\DeclareSymbolFontAlphabet{\mathbb}{AMSb}
\DeclareSymbolFontAlphabet{\mathbbl}{bbold}
\newcommand{\prism}{{\mathlarger{\mathbbl{\Delta}}}}
\theoremstyle{plain}
\newtheorem{theorem}{Theorem}[section]
\newtheorem{prop}[theorem]{Proposition}
\newtheorem{lemma}[theorem]{Lemma}
\newtheorem{cor}[theorem]{Corollary}
\newtheorem{conj}[theorem]{Conjecture}
\theoremstyle{definition}
\newtheorem{assumption}[theorem]{Assumption}
\newtheorem{defn}[theorem]{Definition}
\newtheorem{def/prop}[theorem]{Definition/Proposition}
\newtheorem{example}[theorem]{Example}
\newtheorem{example/def}[theorem]{Example/Definition}
\newtheorem{remark}[theorem]{Remark}
\newtheorem{notation}[theorem]{Notation}
\newcommand{\AV}{\mathcal{A}}
\newcommand{\Af}{\mathbb{A}_f}
\newcommand{\Aut}{\mathrm{Aut}}
\newcommand{\Bun}{\mathrm{Bun}}
\newcommand{\Bdr}{B_\mathrm{dR}}
\newcommand{\C}{\mathbb{C}}
\newcommand{\cris}{\mathrm{cris}}
\newcommand{\cycl}{\mathrm{cycl}}
\newcommand{\F}{\mathbb{F}}
\newcommand{\Fl}{\mathcal{F}l}
\newcommand{\G}{\mathcal{G}}
\newcommand{\GL}{\mathrm{GL}}
\newcommand{\Gm}{{\mathbb{G}_m}}
\newcommand{\Gr}{\mathrm{Gr}}
\newcommand{\Hom}{\mathrm{Hom}}
\newcommand{\Igs}{\mathrm{Igs}}
\newcommand{\N}{\mathbb{N}}
\newcommand{\CO}{\mathcal{O}}
\newcommand{\Perf}{\mathrm{Perf}}
\newcommand{\Q}{\mathbb{Q}}
\newcommand{\R}{\mathbb{R}}
\newcommand{\Shi}{\mathcal{S}}
\newcommand{\Spre}{\Shi_{K^p}^{\circ,\mathrm{pre}}}
\newcommand{\Spa}{\operatorname{Spa}}
\newcommand{\Spd}{\operatorname{Spd}}
\newcommand{\Spec}{\operatorname{Spec}}
\newcommand{\Spf}{\operatorname{Spf}}
\newcommand{\Shio}{\Shi_{K^p}^\circ}
\newcommand{\Shib}{\Shi_{K^p}^\ast}
\newcommand{\Ig}{\mathrm{Ig}^b_C}
\newcommand{\Igb}{\mathrm{Ig}^{b,\ast}_C}
\newcommand{\Z}{\mathbb{Z}}
\newcommand{\CP}{\mathcal{P}}
\newcommand{\SK}{\mathscr{S}_K^\diamond}
\newcommand{\Sht}{\mathrm{Sht}_\G}
\newcommand*{\triple}[2][.1ex]{%
  \mathrel{\vcenter{\offinterlineskip%
   \hbox{$#2$}\vskip#1\hbox{$#2$}\vskip#1\hbox{$#2$}}}}
\newcommand*{\double}[2][.1ex]{%
  \mathrel{\vcenter{\offinterlineskip%
  \hbox{$#2$}\vskip#1\hbox{$#2$}}}}
\title[A PEL-type Igusa stack]{A PEL-type Igusa stack and the $p$-adic geometry of Shimura varieties}
\author{Mingjia Zhang}
\def\l@subsection{\@tocline{2}{0pt}{2pc}{3pc}{}}
\begin{document}
\begin{abstract}
Let $(G,X)$ be a PEL-Shimura datum  of type AC in Kottwitz's classification. Assume $G_{\Q_p}$ is unramified. We show that the good reduction locus of the infinite $p$-level Shimura variety attached to this datum, considered as a diamond, can be described as the fiber product of a certain v-stack (which we call ``Igusa stack") with a Schubert cell of the corresponding $\Bdr^+$-affine Grassmannian, over the stack of $G_{\Q_p}$-torsors on the Fargues-Fontaine curve. We also construct a minimal compactification of the Igusa stack and show that this fiber product structure extends to the minimal compactification of the Shimura variety. When the Schubert cell of the affine Grassmannian is replaced by a certain bounded substack of $\G$-shtukas, where $\G$ is a reductive model of $G_{\Q_p}$ over $\Z_p$, we show that this fiber product recovers the integral model of the Shimura variety. This result on integral models, if specialized to a Newton polygon stratum, recovers the fiber product formula of Mantovan. Similar fiber product structures are conjectured by Scholze to exist on general Shimura varieties.
\end{abstract}

 \maketitle
 
{\setlength{\parskip}{1 pt plus 0pt}
\setlength{\parindent}{6pt}
\tableofcontents}

\setlength{\parskip}{2.5pt plus2pt}
\setlength{\parindent}{15pt}

\section{Introduction: Scholze's fiber product conjecture}
    The motivating question of this paper is to understand the geometry of Shimura varieties as $p$-adic analytic objects and the relation to that of their local counterparts. Instances of such relations can be dated back to the $p$-adic uniformization of Rapoport and Zink \cite{RZ}: it relates an open part (the basic Newton stratum) of PEL type Shimura varieties as $p$-adic rigid analytic spaces to simpler rigid spaces (Rapoport-Zink spaces). The formula for this uniformization formally resembles the complex uniformization expressing the Shimura varieties as adelic double quotients. 

As for a general Newton stratum labeled by an element $b$ in the corresponding Kottwitz set, Mantovan \cite{Mantovan} (cf. \cite{CS17} for this reformulation and notation) discovered that up to quotienting by the action of a certain group $\widetilde{G}_b$, it is a product of a corresponding Rapoport-Zink space $\mathcal{M}^b_\infty$ and a so-called Igusa variety $(\mathrm{Ig}^b_{\CO_K})^\text{ad}_\eta$. On the basic stratum, this takes the form of a $p$-adic ``uniformization", since in that case the Igusa variety is merely a profinite set. 

We show that for some PEL type Shimura varieties, via constructing a $p$-adic analytic stack, which we call ``Igusa stack", it is possible to interpolate between the strata and obtain a similar ``product structure" on the whole Shimura variety. In order to do this, we need to work relatively over a stack that interpolates the classifying stacks for the groups $\widetilde{G}_b$. This base turns out to be provided by the classifying stack of $G$-bundles on the Fargues-Fontaine curve, which appeared in the work of Fargues-Scholze \cite{FS}. Correspondingly the role of a $p$-adic symmetric space is played by a minuscule Schubert cell of the $\Bdr^+$-affine Grassmannian of Scholze-Weinstein \cite{Berkeley}, which interpolates the quotients $[\mathcal{M}^b_\infty/\widetilde{G}_b]$. Since a general formalism of stacks fibered over adic spaces is not available and might not behave well at all, we work in the category of small v-stacks on perfectoid spaces in characteristic $p$ in the framework of Scholze \cite{Sch18}. 

In very rough terms, the fiber product structure we seek for is a separation of the geometric information of a $p$-adic Shimura variety into a $p$-part and a prime-to-$p$ part, where the minuscule Schubert cell models the local geometry of the Shimura variety at $p$, while the Igusa stack records the global prime-to-$p$ information. Although in this work we only deal with certain PEL-type Shimura varieties, a similar fiber product structure is conjectured by Scholze\footnote{While writing up this paper, I learned that Patrick Daniels, Pol van Hoften and Daniel Kim have independently conjectured the integral version of the cartesian diagram in Conjecture \ref{conjecture}. This has led to the joint work \cite{DvHKZ}. Laurent Fargues and Matteo Tamiozzo have independently got similar ideas. A version of the Igusa stack for the modular curve appeared in the thesis of Tamiozzo.} to exist on general Shimura varieties. Let us give a precise formulation of this conjecture, before stating our results towards it.

\subsection{The fiber product conjecture}
Let $(G/\Q,X)$ be a Shimura datum, which determines a $G(\C)$-conjugacy class of minuscule cocharacters $[\mu^{-1}]$ with field of definition $E_0$. Fix a rational prime $p$ and let $E$ be the completion of $E_0$ at a prime above $p$ with residue field $\F_q$. Take a compact open subgroup $K=K_pK^p\subset G(\Af)$. Consider the category $\Perf$ of perfectoid spaces in characteristic $p$ and equip it with the v-topology. Let $\Shi_{K_pK^p}$ denote the diamond over $\Spd E$ attached to the corresponding Shimura variety at level $K_pK^p$ and $\Shi_{K^p}:=\varprojlim_{K_p}\Shi_{K_pK^p}$. Let $\Gr_G$ be the $\Bdr^+$-affine Grassmannian attached to $G_{\Q_p}$, considered as a diamond over $\Spd E$. Fix an isomorphism $\C\cong \bar{\Q}_p$ over $E_0$, where $\bar{\Q}_p$ is an algebraic closure of $\Q_p$ containing $E$. Fixing a maximal torus inside a Borel subgroup of $G_{\bar{\Q}_p}$, we choose a dominant cocharacter $\mu$ representing the $G(\bar{\Q}_p)$-conjugacy class $[\mu]$. Denote by $\Gr_{G,\mu}$ the Schubert cell labeled by $\mu$. Let $\Bun_G:=\Bun_{G_{\Q_p}}$ be the small v-stack on $\Perf$ of $G_{\Q_p}$-bundles on the Fargues-Fontaine curve. The affine Grassmannian maps to $\Bun_G$ via the Beauville-Laszlo map $BL:\Gr_G\rightarrow \Bun_G$. The Shimura variety $\Shi_{K^p}$ maps to the affine Grassmannian via the Hodge-Tate period map $\pi_{HT}: \Shi_{K^p}\rightarrow \Gr_G$, with image lying in $\Gr_{G,\mu}$. 

\begin{conj}{(Scholze)}\label{conjecture}
There exists a system of small v-stacks $\{\Igs_{K^p}\}_{K^p}$ on $\Perf$, together with maps $\Shi_{K^p}/\Spd E \rightarrow \Igs_{K^p}$ and $\Igs_{K^p}\xrightarrow{\bar{\pi}_{HT}} \Bun_{G}$ such that 
\begin{enumerate}[leftmargin=0.7cm]
\item (Cartesian diagram) For each $K^p$, the diagram
    \[
    \begin{tikzcd}
        \Shi_{K^p}\ar[r,"\pi_{HT}"] \ar[d] & \Gr_{G,\mu}\ar[d,"BL"]\\
        \Igs_{K^p}\ar[r, "\bar{\pi}_{HT}"] & \Bun_G
    \end{tikzcd}
    \]
is cartesian. 
\item (Hecke action) There exists a $G(\Af)$-action on $\{\Igs_{K^p}\}_{K^p}$ (where $G(\Q_p)$ acts trivially) descending that on 
$\{S_{K^p}\}_{K^p}$. In particular for any compact open subgroup $K_p$ of $G(\Q_p)$, we have a similar Cartesian diagram at level $K_p$, with the top row replaced by
\[\Shi_{K_pK^p} \xrightarrow{\pi_{HT,K_p}} [\Gr_{G,\mu}/K_p].\]
\item (Minimal compactification) There exist compactifications $\Igs_{K^p}\hookrightarrow \Igs^*_{K^p}$ over $\Bun_G$, extending the above cartesian diagram to the minimal compactification $\Shi_{K^p}^*$'s of the Shimura varieties.
    \item (Integral model) For $\G$ being a smooth parahoric model of $G$ over $\Z_p$, the cartesian diagram at level $K_p=\G(\Z_p)$ has a canonical integral model
 \[
    \begin{tikzcd}
    S_{K}^\Diamond\ar[r,"\pi_\text{crys}"] \ar[d] & \mathrm{Sht}_{\G,\mu}\ar[d]\\
    \Igs_{K^p}\ar[r, "\bar{\pi}_{HT}"] & \Bun_G
    \end{tikzcd}
    \]
    where $S_K^{\Diamond}$ is the v-sheaf\footnote{There are two ways to attach a v-sheaf to an $\CO_E$-scheme, see Definition \ref{BigDiamond} and here we are using the one that views a test perfectoid space as a locally ringed space with its structure sheaf (instead of the integral structure sheaf) as sheaf of rings.} associated with the (conjectural) schematic canonical integral model of the Shimura variety at level $K$ over $\CO_E$, uniquely characterized by Conjecture 4.2.2 of \cite{PR}, $\mathrm{Sht}_{\G,\mu}/\Spd \CO_E$ is the moduli stack of ($p$-adic) $\G$-shtukas with one leg bounded by $\mu$, and the map $\pi_\text{crys}$ is given by the (conjectural) universal $\G$-shtuka\footnote{Part b) of \cite[Conjecture 4.2.2]{PR} only asks there to be a universal $\mathcal{G}$-shtuka over $S_K^{\Diamond/}$, see Definition 2.3.2 of \textit{loc. cit.} for this notation. Yet it seems reasonable to expect that it extends to $S_K^\Diamond$ with some uniqueness properties.} on $S_K^{\Diamond}$. 
    
    \item (Functoriality) The construction is functorial in Shimura data.
\end{enumerate}
\end{conj}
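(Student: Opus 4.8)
Since the statement is Scholze's conjecture for arbitrary Shimura data, I only sketch a proof in the range this paper settles: $(G,X)$ of PEL type AC with $G_{\Q_p}$ unramified, $\G/\Z_p$ a reductive (hyperspecial) model, and with $\Shi_{K^p}$ replaced by the good reduction locus $\Shio$ and $\Shi_{K^p}^\ast$ by the corresponding minimal compactification; here $\mu$ is minuscule, so $\Gr_{G,\mu}$ is the diamond of a flag variety $\Fl_{G,\mu}$ over $\Spd E$ and $\pi_{HT}$ is the Hodge--Tate period map of Caraiani--Scholze. The plan has three parts: construct $\Igs_{K^p}$, verify the cartesian property, and compactify.

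\textbf{Construction.} Work over $\Spd\CO_E$. A point of the infinite-level good reduction locus over a perfectoid $(S,S^\sharp/\CO_E)$ is an abelian scheme $A/\CO_{S^\sharp}$ with PEL structure, $K^p$-level structure and a $G$-compatible trivialization of the $p$-adic Tate module $T_p(A_\eta)$. Composing $A\mapsto A[p^\infty]$ with the Breuil--Kisin--Fargues / shtuka dictionary in its $G$-torsor form (\cite{Berkeley}, plus the reductive-group version) attaches to such a point the $G$-bundle $\mathcal{E}_A$ on the Fargues--Fontaine curve of $S$ together with a $\mu$-bounded modification of the trivial bundle; this recovers $\pi_\text{crys}:S_K^\Diamond\to\mathrm{Sht}_{\G,\mu}$ integrally and $BL\circ\pi_{HT}$ generically. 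Let $\mathcal{R}\rightrightarrows\Shio$ be a groupoid built from $p$-power quasi-isogenies $A\dashrightarrow A'$ respecting the PEL and $K^p$-structures but not the Tate-module trivialization (the precise definition is most cleanly phrased in terms of the associated shtukas), and set $\Igs_{K^p}:=[\Shio/\mathcal{R}]$ as a small v-stack, with $\bar\pi_{HT}$ induced by $A\mapsto\mathcal{E}_A$, which is $\mathcal{R}$-invariant since a $p$-power quasi-isogeny induces a canonical isomorphism of the associated $G$-bundles. The $G(\Af^p)$-action on $\{\Shi_{K^p}\}$ descends to $\{\Igs_{K^p}\}$, and $G(\Q_p)$ acts trivially because it only moves the Hodge--Tate lattice within its $\mathcal{R}$-orbit; this yields (2) and the level-$K_p$ diagram, while (5) is immediate as every ingredient is functorial in $(G,X)$.

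\textbf{Cartesian property (giving (1) and the shtuka half of (4)).} One must show that the tautological comparison
\[
\Shio\to\Igs_{K^p}\times_{\Bun_G}\Gr_{G,\mu}\qquad\bigl(\text{resp.}\ S_K^\Diamond\to\Igs_{K^p}\times_{\Bun_G}\mathrm{Sht}_{\G,\mu}\bigr)
\]
is an isomorphism of small v-stacks. Since $\Shio\to\Igs_{K^p}$ is a v-cover, by v-descent it suffices to check bijectivity on $(C,\CO_C)$-points for algebraically closed $C$ and then promote to families. On points this is exactly the product formula of Mantovan \cite{Mantovan} read through the stratification of $\Bun_G$: over the Newton stratum $\Bun_G^b\cong[\ast/\widetilde{G}_b]$ one has $\Igs_{K^p}\times_{\Bun_G}\Bun_G^b\cong[\mathrm{Ig}^b/\widetilde{G}_b]$ and $\Gr_{G,\mu}\times_{\Bun_G}\Bun_G^b\cong[\mathcal{M}^b_\mu/\widetilde{G}_b]$, whose fibre product is Caraiani--Scholze's description of the $b$-Newton stratum of $\Shio$ \cite{CS17}. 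Concretely, a point of the target packages a $G$-bundle $\mathcal{E}$ with prime-to-$p$ datum and a $\mu$-bounded modification of the trivial bundle whose target is $\mathcal{E}$, i.e.\ a $p$-divisible group with good reduction over $\CO_C$ with trivialized Tate module together with the prime-to-$p$ datum, and by the PEL moduli description this reconstructs the point of $\Shio$; the two constructions are mutually inverse. The family statement follows from full faithfulness of the Breuil--Kisin--Fargues functor over $\CO_{S^\sharp}$ in its $G$-version. The same argument runs integrally with $\mathrm{Sht}_{\G,\mu}$ in place of $\Gr_{G,\mu}$, the construction above already working with $p$-divisible groups over the integral structure sheaf; the resulting $S_K^\Diamond$ is then identified with the v-sheaf of the canonical integral model by the uniqueness in Conjecture 4.2.2 of \cite{PR} (a theorem here), and restricting to $\Bun_G^b$ recovers Mantovan's formula with the Rapoport--Zink space $\mathcal{M}^b_\mu$.

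\textbf{Compactification (3) and the main obstacle.} Construct $\Igs_{K^p}^\ast$ by a relative-$\underline{\Spec}$ over $\Bun_G$ (the idea due to Scholze): push forward the structure sheaf of the partially compactified Igusa stack --- built from the minimal compactification $\mathrm{Ig}^{b,\ast}$ of the Igusa variety --- along $\bar\pi_{HT}$ and take its relative spectrum; functoriality of $\pi_{HT}$ on $\Shi_{K^p}^\ast$ (Caraiani--Scholze, using the $1$-motives attached to the boundary) produces $\Shi_{K^p}^\ast\to\Igs_{K^p}^\ast$ over $\Bun_G$, and the cartesian square is verified cusp by cusp using the previous step for each boundary Shimura datum. \emph{The main obstacle} is the passage from geometric points to families in the cartesian property: it needs control of $p$-divisible groups with $G$-structure over the non-Noetherian integral structure sheaf of an arbitrary perfectoid, i.e.\ relative integral $p$-adic Hodge theory, and --- intertwined with this at the boundary --- making the $\mathcal{R}$-quotient and the $G$-bundle functor work for semiabelian/$1$-motive degenerations so that the boundary of $\Igs_{K^p}^\ast$ matches the lower-dimensional Igusa stacks of the rational boundary components. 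A further delicate point is calibrating $\widetilde{G}_b$ and the exact shape of $\mathcal{R}$ so that $\bar\pi_{HT}$ surjects onto all of $\Bun_G$ with the stated fibres.
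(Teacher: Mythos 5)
Your sketch has the right flavor at several points (a moduli/isogeny-type Igusa object, the Dieudonn\'e/Serre--Tate dictionary, a relative-spectrum compactification over $\Bun_G$), but at the two places where the paper actually has to work, your route either is left undefined or would fail as stated. First, the cartesian property: ``check bijectivity on $(C,\CO_C)$-points and then promote to families'' is not a legitimate reduction --- an isomorphism of v-stacks cannot be checked on geometric points alone, and full faithfulness of the Breuil--Kisin--Fargues/prismatic functor does not supply the essential surjectivity you need (producing a $p$-divisible group over $R^{\sharp+}$ from a $\Gr_{G,\mu}$-point) over a general perfectoid base. The paper's actual reduction is to \emph{products of geometric points}, a basis of the v-topology (legitimate because the fiber product is $0$-truncated), and even there the content is substantial: gluing $p$-divisible groups along Milnor squares $C^+\to \CO_C$, assembling over products of valuation rings, uniformly bounding the denominators of the componentwise quasi-isogenies via a Hom-comparison of crystalline Dieudonn\'e modules, modifying the abelian scheme in its isogeny class, and only then applying Serre--Tate. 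Your alternative, assembling the isomorphism Newton-stratum by Newton-stratum from Mantovan's formula, also cannot work: the strata are only locally closed and families cross them, and in the paper Mantovan's formula is a \emph{corollary} of the cartesian square, not an ingredient. Relatedly, your definition $\Igs_{K^p}:=[\Shi^\circ_{K^p}/\mathcal{R}]$ is underdetermined in exactly the problematic direction: the groupoid $\Shi^\circ_{K^p}\times_{\Igs^\circ_{K^p}}\Shi^\circ_{K^p}$ must identify points lying over \emph{different untilts} whose reductions modulo a pseudo-uniformizer are quasi-isogenous (the Igusa stack lives over $\Spd\F_q$, not over $\Spd\CO_E$), so ``$p$-power quasi-isogenies over $\CO_{S^\sharp}$'' is not the right relation; the paper sidesteps this by defining $\Igs^\circ_{K^p}$ directly as the moduli stack of abelian schemes with $G$-structure over $R^+/\varpi$ up to quasi-isogeny, which also makes the trivial $G(\Q_p)$-action and the prime-to-$p$ Hecke descent immediate.

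Second, the compactification: the paper does not glue partially compactified Igusa varieties or verify the square ``cusp by cusp'' via boundary Shimura data (a route that forces you into semiabelian/$1$-motive degenerations, which you yourself flag as unresolved). Instead it takes, on strictly totally disconnected $T\to\Bun_G$, the canonical compactification of the affinization of $\Igs^\circ_{K^p}\times_{\Bun_G}T$; the v-sheaf property hinges on the almost isomorphism $\CO^+(\Igs_T)/\varpi\cong(\CO^+/\varpi)(\Igs_T)$, proved by identifying fibers of $\pi_{HT}$ with perfectoid Igusa varieties and showing $p$-torsion-freeness of $H^1(X,W\CO_X)$ for perfect schemes; the boundary then appears automatically, and the comparison with $\Shi^\ast_{K^p}$ uses algebraic Hartogs under the hypothesis that the boundary has codimension at least two --- a hypothesis your sketch omits entirely, although it is essential (the paper classifies the excluded cases: the modular curve and certain unitary Shimura curves). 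Two smaller points: surjectivity of $\bar{\pi}_{HT}$ onto $\Bun_G$ is not required by the conjecture (the image is $B(G,\mu)$), so that ``calibration'' worry is moot; and for part (4) the paper takes the known PEL integral model as input, builds $\pi_{\mathrm{crys}}$ from the universal $\G$-shtuka, and gets cartesianness by a qcqs argument (quasi-separatedness of $\mathrm{Sht}_{\G}$ via Gleason--Ivanov and Ansch\"utz) reducing to geometric points --- rather than constructing the integral model as a fiber product and invoking uniqueness as you propose.
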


Here $\Bun_G$, the affine Grassmannian $\Gr_G$ and the Beauville-Laszlo map are explained in detail in \cite[Chapter III]{FS}, \cite[Lecture 19]{Berkeley}. The construction of the Hodge-Tate period map is originally due to Scholze \cite{Sch15} and rewritten in \cite{CS17} for Hodge type Shimura varieties. In this generality, it is recorded in a preliminary draft of Hansen \cite{Hansen16}, cf.\cite[Section 2]{PR}, whose existence relies on the fact that the tautological $G(\Q_p)$-local system on the Shimura variety is de Rham, established by the work of Liu and Zhu \cite{LZ}.

To comment on the motivation and some features of conjecture \ref{conjecture}, we mention that it arises in the context of the geometrization of the local Langlands conjectures due to Fargues \cite{Far16} and Fargues-Scholze \cite{FS}. Conjecturally, to any discrete local Langlands parameter, there is a certain corresponding perverse sheaf on $\Bun_G$, thus realizing the local Langlands correspondence as a geometric Langlands correspondence on the Fargues-Fontaine curve. It is expected that the complex $R\pi_{HT,!}\overline{\Q}_l$, obtained by pushing forward a constant local system from the Shimura variety along the Hodge-Tate map, descends to $\Bun_G$ and relates to the conjectural perverse sheaves in some form of compatibility to the global Langlands correspondence \cite[Remark 1.18]{CS17}, \cite[Section 7]{Far16}. Conjecture~\ref{conjecture} is a geometric and hence more robust version of the weaker conjecture that $R\pi_{HT,!}\overline{\Q}_l$ descends.

Part $(4)$ of the conjecture can also be formulated by saying that the Igusa stack constructed from part $(1)$ using the generic fiber, when pulled back to the moduli stack of $p$-adic $\G$-shtukas bounded by $\mu$ is representable by a flat normal $\CO_E$-scheme with certain properties\footnote{One may hope for similar result for the minimal compactification, though due to stackiness at the boundary, the best reasonable conjecture is that the coarse quotient, i.e. the universal $v$-sheaf under the fiber product $\Igs_{K^p}^\ast\times_{\Bun_G}\mathrm{Sht}_{\mathcal{G},\mu}$, is representable by the minimal compactification of $S_K$ over $\CO_E$.}. This seems to provide a new way of constructing canonical integral models of Shimura varieties, even though our current approach to this part of the conjecture in the PEL case uses the existence of integral models as an input. It also supports the idea that shtukas in the sense of Scholze-Weinstein are some incarnation of motives in $p$-adic situations. 

Also, having the construction of Igusa stacks at hand, we can take their fiber products with various objects over $\Bun_G$, not necessarily the affine Grassmannian. This provides new semi-global companions of Shimura varieties. As pointed out to me by Tamiozzo, the conjecture could be potentially applied to a local version of the plectic conjectures by taking the fiber product of the Igusa stack with a moduli stack of shtukas with several legs. This idea will be pursued in a later work.

\begin{remark}
Our formulation of the conjecture does not uniquely characterize the system of v-stacks $\{\Igs_{K^p}\}_{K^p}$. For example it does not predict their images in $\Bun_G$ under $\overline{\pi}_{HT}$. A more ideal version of the conjecture would ask $\Igs_{K^p}$ to surject onto $\Bun_G$. Yet given that currently we can only approach the construction of $\Igs_{K^p}$ via Shimura varieties, we will be content with having a v-stack covered by the Shimura variety, whose image in $\Bun_G$ is hence bounded by $\mu$. At first sight this would lead to the Igusa stacks being dependent on $[\mu]$ and hence being defined over the residue field of $E$. However it is expected that they only depend on the Kottwitz set $B(G,\mu)$, not the conjugacy class $[\mu]$ itself, and hence $\Igs_{K^p}$ should already be defined over $\Spd \F_p$.\footnote{During the time this paper is reviewed, a unique characterization of the Igusa stacks has been given by Kim \cite{Kim}, where he also proved the functoriality of Igusa stacks, provided that they exist. Moreover, he extended Theorem~\ref{Main} beyond the good reduction locus.}     
\end{remark}

\subsection{Main results and organization}
The aim of this paper is to prove Conjecture \ref{conjecture} for PEL Shimura varieties of type AC in the classification of Kottwitz, and the main result is the following. The functoriality is carried out in \cite{DvHKZ}.
\begin{theorem}[\Cref{cartesian}, \Cref{cpf}, \Cref{Hecke}, \Cref{integral}]\label{Main}
    If $(G,X)$ is a PEL Shimura datum of type AC, assuming $G_{\Q_p}$ is unramified and $\G$ is reductive (see Assumption \ref{assumption}), then part $(1)(2)(4)$ of Conjecture \ref{conjecture} is true on the good reduction locus $\Shi_{K^p}^\circ$ of the Shimura variety. If we further assume that the minimal compactification of the Shimura variety has boundary codimension at least two, then part $(3)$ of the conjecture is true.\footnote{We do not obtain the optimal base field. In our construction the Igusa stacks live over the residue field of $E$. See Proposition \ref{classification} for a classification of the (simple) Shimura varieties that are excluded by the codimension condition.}
\end{theorem}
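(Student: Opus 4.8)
The plan is to construct the Igusa stack by v-descent from the infinite-level Shimura variety, verify the cartesian property of part~(1) by reconstructing $p$-divisible groups from their associated shtuka data, obtain part~(2) formally, deduce part~(4) by base-changing the cartesian square along $\mathrm{Sht}_{\G,\mu}\to\Bun_G$ and identifying the result with the v-sheaf of the Kisin--Pappas integral model via Mantovan's product formula, and treat part~(3) through Scholze's relative-spectrum construction, with the codimension hypothesis used to control the boundary. For the construction: on the good reduction locus $\Shio=\varprojlim_{K_p}\Shi_{K_pK^p}^\circ$ is a perfectoid space \cite{CS17}, and over a perfectoid $S$ of characteristic $p$ with untilt $S^\sharp$ it classifies abelian schemes $A$ over $\CO_{S^\sharp}$ (the abelian variety over $S^\sharp$ extends integrally --- this is the good-reduction condition) with the PEL structure, a $K^p$-level structure, and a trivialization of the $p$-adic Tate module as a $G(\Q_p)$-torsor compatible with the form. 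I would define $\Igs_{K^p}$ to classify the same data \emph{up to prime-to-$p$ quasi-isogeny and forgetting the $p$-adic trivialization} --- i.e.\ as a suitable v-stack quotient of $\Shio$ --- with $\bar\pi_{HT}\colon\Igs_{K^p}\to\Bun_G$ sending such a datum to the $G$-bundle on the Fargues--Fontaine curve attached to $A[p^\infty]$ (well defined, since that bundle is a prime-to-$p$-isogeny invariant and is insensitive to the trivialization). The point to check is that the quotient is a small v-stack; for this I would use that over each Newton stratum $\Bun_G^b\cong[\ast/\tilde{G}_b]$ it restricts to the quotient $[\mathrm{Ig}^b/\tilde{G}_b]$ of the Caraiani--Scholze Igusa variety at infinite level, and glue using the description of $\Bun_G$ and its charts from \cite{FS}.

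For part~(1), namely $\Shio\cong\Igs_{K^p}\times_{\Bun_G}\Gr_{G,\mu}$: the forgetful map into the fiber product is evident, and to invert it I would work v-locally and on $\Spa(C,\CO_C)$-points with $C$ algebraically closed perfectoid. A point of the target consists of an abelian scheme $A$ over $\CO_C$ (with PEL and $K^p$-structure) taken up to prime-to-$p$ isogeny, together with a $\mu$-bounded modification at $C$ of the associated $G$-bundle, compatible with the $G$-structure. By the de~Rham comparison this modification is precisely the Hodge--Tate datum; by the classification of $p$-divisible groups over $\CO_C$ in terms of their associated minuscule modifications (\cite{Berkeley}), it pins $A[p^\infty]$ down exactly within its isogeny class and equips it with the $\Q_p$-trivialization, and Serre--Tate then recovers $A$ and the level structure on the nose. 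Carrying this out functorially in $S$ and descending gives the cartesian square.

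Part~(2) is then formal: $G(\Af^p)$ acts on $\{\Igs_{K^p}\}_{K^p}$ by descending its action on the Shimura tower, while $G(\Q_p)$ moves only the $p$-adic trivialization --- hence only the $\Gr_{G,\mu}$-factor of the fiber product --- and so acts trivially on $\Igs_{K^p}$; quotienting the square of part~(1) by a compact open $K_p\subset G(\Q_p)$, and noting this commutes with the fiber product over $\Bun_G$, gives the cartesian square at level $K_pK^p$ with $[\Gr_{G,\mu}/K_p]$ on top. For part~(4), base-change the square at $K_p=\G(\Z_p)$ along $\mathrm{Sht}_{\G,\mu}\to\Bun_G$ to get a small v-sheaf $Y_K$ over $\Spd\CO_E$. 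The Kisin--Pappas integral model $\mathscr{S}_K$ carries a universal $p$-divisible group with $\G$-structure, hence a universal $\G$-shtuka, giving $\SK\to\mathrm{Sht}_{\G,\mu}$, and forgetting the leg and remembering only the isogeny class gives $\SK\to\Igs_{K^p}$, hence a morphism $f\colon\SK\to Y_K$. On generic fibers $f$ is the isomorphism of part~(2); on geometric points of the special fiber, a point of either side is an abelian scheme over $\CO_C$ with structures plus a compatible $\G$-shtuka over $\Spa(C,\CO_C)$ bounded by $\mu$, and integral Dieudonn\'e theory over $\CO_C$ identifies the two (stratum by stratum this is exactly Mantovan's product formula, realized as the $b$-fiber of the square). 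To conclude that $f$ is an isomorphism of v-sheaves over $\Spd\CO_E$, I would use that $\SK$ is the $\diamond$ of a flat normal finite-type $\CO_E$-scheme (Definition~\ref{BigDiamond}) and hence a $p$-adically separated, topologically flat v-sheaf for which a morphism that is an isomorphism on generic fibers and a bijection on geometric points must be an isomorphism --- this is the characterization of $\SK$ behind Conjecture~4.2.2 of \cite{PR}.

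For part~(3), assuming the boundary of the minimal compactification $\Shib$ has codimension $\geq 2$, I would follow Scholze's relative-spectrum idea: realize $\Shib$ (perfectoid at infinite level, with $\pi_{HT}$ extending to it and image still in the single Schubert cell) as a relative $\underline{\Spec}$ over $\Gr_{G,\mu}$, and define $\Igs^*_{K^p}$ as the relative $\underline{\Spec}$ over $\Bun_G$ of the $\bar\pi_{HT}$-pushforward of the structure sheaf of the descent of $\Shio$. The codimension-$\geq 2$ hypothesis enters through a Koecher/Hartogs-type argument: functions on the good-reduction locus then extend across the boundary, so the algebra defining $\Igs^*_{K^p}$ ``sees'' all of $\Shib$, and dually $\Gr_{G,\mu}\times_{\Bun_G}\Igs^*_{K^p}$ acquires no sections beyond those of $\Shib$ --- which is what keeps the square cartesian over the minimal compactification. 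The \emph{main obstacle} is the joint content of the construction and part~(1): producing a \emph{single} small v-stack over all of $\Bun_G$ (rather than merely stratum by stratum), showing the prime-to-$p$ quotient is well behaved, and --- the technical heart --- proving cartesianness, which rests on the shtuka-theoretic reconstruction of $p$-divisible groups over general perfectoid bases and on upgrading the pointwise identification along v-covers. Part~(3) is a secondary obstacle: forcing the relative-spectrum construction to commute with base change to $\Gr_{G,\mu}$ is precisely what the codimension hypothesis must buy, and the induction over the smaller PEL data appearing at the cusps requires care.
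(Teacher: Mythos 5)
Your definition of the Igusa stack is not the one that makes part (1) true, and this is a genuine gap rather than a cosmetic difference. In the paper, $\Igs^\circ_{K^p}$ classifies abelian schemes with $G$-structure over the reduction $R^+/\varpi$ only, with isomorphisms being \emph{arbitrary} quasi-isogenies (in particular $p$-power ones) compatible with the structures; your stack instead remembers the abelian scheme over $\CO_{S^\sharp}$, up to \emph{prime-to-$p$} quasi-isogeny, with the $p$-adic trivialization forgotten. Both discrepancies break cartesianness. Over $\Spa(C,\CO_C)$, a point of your fiber product is $(A/\CO_{C^\sharp},\,y\in\Gr_{G,\mu},\,\phi)$, and when the $\Bdr^+$-lattice encoded by $(y,\phi)$ differs from the one coming from $A[p^\infty]$ itself, Serre--Tate produces an abelian scheme $A''$ whose reduction is $p$-power isogenous to that of $A$; but $A''$ is neither prime-to-$p$ isogenous to $A$ nor even quasi-isogenous to $A$ over $\CO_{C^\sharp}$ (the quasi-isogeny of reductions does not respect the two lifts), so it gives a \emph{different} point of your Igusa stack, and $\Shio\to\Igs^{\mathrm{yours}}\times_{\Bun_G}\Gr_{G,\mu}$ fails to be essentially surjective. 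Nor can the correct object be obtained as a group quotient of $\Shio$ by prime-to-$p$ Hecke operators together with $G(\Q_p)$: the fibers of $BL$ are not $G(\Q_p)$-orbits (for the modular curve they contain $\Omega=\mathbb{P}^1\setminus\mathbb{P}^1(\Q_p)$), so "forgetting the trivialization" identifies far too little. One must really take the moduli of reductions modulo all quasi-isogenies and v-stackify; your own verification of (1) tacitly uses this ("pins $A[p^\infty]$ down within its isogeny class\dots Serre--Tate then recovers $A$"), contradicting your stated definition.

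Even with the correct definition, your reductions to $\Spa(C,\CO_C)$-points are not justified as stated. Rank-one geometric points are not a basis of the v-topology, and one cannot check that a map of v-stacks is an isomorphism on them without qcqs control; the paper instead proves the bijection on \emph{products of geometric points}, and the substance is exactly there: gluing $p$-divisible groups along Milnor squares and over products of valuation rings, and a uniform bound $p^N$ for the quasi-isogenies across all factors, extracted from properness of $\Gr_{G,\mu}$ and finiteness of $\varphi$-invariants of the Dieudonn\'e modules. The same issue is more serious in part (4): the fiber product $\Igs^\circ_{K^p}\times_{\Bun_G}\mathrm{Sht}_{\G,\mu}$ is an abstract v-sheaf, so the scheme-theoretic characterization behind [PR, Conj.~4.2.2] cannot be applied to it, and "isomorphism on the generic fiber plus bijection on geometric points" proves nothing by itself. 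The paper first establishes quasi-separatedness, properness of the diagonal and a valuative criterion for $\mathrm{Sht}_{\G,\mu}$ (using Gleason--Ivanov's extension of shtukas over products of points to BKF-modules and Ansch\"utz's triviality of $G_{\Z_p}$-torsors over $W(R^+)[1/p]$), so that $\SK\to\Igs^\circ_{K^p}\times_{\Bun_G}\mathrm{Sht}_{\G,\mu}$ is qcqs by cancellation and only then can be checked pointwise via the rational case; none of this geometry of $\mathrm{Sht}_{\G,\mu}$ appears in your proposal (and at hyperspecial level no appeal to Kisin--Pappas or to the abstract characterization is needed — the PEL moduli scheme is the integral model). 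Your sketch of part (3) is in the right spirit, but the technical heart is missing: proving that $T\mapsto\overline{(\Igs_T)_0}^{/T}$ is a v-sheaf, which rests on the almost isomorphism $\CO^+(\Igs_T)/\varpi\cong(\CO^+/\varpi)(\Igs_T)$, the identification of $\pi_{HT}$-fibers with Igusa varieties, and the $p$-torsion-freeness of first Witt vector cohomology of perfect schemes, together with Hartogs applied to the partial minimal compactifications of central leaves and Igusa varieties (where the codimension bound must first be transported), not merely to $\Shib$.
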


Our proof relies heavily on the fact that the Shimura variety in concern is a moduli space of abelian varieties with additional structures. In short, in this case the Igusa stack can be constructed as a moduli stack of abelian varieties up to isogenies in characteristic $p$. Upon relating points of the Schubert cell $\Gr_{G,\mu}$ (respectively $\Bun_G$) to $p$-divisible groups with additional structure via Dieudonn\'e theory, the desired cartesian property of the diagrams in part $(1), (4)$ of the conjecture follows from Serre-Tate theory of lifting abelian varieties. 

In Section 2 to Section 7 we review known results and present our global PEL setup.

In Section 8 we give a construction of the Igusa stack and show part $(1)$ of the conjecture on the good reduction locus. We define:

\begin{defn}[\Cref{DefIgusa}]
    Let $\Igs_{K^p}^\circ$ be the v-stackification of $\Igs^\text{pre}$, which takes an affinoid perfectoid space $T=\Spa(R,R^+) \in \Perf_{\F_q}$ to the groupoid $\Igs^\text{pre}(T)$ whose objects are $\Spec(R^+/\varpi)$-points of $S_K$, where $\varpi$ is a pseudo-uniformizer of $R^+$ and $S_K$ is the schematic Shimura variety at level $K$ over $\CO_{E}$. Isomorphisms between two objects are quasi-isogenies between abelian schemes compatible with extra structures.
\end{defn}

The map $\Shi_{K^p}^\circ/\Spd E \rightarrow \Igs_{K^p}^\circ$ is constructed by taking the reduction of abelian schemes over $R^+$ to $R^+/\varpi$, and we denote it by $\mathrm{red}$. The map $\bar{\pi}_{HT}^\circ: \Igs_{K^p}^\circ\rightarrow \Bun_{G}$ is constructed by taking the $G$-bundle on the Fargues-Fontaine curve attached to the rational Dieudonn\'e module of the objects in $\Igs_{K^p}^\circ$. With these we show:

\begin{prop}[\Cref{cartesian}]\label{rational}
For PEL Shimura varieties of type AC, with the above definitions, part $(1)$ of the conjecture is true on the good reduction locus $\Shi_{K^p}^\circ\subset \Shi_{K^p}$.  
\end{prop}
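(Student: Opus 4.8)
The plan is to prove that the diagram in Proposition \ref{rational} is cartesian by checking it on $T$-points for an arbitrary affinoid perfectoid $T = \Spa(R^\sharp, R^{\sharp+})$ over $\Spd E$ (or, after untilting, over $\Spa E$), since all objects involved are small v-stacks. A $T$-point of the fiber product $\Igs_{K^p}^\circ \times_{\Bun_G} \Gr_{G,\mu}$ is: an object of $\Igs_{K^p}^\circ(T^\flat)$, i.e.\ essentially an abelian scheme with PEL structure over $\Spec(R^{\sharp+}/\varpi)$ (up to quasi-isogeny and v-locally on $T$), together with a point of $\Gr_{G,\mu}$, i.e.\ a modification at the untilt of the associated $G$-bundle on the Fargues--Fontaine curve that is bounded by $\mu$, such that the two images in $\Bun_G(T)$ agree. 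The goal is to match this datum with a $T$-point of $\Shi_{K^p}^\circ$, namely an abelian scheme with PEL structure over $R^{\sharp+}$ with full $K^p$-level structure and good reduction. The dictionary between the Grassmannian/$\Bun_G$ side and $p$-divisible groups is furnished by the Dieudonné-theoretic results recalled in sections 3--7: a point of $\Gr_{G,\mu}$ lying over a given $G$-bundle corresponds, via the Scholze--Weinstein classification of $p$-divisible groups over $\CO_{R^\sharp}$ (integral $p$-adic Hodge theory / Breuil--Kisin--Fargues modules as in \cite{PR}, \cite{Berkeley}), to a lift of the $p$-divisible group encoded by the $\Bun_G$-point from $R^{\sharp+}/\varpi$ to $R^{\sharp+}$, with the minuscule condition $\mu$ exactly matching the Hodge--Tate filtration being of the correct type.

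The key steps, in order, are as follows. First, I would make precise the ``abelian variety up to isogeny in characteristic $p$ plus $G$-bundle'' package: given a point of $\Igs_{K^p}^\circ(T^\flat)$ mapping to a $G$-bundle $\mathcal{E}$ on the Fargues--Fontaine curve, recall (from the construction of $\bar\pi_{HT}^\circ$) that $\mathcal{E}$ is built from the rational (covariant) Dieudonné module of the reduced abelian scheme; crucially this rational Dieudonné module, together with the prime-to-$p$ Tate module giving the $K^p$-level structure, recovers the abelian scheme over $R^{\sharp+}/\varpi$ \emph{up to isogeny}, and v-locally one can rigidify to an honest abelian scheme. Second, I would invoke Serre--Tate theory: deformations of the abelian scheme $A_0/\Spec(R^{\sharp+}/\varpi)$ to $R^{\sharp+}$ (which is $\varpi$-adically complete, or one reduces to that case) are equivalent to deformations of its $p$-divisible group $A_0[p^\infty]$, compatibly with all the PEL structure (the additional endomorphisms and polarization deform with the $p$-divisible group, using that we are in the unramified PEL type AC situation so the Kottwitz determinant/signature condition is the only constraint, encoded by $\mu$). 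Third, I would identify deformations of $A_0[p^\infty]$ over $R^{\sharp+}$ with points of $\Gr_{G,\mu}$ over the given $\Bun_G$-point: this is the integral $p$-adic Hodge theory input — over $\CO_{R^\sharp}$ a $p$-divisible group is the same as its Breuil--Kisin--Fargues module, a lift of $A_0[p^\infty]$ corresponds to a $B_{\mathrm{dR}}^+$-lattice (modification of the associated bundle), and the minuscule cocharacter $\mu$ cuts out exactly the lifts satisfying the Hodge--Tate period/Kottwitz condition, matching $\Gr_{G,\mu}$ rather than all of $\Gr_G$. Finally, I would check that these identifications are compatible with the Hecke/$K^p$-level data and with the v-stackification, and that the resulting abelian scheme over $R^{\sharp+}$ lies in the good reduction locus (automatic since it has an integral model by construction), concluding the map from the fiber product to $\Shi_{K^p}^\circ$ and checking it is inverse to the evident map in the other direction.

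The main obstacle I expect is the third step: pinning down precisely the equivalence between lifts of the $p$-divisible group over $R^{\sharp+}/\varpi$ and points of the \emph{Schubert cell} $\Gr_{G,\mu}$ over a fixed point of $\Bun_G$, with full control of the PEL structure. There are several subtleties intertwined here: one must use the right normalization of covariant Dieudonné theory (the paper fixes the $\cite{CS17}$ convention of dividing Frobenius by $p$) and the right notion of Breuil--Kisin--Fargues module (the paper follows $\cite[2.2.4]{PR}$, which differs from $\cite[11.4.3]{Berkeley}$), so the compatibility of these conventions has to be tracked carefully; one must ensure the equivalence is functorial and works v-locally so that it descends after v-stackification; and one must verify that the minuscule/boundedness condition defining $\Gr_{G,\mu}$ corresponds on the nose to the Kottwitz signature condition defining the Shimura variety (as opposed to, say, a weaker or stronger integral condition). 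A secondary difficulty is the passage between ``up to quasi-isogeny'' on the Igusa side and honest abelian schemes on the Shimura side: one needs the quasi-isogenies to be v-locally trivializable and the resulting descent to be effective, which should follow from the fact that a quasi-isogeny of abelian schemes over a reduced $\F_p$-algebra is determined by its effect on Dieudonné modules and Tate modules, but making this interact cleanly with the level structure requires care. Once these are handled, the rest of the argument is a bookkeeping exercise in matching moduli descriptions.
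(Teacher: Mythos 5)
There is a genuine gap, and it sits exactly where you flagged your "main obstacle": your third step assumes that over an \emph{arbitrary} affinoid perfectoid base $\Spa(R^\sharp,R^{\sharp+})$, a point of $\Gr_{G,\mu}$ lying over a fixed point of $\Bun_G$ is the same thing as a lift of the $p$-divisible group of $A_0$ from $R^{\sharp+}/\varpi$ to $R^{\sharp+}$, "via the Scholze--Weinstein classification over $\CO_{R^\sharp}$". But the Scholze--Weinstein classification (Theorem B, recalled in section 3) is a statement over $\CO_C$ for $C$ a complete algebraically closed nonarchimedean field, i.e.\ at rank one geometric points; no such dictionary exists over a general perfectoid base. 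The prismatic/BKF equivalence does hold over integral perfectoid rings, but a $\Gr_{G,\mu}$-point only hands you a $\Bdr^+(R^\sharp)$-lattice in a given isocrystal/bundle, and to produce a $p$-divisible group over $R^{\sharp+}$ you would need to extend that lattice to a BKF module over $W(R^+)$ compatibly with the Frobenius, which is not possible in general (this is precisely the extension problem the paper only solves over products of rank one points, in section 11 via Gleason--Ivanov, and avoids in section 8). The paper's proof therefore does \emph{not} argue on all test objects: it uses that stackification commutes with 2-fiber products to reduce to comparing presheaves on the basis of the v-topology given by products of geometric points, constructs the $p$-divisible group point-by-point via the $\CO_C$-classification, handles higher-rank points $\Spa(C,C^+)$ by Milnor-square gluing of BT groups (Prop.~\ref{BTgluing}) together with Lemma~\ref{1} on closures of finite subgroup schemes over valuation rings, and reassembles over the product via Cor.~\ref{BTproduct}. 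Your plan as stated would stall at the point where you need an honest $p$-divisible group (not just a $B_{\mathrm{dR}}^+$-lattice) over $R^{\sharp+}$.

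A second missing ingredient, invisible at a single point but essential on a product of points, is boundedness: the construction yields quasi-isogenies $\rho_i\colon \AV_0[p^\infty]|_{s_i}\dashrightarrow \G_i$ separately on each component $s_i$, and to get one quasi-isogeny over the whole base one must show the denominators are uniformly bounded, i.e.\ $p^N\rho_i$ are isogenies for a single $N$. The paper proves this by identifying $\Hom$ of $p$-divisible groups with Frobenius invariants of crystalline Dieudonn\'e modules and comparing with $\Hom$ of the associated bundles, and then glues the infinite-level trivializations using properness of $\Gr_{G,\mu}$ and of the diagonal of $[\ast/\underline{K}_p]$ (Prop.~\ref{qcqs}). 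Relatedly, your "v-locally rigidify the quasi-isogeny class" step is resolved in the paper not by descent but by Lemma~\ref{modifyAb}: one modifies $\AV_0$ within its isogeny class so that its $p$-divisible group is literally isomorphic to the constructed lift, after which Serre--Tate applies on the nose. Your overall skeleton (Dieudonn\'e dictionary plus Serre--Tate) is the right one and matches the paper's, but without the reduction to products of points and the boundedness/gluing arguments the proof does not close.
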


The proof uses the moduli interpretation as alluded to earlier, though the relation of $\Gr_{G,\mu}$ and $\Bun_G$ to $p$-divisible groups is only clean on rank one geometric points. Hence some effort is paid to extend the result from rank one points to a basis of the v-topology named ``product of points", see Example~\ref{Example: productofpoints}.

A sheaf theoretic corollary is the following.

\begin{cor}[\Cref{PBC}]
For a coefficient ring $\Lambda$ with $n\Lambda=0$ for some $n$ prime to $p$, we have natural base change equivalence
\[{BL}^\ast R\bar{\pi}^\circ_{HT,\ast}\cong R\pi^\circ_{HT,\ast}\mathrm{red}^\ast\]
of functors $D_\text{\rm {\'et}}(\Igs_{K^p}^\circ,\Lambda)\rightarrow D_\text{\rm {\'et}}(\Gr_{G,\mu},\Lambda)$. In particular, the complex $R\pi^\circ_{HT,\ast}\Lambda$ on $\Gr_{G,\mu}$ descends to the complex $R\bar{\pi}^\circ_{HT,\ast}\Lambda$ on $\Bun_G$. The same statement for pushforward with compact support is true.
\end{cor}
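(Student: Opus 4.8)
The plan is to deduce the base change equivalence formally from the cartesian diagram of Proposition~\ref{cartesian} together with the general base change machinery for \'etale cohomology of small v-stacks developed in \cite[IV, VI]{FS} and \cite{Sch18}. The key input is that in the cartesian square
\[
\begin{tikzcd}
\Shi_{K^p}^\circ \ar[r,"\pi_{HT}^\circ"] \ar[d,"\mathrm{red}"] & \Gr_{G,\mu}\ar[d,"BL"]\\
\Igs_{K^p}^\circ\ar[r, "\bar{\pi}_{HT}^\circ"] & \Bun_G
\end{tikzcd}
\]
the vertical map $BL:\Gr_{G,\mu}\to\Bun_G$ is, by the work of Fargues--Scholze, cohomologically smooth (indeed the Schubert cell $\Gr_{G,\mu}$ with its map to $\Bun_G$ is a stratum-wise smooth chart exhibiting $\Bun_G$ locally; more precisely $BL$ is $\ell$-cohomologically smooth for any $\ell\neq p$), and in particular it is a \emph{universally open}, and cohomologically smooth, map of small v-stacks. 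For such maps the smooth base change theorem of \cite{Sch18} applies: given the cartesian square above, there is a natural base change transformation $BL^\ast R\bar{\pi}^\circ_{HT,\ast}\to R\pi^\circ_{HT,\ast}\mathrm{red}^\ast$, and it is an isomorphism. The first step is therefore to check that the square is genuinely cartesian in the category of small v-stacks (this is exactly Proposition~\ref{cartesian}, after the extension from rank-one points to products of points), and that all four corners are small v-stacks so that the six-functor formalism of \cite{Sch18} is available.

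Second, I would verify the smoothness hypothesis precisely in the form needed. The map $\pi_{HT}^\circ$ (hence $\pi_{HT}$ restricted to the good reduction locus) factors through $\Gr_{G,\mu}$, and $BL:\Gr_{G,\mu}\to\Bun_G$ restricted to a single Harder--Narasimhan stratum $\Bun_G^b$ is, up to a cohomologically smooth twist, the map recording a modification; by \cite[IV.1.18, V.3]{FS} this map is cohomologically smooth of the expected dimension $\langle 2\rho,\mu\rangle$. One then invokes the v-local nature of cohomological smoothness to conclude that $BL$ itself is cohomologically smooth, and apply \cite[Proposition~23.16]{Sch18} (smooth base change) to the cartesian square. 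This yields the equivalence $BL^\ast R\bar\pi^\circ_{HT,\ast}\cong R\pi^\circ_{HT,\ast}\mathrm{red}^\ast$ on $D_{\mathrm{\acute et}}(-,\Lambda)$. Naturality in $\Lambda$ and compatibility with composition is automatic from the construction of the base change map.

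Third, for the statement with compact support, I would instead use proper (or rather partially proper) base change: the map $\mathrm{red}:\Shi_{K^p}^\circ\to\Igs_{K^p}^\circ$ is the base change of $BL$, hence is itself cohomologically smooth, and in particular the functors $R\mathrm{red}_!$ and $R\mathrm{red}_\ast$ are both defined; the square being cartesian, the Fargues--Scholze formalism gives the canonical exchange isomorphism $BL^\ast R\bar\pi^\circ_{HT,!}\cong R\pi^\circ_{HT,!}\,\mathrm{red}^\ast$ directly from the fact that $Rf_!$ commutes with arbitrary base change in the six-functor formalism of \cite{FS}, \cite{Sch18}. The ``in particular'' clause — that $R\pi^\circ_{HT,\ast}\Lambda$ descends to $R\bar\pi^\circ_{HT,\ast}\Lambda$ along $BL$ — is then the special case $\mathcal{F}=\Lambda$ (noting $\mathrm{red}^\ast\Lambda=\Lambda$), which says precisely that the complex on $\Gr_{G,\mu}$ is the $BL$-pullback of the complex $R\bar\pi^\circ_{HT,\ast}\Lambda$ on $\Bun_G$.

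The main obstacle I expect is not the formal deduction but the careful bookkeeping needed to legitimately invoke smooth base change: one must know that $BL:\Gr_{G,\mu}\to\Bun_G$ is cohomologically smooth (which for the \emph{minuscule} Schubert cell is essentially contained in \cite{FS}, but requires citing it in the right form and checking that the smooth-locus hypotheses of \cite{Sch18} are met on each Newton stratum and then glued), and that the square of Proposition~\ref{cartesian} remains cartesian after passing to the relevant v-sheaf of \'etale cohomology — i.e.\ that no stackiness issue obstructs the base change map being an isomorphism. A secondary point to be careful about is the coefficient hypothesis $n\Lambda = 0$ with $n$ prime to $p$, which is exactly what is needed for the \'etale six-functor formalism on diamonds (torsion prime to $p$) to be available; this should be flagged but causes no real difficulty.
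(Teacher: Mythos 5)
Your route for the ordinary pushforward rests on the claim that $BL:\Gr_{G,\mu}\rightarrow \Bun_G$ is $\ell$-cohomologically smooth, and that claim is false in general. If you pull $BL$ back along a geometric point $\Spd C\rightarrow \Bun_G$ landing in the stratum $\Bun_G^b$, the fiber is the space of type-$\mu$ modifications between $\mathscr{E}_b$ and the trivial bundle with both bundles rigidified, i.e.\ an infinite-level local Shimura variety $\mathcal{M}_{(G,b,\mu),\infty}$; since $\Spd C\rightarrow \Bun_G^b\cong[\ast/\tilde{G}_b]$ is cohomologically smooth, cohomological smoothness of $BL$ would force these spaces to be cohomologically smooth over $\Spd C$. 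By the work of Ivanov and Weinstein on the smooth locus in infinite-level Rapoport--Zink spaces, this fails: the non-smooth (``special'') locus is typically nonempty already in Lubin--Tate cases. The charts of \cite{FS} that are cohomologically smooth over $\Bun_G$ are the maps $\pi_b:\mathcal{M}_b\rightarrow\Bun_G$, which are not the restriction of $BL$ to a Schubert cell, so the citation does not give what you need, and the smooth base change argument collapses at its first step.

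The point is that no hypothesis on $BL$ is needed at all: the base change input should be placed on the other leg of the square. By \Cref{nice}, $\bar{\pi}^\circ_{HT}$ is separated, representable in spatial diamonds and of finite dim.\ trg., hence in particular qcqs with bounded cohomological dimension, and then qcqs base change (\cite[17.6]{Sch18}, with Section 21 of \textit{loc.\ cit.}\ to control the cohomological dimension so that one is not restricted to bounded-below complexes) applied to the cartesian square of \Cref{cartesian} gives $BL^\ast R\bar{\pi}^\circ_{HT,\ast}\cong R\pi^\circ_{HT,\ast}\mathrm{red}^\ast$ for the \emph{arbitrary} pullback $BL^\ast$. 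For the compact-support statement your argument is essentially the right one, but it is incomplete as stated: before invoking that lower shriek commutes with any base change you must know that $R\bar{\pi}^\circ_{HT,!}$ is defined, i.e.\ that $\bar{\pi}^\circ_{HT}$ is compactifiable, representable in locally spatial diamonds and of finite dim.\ trg.\ --- this is exactly \Cref{compactifiable} together with \Cref{nice}, after which \cite[22.8]{Sch18} gives the exchange isomorphism; the auxiliary claim that $\mathrm{red}$ is cohomologically smooth (as a base change of $BL$) is both unfounded, for the reason above, and unnecessary.
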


Section 9 deals with the minimal compactification and along the way investigates the geometry of Igusa varieties. This section is more technical, but arguably novel.

The idea (due to Scholze) of constructing a minimal compactification $\Igs^\ast_{K^p}$ is based on the fact that the fibers of the Hodge-Tate period map are affinoid. So upon imposing the condition that the boundary of the minimal compactification of the Shimura variety has codimension at least two, we can mimic a construction of a relative spectrum ``$\underline{\Spa}_{\Bun_G}(\overline{\pi}^\circ_{\text{HT},\ast}(\CO,\CO^+))$" over $\Bun_G$. That this relative spectrum, when taken fiber product with $\Gr_{G,\mu}$, recovers the minimal compactification of the Shimura variety would be a consequence of the algebraic Hartogs' extension lemma. To carry this out, we define the affinization of a small v-stack $X$ to be the v-sheaf
\[X_0: S\mapsto \mathrm{Hom}((\CO_X,\CO_X^+)(X), (\CO_S,\CO_S^+)(S)).\]

\begin{prop}[\Cref{construction}]
The functor on strictly totally disconnected perfectoid spaces over $\Bun_G$
\[\Igs^\ast: T\mapsto \mathrm{Hom}_T (T,\overline{(T\times_{\Bun_G}\Igs^\circ_{K^p})_0}^{/T}),\] 
where $^{/T}$ denotes the canonical compactification towards $T$, is a sheaf for the v-topology, and hence extends to a v-stack $\Igs_{K^p}^\ast$ with a $0$-truncated map to $\Bun_G$. It contains $\Igs_{K^p}^\circ$ as an open substack and the fiber product $\Igs_{K^p}^\ast\times_{\Bun_G}\Gr_{G,\mu}$ is isomorphic to the minimal compactification $\Shib$ of $\Shi_{K^p}$. Its structure morphism to $\Bun_G$ pulls back to the Hodge-Tate period map on $\Shib$ under this identification. 
\end{prop}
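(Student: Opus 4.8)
The plan is to construct the v-sheaf $\Igs^\ast$ as described, verify it is a v-sheaf, establish the open immersion $\Igs^\circ_{K^p}\hookrightarrow\Igs^\ast_{K^p}$, and then identify the fiber product with $\Shib$ by a point-wise (in fact stalk-wise) comparison using the affinoid-fiber property of $\pi_{HT}$ together with algebraic Hartogs. First I would recall from Section~9 (and the discussion preceding the proposition) that the fibers of $\pi_{HT}^\circ\colon\Shi^\circ_{K^p}\to\Gr_{G,\mu}$ are affinoid perfectoid; pulling back along $BL$, this says that the fibers of $\bar\pi_{HT}^\circ\colon\Igs^\circ_{K^p}\to\Bun_G$ over a geometric point (a $G$-bundle on the Fargues--Fontaine curve, i.e.\ a point of $\Bun_G^\sharp$ after choosing an untilt) are representable by affinoid perfectoid spaces, being Igusa varieties. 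Hence for a strictly totally disconnected $T$ over $\Bun_G^\sharp$, the fiber product $T\times_{\Bun_G}\Igs^\circ_{K^p}$ is an honest affinoid-looking v-sheaf to which the affinization functor $(-)_0$ and the canonical compactification $\overline{(-)}^{/T}$ apply; this produces a candidate $\Igs^\ast(T)$. The first technical step is to check this is a v-sheaf: since strictly totally disconnected perfectoid spaces form a basis of the v-topology on $\Perf_{\Bun_G^\sharp}$, and since affinization and canonical compactification towards $T$ are both defined by representable (affinoid) global-sections constructions that are compatible with the relevant base changes, descent should reduce to descent of $(\CO,\CO^+)$-algebras along v-covers, which holds. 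I would phrase this via the arc/v-descent of structure sheaves as in Scholze's diamonds paper, the point being that $\overline{(T\times_{\Bun_G}\Igs^\circ)_0}^{/T}$ is built canonically out of $\overline{\pi}^\circ_{HT,\ast}(\CO,\CO^+)$ and its value on a v-cover is computed by the equalizer, yielding a sheaf; then v-stackification is unnecessary because it is already a sheaf, and it extends to a v-stack $\Igs^\ast_{K^p}$ with a $0$-truncated structure map to $\Bun_G$ (0-truncatedness because each fiber is a v-sheaf, not a higher stack).

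Next I would produce the open immersion $\Igs^\circ_{K^p}\hookrightarrow\Igs^\ast_{K^p}$: over each strictly totally disconnected $T$, the open subspace $T\times_{\Bun_G}\Igs^\circ_{K^p}\subset\overline{(T\times_{\Bun_G}\Igs^\circ_{K^p})_0}^{/T}$ sits inside its canonical compactification as the locus where the map to $T$ is "proper/adic-separated" in the appropriate sense — concretely the complement is the boundary divisor coming from the non-properness of Igusa varieties — and openness of this locus is preserved under the v-descent since open immersions descend. The key point here, and I expect it to be \emph{the main obstacle}, is to show the fiber product $\Igs^\ast_{K^p}\times_{\Bun_G}\Gr_{G,\mu}$ is exactly $\Shib$. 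The natural strategy: by Proposition~\ref{rational} (the cartesian diagram of part $(1)$) we already know $\Igs^\circ_{K^p}\times_{\Bun_G}\Gr_{G,\mu}\cong\Shi^\circ_{K^p}$, and the compactification $\Shi^\circ_{K^p}\hookrightarrow\Shib$ is, by Scholze's construction of the minimal compactification for these PEL Shimura varieties, itself the relative spectrum of pushed-forward functions from the affinoid $\pi_{HT}$-fibers (one works over $\Gr_{G,\mu}$, uses that $\pi_{HT,\ast}(\CO,\CO^+)$ has affinoid relative spectrum because the boundary of $\Shib$ has codimension $\geq 2$, so algebraic Hartogs lets one extend sections across the boundary). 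So the required identity is a compatibility between "form the relative affinization over $\Bun_G$, then pull back to $\Gr_{G,\mu}$" and "pull back to $\Gr_{G,\mu}$, then form the relative affinization over $\Gr_{G,\mu}$." This commutation is not formal because $\Gr_{G,\mu}\to\Bun_G$ is not affine; the mechanism that saves it is precisely that the \emph{fibers} of $\pi_{HT}$ are affinoid, so that passing to a v-cover of $\Gr_{G,\mu}$ by strictly totally disconnected spaces $T'$ lying over strictly totally disconnected $T$ in $\Bun_G^\sharp$, one has on the one hand $T'\times_T (T\times_{\Bun_G}\Igs^\circ)_0$ and on the other $(T'\times_{\Bun_G}\Igs^\circ)_0=(T'\times_{\Gr_{G,\mu}}\Shi^\circ)_0$, and these agree because the global sections of the structure sheaf of a relatively affinoid space over $T$ base change along $T'\to T$.

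Finally I would assemble: over such $T'$, $\overline{(T'\times_{\Gr_{G,\mu}}\Shi^\circ_{K^p})_0}^{/T'}$ is by the PEL minimal-compactification construction (using the codimension-$\geq 2$ hypothesis and algebraic Hartogs to see that extending functions across the boundary does not change the affinization) equal to $T'\times_{\Gr_{G,\mu}}\Shib$; taking the sheafification / gluing over a v-cover of $\Gr_{G,\mu}$ gives $\Igs^\ast_{K^p}\times_{\Bun_G}\Gr_{G,\mu}\cong\Shib$, with the structure map to $\Bun_G$ restricting to $BL\circ\pi_{HT}$ on the nose because the structure maps on each $T'$-chart are compatible with the Beauville--Laszlo map by construction of $\bar\pi_{HT}$ on the rational Dieudonné module in Section~8. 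That $\Igs^\circ_{K^p}$ is an \emph{open} substack of $\Igs^\ast_{K^p}$ then follows from the corresponding statement fiberwise over $\Bun_G^\sharp$ — the Igusa variety is open in its canonical compactification — again using v-descent of open immersions. The genuinely delicate points to be careful about in the writeup are: (i) that the affinization and canonical-compactification operations are being performed \emph{relatively} over $T$ and are compatible with arbitrary base change $T'\to T$ among strictly totally disconnected perfectoid spaces — this is what makes the two relative spectra (over $\Bun_G$ versus over $\Gr_{G,\mu}$) agree; and (ii) the invocation of algebraic Hartogs, which requires the codimension-$\geq 2$ hypothesis on the boundary of $\Shib$ and is exactly where that assumption in the proposition is used. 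I do not expect to need anything about $\Gr_{G,\mu}$ beyond the affinoid-fiber property of $\pi_{HT}$ and the fact that strictly totally disconnected spaces form a basis of the v-topology.
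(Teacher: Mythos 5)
There is a genuine gap at the heart of your argument for the v-sheaf property. You treat $\Igs_T := T\times_{\Bun_G}\Igs^\circ_{K^p}$ as a ``relatively affinoid'' space over $T$ whose global sections of $(\CO,\CO^+)$ base change along $T'\to T$, and you cite the affinoid-fiber property of the Hodge--Tate map as the mechanism. But the fibers of $\pi^\circ_{HT}$ on the \emph{good reduction locus} are Igusa varieties $\mathrm{Ig}^b_C$, which are qcqs perfectoid but not affinoid in general (only the fibers of $\pi^\ast_{HT}$ on the minimal compactification, i.e.\ the partial minimal compactifications $\mathrm{Ig}^{b,\ast}_C$, are affinoid). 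Consequently $\Igs_T$ is not affinoid over $T$, and the compatibility
\[
\CO^+(\Igs_T)\,\hat\otimes_{\CO^+(T)}\,\CO^+(T')\;\longrightarrow\;\CO^+(\Igs_{T'})
\]
being an almost isomorphism is precisely what has to be proved, not something that follows from ``descent of $(\CO,\CO^+)$-algebras''. You flag this compatibility as delicate in your point (i), but you give no mechanism for it. The paper's proof reduces it modulo $\varpi$ to the comparison $\CO^+(\Igs_T)/\varpi \cong^a (\CO^+/\varpi)(\Igs_{T,\mathrm{an}})$ (Lemma \ref{modvarpi}), which is checked on stalks of the connected components of $T$ and there amounts to the almost vanishing of the $\varpi$-torsion in $H^1_{\mathrm{an}}(\mathrm{Ig}^b_C,\CO^+)$ (Corollary \ref{H^1(Ig)}); this in turn rests on the $p$-torsion-freeness of the first Witt vector cohomology of perfect schemes (Proposition \ref{torsionfree}). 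Your proposal never identifies this cohomological input, and without it the base-change step — hence the sheaf property — does not go through.

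For the identification $\Igs^\ast_{K^p}\times_{\Bun_G}\Gr_{G,\mu}\cong\Shib$ you are closer to the paper: the codimension-$\geq 2$ hypothesis and algebraic Hartogs are indeed exactly where they enter, via $(\mathrm{Ig}^b)_0\cong\mathrm{Ig}^{b,\ast}$ (Corollary \ref{0=*}). However, you invoke as a citable fact that ``Scholze's construction of the minimal compactification'' realizes $\Shib$ as the relative spectrum of $\pi_{HT,\ast}(\CO,\CO^+)$ over $\Gr_{G,\mu}$; no such statement is available, and it is essentially what must be proved. The paper instead constructs the comparison map $\overline{(\Shi^\circ_{K^p}\times_{\Gr_{G,\mu}}T)_0}^{/T}\to\Shib\times_{\Gr_{G,\mu}}T$ from partial properness of the affinoid perfectoid target, observes it is proper, and checks bijectivity on rank-one geometric points, where the fiber comparison reduces (via Theorem \ref{HTfiber} and a \v{C}ech/Witt-vector computation on the formal Igusa variety) to $(\mathrm{Ig}^b)_0\cong\mathrm{Ig}^{b,\ast}$. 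You should restructure this part along those lines rather than presupposing the relative-spectrum description of $\Shib$; the openness of $\Igs^\circ_{K^p}$ in $\Igs^\ast_{K^p}$ is then immediate by pulling back to $\Gr_{G,\mu}$, as you essentially say.
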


The main effort here is to show the pullback of $\Igs^\ast_T$ along a map of strictly totally disconnected spaces $T'\rightarrow T$ is indeed isomorphic to $\Igs^\ast_{T'}$. Write $\Igs_T$ for $T\times_{\Bun_G}{\Igs_{K^p}^\circ}$. Using perfectoid machinery and almost mathematics, this eventually boils down to a comparison between the global sections of the sheaf $\CO^+/\varpi$ for some pseudo-uniformizer $\varpi$ on $\Igs_T$ with $\CO^+(\Igs_T)/\varpi$. We first made a reduction to the case $T=\Spa(C,C^+)$ is a geometric point. Then using the comparison between the fibers of the Hodge-Tate period map with Igusa varieties due to \cite{CS19} and \cite{Mafalda}, we are reduced to show the natural map
\[\CO^+(\Ig)/\varpi\rightarrow (\CO^+/\varpi)(\Ig)\]
is an almost isomorphism. Here $\Ig$ is a perfectoid Igusa variety corresponding to some element $b$ in the Kottwitz set. This is constructed as the adic generic fiber of the deformation to $\Spf\CO_C$ of a perfect scheme $\mathrm{Ig}^b$ over the residue field of $C$. Using the short exact sequence for multiplication by $\varpi$ on the integral structure sheaf, what we need to show becomes the almost vanishing of the $\varpi$-torsion in $H^1(\Ig,\CO^+)$. This is almost isomorphic to the Witt vector cohomology of the perfect scheme $\mathrm{Ig}^b$. We found interestingly that in the generality of any perfect scheme, we have torsion-vanishing in its first Witt vector cohomology:

\begin{prop}[\Cref{torsionfree}]
Let $X$ be a perfect scheme in characteristic $p$. Denote by $W(\cdot)$ the $p$-typical Witt vectors. Then the Witt vector cohomology $H^1(X,W\CO_X)$ on the Zariski site of $X$ is $p$-torsionfree.
\end{prop}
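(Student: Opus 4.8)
The plan is to deduce the statement from a short exact sequence of Zariski sheaves on $X$,
\[
0\longrightarrow W\CO_X \xrightarrow{\ p\ } W\CO_X \longrightarrow \CO_X \longrightarrow 0,
\]
and to observe that the boundary map out of $H^0(X,\CO_X)$ vanishes for trivial reasons. To produce this sequence I would first record the relevant commutative algebra. Since $X$ is perfect, the Frobenius is an automorphism of $\CO_X$, so $\CO_X(U)$ is a perfect $\F_p$-algebra for every open $U\subseteq X$. For a perfect ring $R$ the Witt-vector Frobenius $F$ is a ring automorphism of $W(R)$, the ring $W(R)$ is $p$-torsionfree, and $\ker\bigl(W(R)\to R\bigr)$ is the image of the Verschiebung $V$; combined with the universal identity $VF=p$ this gives $p\,W(R)=V\!\bigl(F(W(R))\bigr)=V(W(R))=\ker(W(R)\to R)$. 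Hence $0\to W(R)\xrightarrow{p} W(R)\to R\to 0$ is exact, functorially in the perfect ring $R$.

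Next I would sheafify. The functor $W=\varprojlim_n W_n$ preserves limits of rings (each $W_n$ does, as its composite with the forgetful functor to sets is $(-)^n$), so the presheaf $U\mapsto W(\CO_X(U))$ is a sheaf — namely $W\CO_X$ — and in particular $H^0(X,W\CO_X)=W\bigl(H^0(X,\CO_X)\bigr)$. Applying the functorial exact sequence above over each open of $X$, where it is exact because it is exact for every perfect ring, yields the displayed short exact sequence of sheaves.

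Finally I would run the long exact cohomology sequence. The restriction map $H^0(X,W\CO_X)=W\bigl(H^0(X,\CO_X)\bigr)\to H^0(X,\CO_X)$ is the projection onto the zeroth Witt component, hence surjective, so the connecting homomorphism $H^0(X,\CO_X)\to H^1(X,W\CO_X)$ is zero. Exactness of
\[
H^0(X,\CO_X)\longrightarrow H^1(X,W\CO_X)\xrightarrow{\ p\ } H^1(X,W\CO_X)
\]
then shows that $p$ acts injectively on $H^1(X,W\CO_X)$, i.e.\ it is $p$-torsionfree.

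The one point requiring care is sheaf-theoretic rather than arithmetic: $W\CO_X$ is not quasi-coherent and is built as an inverse limit, so one must check that it genuinely is the sheaf $U\mapsto W(\CO_X(U))$, that its global sections are $W\bigl(H^0(X,\CO_X)\bigr)$, and that the three-term sequence is exact as sheaves and not merely on a basis. All of this reduces to the single fact that $W$ commutes with limits of rings, so I expect no real obstacle — consistent with the assertion that the statement is easy.
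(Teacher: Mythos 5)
Your proposal is correct and follows essentially the same route as the paper: the short exact sequence $0\to W\CO_X\xrightarrow{p}W\CO_X\to\CO_X\to 0$ (exact by perfectness), the identification $H^0(X,W\CO_X)=W(H^0(X,\CO_X))$ giving surjectivity onto $H^0(X,\CO_X)$, and the long exact sequence killing the $p$-torsion in $H^1$. The only cosmetic difference is that the paper cites the Teichm\"uller multiplicative section for the surjectivity on global sections, whereas you use the projection to the zeroth Witt component; these amount to the same observation.
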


This fulfills our purpose. The rest, namely to check that the fiber product recovers the minimal compactification of the Shimura variety, is easy and is again reduced to geometric points as test objects. Here we need to compare the global sections of the structure sheaves on $\Ig$ and its partial minimal compactification $\mathrm{Ig}_C^{b,\ast}$, which reduces to comparing those on their special fibers $\mathrm{Ig}^b$, $\mathrm{Ig}^{b,\ast}$. We thus make the assumption that the codimension of the boundary of $\mathrm{Ig}^{b,\ast}$ is at least two, so that affiness and normality of $\mathrm{Ig}^{b,\ast}$ allows us to apply algebraic Hartogs lemma. As a side result, we classify the cases we exclude. The assumption on codimension turns out to be rather mild.

\begin{prop}[\Cref{classification}]
If the boundary of the partial minimal compactification of an Igusa variety on a (simple) Shimura variety of PEL-type $AC$ has codimension one, then the Igusa variety must lie over the ordinary locus and the Shimura variety is either the modular curve, or a unitary Shimura curve attached to an imaginary quadratic extension of $\Q$ as in Example \ref{unitary}.
\end{prop}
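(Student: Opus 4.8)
The strategy is to reduce the statement to a comparison of dimensions of central leaves, and then to a short inspection of Kottwitz's classification. The first step uses the description of $\mathrm{Ig}^{b,\ast}$ set up in Section~9 together with the structure theory of minimal compactifications of PEL Shimura varieties: up to finite maps and disjoint unions, the boundary $\partial\,\mathrm{Ig}^{b,\ast}=\mathrm{Ig}^{b,\ast}\setminus\mathrm{Ig}^{b}$ is indexed by the rational boundary components of the datum --- equivalently by the classes of proper $\mathcal{O}_B$-stable isotropic subspaces $W\subset V$ --- the piece attached to $W$ being the Igusa variety $\mathrm{Ig}^{b_W}_W$ for the smaller PEL datum $(G_W,X_W)$ cut out by $W$; no extra dimensions appear because on the \emph{minimal} compactification only the Hermitian part of the associated parabolic contributes. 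This piece is nonempty precisely when $\mathbb{X}_b$ contains, as an $\mathcal{O}_B\otimes\Z_p$-stable self-dual direct summand, the \emph{ordinary} $p$-divisible group $S_W$ prescribed by $W$ (whose étale part corresponds to $W$ and whose multiplicative part to its dual), and then $\mathbb{X}_b\cong S_W\oplus\mathbb{X}_{b_W}$. Since $\dim\mathrm{Ig}^{b_W}_W=\dim C^{b_W}_W$ for the corresponding central leaf, the codimension of this piece of $\partial\,\mathrm{Ig}^{b,\ast}$ equals $\dim C^b-\dim C^{b_W}_W$, and ``$\partial\,\mathrm{Ig}^{b,\ast}$ has codimension one'' means precisely that $\min_W\bigl(\dim C^b-\dim C^{b_W}_W\bigr)=1$, the minimum over those $W$ that contribute.

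Next I would isolate the ordinary case. Suppose $b$ is not ordinary, so $\mathbb{X}_b$ has a slope $\lambda\in(0,1)$; the ordinary summand $S_W$ cannot contain it, so $\mathbb{X}_{b_W}$ retains it, and $S_W$ itself is a nonzero ordinary $p$-divisible group with a nonzero étale and a nonzero multiplicative part. Feeding this into Chai's dimension formula for central leaves, $\dim C^b-\dim C^{b_W}_W$ is the intrinsic contribution of $S_W$ plus the interaction contributions of $S_W$ with $\mathbb{X}_{b_W}$; already the slope-$0$ part of $S_W$ interacts nontrivially with the slope-$\lambda$ part of $\mathbb{X}_{b_W}$, and dually the slope-$1$ part of $S_W$ with the slope-$(1-\lambda)$ part, each contributing a positive integer, so $\dim C^b-\dim C^{b_W}_W\ge 2$ for every contributing $W$. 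Hence a codimension-one boundary component forces $b$ ordinary. In that case $C^b$ is the open dense ordinary locus of $S_K$, every $W$ contributes, and $\dim C^b-\dim C^{b_W}_W=\dim S_K-\dim S_{K_W}$ is exactly the codimension of the corresponding boundary stratum of the minimal compactification of the Shimura variety itself. We are reduced to listing the simple PEL data of type $A$ or $C$ with $G_{\Q_p}$ unramified whose minimal compactification has a codimension-one boundary stratum.

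This is a finite check against the classification. For type $C$, $B$ is a central simple algebra over a totally real field $F$ of degree $d$ with an involution of the first kind: the anisotropic (quaternionic) forms give compact Shimura varieties with no boundary, while a form of $\mathrm{Res}_{F/\Q}\mathrm{GSp}_{2g}$ has its rank-$r$ cusp of codimension $d\bigl(\tbinom{g+1}{2}-\tbinom{g-r+1}{2}\bigr)$, minimised over $r\ge 1$ by $dg$; this equals $1$ iff $d=g=1$, the modular curve. For type $A$, $B$ is a central simple algebra over a CM field $E/F$ with a positive involution of the second kind: one-dimensional Hermitian $B$-modules are anisotropic when $B$ is a division algebra, so those Shimura varieties are compact; otherwise $\dim S_K=\sum_v a_vb_v$ over the real places $v$ of $F$ with signatures $(a_v,b_v)$, $a_v+b_v=n$, and the rank-$k$ cusp --- which exists only when $1\le a_v\le n-1$ for every $v$ and $k\le\min_v\min(a_v,b_v)$ --- has codimension $d\,k(n-k)$, minimised over $k\ge 1$ by $d(n-1)$; this equals $1$ iff $d=1$ and $n=2$, and then the existence of an isotropic line forces the Hermitian form to have signature $(1,1)$ and to be isotropic everywhere, giving the unitary Shimura curve attached to an imaginary quadratic extension of $\Q$ of Example~\ref{unitary}. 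Together with the previous paragraph this is the asserted classification. The step I expect to be the real obstacle is the bookkeeping behind the first paragraph: determining exactly which $W$ contribute to $\partial\,\mathrm{Ig}^{b,\ast}$ and the precise $b_W$ --- i.e.\ matching the semi-abelian degeneration data at a cusp with the $\mathcal{O}_B\otimes\Z_p$-stable self-dual ordinary direct summands of $\mathbb{X}_b$ --- and then running Chai's formula in the presence of the $\mathcal{O}_B\otimes\Z_p$-action (at type $A$ primes the slopes split according to the embeddings of $E$; in particular at an inert prime of an unbalanced unitary group the $\mu$-ordinary locus already has $p$-rank $0$, so $\partial\,\mathrm{Ig}^{b,\ast}=\varnothing$ for all $b$ and there is nothing to prove). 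It is exactly this compatibility that supplies the dichotomy ``either $b$ is ordinary, or the boundary is empty, or it has codimension $\ge 2$'' used above.
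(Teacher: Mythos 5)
Your overall route is the same as the paper's (reduce via the Lan--Stroh description of the boundary of $\mathscr{C}^{\mathbb{X}_b,\ast}$ to comparing dimensions of central leaves, rule out non-ordinary $b$ by a dimension count, then classify the one-dimensional non-compact simple data), but the dimension count at the heart of the second paragraph is wrong as stated. In the polarized, $\CO_B$-linear setting the two interaction spaces you invoke --- $\Hom(\text{\'etale part of }S_W,\ \text{slope-}\lambda\text{ part})$ and $\Hom(\text{slope-}(1-\lambda)\text{ part},\ \text{multiplicative part of }S_W)$ --- are interchanged by the polarization, so inside $\underline{\Aut}_G(\tilde{\mathbb{X}}_b)$ (equivalently in the group-theoretic leaf formula of \ref{dimformula}, or in Chai's polarized formula) they contribute \emph{jointly}, and not by integers; in Chai's formula the \'etale-with-middle pairs in fact contribute nothing, since $\lambda_i+\lambda_j-1\le 0$ there. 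A concrete test: for the $p$-rank one stratum of $A_2$, $\mathbb{X}_b$ has slopes $(0,\tfrac12,\tfrac12,1)$, $\dim \mathscr{C}^{\mathbb{X}_b}=2$, and the boundary leaf is the supersingular point of $A_1$, so the codimension is exactly $2$; your two terms account for only $1$ of it, the other $1$ coming from the symmetric $\Hom(\text{\'etale},\text{multiplicative})$ direction \emph{inside} $S_W$, which your count omits entirely. So ``each contributing a positive integer, hence $\ge 2$'' does not hold, and without the omitted term your bound degenerates to $\ge 1$, which is not enough to force ordinariness. The correct bookkeeping is exactly the paper's proof of \ref{codim}: filter $\underline{\Aut}_G(\mathbb{X}_b)$ by $U_2\subset U_1$ with $U_2\subset \mathcal{H}_{Y\otimes(\Q_p/\Z_p),\,\Hom(X,\mu_{p^\infty})}$ (positive-dimensional whenever the cusp datum is nontrivial) and $U_1/U_2$ inside the middle-interaction Homs (positive-dimensional whenever $\mathbb{X}_Z\neq 0$, checked case by case against the simple factors of $(B,\ast)_{\Q_p}$); these two contributions together give codimension $\ge 2$ in the non-ordinary case.

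Two smaller points. In the type-$A$ finite check you dismiss only the case of one-dimensional Hermitian modules over a division algebra; for $B=M_m(D)$ with $D$ of degree $\ge 2$ over the CM field and $m\ge 2$ there can be isotropic subspaces and cusps, and your codimension formula $d\,k(n-k)$ does not literally apply --- the paper avoids this by the constraint $n\cdot d^2=2$ coming from the type-$A_1$ root datum, which forces $d=1$ directly. Finally, the closing aside is inaccurate: for an unbalanced unitary group at an inert prime (e.g.\ signature $(2,1)$) the $\mu$-ordinary $p$-divisible group has positive $p$-rank and the Igusa boundary need not be empty; what is true, and all that is needed, is that the codimension there is again $\ge 2$ by the same filtration argument.
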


The short Section 10 deals with the Hecke action. 

In the last Section 11, we establish an integral model of the cartesian diagram for $\Igs_{K^p}^\circ$ for $\G$ being a reductive model of $G_{\Q_p}$. Here we first define the moduli stack $\Sht$ of $\G$-shtukas and study its geometry. The main result is 

\begin{theorem}[\Cref{qsness},\ref{generic fiber}, \Cref{ProperDiag}, \Cref{qcqsSchubert}]
The structure map $\Sht\rightarrow\Spd \Z_p$ is quasi-separated, with proper diagonal, and for any affinoid perfectoid Tate ring $(R, R^+)$ and any commutative diagram with solid arrows
    \[
    \begin{tikzcd}
    \Spa(R,R^\circ)\ar[r,"f"]\ar[d]
    & \Sht \ar[d]\\
    \Spa(R,R^+)\ar[r]\ar[ru,dashed]
    & \Spd \Z_p,
    \end{tikzcd}
    \]
there is a unique (up to isomorphism) dotted arrow making the whole diagram commute up to a natural automorphism of $f$. For a dominant cocharacter $\lambda$ of $G_{\overline{\Q}_p}$, the bounded substack $\mathrm{Sht}_{\G,\lambda}$ is quasi-compact. Moreover, the generic fiber $\mathrm{Sht}_{\G,\Q_p}$ identifies with the quotient $[\Gr_G/\underline{K}_p]$ of the affine Grassmannian.
\end{theorem}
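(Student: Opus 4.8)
The plan is to reduce all four assertions to statements about $\G$-torsors on the analytic space $\mathcal{Y}=\mathcal{Y}_{[0,\infty)}(S)$ attached to a test perfectoid $S\in\Perf_{\F_p}$, together with their Frobenius structure and their modification along the leg. As $\G$ is reductive, the Tannakian formalism presents a $\G$-shtuka as an exact $\otimes$-functor $\Rep_{\Z_p}\G\to\{\text{shtukas of vector bundles on }\mathcal{Y}\}$, so it suffices to prove each point for $\GL_n$-shtukas compatibly with tensor constructions and then transport. The conceptual heart is the identification of the generic fibre, which is the relative (``in families'') form of Fargues's classification of Breuil--Kisin--Fargues modules over a geometric point; I would establish this first, then deduce the properties of the diagonal and the valuative criterion, and finally the quasi-compactness of $\mathrm{Sht}_{\G,\lambda}$.

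For the generic fibre, fix $S\in\Perf_{\F_p}$ with an untilt $S^\sharp$ over $\Q_p$, so the leg sits at finite positive radius in $\mathcal{Y}$, away from both ends. From a $\G$-shtuka $(\mathcal{P},\phi)$ I would extract its \'etale realization $\mathbb{L}$, a pro-\'etale $\underline{\G(\Z_p)}$-torsor on $S$ --- this is where the hypothesis that the leg lies over $\Q_p$ is used, ensuring that away from the leg the Frobenius module is \'etale with an integral $\Z_p$-structure --- and then the modification of $\mathcal{P}$ along the leg, which after base change to $\Spec\,\Bdr^+(S^\sharp)$ and the trivialization furnished by $\mathbb{L}$ becomes a $\G$-torsor on $\Spec\,\Bdr^+(S^\sharp)$ trivialized over $\Spec\,\Bdr(S^\sharp)$, i.e. a point of the $\mathbb{L}$-twist of $\Gr_G$. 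This produces a functor $(\mathcal{P},\phi)\mapsto(\mathbb{L},\text{modification})$ into $[\Gr_G/\underline{K}_p]$, the action of $\underline{K}_p=\underline{\G(\Z_p)}$ on $\Gr_G$ being through $\Z_p\hookrightarrow\Bdr^+(S^\sharp)$ by left multiplication (which fixes the base point). Conversely, from a $\underline{K}_p$-torsor $\mathbb{L}$ and a $\underline{K}_p$-equivariant map to $\Gr_G$ one reconstructs $(\mathcal{P},\phi)$ by Beauville--Laszlo gluing: the trivial $\phi$-module attached to $\mathbb{L}$ on the complement of the leg, modified along the leg by the given $\Gr_G$-point. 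I would then check that these two functors are mutually inverse equivalences of groupoids, functorial in $S$ and compatible with v-descent --- using that $K_p$ is profinite, so v-locally trivial $\underline{K}_p$-torsors coincide with pro-finite-\'etale $\G(\Z_p)$-torsors --- obtaining $\mathrm{Sht}_{\G,\Q_p}\cong[\Gr_G/\underline{K}_p]$, and, restricting to modifications of relative position $\le\mu$, the Schubert-cell version $[\Gr_{G,\mu}/\underline{K}_p]$ over $\Spd E$.

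For the diagonal: the diagonal of $\mathrm{Sht}_\G$ over $\Spd\Z_p$ is computed by the v-sheaf of isomorphisms of two $\G$-shtukas over a base $S$; for $\GL_n$ (hence in general) such an isomorphism is a $\phi$-equivariant global section of $\underline{\mathrm{Isom}}(\mathcal{E}_1,\mathcal{E}_2)$, and since the legs have bounded relative position the $\phi$-equivariant sections of $\underline{\mathrm{Hom}}(\mathcal{E}_1,\mathcal{E}_2)$ over $\mathcal{Y}$ extend across the legs and are Frobenius-rigid, forming a ``$\underline{\Z_p}$-local-system-like'' sheaf; its subsheaf of isomorphisms is therefore a pro-(finite \'etale), hence proper, v-sheaf over $S$, which makes the diagonal of $\mathrm{Sht}_\G$ proper and $\mathrm{Sht}_\G$ quasi-separated. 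For quasi-compactness of $\mathrm{Sht}_{\G,\lambda}$: bounding the leg by $\lambda$ forces the underlying bundle on the Fargues--Fontaine curve to have Newton point in the finite set $B(G,\lambda)$, so $\mathrm{Sht}_{\G,\lambda}$ maps into the quasi-compact union of the finitely many Harder--Narasimhan strata $\Bun_G^b$ with $b\in B(G,\lambda)$; over each such stratum the fibre is a Rapoport--Zink-type space with one leg bounded by $\lambda$, covered by a quasi-compact Schubert cell of the $\Bdr^+$-Grassmannian over $\mathcal{Y}$ twisted by $b$, while the automorphism group $\tilde{G}_b$ of the corresponding bundle is quasi-compact, so a quasi-compact surjection onto $\mathrm{Sht}_{\G,\lambda}$ results.

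The remaining point, the valuative criterion, is the step I expect to be the main obstacle. Given $R$ perfectoid Tate with open bounded integrally closed $R^+\subseteq R^\circ$, filling the diagram means extending a $\G$-shtuka over $\Spa(R,R^\circ)$ to one over $\Spa(R,R^+)$, equivalently extending the $\G$-torsor with its $\phi$-structure and leg from $\mathcal{Y}_{[0,\infty)}(\Spa(R,R^\circ))$ to $\mathcal{Y}_{[0,\infty)}(\Spa(R,R^+))$, which differ only by a nowhere-dense closed subset. The $\G$-torsor should extend by a Hartogs-type theorem (Hebbarkeitssatz) for vector bundles on such $\mathcal{Y}$-spaces, equivalently the analogous extension property of $\Bun_\G$; the Frobenius isomorphism and the Cartier-divisor leg should extend by density, with boundedness by $\mu$ preserved as a closed condition; and uniqueness up to automorphism of the given map should follow from the same density, a morphism of shtukas being determined on a dense open. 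The delicate part is making this simultaneous --- the extra boundary must be removable for the $\G$-torsor \emph{and} for the $\phi$-structure at once, with no new automorphisms created --- which forces one to control the geometry of $\mathcal{Y}$ near its ends and its interaction with Frobenius, in contrast to the purely formal Tannakian manipulations that govern the other three parts.
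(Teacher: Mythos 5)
Your identification of the generic fibre is essentially the paper's own argument (étale realization as a pro-étale $\underline{K}_p$-torsor away from the leg, plus the modification at the leg, inverted by Beauville--Laszlo gluing; the paper only adds the bookkeeping of modifying at all Frobenius translates of the leg so as to land in $\mathcal{Y}_{[0,\infty)}$). The other three assertions, however, have genuine gaps. All of the quasi-compactness statements in the paper (quasi-compactness of the diagonal, hence quasi-separatedness, and quasi-compactness of $\mathrm{Sht}_{\G,\lambda}$) rest on one key input that your proposal neither uses nor replaces: the Gleason--Ivanov theorem that shtukas over a \emph{product of rank-one geometric points} extend to vector bundles with Frobenius on all of $\mathcal{Y}_{[0,\infty]}(S)$, combined with Ansch\"utz's triviality of $G$-torsors over $W(R^+)[1/p]$; this is what lets one glue pointwise data into a shtuka over a product of points and then invoke the qcqs criterion (Proposition \ref{qcqs}). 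Your substitute for the diagonal — that the Isom-sheaf of two shtukas is ``Frobenius-rigid, hence a $\underline{\Z}_p$-local-system-like, pro-finite-\'etale, hence proper'' sheaf — is exactly the relative statement that needs proof; it is plausible fibrewise but nothing in your sketch controls it over an arbitrary affinoid perfectoid base, and that relative control is the hard part. For $\mathrm{Sht}_{\G,\lambda}$ your argument moreover contains a false step: $\tilde{G}_b$ is \emph{not} quasi-compact (already for $b$ basic it is $\underline{G_b(\Q_p)}$, a non-compact locally profinite group; what is quasi-compact is the stack $[\ast/\tilde{G}_b]$), and even granting quasi-compact strata and quasi-compact fibres, deducing quasi-compactness of the total stack requires knowing the map to $\Bun_G$ is quasi-compact, which is essentially the statement to be proved — so the assembly is circular as written.

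Finally, the valuative criterion, which you flag as the main obstacle and attack via a Hartogs-type removable-singularity argument that you do not carry out, in fact has an essentially one-line proof that makes the extension problem vacuous: the category of vector bundles on $\mathcal{Y}_{[0,\infty)}(\Spa(R,R^+))$ is independent of the choice of $R^+\subset R$, because this space is covered by the rational subsets $\{|\pi|\le |[\varpi^{1/p^n}]|\}$ whose coordinate rings admit a presentation depending only on $R$ (this is \cite[2.1.1]{PR}, recalled in Remark \ref{Y} of the paper). Tannakian formalism then transports this to $\G$-torsors, and the Frobenius, leg and boundedness come along for free; uniqueness up to automorphism of $f$ is part of the same equivalence of categories. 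So the correct mechanism is not an extension across a nowhere-dense boundary but the literal coincidence of the relevant categories of bundles; your proposed route would at minimum need a proof of the Hebbarkeitssatz you invoke, which the paper never requires.
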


The proof relies on a recent result of Gleason-Ivanov-Zillinger \cite{GIZ} on extending shtukas to Breuil-Kisin-Fargues modules over products of rank one geometric points, as well as a result of Ansch\"utz about triviality of torsors on the spectrum of the ring $W(R^+)[1/p]$, where $R^+$ is the integral subring of such a test object.

We next introduce the crystalline period map on the formal integral model of the Shimura variety. The existence of the map is a consequence of the existence of a universal $\G$-shtuka on it. Pappas-Rapoport \cite{PR} showed this for Hodge-type Shimura varieties and we rephrased their construction in our situation. The desired cartesian diagram is easy to establish in this case, using qcqsness of the map $\overline{\pi}^\circ_{HT}$ and the quasi-separatedness of $\Sht$.

Finally we discuss the Newton stratification on our cartesian diagram. This recovers Mantovan's product formula, and in the special case of a basic stratum, the $p$-adic uniformization of Rapoport--Zink.

\subsection{Example of the modular curve}

Let us discuss the example of the modular curve in detail to illustrate the content of the conjecture. This also clarifies our conventions on Dieudonn\'e theory. The general case shares great similarity.

Consider the Shimura datum $G=\GL_2$, $X=\mathfrak{h}^+\coprod \mathfrak{h}^-$ the union of the complex upper and lower half plane, identified with the $\GL_2(\R)$-conjugacy class of the map
    \[h: \C\rightarrow \GL_2(\R): 
    a+bi\mapsto 
    \begin{pmatrix}
    a &b\\
    -b &a
    \end{pmatrix}.\] 
We also fix the diagonal torus $T$ and standard (upper triangular) Borel $B$ of $\GL_{2,\bar{\Q}}$. The root datum is
\[(\Z^2, \{\pm \alpha\}, \Z^2, \{\pm \alpha^\vee\}),\]
where the character lattice is trivialized by a basis $e_1$, $e_2$ with dual basis $e^\vee_1$, $e^\vee_2$ and $\alpha=e_1-e_2$, $\alpha^\vee=e_1^\vee-e_2^\vee$.
Then the minuscule cocharacter $\mu^{-1}$ can be chosen to be $(1,0)$ and a dominant cocharacter representing its inverse is $\mu=(0,-1)$. 
    
Fix the level subgroup $K_p=\GL_2(\Z_p)$ at $p$ and a prime-to-$p$ principal level $K^p=K(N)$, $p \nmid N\geq 3$. We let $K:=K_pK^p\subset \GL_2(\Af)$. The Shimura variety is the modular curve at level $K$. It is defined over $\Q$ and parametrizes isomorphism classes of elliptic curves with trivialized $N$-torsion points. We consider its base change to $\Q_p$ and take the attached diamond $\Shi_K$. By trivializing the Tate module of the universal elliptic curve, we obtain $\Shi_{K^p}$, the modular curve with infinite level at $p$. Fix an isomorphism $\C\cong \overline{\Q}_p$. In this case the Schubert cell $\Gr_{\GL_2,\mu}$ for $\GL_2/\Q_p$ is the diamond over $\Q_p$ attached to the flag variety for the opposite of the standard Borel, which is a projective line $\mathbb{P}^1$.

Here the Hodge-Tate period map measures the relative position of the Hodge-Tate filtration on the Tate-module of the universal elliptic curve $\mathcal{E}$, which is of the form
    \[{Lie}\mathcal{E}\hookrightarrow T_p\mathcal{E}\otimes_{\underline{\Z}_p} \CO_{\Shi_{K^p}}\cong \CO_{\Shi_{K^p}}^{\oplus 2}.\]
Here we use that Tate module is tautologically trivialized on $\Shi_{K^p}$. Hence this defines a map 
$\Shi_{K^p}\rightarrow\mathbb{P}^{1,\diamond}.$

The stack $\Bun_G=\Bun_2$ classifies rank two vector bundles on the Fargues-Fontaine curve. To define the Beauville-Laszlo map
\[BL:\mathbb{P}^{1,\diamond}\rightarrow \Bun_2,\] 
consider a test object $S\in \Perf$ with an untilt $S^\sharp$ over $\Spa \Q_p$, a map $x: S\rightarrow \mathbb{P}^{1,\diamond}/\Spd \Q_p$ gives a injection $\mathcal{L}\hookrightarrow\CO_{S^\sharp}^{\oplus 2}$ for some line bundle $\mathcal{L}$. The untilt $S^\sharp$ defines a closed Cartier divisor on the relative Fargues-Fontaine curve over $S$ and we denote the closed immersion by $i: S^\sharp\hookrightarrow X_S$. Then we define the image $BL(x)$ to be the limit $\mathcal{K}$ of the diagram
\[i_\ast\mathcal{L}\hookrightarrow i_\ast\CO_{S^\sharp}^{\oplus 2}\twoheadleftarrow \CO_{X_S}(1)^{\oplus 2}.
\]

We construct the stack $\Igs_{K^p}^\circ$ on $\Perf$ by sheafifying the presheaf of groupoids of isogeny classes of elliptic curves with $N$-level structures:
\[\Spa(R,R^+)\mapsto \{E/(R^+/\varpi), E[N]\cong (\Z/N)^2\}/\sim.\]
This maps to $\Bun_2$ by taking the (rational) crystalline Dieudonn\'e module of $E$, which is a rank two projective $B_\mathrm{cris}^+(R^+/\varpi)$-module with Frobenius and hence a rank two vector bundle on the Fargues-Fontaine curve $X_S$. We have a cartesian diagram
 \[
    \begin{tikzcd}
    \Shi^\circ_{K^p}\ar[r,"\pi_{HT}^\circ"] \ar[d] & \mathbb{P}^{1,\diamond}\ar[d,"BL"]\\
    \Igs_{K^p}^\circ\ar[r, "\bar{\pi}^\circ_{HT}"] & \Bun_2.
    \end{tikzcd}
    \]
We use a table below to describe its Newton stratification. We first introduce some notation: Let $k=\overline{\F}_p$. The Kottwitz set $B(\GL_2)$ for $\GL_2/\Q_p$ is in bijection to the dominant cocharacters and can be described by a pair of half integers (slopes) with nonincreasing order. The subset $B(G,\mu)$ of $\mu$-admissible elements consists of two points $[b_0], [b_1]$ with $[b_0]\leq [b_1]$ under the partial order, whose images under the Newton map are respectively $(-\frac{1}{2},-\frac{1}{2})$ and $(0,-1)$. 

Let $\mathbb{X}/k$ be the unique (up to isomorphisms) formal $p$-divisible group of height two and dimension one. Let $D_p$ be the nonsplit quaternion algebra over $\Q_p$ and $D$ be the endomorphism ring of a supersingular elliptic curve over $k$, tensored with $\Q$. This is a division $\Q$-algebra with $p$-adic completion $D_p$. Consider the special fiber $S_{K,k}$ of the integral model of the modular curve over $\Z_p$. We write $S_{K,k}^b$ for the usual Newton strata on $S_{K,k}$ and write $\Shi_{K^p}^{\circ,b}$ for the Newton strata on the good reduction locus on the infinite $p$-level diamond Shimura variety, defined by pullback from $\Bun_G$. We caution the reader that latter is not the same as the strata on $\Shi_{K^p}^{\circ}$ obtained by pullback from $S_{K,k}^b$ along the specialization map. They agree on rank one points but not in general. Thus our notation deviates from \cite[Section 3]{CS17}.

For any $b\in B(\GL_2,\mu)$, $\mathrm{Ig}^b$ denotes the corresponding Igusa variety, a perfect $k$-scheme. We denote by $\overline{\mathrm{Ig}^{b,\diamond}}$ the canonical compactification of its attached v-sheaf towards $\Spd k$. Let $\mathcal{BC}(\CO(1))$ be the Banach-Colmez space as in \cite[Chapter II]{FS} that sends a perfectoid space $S$ to the global sections of $\CO(1)$ on the relative Fargues-Fontaine curve $X_S$. 

In the last row of the table, we use $\overline{\Igs_{K^p}^{\circ,b}}$ to denote the canonical compactification of the stratum on $\Igs_{K^p}^\circ$ labeled by $b$ towards $\Bun_G$, cf. Proposition~\ref{stratum}.
\smallskip

\begin{center}
\begin{tblr}{
  colspec = {|ccc|},
  stretch = 0,
  rowsep = 7pt,
  hlines = {0.3pt},}
    & $[b_0]$ & $[b_1]$  \\
    slopes   & $-\frac{1}{2},-\frac{1}{2}$ & $0,-1$  \\
    isocrystal & $(\Breve{\Q}_p^2, F=\begin{pmatrix}
        0 & 1\\
        p^{-1} & 0
    \end{pmatrix}$) & $(\Breve{\Q}_p^2, F=\mathrm{diag}\{1,p^{-1}\})$  \\
    {isogeny class of\\ $p$-divisible groups} & $\mathbb{X}$& $\Q_p/\Z_p\oplus \mu_{p^\infty}$\\
    {vector bundle\\$\mathscr{E}_b$} & $\CO(\frac{1}{2})$& $\CO\oplus\CO(1)$\\
    $\widetilde{G}_b= \underline{\Aut}(\mathscr{E}_b)$ & $\underline{D}_p^\times$& $\begin{pmatrix}
        \underline{\Q}_p^\times & \mathcal{BC}(\CO(1))\\
        0 & \underline{\Q}_p^\times
    \end{pmatrix}$\\
    $\mathbb{P}^{1,b}$ & $\Omega:=\mathbb{P}^1\backslash \mathbb{P}^1(\Q_p)$& $\mathbb{P}^1(\Q_p)$\\
    $S_{K,k}^b$ & supersingular locus & ordinary locus\\
    $\Shi^{\circ,b}_{K^p}$ &{the residue discs of the\\supersingular points (open)}&  {complement of $\Shi^{\circ,b_0}_{K^p}$\\ (closed)}\\
    $\mathrm{Ig}^b$ & {the profinite set \\$D^\times\backslash D_p^\times\times \GL_2(\Af^p)/K^p$\\considered as a $k$-scheme} & {a $\Z_p^\times \times \Z_p^\times$-torsor over the\\ perfection of the ordinary locus} \\
    $\overline{\Igs_{K^p}^{\circ,b}}$ & $[\underline{D}^\times\backslash \underline{\GL_2
    (\mathbb{A}_f^p)}/\underline{K}^p]$  & {$[\overline{\mathrm{Ig}^{b,\diamond}}/\widetilde{G}_b]$}\\
\end{tblr}    
\end{center}

    \newpage
    \section*{Notations and conventions}

{\setlength{\parskip}{1.5 pt}
\setlength{\parindent}{0 pt}
\begin{itemize}[leftmargin=0.7cm]

    \item $\Af$: the ring of finite adeles of $\Q$
    \item $\Breve{\Q}_p$, $\Breve{\Z}_p$: the completed maximal unramified extension of $\Q_p$ and its ring of integers
    \item A non-archimedean field is a nondiscrete topological field $K$ whose topology is induced by a nonarchimedean norm $|\cdot|: K\rightarrow \R_{\geq 0}$. We denote by $\CO_K$ its ring of integers, i.e. where the norm is no more than one.
    \item For a complete non-archimedean field $K$, we write $\Spa K$ for the adic space $\Spa(K,\CO_K)$ and $\Spa \CO_K$ for $\Spa(\CO_K,\CO_K)$.
    \item We use covariant Dieudonn\'e theory and follow the convention of \cite{CS17} to divide the Frobenius in the usual convention by $p$. So the covariant Dieudonn\'e module of $\Q_p/\Z_p$ is $(\Z_p, F=1)$.
    \item For a geometric object $X$ (e.g. scheme, formal scheme, diamond etc.), we use $|X|$ to mean its underlying topological space.
    \item Underlined objects denote sheaves, e.g. $\underline{\Hom}$, $\underline{\Aut}$. For a topological space $X$, $\underline{X}$ means we view it as a sheaf on some site that sends a test object $S$ to continuous maps from $|S|$ to $X$.
    \item Our definition of Breuil-Kisin-Fargues module follows \cite[Definition 2.2.4]{PR}, which differs from \cite[Definition 11.4.3]{Berkeley}.
    \item Our sign convention for the Cartan decomposition on the $\Bdr^+$-affine Grassmannians follows \cite[Section 19.2]{SW}, and hence differ from \cite[p.30, cf. Footnote 14]{CS17}. For the various sign conventions in literature and justification for our choices, see Remark~\ref{rem: SignHT} and Remark~\ref{rem: SignShtuka}.
    
\end{itemize}}

    {\setlength{\parskip}{1 pt}
\setlength{\parindent}{10 pt}
\section*{Acknowledgements}
This paper is modified from my PhD thesis. I thank my supervisor Peter Scholze heartily for his constant support. In particular, the idea of using a ``relative spectrum" for the minimal compactification is due to him. Special thanks go to Matteo Tamiozzo and Dongryul Kim for very helpful comments on drafts of this paper. I also wish to thank Ian Gleason, Alexander Ivanov, Johannes Ansch\"utz, Andreas Mihatsch, Linus Hamann, Ben Heuer, Si-Ying Lee, David Schwein, Ana Caraiani, Zhiyu Zhang for helpful conversations, and the referee for a careful reading of this paper and detailed comments. This work is completed during my PhD at the Hausdorff Center for Mathematics and the Max-Planck Institute for Mathematics, during which time I was partly supported by DFG via the Leibniz prize of Peter Scholze. I thank both institutes for providing a nice working environment.

\newpage 
\section{Diamonds and v-stacks}
     
The objects in consideration will be stacks on the v-site of perfectoid spaces in characteristic $p$. We review basics of v-sheaf theory, setting up some necessary vocabulary, for details, see \cite[Sections 6-9,17]{Berkeley} and \cite[Sections 3,5-9,18]{Sch18}.

An affinoid (perfectoid) Tate ring is a pair of the form $(R,R^+)$, where $R$ is a (perfectoid) Tate ring, and $R^+\subset R^\circ$ is an open bounded and integrally closed subring. A morphism $(R,R^+)\rightarrow (R',R'^{+})$ between affinoid Tate rings is a map of topological rings $R\rightarrow R'$, carrying $R^+$ into $R'^{+}$. The tilt of an affinoid perfectoid Tate ring $(R,R^+)$ is the affinoid perfectoid Tate ring $(R^\flat,R^{\flat+})$. 

By considering Huber's adic spaces attached to affinoid perfectoid Tate rings, one has the notion of affinoid perfectoid spaces and their tilts. This construction is compatible with taking rational open subsets and hence globalizes. For a perfectoid space $X$, we denote by $X^\flat$ its tilt, which is a perfectoid space in characteristic $p$. There is a homeomorphism $|X|\cong |X^\flat|$, compatibly with passing to rational opens.

\begin{example}[{Geometric points}]
    Let $C$ be a complete algebraically closed non-archimedean field of characteristic zero or $p$ and $C^+\subset C$ an open and bounded valuation subring. Then $\Spa(C,C^+)$ is a perfectoid space. We call a perfectoid space of such form a geometric point. If $C^+=\CO_C$ is the ring of integers of $C$, we say that it is of rank one.
\end{example}

\begin{defn}\label{untilt}
    Let $X$ be a perfectoid space in characteristic $p$. An \textit{untilt} of $X$ is a pair $(X^\sharp, \iota)$, consisting of a perfectoid space $X^\sharp$ and an isomorphism $\iota: X^{\sharp\flat}\cong X$. We sometimes drop $\iota$ and simply write $X^\sharp$ for an untilt.
\end{defn}

For a morphism of perfectoid spaces, there are notions of the morphism being quasi-compact, quasi-separated, injection, (closed or open) immersion, separated, see \cite[Section 5]{Sch18}.

\subsection{Pro-\'etale and v-topology}
We refer to \cite[Definition 6.2, 7.8]{Sch18} for the notion of \'etale and pro-\'etale morphisms between perfectoid spaces.

\begin{defn}
Let $\Perf$ be the category of perfectoid spaces in characteristic $p$. 
\begin{enumerate}[label=(\roman*), leftmargin=0.5cm]
\item The pro-\'etale topology on $\Perf$ is the Grothendieck topology for which a collection of morphisms $\{f_i:Y_i\rightarrow X\}_{i\in I}$ is a covering, if all $f_i$ are pro-\'etale, and for each quasicompact open subset $U\subset X$, there exists a finite subset $J\subset I$ and quasicompact open subsets $V_i\subset Y_i, i\in J$, such that $U=\bigcup_{i\in J}f_i(V_i)$. The category $\Perf$, endowed with this topology, is called the big pro-\'etale site.
\item The v-topology on $\Perf$ is the Grothendieck topology where a collection of morphisms $\{f_i:Y_i\rightarrow X\}_{i\in I}$ is a covering, if for each quasicompact open subset $U\subset X$, there exists a finite subset $J\subset I$ and quasicompact open subsets $V_i\subset Y_i, i\in J$, such that $U=\bigcup_{i\in J}f_i(V_i)$. The category $\Perf$, endowed with this topology, is called the $v$-site.\footnote{To avoid using universe, one first takes cutoff cardinals and then takes a limit over all possible cutoffs to define the category of small sheaves on this site, as discussed in \cite[Section 4,8]{Sch18}. We ignore this issue here.}  
\end{enumerate}
\end{defn}

It is proven in \cite[Section 8.6, 8.7]{Sch18} that the big pro-\'etale site and the v-site, are subcanonical, i.e. the functor $\mathrm{Hom}(-,X)$ for $X\in \Perf$ is a sheaf on the big pro-\'etale and the v-site of $\mathrm{Perf}$. We will sometimes not distinguish a perfectoid space and the v-sheaf represented by it, and this is justified here.

\begin{example}[{Product of points, see \cite[Definition 1.2]{Ian}}]\label{Example: productofpoints}
Let $S=\Spa(A,A^+)$ be an affinoid perfectoid space in $\Perf$ with a pseudo-uniformizer $\varpi\in A^+$. For any point $x:(A,A^+)\rightarrow (K,K^+)$, let $\varpi_x$ be the image of $\varpi$ in $k(x)$, $k(x)^+$ the $\varpi_x$-adic completion of $K^+$ and $k(x):=k(x)^+[\frac{1}{\varpi_x}]$ the completed residue field. Define $R^+:=\prod_{x\in |S|}k(x)^+$, with a pseudo-uniformizer $\varpi':=(\varpi_x)$, and $R:=R^+[\frac{1}{\varpi'}]$. Then $\tilde{S}:=\Spa(R,R^+)$ is perfectoid and $\tilde{S} \rightarrow S$ is a v-cover. 

Generally we call an affinoid perfectoid space a product of (geometric) points if it is of the shape $\Spa(R,R^+)$, where $R^+=\prod_iK_i^+$, $R=R^+[\frac{1}{\varpi}]$, and each $(K_i,K_i^+)$ is an (algebraically closed) affinoid perfectoid field, $\varpi_i\in K_i$ a pseudo-uniformizer. Each $s_i:=\Spa(K_i, K_i^+)$ is called a principal component of $S$.
\end{example}

A product of points is an example of a \textit{totally disconnected} perfectoid space. By using geometric points in the above construction, one gets a v-cover by a \textit{strictly totally disconnected} perfectoid space. These spaces are important as they provide a basis of v-topology and are structurally simple.

\begin{defn}
A perfectoid space $X$ is called (strictly) totally disconnected if it is quasi-compact quasi-separated and every (\'etale) open cover of it splits.
\end{defn}

\begin{prop}[{\cite[Proposition 1.15]{Sch18}}]
A perfectoid space $X$ is (strictly) totally disconnected if and only if it is affinoid, and every connected component of $X$ is of the form $\Spa(K,K^+)$ for $K$ being a perfectoid field (resp. an algebraically closed perfectoid field) with an open and bounded valuation subring $K^+$.
\end{prop}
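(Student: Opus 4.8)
The plan is to prove the two implications, organizing everything around the structure of $\pi_0(X)$ and the connected components of an affinoid perfectoid space. Recall that for $X=\Spa(R,R^+)$ affinoid perfectoid, $\pi_0(X)$ is profinite, each clopen subset of $X$ is cut out by an idempotent of $R$ and is itself affinoid perfectoid, and each connected component $Z$ is the cofiltered intersection of its clopen neighbourhoods $U$; hence $Z=\Spa(R_Z,R_Z^+)$ with $R_Z$ the completion of $\varinjlim_U R_U$, $R_Z$ affinoid perfectoid and $|Z|$ connected. The basic building block is that $X_0:=\Spa(K,K^+)$, with $K$ a perfectoid field and $K^+$ an open bounded valuation subring, is (strictly) totally disconnected: it is qcqs, and $|X_0|$ is the set of continuous valuations on $K$ bounded by $1$ on $K^+$, which, $K$ being a field, form a chain under specialization with a unique closed point $s$; since the only quasicompact open containing $s$ is $X_0$ itself, every open cover of $X_0$ contains $X_0$ as a member, hence admits a section. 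When $K$ is algebraically closed, an étale map $Y\to X_0$ factors near $s$ as an open immersion into some finite étale $W\to X_0$, which is $\Spa$ of a finite étale $K$-algebra, hence a finite disjoint union of copies of $X_0$; combining with the open-cover argument gives a section, so $X_0$ is strictly totally disconnected.

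For the direction $\Leftarrow$, suppose $X$ is affinoid perfectoid with every connected component of the stated form; it is qcqs, so I must show that open (resp.\ étale) covers split. Given a cover $\{Y_j\to X\}_j$, for each component $Z=\Spa(K,K^+)$ the restricted cover $\{Y_j\times_X Z\to Z\}$ splits by the building block. A splitting over $Z$ is a finitely presented datum (a clopen partition of $Z$ subordinate to the cover, plus sections of finitely many étale maps in the strict case), so, writing $Z=\varprojlim_U U$ over its clopen neighbourhoods and using $\pi_0(Z)=\varprojlim_U\pi_0(U)$ together with finite presentation of the étale maps, it spreads out to a splitting over some clopen neighbourhood $U_Z$. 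By quasi-compactness of $\pi_0(X)$, finitely many $U_{Z_1},\dots,U_{Z_m}$ cover $X$; disjointifying them (possible since they are clopen) and gluing the local splittings yields a section of $\coprod_j Y_j\to X$.

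For the direction $\Rightarrow$, assume $X$ is (strictly) totally disconnected. First, $X$ is affinoid: cover the qcqs space $X$ by finitely many affinoid perfectoid opens $U_1,\dots,U_n$; the resulting open cover splits, giving $X=\coprod_i V_i$ with each $V_i$ clopen in $X$ and an open subspace of $U_i$, hence clopen in $U_i$, hence affinoid perfectoid, so $X$ is a finite disjoint union of affinoid perfectoids, hence affinoid. Next, clopen subsets and connected components of $X$ are again (strictly) totally disconnected: for a clopen $U$, adjoin $X\setminus U\hookrightarrow X$ to any cover of $U$, split the resulting cover of $X$, and restrict; for a component $Z$, any cover extends by quasi-compactness to a cover of a clopen neighbourhood, which is then totally disconnected, and the splitting restricts. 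Finally, a connected (strictly) totally disconnected affinoid perfectoid $Z=\Spa(R_Z,R_Z^+)$ has $R_Z$ a perfectoid field and $R_Z^+$ a valuation ring: splitting the binary rational cover $\{|f|\le|g|\}\cup\{|g|\le|f|\}$ and using connectedness shows that for all $f,g\in R_Z$ one has $|f|\le|g|$ on all of $Z$ or $|g|\le|f|$ on all of $Z$; this total order, together with reducedness of perfectoid Tate rings and the absence of nontrivial idempotents, forces $R_Z$ to be a domain in which every nonzero element is a unit, i.e.\ a field, which is perfectoid since complete and uniform, and forces the bounded integrally closed subring $R_Z^+$ to be a valuation ring. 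In the strict case, splitting every finite étale cover of $\Spa(R_Z,R_Z^+)$ shows $R_Z$ has no nontrivial finite separable extension, so it is algebraically closed.

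\textbf{Main obstacle.} The crux is the last step of the $\Rightarrow$ direction: extracting from the purely combinatorial hypothesis ``every binary rational cover trivializes on $Z$'' the ring-theoretic conclusion that $R_Z$ is a field and $R_Z^+$ a valuation ring. One must rule out several minimal primes and embedded components (this is exactly what the total-ordering of the valuation spectrum accomplishes) and check that a complete uniform Tate field is perfectoid. A secondary technical point is the spreading-out in the $\Leftarrow$ direction—that a section over the pro-(clopen) limit $Z$ descends to a clopen neighbourhood—which rests on the étale maps in the cover being locally of finite presentation and on $\pi_0$ commuting with the relevant cofiltered limit.
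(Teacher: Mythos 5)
The paper gives no argument for this statement at all: it is quoted verbatim from \cite[1.15]{Sch18} (the relevant proofs there are Lemmas 7.3, 7.5 and 7.16), so your proposal should be measured against that standard argument, whose architecture you in fact reproduce: affinoidness by splitting a finite affinoid open cover and using that clopen subsets correspond to idempotents; reduction to a connected component $Z=\varprojlim_U U$, which is again (strictly) totally disconnected; the structure of $\Spa(K,K^+)$ as a chain with a unique closed point for the building block and for the converse; and spreading sections out from the pro-clopen limit to a clopen neighbourhood by finite presentation of \'etale maps. All of that is sound (modulo the standard approximation results you flag).

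The genuine gap is at the step you yourself single out as the crux. First, $\{|f|\le|g|\}$ is in general not an open subset of $\Spa(R_Z,R_Z^+)$: it is a rational subset only when $(f,g)$ generates an open ideal, and at points where $f$ and $g$ both vanish it need not be a neighbourhood; so ``splitting the binary rational cover'' is simply not available for arbitrary $f,g$. Second, even granting global comparability, the inference ``total order, reducedness and absence of nontrivial idempotents force $R_Z$ to be a domain in which every nonzero element is a unit'' is not valid reasoning: a reduced ring with connected spectrum need not be a domain, and a domain need not be a field, so the stated hypotheses do not do the work. The correct mechanism is to compare $f$ only against units, namely the Laurent covers $Z=\{|f|\le|\varpi^n|\}\cup\{|\varpi^n|\le|f|\}$ for a pseudo-uniformizer $\varpi$: splitting plus connectedness gives, for each $n$, one of the two inequalities globally; if $|f|\le|\varpi^n|$ holds globally for every $n$, then $f\in\bigcap_n\varpi^nR_Z^+=0$ (using $R_Z^+=\{g:|g(x)|\le 1\ \forall x\}$ for the complete Huber pair, boundedness of $R_Z^+\subset R_Z^\circ$, and separatedness), so for $f\neq 0$ there is $n$ with $|f|\ge|\varpi^n|>0$ everywhere; then the rational subset $Z(\varpi^n/f)$ is all of $Z$ and sheafiness ($\mathcal{O}(Z)=R_Z$) shows $f\in R_Z^\times$. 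With $R_Z$ a field, the Laurent cover $\{|h|\le 1\}\cup\{|h|\ge 1\}$ applied to $h=f g^{-1}$ shows $h\in R_Z^+$ or $h^{-1}\in R_Z^+$, i.e.\ $R_Z^+$ is an open bounded valuation ring; this last point is also what legitimately makes $R_Z$ a perfectoid \emph{field} — ``complete and uniform Tate field'' is not formally enough (that a perfectoid ring which happens to be a field is a perfectoid field is a nontrivial theorem of Kedlaya), whereas the valuation-ring structure of $R_Z^+$ gives the rank-one topology directly. With this step repaired, the rest of your argument, including the deduction of algebraic closedness from splitting finite \'etale covers in the strictly totally disconnected case, goes through.
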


One can define and study stacks in this context.

\begin{defn}
A v-stack $F$ is a contravariant 2-functor from the v-site $\Perf$ to the 2-category of groupoids (whose objects are groupoids and morphisms are functors), satisfying descent for v-covers, i.e. for a v-cover $Y\rightarrow X$, the natural functor
\[F(X)\rightarrow F(Y/X),\]
is an equivalence of categories. Here $F(Y/X)$ is the category of descent data, i.e. the objects are couples $(s,\alpha)$, with $s\in F(Y)$ and $\alpha: p_1^\ast s\cong p_2^\ast s$, satisfying the cocycle condition $p_{23}^*\alpha \circ p_{12}^* \alpha = p_{13}^* \alpha$, where $p_1,p_2:Y\times_XY\double{\rightarrow}Y$, $p_{12},p_{23},p_{13}:Y\times_XY\times_XY\triple{\rightarrow}Y\times_XY$ are the projections.
\end{defn}

We will work exclusively with the following class of v-stacks that are more geometric in nature, in the sense that, using charts of perfectoid spaces, one can define underlying topological spaces for them.

\begin{defn}
A small v-stack is a v-stack $X$ on $\Perf$ admitting a presentation
\[R=Y\times_X Y \double{\rightarrow} Y\rightarrow X,\]
	with $Y$ being the v-sheaf represented by some perfectoid space (not necessarily in characteristic $p$), and $R$ is a $\mathrm{small}$ v-$\mathrm{sheaf}$, i.e. a v-sheaf admitting a surjection (of v-sheaves) from a perfectoid space. 
\end{defn}

For a small v-stack $X$ with presentation $R\double{\rightarrow} Y$, where $Y$ is a perfectoid space and $R$ is a small v-sheaf admitting a surjection from a perfectoid space $\tilde{R}\rightarrow R$, its underlying topological space is the quotient space $|X|=|Y|/|\tilde{R}|$. 
As a set, this is in bijection to 
\[\{\Spa(K,K^+)\xrightarrow{s} Y\}/\sim,\]
where $\Spa(K,K^+)$ runs through all affinoid perfectoid fields, and the equivalence relation is defined by $s_1\sim s_2$ if there is a commutative diagram 
    \[
    \begin{tikzcd}
        \Spa(K_3,K_3^+) \ar[r,two heads]\ar[d,two heads]\ar[dr,"s_3"] & \Spa(K_1,K_1^+)\ar[d,"s_1"]\\
        \Spa(K_2,K_2^+)\ar[r,"s_2"] & X,
    \end{tikzcd}
    \]
for some third affinoid perfectoid field $(K_3,K_3^+)$. The topological space $|X|$ is independent of the choice of presentation \cite[Proposition 12.7, Definition 12.8]{Sch18}.

\begin{example}
For $T$ a topological space, we denote by $\underline{T}$ the v-sheaf on $\Perf$ of continuous homomorphisms into $T$, i.e.
\[S\mapsto \mathrm{Hom}_\mathrm{cts}(|S|,T).\] 

Let $X\in \Perf$ be a perfectoid space in characteristic $p$, with an action by a topological group $G$, one can consider the v-sheaf theoretic coequalizer $[X/\underline{G}]$ of the projection and action maps $X\times \underline{G} \double{\rightarrow} X$. This is a v-stack. 
\end{example}

\begin{def/prop}[{Fiber product of v-stacks}]
    Given a diagram $X\xrightarrow{f} Z\xleftarrow{g} Y$ of small v-stacks. The fiber product $X\times_Z Y$ is the presheaf of groupoids that sends $S\in \Perf$ to the groupoid whose objects are triples
    \[(x,y,\varphi: f(x)\cong g(y))\]
    and morphisms between $(x,y,\varphi)$ and $(x',y',\varphi')$ are pairs of maps $(x\xrightarrow{\alpha} x', y\xrightarrow{\beta} y')$ such that $\varphi'\circ f(\alpha)=g(\beta)\circ \varphi$. This is again a small v-stack by \cite[Proposition 12.10]{Sch18}.
\end{def/prop}
For the universal property of a fiber product, see \cite[3.4.13]{Olsson}.

\begin{defn}
A v-stack $X$ is quasi-compact if it admits a surjection of v-stacks from an affinoid perfectoid space. In particular, if $X$ is quasi-compact, then it is small and its underlying topological space $|X|$ is quasi-compact.
\end{defn}

\begin{defn}[{Morphism of v-stacks, cf. \cite[Definition 10.7]{Sch18}}]
Let $f: Y\rightarrow X$ be a morphism of v-stacks. 
	\begin{enumerate}[label=(\roman*),leftmargin=0.5cm]
    \item $f$ is $0$-truncated if for all $S\in \Perf$, the map of groupoids $f(S): Y(S)\rightarrow X(S)$ is faithful, or equivalently the diagonal map $\Delta_f: Y \rightarrow Y\times_X Y$ is fully faithful.
	\item $f$ is quasi-compact if for any affinoid perfectoid space $S$ mapping to $X$, the fiber product $Y\times_X S$ is quasi-compact. 
	\item $f$ is quasi-separated if the diagonal, which is $0$-truncated, is quasi-compact quasi-separated (qcqs).
	\item $f$ is an open (resp. closed) immersion if for every (totally disconnected) perfectoid space $T$ mapping to $X$, the pullback $Y\times_XT\rightarrow T$ is represented by an open (closed) immersion.
	\item $f$ is separated if the diagonal $\Delta_{Y/X}:Y\rightarrow Y\times_X Y$ is a closed immersion (hence $f$ is automatically $0$-truncated).
	\item $f$ is partially proper if it is separated and for every diagram
	\[\begin{tikzcd}
	\Spa(R,R^\circ)\ar[r]\ar[d]			  & Y\ar[d,"f"]\\
	\Spa(R,R^+)\ar[r] 	\ar[ur,dashed]  & X
	\end{tikzcd}\]
where $(R,R^+)$ is any affinoid perfectoid Tate ring, there exists a unique dotted arrow making it commute.
	\end{enumerate}
\end{defn}

We give a criterion for small v-stacks to be qcqs. The proof is adapted from the proof of \cite[Theorem 21.2.1]{Berkeley}

\begin{prop}\label{qcqs}
    Let $X$ be a small v-sheaf. Let $Y$ be a small v-stack on the slice category $\Perf_{/X}$, such that the structure map to $X$ has quasi-separated diagonal. If for any product of geometric points $S\in \Perf_{/X}$ with principal components $s_i$, $i\in I$, the restriction
    \[res: Y(S)\rightarrow \prod_{i\in I} Y(s_i)\]
    is an equivalence of groupoids, then $f$ is qcqs. The converse implication holds if $f$ is representable in diamonds.
\end{prop}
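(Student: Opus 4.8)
The plan is to prove quasi-compactness and quasi-separatedness of $f$ separately, reducing each to a single assertion over a base that is a product of geometric points, where the hypothesis can be applied directly. I will use two standard facts: every affinoid perfectoid space $T$ admits a v-cover by a single product of geometric points, e.g.\ $S=\prod_{x\in|T|}\Spa(\overline{k(x)},\overline{k(x)}^+)$ as in the product-of-points example; and quasi-compactness, resp.\ quasi-separatedness, of a morphism of v-stacks can be checked v-locally on the target (\cite{Sch18}). For quasi-compactness: to see $f$ is quasi-compact we must show $Y\times_X T$ is quasi-compact for every affinoid perfectoid $T\to X$; v-covering such a $T$ by a product of geometric points $S\to T$, it suffices to show $Z:=Y\times_X S$ is quasi-compact. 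The hypothesis descends to $Z\to S$, because over $S$ the relative sections of $Z$ over any $S'\to S$ coincide with $Y(S')$ (viewing $S'$ over $X$ via $S'\to S\to X$), so $Z(S')\xrightarrow{\sim}\prod_j Z(t'_j)$ for every product of geometric points $S'=\prod_j t'_j$ over $S$.

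For quasi-separatedness: as $\Delta_f$ is quasi-separated by hypothesis, it remains to show $\Delta_f$ is quasi-compact, i.e.\ that $Y\times_{Y\times_X Y}T$ is quasi-compact for every affinoid perfectoid $T\to Y\times_X Y$. Such a $T$ is the datum of maps $a,b\colon T\to Y$ with $fa=fb$, and $Y\times_{Y\times_X Y}T=\underline{\mathrm{Isom}}_T(a,b)$; v-covering $T$ by a product of geometric points $S\to T$, it suffices to show $W:=\underline{\mathrm{Isom}}_S(a|_S,b|_S)$ is quasi-compact. This is a small $0$-truncated v-sheaf over $S$, and using the hypothesis to identify $Y(S')\simeq\prod_j Y(t'_j)$ one obtains $W(S')=\prod_j\mathrm{Isom}_{Y(t'_j)}(a|_{t'_j},b|_{t'_j})=\prod_j W(t'_j)$, so $W\to S$ again satisfies the product-of-points hypothesis. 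In both cases we are reduced to the following.

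\emph{Core assertion:} if $S$ is a product of geometric points and $W\to S$ is a small v-stack such that $W(S')\xrightarrow{\sim}\prod_j W(t'_j)$ for every product of geometric points $S'=\prod_j t'_j$ over $S$, then $W$ is quasi-compact. I would prove this following the template of \cite[21.2.1]{Berkeley}: first show $|W|$ is quasi-compact, and then, choosing geometric points surjecting onto $|W|$ and assembling them into a single product of geometric points $P$ with a map $P\to W$, check that $P\to W$ is a v-cover, so that $W$ is quasi-compact (if $W$ is a genuine stack one first passes to a presentation $W'\rightrightarrows W$). \textbf{The main obstacle is the quasi-compactness of $|W|$.} A point of $|W|$ is represented by a geometric point $\Spa(K,K^+)\to W$ whose composite to $S$ need not factor through any of the principal components $s_i$ of $S$ --- it typically hits a ``limit point'' of $|S|$ --- so one cannot argue componentwise. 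Controlling these limit points is exactly what forces one to use the hypothesis for \emph{all} products of geometric points over $S$, in particular for the v-cover $\prod_{x\in|S|}\Spa(\overline{k(x)},\overline{k(x)}^+)\to S$ and for suitable further products built from it; this bookkeeping, in Gleason's product-of-points formalism, is the heart of the argument. (As a sanity check, for the non-quasi-compact $W=\coprod_{n\in\mathbb Z}\Spa(C,C^+)$ over $\Spa(C,C^+)$ the hypothesis already fails at $S'=\prod_{j\in\mathbb Z}\Spa(C,C^+)$: any map out of the quasi-compact $S'$ factors through finitely many of the copies, so the restriction to the $\mathbb Z$ principal components cannot realize a family with infinite image. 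The hypothesis is thus tuned precisely to exclude such ``discretely infinite'' fibers.)

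\emph{Converse:} suppose $f$ is qcqs and representable in diamonds. Then for $S$ a product of geometric points $D:=Y\times_X S$ is a qcqs diamond over $S$, and the asserted equivalence $Y(S)\xrightarrow{\sim}\prod_i Y(s_i)$ unwinds to $\Gamma(S,D)\xrightarrow{\sim}\prod_i\Gamma(s_i,D\times_S s_i)$ --- a section of $D$ over $S$ is the same datum as a family of sections of the fibers over the principal components. This is a standard spreading-out property of (spatial) diamonds over products of points --- existence of the section from quasi-compactness, uniqueness from quasi-separatedness of the diagonal --- see e.g.\ \cite{Ian}; this gives the converse.
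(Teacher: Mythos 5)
Your reductions (checking quasi-compactness after a v-cover of the test object, and handling quasi-separatedness via the Isom-sheaf $W=\underline{\mathrm{Isom}}_T(a,b)$, for which the product-of-points hypothesis is inherited from $Y$) are sound and parallel the paper's treatment of the diagonal. But the proof has a genuine gap exactly where you flag it: the ``core assertion'' is never proved, and the route you propose for it --- first establish that $|W|$ is quasi-compact, then build a cover --- is the wrong way around. Quasi-compactness of $|W|$ is essentially the statement to be proved, and nothing in your outline produces it. The missing idea, which is the actual content of the paper's argument, is that no a priori quasi-compactness of the underlying space is needed: for an affinoid perfectoid $S\rightarrow X$ set $T=Y\times_X S$, choose for \emph{every} point $t\in |T|$ a representative geometric point $\Spa(C_t,C_t^+)$, and form the single product of geometric points $\widetilde{T}=\Spa(R,R^+)$ with $R^+=\prod_{t\in|T|}C_t^+$ (the index set may be arbitrary). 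Because $S$ is affinoid, the family of composites $t\rightarrow T\rightarrow S$ assembles into one map $g:\widetilde{T}\rightarrow S$, so $\widetilde{T}$ is a product of geometric points in $\Perf_{/X}$ whose principal components all come with lifts to $Y$; the hypothesis then produces a map $\widetilde{T}\rightarrow Y$, hence $\tilde{g}:\widetilde{T}\rightarrow T$, which is surjective on topological spaces by construction. Quasi-separatedness of $T\rightarrow S$ (this is where the hypothesis on $\Delta_f$, resp.\ the first-step assumption that $f$ is quasi-separated, enters) gives that $\tilde{g}$ is qcqs by cancellation, and a qcqs map surjective on topological spaces is a surjection of v-sheaves; since $\widetilde{T}$ is affinoid, $T$ is quasi-compact. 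Without this construction your ``bookkeeping in Gleason's formalism'' is not an argument, and the limit-point difficulty you describe never has to be confronted.

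The converse is likewise only gestured at: you appeal to a ``standard spreading-out property'' and cite Gleason, but the statement you need is precisely what has to be shown. The paper's argument is short and you should give it: for $S=\prod_i s_i$ with lifts $s_i\rightarrow Y$, the fiber $T=Y\times_X S$ is a qcqs (spatial) diamond; choose a pro-\'etale surjection $\widetilde{T}\rightarrow T$ from an affinoid perfectoid space, lift each $s_i\rightarrow T$ to $\widetilde{T}$ (possible since each $s_i$ is strictly totally disconnected), assemble these lifts into a map $S\rightarrow\widetilde{T}$ using that $\widetilde{T}$ is affinoid, and compose back to $T$ to get a section $S\rightarrow T$, i.e.\ a preimage under $res$; uniqueness up to isomorphism comes from the diagonal hypotheses. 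As written, both halves of your proof defer the essential steps to references or to unexecuted ``bookkeeping,'' so the proposal does not yet constitute a proof.
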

\begin{proof}
    We first prove quasi-compactness assuming $f$ is quasi-separated. Take any affinoid perfectoid space $S$ with a map to $X$ and denote by $T$ the fiber product $Y\times_X S$. It suffices to show $T$ is quasi-compact. We fix a representative $\Spa(C_t,C_t^+)$ for each $t\in |T|$ (recall that $t$ is an equivalence class of maps) and choose a pseudo-uniformizer $\varpi$ on $S$. The map $t\rightarrow T\rightarrow S$ 
    pulls $\varpi$ back to a pseudo-uniformizer $\varpi_t\in C_t^+$. Define $R^+=\prod_{t\in|T|} C^+_t$, $\varpi=(\varpi_t)$ and $R=R^+[1/\varpi]$. Then $\widetilde{T}:= \Spa(R,R^+)$ is a product of geometric points and the collection of maps $t\rightarrow S$ determines a unique map $g: \widetilde{T}\rightarrow S$. Hence we obtain commutative diagrams
     \[\begin{tikzcd}
         t \ar[d, hook]\ar[r] &  T \ar[d,"\tilde{f}"] \ar[r] & Y\ar[d,"f"]\\
         \widetilde{T} \ar[r, "g"] \ar[ru, dashed, "\tilde{g}"]
         & S\ar[r] & X
     \end{tikzcd}\]
     By assumption, the outer commutative squares give a unique (up to automorphisms) map $\widetilde{T}\rightarrow Y$, and hence a unique (up to automorphisms) dotted arrow $\tilde{g}$ by universal property of $T$. By construction, it is surjective on topological spaces. As $g$ is qcqs and $\tilde{f}$ is quasi-separated, $\tilde{g}$ is qcqs by cancellation. This shows that it is in fact a surjection of v-stacks and $T$ is quasi-compact as wished.
    
     Now for a general $f$, we take an affinoid perfectoid space $S$ with a map to $Y\times_X Y$ and consider the pullback $T$ of the diagonal. Note that the map $\widetilde{\Delta}_f: T\rightarrow S$ is a quasi-separated map satisfying the condition in the proposition. Indeed, for any product of points $\widetilde{S}$ with a map to $S$, assume we have commutative diagrams for all principal components $\tilde{s}\in \widetilde{S}$
     \[\begin{tikzcd}
         \tilde{s} \ar[r, hook]\ar[d] & \widetilde{S} \ar[d] \ar[ld, dashed, swap, "\tilde{g}"] \ar[ddr]& \\
         T \ar[d] \ar[r,"\widetilde{\Delta}_f"] & S\ar[d]  & \\
         Y \ar[r,"\Delta_f"] & Y\times_X Y \ar[r] & X.
     \end{tikzcd}\]
     Then by assumption the outer commutative diagrams determine a unique (up to automorphisms) map $\widetilde{S}\rightarrow Y$. By uniqueness, its composition with $\Delta_f$ agrees with $\widetilde{S}\rightarrow S\rightarrow Y\times_X Y$ up to a natural transform. This induces a unique (up to automorphisms) map $\tilde{g}$ by the universal property. It makes the diagram commute, up to an automorphism in $T$ in the upper left triangle. Hence we can apply the argument in the first paragraph to $\widetilde{\Delta}_f$ and deduce that it is quasi-compact. Since this works for any $S$ mapping to $Y\times_X Y$, it shows $\Delta_f$ is quasi-compact and hence $f$ is quasi-separated. Now apply the first paragraph again we see that $f$ is qcqs.

     Conversely, if $f$ is qcqs and representable in diamonds, assume for some product of geometric points $S\in \Perf_{/X}$ with principal components $s_i$, $i\in I$, we are given lifts of $s_i\rightarrow X$ to $Y$. Consider the fiber product $T=Y\times_X S$. This is a spatial diamond. Each $s_i$ maps to $T$ by the universal property. Take a pro-\'etale surjection $\widetilde{T}\twoheadrightarrow T$ from an affinoid perfectoid space. The maps $s_i \rightarrow T$ lift to $\widetilde{T}$, which determines a section $S\rightarrow \widetilde{T}$.
     Composing with the projection to $T$, we get a unique (up to automorphisms) section $\tilde{g}: S\rightarrow T$. This constructs an inverse to $res: Y(S)\rightarrow \prod_{i\in I} Y(s_i)$.
\end{proof}
    
\begin{remark}
    For a map $f$ between small v-stacks with quasi-separated diagonal, one can show $f$ is qcqs by testing the above criterion on any pullback of $f$ to an affinoid perfectoid space. 
\end{remark}

\subsection{Diamonds}

\begin{defn}
A diamond is a pro-\'etale sheaf on $\Perf$ that can be written as $X/R$ with $X, R$ being representable by perfectoid spaces and $R \subset X\times X$ an equivalence relation, such that the two projections $s,t: R\rightarrow X$ are pro-\'etale.
\end{defn}

Diamonds are in fact (small) v-sheaves, see \cite[Proposition 11.9]{Sch18}. 

\begin{example}[{$\Spd E$}]
Let $E/\Q_p$ be a finite extension. Joining all $p$-power roots of unity and then taking completion, one gets the perfectoid field $E^{\cycl}$. Define 
\[\Spd E := \mathrm{coeq}(\Spa(E^\cycl)^\flat\times \underline{\mathrm{Gal}(E^\cycl/E)}\double{\rightarrow} \Spa(E^\cycl)^\flat).\]
This is a diamond whose underlying topological space is just a point.
\end{example}

The following theorem describes the category of perfectoid spaces over $\Q_p$ in terms of those in characteristic $p$ in aid of diamonds. 

\begin{theorem}\cite[Theorem 8.4.2]{Berkeley}
The category of perfectoid spaces over $\Q_p$ is equivalent to the category of perfectoid spaces $X$ of characteristic p with a structure morphism $X \rightarrow \Spd\Q_p$ as sheaves on $\Perf$.
\end{theorem}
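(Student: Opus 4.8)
The plan is to construct an explicit quasi-inverse to the tilting functor by ``relative untilting'', reducing the statement to the affinoid tilting correspondence of Scholze together with a moduli description of $\Spd\Q_p$. I would first upgrade $Y\mapsto Y^\flat$ to a functor $\mathrm{Perfd}_{\Q_p}\to\Perf_{/\Spd\Q_p}$: for a perfectoid space $Y$ over $\Q_p$, the tilt $Y^\flat$ is a perfectoid space of characteristic $p$, and it carries a canonical map $c_Y\colon Y^\flat\to\Spd\Q_p$, defined on an affinoid perfectoid open $\Spa(R,R^+)\subseteq Y$ by observing that $(R,R^+)$ is itself an untilt over $\Q_p$ of its tilt $(R^\flat,R^{\flat+})$ and hence a $\Spd\Q_p$-point of $\Spa(R^\flat,R^{\flat+})=\Spa(R,R^+)^\flat$; since untilting is compatible with rational localization these points glue over $Y^\flat$, and the construction is manifestly functorial in $Y$.

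Next I would build the candidate inverse $(X,f)\mapsto X^\sharp$. The key input is the identification, for every affinoid perfectoid $\Spa(R,R^+)\in\Perf$, of $\Spd\Q_p(R,R^+)$ with the set of isomorphism classes of untilts of $(R,R^+)$ over $\Q_p$ (equivalently, characteristic-zero untilts, which automatically carry a $\Q_p$-structure). This comes from unwinding the presentation of $\Spd\Q_p$ as the coequalizer of the projection and the $\Gal(\Q_p^\cycl/\Q_p)$-action on $\Spa(\Q_p^\cycl)^\flat$, using that the pro-\'etale site is subcanonical, and, at bottom, Fontaine's classification of untilts of a perfectoid ring via $\ker\theta$. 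Passing to covers and using that untilts glue, the same description holds with $\Spa(R,R^+)$ replaced by an arbitrary perfectoid space $X$: a morphism $f\colon X\to\Spd\Q_p$ of sheaves on $\Perf$ is precisely the datum of an untilt $X^\sharp$ of $X$ over $\Q_p$. I set $(X,f)^\sharp:=X^\sharp$; functoriality of the untilting construction (again via its compatibility with localizations, \'etale maps, and gluing) makes this a functor $\Perf_{/\Spd\Q_p}\to\mathrm{Perfd}_{\Q_p}$.

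Then I would check that the two functors are quasi-inverse. For $Y\in\mathrm{Perfd}_{\Q_p}$, the untilt of $Y^\flat$ classified by $c_Y$ is locally the tautological untilt $(R,R^+)$ of $(R^\flat,R^{\flat+})$, so $(Y^\flat,c_Y)^\sharp\cong Y$ by the fact that untilting recovers the original perfectoid space, naturally in $Y$. Conversely $(X^\sharp)^\flat\cong X$ for the same reason, and $c_{X^\sharp}$ classifies by construction exactly the untilt $X^\sharp$, i.e. the datum $f$; hence $((X,f)^\sharp)^\flat\cong(X,f)$ over $\Spd\Q_p$. This already yields the equivalence, so full faithfulness of $\flat$ is automatic; if one prefers to see it directly, a morphism $Y^\flat\to Y'^{\flat}$ over $\Spd\Q_p$ is locally a continuous map of the tilted rings compatible with the untilt data, which by the affinoid tilting equivalence relative to the $\Q_p$-structure lifts uniquely to a continuous $\Q_p$-algebra map of the untilts, and these lifts glue by uniqueness.

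The main obstacle is the pair of substantive points buried in the middle steps: (i) the moduli interpretation of $\Spd\Q_p$, namely that $\Spd\Q_p(R,R^+)$ really is the set of characteristic-zero untilts of $(R,R^+)$ over $\Q_p$, which is where one needs the full strength of Fontaine's untilt classification; and (ii) verifying that the local affinoid untilting construction glues along general (pro-\'etale, and then v-) covers and that the resulting locally ringed space is again a perfectoid space, i.e. the sheafiness of the untilted structure presheaf. Both of these are exactly where the tilting correspondence is genuinely used; everything else is formal bookkeeping with slice categories.
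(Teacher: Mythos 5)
The paper does not prove this statement at all: it is quoted verbatim from the Berkeley lectures (\cite[8.4.2]{Berkeley}), so the only meaningful comparison is with the proof given there. Your skeleton matches the standard strategy: enhance tilting to a functor to $\Perf_{/\Spd\Q_p}$ via the tautological untilt, define a quasi-inverse by ``the untilt classified by the structure map'', and observe that the two checks of quasi-inverseness are then formal. You also correctly isolate where the content sits, namely (i) the identification of maps $X\to\Spd\Q_p$ with untilts of $X$ over $\Q_p$, and (ii) gluing/descent of the untilting construction.

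The gap is that (i), as you set things up, is essentially the theorem itself (already in the affinoid case), and your proposed justification --- ``unwinding the coequalizer presentation, subcanonicity, and Fontaine's classification of untilts via $\ker\theta$'' --- is not an argument for it. Unwinding the coequalizer definition of $\Spd\Q_p$, a map $X\to\Spd\Q_p$ gives you a pro-\'etale $\underline{\Gal(\Q_p^\cycl/\Q_p)}$-torsor $\tilde{X}\to X$ together with an equivariant map $\tilde{X}\to\Spa(\Q_p^\cycl)^\flat$; to extract an untilt of $X$ over $\Q_p$ you must first untilt $\tilde{X}$ using the tilting equivalence over the perfectoid field $\Q_p^\cycl$ (Fontaine's $\ker\theta$ classification of untilts of a fixed affinoid pair does not, by itself, produce this untilting of spaces, nor its functoriality), and then descend the resulting perfectoid space over $\Q_p^\cycl$ along the Galois torsor, i.e.\ construct the quotient and verify it is again perfectoid over $\Q_p$. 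That descent step is precisely your obstacle (ii), which you flag but never address, and it is where the proof in \cite[8.4.2]{Berkeley} does its real work (the forward map $c_Y$ is likewise most cleanly obtained from $Y\hat{\otimes}_{\Q_p}\Q_p^\cycl$ rather than affinoid-locally). So the outline is the right shape, but the two steps carrying all the content are deferred rather than proved, and the tool you name for step (i) is not sufficient on its own.
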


\subsubsection{Diamonds attached to adic spaces}

\begin{defn}[{The functor ``$\diamond$"}]
Let $X$ be an analytic adic space over $\Spa \Z_p$. Define a presheaf $X^\diamond$ on $\Perf$ by:
\[T\mapsto X^\diamond(T)= \{(T^\sharp,T^\sharp\rightarrow X)\}/\sim,\]
where $T^\sharp$ is an untilt of $T$, $T^\sharp \rightarrow X$ is a map of adic spaces and the equivalence relation is given by isomorphisms of such pairs.
\end{defn}

According to \cite[Theorem 10.1.5]{Berkeley}, the presheaf $X^\diamond$ is a diamond. And in particular, if $X$ is perfectoid, then $X^\diamond$ is represented by $X^\flat$. If $X=\Spa(R,R^+)$, we also write $X^\diamond$ as $\Spd(R,R^+)$, or $\Spd R$ if $R^+=R^\circ$. 

In general for any pre-adic space (as in \cite[Appendix to lecture 3]{Berkeley}) over $\Spa \Z_p$, the same functor as in the exhibited formula above always defines a v-sheaf \cite[Lemma 18.1.1]{Berkeley}, though not necessarily representable by a diamond. This includes formal schemes over $\Spf \Z_p$. For schemes over $\Z_p$, there are two different ways of attaching v-sheaves to it constructed in \cite[Section 2.2]{AGLR}, according to whether we want to view the test object as a ringed space with sheaf of rings given by the structure or the integral structure sheaf, recalled below.

\begin{example}\label{vsheafFormal}
    For an affine $p$-adic formal scheme $\mathfrak{X}=\Spf A$ over $\Spf \Z_p$, the v-sheaf $\mathfrak{X}^\diamond$ is the v-sheaf attached to the pre-adic space $\Spa(A,A)$. This construction is compatible with localization and hence globalizes and defines a functor from $p$-adic formal schemes to v-sheaves.
\end{example}

\begin{defn}[{\cite[Definition 2.10]{AGLR}}]\label{BigDiamond}
Let $A$ be a $\Z_p$-algebra and $X=\Spec(A)$. 
    \begin{enumerate}
        \item The \textit{small diamond} functor $X^\diamond$ of $X$ is the v-sheaf on $\Perf$
        \[S\mapsto \{(S^\sharp, f: A\rightarrow \CO_{S^\sharp}^+(S^\sharp))\},\]
        where $S^\sharp$ is an untilt of $S$ and $f$ is a ring homomorphism.
        
        \item The \textit{big diamond} functor $X^\Diamond$ of $X$ is the v-sheaf on $\Perf$
        \[S\mapsto \{(S^\sharp, f: A\rightarrow \CO_{S^\sharp}(S^\sharp))\},\]
        where $S^\sharp$ is an untilt of $S$ and $f$ is a ring homomorphism.
    \end{enumerate}
\end{defn}
\begin{remark}
    On proper schemes, the two diamond functors agree by the valuative criterion for properness.
\end{remark}
\hfill

\section{\texorpdfstring{$p$}{}-divisible groups}
	 We review facts about $p$-divisible groups, following \cite[Section I.2]{Messing}, \cite[Section 4.1]{CS17}, \cite{SW}, \cite{AL}. 

\subsection{Basic definitions}
Let $S$ be a scheme.
\begin{defn}
A sheaf of abelian groups $\G$ on the fpqc site of $S$ is said to be $p^\infty$-torsion if it is the colimit of its $p^n$-torsion points (denoted by $\G[p^n]$). It is $p$-divisible if multiplication by $p$ on $\G$ is an epimorphism.
\end{defn}

\begin{defn}
Let $h$ be an integer $\geq 0$. A $p$-divisible group $\G$ over $S$ of height $h$ is a fpqc sheaf of abelian groups on $S$, which is $p^\infty$-torsion, $p$-divisible and each $\G[p^n]$ is representable by a finite locally free group scheme of order $p^{nh}$. Morphisms between $p$-divisible groups are morphisms of sheaves of groups on $S_\text{fpqc}$.
\end{defn}

The dual $p$-divisible group $\G^\vee$ of $\G$ is the fpqc sheaf $T\mapsto \varinjlim_n \G[p^n]^\vee(T)$ over $S$, where $\G[p^n]^\vee$ is the Cartier dual of $\G[p^n]$ and the transition maps are the duals of multiplication by $p$. This is clearly a $p$-divisible group.

\begin{defn}
An isogeny between two $p$-divisible groups is a surjection of fpqc sheaves whose kernel is representable by a finite locally free group scheme.
\end{defn}
For two $p$-divisible groups $\G,\G'$ on a scheme $S$, we write $\underline{\mathrm{Hom}}(\G,\G')$ for the sheaf of homomorphisms between them.
\begin{defn}
    Let $\G,\G'$ be two $p$-divisible groups over a scheme $S$. A quasi-isogeny is a global section $\rho$ of the sheaf $\underline{\mathrm{Hom}}(\G,\G')\otimes \Q$ such that Zariski locally on $S$, $p^n\rho$ is an isogeny for some integer $n$.
\end{defn}

\begin{defn}
A polarization on a $p$-divisible group $\G$ is a quasi-isogeny
\[\lambda: \G\rightarrow \G^\vee,\]
such that the Cartier dual of $\lambda$ equals $-\lambda$. It is called a principal polarization if it is an isomorphism.
\end{defn}


\begin{example}
	\begin{enumerate}
	\item $\Q_p/\Z_p$ is a $p$-divisible group of height one.
	\item $\mu_{p^\infty}:=\varinjlim_n \Gm[p^n]$ is a $p$-divisible group of height one.
	\item Let $A/S$ be a $d$-dimensional abelian scheme. The colimit of its $p$-power torsion points $A[p^\infty]:=\varinjlim_n A[p^n]$ is a $p$-divisible group of height $2d$.
	\end{enumerate}
\end{example}

In the above examples, $\Q_p/\Z_p$ and $\mu_{p^\infty}$ are dual to each other and $A[p^\infty]$ is dual to the $p$-divisible group of the dual abelian variety $A^\vee$. 
The pairing between them (or rather the duality pairings on $A^\vee[p^n]\times A[p^n]$ for each $n$) is called the Weil pairing. In particular when $A$ is principally polarized, $A[p^\infty]$ is self-dual via the principal polarization.

\begin{remark}
Note that a polarization $\lambda$ on an abelian variety induces a polarization on its $p$-divisible group. Although slightly confusingly, on the abelian variety $\lambda$ agrees with its dual isogeny, yet on the $p$-divisible group it is the inverse of its dual. This is a consequence of the expression of the Weil pairing as a commutator of two translation operators on the sheaf $([p^n]\times id)^\ast\mathscr{P}_A$ on $A\times A^\vee$, for each integer $n$, where $[p^n]$ denotes the multiplication by $p^n$ map and $\mathscr{P}_A$ denotes the Poincar\'e bundle on $A\times A^\vee$. For details, see \cite[Corollary 1.3]{Oda}.
\end{remark}
 
\begin{defn}
The fpqc sheaf $T_p\G=\varprojlim_n\G[p^n]$ (where the transition maps are multiplication by $p$) on $S$ is called the (integral) Tate module of $\G$. It is a sheaf of $\Z_p$-modules and identifies with the internal Hom $\mathcal{H}om(\Q_p/\Z_p,\G)$ in the category of sheaves of abelian groups over $S_{fpqc}$. It is representable by a scheme that is affine and flat over $S$, see \cite[Proposition 3.3.1]{SW}.
\end{defn}

When $S$ is the spectrum of a $p$-adically complete $\Z_p$-algebra $R$, which is our main case of interest, we will more often view a $p$-divisible group as an fpqc sheaf on $\mathrm{Nilp_{R}^\text{op}}$, the opposite category of $R$-algebras on which $p$ is nilpotent, sending $A\in \mathrm{Nilp_{R}^\text{op}}$ to $\varprojlim_i\varinjlim_n \G[p^n](A/p^i)$. 

Denote by $e_\G$ the zero section of $\G$.

\begin{defn}
Let $\hat{\G}$ be the fpqc sheaf on $\mathrm{Nilp_{R}^\text{op}}$:
\[A\mapsto \varinjlim_k\{x\in \G(A)\mid x=e_\G \text{ in } A/I, \text{ for an ideal } I\subset A, \text{ such that } I^{k+1}=0\}.\]
\end{defn}

\begin{prop}
The sheaf $\hat{\G}$ is a formal Lie (group) variety in the sense of \cite[Chapter II, Definition (1.1.4)]{Messing}. It is represented by an affine formal scheme over $\Spf R$. There is some integer $d\geq 0$, such that it is Zariski locally isomorphic to 
\[\Spf(R[[X_1,...X_d]]).\]
\end{prop}
\begin{proof}
This is \cite[Chapter II, Theorem 3.3.18]{Messing}, cf. \cite[Lemma 3.1.2]{SW}.
\end{proof}

We call $d$ the dimension of the $p$-divisible group $\G$ relative to $\Spf R$.

\begin{defn}
The fpqc sheaf of $\CO_S$-modules ${Lie}\,\G:= {Lie}\, \hat{\G}$
is the dual of the (Zariski) locally free $\CO_S$-module $\omega_{\G}:=e_{\hat{\G}}^*\Omega^1_{\hat{\G}/S}$ of rank $d$. It is called the Lie algebra of $\G$. We use straight letters $\operatorname{Lie}\G$ to denote its global sections. This is a finite projective $R$-module.
\end{defn}

When $\G$ is connected, $\G=\hat{\G}$ and is hence (pro-)representable by a formal scheme. In general it is not representable, but one can nevertheless define its adic generic fiber $\G^\text{ad}_\eta$ by \cite[Proposition 2.2.2]{SW}.

\subsection{Classification over \texorpdfstring{$\CO_C$}{}}

Let $C/\Q_p$ be a complete algebraically closed non-archimedean field with ring of integers $\CO_C$. Scholze and Weinstein have classified $p$-divisible groups over $\CO_C$ in terms of the Hodge-Tate filtration on their Tate modules \cite[Theorem B]{SW}.

Let $\G$ be a $p$-divisible group over $\CO_C$. Recall the Hodge-Tate exact sequence (due to Fargues) as in \cite[Theorem 12.1.1]{Berkeley}.

\begin{theorem}\label{BTclassification}
There is a natural short exact sequence:
\[0\rightarrow \operatorname{Lie}\G\otimes_{\CO_C}C(1)\xrightarrow{\alpha_{G^*}^*(1)}T_p\G (\CO_C)\otimes_{\Z_p}C\xrightarrow{\alpha_G}(\operatorname{Lie}\G^\vee)^\ast\otimes_{\CO_C}C\rightarrow 0.\] 
\end{theorem}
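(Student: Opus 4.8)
The plan is to construct the two maps explicitly, using the $\mathrm{dlog}$ construction together with Cartier duality, and then to reduce exactness to a single surjectivity statement which one proves by d\'evissage along the connected--\'etale sequence. \emph{The maps.} Cartier duality of the finite levels identifies $T_p\G=\varprojlim_n\G[p^n](\CO_C)$ with the group $\Hom_{\CO_C}(\G^\vee,\mu_{p^\infty})$ of homomorphisms of $p$-divisible groups. A homomorphism $f\colon\G^\vee\to\mu_{p^\infty}$ induces a map on formal completions $\widehat f\colon\widehat{\G^\vee}\to\widehat{\mu_{p^\infty}}=\widehat{\Gm}$, hence a map $\mathrm{Lie}(\widehat f)\colon\mathrm{Lie}\,\G^\vee\to\mathrm{Lie}\,\Gm=\CO_C$, i.e.\ an element of $(\mathrm{Lie}\,\G^\vee)^\ast$; this defines an $\CO_C$-linear map $\alpha_\G\colon T_p\G(\CO_C)\to(\mathrm{Lie}\,\G^\vee)^\ast$, and its base change to $C$ is the map $\alpha_G$ of the statement. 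Running the same recipe for $\G^\vee$ produces $\alpha_{\G^\vee}\colon T_p\G^\vee\to(\mathrm{Lie}\,\G)^\ast$, and the left-hand map $\alpha_{G^\ast}^\ast(1)$ is the $C(1)$-twist of the $C$-linear dual of $\alpha_{\G^\vee}$, using the perfect Weil pairing $T_p\G\otimes_{\Z_p}T_p\G^\vee\xrightarrow{\ \sim\ }\Z_p(1)$ (which yields $(T_p\G^\vee)^\ast(1)\cong T_p\G$) and the canonical identification $((\mathrm{Lie}\,\G)^\ast)^\ast=\mathrm{Lie}\,\G$. A direct check shows that applying $\Hom_C(-,C)(1)$ to the sequence attached to $\G$ recovers the sequence attached to $\G^\vee$, interchanging the two maps; in particular the assertion is self-dual under $\G\leftrightarrow\G^\vee$.

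\emph{Reduction of exactness.} I would isolate three facts: (i) the composite $\alpha_G\circ\alpha_{G^\ast}^\ast(1)$ vanishes; (ii) $\alpha_\G\otimes_{\CO_C}C$ is surjective for every $p$-divisible group $\G$ over $\CO_C$; (iii) the numerology $\dim_C(T_p\G\otimes_{\Z_p}C)=h:=\mathrm{ht}\,\G$, $\dim_C\mathrm{Lie}\,\G=\dim\G$ and $\dim_C(\mathrm{Lie}\,\G^\vee)^\ast=\dim\G^\vee$, together with the classical identity $\dim\G+\dim\G^\vee=\mathrm{ht}\,\G$. Granting these, exactness is pure linear algebra: (ii) applied to $\G^\vee$ combined with the self-duality above shows $\alpha_{G^\ast}^\ast(1)$ is injective; (ii) applied to $\G$ shows $\alpha_G$ is surjective; (i) places the image of the first map inside the kernel of the second; and (iii) forces these two subspaces of $T_p\G\otimes_{\Z_p}C$ to have the same dimension, hence to coincide. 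Fact (iii) is classical. Fact (i) follows by unwinding the definitions (the two $\mathrm{dlog}$ maps are mutually adjoint for the Weil pairing and the evaluation pairing $\mathrm{Lie}\otimes\omega\to\CO_C$), or, more conceptually, from the $\Bdr^+$-lattice description of $\G$ in which the whole sequence is the associated graded of a two-step filtration, where the composite is manifestly zero.

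\emph{The crux.} The real content is (ii), and I would prove it by d\'evissage along the connected--\'etale sequence $0\to\G^0\to\G\to\G^{\et}\to 0$ over the strictly henselian valuation ring $\CO_C$: applying $T_p(-)\otimes C$, $(\mathrm{Lie}(-)^\vee)^\ast\otimes C$ and the maps $\alpha_{(-)}$ turns this sequence (and its dual) into a short exact sequence of three-term complexes, reducing (ii) to the cases $\G$ \'etale and $\G$ connected. If $\G$ is \'etale then $\G\cong(\Q_p/\Z_p)^{h}$ and $\G^\vee\cong\mu_{p^\infty}^{h}$ over $\CO_C$, and under the canonical trivializations $\alpha_\G$ is the inclusion $\Z_p^{h}\hookrightarrow\CO_C^{h}$, an isomorphism after $\otimes_{\Z_p}C$. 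The genuine obstacle is the connected case, i.e.\ $\G$ a $p$-divisible formal group with $\G^\vee$ of positive dimension: here one combines the Cartier-dual description of $\alpha_\G$ with the logarithm exact sequence relating $T_p\G$, the universal cover $\widetilde\G(\CO_C)$ and $\mathrm{Lie}\,\G\otimes_{\CO_C}C$, to reduce surjectivity onto $(\mathrm{Lie}\,\G^\vee)^\ast\otimes C$ to the non-degeneracy of the $p$-adic period pairing between $\widetilde\G(\CO_C)$ and $\widetilde{\G^\vee}(\CO_C)$. This last input is, in essence, Tate's surjectivity theorem for $p$-divisible groups, which rests on his computation of the Galois cohomology of the Tate twists $C(n)$: one reaches it either by descending $\G$ to a model over $\CO_K$ with $K/\Q_p$ finite and invoking Tate directly, or by reading it off from the $\Ainf$- and $\Bdr^+$-lattice classification of $p$-divisible groups over $\CO_C$ (\cite[Theorem B]{SW}, recalled immediately below). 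I expect that everything outside this surjectivity step is formal.
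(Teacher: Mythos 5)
The paper does not actually prove this statement: it is quoted from \cite[12.1.1]{Berkeley} (the sequence is due to Fargues), and only the construction of $\alpha_G$ is recalled. So your sketch has to stand on its own, and its skeleton — defining both maps via Cartier duality and $\mathrm{dlog}$, checking the composite vanishes, proving surjectivity of $\alpha_\G\otimes C$, and concluding by the dimension count $\dim\G+\dim\G^\vee=\mathrm{ht}\,\G$ together with self-duality — is indeed the standard outline. The reduction of everything to fact (ii) is fine, as is the d\'evissage along the connected--\'etale sequence and the \'etale case.

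The gap is in the crux, the connected case of (ii). Your first proposed input — ``descending $\G$ to a model over $\CO_K$ with $K/\Q_p$ finite and invoking Tate directly'' — is not available: a $p$-divisible group over $\CO_C$ does not in general descend to any discretely valued subfield (its Lubin--Tate/Gross--Hopkins period, equivalently the subspace $W\subset T_p\G\otimes C(-1)$ in the classification, is typically transcendental and not Galois-stable), and Tate's argument is Galois-cohomological, so it cannot be run over $C$ where there is no Galois group; this absence of descent is precisely why the theorem over $\CO_C$ is a separate result of Fargues. Your second proposed input — reading surjectivity off from the $\Ainf$/$\Bdr^+$-lattice classification \cite[Theorem B]{SW} — is circular in the present context: that classification is formulated (and, as recalled in the paper immediately after, proved) using the Hodge--Tate filtration $\alpha^\ast_{\G^\ast}$, i.e.\ the very sequence being established, and in \cite{Berkeley} the sequence 12.1.1 is likewise proved before and used in the lattice classifications. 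What is actually needed at this point is Fargues' direct argument over $\CO_C$ (via the universal cover/quasi-logarithm and an integrality or Dieudonn\'e-theoretic computation showing non-degeneracy of the period pairing), and your sketch identifies where that content sits but does not supply it. Everything else in your proposal is formal, as you say; this one step is not.
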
  

Here to define $\alpha_G$, we view a section $f$ of $T_p\G$ as a homomorphism $\Q_p/\Z_p \rightarrow \G$. Then the Lie algebra functor applied to its dual $f^\vee: \G^\vee\rightarrow \mu_{p^\infty}$ gives $\operatorname{Lie}(f^\vee): \operatorname{Lie}\G^\vee\rightarrow \operatorname{Lie}\mu_{p^\infty}$. By picking a coordinate of $\Gm$, say $t$, the $\CO_C$-linear dual $(\operatorname{Lie}\mu_{p^\infty})^*$ is naturally trivialized and is isomorphic to $\CO_C \frac{dt}{t}$. Hence $\alpha_G$ is defined as $f\mapsto (\operatorname{Lie} f^\vee)^*(\frac{dt}{t})$.

Let $\{(T,W)\}$ be the category of pairs consisting of a finite free $\Z_p$-module $T$ and $W\subset T\otimes_{\Z_p}C(-1)$ is a sub-$C$-vector space. A morphism between two such pairs is a pair of morphisms between the $\Z_p$-modules and the sub-vector spaces, compatible with each other. The dual of $(T,W)$ is the pair $(T^*(1),W^\perp)$, with $*$ being the usual vector space dual, $(1)$ the Tate twist and $\perp$ the orthogonal complement (with respect to the natural pairing between $T\otimes_{\Z_p}C$ and $T^\ast\otimes_{\Z_p}C$). Then we have

\begin{theorem}{\cite[Theorem B, 5.2.1]{SW}}
The category of $p$-divisible groups over $\CO_C$ is equivalent to the above category $\{(T,W)\}$ via: 
\[\Psi: \G \mapsto (T_p\G(\CO_C), \operatorname{Lie}\G\otimes_{\CO_C}C),\] 
where $\operatorname{Lie}\G\otimes_{\CO_C}C$ is viewed as a subspace of $T_p\G(\CO_C)\otimes_{\Z_p}C(-1)$ via the Hodge-Tate filtration $\alpha^*_{G^*}$. This equivalence is compatible with duality.
\end{theorem}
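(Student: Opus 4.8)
Since this statement is \cite[Theorem~B, 5.2.1]{SW}, the plan is to recall how its proof goes. First I would check that $\Psi$ is well defined and duality‑compatible: twisting the injection in \Cref{BTclassification} by $C(-1)$ exhibits $W:=\mathrm{Lie}\,\G\otimes_{\CO_C}C$ as a $C$‑subspace of $T_p\G(\CO_C)\otimes_{\Z_p}C(-1)$, functorially in $\G$, so $\Psi$ is a functor; and the duality statement reduces to the self‑duality of the Hodge--Tate exact sequence of \Cref{BTclassification} under Cartier/Weil duality, which is immediate from the definition of $\alpha_G$. It then remains to prove $\Psi$ is fully faithful and essentially surjective.

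Faithfulness I would do by hand: a morphism $f\colon\G\to\mathcal{H}$ over the domain $\CO_C$ amounts to a compatible family $f[p^n]\colon\G[p^n]\to\mathcal{H}[p^n]$ of maps of finite locally free group schemes, each of which is determined by its generic fibre over the algebraically closed field $C$ --- where these group schemes are \'etale --- hence by $T_pf\bmod p^n$. Fullness and essential surjectivity are the real content, and the efficient way I would get them is to factor $\Psi$ through Breuil--Kisin--Fargues modules over $\Ainf=W(\CO_{C^\flat})$. Fix a generator $\xi$ of $\ker(\theta\colon\Ainf\to\CO_C)$ and put $\tilde\xi=\varphi(\xi)$. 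By the Dieudonn\'e theory of $p$‑divisible groups over $\CO_C$ (crystalline Dieudonn\'e theory over $\CO_C/p$ combined with Grothendieck--Messing deformation, see \cite{SW}, \cite{AL}), these are equivalent to \emph{minuscule} BKF modules in the sense of \cite[2.2.4]{PR}: pairs $(M,\varphi_M)$ with $M$ finite free over $\Ainf$, $\varphi_M\colon(\varphi^\ast M)[1/\tilde\xi]\xrightarrow{\sim}M[1/\tilde\xi]$, and $\mathrm{coker}(\varphi_M\colon\varphi^\ast M\to M)$ a finite projective $\CO_C$‑module.

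From such an $M$ one recovers $\Psi(\G)$: $\varphi$‑invariants over the relevant crystalline period ring give $T=T_p\G$, while base change along $\theta$ and the de Rham comparison $M\otimes B_{\crys}^+\cong T\otimes_{\Z_p}B_{\crys}^+$ realize $\Xi:=M\otimes_{\Ainf}\Bdr^+$ as a $\Bdr^+$‑lattice with $T\otimes_{\Z_p}\Bdr^+\subseteq\Xi\subseteq\tfrac{1}{\xi}\,T\otimes_{\Z_p}\Bdr^+$, whose quotient $\Xi/(T\otimes_{\Z_p}\Bdr^+)\subset T\otimes_{\Z_p}C(-1)$ is exactly $W$ by \Cref{BTclassification}. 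Thus $\Psi$ becomes the composite of the Dieudonn\'e equivalence with $M\mapsto(T,\Xi)\mapsto(T,W)$, and it suffices to prove that $M\mapsto(T,\Xi)$ is an equivalence onto the pairs obeying the displayed lattice bound --- which carry the same data as $\{(T,W)\}$. For essential surjectivity I would run this backwards: given $(T,W)$, form the lattice $\Xi$ and glue, on the punctured spectrum $\Spec\Ainf\setminus\{\mathfrak m\}$ via Beauville--Laszlo, the constant Frobenius module $T\otimes_{\Z_p}W(\CO_{C^\flat})[1/p]$ (away from $V(\tilde\xi)$) with $\Xi$ (at $V(\tilde\xi)$), then extend the resulting Frobenius vector bundle across the puncture to a finite free $\Ainf$‑module; this yields a minuscule BKF module realizing $(T,\Xi)$. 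Fullness then follows, since a morphism of pairs $(T,\Xi)$ is the same as a morphism of Frobenius vector bundles on $\Spec\Ainf\setminus\{\mathfrak m\}$.

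\textbf{The hard part} is precisely this extension step: $\Ainf$ is a non‑regular two‑dimensional local ring, and one needs that vector bundles, with their Frobenius structure, on its punctured spectrum extend uniquely to finite free $\Ainf$‑modules --- a theorem of Kedlaya. Granting it, the remaining verifications --- that the Frobenius glued in is automatically minuscule (so that the resulting BKF module really comes from a $p$‑divisible group), and that the whole chain intertwines the dualities $(T,W)\mapsto(T^\ast(1),W^\perp)$ and $M\mapsto M^\vee$ (with transposed, Tate‑twisted Frobenius) --- are routine.
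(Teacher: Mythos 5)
Your strategy is correct, but it is a genuinely different argument from the one the paper records. The paper follows the original route of \cite{SW}: it builds the inverse functor $\Phi$ directly on adic generic fibers, defining $\G^{\mathrm{ad}}_\eta$ as the pullback of the logarithm of $(T(-1)\otimes_{\Z_p}\mu_{p^\infty})^{\mathrm{ad}}_\eta$ along $W\otimes_C\Ga\hookrightarrow T(-1)\otimes_{\Z_p}\Ga$, invokes results of Fargues \cite{Far12} to show this rigid-analytic group comes from a $p$-divisible group over $\CO_C$ (which is where the auxiliary hypothesis that $C$ be spherically complete with surjective norm enters; the general case is only quoted from \cite[6.2]{SW}, via descent through infinite-level Rapoport--Zink spaces), and then checks $\Phi\circ\Psi\cong\mathrm{id}$ and $\Psi\circ\Phi\cong\mathrm{id}$ using the two cartesian logarithm diagrams. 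You instead factor $\Psi$ through minuscule Breuil--Kisin--Fargues modules and Fargues' classification of BKF-modules by pairs $(T,\Xi)$, with Kedlaya's extension theorem on the punctured $\Spec \Ainf$ plus Beauville--Laszlo gluing carrying essential surjectivity, and with the identification $\Xi/(T\otimes_{\Z_p}\Bdr^+)=W$ resting on the compatibility of the Dieudonn\'e module with the Hodge--Tate filtration (the same compatibility the paper itself uses in the proof of \Cref{HT}, citing \cite[14.8]{Berkeley}). What your route buys is uniformity: no spherical completeness and no Rapoport--Zink descent. What it costs is the input ``$p$-divisible groups over $\CO_C$ are equivalent to minuscule BKF-modules,'' and here you must be careful about circularity: in \cite[Lecture 14]{Berkeley} that equivalence over $\CO_C$ is \emph{deduced from} Theorem B, so it cannot be your source; you need an independent proof such as Lau's \cite{Lau} or the prismatic approach of \cite{AL}, as your citation suggests. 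With that caveat, and with the minuscularity and duality checks you already flag carried out, your argument is sound, though note it proves the theorem by assembling several substantial external results, whereas the paper's outline is a self-contained construction under an extra hypothesis on $C$.
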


\subsection{Complements}
We record below some descent properties of $p$-divisible groups.

\begin{lemma}\label{milnor}
    Given a cartesian diagram of rings
    \[\begin{tikzcd}
        R\ar[r]\ar[d] & R_2 \ar[d]\\
        R_1\ar[r, two heads]     & R_3
    \end{tikzcd}\]
    such that $R_1\rightarrow R_3$ is surjective (a Milnor square), the corresponding diagram of categories of finite projective modules over these rings is 2-cartesian, i.e. the category of finite projective modules over $R$ is equivalent to that of ``gluing triples” 
    \[(M_1,M_2,\alpha: M_1\otimes_{R_1}R_3 \xrightarrow{\sim} M_2\otimes_{R_1}R_3),\]
    where $M_i$ is a finite projective module over $R_i$ for $i=1,2$, and $\alpha$ is an isomorphism between their base changes.
\end{lemma}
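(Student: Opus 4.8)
The statement is Milnor's patching theorem for finitely generated projective modules, and the plan is to carry out his classical argument in this setting. In one direction, send a finite projective $R$-module $M$ to the gluing triple $F(M):=(M\otimes_R R_1,\, M\otimes_R R_2,\, \mathrm{can})$, where $\mathrm{can}$ is the canonical identification of $(M\otimes_R R_1)\otimes_{R_1}R_3$ and $(M\otimes_R R_2)\otimes_{R_2}R_3$ with $M\otimes_R R_3$. In the other direction, send a triple $\xi=(M_1,M_2,\alpha)$ to the $R$-module $G(\xi):=M_1\times_{M_3}M_2$, the fiber product over $M_3:=M_2\otimes_{R_2}R_3$ formed using $\alpha$ to identify $M_1\otimes_{R_1}R_3$ with $M_3$. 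The first thing I would record is the short exact sequence
\[ 0\longrightarrow G(\xi)\longrightarrow M_1\oplus M_2\xrightarrow{\ (m_1,m_2)\mapsto\alpha(\overline{m_1})-\overline{m_2}\ }M_3\longrightarrow 0,\]
whose exactness on the right is exactly where the hypothesis enters: since $R_1\twoheadrightarrow R_3$, the map $M_1\to M_1\otimes_{R_1}R_3=M_3$ is already surjective for $M_1$ finite projective. This sequence is the main tool in everything that follows.

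\emph{Core step: $G(\xi)$ is finite projective, and base change holds.} I would first treat a free triple $\xi_h:=(R_1^n,R_2^n,h)$ of equal rank, $h\in\GL_n(R_3)$. By the Whitehead lemma the block matrix $\mathrm{diag}(h,h^{-1})\in\GL_{2n}(R_3)$ is a product of elementary matrices; since $R_1\twoheadrightarrow R_3$ each elementary matrix lifts to $\GL_{2n}(R_1)$, so $\mathrm{diag}(h,h^{-1})$ lifts to some $g\in\GL_{2n}(R_1)$. Acting by $g$ on the first component gives an isomorphism of triples $(R_1^{2n},R_2^{2n},\mathrm{diag}(h,h^{-1}))\cong(R_1^{2n},R_2^{2n},\mathrm{id})$, and the fiber product of the trivial triple is $R^{2n}$ by the cartesian square of rings. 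Since $(R_1^{2n},R_2^{2n},\mathrm{diag}(h,h^{-1}))\cong\xi_h\oplus\xi_{h^{-1}}$ as triples and $G$ commutes with finite direct sums, $G(\xi_h)$ is a direct summand of $R^{2n}$, hence finite projective; moreover $G(\xi_h)\otimes_R R_i\xrightarrow{\sim}R_i^n$, read off from the corresponding decomposition of $R^{2n}=R_1^{2n}\times_{R_3^{2n}}R_2^{2n}$. For a general triple with $M_1,M_2$ finite projective, I would stabilize by adding trivial free triples and then complement $\xi$ to a free triple; since $G$ and $\xi\mapsto G(\xi)\otimes_R R_i$ are additive functors of $\xi$, this yields that $G(\xi)$ is finite projective and $G(\xi)\otimes_R R_i\xrightarrow{\sim}M_i$ in general.

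\emph{Quasi-inverseness and the main obstacle.} That $F\circ G\cong\mathrm{id}$ is the base-change statement just established, the compatibility with $\alpha$ being immediate from the construction. For $G\circ F$, there is a natural comparison map $M\to(M\otimes_R R_1)\times_{M\otimes_R R_3}(M\otimes_R R_2)$; for $M$ finite projective it is an isomorphism, which one checks on $M$ free — where it is the identity by the cartesian square — and extends to direct summands by additivity. Hence $F$ and $G$ are mutually quasi-inverse, giving the asserted equivalence. I expect the genuine work to be concentrated in the projectivity of $G(\xi)$: the surjectivity of $R_1\to R_3$ is used twice in an essential way — once to make the fundamental sequence right-exact, and once, via the Whitehead lemma, to lift the stabilized gluing matrix — and the bookkeeping needed to realize an arbitrary projective triple as a direct summand of a free one (stabilizing so that ranks match and choosing complementary gluing data over $R_3$) is the only fiddly point; once the free case is settled, everything else is formal. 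Alternatively one could simply invoke Milnor's \emph{Introduction to Algebraic K-theory}, \S2, or the treatment of Milnor patching in the Stacks project.
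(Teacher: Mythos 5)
Your proposal is correct and follows essentially the same route as the paper: the paper constructs the same two functors (restriction to the gluing triple in one direction, the fiber product/kernel of the difference map in the other) and then simply cites Milnor, \S 2, Theorems 2.1--2.3 for the fact that they are quasi-inverse. What you have written out — the free case via the Whitehead lemma and lifting of elementary matrices along $R_1\twoheadrightarrow R_3$, followed by stabilization to handle general projective triples — is precisely the argument behind that citation, so there is nothing to add beyond noting that the paper's $M_2\otimes_{R_1}R_3$ in the statement is a typo for $M_2\otimes_{R_2}R_3$, which you implicitly corrected.
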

\begin{proof}
    Given a finite projective module over $R$, one can construct a gluing triple by base changing to $R_i$, $i=1,2,3$ and the isomorphism $\alpha$ is the identity. Conversely, given a gluing triple, one can get an $R$-module by taking the kernel of the difference map
    \[M_1\oplus M_2\xrightarrow{\alpha - id} M_2\otimes_{R_1} R_3.\]
    That this gives the desired equivalence follows from \cite[Theorem 2.1-2.3]{Milnor}. 
\end{proof}

\begin{example}
    Let $C$ be a complete algebraically closed non-archimedean field with ring of integers $\CO_C$ and $C^+$ is a bounded valuation subring of $\CO_C$. Let $k$ be its residue field, $\overline{C^+}$ be the image of $C^+$ in $k$ and $\varpi\in C^+$ be a pseudo-uniformizer of $C$. Taking $R=C^+$, $R_1=\CO_C$, $R_2=\overline{C^+}$ and $R_3=k$ gives a Milnor square. Similarly, taking $R=C^+/\varpi\cdot \CO_C$, $R_1=\CO_C/\varpi$, $R_2=\overline{C^+}$ and $R_3=k$ gives a Milnor square. These examples will be used in the proof of Proposition \ref{cartesian}.
\end{example}

\begin{prop}\label{BTgluing}
    Let $A$ be a ring. Denote the category of $p$-divisible groups on $\Spec(A)$ by $BT(A)$ (``BT" stands for Barsotti-Tate). Given a Milnor square as in Lemma \ref{milnor}, we have $BT(R)$ is the 2-cartesian product of $BT(R_1)$ and $BT(R_2)$ over $BT(R_3)$.
\end{prop}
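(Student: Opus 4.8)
The plan is to reduce the statement to the gluing of finite locally free commutative group schemes, where Lemma \ref{milnor} applies once it is upgraded to a symmetric monoidal equivalence.

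First I would invoke the standard description of $BT(A)$ as the category of inductive systems $(\G_n,\, i_n\colon \G_n\hookrightarrow \G_{n+1})_{n\geq 1}$ of finite locally free commutative group schemes over $A$, where each $\G_n$ has constant order $p^{nh}$ for a fixed integer $h$ and $i_n$ identifies $\G_n$ with the $p^n$-torsion subgroup $\G_{n+1}[p^n]$; the two functors are $\G\mapsto(\G[p^n])_n$ and $(\G_n)_n\mapsto \varinjlim_n\G_n$. Since forming a $2$-fibre product of categories commutes with this componentwise description, it suffices to prove the analogous statement at the level of finite locally free commutative group schemes, provided one also checks that the numerical conditions survive gluing. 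For the latter, observe that $\Spec R_1\sqcup\Spec R_2\to\Spec R$ is surjective for any Milnor square with $R_1\to R_3$ surjective; since the rank of a finite projective module is stable under base change, the order condition on a glued system is inherited from $R_1$ and $R_2$, and the identity $\G_n=\G_{n+1}[p^n]$, an equality of closed subgroup schemes, may likewise be tested over this cover.

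Next I would identify the category of finite locally free commutative group schemes over $A$ with the category of commutative, cocommutative Hopf algebra objects in the symmetric monoidal category $(\mathrm{FinProj}(A),\otimes_A,A)$ of finite projective $A$-modules. Lemma \ref{milnor} provides an equivalence of $\mathrm{FinProj}(R)$ with the category of gluing triples over $(R_1,R_2,R_3)$, and I would note that this equivalence is symmetric monoidal: the gluing triple of $M\otimes_R N$ is $(M_1\otimes_{R_1}N_1,\, M_2\otimes_{R_2}N_2,\, \alpha_M\otimes\alpha_N)$, compatibly with associativity and symmetry. A symmetric monoidal equivalence induces an equivalence on the categories of (Hopf) algebra objects; and, unwinding the definition of a morphism of gluing triples, a Hopf algebra object in that category is precisely a pair of finite locally free commutative group schemes over $R_1$ and $R_2$ together with an isomorphism of their pullbacks to $R_3$ — that is, an object of the $2$-fibre product of the categories $\mathrm{Grp}_{\mathrm{flf}}(R_i)$. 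This gives the claim at the level of finite locally free group schemes, and together with the first paragraph, the proposition.

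The one place that needs care — the only genuine, if soft, obstacle — is the monoidality of the gluing equivalence, together with the attendant point that the Hopf-algebra axioms, being equalities of $R$-linear maps between finite projective $R$-modules, may be verified after base change to $R_1$ and $R_2$. Both ultimately rest on the flatness of finite projective modules, which makes each functor $-\otimes_R R_i$ exact and keeps the inclusion $R\hookrightarrow R_1\oplus R_2$ injective after tensoring with such a module. Reassembling the inductive systems of finite group schemes back into $p$-divisible groups at the end is then routine.
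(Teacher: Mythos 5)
This is essentially the paper's argument: the paper also works level by level, using that morphisms and Hopf-algebra structures on the finite locally free group schemes $\G[p^n]$ are given by maps of the underlying finite projective modules, so that Lemma \ref{milnor} glues them; your packaging via the symmetric monoidal structure of the gluing equivalence (hence an equivalence on commutative, cocommutative Hopf algebra objects) is the same proof stated more systematically, and your attention to the order and $\G_n=\G_{n+1}[p^n]$ conditions only makes explicit checks the paper leaves implicit. For those checks, note that topological surjectivity of $\Spec R_1\sqcup\Spec R_2\to\Spec R$ does suffice, but not by itself for a scheme-theoretic identity: one should observe that the failure is measured by finitely generated $R$-modules (cokernel, then a split-off kernel) which vanish after base change to $R_1$ and to $R_2$, hence vanish by a fiberwise Nakayama argument over the surjective cover.
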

\begin{proof}
    We have a functor 
    \[BT(R)\rightarrow BT(R_1)\times_{BT(R_3)}BT(R_2)\] 
    by base changes. Given two $p$-divisible groups $\G,\G'$. We have
    \[\mathrm{Hom}_R(\G,\G')=\varprojlim_n \mathrm{Hom}_R(\G[p^n],\G'[p^n]).\]
    For each $n$, $\mathrm{Hom}_R(\G[p^n],\G'[p^n])$ is given by maps between the $R$-modules $\CO(G'[p^n])$ and $\CO(G[p^n])$, respecting the Hopf algebra structures on both sides. Since the Hopf algebra structures are given by morphisms of $R$ modules, we conclude by Lemma \ref{milnor} that giving such a map is equivalent to giving a pair of maps on the restrictions of $G[p^n]$ and $G'[p^n]$ to $R_1$ and $R_2$, identical on $R_3$. Passing to the inverse limit, this shows full-faithfulness. Essential surjectivity follows from a similar reasoning. Namely given a gluing triple of $p$-divisible groups, restricting to $p^n$-torsion points for each $n$, we can first recover the ring of functions $\CO(\G[p^n])$ as an $R$-module by Lemma \ref{milnor} and then endow it with a Hopf algebra structure. The full-faithfulness ensures that this will define a $p$-divisible group which restricts to the correct thing.
\end{proof}

\begin{lemma}\label{lemma: ModuleProduct}
    For $R=\prod_iV_i$ being a product of valuation rings (or fields) and $n$ be an integer, the category of rank $n$ projective modules over $R$ is equivalent to the collection of those over each $V_i$. 
\end{lemma}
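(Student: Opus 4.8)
The plan is to show that the base change functor
$F\colon P\mapsto (P\otimes_R V_i)_{i}$,
from the category of rank $n$ projective $R$-modules to the product category $\prod_i(\text{rank }n\text{ projective }V_i\text{-modules})$, is an equivalence. The only non-formal input I would invoke is the standard fact that a finitely generated projective module over a local ring — in particular over a valuation ring, or a field — is free. Everything else is bookkeeping with idempotents, but done carefully because $R=\prod_i V_i$ is an \emph{infinite} product, not a direct sum.

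The key structural step is a slot-wise decomposition of finitely generated projectives. Given a rank $n$ projective $R$-module $P$, write it as a direct summand of some $R^m$, say $P=eR^m$ with $e\in M_m(R)$ idempotent. Since $m$ is finite, $M_m\bigl(\prod_i V_i\bigr)=\prod_i M_m(V_i)$, so $e=(e_i)_i$ with each $e_i\in M_m(V_i)$ idempotent, whence $P=\prod_i e_iV_i^m=:\prod_i P_i$, each $P_i$ finitely generated projective — hence free — over $V_i$. The prime $I_i:=\ker(R\twoheadrightarrow V_i)$ is prime because $V_i$ is a domain, and $P\otimes_R V_i = P/I_iP = e_iV_i^m = P_i$; as $P$ has rank $n$ everywhere, evaluating at $I_i$ forces $P_i\cong V_i^n$, so in fact $P\cong R^n$. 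This simultaneously shows $F$ is well-defined (landing in $\prod_i$ of rank $n$ projectives) and that every rank $n$ projective over $R$ is free. For essential surjectivity I reverse this: given $(Q_i)_i$ with each $Q_i\cong V_i^n$, set $Q:=\prod_i Q_i\cong\prod_i V_i^n=R^n$, finitely generated free of rank $n$; and $Q\otimes_R V_j=Q/I_jQ=Q_j$, using that $I_jQ=\prod_{i\neq j}Q_i$ (which follows from $1-e_j\in I_j$, where $e_j$ is the $j$-th coordinate idempotent). For full faithfulness, writing $P=\prod_i P_i$ and $P'=\prod_i P'_i$ as above, an $R$-linear map $\phi\colon P\to P'$ commutes with each $e_j$, hence restricts to $\phi_j\colon P_j\to P'_j$; the $j$-th component of $\phi(p)$ equals that of $\phi(e_jp)=\phi_j(p_j)$, so $\phi=\prod_j\phi_j$, giving a bijection $\Hom_R(P,P')\cong\prod_i\Hom_{V_i}(P_i,P'_i)$ compatible with composition.

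The only real obstacle is exactly the one just flagged: because $R$ is a product and not a direct sum, the coordinate idempotents do not sum to $1$, so one cannot split an arbitrary $R$-module into a product over $i$, and $P$ is genuinely $\prod_i P_i$ rather than $\bigoplus_i P_i$. The decomposition nonetheless goes through for finitely generated projectives precisely because such a $P$ is a summand of a single $R^m$ with $m$ finite, so that $M_m(\prod_i V_i)=\prod_i M_m(V_i)$ applies; the rank $n$ hypothesis then upgrades each slot to a free module of the \emph{fixed} rank $n$, which is what makes both directions of the equivalence (including the reconstruction $\prod_i Q_i$) land back among finitely generated modules. (Compare the gluing arguments of Lemma~\ref{milnor} and Proposition~\ref{BTgluing}, which are the analogous statements for a Milnor square rather than a product decomposition.)
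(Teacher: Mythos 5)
Your proof is correct and takes essentially the same route as the paper's: base change in one direction, the product $\prod_i$ in the other, with the key input that finitely generated projectives over the local domains $V_i$ are free, so that a product of rank $n$ free modules over the $V_i$ is free of rank $n$ over $R=\prod_i V_i$. Your idempotent decomposition $P=eR^m=\prod_i e_iV_i^m$ and the explicit verification of full faithfulness and of $Q/I_jQ\cong Q_j$ simply spell out the steps the paper's proof leaves as ``clear'', so the two arguments agree in substance.
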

\begin{proof}
    We have a functor from rank $n$ projective modules on $R$ to those on each $V_i$ by base changes. Conversely, given a collection of rank $n$ projective modules $M_i$ over each $V_i$, we can take the product $M:=\prod_i M_i$. Since each $M_i$ is necessarily free and we can pick a basis $e_{i1},\dots, e_{in}$ of it, the product $M$ is also free and trivialized by $(e_{i1})_i, \dots, (e_{in})_i$. This gives a functor in the opposite direction. Clearly they are inverses to each other. 
\end{proof}

\begin{cor}\label{BTproduct}
    The category of $p$-divisible groups of a fixed height over $R$ is equivalent to the collection of those over each $V_i$. 
\end{cor}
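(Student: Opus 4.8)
The plan is to run the argument of Proposition \ref{BTgluing} essentially verbatim, but with the Milnor-square input (Lemma \ref{milnor}) replaced by the preceding lemma on finite projective modules over a product of valuation rings. Write $R=\prod_i V_i$ and fix the common height $h$, and write $BT_h(-)$ for the category of height-$h$ $p$-divisible groups. Base change along the projections $R\to V_i$ gives a functor $BT_h(R)\to\prod_i BT_h(V_i)$, and I want to show this is an equivalence.

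For full faithfulness I would use $\Hom_R(\G,\G')=\varprojlim_n\Hom_R(\G[p^n],\G'[p^n])$ together with the fact that a homomorphism $\G[p^n]\to\G'[p^n]$ is the same as an $R$-module map $\CO(\G'[p^n])\to\CO(\G[p^n])$ respecting the Hopf algebra structures. Since $\CO(\G[p^n])$ and $\CO(\G'[p^n])$ are finite projective $R$-modules, the preceding lemma (applied to module maps, by the same argument as in its proof) identifies $R$-module maps between them with products over $i$ of the corresponding $V_i$-module maps, and the Hopf-compatibility conditions, being equalities of module maps, decompose over the factors as well. Passing to the limit over $n$ then gives $\Hom_R(\G,\G')=\prod_i\Hom_{V_i}(\G_{V_i},\G'_{V_i})$.

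For essential surjectivity, given a family $(\G_i)_i$ with each $\G_i$ of height $h$, I would set $M_n:=\prod_i\CO(\G_i[p^n])$ for each $n$; since every factor is finite projective of rank $p^{nh}$ over $V_i$ — this is exactly where fixing $h$ enters, guaranteeing a constant rank — the preceding lemma makes $M_n$ finite projective of rank $p^{nh}$ over $R$. The comultiplication, counit, antipode and algebra structure maps are $V_i$-linear, so their products endow $M_n$ with an $R$-Hopf algebra structure (the axioms hold factorwise, hence in the product), defining a finite locally free group scheme $G_n=\Spec M_n$ of order $p^{nh}$ over $R$, and the transition maps $\G_i[p^n]\hookrightarrow\G_i[p^{n+1}]$ likewise assemble into closed immersions $G_n\hookrightarrow G_{n+1}$. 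It then remains to check that $\G:=\varinjlim_n G_n$ is a height-$h$ $p$-divisible group over $R$ restricting to $\G_i$ over each $V_i$: the defining conditions — each $\G[p^n]$ finite locally free of order $p^{nh}$, $\G$ being $p^\infty$-torsion, and $[p]\colon\G\to\G$ an epimorphism (equivalently, each $[p]\colon\G[p^{n+1}]\to\G[p^n]$ faithfully flat) — are all statements about the finite projective modules $M_n$ and the maps between them, hence can be read off on the factors $V_i$, where they hold by assumption.

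The step I expect to be the main obstacle — though it turns out not to be serious — is this last descent along the infinite product: because $\Spec R$ has points ``at infinity'' not lying over any $\Spec V_i$, one cannot invoke Zariski descent directly. It goes through because we only ever work with finite projective $R$-modules and algebras, which by the preceding lemma are honestly products of their restrictions: exactness of a sequence of such modules is then detected factorwise since products are exact, and faithful flatness of a finite $R$-algebra map $\prod_i A_i\to\prod_i B_i$ with each $B_i$ faithfully flat over $A_i$ holds because it is flat by the preceding lemma and surjective on spectra, the latter since $\prod_i A_i\hookrightarrow\prod_i B_i$ is an injective integral extension and so satisfies lying-over. Granting these observations, the base-change functor and the product construction are mutually quasi-inverse, which is the assertion.
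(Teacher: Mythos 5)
Your proof is correct and takes essentially the same route as the paper's (two-line) argument: truncate to the $p^n$-torsion levels, invoke the fixed-rank projective-module lemma over $R=\prod_i V_i$, and note that the Hopf-algebra structures, transition maps and defining conditions are all maps of finite projective $R$-modules, hence decompose factorwise. One small imprecision: the flatness of $\prod_i B_i$ over $\prod_i A_i$ in your last paragraph is not literally given by the preceding lemma (which concerns modules over $\prod_i V_i$), but it does hold, e.g.\ because each $B_i$ is generated over $A_i$ by a $V_i$-basis of $p^{(n+1)h}$ elements, so the uniform bound yields $\prod_i B_i$ as a direct summand of $\bigl(\prod_i A_i\bigr)^{p^{(n+1)h}}$.
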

\begin{proof}
    Using Lemma~\ref{lemma: ModuleProduct}, one can again reason by first truncating the $p$-divisible groups and then saying that the Hopf algebra structure on the rings of functions of each $p^n$-torsion subgroup is defined using maps between the underlying (finite projective) modules.
\end{proof}

\subsection{Dieudonn\'e modules} 
We recall the prismatic Dieudonn\'e theory of $p$-divisible groups following \cite{AL}. The results we need over certain semiperfect and perfectoid rings, are already known to \cite{Lau} and \cite[Appendix to Lecture 17]{Berkeley}.

For a $p$-divisible group $G$ over a quasi-syntomic ring $R$, Ansch\"utz and Le Bras defined its prismatic Dieudonn\'e crystal as a sheaf on the small quasi-syntomic site of $R$, see \cite[Section 4]{AL}. If $R$ is quasi-regular semiperfectoid (qrsp), giving this crystal is the same as giving the evaluation of its associated sheaf on the prismatic site of $R$ at the initial prism $(\prism_R,I)$, which is called the \textit{prismatic Dieudonn\'e module} of $G$. It is a finite locally free $\prism_R$ module and is equipped with an endomorphism $\varphi_M:\varphi^\ast M\rightarrow M$, admissible in the sense of \cite[Proposition 4.9]{AL} (where $\varphi$ is the Frobenius on $\prism_R$). The so-defined prismatic Dieudonn\'e module is contravariant in $G$. To be consistent with \cite{Berkeley}, we use the \textbf{covariant} prismatic Dieudonn\'e module and denote it by $M_{\prism}(G)$. This is obtained by applying $\mathrm{Hom}_{\prism_R}(-,\prism_R)$ to the contravariant one. The following theorem will be crucial to us.

\begin{theorem}[{Cf. \cite[Proposition 4.73, 4.12]{AL}}]\label{BTDieudonne}
    Let $R$ be a qrsp ring. The covariant prismatic Dieudonn\'e module functor sending $G$ to $M_{\prism}(G)$ is an equivalence between the category of $p$-divisible groups over $R$ and that of admissible Dieudonn\'e modules over $R$.
\end{theorem}

\begin{example}{(\cite[Example 3.20]{AL})}
    An integral perfectoid ring, or a $p$-complete bounded $p^\infty$-torsion quotient of a perfectoid ring by a finite regular sequence, is qrsp. In particular, for a perfectoid Tate ring $(R,R^+)$ with $\varpi\in R^+$ a pseudo-uniformizer of $R$, both $R^+$ and $R^+/\varpi$ are qrsp.
\end{example}

\begin{example} 
    Let $R$ be an integral perfectoid ring, then
    \[(\prism_R,I)=(W(R^\flat),\mathrm{ker}(\theta\circ \varphi_R^{-1})),\]
    where $\theta$ is Fontaine's theta map. In this case an admissible prismatic Dieudonn\'e module is the same as a minuscule Breuil-Kisin-Fargues module with a leg at $V(\mathrm{ker}(\theta\circ \varphi_R^{-1}))$ (see Definition \ref{BKF}), and the construction of the covariant Dieudonn\'e module agrees with the construction in \cite[Theorem 17.5.2]{Berkeley}, see \cite[Proposition 4.48]{AL}.
\end{example}

\begin{example}\label{example-crystallineDieu}
    Let $R$ be qrsp and $pR=0$ (e.g. the integral subring in a Tate perfectoid ring modulo a pseudo-uniformizer), then 
    \[(\prism_R,I)=(A_{\cris}(R),(p)).\]
    In this case the covariant Dieudonn\'e module agrees with the (naive dual of the contravariant) crystalline Dieudonn\'e module of Berthelot, Breen and Messing, \cite[Lemma 4.45]{AL}. 
\end{example}
\hfill

\section{Serre-Tate theory}
	As the main input of the fiber product description of the Shimura variety, we need the following theorems regarding the deformation of abelian schemes, originally due to Serre-Tate, Messing and Drinfeld. 

\begin{theorem}[{\cite[Theorem 2.4.1]{CS19}}]\label{ST0}
Let $S'\twoheadrightarrow S$ be a surjection of rings in which p is nilpotent, with kernel $I\subset S'$ being a nilpotent ideal.
	\begin{enumerate}
	\item The functor $\G_{S'}\mapsto \G_{S'}\times_{S'}S$ from $p$-divisible groups up to isogeny over $S'$ to $p$-divisible groups up to isogeny over S is an equivalence of categories.
	\item The functor $A_{S'}\mapsto A_{S'}\times_{S'}S$ from abelian schemes up to $p$-power isogeny over $S'$ to abelian schemes up to $p$-power isogeny over S is an equivalence of categories.
 	\end{enumerate}
\end{theorem}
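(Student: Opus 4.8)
The plan is to deduce the two parts in turn: part (1) from Drinfeld's rigidity lemma together with the unobstructedness of deformations of $p$-divisible groups along nilpotent thickenings, and then part (2) from part (1) by feeding it into the Serre-Tate theorem and localizing at $p$-power isogenies.

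For part (1) I would first reduce to the case of a square-zero ideal. Since $I^m=0$ for some $m$, the surjection $S'\to S$ is a finite composite of square-zero extensions $S'/I^{j+1}\to S'/I^{j}$ (one has $I^{2j}\subseteq I^{j+1}$ for $j\geq 1$), and a composite of equivalences of categories is an equivalence; so assume $I^2=0$. Faithfulness of the reduction functor is classical and holds already before inverting $p$: a homomorphism $G\to H$ over $S'$ that vanishes modulo the nilpotent ideal $I$ factors through the formal completion of $H$ along its identity section, and $p$-divisibility of $G$ together with the nilpotence of $p$ in $S'$ then force it to vanish. Essential surjectivity I would also obtain without inverting $p$, from the deformation theory of $p$-divisible groups (Messing, Illusie; see \cite[II]{Messing}): the lifts of a $p$-divisible group $G_0$ over $S$ to $S'$ form a nonempty torsor under a module assembled from $\mathrm{Lie}\,G_0$, $\mathrm{Lie}\,G_0^\vee$ and $I$, so in particular every $p$-divisible group over $S$ lifts to $S'$. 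What remains — fullness after $\otimes_{\Z_p}\Q_p$ — is exactly Drinfeld's lemma: any homomorphism $\bar f\colon G_0\to H_0$ lifts to $S'$ after multiplication by a power of $p$ that is bounded independently of $\bar f$ and of $G_0,H_0$.

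Drinfeld's lemma is the one genuinely non-formal ingredient, and is where I expect the work to lie. The route I would take: fix lifts $G,H$ over $S'$ of $G_0,H_0$ and describe them, via Grothendieck-Messing theory, by the covariant Dieudonn\'e crystals $\mathbb{D}(G_0),\mathbb{D}(H_0)$ together with the Hodge filtrations on their evaluations at $S'$. A homomorphism $\bar f$ induces a morphism of crystals, hence a canonical map $\mathbb{D}(G_0)(S')\to\mathbb{D}(H_0)(S')$; the obstruction to this map respecting the Hodge filtrations — equivalently, to $\bar f$ lifting to $\Hom_{S'}(G,H)$ — is an $S'$-linear map $\omega_G\to\mathrm{Lie}\,H$ which vanishes modulo $I$, hence lands in a submodule of $\Hom_{S'}(\omega_G,\mathrm{Lie}\,H)\otimes_{S'}I$. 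As $p$ is nilpotent in $S'$ this module is annihilated by a fixed power of $p$, so $p^{c}\bar f$ lifts. The delicate point is making the crystalline/deformation machinery available for a general, non-PD, nilpotent thickening; I would do this by the standard device of interposing PD-thickenings, and it is this bookkeeping, rather than any single estimate, that I regard as the main obstacle. (For the quasiregular semiperfectoid rings that actually occur later, one could instead argue directly with prismatic Dieudonn\'e modules over the explicit prisms, but the crystalline argument is what covers the asserted generality.)

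For part (2) I would invoke the Serre-Tate theorem (\cite{Katz}; see also \cite[2.4]{CS19}). Writing $\mathrm{Ab}(-)$ for the category of abelian schemes and, as in the excerpt, $BT(-)$ for that of $p$-divisible groups, it asserts that $A\mapsto(A\times_{S'}S,\ A[p^\infty],\ \mathrm{can})$ is an equivalence
\[
\mathrm{Ab}(S')\ \simeq\ \mathrm{Ab}(S)\ \times_{BT(S)}\ BT(S').
\]
A $p$-power isogeny of abelian schemes induces, compatibly under the three functors, an isogeny of the associated $p$-divisible groups; hence formally inverting $p$-power isogenies is compatible with this fibre product of categories, giving an equivalence
\[
\mathrm{Ab}(S')\bigl[\tfrac1p\bigr]\ \simeq\ \mathrm{Ab}(S)\bigl[\tfrac1p\bigr]\ \times_{\,BT(S)\otimes\Q_p}\ \bigl(BT(S')\otimes\Q_p\bigr).
\]
By part (1) the functor $BT(S')\otimes\Q_p\to BT(S)\otimes\Q_p$ is an equivalence, so this fibre product collapses, via its first projection, onto $\mathrm{Ab}(S)\bigl[\tfrac1p\bigr]$; tracing through the identifications, that first projection is precisely the reduction functor of part (2), which is therefore an equivalence.
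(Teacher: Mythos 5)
The paper does not actually prove this theorem: it is cited from \cite[2.4.1]{CS19} (the underlying result being Drinfeld's rigidity lemma), and what the paper proves in this section is the reverse direction, namely Theorem \ref{ST1} (Serre--Tate) \emph{from} the present statement. So there is no in-paper proof to compare with; judged on its own, your outline is correct and follows the standard route: square-zero d\'evissage, integral faithfulness (the kernel of $H(A)\to H(A/IA)$ is an $S'$-module, hence killed by a power of $p$, and $G$ is $p$-divisible), essential surjectivity from Illusie's unobstructedness, and the ``multiply by $p^c$ and lift'' fullness step, which you run through Grothendieck--Messing rather than Drinfeld's original elementary argument (lift points locally and multiply by $p^m$ to kill the ambiguity lying in $\mathrm{Lie}(H)\otimes I$); both work.

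Two places need tightening. First, the obstacle you single out --- making the crystalline machinery available for a non-PD nilpotent thickening --- is not there: after your own reduction to $I^2=0$, the ideal carries the trivial divided powers ($\gamma_n=0$ for $n\ge 2$), which are nilpotent, and $p$ is nilpotent on $S'$, so Messing's theorem applies verbatim to each square-zero step; no interposition of PD-thickenings is needed. Second, the claim that inverting $p$-power isogenies ``is compatible with'' the Serre--Tate fibre product is not a formal fact and is the one real gap in part (2): you must check that the comparison functor to $\mathrm{Ab}(S)[1/p]\times_{BT(S)\otimes\Q}(BT(S')\otimes\Q)$ is essentially surjective (replace the quasi-isogeny gluing datum $\rho$ by a genuine isomorphism after dividing $A_S$ by the finite locally free kernel of $p^n\rho$, exactly as the paper does in the proofs of Theorem \ref{ST1} and Proposition \ref{Bijective}) and fully faithful (clear denominators and use that $\Hom$-groups of $p$-divisible groups and of abelian schemes are $p$-torsionfree). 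Alternatively, prove (2) directly: essential surjectivity by lifting $A_S[p^\infty]$ (Illusie) and applying Serre--Tate to get an honest lift; fullness up to $p$-powers from part (1) plus the morphism part of Serre--Tate. Finally, since this paper deduces its Serre--Tate theorem \ref{ST1} from the present statement, you should say explicitly that the Serre--Tate input in your part (2) is the independently proven classical theorem of \cite{Katz}, not Theorem \ref{ST1}; otherwise the argument, spliced into this paper, would appear circular.
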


\begin{theorem}[{\cite[Theorem 2.4.2]{CS19}}(Serre-Tate)]\label{ST1}
Let $S'\twoheadrightarrow S$ be a surjection of rings in which p is nilpotent, with kernel $I\subset S'$ being a nilpotent ideal. The functor 
\[A_{S'}\mapsto (A_S, A_{S'}[p^\infty],id)\]
is an equivalence of categories between the category of abelian schemes over $S'$ and the category of triples consisting of an abelian scheme $A_S$ over $S$, a $p$-divisible group $\G_{S'}$ over $S'$ and an isomorphism $\rho:A_S[p^\infty]\rightarrow \G_{S'}\times_{S'}S$. 
\end{theorem}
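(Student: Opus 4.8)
The plan is to reduce to a square-zero thickening and then run crystalline deformation theory on both sides of the functor $A'\mapsto A'[p^\infty]$, exploiting that an abelian scheme and its $p$-divisible group have the same filtered first crystalline cohomology, so that the two deformation problems literally coincide.

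\textbf{Reduction.} Since $I$ is nilpotent, I would factor $S'\twoheadrightarrow S$ as a finite tower of surjections with square-zero kernels and argue by induction. Granting the statement for each square-zero step, peeling off one layer identifies $\mathrm{AbSch}(S')$ with the category of triples $(A''/S'',\ \mathcal{G}'/S',\ A''[p^\infty]\xrightarrow{\sim}\mathcal{G}'_{S''})$ for an intermediate ring $S''$; applying the inductive hypothesis in the first slot and then discarding the now-redundant $p$-divisible group over $S''$ recovers the category of triples over $S'\twoheadrightarrow S$, and a diagram chase shows the composite equivalence is the functor of the theorem. So we may assume $I^2=0$; then $I$ carries its canonical nilpotent divided power structure and $S'\twoheadrightarrow S$ is a nilpotent PD thickening with $p$ nilpotent, which is exactly the setting for crystalline deformation theory.

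\textbf{The two deformation theories and the comparison.} Now I would invoke three classical facts. \emph{(a)} Grothendieck--Messing for $p$-divisible groups: $p$-divisible groups over $S'$ are equivalent to pairs $(G_0,\mathrm{Fil})$ with $G_0$ a $p$-divisible group over $S$ and $\mathrm{Fil}$ a (locally) direct summand of $\mathbb{D}(G_0)(S')$ lifting the Hodge filtration, the equivalence sending $G$ to $G_0$ together with its own Hodge filtration. \emph{(b)} The analogous crystalline deformation theory for abelian schemes: abelian schemes over $S'$ are equivalent to pairs $(A_0,\mathrm{Fil})$ with $A_0$ an abelian scheme over $S$ and $\mathrm{Fil}$ a direct summand of $H^1_{\crys}(A_0/S)(S')$ lifting the Hodge filtration — this one I would take as known, as it follows from the cotangent-complex description of deformations of the smooth proper $A_0$ together with Mazur--Messing's identification of $H^1_{\crys}$ with the Lie algebra of the universal vectorial extension. \emph{(c)} The Mazur--Messing / Berthelot--Breen--Messing comparison: a canonical isomorphism of filtered crystals $\mathbb{D}(A_0[p^\infty])\cong H^1_{\crys}(A_0/S)$. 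Combining, the functor factors as a composite of equivalences
\begin{multline*}
\mathrm{AbSch}(S')\ \simeq\ \{(A_0,\ \mathrm{Fil}\subset H^1_{\crys}(A_0/S)(S'))\}\\
\simeq\ \{(A_0,\ \mathrm{Fil}\subset\mathbb{D}(A_0[p^\infty])(S'))\}\ \simeq\ \{(A_0,\ \mathcal{G}'/S',\ A_0[p^\infty]\xrightarrow{\sim}\mathcal{G}'_S)\},
\end{multline*}
the three arrows being \emph{(b)}, \emph{(c)}, \emph{(a)} (the last sending $(A_0,\mathrm{Fil})$ to the $p$-divisible group over $S'$ attached to $(A_0[p^\infty],\mathrm{Fil})$). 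By naturality of all three constructions the composite is isomorphic to $A'\mapsto(A'\times_{S'}S,\ A'[p^\infty],\ \mathrm{id})$, so this functor is an equivalence; full faithfulness and essential surjectivity come at once, with no separate bookkeeping of $\mathrm{Hom}$-groups.

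\textbf{Where the difficulty lies.} The only genuinely substantive ingredients are \emph{(b)} and \emph{(c)} together: morally, the assertion that a $p$-divisible group carries the entire first-order deformation theory of the abelian scheme it is extracted from. A fully self-contained treatment would have to establish these directly (via the cotangent complex of $A_0$ and the universal-extension picture of $H^1_{\crys}$), or else replace the argument by Drinfeld's: using the rigidity of \Cref{ST0}, lift $A_0$ to an abelian scheme over $S'$ up to $p$-power isogeny, lift the induced identification of $p$-divisible groups up to isogeny, clear denominators to an honest isogeny, and pass to the quotient by its kernel; the delicate point on that route is to choose the lifted quasi-isogeny so that this quotient reduces to $A_0$ exactly over $S$, not merely up to isogeny.
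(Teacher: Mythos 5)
Your proposal is correct in outline, but it is a genuinely different proof from the one in the paper. The paper follows the elementary Drinfeld-style argument (as in Katz and Caraiani--Scholze): full faithfulness is obtained by a direct computation with the kernel $K_B$ of $B\rightarrow i_*B_0$, which is killed by $p^N$ (where $I^N=0$), via the multiplication-by-$p^N$ sequence; essential surjectivity is exactly your ``fallback'' route: lift $A_0$ up to $p$-power isogeny using \Cref{ST0}, arrange an honest isogeny $A'_S\rightarrow A_0$, lift the induced isogeny on $p$-divisible groups to $S'$ by \Cref{ST0}(1), and quotient by its kernel. The ``delicate point'' you flag there is handled precisely by normalizing to an actual isogeny $A'_S\rightarrow A_0$ at the start, so the kernel of the lifted isogeny reduces to the kernel of that isogeny and the quotient restricts to $A_0$ on the nose, not just up to isogeny. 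By contrast, your main route is Grothendieck's crystalline proof: d\'evissage to a square-zero (hence canonically PD-nilpotent) thickening, Grothendieck--Messing for $p$-divisible groups, the crystalline deformation theory of abelian schemes, and the Mazur--Messing/BBM filtered comparison. This is valid and more conceptual---it explains \emph{why} the theorem holds (both deformation problems are lifts of the same Hodge filtration in the same crystal) and it yields full faithfulness and essential surjectivity simultaneously, whereas the paper must argue full faithfulness separately. What it costs is substantially heavier input: your step (b), the identification of deformations of $A_0$ with filtration lifts in $H^1_{\crys}(A_0/S)(S')$, is exactly where the content sits, and you must take care to cite a source (Grothendieck's Montreal notes, Mazur--Messing, or Illusie's cotangent-complex treatment) that proves it \emph{independently} of Serre--Tate, since several standard references deduce it from the very theorem you are proving; the paper's argument avoids this entirely and needs no crystalline machinery, no PD structures, and no reduction to square-zero kernels.
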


\hfill

\section{Shimura varieties}
    We introduce the PEL-type Shimura data and their associated Shimura varieties this paper concerns, following Kottwitz \cite[Section 5]{Kottwitz}, cf. \cite{Lan13}.

\subsubsection{Global PEL setup}

\begin{defn}
    A global PEL-datum is a tuple 
    \[(B,\ast,V,(\cdot,\cdot),h),\]
    where
    \begin{itemize}
        \item $B$ is a finite-dimensional semisimple $\Q$-algebra.
        \item $\ast$ is a positive involution on $B$, i.e. over $\R$, $\ast$ satisfies that $\mathrm{tr}_{B_\R/\R}(xx^\ast)>0$ for all $0\neq x\in B_\R$.
        \item $V$ is a finite left $B$-module.
        \item $(\cdot,\cdot)$ is a nondegenerate $\Q$-valued alternating form on $V$ such that $(bv,w)=(v,b^\ast w)$ for all $v,w\in V$ and $b\in B$. In particular, the induced involution on $\mathrm{End}(V)$ that sends an endomorphism to its adjoint with respect to $(\cdot,\cdot)$ extends $\ast$ on $B\subset \mathrm{End}(V)$. 

        Let $G/\Q$ be the algebraic group determined by the functor:
        \[R \mapsto \{x \in \mathrm{End}_{B\otimes R}(V\otimes R)\mid xx^*\in R^\times\}.\]
        \item $h: \mathbb{S}\rightarrow G_\R$ is a homomorphism, such that $h(\bar{z})=h(z)^\ast$ for any $z\in \C$, the symmetric real-valued bilinear form $(v,h(i)w)$ on $V_\R$ is positive-definite, and the induced Hodge structure on $V_\R$ is of type $(-1,0)$, $(0,-1)$.\footnote{This means $V_\C$ decomposes into a direct sum of two subspaces where the $h(z)$ action is by $z$ and $\bar{z}$ respectively, for all $z\in \C$. Here we follow Deligne's convention in \cite{Deligne79} that we think of the Hodge structure corresponding to $h$ as coming from the de Rham homology of an abelian variety, hence having negative weights.}
    \end{itemize}
\end{defn}

Let $X$ be the $G(\R)$-conjugacy class of $h$. Then the pair $(G,X)$ is a Shimura datum. Let $V_\C\cong V_1\oplus V_0$ be the $B_\C$-module decomposition induced by $h$ such that $h(z)$ acts on $V_1$ (resp. $V_0$) by $z$ (resp. $\bar{z}$).\footnote{Note that in \cite{Deligne79}, the subspaces $V_1$ and $V_0$ would be denoted $V^{-1,0}$ and $V^{0,-1}$ respectively, which is meant to indicate that in the cohomological convention of the Hodge decomposition, these are the subspaces of type $(-1,0)$, $(0,-1)$. Our justification for the notation here is that we use a subscript to indicate that we think of $V$ as the homology, while the number $1$ and $0$ are the weights of the Hodge cocharacter attached to $h$, in the usual representation theoretic sense.} Let $E_0$ be the field of definition of the complex representation $V_1$ of $B$, i.e.
\[E_0=\Q[\{\mathrm{tr}(b\mid V_1)\}_{b\in B}].\]
Then the reflex field $E(G,X)=E_0$.

If $B$ is simple, its center $F$ is a field and $F^+:=F^{\ast=id}$ is a totally real subfield. Let $G_1/\Q$ be the closed subgroup of $G$ defined by
\[R\mapsto\{x\in \mathrm{End}_{B\otimes R}(V \otimes R)\mid xx^{\ast} = id\}.\]
Then it is the restriction of scalar of some group $G_0/F^+$ from $F^+$ to $\Q$. According to the type of the extension $F/F^+$ and $G_0$, the PEL-datum falls into three families, cases $A$, $C$ and $D$, where respectively $F/F^+$ is a complex quadratic extension, $G_0$ is an inner form of the quasi-split unitary group over $F^+$ (of type $A_{n-1}$); $F=F^+$ is totally real, $G_0$ is a symplectic group in $2n$ variables; and $F=F^+$, $G_0$ is an orthogonal group of $2n$ variables.  Here $n=\tfrac{[F:F^+]}{2}(\mathrm{dim}_{F}\mathrm{End}_{B}(V))^{\frac{1}{2}}$ is forced to be even by the existence of $h$. 

In general the semisimple $\Q$-algebra $B$ decomposes into a product of simple algebras. According to \cite[Lemma 1.2.1.11]{Lan13}, the involution $\ast$ leaves stable each simple factor. Hence the symplectic $B$-module $(V,(\cdot,\cdot))$ decomposes accordingly. Up to similitude factors, $G$ is the product of groups as $G_1$ above. 

From now on, we will make the following additional assumptions on the PEL-datum and on the prime $p$:
\begin{assumption}\label{assumption}\hfill
\begin{enumerate}[leftmargin=0.7cm]
    \item (Type AC) In the decomposition of $B$ into simple factors, no factor of type $D$ appears.
    \item $B_{\Q_p}$ is a product of matrix algebras over unramified extensions of $\Q_p$. There exists a $\ast$-invariant $\Z_{(p)}$-order $\CO_B \subset B$, whose $p$-adic completion is a maximal $\Z_p$-order of $B_{\Q_p}$, and a $\Z_{(p)}$-lattice $\Lambda_0 \subset V$, stable under the $\CO_B$-action and self-dual with respect to $(\cdot, \cdot)$.
\end{enumerate}
\end{assumption}

\begin{remark}\label{Hasse}
    The Hasse principle holds for groups of type $C$. For type $A$, $G_0$ above is the inner form of a quasi-split unitary group over $F^+$, determined by the quadratic extension $F/F^+$. The Hasse principle holds if the Hermitian space giving rise to the quasi-split unitary group has even dimension over $F$; otherwise it can fail but this failure comes from the failure of the Hasse principle for the center of $G$, see \cite[Section 7]{Kottwitz}. But for type $D$, the Hasse principle fails in a more essential way. The reason we put assumption $(1)$ is to ensure that the moduli problem we will consider below will be a union of copies of Shimura varieties given by the PEL-datum. Involving type $D$ factors destroys this feature due to failure of the Hasse principle, see \cite[Example A.7.2]{Lan15}. Our assumption $(2)$ on the prime $p$ ensures a smooth integral structure at $p$. In particular, hyperspecial subgroups exist, or equivalently the group $G_{\Q_p}$ is quasi-split and splits over an unramified extension of $\Q_p$, see below.
\end{remark}

Fix $\CO_B$ and a self-dual $\CO_B$-lattice $\Lambda_0 \subset V$ as in Assumption \ref{assumption}(2). These determine a connected reductive group $G_{\Z_{(p)}}$ over $\Z_{(p)}$ with generic fiber $G$ as 
\[G_{\Z_{(p)}}(R)=\{x\in \mathrm{End}_{\CO_B\otimes_{\Z_{(p)}} R}(\Lambda_0\otimes_{\Z_{(p)}} R)\mid xx^*\in R^\times\}.\]


\subsubsection{Moduli interpretation}
Let $(B,\ast,V,(\cdot,\cdot),h)$ be a global PEL-datum satisfying Assumption \ref{assumption}, $(\CO_B,\ast, \Lambda_0, (\cdot,\cdot),h)$ its integral model at $p$, and $G_{\Z_{(p)}}$ as above. Let $\Lambda$ be the $p$-adic completion of $\Lambda_0$. We fix the hyperspecial subgroup $K_p=G_{\Z_{(p)}}(\Z_p) \subset G(\Q_p)$. Let $K^p \subset G(\mathbb{A}_f^p)$ be a compact open subgroup and $K=K_pK^p$. We can define a moduli stack of polarized abelian varieties with endomorphism by $\CO_B$ at level $K$, over the localization of $\CO_{E_0}$ at some prime above $p$. By Remark \ref{Hasse}, its generic fiber is a finite disjoint union of copies of the Shimura variety attached to the given PEL-datum. The number of copies agrees with the cardinality of the set of locally trivial elements in $H^1(G,\Q)$ see\cite[Section 8]{Kottwitz}. We ignore this difference below and call the moduli space the Shimura variety. 

\begin{defn}\label{abelianVar}
	Let $S$ be a scheme over $\CO_{E_0}\otimes_\Z \Z_{(p)}$, where $\Z_{(p)}$ is the localization of $\Z$ at $p$. An abelian scheme with $G$-structure at level $K$ over $S$ is a quadruple $\AV=(A,\iota, \lambda, \bar{\eta})$ where:
	\begin{itemize}[leftmargin=0.8cm]
 	\item $A$ is an abelian scheme of dimension $g=\frac{1}{2}\text{dim}_\Q V$ over $S$;
	\item $\iota:\CO_B\rightarrow \mathrm{End}(A)\otimes \Z_{(p)}$ is an $\CO_B$-action, satisfying the Kottwitz condition that 
\[\mathrm{det}_{\CO_S}(\iota(b)\mid Lie(A))=\mathrm{det}(b\mid V_1),\]
for all $b\in \CO_B$, where $V_\C \cong V_1 \oplus V_0$ is the decomposition such that $h(z)$ acts on $V_1$ (resp. $V_0$) by $z$ (resp. $\bar{z}$);
\footnote{In other words, the determinant of any element $b \in \CO_B$ acting on the Lie algebra $Lie(A)$ (as a free $\CO_S$-module) of $A$ agrees with the determinant of it acting on $V_1$. This makes sense as the decomposition $V_\C \cong V_1 \oplus V_0$ is defined over $E_0$ and the determinant of $b$ lies in $\CO_{E_0}\otimes \Z_{(p)}$. As remarked by \cite[Section 5]{Kottwitz}, for a point $s\in S$ with residue field $K/E_0$, this condition ensures that $\mathrm{Lie}(A_s)\cong V_{1,K}$ as $B_K$-modules, where $V_K \cong V_{1,K} \oplus V_{0,K}$ is a $K$-vector space decomposition whose base change to $\C$ is the above. In this way, $V_\C$ can be identified with the Betti homology $H_1(A_\C,\C)$ preserving the Hodge structures.}
	\item $\lambda: A\rightarrow A^\vee$ is a prime-to-p quasi-isogeny, symmetric with respect to the double duality $A\cong A^{\vee\vee}$, such that for some natural number $n$, $n\lambda$ is induced by an ample line bundle on $A$ (hence pointwise a polarization) and whose Rosati involution on $\mathrm{End}(A)\otimes \Z_{(p)}$ is compatible with $\ast$ on $\CO_B$ via $\iota$;
	\item $\bar{\eta}$ is a $K^p$-orbit of a chosen trivialization $\eta$ of the locally constant pro-\'etale\footnote{In the sense of \cite{BS15}.} sheaf $\underline{H_1}(A,\Af^p)$ on $S$, under the action of $\underline{G(\Af^p)}$. Namely, the sheaf
\[\underline{\mathrm{Isom}}_{G}(\underline{H_1}(A,\Af^p), \underline{V_{\Af^p}}),\]
whose sections are $B\otimes_\Q \Af^p$-module isomorphisms that preserve $(\cdot,\cdot)$ up to a scalar in $\underline{\Af^{p,\times}}$, is a $\underline{G(\Af^p)}$-torsor on $S_{\text{pro-\'et}}$. Choose one section $\eta$ of it on a trivializing cover $\tilde{S}\rightarrow S$ and look at the $\underline{G(\Af^p)}(\tilde{S})$-action on it. Then $\bar{\eta}$ is its orbit under the subgroup $\underline{K^p}(\tilde{S})$. We further require that $\bar{\eta}$ is invariant under the action of the covering group of $\tilde{S}\rightarrow S$. 
	\end{itemize}
\end{defn}

\begin{defn}
	Let $S_K^\mathrm{pre}$ be the presheaf of groupoids on the big \'etale site of schemes over $\CO_{E_0}\otimes_\Z \Z_{(p)}$, whose value on $S$ is the groupoid of abelian schemes over $S$ with $G$-structure at level $K$, and an isomorphism between $(A,\iota, \lambda, \bar{\eta})$ and $(A',\iota', \lambda', \bar{\eta'})$ is a prime-to-$p$ quasi-isogeny $f: A\rightarrow A'$, such that $f^\vee \circ \lambda' \circ f = c\lambda$, for some $c\in \underline{\Z_{(p)}^\times}(S)$, where $f^\vee: A'^\vee\rightarrow A^\vee$ is the dual quasi-isogeny, $f$ commutes with the action of $\CO_B$ on $A$ and $A'$ via $\iota, \iota'$, and $\bar{\eta}=\bar{\eta'}\circ f_\ast$. 
\end{defn}

This moduli problem is a Deligne-Mumford stack. For small enough $K^p$, it is representable by a smooth quasi-projective scheme, see \cite[Section 5]{Kottwitz}. We will always work in such situations. Let $E$ be the completion of $E_0$ at some prime $\mathfrak{p}$ above $p$ and $\CO_E$ its ring of integers. We base change the moduli functor to $\CO_E$ and denote the representing scheme by $S_K$. Its $p$-adic completion is denoted by $\mathscr{S}_{K}$. This is a formal scheme over $\Spf \CO_E$.


\begin{remark}\label{G-p-div}
    Let $S$ be an $\CO_E$-scheme. We call a tuple $(\G,\iota, \lambda)$ a $p$-divisible group with $G$-structure over $S$, where
    \begin{itemize}[leftmargin=0.8cm]
        \item $\G$ is a $p$-divisible group over $S$,
        \item $\iota: \CO_B\otimes \Z_p\rightarrow \mathrm{End}(\G)$ is
        a $\Z_p$-linear map satisfying the Kottwitz condition \[\mathrm{det}_{\CO_S}(\iota(b)\mid \mathrm{Lie}(\G))=\mathrm{det}(b\mid V_1\otimes_{E_0} E),\]
        \item $\lambda: \G\rightarrow \G^\vee$ is a polarization, satisfying for any $b\in \CO_B\otimes \Z_p$, $\lambda^{-1}\iota(b)^\vee \lambda=\iota(b^\ast)$. 
    \end{itemize} 
    An isomorphism (resp. quasi-isogeny) between $(\G,\iota, \lambda)$ and $(\G',\iota', \lambda')$ is an $\CO_B\otimes \Z_p$-linear isomorphism (resp. quasi-isogeny) $f:\G \rightarrow \G'$ such that $f^\vee\circ \lambda'\circ f= c\lambda$ for some $c\in \underline{\Z}_p^\times(S)$ (resp. $\underline{\Q}_p^\times(S)$).
    
    Taking the $p$-divisible group of an abelian scheme defines a functor from abelian schemes with $G$-structure up to (prime-to-$p$) quasi-isogenies to $p$-divisible groups with $G$-structure up to (isomorphisms) quasi-isogenies.
\end{remark}



\begin{example}\label{unitary}(Non-compact unitary Shimura varieties appeared in \cite[Section 2.1]{CS19}.)
Let $F$ be a CM field with totally real subfield $F^+ \subset F$ and $n\geq 1$ be an integer. Then we can take $B=F$, $*=$CM conjugation, $V=F^{2n}$, with alternating form
\[(\cdot,\cdot): V\times V \rightarrow \Q\]
\[((x_i),(y_i)) \mapsto \mathrm{tr}_{F/\Q}(\Sigma_{i=1}^{n}(x_i\bar{y}_{2n+1-i}-x_{2n+1-i}\bar{y}_i)).\]
The reductive group $G$ is a unitary similitude group and $G_\R\cong \mathrm{GU}(n,n)^{[F^+:\Q]}$.
\[X= \prod_{\tau: F^+\rightarrow \R}X_{\tau,+}\sqcup \prod_{\tau: F^+\rightarrow \R}X_{\tau,-},\]
where $X_{\tau,+}$ (resp. $X_{\tau,-}$) is the space of positive (negative) definite $n$-dimensional subspaces of $V\otimes_{F^+}\R \cong \C^{2n}$, each being isomorphic to the Hermitian upper (lower) half-space 
\[\mathcal{H}_{n,n}=\{A\in\mathrm{Herm}_n(\C)\otimes_\R \C: \mathrm{Im}(A)>0\}\]
\[(\mathcal{H}_{n,n}^{-}=\{A\in\mathrm{Herm}_n(\C)\otimes_\R \C: \mathrm{Im}(A)<0\}),\]
where $\mathrm{Herm}_n(\C)$ is the set of $n$-by-$n$ Hermitian matrices (see \cite[Section 3.2.5]{Lan16}), and

\[h=\prod_{\tau: F^+\rightarrow\R} h_\tau: \mathbb{S}\rightarrow G_{\R}, z\mapsto 	
(\mathrm{diag}\{z\cdot I_n, \overline{z}\cdot I_n\}_\tau)_{\tau:F^+\rightarrow \R}.\]

This is a moduli problem of type $A$, and the corresponding (unitary) Shimura variety is non-compact, since the group $G$ is quasi-split and has rationally defined parabolic subgroups. When $n=1$, $G_\R=\mathrm{GU}(1,1)$, the Shimura variety is one dimensional and we call it a unitary Shimura curve attached to the imaginary quadratic field $F$.
\end{example}

\subsubsection{Minimal compactifications}
Let $K=K_pK^p$ with $K_p$ hyperspecial as before, the smooth quasi-projective scheme $S_K$ over $\CO_E$ has a good minimal compactification, whose properties we summarize below. For more details, see \cite[Section 7.2.4]{Lan13}, \cite[Proposition 2.1.2]{LS}, cf. \cite[Theorem 2.5.8]{CS19}. 

\begin{theorem}
There exists a flat, projective, normal scheme $S_K^\ast$ over $\Spec(\CO_E)$, together with a set-theoretic partition into locally closed subschemes
\[S_K^\ast=\coprod_Z S_{K,Z},\]
where the (finite) index set is endowed with a partial order such that the incidence relations among strata are determined by this partial order. There is a unique dense open stratum that is isomorphic to $S_K$. If the level $K$ is principal, i.e. it is the kernel of the reduction by $N$ map on $G_{\Z_p}(\Z_p)$, for some integer $N$ coprime to $p$, then each $S_{K,Z}$ is a PEL-type Shimura variety. 
\end{theorem}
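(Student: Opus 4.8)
The plan is not to give a new construction but to recall the one due to Lan, which in the Siegel case goes back to Faltings--Chai. First I would fix an admissible smooth rational polyhedral cone decomposition $\Sigma$ and invoke the existence of a smooth projective toroidal compactification $S_K^{\mathrm{tor},\Sigma}$ of $S_K$, together with the canonical (semi-ample) extension $\omega$ of the Hodge line bundle $\det \pi_\ast\Omega^1_{A/S_K}$ of the universal abelian scheme; this is the main output of \cite[6]{Lan13}, after base change from $\CO_{E_0}\otimes_\Z\Z_{(p)}$ to $\CO_E$, using that Assumption \ref{assumption}(2) places us in the good-reduction PEL situation. One then sets
\[
S_K^\ast:=\mathrm{Proj}\Big(\bigoplus_{n\geq 0}H^0\big(S_K^{\mathrm{tor},\Sigma},\omega^{\otimes n}\big)\Big),
\]
and the content of \cite[7.2.4]{Lan13} is that this graded ring is finitely generated over $\CO_E$, that the construction is independent of $\Sigma$, that $S_K^\ast$ is flat, projective and normal over $\Spec(\CO_E)$ with $\omega$ descending to an ample line bundle, and that the canonical map $S_K^{\mathrm{tor},\Sigma}\to S_K^\ast$ restricts to an open immersion of $S_K$ as a dense open subscheme, which is the unique open stratum.

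For the stratification I would use Lan's combinatorial index set of cusp labels: each boundary stratum of $S_K^\ast$ is the image of a union of boundary strata of $S_K^{\mathrm{tor},\Sigma}$ and is parametrized by (an equivalence class of) a fully symplectic admissible filtration of $\Lambda_0$ together with the induced level datum, the set of such labels being partially ordered by the adjacency/refinement relation. Via the theory of degeneration data for polarized abelian schemes with $\CO_B$-action, each locally closed stratum $S_{K,Z}$ is identified with a finite étale quotient of a PEL moduli problem of strictly smaller relative dimension attached to the ``Hermitian part'' of the cusp label, and the incidence relations among the $S_{K,Z}$ are exactly those read off from the partial order; this is \cite[7.2.3--7.2.4]{Lan13}. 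When $K=K(N)$ is a principal level with $p\nmid N$, the level data occurring on the boundary are again of principal type for the smaller groups, so the finite quotients in question are themselves PEL-type Shimura varieties, after the same harmless identification of a disjoint union of copies as in the interior (cf. Remark \ref{Hasse}); for this statement in precisely the normalization and over the base $\CO_E$ used here I would additionally cite \cite[2.1.2]{LS} and \cite[2.5.8]{CS19}.

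The step that genuinely requires work — and which I would quote rather than reprove, since we only need the statement as an input — is the finite generation of $\bigoplus_n H^0(S_K^{\mathrm{tor},\Sigma},\omega^{\otimes n})$ together with the normality and $\CO_E$-flatness of its $\mathrm{Proj}$, uniformly in $p$. This is the heart of \cite[6--7]{Lan13}: it rests on Koecher's principle, a careful analysis of the formal completions of $S_K^{\mathrm{tor},\Sigma}$ along its boundary strata (so that sections of $\omega^{\otimes n}$ extend and their behaviour at the boundary is controlled), and on Stein-factorization arguments contracting the toroidal boundary. The only thing one must check by hand in the present setting is bookkeeping: matching our conventions (the base $\CO_E$, the self-dual lattice $\Lambda_0$ at $p$, and the possible replacement of the Shimura variety by a finite disjoint union of copies, cf. Remark \ref{Hasse}) to those of the cited references, which is routine.
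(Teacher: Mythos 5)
Your proposal matches what the paper does: the theorem is stated as a summary of known results, with the proof consisting of citations to Lan (notably \cite[7.2.4]{Lan13}) together with \cite[2.1.2]{LS} and \cite[2.5.8]{CS19}, exactly the references you invoke for the Proj construction over the toroidal compactification, the cusp-label stratification, and the identification of boundary strata at principal level. Your recollection of the construction and the bookkeeping remarks (base change to $\CO_E$, Remark \ref{Hasse}) are accurate and add no gap.
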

\begin{remark}\label{MinLevel}
    Over $E$ we can add levels at $p$ and the same statements hold. In this case, the scheme $S_{K,E}$ is the solution to the moduli problem of abelian schemes with $G$-structures at level $K$, where the level $\bar{\eta}$ is a $K$-orbit of trivializations of $\underline{H}_1(A,\Af)$ under the action of $\underline{G(\Af)}$.
\end{remark}

\begin{remark}
 As in the literature, we will refer to elements in the index set as \textit{cusp labels at level $K$}. In general, a cusp label is a tuple of the shape $(Z,(X,Y,\phi, \varphi_{-2},\varphi_0))$ consisting of the following data. see \cite[Definition 5.2.7.1, 5.4.1.3]{Lan13} 
    \begin{enumerate}
         \item $Z$ is a $\CO_B\otimes_{\Z} \hat{\Z}$-stable split two step filtration \[0=Z_{-3}\subset Z_{-2}\subset Z_{-1}\subset Z_0=\Lambda_0\otimes_{\Z}\hat{\Z},\]
         such that each graded piece is isomorphic to $M\otimes_{\Z_{(p)}}\hat{\Z}$ for some finitely generated $\CO_B$-torsionfree $\CO_B$-module (an $\CO_B$-lattice) $M$, and that $Z_{-2}$ and $Z_{-1}$ are annihilators of each other under the pairing $(\cdot,\cdot)$ induced from $\Lambda_0$;
         \item $X$ and $Y$ are $\CO_B$-lattices of the same $\CO_B$-multi-rank\footnote{Assume $B\cong \prod_i B_i$ is a decomposition of $B$ into simple $\Q$-algebras, then each finite $B$-module $M$ decomposes as $\prod_i M_i^{m_i}$, with $M_i$ being the unique simple left $B_i$-module. Then the vector $(m_i)$ is called the $B$-multi-rank of $M$. And for an $\CO_B$-lattice $M$, its $\CO_B$-multi-rank is the $B$-multi-rank of $M\otimes \Q$, see\cite[Definition 1.2.1.21]{Lan13}.} and $\phi: Y\rightarrow X$ is an $\CO_B$-linear injection;
         \item Denote $\Z_{(p)}\otimes_{\Z}\hat{\Z}$ by $R$, then $\varphi_{-2}: \Gr_{-2}^Z\cong \Hom_R(X\otimes_{\Z}\hat{\Z},R(1))$ and $\varphi_0: \Gr_{0}^Z\cong Y\otimes_{\Z}\hat{\Z}$ are isomorphisms such that the induced pairing 
         \[(\cdot,\cdot)_{20}: \Gr_{-2}^Z \times \Gr_{0}^Z\rightarrow R(1)\]
         is the pullback under $(\varphi_{-2},\varphi_0)$ of the pairing:\footnotesize
         \[\Hom_R(X\otimes_{\Z}\hat{\Z},R(1))\times (Y\otimes_{\Z}\hat{\Z}) \xrightarrow{id\times \phi} \Hom_R(X\otimes_{\Z}\hat{\Z},R(1))\times (X\otimes_{\Z}\hat{\Z})\xrightarrow{ev} R(1),\]
         \normalsize where the last map is the evaluation pairing.
    \end{enumerate}
There is an action of the group $K_pG(\Af^p)$ on $\Lambda_0\otimes_{\Z}\hat{\Z}$, inducing an action on the set of cusp labels. A cusp label at level $K$ is a $K$-orbit of cusp labels under this action.
\end{remark}
 \begin{remark}
    Given $Z$, a cusp label at level $K$, assume $\Gr_{-1}^Z\cong M\otimes_{\Z_{(p)}}\hat{\Z}$ for some $\CO_B$-lattice $M$. Then the stratum $S_{K,Z}$ is attached to the integral PEL Shimura datum $(\CO_B,*,M, (\cdot,\cdot)_{11}, h_{-1})$ (see \cite[Proposition 5.1.2.2]{Lan13} for the definition of $h_{-1}$). For an abelian variety corresponding to a $C$-point of $S_{K,E}$ for some complete algebraically closed non-archimedean field $C$, it has semistable reduction over the ring of integers $\CO_C\subset C$ and hence an attached Raynaud extension
    \[0\rightarrow T\rightarrow E\rightarrow B\rightarrow 0\]
    i.e. an extension of a smaller dimensional polarized abelian scheme $B$ by a torus $T$, both equipped with $\CO_B$-endomorphism. Then $S_{K,Z}$ is a parameter space for such $B$'s. In fact $X$ is obtained from the character group\footnote{In \cite{CS19} page 23, $X$ is said to be the cocharacter group instead of the character group and this is a slight inconsistency with the explanations there.} of $T$ (tensored up to $\Z_{(p)}$), $Y$ from that of the dual Raynaud extension, and the filtered pieces of $Z$ are obtained by taking the Tate module of $T$ and that of $E$. For details, see \cite[Section 3.3, 4.2]{Lan13}, cf. \cite[Section 2.5.1]{CS19} in the principally polarized case.
 \end{remark}

\subsubsection{Shimura variety as v-sheaves}
Let $K$, $S_K/\CO_E$ and $\mathscr{S}_{K}$ over $\Spf(\CO_E)$ be as before. We define below variants of the Shimura variety as v-sheaves for later use.

\begin{defn}
    The \textit{adic Shimura variety} at level $K$ is the diamond $\Shi_K$ over $\Spd E$ attached to $S_{K,E}$ by the big diamond functor, see Definition \ref{BigDiamond}, namely $\Shi_K= S^\Diamond_{K,E}$.
\end{defn}

By analytifying the universal abelian scheme over $S_{K,E}$ and passing to diamonds, we obtain a proper map of diamonds $\pi: \AV^\diamond\rightarrow \Shi_{K}$.

Equip $\Z_p$ with its usual $p$-adic topology. Let $\underline{\Z}_p$ be the v-sheaf on $\mathcal{A}^\diamond$ attached to the profinite space $\Z_p$. The sheaf of $\underline{\Z}_p$-modules on $\Shi_K$
\[T_p\mathcal{A}:=\underline{\mathrm{Hom}}_{\underline{\Z}_p}(R^1\pi_\ast\underline{\Z}_p,\underline{\Z}_p)\]
is called the Tate module of the universal object. 

\begin{defn}
     The \textit{Shimura variety with infinite level at $p$} is the diamond $\Shi_{K^p}$ over $\Shi_K$ of $\CO_B$-linear trivializations of $T_p\AV$, which preserve the alternating paring $(\cdot,\cdot)$ up to a constant in $\underline{\Z}_p^\times$, namely
    \[\Shi_{K^p}= \underline{\mathrm{Isom}}_G(T_p\mathcal{A}, \underline{\Lambda})\rightarrow \Shi_K.\]
\end{defn}
\begin{remark}
    Alternatively this is the limit in the category of diamond of $\Shi_K$'s for $K_p$ running over compact open subgroups of $G(\Q_p)$. It is in fact representable by a perfectoid space by the work of \cite{Sch15}.
\end{remark}

\begin{defn}[Cf. {\cite[Definition 5.19, Section 5.2]{MiedaImai}}]
    The \textit{good reduction locus} of the adic Shimura variety at level $K$ is the diamond $\Shi_K^\circ$ attached to the adic generic fiber of the formal scheme $\mathscr{S}_{K}$, i.e. 
    \[(\mathscr{S}_K^\text{ad}\times_{\Spa(\CO_E,\CO_E)}\Spa(E))^\diamond.\]
    This is a spatial diamond over $\Spd E$. 
\end{defn}

The diamond $\mathcal{S}_{K}^\circ$ still has a moduli interpretation in the following sense:
\begin{lemma}\label{PresheafGRL}
The diamond good reduction locus $\mathcal{S}_{K}^\circ$ is the sheafification with respect to the analytic topology of the presheaf on $\Perf$
\[S=\Spa(R,R^+)\mapsto \{(S^\sharp, \Spf R^{\sharp+}\rightarrow\mathscr{S}_K)\},\]
where $S^\sharp=\Spa(R^\sharp,R^{\sharp+})$ is an untilt of $S$ over $E$.
\end{lemma}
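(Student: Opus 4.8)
The plan is to identify $\Shi_K^\circ=(\mathscr{S}_K^{\text{ad}}\times_{\Spa(\CO_E,\CO_E)}\Spa(E))^\diamond$, which I abbreviate $Y^\diamond$ with $Y$ the adic generic fibre, with the analytic sheafification $P^\#$ of the presheaf $P$ in the statement. First I would build a natural transformation $P\to Y^\diamond$. Given an affinoid perfectoid $S=\Spa(R,R^+)$, an untilt $S^\sharp=\Spa(R^\sharp,R^{\sharp+})$ of $S$ over $E$, and a morphism of formal schemes $\phi\colon\Spf R^{\sharp+}\to\mathscr{S}_K$, pass to adic generic fibres over $\Spf\CO_E$: since $R^\sharp=R^{\sharp+}[1/\varpi]$ for a pseudo-uniformizer $\varpi$ and $R^{\sharp+}$ is integrally closed in $R^\sharp$, one has $(\Spf R^{\sharp+})_\eta=\Spa(R^\sharp,R^{\sharp+})=S^\sharp$, so $\phi$ induces $\phi_\eta\colon S^\sharp\to Y$ and $(S^\sharp,\phi_\eta)\in Y^\diamond(S)$; this is functorial by pulling back untilts. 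Since $Y^\diamond$ is a v-sheaf, hence an analytic sheaf, this factors through $P^\#\to Y^\diamond$, and it remains to show the latter is an isomorphism. As both sides are analytic sheaves on $\Perf$ it suffices to test on affinoid perfectoid $S$, where it is enough to check (a) $P(S)\to Y^\diamond(S)$ is injective and (b) every section of $Y^\diamond(S)$ is, analytic-locally on $S$, in the image of $P$.

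For (b) I would fix $(S^\sharp,f\colon S^\sharp\to Y)$, choose a finite affine open cover $\mathscr{S}_K=\bigcup_i\Spf A_i$ and the corresponding affinoid open cover $Y=\bigcup_iY_i$ with $Y_i=(\Spf A_i)_\eta=\Spa(A_i[1/p],A_i)$ (using normality of $S_K$, so that $A_i$ is integrally closed in $A_i[1/p]$). The opens $f^{-1}(Y_i)$ cover the quasi-compact space $|S^\sharp|$ and may be refined by rational subsets, so after replacing $S$ by a finite analytic cover I may assume $f$ factors through a single $Y_i$; then $f\colon\Spa(R^\sharp,R^{\sharp+})\to\Spa(A_i[1/p],A_i)$ is, by the universal property of the adic generic fibre of $\Spf A_i$, the same datum as a continuous $\CO_E$-algebra map $A_i\to R^{\sharp+}$ (continuity being automatic, as the image of $p$ is topologically nilpotent), i.e.\ a morphism $\Spf R^{\sharp+}\to\Spf A_i\hookrightarrow\mathscr{S}_K$ lifting $f$. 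For (a), let $(S^\sharp_j,\phi_j)\in P(S)$, $j=1,2$, have the same image in $Y^\diamond(S)$; then the untilts coincide and $\phi_{1,\eta}=\phi_{2,\eta}$. Since $S_K$ is quasi-projective, $\mathscr{S}_K$ is separated over $\CO_E$, so the equalizer $\mathrm{Eq}(\phi_1,\phi_2)$ is a closed formal subscheme $\Spf(R^{\sharp+}/I)\hookrightarrow\Spf R^{\sharp+}$; generic fibres commute with fibre products, so $(\Spf(R^{\sharp+}/I))_\eta=\mathrm{Eq}(\phi_{1,\eta},\phi_{2,\eta})=S^\sharp=\Spa(R^\sharp,R^{\sharp+})$, forcing $I[1/\varpi]=0$ in $R^\sharp$, hence $I=0$ as $R^{\sharp+}$ is $\varpi$-torsion free; thus $\phi_1=\phi_2$.

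I expect the point needing the most care is the presence of the sheafification, namely the reduction in (b): a morphism $S^\sharp\to Y$ of adic spaces does not in general come from a morphism $\Spf R^{\sharp+}\to\mathscr{S}_K$ of formal schemes, because an affine formal model of a rational subdomain of $S^\sharp$ need not be an open formal subscheme of $\Spf R^{\sharp+}$; this is exactly why $P$ is not itself a sheaf once $\mathscr{S}_K$ is non-affine, and why one must pass to a rational cover. The remaining ingredients — the equality $(\Spf R^{\sharp+})_\eta=\Spa(R^\sharp,R^{\sharp+})$, the universal property of the adic generic fibre of an affine formal scheme, and the compatibility of generic fibres with fibre products — are standard. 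Finally, for later use I would record that via the moduli description of $S_K$, and since $S_K$ is of finite type over $\CO_E$, a morphism $\Spf R^{\sharp+}\to\mathscr{S}_K$ is the same as a morphism of schemes $\Spec R^{\sharp+}\to S_K$, i.e.\ an abelian scheme with $G$-structure at level $K$ over $R^{\sharp+}$.
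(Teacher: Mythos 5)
Your proposal is correct and is essentially the paper's argument made explicit: the paper simply cites \cite[10.1.5]{Berkeley} for the description of the diamond attached to the adic generic fiber (untilts plus maps) and \cite[2.2.2]{SW} for the identification, analytic-locally, of maps $S^\sharp\rightarrow (\mathscr{S}_K)_\eta$ with maps $\Spf R^{\sharp+}\rightarrow \mathscr{S}_K$ (the colimit over open bounded subrings there collapses to $R^{\sharp+}$ since $R^\sharp$ is uniform). Your affine-cover/universal-property and separatedness arguments are precisely a by-hand verification of these two cited facts, so nothing further is needed.
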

\begin{proof}
   Combine \cite[Theorem 10.1.5]{Berkeley} and \cite[Proposition 2.2.2]{SW}.
\end{proof}

\begin{defn}
     The \textit{good reduction locus with infinite level at $p$} is the diamond $\Shi_{K^p}^\circ$ over $\Spd E$, obtained by pulling back $\Shi_K^{\circ}$ to $\Shi_{K^p}$.
\end{defn}

\begin{defn}
     The \textit{(v-sheaf) integral model} of the Shimura variety at level $K$ is the v-sheaf $\SK$ attached to the formal scheme $\mathscr{S}_K$, or alternatively this is the small diamond functor applied to the $\CO_E$-scheme $S_K$, see Example \ref{vsheafFormal}, \ref{BigDiamond} and \cite[Remark 2.11]{AGLR}.
\end{defn}

As explained in Remark \ref{MinLevel}, over $E$ we have minimal compactifications $S_{K_pK^p,E}^\ast$ for Shimura varieties with deeper levels at $p$.
\begin{defn}
The \textit{minimal compactification with infinite level at $p$} is the diamond $\Shi_{K^p}^\ast$ over $\Spd E$
\[\Shi_{K^p}^\ast:= \varprojlim_{K_p}S_{K_pK^p,E}^{\ast, \diamond},\]
where the limit is taken over compact open subgroups $K_p\subset G(\Q_p)$.   
\end{defn}

\hfill

\section{\texorpdfstring{$\Bdr^+$}{}-affine Grassmannian and Hodge-Tate period map}
     \subsection{\texorpdfstring{$\Bdr^+$}{}-affine Grassmannian}

Let $G$ be a reductive group over $\Q_p$. Recall the following definition from \cite[Section 19.1]{Berkeley}.

\begin{defn}
    The $\Bdr^+$-affine Grassmannian $\Gr_G$ is the v-sheaf on $\Perf/\Spd \Q_p$ sending $S=\Spa(R,R^+)$ to the set of isomorphism classes of pairs $(\mathcal{F}, \alpha)$, where $\mathcal{F}$ is a $G$-torsor over $\Bdr^+(R^\sharp)$, $\alpha$ is a trivialization of $\mathcal{F}$ over $\Bdr(R^\sharp)$, and $S^\sharp= \Spa(R^\sharp, R^{\sharp+})$ is the untilt of $S$ over $\Spa \Q_p$ determined by the map $S\rightarrow \Spd\Q_p$. Alternatively, this is the \'etale sheafification\footnote{According to \cite{Kestutis}, analytic sheafification suffices.} of the presheaf sending $S\rightarrow \Spd \Q_p$ to the set $G(\Bdr(R^\sharp))/G(\Bdr^+(R^\sharp))$.
\end{defn}

Over an algebraically closed non-archimedean extension $C/\Q_p$, fixing a split torus and a Borel $T\subset B\subset G_C$, we have the Cartan decomposition 
\[G(\Bdr(C))=\coprod_{\mu \in X^+_*(T)} G(\Bdr^+(C))\cdot \xi^\mu\cdot G(\Bdr^+(C)).\]
where $ X^+_*(T)$ is the set of dominant cocharacters of $T$. This defines a Bruhat stratification on $\Gr_{G,C}$, the base change of $\Gr_G$ to $\Spd C$:

\begin{defn}\label{SchubertCell}
Let $\mu\in X^+_*(T)$. Then $\Gr_{G,C,\mu}$ (respectively $\Gr_{G,C,\leq \mu}$) is the subfunctor of $\Gr_{G,C}$ sending $S/\Spd C$ to the set of maps from $S$ to $\Gr_{G,C}$ for which any geometric point $\Spa(\tilde{C},\tilde{C}^+)$ of $S$ maps to a point in the coset 
\[G(\Bdr^+(\tilde{C}))\cdot \xi^\mu\cdot G(\Bdr^+(\tilde{C}))\] (respectively in the union of cosets labeled by some $\lambda\leq \mu$ in the Bruhat order on $X^+_\ast(T)$). If the $G(C)$-conjugacy class of $\mu$ is defined over some field $E$ with $C/E/\Q_p$, then so is $\Gr_{G,\mu}$. 
\end{defn}

\subsection{Hodge-Tate period map in the PEL setup}

Let $(B,\ast, V, (\cdot,\cdot),h)$ be a global PEL-datum satisfying Assumption \ref{assumption}, and $G/\Q$, $\nu_h$, $E_0$, $E$ determined by it as in section 5. Fix an isomorphism $\C\cong \overline{\Q}_p$ over $E_0$, where $\overline{\Q}_p$ is an algebraic closure of $\Q_p$ containing $E$. Choose a maximal torus and a Borel $T\subset B\subset G_{\overline{\Q}_p}$. Let $\mu$ be a dominant cocharacter representing the $G(\overline{\Q}_p)$-conjugacy class of $\nu_h^{-1}$. 

We consider the $\Bdr^+$-affine Grassmannian $\Gr_G$ attached to $G_{\Q_p}$. The conjugacy class $[\mu]$ determines a Schubert cell $\Gr_{G,\mu} \subset \Gr_{G,E}$. Since $\mu$ is minuscule it equals $\Gr_{G,\leq\mu}$ and is proper by \cite[Proposition 19.2.3]{Berkeley}. From now on, we drop the subscript $\Q_p$ from $G_{\Q_p}$ when it is clear from the context that the situation is local at $p$.   
\begin{remark}
\label{minusculeSchubert}
Let $C/E$ be a complete algebraically closed nonarchimedean field. Using Theorem \ref{BTclassification} and a Bialynicki-Birula isomorphism \cite[Proposition 19.4.2]{Berkeley}, we can interpret $\Spd C$-valued points of $\Gr_{G,\mu}$ as parametrizing $p$-divisible groups over $\CO_C$ with additional structures:

Let $\Fl_{G,\mu}$ be the analytification of the partial flag variety $G/P_\mu$ over $E$, with $P_\mu$ being the maximal parabolic subgroup of $G$ such that for any $g\in P_\mu$, the limit 
\[\varinjlim_{t\rightarrow 0}\mu(t)^{-1}g\mu(t)\] 
exists. The Bialynicki-Birula isomorphism identifies $\Gr_{G,\mu}$ with $\Fl_{G,\mu}^\diamond$. Giving a $\Spa C$-point of $\Fl_{G,\mu}$ is equivalent to giving a $B$-equivariant filtration $W\subset V\otimes_\Q C$ by a maximal isotropic subspace with respect to the pairing $(\cdot,\cdot)$.

According to Theorem \ref{BTclassification}, this filtration, together with the self-dual lattice $\Lambda\subset V_{\Q_p}$, defines a $p$-divisible group $\G$ with trivialized Tate module $T_p\G \cong \Lambda$. This $\G$ is equipped with a polarization $\G\rightarrow \G^*$ coming from $(\cdot,\cdot): (\Lambda,W)\rightarrow (\Lambda^*(1),W^\perp), (t,w)\mapsto ((\cdot,t), (\cdot,w))$, an $\CO_B$-endomorphism coming from the $\CO_B$-module structures on $(\Lambda,W)$, and an infinite level structure coming from the trivialization $T_p\G \cong \Lambda$. 
\end{remark}

Let $K^p$ be a compact open subgroup of $G(\Af^p)$ and $\Shi_{K^p}^\circ/\Spd E$ the good reduction locus of the diamond infinite $p$-level Shimura variety. The Hodge-Tate period map of \cite{Sch15} and \cite[Theorem 2.1.3]{CS17} restricted to $\Shi_{K^p}^\circ$ can be rephrased as below.

\begin{theorem}\label{HT}
There exists a $G(\Q_p)$-equivariant Hodge-Tate period map of diamonds over $\Spd E$
\[\pi_{HT}^\circ: \Shi_{K^p}^\circ\rightarrow \Gr_{G,\mu}.\] 
It is also equivariant with respect to the natural $G(\Af^p)$-action on the inverse system $\{\Shi_{K^p}^\circ\}_{K^p}$ and the trivial action on the target. 
\end{theorem}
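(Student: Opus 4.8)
The plan is to construct $\pi_{HT}^\circ$ directly from the moduli interpretation of $\Shi_{K^p}^\circ$ via covariant (prismatic) Dieudonn\'e theory, then to read off the boundedness by $\mu$ and the equivariance on geometric points. First I would use Lemma \ref{PresheafGRL}: analytic-locally on $S=\Spa(R,R^+)$, a map $S\to\Shi_{K^p}^\circ$ amounts to an untilt $S^\sharp=\Spa(R^\sharp,R^{\sharp+})$ over $E$, a map $\Spf R^{\sharp+}\to\mathscr{S}_K$ — hence an abelian scheme $A/R^{\sharp+}$ with $G$-structure $(\iota,\lambda,\bar\eta)$ as in Definition \ref{abelianVar} — together with a trivialization $T_pA\cong\underline{\Lambda}$ compatible with $\iota$ and $\lambda$ (from the infinite level at $p$). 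Put $\G=A[p^\infty]$, a $p$-divisible group over the integral perfectoid ring $R^{\sharp+}$.

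Next I would produce the point of $\Gr_G$. By Section~4, $\G$ has a covariant prismatic Dieudonn\'e module $M_{\prism}(\G)$, finite projective over $\prism_{R^{\sharp+}}=\Ainf(R^{\sharp+})$ with its Frobenius; its base change to $\Bdr^+(R^\sharp)$ is a $\Bdr^+(R^\sharp)$-lattice, and the \'etale comparison (together with the level trivialization, which identifies $T_p\G\cong\Lambda$ locally on $S$) gives an isomorphism $M_{\prism}(\G)\otimes_{\Ainf(R^{\sharp+})}\Bdr(R^\sharp)\cong\Lambda\otimes_{\Z_p}\Bdr(R^\sharp)$ respecting the $\CO_B$-action and the pairing up to a unit. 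Since $G_{\Q_p}$ is by definition the group of $\CO_B$-linear $(\cdot,\cdot)$-similitudes of $\Lambda$, the $G$-torsor $\mathcal{F}$ of $\CO_B$-linear similitudes $\Lambda\otimes\Bdr^+(R^\sharp)\xrightarrow{\sim}M_{\prism}(\G)\otimes\Bdr^+(R^\sharp)$, together with the trivialization over $\Bdr(R^\sharp)$ just described, is a point $(\mathcal{F},\alpha)\in\Gr_G(S)$. Functoriality of the covariant prismatic Dieudonn\'e functor shows this is independent of the auxiliary choices and of the analytic cover, so it glues to a morphism $\pi_{HT}^\circ\colon\Shi_{K^p}^\circ\to\Gr_G$ over $\Spd E$; agreement with the Hodge--Tate period map of \cite{Sch15}, \cite{CS17} can be checked on geometric points, where both are given by the Hodge--Tate filtration.

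To see the image lies in $\Gr_{G,\mu}$, by Definition \ref{SchubertCell} it suffices to check at geometric points, and hence (the relative position depending only on $\Bdr^+$ of the residue field) for $R^{\sharp+}=\CO_C$. There $M_{\prism}(\G)\otimes\Bdr^+(C^\sharp)$ is the lattice attached to $\G/\CO_C$ by \cite{SW}, and its position relative to $\Lambda\otimes\Bdr^+(C^\sharp)$ is governed, via Theorem \ref{BTclassification}, by the Hodge--Tate filtration $\mathrm{Lie}\,\G\otimes_{\CO_C}C(1)\subset\Lambda\otimes_{\Z_p}C$ with its $\CO_B$-structure. The Kottwitz determinant condition of Definition \ref{abelianVar}, $\det_{\CO_{S^\sharp}}(\iota(b)\mid\mathrm{Lie}\,A)=\det(b\mid V_1)$, pins $\mathrm{Lie}\,\G$ to the $B$-isomorphism type of $V_1$, which by the choice of $\mu$ means the resulting filtration defines a $C$-point of $\Fl_{G,\mu}$ (Remark \ref{minusculeSchubert}); under the Bialynicki--Birula isomorphism $\Gr_{G,\mu}\cong\Fl_{G,\mu}^\diamond$ of \cite[19.4.2]{Berkeley} this is exactly the statement that the point lies in $\Gr_{G,\mu}$. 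Equivariance is then formal: $g\in G(\Q_p)$ acts on $\Shi_{K^p}^\circ$ by composing the trivialization $T_pA\cong\underline{\Lambda}$ with $g$, which transports $(\mathcal{F},\alpha)$ by $g$, i.e. by the tautological $G(\Q_p)$-action on $\Gr_G$ through $\Lambda\otimes\Bdr(R^\sharp)$; and $G(\Af^p)$ acts only on $\bar\eta$, leaving $(A,\G,\iota,\lambda)$ and the datum at $p$ unchanged, so $\pi_{HT}^\circ$ intertwines the $G(\Af^p)$-action on $\{\Shi_{K^p}^\circ\}_{K^p}$ with the trivial action on $\Gr_G$.

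I expect the main obstacle to be the family version of the construction rather than the geometric-point case: producing the $\Bdr^+$-lattice functorially for $\G$ over a general integral perfectoid $R^{\sharp+}$, and — more delicately — checking that the resulting $\CO_B\otimes_{\Z_p}\Bdr^+(R^\sharp)$-module with similitude pairing is, locally, in the standard $G(\Bdr^+(R^\sharp))$-orbit (so that $\mathcal{F}$ really is a $G$-torsor), since $\Bdr^+(R^\sharp)$ is not local for general $R^\sharp$. In the Hodge-type setting this is where de Rham-ness of the tautological $G(\Q_p)$-local system (Liu--Zhu) is invoked; in the present PEL situation it is supplied instead by prismatic Dieudonn\'e theory and the comparison isomorphisms of Section~4, so the point is to organize that input carefully.
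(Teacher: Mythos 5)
Your construction of the map itself is essentially the paper's: both pass through the moduli description of $\Shi_{K^p}^\circ$ (Lemma \ref{PresheafGRL}), take the (covariant) prismatic Dieudonn\'e module of $\mathfrak{A}[p^\infty]$, base change to $\Bdr^+(R^\sharp)$, use the \'etale--crystalline/de Rham comparison together with the infinite-level trivialization to get the generic trivialization $\alpha$, and define $\mathcal{F}$ as the $\underline{\mathrm{Isom}}_G$-sheaf. You correctly isolate the real crux in the family case, namely that $\mathcal{F}$ is a $G$-torsor; note that in the paper this is not "supplied by prismatic Dieudonn\'e theory" but by the Kottwitz condition via \cite[21.6.4, 21.6.5]{Berkeley}, c.f. \cite[3.16]{RZ}, so your plan should invoke that input explicitly. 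Where you genuinely diverge is the proof that the image lands in $\Gr_{G,\mu}$: you argue at a geometric point that the Hodge--Tate filtration is a $B$-stable maximal isotropic subspace whose $B$-type is pinned to $V_1$ by the determinant condition, hence (type AC, where $G(C)$ acts transitively on such subspaces of a fixed type) defines a point of $\Fl_{G,\mu}$, identified with $\Gr_{G,\mu}$ by Bialynicki--Birula. The paper instead only reads off from the Dieudonn\'e lattice that the cocharacter is minuscule with weights $0,-1$, then algebraizes the abelian variety at one point per connected component and runs a chain of comparisons ($p$-adic \'etale/de Rham, rigid GAGA, Betti) to match the class with $[\nu_h^{-1}]=[\mu]$. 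Your route is shorter and avoids algebraization, at the price of needing the orbit statement (uniqueness of the $G(C)$-orbit of $B$-stable Lagrangians of type $V_1$, including the similitude normalization), which is exactly the point that fails for type D and so must be cited or proved for type AC; the paper's route sidesteps this but needs the algebraization footnote and the comparison chain.

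Two smaller points. First, your treatment of $G(\Q_p)$-equivariance as "composing the trivialization with $g$" is not quite right on the moduli problem you are using: for $g\in G(\Q_p)$ with $g\Lambda\neq\Lambda$, the composite is no longer a trivialization onto $\underline{\Lambda}$, and one must replace $\mathfrak{A}$ by its quotient by the finite subgroup corresponding to $g\Lambda$ (equivalently, move within the isogeny class so that the Tate module becomes $g\Lambda$); the paper does precisely this, after reducing to rank one geometric points using qcqsness of $\Shi_{K^p}^\circ$ and properness of $\Gr_{G,\mu}$. The intertwining claim is still correct, but your one-line justification skips the isogeny modification. Second, the reduction of the boundedness check from $\Spa(C,C^+)$ to $\CO_C$ "since the relative position depends only on $\Bdr^+$ of the residue field" is fine and matches Definition \ref{SchubertCell}, but you should say explicitly that the $\Bdr^+$-lattice of $M_{\prism}(\G)$ for $\G$ over $C^{\sharp+}$ agrees with the one attached to $\G\times_{C^{\sharp+}}\CO_{C^\sharp}$, which is where the \cite{SW} classification is applied.
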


\begin{proof}
View $\Shi_{K^p}^\circ$ as the analytic sheafification of the presheaf $\Shi_{K^p}^{\circ,\text{pre}}$:
\[S=\Spa(R,R^+)\mapsto 
\{(S^\sharp, \mathfrak{A},\beta)\},\]
where $S^\sharp=\Spa(R^\sharp, R^{\sharp +})$ is an untilt of $S$ over $\Spa E$, $\mathfrak{A}$ is a formal abelian scheme with $G$-structures over $\Spf R^{\sharp+}$, and $\beta\in \underline{\mathrm{Isom}}_G(T_p\mathcal{A},\underline{\Lambda})(S)$ is a trivialization of the Tate module of the generic fiber $\AV$ of $\mathfrak{A}^\diamond$.

Given an $S$-point $(S^\sharp, \mathfrak{A},\beta)$ of $\Shi_{K^p}^{\circ,\text{pre}}$, write $T$ for $T_p\AV(S)$. This is a finite projective module over the ring $C^0(S,\Z_p)$ of continuous $\Z_p$-valued maps on $S$. Consider the prismatic Dieudonn\'e module
\[(M:=M(\mathfrak{A}[p^\infty]), \varphi_M).\]

By strong \'etale comparison of \cite[Theorem 1.20]{GR}, or in fact its base-change to $\Bdr(R^\sharp)$, we have a natural comparison isomorphism
\[c: T\otimes_{C^0(S,\Z_p)}\Bdr(R^\sharp)\cong M\otimes_{W(R^+)}\Bdr(R^{\sharp}),\]
compatible with the $G$-structures.

Let $\mathcal{F}$ be the \'etale sheaf of symplectic similitude $\CO_B$-linear trivializations on $X:=\Spec(\Bdr^+(R^\sharp))$
\[\underline{\mathrm{Isom}}_G (M\otimes_{W(R^+)}\CO_X, \Lambda\otimes_{\Z_p}\CO_X).\]
This is a $G_{\Q_p}$-torsor by \cite[Theorem 21.6.4, Remark 21.6.5]{Berkeley}, and it is trivialized over $\Spec(\Bdr(R^\sharp))$ by the section $\alpha:=(\beta\otimes id)\circ c^{-1}$. The pair $(\mathcal{F},\alpha)$ defines an $S$-valued point of $\Gr_G$. This induces a map of sheaves
\[\pi_{HT}^\circ: \Shi_{K^p}^\circ\rightarrow \Gr_G.\]

To see that the image lies in $\Gr_{G,\mu}$, we can post-compose with the closed immersion $\Gr_G\hookrightarrow \Gr_{\GL(\Lambda)}$ and assume $S=s:=\Spa(C,C^+)$ is a point. The image is determined by the relative position of the $\Bdr^+(C^\sharp)$-lattices $M_0:=M\otimes_{W(C^+)}\Bdr^+(C^\sharp)$ and $M:=T\otimes_{\Z_p}\Bdr^+(C^\sharp)$, where the latter is trivialized by $\beta$. Since $(M,\varphi_M)$ comes from a $p$-divisible group, we have $M\subset M_0\subset \xi^{-1}M$ and the image of $M_0$ in $\xi^{-1}M/M\cong T\otimes_{\Z_p} C^\sharp(-1)$ agrees with the Lie algebra $\mathrm{Lie}(\mathfrak{A}[p^\infty])\otimes_{\CO_{C^\sharp}}C^\sharp$, cf. \cite[Section 14.8]{Berkeley}. Hence the image of $\pi_{HT}^\circ$ will lie in a single minuscule Schubert cell  of $\Gr_{\GL(\Lambda)}$, labeled by some dominant cocharacter $\lambda$. We need to determine $\lambda$.

For this, we may choose $s$ such that $s^\sharp$ lies over $\Spa \overline{\mathbb{Q}}_p$ and assume the fiber $\AV_s$ algebraizes\footnote{One can use the techniques developed in \cite{Conrad} to prove algebraization, but we do not pursue it here.} to some abelian variety $A_s$, which is already defined over a finite extension of $\mathbb{Q}_p$. Then using the isomorphism $\C\cong \overline{\Q}_p$, we can go through a chain of comparison theorems: between $p$-adic \'etale and de Rham homologies \cite[Theorem 1.6]{Sch13}, analytic and algebraic de Rham homologies \cite[Corollary 32.2.2]{AB} (first $p$-adic analytic with algebraic, then algebraic with complex analytic), and de Rham and Betti homologies \footnote{Also, use invariance of \'etale cohomology under algebraically closed field extensions, and a rigid GAGA theorem \cite[Example 3.2.6]{Conrad} to identify the Hodge cohomologies.}, to get an isomorphism
\[T\otimes_{\mathbb{Z}_p} C^\sharp\cong H_1(A_{s,\C},\C)\otimes_{\overline{\mathbb{Q}}_p}C^\sharp\]
that identifies the subspace $\operatorname{Lie} \mathcal{A}_s(1)\hookrightarrow T\otimes_{\mathbb{Z}_p} C^\sharp$ with the quotient $H_1(A_{s,\C},\C)\otimes_{\overline{\mathbb{Q}}_p}C^\sharp\twoheadrightarrow \operatorname{Lie} A_{s,\C}\otimes_{\overline{\mathbb{Q}}_p}C^\sharp$. This shows that $\lambda$ is oppposite to the Hodge cocharacter determined by $A_{s,\C}$, so $[\lambda]=[\nu_{h^{-1}}]=[\mu]$.

To check Hecke equivariance, by qcqsness of both $\Shi_{K^p}^\circ$ and $\Gr_{G,\mu}$ and properness of the latter, we may assume $S=s:=\Spa C$ is a rank one geometric point. For $g\in G(\Q_p)$, there exists some $N\gg 0$, such that $p^N\Lambda\subset g\Lambda \subset p^{-N}\Lambda$. Denote by $K$ the image of $g\Lambda$ in the quotient 
\[\overline{\beta}: p^{-N}\Lambda/p^N\Lambda\xrightarrow{\sim} \AV_s[p^{2N}].\] 
Then $g$-action sends $(\AV_s, \beta: T_p\AV_s\cong \Lambda)$ to 
\[(\AV_s':=\AV_s/K, T_p\AV_s'\cong g\Lambda).\]
This agrees with the $g$-action on $\Gr_{G,\mu}$ which sends a point $(\Lambda, W)$ as in Remark \ref{minusculeSchubert} to $(g\Lambda, W)$. Away from $p$, the Hecke action conjugates $\Shi_{K^p}^\circ$ to $\Shi^\circ_{g^{-1}K^pg}$ for some $g\in G(\Af)$. Let $K'$ be $K^p\cap g^{-1}K^pg$, then the composition $\Shi^\circ_{K'}\rightarrow \Shi^\circ_{K^p}\xrightarrow{\pi_{HT}^\circ} \Gr_G$ is the Hodge-Tate period map on $\Shi^\circ_{K'}$, similarly for  $\Shi^\circ_{g^{-1}K^pg}$.
\end{proof}
\begin{remark}
    This construction agrees with the construction in \cite{CS17} by compatibility of the strong \'etale comparison of \cite{GR} with the \'etale-de Rham comparison of \cite[Proposition 2.2.3]{CS17}. We leave out the details.
\end{remark}

\begin{remark}\label{rem: SignHT}
    Note that the flag variety in the target of the Hodge-Tate map is labeled by $\mu$ while the Hodge cocharacter of the Shimura variety is $\mu^{-1}$. In particular, this flag variety will have a different sign from the one in the classical Borel embedding. This switch of signs is due to the fact that the Hodge filtration on de Rham cohomologies takes the opposite order to the Hodge-Tate filtration on \'etale cohomologies, under $p$-adic \'etale-de Rham comparison. This sign switch is noted in \cite{CS17}, where they deliberately changed the signs in the Cartan decomposition of the affine Grassmannians to match the two, see \cite[Section 3.4, Footnote 14]{CS17}. We choose not to do this to reflect this difference. When we consider shtukas in Section 11, there will be another flip of the signs, since we want the boundedness of a shtuka to be capturing the Hodge-filtration on de Rham cohomology, see Remark~\ref{rem: SignShtuka}.
\end{remark}
\hfill

\section{Stack of \texorpdfstring{$G$}{}-bundles on the Fargues-Fontaine curve}
     Here we recall the relative Fargues-Fontaine curve and the classifying stack of $G$-torsors $\Bun_G$ following \cite[Section 3.2]{CS17} and \cite[Section II.1]{FS}. For $R$ an $\F_p$-algebra, $[\cdot]:R\rightarrow W(R)$ denotes the Teichm\"uller lift, and $\varphi:=\varphi_R$ is the Frobenius on $W(R)$ lifting the Frobenius on $R$. We use $V(\cdot)$ to denote the vanishing locus of a function on a topological space.

\subsection{Fargues-Fontaine curve and vector bundles}
\begin{defn}[The Fargues-Fontaine curve]
For $S=\Spa(R,R^+)\in \Perf$ an affinoid perfectoid with a pseudo-uniformizer $\varpi\in R^+$, we use the following versions of the relative Fargues-Fontaine curve over $S$:
	\begin{itemize}[leftmargin=0.5cm]
	\item (adic space) Denote by $Y_S$ the adic space $\Spa(W(R^+))\backslash V(p\cdot[\varpi])$, then the adic Fargues-Fontaine curve is the quotient $X_S:=Y_S/\varphi^\Z$;

	\item (scheme) The line bundle $\CO_{Y_S}$ with the linearization $\pi^{-1}\varphi:\CO_{Y_S}\cong \CO_{Y_S}$ descends to an ample line bundle $\CO_{X_S}(1)$ on $X_S$. Define $\CO_{X_S}(n):=\CO_{X_S}(1)^{\otimes n}$ and $P:=\bigoplus_{n\geq 0} H^0(X_S,\CO_{X_S}(n))$, then $X_S^\text{alg}:=\mathrm{Proj}(P)$, defines the algebraic Fargues-Fontaine curve, with a natural morphism of locally ringed spaces:
\[X_S\rightarrow X_S^\text{alg}.\]
	\end{itemize}
This globalizes to a construction of a relative Fargues-Fontaine curve $X_S$ (and hence $X_S^\text{alg}$) for a general $S \in \Perf$.
\end{defn}

\begin{remark}\label{graded}

When $S=\Spa(R,R^\circ)$, we can alternatively construct $X_S$ using the crystalline period ring  \[B_{\text{cris}}^+(R^\circ):=A_\text{cris}(R^\circ)[1/p],\] then by reducing to the case $S$ is a geometric point, and argue as in \cite[Proposition 10.15]{AnsFF}, cf. \cite[Example 5.2.9]{FF}, we have
\[P\cong \bigoplus_{d\geq 0}(B_{\text{cris}}^+(R^\circ))^{\varphi=\pi^d}.\]
See also \cite[Section 6.2]{Far16}. Later we will use this to attach vector bundles on the Fargues-Fontaine curve to $p$-divisible groups via their crystalline Dieudonn\'e modules. 
\end{remark}

\begin{remark}\label{Y}
We denote by $\mathcal{Y}(S)$ the punctured spectrum 
\[\Spa(W(R^+),W(R^+))\backslash\{[\varpi]=p=0\}.\]  
 For $I=[a,b]$ an interval in $[0,\infty]$ with $a, b\in \Q\cup \{\infty\}$, denote by $\mathcal{Y}_{I}(S)$ the open subspace of $\mathcal{Y}(S)$ where 
    \[|p|^b\leq |[\varpi]|\leq |p|^a.\] 

 In this notation, the $Y_S$ above is $\mathcal{Y}_{(0,\infty)}(S)$ and the space 
 \[\Spa(W(R^+), W(R^+))\backslash V([\varpi])\]
 is $\mathcal{Y}_{[0,\infty)}(S)$.\footnote{In \cite[Lecture 11]{Berkeley}, $Y_S$ is alternatively denoted $S\dot{\times}\Spa \Q_p$. Similarly $\Spa(W(R^+))\backslash V([\varpi])$ is denoted $S\dot{\times}\Spa \Z_p$.} It is proven in \cite[Proposition 11.2.1]{Berkeley} that $\mathcal{Y}_{[0,\infty)}(S)$ is an adic space. It is covered by rational subsets of the form $\{|p|\leq|[\varpi^{\frac{1}{p^n}}]|\}, n=1,2,\dots$. Each is represented by an affinoid sousperfectoid space\footnote{i.e. locally the adic spectrum of a complete Tate $\Q_p$-algebra $R$ that admits a split injection of topological $R$-modules into a perfectoid Tate ring} $\Spa(R_n, R_n^+)$, where $R_n^+$ is the $[\varpi]$-adic completion of $W(R^+)[p/[\varpi^{\frac{1}{p^n}}]]$ and $R_n$ is $R_n^+$ inverting $[\varpi]$. As $R_n$ has a presentation 
    \[\{\Sigma_{i\geq 0}[r_i]\left(\frac{p}{[\varpi^{1/p^n}]}\right)^i|r_i\in R, r_i\rightarrow 0\},\]
which depends only on $R$ and not on $R^+$, the category of vector bundles over $\Spa(R_n,R_n^+)$ and hence that over $\mathcal{Y}_{[0,\infty)}(S)$ is independent of the choice of $R^+$ in $R$, see also \cite[Proposition 2.1.1]{PR}. In particular, the category of vector bundles over $Y_S$, or $X_S$, is independent of the choice of $R^+$ in $R$.
\end{remark}

A GAGA type of result holds, relating the adic and algebraic curve: 

\begin{theorem}[{GAGA,\cite[Theorem 8.7.7]{KL1}, \cite[Proposition II.2.7]{FS}}]
Pullback along $X_S\rightarrow X_S^\text{alg}$ induces an equivalence of categories between vector bundles on $X_S$ and $X_S^\text{alg}$.
\end{theorem}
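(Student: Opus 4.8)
The statement to prove is the GAGA theorem for the Fargues--Fontaine curve: pullback along the canonical morphism of locally ringed spaces $X_S \to X_S^{\mathrm{alg}}$ induces an equivalence between the categories of vector bundles. Here is how I would organize the argument.

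\textbf{Reduction to the affinoid case and to geometric points.} First I would note that both sides satisfy descent for the analytic (or v-) topology on $S$, so it suffices to treat $S = \Spa(R,R^+)$ affinoid perfectoid. In that case $X_S^{\mathrm{alg}} = \mathrm{Proj}(P)$ with $P = \bigoplus_{n\geq 0} H^0(X_S, \CO_{X_S}(n))$, and the morphism $X_S \to X_S^{\mathrm{alg}}$ is the one classifying the line bundle $\CO_{X_S}(1)$ together with its graded sections. The heart of the matter is then a comparison of coherent sheaf theory on a ``curve'' (the scheme $\mathrm{Proj}(P)$) with that on the adic space $X_S$ covered by the two standard affinoids $\Spa B_{[0,1]}$, $\Spa B_{[1,\infty]}$ whose overlap is controlled by $\varphi$. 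The key structural input, which I would invoke from \cite{KL1} and \cite{FS}, is that $X_S$ is covered by finitely many affinoids on which vector bundles are finite projective modules, glued along a single $\varphi$-equivariant overlap; concretely one uses the description of $Y_S/\varphi^{\Z}$ and the resulting presentation of a vector bundle on $X_S$ as a finite projective $B$-module with a $\varphi$-semilinear automorphism, for $B$ the ring of global functions on a fundamental domain.

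\textbf{Essential surjectivity.} Given a vector bundle $\mathcal{E}$ on $X_S$, I would produce the graded $P$-module $M(\mathcal{E}) := \bigoplus_{n \geq 0} H^0(X_S, \mathcal{E}(n))$ and show that for $n \gg 0$ the sheaf $\mathcal{E}(n)$ is globally generated with vanishing higher cohomology (an ampleness statement for $\CO_{X_S}(1)$, which is exactly what makes the Proj construction work in the first place). This requires the cohomological finiteness and vanishing results for coherent cohomology on $X_S$ — these are available from the sousperfectoid/adic-space structure recalled in Remark~\ref{Y} and the cited results of Kedlaya--Liu. From global generation in high degree one builds a presentation of $\mathcal{E}$ by sums of $\CO_{X_S}(-n)$'s, whence $\mathcal{E}$ is the analytification of the coherent sheaf on $\mathrm{Proj}(P)$ attached to $M(\mathcal{E})$; one then checks this coherent sheaf is in fact locally free by computing fibers, or by the fact that $\mathrm{Proj}(P)$ is a regular one-dimensional scheme so a torsion-free coherent sheaf is automatically a vector bundle.

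\textbf{Full faithfulness.} For full faithfulness I would reduce $\mathrm{Hom}(\mathcal{E},\mathcal{E}')$ on either side to $H^0$ of $\mathcal{H}om(\mathcal{E},\mathcal{E}')$, which is again a vector bundle, so the statement becomes: $H^0(X_S^{\mathrm{alg}}, \mathcal{F}^{\mathrm{alg}}) \xrightarrow{\sim} H^0(X_S, \mathcal{F})$ for every vector bundle $\mathcal{F}$. The left side is by construction the degree-zero part of $\bigoplus_n H^0(X_S, \mathcal{F}(n))$ viewed inside the Proj, and one identifies it with $H^0(X_S,\mathcal{F})$ using the ampleness and the fact that $\CO_{X_S}$ has the ``right'' global sections (the fundamental computation $H^0(X_S,\CO_{X_S}) = F$-algebra of functions, with $H^0(X_S, \CO_{X_S}(n))$ as expected for $n \geq 0$, from \cite{FF}, \cite{AnsFF}). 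I expect the \textbf{main obstacle} to be establishing the cohomological vanishing and finiteness ($H^i(X_S, \mathcal{E}(n)) = 0$ for $i>0$, $n \gg 0$, and finite generation of $H^0$) in the relative setting over a general affinoid perfectoid base $S$, rather than over a geometric point; this is where one genuinely needs the perfectoid machinery and the adic-space structure of $\mathcal{Y}_{[0,\infty)}(S)$. Once that is in hand, the Serre-type argument packaging coherent sheaves on $\mathrm{Proj}$ against sheaves on the adic curve is formal. Since this is a cited theorem (\cite[8.7.7]{KL1}, \cite[II.2.7]{FS}), in the paper I would simply appeal to those references rather than reproduce the argument.
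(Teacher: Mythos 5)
The paper does not prove this statement at all: it is quoted as a cited result from Kedlaya--Liu (8.7.7) and Fargues--Scholze (II.2.7), so there is no internal argument to compare against, and your closing remark that one would simply appeal to those references matches what the paper does. Your sketch is essentially the strategy those references use: a Serre-type GAGA argument built on relative ampleness of $\CO_{X_S}(1)$ (global generation and vanishing of higher cohomology of $\mathcal{E}(n)$ for $n\gg 0$ over an affinoid perfectoid base), followed by a two-step presentation of $\mathcal{E}$ by sums of twists and a comparison of $H^0$'s for full faithfulness; you also correctly identify the relative ampleness/finiteness statement as the genuinely hard input. One caution: your fallback for local freeness of the algebraic sheaf, namely that ``$\mathrm{Proj}(P)$ is a regular one-dimensional scheme, so torsion-free coherent implies locally free,'' is only available when $S$ is a geometric point; for a general affinoid perfectoid $S$ the scheme $X_S^{\mathrm{alg}}$ is non-noetherian and has no such regularity or dimension theory, so in the relative case one must instead use the fiberwise criterion or the fact that the presentation by twists splits locally (the kernel of a surjection onto a locally free sheaf is again locally free), as is done in the cited sources. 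With that route taken as the primary one, the outline is sound.
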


Cartier divisors on $X_S$ classify Frobenius orbits of untilts of $S$. More precisely, fix any untilt $S^\sharp$ over $F$ of $S$. It is locally of the form $\Spa(R^\sharp,R^{\sharp+})$. Each kernel of the surjections $W(R^+)\cong W(R^{\sharp\flat+})\rightarrow R^{\sharp+}$ is a principal ideal generated by an element of the form $p-a[\varpi]$ for some $ a\in W(R^+)$. The induced maps 
\[\Spa(R^\sharp, R^{\sharp+})\rightarrow Y_{\Spa(R,R^+)}\] 
glue and define a closed Cartier divisor $S^\sharp \hookrightarrow Y_S$, which maps to a closed Cartier divisor $S^\sharp \hookrightarrow X_S$. It is cut out by a global section of $\CO_{X_S}(1)$. Hence by GAGA there is a corresponding global section of $\CO(1)$ on $X_S^\text{alg}$, which cuts out a closed Cartier divisor $S^{\sharp,\text{alg}} \hookrightarrow X_S^\text{alg}$.

For $S$ being affinoid, the algebraic curve $X_S^\text{alg}$ is covered by two principal affine charts $X_S^\text{alg}\backslash V(f_i)$, $i=1,2$ for any two linearly independent $f_1, f_2\in H^0(X_S^\text{alg},\CO(1))$. In particular if $S^\sharp$ is cut out by $\xi \in H^0(X_S^\text{alg},\CO(1))$, choose $t\in H^0(X_S^\text{alg},\CO(1))$ linearly independent to $\xi$, then $S^{\sharp,\mathrm{alg}}\hookrightarrow X_S^\text{alg}$ is defined by 
\[(P[1/t])_0 \twoheadrightarrow R^\sharp,\] 
where $(\cdot)_0$ means taking degree zero part of the graded ring. The completion of $(P[\frac{1}{t}])_0$ along $\xi$ is $\Bdr^+(R^\sharp)$. This combined with the Beauville-Laszlo lemma, leads to an interpretation of the $\Bdr^+$-affine Grassmaniann $\Gr_{\GL_n}$ as parametrizing modifications of the trivial rank $n$ bundle on $X_S$. 

Namely, for any $S=\Spa(R,R^+)\in \Perf$, an S-point of the $\Bdr^+$-affine Grassmannian for $\GL_n/\Q_p$ amounts to a triple of an untilt $S^\sharp=\Spa(R^\sharp, R^{\sharp+})$ over $\Q_p$, a rank $n$ vector bundle $\mathcal{F}$ over $\Spec(\Bdr^+(R^\sharp))$, and a trivialization of $\mathcal{F}$ over $\Spec(\Bdr(R^\sharp))$. Via Beauville-Laszlo, this triple defines a new bundle on $X_S^\text{alg}$ by glueing the trivial rank $n$ bundle on $X_S^\text{alg}\backslash\Spec(R^\sharp)$ and $\mathcal{F}$, along the trivialization. This corresponds to a rank $n$ vector bundle on the adic curve $X_S$ by GAGA.

\subsection{The stack of \texorpdfstring{$G$}{}-bundles}

Let $G/\Q_p$ be a reductive group and $X$ be a scheme or a sousperfectoid space over $\Q_p$. Denote by $\mathrm{Rep}_{\Q_p}(G)$ the exact symmetric monoidal category of finite dimensional algebraic $\Q_p$-representations of $G$ and by $\Bun(X)$ that of vector bundles on $X$. 

\begin{def/prop}[{\cite[Theorem 19.5.1, 19.5.2]{Berkeley}}]\label{G-bundle}
A $G$-bundle on $X$ is an exact tensor functor
\[\mathrm{Rep}_{\Q_p}(G) \rightarrow \Bun(X).\]
Equivalently this is an \'etale sheaf on $X$ with an action of $G$ that is \'etale locally isomorphic to $G$.
\end{def/prop} 

The relative Fargues-Fontaine curve $X_S$ for a perfectoid space $S\in \Perf$ is sousperfectoid by \cite[Proof of Proposition 11.2.1]{Berkeley}. Hence one can talk about $G$-torsors on $X_S$ as above. Moreover, by post-composing with the exact tensor equivalence $\Bun(X_S^\text{alg})\cong \Bun(X_S)$, GAGA extends to an equivalence between categories of $G$-torsors on $X_S$ and $X_S^\text{alg}$.

\begin{def/prop}[{\cite[Proposition II.2.1, Definition III.1.2, Proposition III.13]{FS}}]
The pre-stack on $\Perf_{\F}$ sending a perfectoid space $S\in \Perf_{\F}$ to the groupoid of $G$-torsors on $X_S$ is a small v-stack, denoted by $\Bun_G$. 
\end{def/prop}

Using Tannakian formalism, the interpretation of the $\Bdr^+$-affine Grassmannian $\Gr_{\GL_n}$ as parametrizing modifications of the trivial rank $n$ vector bundle generalizes to any other reductive group $G/\Q_p$:

For $S=\Spa(R,R^+)\in \Perf_{\F}$ with an untilt $S^\sharp$ over $F$, viewed as a closed Cartier divisor on $X_S$, the equivalence between $\Bun(X_S^\text{alg})$ and the 2-fiber product 
\[\Bun(X_S^\text{alg}\backslash S^{\sharp,\mathrm{alg}})\times_{\Bun(\Spec(\Bdr(R^\sharp)))}\Bun(\Spf(\Bdr^+(R^\sharp)))\]
is exact and symmetric monoidal. 

Therefore given an $S$-valued point of $\Gr_G$ over $\Spd \Q_p$, i.e. a pair $(\mathcal{F},\alpha)$, where $\mathcal{F}$ is a $G$-torsor over $\Spec(\Bdr^+(R^\sharp))$ and $\alpha$ is a trivialization of it over $\Spec(\Bdr(R^\sharp))$, one can glue the trivial $G$-torsor on $X_S^\text{alg}\backslash S^{\sharp,\mathrm{alg}}$ with $\mathcal{F}$ via $\alpha$ to get new $G$-torsor on $X_S^\text{alg}$. This defines the ``Beauville-Laszlo uniformization" morphism between small v-stacks, see \cite[Section III.3]{FS}:
\[BL: \Gr_G \rightarrow \Bun_G.\]

\begin{remark}\label{rem}
In the above interpretation of $\Gr_G$ as a moduli space of modifications of $G$-torsors, the initial $G$-torsor to modify can be any $G$-torsor, not necessarily the trivial one.   
\end{remark}

\begin{prop}[{\cite[Proposition III.3.1]{FS}}]\label{BLSurj}
The ``Beauville-Laszlo" morphism is surjective as a map of pro-\'etale-stacks.
\end{prop}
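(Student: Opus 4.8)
The plan is to unwind what ``surjective as a map of pro-\'etale stacks'' means and reduce it to a triviality statement for $G$-bundles on a punctured Fargues--Fontaine curve. Concretely, one must show: given $S\in\Perf_{\F}$ and a $G$-bundle $\mathcal E$ on $X_S$, there is a pro-\'etale cover $T\to S$, an untilt $T^\sharp=\Spa(R^\sharp,R^{\sharp+})$ of $T$ over $F$ with associated Cartier divisor $D\hookrightarrow X_T$, a $G$-torsor $\mathcal F$ on $\Spec\Bdr^+(R^\sharp)$ and a trivialization $\alpha$ of $\mathcal F$ over $\Spec\Bdr(R^\sharp)$ with $BL(\mathcal F,\alpha)\cong\mathcal E|_{X_T}$. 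Since the claim is local for the pro-\'etale topology, I would first replace $S$ by a strictly totally disconnected pro-\'etale cover, and then, pulling back along $\Spd F\to\Spd\F$ (which furnishes an untilt over $F$ pro-\'etale-locally), arrange that $S$ itself carries an untilt $S^\sharp$; fix this $S^\sharp$, the divisor $D\hookrightarrow X_S$, its formal completion $\widehat D$, and the identification $\widehat{\CO}_{X_S,D}=\Bdr^+(R^\sharp)$. By the Beauville--Laszlo gluing for $G$-torsors recalled above, the data of a pair $(\mathcal F,\alpha)$ with $BL(\mathcal F,\alpha)\cong\mathcal E$ is \emph{equivalent} to the data of a trivialization of $\mathcal E|_{X_S\setminus D}$ (one then takes $\mathcal F=\mathcal E|_{\widehat D}$, and $\alpha$ records its agreement with the trivial bundle on the overlap). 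So everything comes down to the single assertion: \emph{every $G$-bundle on $X_S\setminus D$ is, pro-\'etale-locally on $S$, trivial.}

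To prove this assertion I would begin with the case where $S=\Spa(C,\CO_C)$ is a rank-one geometric point and $D=\{\infty\}$ is the classical point: then $X_C\setminus\{\infty\}$ has $H^0(\CO)$ equal to the principal ideal domain $B_{e,C}=(B^+_{\crys,C})^{\varphi=1}$ by Fargues--Fontaine, so every vector bundle on it is free, and, via Fargues' classification, every $G$-bundle on $X_C$ is a modification of the trivial $G$-bundle at $\infty$ of some dominant type $\mu$ --- one reaches every class in $B(G)$ since enlarging $\mu$ suitably adjusts both the Newton point and the connected-component/Kottwitz invariant --- hence is trivial on $X_C\setminus\{\infty\}$. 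The substantial point is then to propagate this over a general strictly totally disconnected $S$: this is exactly the geometry of $\Bun_G$ developed in \cite[II--III]{FS}, where one uses the Newton (Harder--Narasimhan) stratification of $\Bun_G$ by $B(G)$, the fact that the images $\Bun_G^{\leq\mu}$ of the proper Schubert varieties $\Gr_{G,\leq\mu}$ are open, and the cohomological vanishing of coherent sheaves (Banach--Colmez spaces) on $X_S\setminus D$; a d\'evissage along the stratification, anchored on the open basic stratum $[\ast/\underline{G(F)}]$ --- where the associated $G$-bundle is pro-\'etale-locally trivial because $\underline{G(F)}$-torsors are --- then yields pro-\'etale-local triviality of $\mathcal E|_{X_S\setminus D}$ for arbitrary $\mathcal E$.

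Granting that assertion, the conclusion is immediate: pro-\'etale-locally on $S$ one has the untilt $S^\sharp$ and a trivialization of $\mathcal E|_{X_S\setminus D}$, hence by Beauville--Laszlo a well-defined point $(\mathcal F,\alpha)\in\Gr_G(S)$ with $BL(\mathcal F,\alpha)\cong\mathcal E$, so $BL$ is a pro-\'etale-stack surjection. The hard part is precisely the displayed assertion --- pro-\'etale-local triviality of a $G$-bundle on the punctured curve $X_S\setminus D$. Over a single geometric point it is soft (the ring $B_{e,C}$ is a PID and every class in $B(G)$ is reachable by a modification of the trivial bundle), but in families the relevant torsor of trivializations is not quasi-compact over $S$, so its pro-\'etale-local non-emptiness is genuine content: it is here that one must invoke the stratification of $\Bun_G$, the properness of the $\Gr_{G,\leq\mu}$, and the cohomology of $X_S\setminus D$ from \cite[II--III]{FS}, and it is here that pro-\'etale --- rather than merely \'etale --- covers become unavoidable.
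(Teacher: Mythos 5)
This proposition is not proved in the paper at all: it is imported verbatim from Fargues--Scholze (\cite[III.3.1]{FS}), so there is no in-paper argument to match your sketch against, and your attempt has to stand on its own. Its formal skeleton is fine: surjectivity as pro-\'etale stacks is indeed equivalent to showing that for $S$ strictly totally disconnected with a fixed untilt $S^\sharp$, any $G$-bundle $\mathcal{E}$ on $X_S$ becomes trivial on $X_S\setminus S^\sharp$ after a pro-\'etale cover of $S$, and the passage back to a point of $\Gr_G$ via Beauville--Laszlo gluing is correct. (Two small corrections: an untilt over $F$ exists outright for any affinoid perfectoid $S$, e.g.\ via $W_{\CO_F}(R^+)/(\pi-[\varpi])$ --- no cover is needed, and ``pulling back along $\Spd F\to\Spd\F$'', which is not a pro-\'etale map, is not the mechanism. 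And even over a geometric point, ``one reaches every class in $B(G)$ by enlarging $\mu$'' is not just bookkeeping with $\kappa$ and $\nu$: that the numerical set $B(G,\mu)$ is actually hit by $\Gr_{G,\leq\mu}(C)$ is Rapoport's non-emptiness theorem, the same \cite[A.9]{Rap} this paper invokes in Proposition \ref{BLImage}; equivalently, triviality of $G$-torsors over $B_e$ for general reductive $G$ does not reduce to the PID argument you give for $\GL_n$.)

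The genuine gap is the family case, which you correctly identify as the heart of the matter and then dispose of by appealing to ``the geometry of $\Bun_G$ developed in \cite[II--III]{FS}''. Since the statement under review is itself one of the foundational results of that development, this is circular as a proof rather than a compression of one. Moreover, the d\'evissage you sketch would not go through as stated: the open semistable locus is a disjoint union of basic strata indexed by $\pi_1(G)_\Gamma$, not just $[\ast/\underline{G(F)}]$; on a basic stratum with $b\neq 1$ the bundle is pro-\'etale-locally isomorphic to $\mathscr{E}_b$, which is not trivial, so one still has to trivialize $\mathscr{E}_b|_{X_S\setminus S^\sharp}$ in a family, and this already requires producing generating sections pro-\'etale-locally, i.e.\ the Banach--Colmez surjectivity/splitting results; even on the stratum $b=1$, the identification of slope-zero semistable families with $\underline{G(F)}$-torsors is a theorem of Kedlaya--Liu type, not a formality. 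Finally, ``a d\'evissage along the stratification'' of a locally closed stratification does not by itself yield pro-\'etale-local trivializations of a bundle whose Newton point jumps over $S$; one needs an actual mechanism for crossing strata (this is exactly where the charts and the positive/negative Banach--Colmez geometry of \cite{FS} enter), and your proposal names these ingredients without assembling them into an argument. So: the reduction is right, the point-wise case is right modulo a serious citation, but the proof of the key assertion is missing rather than merely sketched.
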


\subsection{Stratification}
Let $G$ over $\Q_p$ be a reductive group. Following \cite{FS}, \cite{Ans16}, we review the Newton (or Harder-Narasimhan) stratification of $\Bun_G$, labeled by the Kottwitz set $B(G)$, which is first studied in the setup of isocrystals by Kottwitz \cite{Kottwitz85}, cf. \cite{RR}.

Fix an algebraically closed field $k$ of $\F_p$. Let $L$ be the fraction field of $W(k)$ and $\sigma$ be the Frobenius on $L$. Fix an algebraic closure $\bar{\Q_p}$ of $\Q_p$ containing $L$.

\begin{defn}[{\cite[Definition 5.2]{Ans16}}]
    The Kottwitz category $\mathcal{B}(G)$ is the groupoid whose objects are elements in $G(L)$ and the set of isomorphisms between $b,b'\in G(L)$ is 
    \[\{c\in G(L)\mid cb\sigma(c)^{-1}=b'\}.\]
    Composition of morphisms is defined by multiplication in $G(L)$. The \textit{Kottwitz set} $B(G)$ is the set of isomorphism classes of objects in this category. This is in bijection to the set of $\sigma$-conjugacy classes in $G(L)$. 
\end{defn}
\begin{remark}
    According to Kottwitz \cite{Kottwitz85}, $B(G)$ is invariant under passing to algebraically closed extensions of $k$.
\end{remark}

For each perfectoid space $S$ over $k$, the pullback $\tilde{\mathscr{E}}_1$ of the trivial $G$-bundle $\mathscr{E}_1$ on $X_S$ to $Y_S$ is equipped with a natural descent datum 
\[\alpha: \varphi_S^\ast\tilde{\mathscr{E}}_1\xrightarrow{\sim}\tilde{\mathscr{E}}_1.\] 
Twist $\alpha$ with the automorphism $b\in G(L)\subset \mathrm{Aut}(\tilde{\mathscr{E}}_1)$. The descent datum $(\mathscr{E}_1, b^{-1}\alpha \varphi^\ast b)$ gives rise to a new $G$-bundle $\mathscr{E}_b$ on $X_S$. This assignment 
\[b\mapsto \mathscr{E}_b\in \Bun_G(S)\]
is functorial with respect to pullback along maps $S'\rightarrow S$ over $\Spd k$. In this way we obtain a functor
\[\mathcal{B}(G)\rightarrow \Bun_G(\Spd k),\]
where the target category is understood by v-descent of $G$-torsors on the Fargues-Fontaine curve, namely, by taking any v-cover of $\Spd k$ by a perfectoid space $S$ and considering the category of $G$-torsors on $X_S$ with descent data. We have the following theorem of Ansch\"utz:

\begin{theorem}[{\cite[Theorem 5.3]{Ans16}}]\label{AbsFF}
    The functor 
    \[\mathcal{B}(G)\rightarrow \Bun_G(\Spd k)\] is an equivalence of categories.
\end{theorem}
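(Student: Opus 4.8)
The plan is to prove Anschütz's theorem $\mathcal{B}(G) \to \Bun_G(\Spd k)$ is an equivalence by first reducing to the case $G = \GL_n$, then deducing the general reductive case from Tannakian considerations together with the classification of vector bundles on the Fargues-Fontaine curve. For $G = \GL_n$, objects of $\mathcal{B}(\GL_n)$ are isocrystals $(L^n, b\sigma)$ and the functor sends such a pair to the associated vector bundle $\mathscr{E}_b$ on $X_S$ for $S$ a v-cover of $\Spd k$, equipped with descent data. The target is understood via v-descent: a $\GL_n$-torsor on $X_{\Spd k}$ means a vector bundle on $X_S$ together with an isomorphism of its two pullbacks to $X_{S \times_{\Spd k} S}$ satisfying the cocycle condition. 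The content of the theorem for $\GL_n$ is essentially the classification theorem of Fargues--Fontaine: over a geometric point, every vector bundle on the Fargues-Fontaine curve is a direct sum of the standard bundles $\CO(\lambda)$, and $\Hom$, $\Aut$ of these are computed explicitly; combined with the dictionary relating slopes of bundles to slopes of isocrystals, this gives full faithfulness and essential surjectivity on geometric points, and one propagates to all of $\Spd k$ by v-descent.

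Concretely, the key steps I would carry out are as follows. First, establish the functor is well-defined: check that $b \mapsto (\mathscr{E}_1, b^{-1}\alpha\varphi^*b)$ really descends to a $G$-bundle on $X_S$ independent of choices, and that isomorphisms $c$ in $\mathcal{B}(G)$ induce isomorphisms of the descended bundles — this is a direct unwinding of descent data. Second, prove \emph{full faithfulness}: a morphism $\mathscr{E}_b \to \mathscr{E}_{b'}$ of $G$-bundles on $X_{\Spd k}$ unwinds, via the Beauville-Laszlo/$Y_S$ description, to a $\varphi$-equivariant map between the trivialized bundles on $Y_S$, i.e.\ an element $c \in \Hom$ intertwining $b\sigma$ and $b'\sigma$; the point is that global sections of $\CO_{X_S}$ over a connected $S$ recover $L$ (more precisely $B_{\mathrm{crys},S}^{+,\varphi = 1}$-type computations, or the fact that $H^0$ of the structure sheaf on the curve over a geometric point is $L$ itself via the graded-ring description in Remark \ref{graded}), so $\Hom$-groups match on the nose. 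Third, prove \emph{essential surjectivity}: given a $G$-bundle $\mathscr{E}$ on $X_{\Spd k}$, pull back to a geometric point $\Spa(C,C^+) \to \Spd k$; by the Fargues--Fontaine classification (for $G=\GL_n$) or by Fargues's theorem on $G$-bundles over a geometric point of the curve, $\mathscr{E}$ is isomorphic to some $\mathscr{E}_b$; then the descent datum on $\mathscr{E}$ transports to a descent datum on $\mathscr{E}_b$, which by full faithfulness is pinned down by an element of $G(L)$, recovering an object of $\mathcal{B}(G)$.

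For the passage from $\GL_n$ to a general reductive $G$, I would use the Tannakian description of $\Bun_G$: a $G$-bundle is an exact tensor functor $\Rep_F(G) \to \Bun(X_S)$, and an object of $\mathcal{B}(G)$ is likewise a tensor-compatible assignment of isocrystals. Pick a faithful representation $G \hookrightarrow \GL_n$; the $\GL_n$-case gives the equivalence on the underlying bundles, and one checks the $G$-structure (i.e.\ the reduction of structure group, equivalently the tensor-functoriality) matches between the two sides. Here one invokes that reductions of structure group along $G \hookrightarrow \GL_n$ on $X_{\Spd k}$ correspond to reductions of the isocrystal, using that $G$-torsors on $\Spec L$ (or the relevant local rings) are classified compatibly — this is where Steinberg-type vanishing ($H^1$ of a connected group over a field like $L$, or over the completed local rings of the curve) enters.

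The main obstacle I expect is the essential surjectivity step in its v-descent form: showing that every $G$-bundle on $X_{\Spd k}$, a priori only an object of a descended category, is \emph{globally} of the form $\mathscr{E}_b$, not merely fiberwise. The subtlety is that the isomorphism $\mathscr{E} \cong \mathscr{E}_b$ exists over each geometric point but the $b$ could vary; one must show the descent datum forces $b$ to be locally constant, hence (since $B(G)$ is discrete and invariant under algebraically closed extensions, Kottwitz) genuinely constant, so that $\mathscr{E}$ descends to an honest $\mathscr{E}_b$ over $X_{\Spd k}$. Handling this requires knowing that the Newton stratification of $\Bun_G$ is well-behaved — that the locally constant function $S \mapsto [b_s]$ on geometric points of $S$ is constant on connected $S$ — which is itself a nontrivial input (semicontinuity of the Newton point, as in \cite{FS}, \cite{Ans16}). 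A secondary technical point is verifying that the $\Hom$-computation genuinely gives $G(L)$ and not something larger when $S$ is a more general (non-geometric) connected perfectoid space over $k$; this is controlled by the structure of $H^0(X_S, \CO_{X_S})$ and the fact that the curve over such $S$ still has the expected cohomology, but it needs care to phrase correctly at the level of v-sheaves.
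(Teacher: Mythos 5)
The paper itself does not prove this statement; it is imported verbatim from Ansch\"utz \cite[5.3]{Ans16}, so there is no internal argument to compare against, and your sketch has to stand on its own. Its skeleton (classification of $G$-bundles over a geometric point, then v-descent to $\Spd k$) is the right starting point, but the sketch breaks down exactly where the theorem has its content, namely in the descent analysis. For full faithfulness: over a v-cover $S=\Spa(C,\CO_C)\rightarrow \Spd k$ the Hom- and Aut-spaces are \emph{not} given by constants $c\in G(L)$ intertwining $b\sigma$ and $b'\sigma$. A $\varphi$-equivariant map of trivialized bundles on $Y_S$ is a matrix of functions on $Y_S$, not of elements of $L$; concretely $\mathrm{Hom}_{X_S}(\CO,\CO(1))=(B^{+}_{\mathrm{crys},S})^{\varphi=\pi}$ is an infinite-dimensional Banach--Colmez space, and $\Aut_{X_S}(\mathscr{E}_b)=\tilde{G}_b(S)$ contains positive Banach--Colmez pieces whenever $b$ is non-basic. (Also, $H^0(X_S,\CO_{X_S})=F$, not $L$, so even your fallback computation is off.) The entire content of the theorem is that the descent condition along $S\times_{\Spd k}S\rightrightarrows S$ cuts these large spaces down to the isocrystal Homs, and your sketch never computes sections of $\underline{\mathrm{Isom}}(\mathscr{E}_b,\mathscr{E}_{b'})$ or of $\tilde{G}_b$ over $S\times_{\Spd k}S$; that computation is the technical heart of Ansch\"utz's proof. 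The same conflation undermines your essential surjectivity step: transporting the descent datum of an arbitrary $\mathscr{E}$ onto $\mathscr{E}_b$ yields a \emph{cocycle} valued in $\Aut(\mathscr{E}_b)$ over the \v{C}ech nerve of $S\rightarrow\Spd k$, and ``full faithfulness'' (a statement about sections over $\Spd k$) does not pin such a cocycle down by an element of $G(L)$; one must show every such cocycle is cohomologous to the canonical one of some $\mathscr{E}_{b'}$, which requires a d\'evissage of $\tilde{G}_b$ into $\underline{J_b(F)}$ and positive Banach--Colmez groups together with vanishing statements for their sections and $H^1$ along the nerve.

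Two further points. The obstacle you single out (possible non-constancy of $b$ over geometric points, semicontinuity of the Newton point) is not where the difficulty lies: since $k$ is algebraically closed you may take the v-cover $S$ to be a single geometric point, so $[b]$ is automatically constant on $S$; the genuine work is the descent analysis above. And the proposed reduction to $\GL_n$ via a faithful representation is not innocuous for essential surjectivity --- producing a reduction of structure group of the underlying bundle is not a formal consequence of the $\GL_n$-case --- though this can be sidestepped by citing the classification of $G$-bundles over a geometric point for general reductive $G$ directly, as you partly do.
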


The Kottwitz set $B(G)$ can be endowed with a partial order recording the degeneration relations of $G$-isocrystals in families. Equip it with the topology defined by the opposite of this partial order. Then a result of Viehmann shows that the above equivalence is compatible with the topologies on the set of objects on both sides. 

More precisely, fix $T \subset B\subset G_{\bar{F}}$, where $T$ is a maximal torus and $B$ is a Borel. Let $X_\ast(T)$ be the cocharacter lattice of $T$. It has an action by the Weyl group $W$ and the absolute Galois group $\Gamma$ of $F$. We write $X_\ast(T)^\Gamma$ for the Galois invariants. Denote by $\pi_1(G)$ the algebraic fundamental group of $G$, i.e. the quotient of $X_\ast(T)$ by the lattice generated by the coroots. It is also equipped with a $\Gamma$-action and we write $\pi_1(G)_\Gamma$ for the $\Gamma$-coinvariants. Kottwitz defined the \textit{Newton} and the \textit{Kottwitz map}
\[\nu_G: B(G)\rightarrow (X_\ast(T)_\Q/W)^\Gamma\]
\[\kappa_G: B(G)\rightarrow \pi_1(G)_\Gamma\]
satisfying certain characterizing properties, see \cite[Theorem 1.8, 1.15]{RR}. In particular
\[\nu_G\times\kappa_G: B(G)\rightarrow (X_\ast(T)_\Q/W)^\Gamma\times \pi_1(G)_\Gamma\]
is injective. Using this, for $[b]$ and $[b']\in B(G)$, we say $[b]\leq [b']$ if $\kappa_G([b])=\kappa_G([b'])$ and $\nu_G([b])\leq\nu_G([b'])$ in the Bruhat order, i.e. choosing a dominant cocharacter to represent the $W$-orbit of each, then the difference $\nu_G([b'])-\nu_G([b])$ is a sum of positive coroots with non-negative coefficients. Equip $B(G)$ with the topology such that $\{[b]\}\in \overline{\{[b']\}}$ if and only if $[b]\geq [b']$. We have 

\begin{theorem}[{\cite[Theorem 1.1]{Viehmann}}]\label{TopBunG}
The equivalence in Theorem~\ref{AbsFF} induces a homeomorphism 
\[B(G)\cong |\Bun_G|.\]    
\end{theorem}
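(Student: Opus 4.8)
The plan is to construct a continuous bijection $\phi\colon B(G)\to|\Bun_G|$ out of the equivalence $\mathcal{B}(G)\simeq\Bun_G(\Spd k)$ of \ref{AbsFF}, and then upgrade it to a homeomorphism by computing the specialisation relation on both sides. First I would check that $\phi$ is bijective on points. By \ref{AbsFF} the assignment $b\mapsto\mathscr{E}_b$ gives a bijection between $B(G)$ and isomorphism classes of $G$-bundles on $X_{\Spd k}$. A point of $|\Bun_G|$ is an equivalence class of maps $\Spa(C,C^+)\to\Bun_G$ with $(C,C^+)$ an affinoid perfectoid field over $k$; since the category of vector bundles, hence of Tannakian $G$-torsors, on $X_{\Spa(C,C^+)}$ depends only on $C$ and not on $C^+$ (Remark \ref{Y}), and since $B(G)$ is unchanged under algebraically closed extensions of $k$ (the remark after the definition of $\mathcal{B}(G)$), the classification \ref{AbsFF} applied over the completed algebraic closure of $C$ shows $\phi$ is both surjective and injective.

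Next I would prove the semicontinuity inclusion $|\overline{\Bun_G^{[b]}}|\subseteq\phi(\{[b']\in B(G):[b']\ge[b]\})$, equivalently: for any $S\in\Perf_{\F}$ and any $G$-bundle $\mathscr{E}$ on $X_S$, the locus $\{s\in|S|:[b_s]\ge[b]\}$ is closed. Choosing a faithful representation $G\hookrightarrow\GL_n$ reduces this to $\GL_n$, where the Harder--Narasimhan polygon of the associated rank-$n$ bundle is upper semicontinuous in families (Kedlaya--Liu semicontinuity for $\varphi$-modules over the Fargues--Fontaine curve); the Kottwitz invariant $\kappa_G(\mathscr{E}_s)\in\pi_1(G)_\Gamma$ is locally constant on $S$, as one sees by pushing $\mathscr{E}$ to the torus $D(G)$ with cocharacter lattice $\pi_1(G)$, where it becomes a line bundle whose degree is locally constant. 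Since $[b]\le[b']$ means precisely ``$\kappa$ equal and $\nu$ dominated'', the locus is closed. This direction is essentially classical.

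The main work, and the step I expect to be the real obstacle, is the converse: $\mathscr{E}_b$ genuinely degenerates to $\mathscr{E}_{b'}$ whenever $[b]\le[b']$, i.e.\ $\phi([b'])\in\overline{\{\phi([b])\}}$ in $|\Bun_G|$. I would reduce to the case of a covering relation $[b]\lessdot[b']$ and, using the Harder--Narasimhan (parabolic) reduction of $\mathscr{E}_b$ together with a description of the boundary of strata via extensions of line bundles on the curve, i.e.\ Banach--Colmez spaces $\mathcal{BC}(\CO(d))$, build an explicit family of $G$-bundles on $X_S$ over a suitable valuation-ring-like base $S$ whose generic point maps to $[b]$ and whose closed point maps to $[b']$. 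Producing all such degenerations and checking that they are ``sharp'', landing in the stratum $[b']$ and nowhere lower, is the genuinely new input; everything else is formal or quoted.

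Finally I would assemble the homeomorphism. Each stratum $\Bun_G^{[b]}$ is locally closed in $\Bun_G$ with a one-point underlying space (it is a classifying stack $[\ast/\underline{\Aut}(\mathscr{E}_b)]$), so $|\Bun_G|$ is set-theoretically $\coprod_{[b]}\{\mathrm{pt}_{[b]}\}$ and its topology is determined by its specialisation relation. The two inclusions above identify that relation with ``$[b']\ge[b]$'', which is the defining relation of the topology on $B(G)$; since comparable elements of $B(G)$ are separated by only finitely many intermediate elements, both topologies are the Alexandrov topology of the order, and $\phi$ is a homeomorphism.
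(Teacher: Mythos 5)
The paper does not prove this statement at all: Theorem \ref{TopBunG} is quoted verbatim from \cite[1.1]{Viehmann}, so the only thing to compare your argument against is that citation. Your outline does correctly reproduce the standard skeleton of such a proof, and its first two steps are fine in spirit: bijectivity of $B(G)\to|\Bun_G|$ follows from the pointwise classification of $G$-bundles on $X_C$ (note, though, that \ref{AbsFF} as stated is about $\Bun_G(\Spd k)$, i.e.\ bundles with descent datum, so you really need the classification over a geometric point $X_C$, together with Remark \ref{Y} and the invariance of $B(G)$ under extension of $k$), and the closedness of the loci $\{s:[b_s]\ge[b]\}$ is the usual Kedlaya--Liu/Fargues--Scholze semicontinuity of the Harder--Narasimhan point combined with local constancy of $\kappa_G$. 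The genuine gap is exactly the step you yourself flag as ``the real obstacle'': showing that for every $[b']\ge[b]$ the bundle $\mathscr{E}_{b'}$ lies in the closure of the stratum of $\mathscr{E}_b$. That is not a detail to be filled in later --- it is the entire content of the cited theorem. Nothing in your sketch produces the required families: a faithful embedding $G\hookrightarrow\GL_n$ does not transport closure relations back to $\Bun_G$ (the order on $B(G)$ is not simply the restriction of the order on $B(\GL_n)$, and closedness of images is not inherited), and constructing, for each covering relation in $B(G)$, a degeneration that lands sharply in the stratum $[b']$ is precisely where Viehmann's new input lies. As written, the proposal therefore assumes the statement it sets out to prove.

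A secondary, fixable gap is in the final assembly. The topology of $|\Bun_G|$ is not determined by its specialization relation merely because each stratum has a one-point underlying space, and ``comparable elements are separated by finitely many intermediates'' does not make either space Alexandrov. To conclude you need one of the following: the openness of the loci $\{s:[b_s]\le[b]\}$ (the full semicontinuity statement, not only closedness of the up-loci), or the finiteness of the sets $\{[b']\in B(G):[b']\le[b]\}$, which guarantees that every up-closed subset of $B(G)$ is a finite union of principal up-sets and hence has closed preimage. With one of these in hand, continuity of $|\Bun_G|\to B(G)$ follows, and together with bijectivity and the (unproven) degeneration step one would indeed obtain the homeomorphism.
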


Now we can define locally closed substacks of $\Bun_{G, k}$, the base-change of $\Bun_{G}$ to $\Spd k$.

\begin{def/prop}[{\cite[Theorem III.0.2(v), III.5.3]{FS}}]\label{BunGStrata}
    For any $[b]\in B(G)$, define the substack $\Bun_G^b$ of $\Bun_{G, k}$ to be
    \[\Bun_G\times_{|\Bun_G|}\{[b]\}.\]
    It can be identified with the classifying stack of $\widetilde{G}_b$-torsors, for the v-sheaf of groups
    \[\widetilde{G}_b: S\mapsto \mathrm{Aut}_{X_S}(\mathscr{E}_b).\]
\end{def/prop}

\begin{remark}\label{basic}
If the element $[b] \in B(G)$ is \textit{basic}, i.e. maximal under generalization, then the group $\tilde{G}_b$ agrees with the v-sheaf attached to the locally profinite group $G_b(F)$, where $G_b$ is an inner form of $G$ defined by
\[G_b(R)=\{g\in G(L\otimes_F R)\mid g=b\sigma(g)b^{-1}\},\]
for any $\Q_p$-algebra $R$.  
\end{remark}

Let $\mu$ be a dominant cocharacter, we can describe the image of the Beauville-Laszlo map on $\Gr_{G,\mu}$ under the identification $|\Bun_G|\cong B(G)$.

Note that $\mu$ defines an element $\bar{\mu}\in (X_\ast(T)_\Q/W)^\Gamma$ by averaging its Galois conjugates, i.e. 
\[\bar{\mu}:=\frac{1}{[E':\Q_p]}\sum_{\gamma\in \mathrm{Gal}(E'/\Q_p)}\gamma(\mu)\]
for a large enough Galois extension $E'/\Q_p$ over which $\mu$ is defined. Also, let $\mu^\flat$ be the image of $\mu$ in $\pi_1(G)_\Gamma$.

\begin{defn}
The subset $B(G,\mu)\subset B(G)$ of $\mu$-admissible elements is
\[\{[b]\in B(G)\mid \nu_G([b])\leq \bar{\mu}, \kappa([b])=\mu^\flat\}.\]
\end{defn}

\begin{prop}\label{BLImage}
    The map of topological spaces
    \[|\Gr_{G,\mu}|\xrightarrow{|BL|} |\Bun_G|\rightarrow B(G)\]
    has image $B(G,\mu)$.
\end{prop}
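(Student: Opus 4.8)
The plan is to check the two inclusions separately, testing on geometric points since $|\Gr_{G,\mu}|$ is a spectral space and the Newton stratification is constructible. First I would recall that for a point $\Spa(C,C^+)\to \Gr_{G,\mu}$, the associated modification $i\colon \mathscr{E}_1\dashrightarrow \mathscr{E}$ of the trivial $G$-bundle on $X_{C^\flat}$ has a well-defined Harder--Narasimhan type, which by Theorem \ref{AbsFF} corresponds to a unique $[b]\in B(G)$; this $[b]$ is precisely the image of the point under $|BL|$ composed with $|\Bun_G|\cong B(G)$. Because the relative position of the modification at the point $S^\sharp$ is exactly $\mu$ (by definition of the Schubert cell) and the modification is an isomorphism away from $S^\sharp$, this is a minuscule (more generally $\mu$-bounded) modification of the \emph{trivial} bundle $\mathscr{E}_1$. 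So the content is: the set of HN-types of $G$-bundles obtained by a $\mu$-modification of $\mathscr{E}_1$ is exactly $B(G,\mu)$.

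For the inclusion $\subseteq$, I would argue with the Kottwitz map and the Newton map. The Kottwitz invariant is insensitive to the local modification type up to the class $\mu^\flat\in\pi_1(G)_\Gamma$: modifying by $\mu$ changes $\kappa$ of the trivial bundle (which is $0$) by exactly $\mu^\flat$, so $\kappa_G([b])=\mu^\flat$ for any such $[b]$. For the Newton inequality $\nu_G([b])\le\bar\mu$ one reduces, via a faithful representation $G\hookrightarrow \GL_n$ (using that $\Gr_G\hookrightarrow\Gr_{\GL_n}$ is a closed embedding, compatibly with $BL$), to the case of vector bundles: a modification of $\mathcal{O}^n_{X_C}$ bounded by $\mu$ at one point produces a bundle whose HN-polygon lies below the polygon of $\bar\mu$ — this is the standard semicontinuity/convexity estimate for HN-polygons under elementary modifications, and then one feeds this back through the representation-theoretic characterization of $\nu_G$. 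Together these two facts give $|BL|(|\Gr_{G,\mu}|)\subseteq B(G,\mu)$.

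For the reverse inclusion $\supseteq$, given $[b]\in B(G,\mu)$ I must exhibit a geometric point of $\Gr_{G,\mu}$ mapping to it; equivalently, a $\mu$-bounded modification $\mathscr{E}_1\dashrightarrow\mathscr{E}_b$ over some $X_C$. This is exactly the non-emptiness of the local Shimura variety / affine Deligne--Lusztig datum attached to $(G,b,\mu)$ in the $B_{\mathrm{dR}}^+$-setting: for $[b]\in B(G,\mu)$ there exists a modification of type $\mu$ from the trivial bundle to $\mathscr{E}_b$. I would invoke this via \cite[III.3.1]{FS} together with the group-theoretic description of $B(G,\mu)$ — concretely, one can first handle the basic case (where $\mathscr{E}_b$ admits such a modification because the relevant double coset is nonempty, using that $G(B_{\mathrm{dR}})$ acts transitively enough on lattices of the prescribed type and Ansch\"utz's classification) and then bootstrap to general $[b]\le$ the basic element using that $|BL|$ is a map of topological spaces whose image is closed under the specializations cut out inside $B(G,\mu)$; alternatively cite the non-emptiness result for $B_{\mathrm{dR}}^+$-affine Deligne--Lusztig varieties directly.

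The main obstacle I expect is the $\supseteq$ direction: producing, for every $\mu$-admissible $[b]$, an actual point of the Schubert cell — this is genuinely the non-emptiness statement for $p$-adic affine Deligne--Lusztig/local Shimura data and is not formal, so the cleanest route is to cite it (Fargues--Scholze, Viehmann, or the Chen--Kisin--Viehmann-type results transported to the $B_{\mathrm{dR}}^+$ side) rather than reprove it. The $\subseteq$ direction, by contrast, is essentially the HN-polygon estimate and a bookkeeping check on $\kappa_G$, which is routine.
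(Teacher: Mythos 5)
Your overall strategy coincides with the paper's proof: for the inclusion of the image in $B(G,\mu)$ the paper simply cites \cite[3.5.3]{CS17}, whose proof (Lemmas 3.5.4, 3.5.5 there) is exactly the argument you sketch — the Kottwitz invariant equals $\mu^\flat$, and the Newton bound reduces via representations to the Harder--Narasimhan polygon estimate for a one-point modification of a trivial vector bundle — with only cosmetic changes for the ramified Witt vectors and a sign convention; for surjectivity the paper likewise just cites the non-emptiness result, namely Rapoport's appendix \cite[A.9]{Rap}, cf. \cite[Remark 3.5.8]{CS17}, which is precisely your ``cite it directly'' fallback. I would caution, however, that your two sketched substitutes for that citation do not work as written: \cite[III.3.1]{FS} gives surjectivity of $BL$ from all of $\Gr_G$, not from the fixed cell $\Gr_{G,\mu}$, so it says nothing about which strata a $\mu$-bounded modification of the trivial bundle can reach; and the proposed bootstrap from the basic case is both backwards and circular — in the paper's conventions the basic class is the \emph{minimal}, i.e. generic, element of $B(G,\mu)$, so every other $[b]\in B(G,\mu)$ is a specialization of it (not ``$\le$'' it), and stability of the image of $|BL|$ under specializations inside $B(G,\mu)$ is not a formal property of a quasi-compact continuous map of locally spectral spaces (note $B(G,\mu)$ itself is not stable under specialization in $B(G)$); establishing it stratum by stratum is essentially the non-emptiness statement you are trying to prove. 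So the citation route you name is the correct one, and it is the one the paper takes.
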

\begin{proof}
    This is \cite[Proposition 3.5.3]{CS17}, combined with \cite[Proposition A.9]{Rap}, see \cite[Remark 3.5.8]{CS17}, except that our convention on the Cartan decomposition on $\Gr_G$ differs from theirs by a minus sign, which eliminates the minus sign on $\mu^{-1}$ from their statement.
\end{proof}

\hfill

\section{A PEL-type Igusa stack and the rational conjecture}
     We return to the global PEL setup as in Section 5. Fix the level subgroup $K^p \subset G(\Af^p)$. Let $K_p=G_{\Z_{(p)}}(\Z_p)$ and $K=K_pK^p$. The adic Shimura variety is defined over $E/\Q_p$ with residue field $\F_q$ and $S_K$ over $\CO_E$ is the schematic Shimura variety at level $K$. Let $\Bun_G$ and $\Gr_G$ be those for the group $G_{\Q_p}$. We construct the PEL type Igusa stack at level $K^p$ over $\Spd \F_q$ and discuss part $(1)$ of Conjecture \ref{conjecture} on the good reduction locus. The word ``rational" in the title is in contrast to the integral model in Section 11.

\subsection{Construction of the Igusa stack}
\begin{defn}\label{DefIgusa}
    Equip the slice category $\Perf/\Spd \F_q$ with the v-topology. Let $\Igs:=\Igs_{K^p}^\circ$ be the stackification of $\Igs^\mathrm{pre}$, the category fibered in groupoids over $\Perf/\Spd \F_q$ determined by: 
    \[T=\Spa(R,R^+) \mapsto \Igs^\mathrm{pre}(T),\] 
    where objects in $\Igs^\mathrm{pre}(T)$ are quadruples $(A_0,\iota,\lambda,\bar{\eta})$ of abelian schemes with $G$-structure at level $K$ over $R^+/\varpi$, or $R^+/\varpi$-points of $S_K$ (where $\varpi$ is any pseudo-uniformizer of $R^+$). Isomorphisms between two objects $\AV_0 = (A_0, \iota, \lambda, \bar{\eta}), \AV'_0=(A'_0, \iota', \lambda',\bar{\eta'})$ are quasi-isogenies preserving the $G$-structures, i.e.
	\begin{align*}
  	&\mathrm{Hom}_{\Igs^\mathrm{pre}(T)}(\AV_0, \AV'_0)= \\
	&\left\lbrace \rho \in (\mathrm{Hom}_{R^+/\varpi}(A_0, A'_0)\otimes \Q)^\times \;\middle|\;
  \begin{tabular}{@{}l@{}}
	$\rho \circ \iota(b)=\iota'(b)\circ \rho$, for any $b\in \CO_B$\\
	$\rho^\vee\circ \lambda'\circ \rho=c\lambda$, for \\some $c\in \underline{\Q^\times}(\Spec(R^+/\varpi))$\\
	$\bar{\eta}=\bar{\eta'}\circ \rho_*$
   \end{tabular}
  \right\rbrace.\\
	\end{align*}
\end{defn}

\begin{remark} 
For a different choice of pseudo-uniformizer $\varpi'\in R^+$ (without loss of generality $\varpi \in (\varpi')$), the base change along $R^+/\varpi \twoheadrightarrow R^+/\varpi'$ induces an equivalence between $\Igs^\mathrm{pre}_{\varpi}(T)$ and $\Igs^\mathrm{pre}_{\varpi'}(T)$ by Serre-Tate lifting, see Theorem~\ref{ST0}. So the functoriality of $\Igs^\mathrm{pre}$ is ensured by composing with this equivalence, even if a map $\Spa(R_1,R_1^+) \rightarrow \Spa(R_2,R_2^+)$ does not necessarily preserve the choice of pseudo-uniformizers.
\end{remark}

Using the moduli interpretation of the good reduction locus $\Shi_{K}^\circ$, we get immediately the following:
\begin{prop}
Sending an isomorphism class of abelian schemes with $G$-structure to its reduction (modulo a pseudo-uniformizer on the base) defines a map of v-stacks $\mathrm{red}: \Shi_K^\circ \rightarrow \Igs$.
\end{prop}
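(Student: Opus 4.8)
The plan is to exhibit the map $\mathrm{red}$ on the level of the presheaf $\Shi_{K^p}^{\circ,\mathrm{pre}}$ (or rather $\Shi_K^{\circ,\mathrm{pre}}$) and then check that it descends to the v-stackifications. First I would use \Cref{PresheafGRL}: an $S=\Spa(R,R^+)$-point of (the presheaf underlying) $\Shi_K^\circ$ is a pair $(S^\sharp, \Spf R^{\sharp+}\to \mathscr{S}_K)$, which by the moduli interpretation of $\mathscr{S}_K$ amounts to a formal abelian scheme $\mathfrak{A}$ with $G$-structure $(\iota,\lambda,\bar\eta)$ over $\Spf R^{\sharp+}$. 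Now $R^{\sharp+}$ is $p$-adically complete, so reduction modulo a pseudo-uniformizer $\varpi$ of $R^{\sharp+}$ with $\varpi^p\mid p$ gives an abelian scheme $\mathfrak{A}\otimes_{R^{\sharp+}}R^{\sharp+}/\varpi$ with $G$-structure over $R^{\sharp+}/\varpi$. Under the identification $R^{\sharp+}/\varpi \cong R^{\flat+}/\varpi^\flat = R^+/\varpi^\flat$ (recorded in \S2 after \Cref{untilt}), this is precisely an object of $\Igs^{\mathrm{pre}}(S)$. This assignment is clearly functorial in $S$, so it defines a map of presheaves $\Shi_K^{\circ,\mathrm{pre}}\to \Igs^{\mathrm{pre}}$, which after v-stackification and composition with $\Shi_{K^p}^\circ\to\Shi_K^\circ$ (which exists since $\Shi_{K^p}^\circ$ lies over $\Shi_K^\circ$) yields the desired $\mathrm{red}:\Shi_{K^p}^\circ\to\Igs$.

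The two points requiring a little care are: (i) independence of the choice of pseudo-uniformizer $\varpi$, and (ii) compatibility with the equivalence relation defining the groupoids, i.e. that prime-to-$p$ quasi-isogenies (isomorphisms in $\Shi_K^\circ$, after passing to infinite level at $p$ the extra data is the trivialization of the Tate module, which is prime-to-$p$ in nature) map to quasi-isogenies compatible with $G$-structure over $R^+/\varpi$. Point (i) is exactly the content of the \textbf{Remark} following \Cref{DefIgusa}: a different $\varpi'$ with $\varpi\in\varpi'R^+$ gives a base-change functor $\Igs^{\mathrm{pre}}_{\varpi'}\to\Igs^{\mathrm{pre}}_\varpi$ which is an equivalence by Serre-Tate lifting, \Cref{ST0}(2) (the kernel $\varpi R^+/\varpi' R^+$ of $R^+/\varpi\twoheadrightarrow R^+/\varpi'$ is nilpotent after possibly iterating since both are pseudo-uniformizers — more precisely one reduces to the case where the kernel is nilpotent by writing the quotient as a composite of square-zero or at least bounded-nilpotent extensions in the $\varpi$-adic filtration, using that $p$ is nilpotent on $R^+/\varpi$), so $\mathrm{red}$ is well-defined up to this canonical identification. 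Point (ii) is immediate: a prime-to-$p$ quasi-isogeny $f:A\to A'$ over $R^{\sharp+}$ restricts to a quasi-isogeny over $R^{\sharp+}/\varpi$, and the conditions $\rho\circ\iota(b)=\iota'(b)\circ\rho$, $\rho^\vee\circ\lambda'\circ\rho=c\lambda$, $\bar\eta=\bar\eta'\circ\rho_\ast$ are stable under base change along $R^{\sharp+}\twoheadrightarrow R^{\sharp+}/\varpi$.

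Finally, to see that the presheaf map indeed induces a map of v-stacks, I would invoke that v-stackification is functorial and that a morphism of presheaves of groupoids induces a morphism of their associated stacks; no separate descent argument is needed on the source since we only need a map \emph{into} a stack, and $\Igs$ is by construction the stackification of $\Igs^{\mathrm{pre}}$, so the universal property gives $\widehat{\Shi_K^{\circ,\mathrm{pre}}}\to\Igs$ factoring the presheaf-level map, where $\widehat{(-)}$ denotes v-stackification; but $\widehat{\Shi_K^{\circ,\mathrm{pre}}}=\Shi_K^\circ$ by \Cref{PresheafGRL}. I do not anticipate a serious obstacle here: this proposition is essentially a bookkeeping statement packaging the tilting identification $R^{\sharp+}/\varpi\cong R^+/\varpi^\flat$ together with the functoriality of reduction, and the only mild subtlety is the pseudo-uniformizer independence, which the cited Serre-Tate input (\Cref{ST0}) handles cleanly. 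The more substantive content — that this $\mathrm{red}$ fits into a cartesian diagram — is deferred to \Cref{cartesian}.
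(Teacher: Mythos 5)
Your proposal is correct and follows essentially the same route as the paper: view $\Shi_K^\circ$ as the stackification of the presheaf of pairs (untilt, $\Spf R^{\sharp+}\to\mathscr{S}_K$), reduce the formal abelian scheme modulo a pseudo-uniformizer using the tilting identification $R^{\sharp+}/\varpi^\sharp\cong R^+/\varpi$, and conclude by the universal property of v-stackification; the pseudo-uniformizer independence you address is handled identically by the remark after \Cref{DefIgusa} via \Cref{ST0}. (Note only that the stated proposition concerns $\Shi_K^\circ\to\Igs$ at finite level, so your final composition with $\Shi_{K^p}^\circ\to\Shi_K^\circ$ is superfluous, though harmless.)
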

\begin{proof}[Construction]	
View $\Shi_{K}^\circ$ as the sheafification of the presheaf on $\Perf_{\F_q}$
\[\Shi_K^{\circ,\mathrm{pre}}: S=\Spa(R,R^+)\mapsto \{(S^\sharp, \mathscr{S}_K(\Spf R^{\sharp+}))\},\]
where $S^\sharp=\Spa(R^\sharp,R^{\sharp+})$ is an untilt of $S$ over $E$. Choose a pseudo-uniformizer $\varpi\in R^+$. An $S$-point of $\Shi_K^{\circ,\mathrm{pre}}$ gives a formal abelian scheme $\mathfrak{A}/\Spf R^{\sharp+}$ with $G$-structure at level $K$. As $R^+/\varpi \cong R^{\sharp+\flat}/\varpi \cong R^{\sharp+}/\varpi^\sharp$,  $\AV_0:=\mathfrak{A}\times_{R^{\sharp+}} R^{\sharp+}/\varpi^\sharp$ is an object of $\Igs^\mathrm{pre}(S)$. Sending $(S^\sharp, \mathfrak{A})$ to $\AV_0$ defines a map $\Shi_K^{\circ,\mathrm{pre}} \rightarrow \Igs^\mathrm{pre}$, inducing the desired map. 
\end{proof}

\begin{remark}\label{IntRed}
By allowing the untilt $S^\sharp$ to lie over $\Spa \CO_E$, the construction above extends to a map on the v-sheaf integral model
\[\mathrm{red}: \SK\rightarrow \Igs.\]
\end{remark}

Next, we would like to construct a map $\Igs\to \Bun_G$. Before that we need a lemma.

Let $S=\Spa(R,R^+)$ in $\Perf$ be an affinoid perfectoid and $S^\sharp=\Spa(R^\sharp, R^{\sharp+})$ be any untilt of $S$ over $\Spa E$. Assume $\mathfrak{A}$ is a formal abelian scheme (without $G$-structures) over $\Spf R^{\sharp+}$ with
\[A_0:=\mathfrak{A} \times_{R^{\sharp+}}R^+/\varpi.\] 
Note that we have a priori two ways of attaching a vector bundle on the relative Fargues-Fontain curve $X_S$ to $\mathfrak{A}$. Namely, the rational crystalline Dieudonn\'e module 
\[M[1/p]:=M_\mathrm\mathrm(A_0[p^\infty]\times_{R^+/\varpi} R^\circ/\varpi)[1/p]\] 
of $A_0[p^\infty]\times_{R^+/\varpi} R^\circ/\varpi$ is a finite projective $B_\cris^+(R^\circ/\varpi)$-module. Then the graded module 
\[\bigoplus_{d\geq 0}(M[1/p])^{\varphi=p^{d+1}}\] 
defines a vector bundle $\mathscr{E}(A_0)$ with $G$-structure on the relative Fargues-Fontain curve $X_S$, using the description of the algebraic curve in Remark \ref{graded}, as well as the GAGA theorem for the curve.

On the other hand, if we denote the prismatic Dieudonn\'e module of the $p$-divisible group of $\mathfrak{A}$ by $(\tilde{M},\varphi_{\tilde{M}})$, then the restriction of $(\tilde{M},\varphi_{\tilde{M}})$ to $\mathcal{Y}_{[r, \infty)}(S)$ for $r\gg 0$ descends to a vector bundle $\mathscr{E}$ on $X_S$. We have the following relation between the two constructions.

\begin{lemma}\label{commute}
There is a natural (with respect to isomorphism $\mathfrak{A}\xrightarrow{\sim} \mathfrak{A}'$ and base change in $S$) isomorphism $\mathscr{E}\simeq \mathscr{E}(A_0)$.
\end{lemma}
\begin{proof}
     Let $\AV$ be the adic generic fiber of $\mathfrak{A}$ and $T_p\AV$ be its Tate module, considered as a $\underline{\Z}_p$-local system on $S$. For any $f\in T_p\mathcal{A}(S)$, by viewing it of a homomorphism $\Q_p/\Z_p\rightarrow \mathfrak{A}[p^\infty]$ as $p$-divisible groups over $\Spf R^{\sharp+}$, one gets a map between the covariant Dieudonn\'e modules $W(R^+)\cong M_{\prism}(\Q_p/\Z_p)\rightarrow \tilde{M}$. This gives rise to a natural Frobenius equivariant evaluation map 
    \[T_p\mathcal{A}(S)\otimes W(R^+) \rightarrow \tilde{M},\]
    where the tensor is over the global sections of $\underline{\Z}_p$ on $S$. Base change to $A_\mathrm{cris}(R^+)$, one has a similar map. Upon identifying $\tilde{M}\otimes_{W(R^+)}A_\mathrm{cris}(R^+)$ with the crystalline Dieudonn\'e module of $A_0$ (see \cite[Theorem 17.5.2]{Berkeley}), we see that both $\mathscr{E}$ and $\mathscr{E}(\AV_0)$ are naturally the modification of $T_p\mathcal{A}\otimes \CO_{X_S}$ by the $\Bdr^+(R^\sharp)$-lattice $\tilde{M}\otimes_{W(R^+)} \Bdr^+(R^\sharp)$ along the de Rham comparison isomorphism
    \[T_p\mathcal{A}(S)\otimes \Bdr(R^\sharp) \cong \tilde{M}\otimes _{W(R^+)} \Bdr(R^\sharp).\]

    Hence one has an identification $\mathscr{E}\cong \mathscr{E}(\mathcal{A}_0)$ depending only on the isomorphism $\mathfrak{A}\times_{R^{\sharp+}}R^+/\varpi\cong A_0$ and is functorial with respect to isomorphisms between lifts and base change in $S$.
\end{proof}
\begin{remark}
    From the proof we see that $\mathscr{E}(\mathcal{A}_0)$ sits in the short exact sequence
    \[0\rightarrow T_p\mathcal{A}\otimes_{\underline{\Z}_p} \CO_{X_S}\rightarrow \mathscr{E}(\mathcal{A}_0)\rightarrow i_\ast Lie(\mathcal{A})\rightarrow 0,\]
    where $i:S^\sharp \rightarrow X_S$ is the closed immersion of the Cartier divisor $S^\sharp$.
\end{remark}

\begin{prop}\label{prop: MapIgstoBunG}
    There is a morphism of v-stacks
	\[\overline{\pi}_{HT}^\circ: \Igs\rightarrow \Bun_G.\]
 (The notation is justified by Theorem~\ref{cartesian}.)
\end{prop}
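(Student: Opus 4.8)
The plan is to construct $\overline{\pi}_{HT}^\circ$ by attaching to an object of $\Igs^\text{pre}(T)$ the $G_{\Q_p}$-bundle on the relative Fargues-Fontaine curve $X_T$ coming from the (rational) covariant crystalline Dieudonn\'e module of the $p$-divisible group with $G$-structure of the abelian scheme. First I would work on the presheaf level: given $T=\Spa(R,R^+)\in\Perf_{\F_q}$ with a pseudo-uniformizer $\varpi$ and an object $\AV_0=(A_0,\iota,\lambda,\bar\eta)$ over $R^+/\varpi$, consider the $p$-divisible group with $G$-structure $(\G,\iota,\lambda)=(A_0[p^\infty],\iota,\lambda)$ over the quasi-regular semiperfectoid ring $R^+/\varpi$ (recall $R^+/\varpi$ is quasi-regular semiperfectoid by the example after Definition of q.r.s.\ rings). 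Its covariant prismatic/crystalline Dieudonn\'e module $M:=M_\prism(\G)$ is a finite projective module over $\prism_{R^+/\varpi}=A_\crys(R^+/\varpi)$ with an admissible Frobenius $\varphi_M$, and the $\CO_B$-action and polarization equip it with a $G_{\Q_p}$-structure. Inverting $p$ and using the graded description $P\cong\bigoplus_{d\ge 0}(B^+_{\crys,T})^{\varphi=\pi^d}$ from Remark \ref{graded}, the pair $(M[1/p],\varphi_M)$ gives a vector bundle, and more precisely a $G_{\Q_p}$-torsor, on $X_T^\text{alg}$, hence on $X_T$ by GAGA; this is a point of $\Bun_G(T)$. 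This defines a morphism $\Igs^\text{pre}\to\Bun_G$ of categories fibered in groupoids over $\Perf/\Spd\F_q$.

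Next I would check this is compatible with the two features of $\Igs^\text{pre}$ that need care. First, independence of the pseudo-uniformizer: for $\varpi\in\varpi'R^+$, the base change $R^+/\varpi\twoheadrightarrow R^+/\varpi'$ is a nilpotent thickening, Serre-Tate/Drinfeld (Theorem \ref{ST0}) identifies $p$-divisible groups up to isogeny on both sides, and the crystalline Dieudonn\'e module is insensitive to nilpotent thickenings after inverting $p$ (equivalently, $A_\crys(R^+/\varpi)[1/p]$ only depends on the perfectoid tilt, which is the same); so the resulting $G$-bundle is canonically the same. Second, functoriality of morphisms: a quasi-isogeny $\rho:\AV_0\to\AV_0'$ preserving $G$-structures induces a quasi-isogeny on $p$-divisible groups, hence an isomorphism on rational Dieudonn\'e modules, hence an isomorphism of $G$-bundles on $X_T$, compatibly with composition. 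Since $\Bun_G$ is already a v-stack (a small v-stack, by the cited result from \cite{FS}), the universal property of stackification yields a unique morphism $\overline{\pi}_{HT}^\circ:\Igs\to\Bun_G$ extending $\Igs^\text{pre}\to\Bun_G$, with the desired compatibilities.

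The main obstacle I expect is verifying that the output of the construction is genuinely a $G_{\Q_p}$-\emph{torsor} on $X_T$ (not merely a vector bundle with tensor structure), functorially in $T$, working directly over $\Perf_{\F_q}$ rather than just over geometric points. One must show that the $\CO_B$-linearity and the polarization cut out, from the $\GL$-bundle attached to $M[1/p]$, a torsor under $G_{\Q_p}$: locally this is the sheaf of $\CO_B\otimes\Q_p$-linear symplectic similitudes from $M[1/p]$ to the constant module $\Lambda\otimes\Q$, and one needs it to be \'etale-locally trivial on $X_T$ and fppf-isomorphic to $G_{\Q_p}$. For a single abelian variety over an algebraically closed field this is classical (Rapoport-Zink, cf. \cite[3.16]{RZ}, and the argument of \cite[21.6.4, 21.6.5]{Berkeley} used in the proof of Theorem \ref{HT}); in families over general $T$ one reduces to the geometric-point case by v-descent, which is legitimate precisely because $\Bun_G$ satisfies v-descent of $G$-torsors. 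I would also remark that, as the notation anticipates, the compatibility $BL\circ\pi_{HT}^\circ=\overline{\pi}_{HT}^\circ\circ\mathrm{red}$ will be checked in the proof of \ref{cartesian}: it amounts to the statement that the $G$-bundle attached to the crystalline Dieudonn\'e module of the reduction agrees with the Beauville-Laszlo modification of the trivial bundle determined by the Hodge-Tate filtration, which is exactly the content of the \'etale-crystalline comparison isomorphism $c$ appearing in the proof of Theorem \ref{HT}.
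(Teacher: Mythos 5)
Your construction is the same as the paper's up to the decisive step: take the rational crystalline Dieudonn\'e module of $\AV_0[p^\infty]$ over the quasi-regular semiperfectoid ring $R^+/\varpi$, pass to the graded module $\bigoplus_d(M[1/p])^{\varphi=p^{d+1}}$ to get a bundle on $X_T$ via Remark \ref{graded} and GAGA, send quasi-isogenies to isomorphisms, and invoke the universal property of v-stackification against the v-stack $\Bun_G$. The independence-of-$\varpi$ and functoriality points you raise are fine and match the paper.

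The gap is exactly at the point you flag as the main obstacle, and your proposed fix does not work as stated. You want to verify the torsor property of $\underline{\mathrm{Isom}}_G(\mathscr{E}(\AV_0),\Lambda\otimes_{\Z_p}\CO_{X_T})$ by ``reducing to the geometric-point case by v-descent.'' But geometric points of $T$ do not form a v-cover of $T$; a v-cover is given by a \emph{product} of points $\widetilde T\to T$, and over such a $\widetilde T$ the classical single-point argument (Rapoport--Zink \cite[3.16]{RZ}, or the fiber-functor argument over an algebraically closed field) does not immediately apply: the base ring is $\prod_i C_i^+/\varpi_i$, not a perfect field, and knowing the restriction to each principal component is a torsor does not by itself give \'etale-local triviality on $X_{\widetilde T}$. (This passage from rank-one points to products of points is precisely where the paper invests real work elsewhere, e.g.\ in Proposition \ref{Bijective}; it is not a formal consequence of $\Bun_G$ being a v-stack. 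Moreover, even granting triviality v-locally, one must argue that the Isom sheaf you wrote down itself satisfies v-descent so that it agrees with the descended torsor.)

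The paper circumvents all of this by a different mechanism that your proposal omits: choose an arbitrary untilt $S^\sharp$ of $T$ over $\Spa E$ and use formal smoothness of the integral model $\mathscr{S}_K$ to lift $\AV_0$ to a formal abelian scheme $\mathfrak{A}$ with $G$-structure over $R^{\sharp+}$ (this is where the global moduli interpretation enters essentially). Lemma \ref{commute} then identifies $\mathscr{E}(\AV_0)$, functorially, with the modification of $T_p\mathcal{A}\otimes_{\underline{\Z}_p}\CO_{X_T}$ by the prismatic Dieudonn\'e lattice along $S^\sharp$. \'Etale-local triviality as symplectic similitude $\CO_B\otimes\CO_{X_T}$-modules then follows away from the leg from the $G$-structure on the Tate module (\cite[21.6.4, 21.6.5]{Berkeley}), and at the completion along the leg from the Kottwitz condition together with \cite[3.23 c, 3.16]{RZ}, lifted to $\Bdr^+(R^\sharp)$ by $(\xi)$-adic completeness. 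If you want to salvage your route without the lifting trick, you would need to supply the missing descent/globalization argument over products of points explicitly; as written, the proof is incomplete at its central step. Your closing remark about the compatibility $BL\circ\pi^\circ_{HT}=\overline{\pi}^\circ_{HT}\circ\mathrm{red}$ being deferred to \ref{cartesian} via the \'etale--crystalline comparison does agree with the paper.
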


\begin{proof}[Construction]
It suffices to construct the map on $\Igs^\mathrm{pre}$. As $\Bun_G$ is a v-stack, this will necessarily factor through the v-stackification and give the desired map.

For $S=\Spa(R,R^+) \in \Perf$ with a chosen pseudo-uniformizer $\varpi \in R^+$, $\Igs^\mathrm{pre}(S)$ is the groupoid of quadruples $\AV_0:=(A_0, \iota, \lambda, \bar{\eta})$, where $A_0$ is an abelian scheme over $R^+/\varpi$; $\iota$ is a morphism $\CO_B \rightarrow \mathrm{End}(A_0)\otimes \Z_{(p)}$; $\lambda$ is a polarization $A_0\rightarrow A_0^\vee$ whose degree is prime to $p$ and $\bar{\eta}$ is a $K^p$-level structure, with morphisms being quasi-isogenies compatible with the $G$-structures. The $p$-divisible group of $\AV_0$ is defined up to isogeny, and is equipped with $\CO_B$-endomorphism and polarization induced by those on $A_0$. Its rational crystalline Dieudonn\'e module $M[1/p]:=M_\mathrm{cris}(\AV_0[p^\infty])[1/p]$ is a finite projective $B_\cris^+(R^+/\varpi)$-module. This is equipped with a $G$-structure, i.e. an $B\otimes_\Q B_\cris^+(R^+/\varpi)$-module structure and a symplectic pairing, by full-faithfulness of the crystalline Dieudonn\'e module functor \cite[Theorem A]{SW}. Now base-change to $B_\cris^+(R^\circ/\varpi)$, and the graded module $\bigoplus_{d\geq 0}(M[1/p])^{\varphi=p^{d+1}}$ defines a vector bundle $\mathscr{E}(\AV_0)$ with $G$-structure on $X_S$.

Consider the sheaf on $X_S$ of trivializations of $\mathscr{E}(\AV_0)$ as a symplectic similitude $\CO_B\otimes \CO_{X_S}$-module 
\[\underline{\mathrm{Isom}}_{G}(\mathscr{E}(\AV_0),\Lambda\otimes_{\Z_p} \CO_{X_S}).\] 
We claim that this is a $G$-torsor and hence an object in $\Bun_G(S)$. To prove this, choose an arbitrary untilt $S^\sharp=\Spa(R^{\sharp},R^{\sharp+})$ over $E$. Using the formal smoothness of $\mathscr{S}_K$, a formal abelian scheme with $G$-structure $\mathfrak{A}$ over $R^{\sharp+}$ lifting $\AV_0$ compatible with the $G$-structures exists. We can now apply the description of $\mathscr{E}(\mathcal{A}_0)$ in Lemma~\ref{commute}. In particular, since $T_p\mathcal{A}\otimes_{\underline{\Z}_p} \CO_{X_S}$ and the completion of $\mathscr{E}(A_0)$ at $S^\sharp$ are \'etale locally on $X_S$ isomorphic to $\Lambda\otimes_{\Z_p} \CO_{X_S}$ and its completion at $S^\sharp$ as symplectic similitude $\CO_B\otimes \CO_{X_S}$-modules, the sheaf 
\[\underline{\mathrm{Isom}}_{G}(\mathscr{E}(\AV_0),\Lambda\otimes_{\Z_p} \CO_{X_S})\]
is an \'etale $G$-torsor on $X_S$. Indeed, for the statement about the completion at $S^\sharp$, note that $i^\ast\mathscr{E}(\mathcal{A}_0)$ is the Lie algebra of the universal extension of $\mathfrak{A}[p^\infty]$ with $p$ inverted. By \cite[Section 3.23 c]{RZ}, \cite[Remark 21.6.5]{Berkeley}, the Kottwitz condition ensures that $i^\ast\mathscr{E}(\AV_0)$ is \'etale locally on $\Spec(R^\sharp)$ isomorphic to $\Lambda \otimes_{\Z_p} \CO$ as polarized $\CO_B\otimes \CO$-modules. This property lifts to $\Bdr^+(R^\sharp)$, which is complete along $(\xi)$.

On the other hand, by full-faithfulness of the Dieudonn\'e module functor, a quasi-isogeny $\AV_0 \rightarrow \AV_0'$ over $R^+/\varpi$ preserving the $G$-structures, induces an isomorphism $M[1/p]\cong M'[1/p]$ compatible with the $G$-structures, and hence an isomorphism of the attached $G$-bundles $\mathscr{E}(\AV_0)\cong \mathscr{E}(\AV'_0)$. This is a morphism in $\Bun_G(S)$. Everything is functorial in $S$ and hence the above defines a morphism $\Igs^\mathrm{pre}\rightarrow \Bun_G$.
\end{proof}

\begin{remark}
From now on we no longer distinguish the vector bundle $\mathscr{E}(\AV_0)$ with $G$-structure and its attached $G$-torsor. To get back $\mathscr{E}(\AV_0)$ from its attached $G$-torsor, one can take pushout along the standard representation of $G_{\Q_p}$ on $V_{\Q_p}$. 
\end{remark}

We can describe the image of the above map under the homeomorphism $|\Bun_G|\cong B(G)$ from Theorem \ref{TopBunG}. 

\begin{prop}\label{IgsImage}
    The image of 
    \[|\Igs|\rightarrow |\Bun_G|\cong B(G)\] 
    is the subset $B(G,\mu)$ of $\mu$-admissible elements.
\end{prop}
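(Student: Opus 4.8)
The plan is to reduce the computation of the image to the level of topological spaces and exploit the description of points of $\Bun_G$ via the Kottwitz category (\Cref{AbsFF}, \Cref{TopBunG}) together with the already-established image computation for the Hodge--Tate period map. First I would observe that by \Cref{TopBunG} the composite $|\Igs|\to|\Bun_G|\cong B(G)$ sends a point represented by $\Spa(C,C^+)\to\Igs$ to the $\sigma$-conjugacy class $[b]\in B(G)$ of the $G$-isocrystal underlying the rational crystalline Dieudonn\'e module $M_{\mathrm{crys}}(\AV_0[p^\infty])[p^{-1}]$ of the corresponding abelian variety with $G$-structure over the residue field $k$ of $C^+$. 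So the claim is exactly: the set of $[b]\in B(G)$ arising as the $G$-isocrystal of the $p$-divisible group with $G$-structure of some $\overline{\F}_p$-point of $S_K$ equals $B(G,\mu)$.

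For the inclusion ``$\subseteq$'', I would argue as follows. Given a point of $\Igs$ over $\Spa(C,C^+)$, one can after passing to a v-cover assume there is an untilt $S^\sharp$ over $E$ and (by formal smoothness of $\mathscr{S}_K$) a lift $\mathfrak{A}$ to $\Spf\CO_{C^\sharp}$, i.e.\ a point of the good reduction locus $\Shi_K^\circ$ mapping to our given point under $\mathrm{red}$. Hence $\overline{\pi}_{HT}^\circ\circ\mathrm{red}$ factors through $\Igs$, and by the construction of $\overline{\pi}_{HT}^\circ$ via the crystalline Dieudonn\'e module together with \Cref{commute}, the $G$-bundle attached to this point is the Beauville--Laszlo image of the $\pi_{HT}^\circ$-image of the corresponding point of $\Shi_{K^p}^\circ$ — more precisely, $\mathscr{E}(\AV_0)$ is the modification of $T_p\AV\otimes\CO_{X_S}$ by the lattice measuring the Hodge filtration, which is bounded by $\mu$ since the Hodge--Tate period map has image in $\Gr_{G,\mu}$ (\Cref{HT}). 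So $|\Igs|\to|\Bun_G|$ factors, on the relevant points, through $|\Gr_{G,\mu}|\xrightarrow{|BL|}|\Bun_G|$, whose image is $B(G,\mu)$ by \Cref{BLImage}. This gives the containment in one direction — and in fact shows the image of $|\Igs|$ is contained in $B(G,\mu)$ on the nose, not merely after taking closures.

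For the reverse inclusion ``$\supseteq$'', I would show that every $\mu$-admissible $[b]$ is actually realized by an $\overline{\F}_p$-point of $S_K$. The clean way to see this is to use that the Newton stratification on the special fiber $S_{K,\overline{\F}_p}$ of the integral model is non-empty in every stratum labelled by $B(G,\mu)$ — this is precisely the non-emptiness of Newton strata for PEL Shimura varieties with hyperspecial level, which follows from Rapoport's conjecture on non-emptiness (proved in this setting; see \cite[A.9]{Rap} for the surjectivity of $|\Gr_{G,\mu}|\to B(G,\mu)$ and the reference therein to the special fiber), or alternatively from the surjectivity half of \Cref{BLImage} combined with the fact that $\Shi_{K^p}^\circ$ surjects onto the rank-one points of $\Gr_{G,\mu}$ via $\pi_{HT}^\circ$ (each such point has a $p$-divisible group with $G$-structure over $\CO_C$, whose special fiber gives an $\overline{\F}_p$-point of $S_K$ in the stratum $[b]$). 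Concretely: pick a rank one geometric point $\Spa(C,\CO_C)\to\Gr_{G,\mu}$ landing in the stratum over $[b]$; by \Cref{minusculeSchubert} it corresponds to a $p$-divisible group with $G$-structure over $\CO_C$; by formal smoothness and the moduli interpretation this is the $p$-divisible group of a point of $\mathscr{S}_K(\CO_C)$; reducing modulo the maximal ideal gives an $\overline{\F}_p$-point of $S_K$ whose $G$-isocrystal is $[b]$; this point defines an element of $\Igs^{\mathrm{pre}}(\Spa(C,\CO_C))$, hence of $|\Igs|$, mapping to $[b]$.

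\textbf{Main obstacle.} The technical heart is the surjectivity onto rank-one points of $\Gr_{G,\mu}$ and the claim that the resulting $\CO_C$-point of $\mathscr{S}_K$ exists — i.e.\ that a $p$-divisible group with $G$-structure coming from a point of $\Gr_{G,\mu}(\Spa(C,\CO_C))$ deforms/algebraizes to an honest $\CO_C$-point of the integral model. For the ``$\subseteq$'' direction the subtlety is instead the factorization through $|\Gr_{G,\mu}|$: one must check compatibility of the construction of $\overline{\pi}_{HT}^\circ$ (via $B^+_{\mathrm{crys}}$ and GAGA) with that of $\pi_{HT}^\circ$ (via $\Bdr^+$), which is exactly what \Cref{commute} provides, so this part should be routine modulo carefully tracking that $\mathscr{E}(\AV_0)$ is the Beauville--Laszlo modification by the $\mu$-bounded lattice. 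I expect the bulk of the writing to be in justifying the reduction to rank-one geometric points (so that \Cref{BLImage} and \Cref{minusculeSchubert} apply) and in citing the correct non-emptiness statement for the reverse inclusion.
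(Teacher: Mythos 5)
Your ``$\subseteq$'' direction is fine and is essentially the paper's argument: commutativity of the square in \ref{cartesian} (via \ref{commute}) plus \ref{BLImage} shows the image lands in $B(G,\mu)$, once one knows points of $\Igs$ lift to $\Shi^\circ_{K^p}$ (in the paper this is immediate from the cartesian property and pro-\'etale surjectivity of $BL$, \ref{BLSurj}).

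The gap is in your ``$\supseteq$'' direction. Your ``concrete'' argument asserts that a rank-one point $\Spa(C,\CO_C)\to\Gr_{G,\mu}$ over $[b]$, which by \ref{minusculeSchubert} gives a $p$-divisible group with $G$-structure over $\CO_{C^\sharp}$, ``by formal smoothness and the moduli interpretation'' is the $p$-divisible group of a point of $\mathscr{S}_K(\CO_{C^\sharp})$. This does not follow: formal smoothness of $\mathscr{S}_K$ lets you lift abelian varieties along nilpotent (or adic) thickenings, but it produces no abelian variety from a bare $p$-divisible group with $G$-structure. Whether such a $p$-divisible group is (isogenous to) $A[p^\infty]$ for some point $A$ of the Shimura variety is exactly the assertion that the fiber of $\pi^\circ_{HT}$ over that point is non-empty, i.e.\ the surjectivity you are trying to establish; your alternative phrasing (``the fact that $\Shi^\circ_{K^p}$ surjects onto the rank-one points of $\Gr_{G,\mu}$'') simply assumes the conclusion. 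Moreover the citation you lean on is off target: \cite[A.9]{Rap} is the group-theoretic surjectivity $|\Gr_{G,\mu}|\twoheadrightarrow B(G,\mu)$, which is already what \ref{BLImage} uses, and says nothing about non-emptiness of Newton strata of $S_{K,\overline{\F}_p}$. Your first route — invoking non-emptiness of Newton strata for these PEL Shimura varieties (Viehmann--Wedhorn) — would indeed close the gap, but it is a genuinely different and heavier external input than the paper uses, and it needs to be cited as such. The paper instead gets ``$\supseteq$'' from surjectivity of $\pi^\ast_{HT}$ on the compactified infinite-level Shimura variety \cite[IV.1.1(i)]{Sch15} together with the observation that the good-reduction locus in each fiber of $\pi^\ast_{HT}$ is non-empty, and then composes with $BL$ using \ref{BLImage}. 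So: either import the Newton-stratum non-emptiness theorem explicitly, or reproduce the paper's deduction from \cite{Sch15}; the step you wrote down in between is not valid.
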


\begin{proof}
    This follows from Proposition \ref{BLImage} and commutativity of the diagram in Theorem \ref{cartesian} below. Note that $\pi_{HT}^\circ$ is surjective on topological spaces: this holds for the Hodge-Tate map $\pi_{HT}^\ast$ on the minimal compactification \cite[Theorem IV.1.1(i)]{Sch15} but the good reduction locus of any fiber of $\pi_{HT}^\ast$ is non-empty, since its canonical compactification contains an Igusa variety, see Theorem~\ref{HTfiber} in the next section. 
\end{proof}

\subsection{A fiber product formula}
We first record two lemmas.
\begin{lemma}
\label{1}
Let $V$ be a valuation ring with fraction field $K$. Let $A$ be an abelian scheme over $V$ and $A_K$ its generic fiber. If $j: G_K\rightarrow A_K$ is a finite sub-group scheme, then there exists a finite sub-group scheme $G$ of $A$, flat over $V$, whose generic fiber agrees with $G_K$.
\end{lemma}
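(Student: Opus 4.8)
\textbf{Proof plan for Lemma \ref{1}.}

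The statement is essentially the observation that finite flat subgroup schemes of an abelian scheme over a valuation ring are detected by their generic fibers, via scheme-theoretic closure. The plan is as follows. Let $G$ be the scheme-theoretic closure of the locally closed subscheme $j(G_K)\subset A_K\subset A$ inside $A$; equivalently, if $A=\Spec(\mathcal{O}_A)$ locally and $G_K$ is cut out of $A_K$ by an ideal $I_K\subset \mathcal{O}_A\otimes_V K$, let $G$ be cut out by $I:=I_K\cap \mathcal{O}_A$. The first step is to check that $G$ is flat over $V$: since $V$ is a valuation ring, flatness over $V$ is equivalent to being torsion-free as a $V$-module, and $\mathcal{O}_A/I$ injects into $(\mathcal{O}_A/I)\otimes_V K=\mathcal{O}_A\otimes_V K / I_K$ by construction, hence is torsion-free. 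The second step is to see that $G$ is a subgroup scheme: the zero section, the multiplication map $A\times_V A\to A$ and the inverse map $A\to A$ all restrict appropriately over $K$, and because $G\times_V G$ (resp. $G$) is flat over $V$, it is the scheme-theoretic closure of its generic fiber, so these restrictions automatically extend — a morphism of flat $V$-schemes factoring through a closed subscheme generically factors through the closure. Concretely, $G\times_V G$ is $V$-flat hence $V$-torsion free, so the composite $G\times_V G\to A\times_V A\to A$, whose generic fiber lands in $G_K\subset A_K$, lands in $G$; similarly for the unit and the inverse.

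The remaining, and only mildly delicate, point is finiteness of $G$ over $V$. The morphism $G\to \Spec V$ is separated and of finite type (as a closed subscheme of the proper $A$), and it is flat; moreover $G_K$ is finite over $K$ by hypothesis. Since $A\to\Spec V$ is proper, $G\to\Spec V$ is proper, hence $G$ is finite over $V$ as soon as it is quasi-finite, and quasi-finiteness over the closed point is what must be verified. Here one uses that $G$ is flat and finitely presented over $V$ with $0$-dimensional generic fiber: by semicontinuity of fiber dimension for proper flat morphisms the special fiber is also $0$-dimensional, so $G\to\Spec V$ is proper with finite fibers, hence finite. (If one prefers to avoid finite presentation subtleties over a general, possibly non-Noetherian, valuation ring, one can first spread out: $j\colon G_K\hookrightarrow A_K$ and the subgroup structure are defined over a finitely generated $V$-subalgebra of $K$, reduce to a Noetherian valuation ring — a DVR after passing to a suitable localization — and then apply the above; in our eventual application $V$ will in any case be the valuation ring of a non-archimedean field, and one can reduce to the rank-one case or work directly with the generic and closed fiber.)

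The step I expect to be the main (though still routine) obstacle is precisely this finiteness claim: confirming that the scheme-theoretic closure $G$ has special fiber finite over the residue field, rather than merely of dimension $\le 1$. For an abelian scheme over a valuation ring this follows because $A$ is proper, $G$ is closed in $A$ hence proper over $V$, $G$ is flat over $V$, and the generic fiber is finite; properness plus finite generic fibers plus flatness force finite special fibers by upper semicontinuity of fiber dimension. Everything else — the construction of $G$ as a closure, flatness via torsion-freeness over the valuation ring, and the subgroup-scheme axioms via "a map of flat $V$-schemes is determined by its generic fiber" — is formal.
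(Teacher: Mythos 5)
Your construction is the same as the paper's: take the schematic closure $G$ of $G_K$ in $A$, deduce flatness from torsion-freeness over the valuation ring, and transport the group structure from $A$. The only real difference is the mechanism for the group structure: you use that the generic fiber of a flat $V$-scheme is schematically dense, so the restrictions of $m_A$, $e_A$, $i_A$ automatically factor through the closure $G$; the paper instead splits the surjection $\CO_A\twoheadrightarrow \CO_Z$ as $V$-modules (using finite generation and hence projectivity of $\CO_Z(Z)$) and checks the resulting comultiplication is a ring map after embedding into $j_\ast\CO_{G_K}$. Your route is the more standard one and has the advantage of not needing any finiteness input at that stage.

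The weak point is the finiteness claim, which is part of the statement. Upper semicontinuity of fiber dimension (Chevalley) goes the wrong way for your purpose: it says that $\dim_x$ of the fiber can only jump up under specialization in the source, so it yields a lower bound, not an upper bound, on the dimension of the special fiber; ``proper $+$ flat $+$ finite generic fiber'' does not force a finite special fiber by semicontinuity alone. Over a DVR the conclusion does hold, but via the dimension formula (the closure of a point with residue field finite over $K$ has total dimension at most one, so its special fiber is zero-dimensional), and your proposed reduction to the Noetherian case is itself delicate: the valuation ring induced on a finitely generated subfield need not be discrete or even Noetherian, and in the paper's application ($V=\overline{C^+}$) the rank can be arbitrary. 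A clean fix that works over any valuation ring and uses only what you have already established: once $G$ is a flat subgroup scheme, let $N$ be the order of $G_K$, so $[N]$ kills $G_K$; by schematic density of $G_K$ in $G$ and separatedness of $A$, the map $[N]|_G$ agrees with the unit section on all of $G$, hence $G$ is a closed subscheme of the finite flat group scheme $A[N]$, and a closed subscheme of a finite $V$-scheme is finite over $V$. (For comparison, the paper is also brief here: it asserts without proof that $\CO_Z(Z)$ is a finitely generated $V$-module, which is exactly the finiteness point at issue.)
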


\begin{proof} 
The map $j$ is quasi-compact. Let $Z$ be its schematic image, i.e. defined by the quasi-coherent ideal $\mathcal{I}:=\mathrm{ker}(\mathcal{O}_A\rightarrow j_*\mathcal{O}_{G_K})$. We write $\CO_Z$ for the quotient $\CO_A/\mathcal{I}$, considered as an $\CO_A$-module. We have 
\[\CO_A\twoheadrightarrow \CO_Z \hookrightarrow j_*\CO_{G_K}.\]
Since $j_*\CO_{G_K}$ is torsionfree, so is the submodule $\CO_Z$, which implies that it is flat over $V$ (as $V$ is a valuation ring). It suffices to show that $Z$ can be endowed with a group scheme structure, or equivalently it is equipped with morphisms $m_Z:Z\times_{\mathrm{Spec}(V)} Z\rightarrow Z$ (multiplication), $i_Z: Z\rightarrow Z$ (inverse) and $e_Z: \mathrm{Spec}(V)\rightarrow Z$ (identity section), satisfying the group axioms.

For this, note that $\CO_Z(Z)$ is finitely generated over a valuation ring, so it is a projective $V$-module.\footnote{Cf. the last sentence of \cite{Couchot} and \cite[Definition 2, Proposition 4(iii)]{Hiremath}. Note that Proposition 4(iii) is easy to prove by choosing $M$ to be $C$ in Definition 2 in \textit{loc. cit.}.}
Hence the surjection $\CO_A\rightarrow \CO_Z$ splits as a $V$-module homomorphism. Define $m_Z^*$ to be the composition 
\[\CO_Z\rightarrow \CO_A\xrightarrow{m_A^*} m_* \CO_{A\times A}\twoheadrightarrow m_*\CO_{Z\times Z}.\]
This map, a priori a $V$-module homomorphism, is in fact a $V$-algebra homomorphism, due to the commutativity of the following diagram:
	\[
	\begin{tikzcd} 
	\CO_A\ar[r, shift right] \ar[d,"m^*"]&\CO_Z \ar[r,hook] \ar[l,shift right]\ar[d,dashed, "m_Z^*"]& j_* \CO_{G_K}\ar[d, "m^*"]\\
	\CO_{A\times A}\ar[r]   &\CO_{Z\times Z} \ar[r,hook] & (j\times j)_* \CO_{G_K\times G_K}.
	\end{tikzcd}
	\]
Namely, when postcomposed with the injection $\CO_{Z\times Z} \hookrightarrow (j\times j)_* \CO_{G_K\times G_K}$, $m_Z^*$ agrees with $\CO_Z\hookrightarrow j_*\CO_{G_K}\xrightarrow{m^*} (j\times j)_* \CO_{G_K\times G_K}$, which is a $V$-algebra homomorphism. This defines the multiplication morphpism.

The inverse map on $A$ preserves $G_K$ and hence also its schematic image. Hence we can restrict the inverse map on $A$ to $Z$ to get $i_Z$. The identity section on $G_K$ extends to a section $e_Z:\mathrm{Spec}(V)\rightarrow Z$ by properness of $Z$ over $V$. By uniqueness, it is the same as the identity section of $A$.

Now since $m_Z$, $i_Z$, $e_Z$ are the restrictions of the corresponding morphisms on $A$, they satisfy the desired group axioms. This finishes the proof of the lemma.
\end{proof}

\begin{lemma}\label{modifyAb}
    Let $S$ be an $\CO_E$-scheme, $\AV=(A/S,\iota_A, \lambda_A, \bar{\eta})$ an abelian scheme with $G$-structure at level $K$, i.e. an $S$-point of the Shimura variety $S_K$, and $\mathcal{H}=(\mathcal{H}/S, \iota_H, \lambda_H)$ a $p$-divisible group with $G$-structure. Assume 
    \[\rho: \AV[p^\infty]\rightarrow \mathcal{H}\]
    is an $\CO_B$-linear isogeny preserving the polarization up to a scalar in $\underline{\Q}_p^\times(S)$. Then $A':=A/\mathrm{ker}\rho$ can be uniquely (up to isomorphism) promoted to $(A',\iota'_A, \lambda'_A,\bar{\eta}')$, an abelian scheme with $G$-structure at level $K$, such that the induced map 
    \[\rho': A'[p^\infty]\rightarrow \mathcal{H}\]
    is an isomorphism of $p$-divisible groups with $G$-structure and that the quotient map $\pi:A\rightarrow A'$ is a $G$-isogeny preserving the $K^p$-levels. 
\end{lemma}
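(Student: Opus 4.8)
The plan is to take $A':=A/\ker\rho$ and to descend all the extra structures along the quotient isogeny $\pi\colon A\to A'$; the statement is a bookkeeping exercise built on the elementary theory of quotients of abelian schemes by finite locally free subgroups, with no deep input. First I would make sense of $A'$ and $\pi$: since $\rho$ is an isogeny of $p$-divisible groups, $\ker\rho$ is a finite locally free group scheme killed by some $p^n$, hence a finite locally free closed subgroup scheme of $\AV[p^n]\subset A$, so $A':=A/\ker\rho$ is an abelian scheme over $S$ and $\pi\colon A\to A'$ is a $p$-power isogeny with $\ker(\pi[p^\infty])=\ker\rho$. As $\rho$ and $\pi[p^\infty]$ exhibit $\mathcal H$, resp.\ $A'[p^\infty]$, as the quotient of $\AV[p^\infty]$ by the same subgroup, there is a unique isomorphism $\rho'\colon A'[p^\infty]\xrightarrow{\sim}\mathcal H$ with $\rho=\rho'\circ\pi[p^\infty]$.

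Next I would transport the $G$-structure. Because $\rho$ is $\CO_B$-linear, $\ker\rho$ is stable under the $\CO_B$-action on $\AV[p^\infty]$, so for $b\in\CO_B$ the endomorphism $\iota_A(b)$ descends to $\iota'_A(b)\in\mathrm{End}(A')\otimes\Z_{(p)}$ (one uses that $\pi$ is an epimorphism to see $\iota'_A$ is a ring homomorphism), and $\rho'$ is then $\CO_B$-linear. The Kottwitz determinant condition for $(A',\iota'_A)$ follows from the one for $(\mathcal H,\iota_H)$, since $\rho'$ induces an $\CO_B$-linear isomorphism $\mathrm{Lie}(A')=\mathrm{Lie}(A'[p^\infty])\cong\mathrm{Lie}(\mathcal H)$. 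For the polarization: $\lambda_H$ is a principal polarization (our running assumption that $\Lambda$ is self-dual), and then the relation $\rho^\vee\circ\lambda_H\circ\rho=c\,\lambda_A[p^\infty]$ forces $\ker\rho$ to be isotropic for the Weil pairing attached to $\lambda_A$; hence $\lambda_A$ descends to a prime-to-$p$ quasi-polarization $\lambda'_A\colon A'\to A'^\vee$, symmetric and positive because $\pi^{\ast}$ preserves ampleness, $\CO_B$-compatible by descent, and with $\lambda'_A[p^\infty]$ matching $\lambda_H$ under $\rho'$ up to a scalar in $\underline{\Z}_p^\times$. Finally, as $\pi$ is a $p$-power isogeny it induces an isomorphism $\pi_\ast\colon\underline{H_1}(A,\Af^p)\xrightarrow{\sim}\underline{H_1}(A',\Af^p)$ of pro-\'etale sheaves compatible with the $\CO_B$- and pairing structures, so $\bar\eta':=\bar\eta\circ\pi_\ast^{-1}$ is a level-$K^p$ structure on $A'$ with $\bar\eta=\bar\eta'\circ\pi_\ast$. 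This exhibits $(A',\iota'_A,\lambda'_A,\bar\eta')$ as an $S$-point of $S_K$, $\rho'$ as an isomorphism of $p$-divisible groups with $G$-structure, and $\pi$ as a $G$-isogeny preserving the $K^p$-levels.

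For uniqueness, given another solution $(A'',\iota'',\lambda'',\bar\eta'')$ with $G$-isogeny $\pi''\colon A\to A''$ and isomorphism $\rho''\colon A''[p^\infty]\to\mathcal H$, the identity $\rho=\rho''\circ\pi''[p^\infty]$ gives $\ker\pi''=\ker\rho$, so the universal property of the quotient yields a unique isomorphism $A'\xrightarrow{\sim}A''$ over $A$; it automatically intertwines $\iota$, $\lambda$ (up to scalar), $\bar\eta$ and $\rho',\rho''$, since each of these is pinned down on $A'$ by compatibility with $\pi$. The only step requiring genuine care is the polarization step — verifying that the descended $\lambda'_A$ is again a prime-to-$p$ quasi-polarization whose $p$-divisible part corresponds to $\lambda_H$ — and this is exactly where the hypothesis that $\rho$ respects the polarizations (together with the principality of $\lambda_H$) enters, via the isotropy of $\ker\rho$ and a degree/scalar count. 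Everything else is formal and visibly compatible with base change in $S$, which also gives the functoriality implicit in the statement.
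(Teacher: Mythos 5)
Your construction follows the same route as the paper (transport $\iota_A$, $\bar\eta$ and the polarization along $\pi$, with the polarization as the one delicate point), but the polarization step as you wrote it is wrong, and the error sits exactly at the heart of the lemma. The scalar $c$ in $\rho^\vee\circ\lambda_H\circ\rho=c\,\lambda_A[p^\infty]$ lies only in $\underline{\Q}_p^\times(S)$, and whenever $\ker\rho\neq 0$ one necessarily has $v_p(c)>0$ (compare the orders of the kernels of the two sides, using that $\lambda_A$ is prime-to-$p$ and $\lambda_H$ is principal). In that situation neither of your two claims holds: $\ker\rho$ need not be isotropic for the Weil pairing of $\lambda_A$, and $\lambda_A$ itself never descends to a prime-to-$p$ quasi-polarization. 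For a concrete counterexample take $\mathcal H=\AV[p^\infty]$ with $\lambda_H=\lambda_A[p^\infty]$ principal and $\rho=p^m$: then $c=p^{2m}$, $\ker\rho=A[p^m]$ is certainly not isotropic (the $\lambda_A$-Weil pairing is perfect on $A[p^m]$), and $(\pi^\vee)^{-1}\lambda_A\pi^{-1}$ has $p$-part of nonzero height, so it is not prime-to-$p$; a degree count shows this failure is unavoidable, since $\deg\lambda_A$ is prime to $p$ while $\deg(\pi)^2$ is a positive power of $p$. The actual content of the lemma, and essentially the whole of the paper's proof, is the normalization of a rational scalar: one must put $\lambda'_A=(\pi^\vee)^{-1}(d\,\lambda_A)\pi^{-1}$ with $d\in\underline{\Q}^\times(S)$ chosen so that $v_p(d)=v_p(c)$; it is this choice that makes $\rho'$ preserve the polarizations up to $\underline{\Z}_p^\times(S)$ and, using principality of $\lambda_H$, makes $\lambda'_A$ prime-to-$p$. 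You gesture at ``a degree/scalar count'' but never carry it out, and without it the construction does not produce a point of $S_K$.

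The same omission undermines your uniqueness argument. Compatibility with $\pi$ being a $G$-isogeny pins $\lambda'_A$ down only up to a scalar in $\underline{\Q}^\times(S)$, whereas uniqueness up to isomorphism in the moduli problem requires the scalar ambiguity to land in $\underline{\Z_{(p)}^\times}(S)$. What fixes the $p$-adic valuation is the further requirement that $\rho'$ be an isomorphism of polarized $p$-divisible groups with $G$-structure (scalar in $\underline{\Z}_p^\times$), which forces $v_p(d)=v_p(c)$ and leaves $d$ ambiguous only up to $\Z_{(p)}^\times$ --- exactly the ambiguity absorbed by isomorphisms in $S_K$. Your remaining points (descent of $\iota_A$ and why it stays prime-to-$p$, the Kottwitz condition via $\mathrm{Lie}(A')\cong\mathrm{Lie}(\mathcal H)$, transport of the $K^p$-level, positivity of the descended polarization) agree with the paper and are fine, but the scalar bookkeeping must be made explicit for the proof to be complete.
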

\begin{proof}
    Clearly $A'$ inherits a $K^p$-level structure $\bar{\eta}$ from $\AV$. The condition that $\pi$ preserves the $G$- and level structures forces $\bar{\eta}'$ to be $\bar{\eta}$, $\iota'_A$ to be $\pi\iota_A \pi^{-1}$ and $\lambda'_A$ to be $(\pi^\vee)^{-1}(d\cdot \lambda_A)\pi^{-1}$ for some $d\in \underline{\Q}^\times(S)$. Also, for each $b\in \CO_B$, $\iota'_A(b):=\pi\iota_A(b) \pi^{-1}$ is indeed prime-to-$p$, because on the $p$-divisible groups $\iota'_A(b)[p^\infty]=\iota_H(b)$ is an isomorphism. To fix $d$, assume $c\in \underline{\Q}_p^\times (S)$ is a constant such that
    \[c\cdot \lambda_A[p^\infty]=\rho^\vee\circ \lambda_H\circ \rho.\]
    Then $\rho'$ preserves the polarization up to $c\cdot d^{-1}\in \underline{\Q}_p^\times (S)$. The condition that $\rho'$ is an isomorphism of $p$-divisible groups with $G$-structure requires this section to be in $\underline{\Z}_p^\times (S)$.
    Let $v_p: \underline{\Q}_p \rightarrow \underline{\Z}$ be the $p$-adic valuation. We must then require $v_p(d)=v_p(c)$ as sections of $\underline{\Z}(S)$. This fixes $d$ up to a unit in $\underline{\Z}_{(p)}(S)$, showing uniqueness. It also ensures that $\lambda'_A$ is prime-to-$p$, as the induced polarization on $H$ is principal. So $(A', \iota_A', \lambda_A',\bar{\eta}')$ is an abelian scheme with $G$-structure at level $K$.
\end{proof}
\begin{remark}
    If $\rho$ is only a quasi-isogeny, assume that $p^N\rho$ is an actual isogeny for some $N\gg 0$. Then 
    \[\AV':= (A/\mathrm{ker}(p^N\rho), \pi\iota_A\pi^{-1}, (\pi^\vee)^{-1}(c^{-1}\cdot \lambda_A)\pi^{-1},\bar{\eta})\]
    is an abelian scheme with $G$-structure at level $K$ by the same reasoning as above. Note that even though $p^N$, and hence $p^N\rho$, itself might not preserve the $G$-structures, the conjugation $p^{-N}\rho p^N$ does. Note that this construction is implicit in \cite[Section 6.13]{RZ}.
\end{remark}

Let $\Shi_{K^p}^\circ$ over $\Spd E$ be the good reduction locus with infinite level at $p$ as before. It maps to $\Igs$ by composing the projection $\Shi_{K^p}^\circ\rightarrow \Shi_K^\circ$ with the reduction map $\Shi_K^\circ\rightarrow \Igs$. Let $\pi^\circ_{HT}$ be the Hodge-Tate map on it as in section 6.2. The following is the main result of this section.

\begin{theorem}
\label{cartesian}
The diagram of small v-stacks on $\Perf_{\F_q}$
    \[
    \begin{tikzcd}
    \Shi_{K^p}^\circ \ar[r,"\pi_{HT}^\circ"] \ar[d,"\mathrm{red}"]& \Gr_{G, \mu} \ar[d,"BL"]\\
    \Igs \ar[r,"\overline{\pi}_{HT}^\circ"] 					&   \Bun_G 
    \end{tikzcd}
    \]
is 2-cartesian.
\end{theorem}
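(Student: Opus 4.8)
The plan is to reduce the Cartesian claim to a statement about points, using the criterion of Proposition~\ref{qcqs}, and then to verify that statement via Serre--Tate theory (Theorem~\ref{ST1}) and the Scholze--Weinstein classification (Theorem~\ref{BTclassification} and its refinements). First I would form the natural map $f\colon \Shi_{K^p}^\circ \to \Igs \times_{\Bun_G} \Gr_{G,\mu}$; it exists because the diagram commutes, which is essentially the content of Lemma~\ref{commute} (the $G$-bundle attached to the reduced abelian scheme is the modification of the \'etale lattice by the de~Rham lattice, which is exactly what $\pi_{HT}^\circ$ records). To show $f$ is an isomorphism of v-stacks, I would show it is qcqs and then check it is an isomorphism on geometric points, and in fact on products of geometric points, which form a basis for the v-topology. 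Quasi-separatedness and quasi-compactness of $f$: the source $\Shi_{K^p}^\circ$ is a spatial diamond, $\overline{\pi}_{HT}^\circ$ is qcqs (this uses Proposition~\ref{qcqs} applied to $\Igs$: on a product of points the groupoid of abelian schemes up to isogeny over $R^+/\varpi$ decomposes as the product over the principal components, using Corollary~\ref{BTproduct} and the Milnor-square descent of Proposition~\ref{BTgluing}), and $BL$ factors a qcqs map, so $f$ is qcqs by a cancellation argument. Hence it suffices to check $f$ induces an equivalence of groupoids after base change to any product of geometric points $S = \Spa(R,R^+)$ over $\Spd \F_q$, and by the product decomposition of both sides over the principal components $s_i = \Spa(C_i, C_i^+)$ we are reduced to a single geometric point $\Spa(C, C^+)$.

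On a geometric point the argument is the following chain of equivalences, run in the order: affine scheme $\rightsquigarrow$ $p$-divisible group with $G$-structure $\rightsquigarrow$ lattice pair, then lift. An object of $(\Igs \times_{\Bun_G}\Gr_{G,\mu})(\Spa(C,C^+))$ consists of: an abelian scheme with $G$-structure $\AV_0$ over $C^+/\varpi$ up to quasi-isogeny; a $\Bdr^+$-lattice (equivalently a point of $\Gr_{G,\mu}$) over an untilt $C^\sharp$, giving by Remark~\ref{minusculeSchubert} a $p$-divisible group $\mathcal H$ with $G$-structure over $\CO_{C^\sharp}$ together with a trivialization of its Tate module by $\Lambda$; and an isomorphism of the associated $G$-bundles on $X_{\Spa(C,C^+)}$. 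I would translate that last isomorphism, via the crystalline Dieudonn\'e theory of Example~\ref{example-crystallineDieu} (and its compatibility with prismatic Dieudonn\'e theory over the quasi-regular semiperfectoid ring $C^+/\varpi$), into an $\CO_B$-linear quasi-isogeny $\rho\colon \AV_0[p^\infty] \to \mathcal H \times_{\CO_{C^\sharp}} C^+/\varpi$ compatible with polarizations up to a scalar in $\underline{\Q}_p^\times$; here I need that the functor from rational $p$-divisible groups to rational Dieudonn\'e modules is fully faithful (Theorem~A of \cite{SW}) so that an isomorphism of $G$-bundles on the curve, i.e.\ of rational crystalline Dieudonn\'e modules with $G$-structure, comes from a unique quasi-isogeny. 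Now I apply Serre--Tate: first use Theorem~\ref{ST0}(2) to promote $\AV_0$ to an abelian scheme up to $p$-power isogeny over $\CO_{C^\sharp}/\varpi^\sharp$ (using the Milnor squares of the Example after Lemma~\ref{milnor} to move between $C^+/\varpi$, $\CO_C/\varpi$, $\overline{C^+}$, $k$ if needed), and then, having the $p$-divisible group $\mathcal H$ over $\CO_{C^\sharp}$ and the identification $\rho$ over the special fiber, use Theorem~\ref{ST1} to lift to a genuine abelian scheme $A$ over $\CO_{C^\sharp}$ with $A[p^\infty]\cong \mathcal H$; Lemma~\ref{modifyAb} takes care of propagating the $G$-structure and $K^p$-level structure through the isogeny so that the Kottwitz determinant condition and polarization type are preserved. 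The minusculeness of $\mu$ (so $\mathcal H$ really is a $p$-divisible group, $M \subset M_0 \subset \xi^{-1}M$) is what makes the Scholze--Weinstein classification applicable; the fact that the Lie algebra sits in the correct $G(C)$-conjugacy class $[\mu]$ is exactly the constraint imposed by landing in $\Gr_{G,\mu}$, so the lifted $A$ indeed defines a point of $\Shi_K^\circ$, and the trivialization of $T_p\mathcal H \cong \Lambda$ upgrades it to a point of $\Shi_{K^p}^\circ$. Reversing this construction gives the inverse functor, and uniqueness in Theorem~\ref{ST1} and Lemma~\ref{modifyAb} gives that it is a quasi-inverse; one checks compatibility with morphisms (quasi-isogenies) on both sides, which is again full faithfulness of the Dieudonn\'e functor.

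The main obstacle I anticipate is \emph{not} the geometric-point computation, which is a clean application of Serre--Tate, but the passage from rank-one geometric points to general products of points and the v-descent bookkeeping: one must be careful that the crystalline period ring $B_{\mathrm{crys}}^+(R^+/\varpi)$ and the associated $G$-bundle behave well over a product $R^+ = \prod_i C_i^+$ (this is where Corollary~\ref{BTproduct}, Proposition~\ref{BTgluing}, and the independence of vector bundles on $\mathcal Y_{[0,\infty)}(S)$ from the choice of $R^+$ from Remark~\ref{Y} all get used), and that the identifications are natural enough to glue. A secondary subtlety is handling the valuation ring $C^+ \subsetneq \CO_C$ rather than just $\CO_C$: here the Milnor-square descent for finite projective modules and for $p$-divisible groups (Lemma~\ref{milnor}, Proposition~\ref{BTgluing}) lets one reconstruct the abelian scheme over $C^+/\varpi$ from its pullbacks to $\CO_C/\varpi$ and to the residue field, so that Scholze--Weinstein over $\CO_C$ suffices. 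Once qcqs-ness is in hand, everything reduces to these inputs, so the proof is really an assembly of Theorems~\ref{ST0}, \ref{ST1}, \ref{BTclassification}, Lemmas~\ref{milnor}, \ref{modifyAb}, and Propositions~\ref{qcqs}, \ref{BTgluing}, \ref{BTproduct}.
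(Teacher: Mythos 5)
Your geometric-point argument is essentially the paper's (Scholze--Weinstein classification, full faithfulness of Dieudonn\'e theory, Lemma \ref{modifyAb}, Serre--Tate, and Milnor-square gluing to handle $C^+\subsetneq \CO_C$), but the step ``by the product decomposition of both sides over the principal components $s_i$ we are reduced to a single geometric point'' is a genuine gap, and it is exactly where the paper's proof of Theorem \ref{cartesian} does its real work (Proposition \ref{Bijective}). Neither side decomposes as a product over the components. Concretely, on $S=\Spa(R,R^+)$ with $R^+=\prod_i C_i^+$, the componentwise construction produces for each $i$ a $p$-divisible group $\G_i/C_i^{\sharp+}$, a trivialization $\alpha_i$ of its Tate module, and a quasi-isogeny $\rho_i$ whose denominator is some $p^{n_i}$; to build an $S$-point of $\Shi^\circ_{K^p}$ these must be glued, and a family of quasi-isogenies with unbounded $n_i$ does \emph{not} define a quasi-isogeny over $R^+/\varpi$ (a quasi-isogeny requires a single $p^N$ clearing denominators Zariski-locally, hence on quasi-compact pieces). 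Equivalently, the restriction functor $\Igs^{\mathrm{pre}}(S)\to\prod_i\Igs^{\mathrm{pre}}(s_i)$ is not an equivalence (it fails to be full), so the ``product decomposition'' you invoke is false as stated. The paper closes precisely this gap: it assembles the $\G_i$ via Corollary \ref{BTproduct}, glues the trivializations using properness of the diagonal of $[\ast/\underline{K}_p]$, identifies the resulting $S$-point of $\Gr_{G,\mu}$ with the given $y$ using properness of $\Gr_{G,\mu}$, and then shows that the isomorphism $\phi$ of $G$-bundles over all of $S$ forces a \emph{common} bound $N$ on the $n_i$, by identifying $\Hom(\AV_0[p^\infty],\G_0)$ with the $\varphi$-invariants of integral Dieudonn\'e modules inside their rational counterparts. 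This boundedness argument is the content of the products-of-points case, not bookkeeping; compare also Remark \ref{ShtProd}, where the same phenomenon is made explicit for shtukas.

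A related problem is your use of quasi-compactness of $\overline{\pi}^\circ_{HT}$ as an input: in the paper this is Corollary \ref{nice}, deduced \emph{from} the cartesian diagram by pulling back along $BL$, and your proposed independent proof via Proposition \ref{qcqs} rests on the same unproven decomposition of $\Igs$ over products of points, so as written the argument is circular at that point. It is also unnecessary: since v-stackification commutes with 2-fiber products, it suffices to prove that the presheaf-level map $\Spre\to\Igs^{\mathrm{pre}}\times_{\Bun_G}\Gr_{G,\mu}$ is a bijection on products of geometric points, which is what the paper does directly; no qcqs reduction to rank-one points is available or needed.
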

	
\begin{proof}
We observe that the stackification procedure in defining $\Igs$ will cause no trouble: since stackification commutes with 2-fiber product \cite[Tag 04Y1]{stacks-project}, if we can show $\Shi_{K^p}^\circ$ is the v-stackification of $\Igs^\mathrm{pre}\times_{\Bun_G}\Gr_{G,\mu}$, we also have $\Shi_{K^p}^\circ \cong \Igs\times_{\Bun_G}\Gr_{G,\mu}$. Hence it suffices to show 
\[\Spre\cong
\Igs^\mathrm{pre}\times_{\Bun_G}\Gr_{G,\mu}\]
on a basis of the v-topology, see Lemma \ref{PresheafGRL} for the definition of $\Spre$.

We quickly check 2-commutativity. For $S=\Spa(R,R^+)$ in $\Perf_{\F_q}$, an $S$-point of $\Spre$ is a tuple consisting of an untilt $S^\sharp=\Spa(R^\sharp, R^{\sharp+})$, a formal abelian scheme $\mathfrak{A}$ over $R^{\sharp+}$, with endomorphism $\iota$, polarization $\lambda$, $K^p$-level structure $\bar{\eta}$ and a $G$-trivialization $\alpha$ of the Tate module of its generic fiber. Write $A_0$ for the reduction $\mathfrak{A}\times_{R^{\sharp+}}R^{+}/\varpi$. Lemma \ref{commute} and the construction of the Hodge-Tate map show that along both ways in the diagram, the tuple is sent to a point of $\Bun_G$ that is naturally (with respect to automorphisms of $\mathfrak{A}$ and base change in $S$) identified with the vector bundle with $G$-structure glued from $T_p\AV\otimes \CO_{X_S}$ and 
\[M_\mathrm{cris}(A_0[p^\infty])\otimes_{\mathrm{A}_\mathrm{cris}(R^+)}\Bdr^+(R^\sharp)\] 
along de Rham comparison isomorphism. Hence the diagram is 2-commutative.

Note also that $\Igs^\mathrm{pre}\times_{\Bun_G}\Gr_{G,\mu}(S)$ is discrete and hence the fiber product is 0-truncated: indeed, this groupoid is equivalent to the groupoid whose objects are tuples 
\[(\AV_0, y\in \Gr_{G,\mu}(S), \phi: \mathscr{E}(\AV_0)\cong \mathscr{E}(y)),\]
where $\AV_0$ over $R^+/\varpi$ is an abelian scheme with $G$-structure up to quasi-isogenies, $y$ is an $S$-point of $\Gr_{G,\mu}$, and $\phi$ is an isomorphism of their attached $G$-bundles on $X_S$. An automorphism is a self $G$-quasi-isogeny
$f:\AV_0\rightarrow \AV_0$ in $\Igs^\mathrm{pre}(S)$ such that $\mathscr{E}(f)=\mathrm{id}$. But this means $f$ is the identity on $\AV_0[p^\infty]$ and hence is prime-to-$p$, i.e. an automorphism in $S_K(R^+/\varpi)$. By representability of $S_K$, $f$ must be the identity.

The desired result now follows from Proposition \ref{Bijective} below.
\end{proof}

Assume $S$ is a product of points, i.e $R^+=\prod_{i\in I}C_i^+$, where each $C_i$ is complete algebraically closed with a pseudo-uniformizer $\varpi_i$, $\varpi=(\varpi_i)$ and $R=R^+[1/\varpi]$. We denote by $k_i$ the residue field of $C_i$ and $\bar{C_i^+}$ the image of $C_i^+$ in $k_i$. Later whenever we put a subscript $i$ to a morphism on $S$, we mean its restriction along $\Spa(C_i,C_i^+)\rightarrow S$.
\begin{prop}\label{Bijective}
    Let $S$ be as above, the map 
    \[F: \Spre(S) \rightarrow (\Igs^\mathrm{pre} \times_{\Bun_G}\Gr_{G,\mu})(S)\]
    is a natural (in $S$) bijection.
\end{prop}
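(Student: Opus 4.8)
The plan is to construct explicit inverse maps at the level of $S$-points, using Serre–Tate theory (Theorems \ref{ST0}, \ref{ST1}) as the main engine, and to check that they are mutually inverse. Since $S = \Spa(R,R^+)$ with $R^+ = \prod_{i\in I} C_i^+$ is a product of points, by Corollary \ref{BTproduct} and the analogous statements for abelian schemes and $G$-structures, an $S$-point of either side decomposes as a collection of points over the $\Spa(C_i,C_i^+)$'s (at least after unwinding the moduli descriptions; the sheafification and stackification were already dealt with in the proof of \ref{cartesian}), so I will mostly argue one $i$ at a time and then reassemble. Write $C = C_i$, $C^+ = C_i^+$, $k$ the residue field, $\overline{C^+}$ the image of $C^+$ in $k$, and $\varpi$ a pseudo-uniformizer.

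\textbf{From the fiber product to the good reduction locus.} Given $(\AV_0, y, \phi)$ with $\AV_0$ an abelian scheme with $G$-structure up to quasi-isogeny over $C^+/\varpi$, $y\in \Gr_{G,\mu}(S)$, and $\phi\colon \mathscr{E}(\AV_0)\cong \mathscr{E}(y)$ over $X_S$, I first use Remark \ref{minusculeSchubert}: the point $y$ over $\Spd C$ (giving the untilt $C^\sharp$ over $E$) determines a $p$-divisible group $\mathcal{H}$ over $\CO_{C^\sharp}$ with $G$-structure and trivialized Tate module $T_p\mathcal{H}\cong\Lambda$, via Theorem \ref{BTclassification}. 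The isomorphism $\phi$ of $G$-bundles on $X_S$ translates — through the crystalline/prismatic Dieudonné module description of $\mathscr{E}(\AV_0)$ and $\mathscr{E}(y)$ and full-faithfulness of the Dieudonné functor over the (quasi-regular semiperfectoid) ring $C^+/\varpi$ — into a $G$-quasi-isogeny $\rho\colon \AV_0[p^\infty] \to \mathcal{H}\times_{\CO_{C^\sharp}} C^+/\varpi$ of $p$-divisible groups with $G$-structure over $C^+/\varpi$. (Here one uses the two Milnor squares from the Example after Lemma \ref{milnor}, together with Proposition \ref{BTgluing}, to move between $C^+$, $\CO_C$, $\overline{C^+}$, $k$ as needed, and the fact that a rational Dieudonné module over $B^+_{\mathrm{crys}}(C^+/\varpi)$ together with its integral lattice determines an integral Dieudonné module.) Now apply Lemma \ref{modifyAb} with $S = \Spec(C^+/\varpi)$: replacing $A_0$ by $A_0/\ker(p^N\rho)$ gives an honest abelian scheme $A_0'$ with $G$-structure at level $K$ over $C^+/\varpi$ whose $p$-divisible group is identified with $\mathcal{H}\times C^+/\varpi$ compatibly with $G$-structures. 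Then Serre–Tate lifting (Theorem \ref{ST1}) applied to the triple $(A_0', \mathcal{H}, \rho')$ — lifting along $\CO_{C^\sharp}/\varpi^\sharp \cong C^+/\varpi \leftarrow \CO_{C^\sharp}$, i.e. along the (pro-)nilpotent thickening in the $p$-complete setting, exactly as in \ref{ST0}–\ref{ST1} — produces an abelian scheme $\mathfrak{A}$ with $G$-structure over $\CO_{C^\sharp}$ lifting $A_0'$ with $\mathfrak{A}[p^\infty]\cong\mathcal{H}$; the trivialization $T_p\mathcal{H}\cong\Lambda$ furnishes the infinite level structure $\alpha$, and the prime-to-$p$ level $\bar\eta$ is carried along. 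This produces the desired point of $\Spre(S)$, and this construction is manifestly functorial in $S$.

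\textbf{Inverse and compatibility.} In the other direction the map $F$ is the one already described: send $(S^\sharp,\mathfrak{A},\iota,\lambda,\bar\eta,\alpha)$ to $(\AV_0 := \mathfrak{A}\times_{R^{\sharp+}} R^+/\varpi,\ \pi_{HT}^\circ$ of it$,\ $the tautological isomorphism of $G$-bundles from Lemma \ref{commute}$)$. That this is well-defined and that the two constructions are mutually inverse comes down to two things: (i) starting from $\Spre(S)$, reducing mod $\varpi^\sharp$, then re-lifting by Serre–Tate recovers $\mathfrak{A}$ up to unique isomorphism — this is precisely the equivalence of categories in Theorem \ref{ST1} (the lift of an abelian scheme is determined by its special fiber together with its $p$-divisible group over the base); (ii) starting from the fiber product side, the composite is the identity because the $G$-quasi-isogeny $\rho$ extracted from $\phi$ is, after the modification of Lemma \ref{modifyAb}, an isomorphism on $p$-divisible groups, and Lemma \ref{modifyAb}'s uniqueness clause together with full-faithfulness of the Dieudonné functor pins down $A_0'$ and the identification of bundles. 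The Hodge-Tate leg being bounded by $\mu$ matches the Schubert condition on $y$ by Remark \ref{minusculeSchubert} and the computation of the Hodge–Tate cocharacter in the proof of Theorem \ref{HT}. Naturality in $S$ is clear from the construction, and since everything was done componentwise over the principal components $\Spa(C_i,C_i^+)$ and then glued via Corollary \ref{BTproduct} (and the corresponding gluing for abelian schemes with $G$-structure, which reduces to the module-level gluing of Lemma \ref{milnor} after $p^n$-truncation), we obtain the bijection on all of $S$.

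\textbf{Main obstacle.} The delicate point is the passage between the \emph{rational} data on $X_S$ — a $G$-bundle $\mathscr{E}(\AV_0)$ built from the rational crystalline Dieudonné module over $B^+_{\mathrm{crys}}(C^+/\varpi)$ — and the \emph{integral} $p$-divisible group $\mathcal{H}$ over $\CO_{C^\sharp}$ together with an \emph{integral} $\CO_B$-linear quasi-isogeny over $C^+/\varpi$. One must argue carefully that an isomorphism of $G$-bundles on the relative Fargues–Fontaine curve is the same as a $G$-quasi-isogeny of $p$-divisible groups over $C^+/\varpi$ respecting all structures, and that the integral lattice (equivalently, the Breuil–Kisin–Fargues/integral Dieudonné structure over $W((C^+/\varpi)^\flat)$, cf. the Examples in §3) is recovered — this is where the Milnor-square descent along $C^+ \to \CO_C \leftarrow \overline{C^+}\to k$ and the classification theorems of Scholze–Weinstein and Anschütz–Le Bras do the real work. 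Everything else is formal bookkeeping with Serre–Tate theory and the moduli description of $\Spre$.
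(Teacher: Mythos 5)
There is a genuine gap, and it sits exactly where the paper's proof does its real work: the reassembly over a product of points. An $S$-point of $\Igs^\text{pre}\times_{\Bun_G}\Gr_{G,\mu}$ consists of a single abelian scheme with $G$-structure $\AV_0$ over $R^+/\varpi$ and a single isomorphism $\phi$ of $G$-bundles on $X_S$; since $\Spec(R^+/\varpi)$ is not the disjoint union of the $\Spec(C_i^+/\varpi_i)$, neither side of the map $F$ "decomposes as a collection of points over the $\Spa(C_i,C_i^+)$'s", and your plan to argue one $i$ at a time and then glue "via Corollary \ref{BTproduct} and Lemma \ref{milnor} after $p^n$-truncation" skips the crucial step. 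Componentwise one obtains $(\G_i,\alpha_i,\rho_i)$ with $p^{n_i}\rho_i$ an isogeny; the product $\G=\prod_i\G_i$ exists by \ref{BTproduct}, but the collection $(\rho_i)$ assembles into a quasi-isogeny $\AV_0[p^\infty]\dashrightarrow \G_0$ over $R^+/\varpi$ only if the exponents $n_i$ admit a common bound $N$ (a quasi-isogeny over the quasi-compact base $\Spec(R^+/\varpi)$ requires a uniform power of $p$ clearing denominators). The paper proves this bound by first identifying the $S$-point of $\Gr_{G,\mu}$ defined by $(\G,\alpha)$ with $y$ (agreement on the $s_i$ suffices only because $\Gr_{G,\mu}$ is proper, and the trivialization $\alpha$ glues only by properness of the diagonal of $[\ast/\underline{K}_p]$), and then showing that $p^N\phi$ lies in the image of $\Hom(\AV_0[p^\infty],\G_0)\cong (M^\vee\otimes M')^{\varphi\otimes\varphi'=1}$ inside $(M^\vee\otimes M'[1/p])^{\varphi\otimes\varphi'=1}$ for a single $N$. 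Nothing in your proposal produces this uniform $N$, so no quasi-isogeny over $R^+/\varpi$, hence no input for Lemma \ref{modifyAb} and Serre--Tate on all of $S$; the bijection on a general product of points does not follow.

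A secondary but real issue is the geometric-point step itself. You invoke full-faithfulness of $\mathscr{E}(\cdot)$ over the quasi-regular semiperfectoid ring $C^+/\varpi$ to turn $\phi$ directly into a $G$-quasi-isogeny over $C^+/\varpi$; the statement actually available, and the one the paper uses, is \cite[5.1.4(ii)]{SW} for $p$-divisible groups up to isogeny over $\CO_C/\varpi$. When $C^+\subsetneq\CO_C$ this only yields $\rho$ after base change to $\CO_C/\varpi$, and getting back down is precisely the role of Lemma \ref{1} (scheme-theoretic closure of $\ker(p^n\rho_k)$ in $\AV_0\times_{C^+/\varpi}\overline{C^+}$ over the valuation ring $\overline{C^+}$), of Proposition \ref{BTgluing} applied to the Milnor square gluing $\overline{C^+}$ with $\CO_{C^\sharp}$ over $k$ to build $\G$ over $C^{\sharp+}$, and of Theorem \ref{ST0} to lift the isogeny from $C^+/\varpi\cdot\CO_C$ to $C^+/\varpi$. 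Your sketch gestures at these ("as needed") but then performs the Serre--Tate lift over $\CO_{C^\sharp}$ using the identification $\CO_{C^\sharp}/\varpi^\sharp\cong C^+/\varpi$, which fails whenever $C^+\neq\CO_C$; the point of $\Spre(s)$ you must produce is a formal abelian scheme over $C^{\sharp+}$, not over $\CO_{C^\sharp}$. So even at a single higher-rank geometric point the construction as written does not close.
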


\begin{proof}
Assume we are given an $S$-point of the fiber product, i.e. a tuple 
\[(\AV_0, y\in \Gr_{G,\mu}(S), \phi: \mathscr{E}(\AV_0)\cong \mathscr{E}(y))\]
as above. The projection of $y$ to $\Spd E$ selects an untilt $S^\sharp=\Spa(R^\sharp, R^{\sharp+})$. Out of this datum, we would like to construct a formal abelian scheme $\mathfrak{A}$ with $G$-structure at level $K$ over $\Spf(R^{\sharp+})$ and a trivialization of its Tate module. The idea is to apply Serre-Tate theory as in Theorem~\ref{ST1}, which requires us to obtain a $p$-divisible group $\G/R^{\sharp+}$ from $(y,\phi)$, lifting $\mathcal{A}_0[p^\infty]$ up to isomorphism (up to modifying $\AV_0$ in its isogeny class).  

We first handle the case $S=\Spa(C,C^+)$ is a geometric point. By Remark~\ref{minusculeSchubert} and full-faithfulness of $\mathscr{E}(\cdot)$ on $p$-divisible groups over $\CO_C/\varpi$ up to isogeny \cite[Theorem 5.1.4(ii)]{SW}, in this case the tuple simplifies to
\[(\AV_0, (S^\sharp, \mathcal{H}, \alpha), \rho: \AV_0[p^\infty] \times_{C^+/\varpi} \mathcal{O}_C/\varpi \dashrightarrow \mathcal{H} \times_{\mathcal{O}_{C^\sharp}} \mathcal{O}_C/\varpi),\]
where $S^\sharp$$=\Spa(C^\sharp, C^{\sharp+})$ is an untilt of $S$ over $\Spa E$, $\mathcal{H}$ is a $p$-divisible group with $G$-structure over $\CO_{C^\sharp}$, $\alpha: T_p\mathcal{H}\cong \underline{\Lambda}$ is a sympletic similitude $\CO_B$-linear trivialization, and $\rho$ is a quasi-isogeny preserving the $G$-structures.

We may assume $p^n\cdot\rho$ is an actual isogeny, for some integer $n$. Apply Lemma \ref{1} to the valuation ring $\overline{C^+}$ with fraction field $k$, the abelian scheme $\mathcal{B}:=\mathcal{A}_0\times_{C^+/\varpi}\overline{C^+}$ and the finite group scheme $\mathrm{ker}(p^n\cdot \rho_k)$, where $\rho_k$ is the base change of $\rho$ to $k$. We see the closure of $\mathrm{ker}(p^n\cdot \rho_k)$ in $\mathcal{B}$ is a finite flat subgroup scheme. We can then take the quotient of $\mathcal{B}$ by it to obtain a new abelian scheme $\mathcal{B}'$, equipped with $G$-structure by Lemma \ref{modifyAb},\footnote{Note that although we are not exactly in the situation of the lemma, since we have the quasi-isogeny on $p$-divisible groups only over $k$ not over $\overline{C^+}$, but this suffices to fix the constant $d$.} whose $p$-divisible group agrees with $\mathcal{H}$ when base-changed to $k$. 

Now using Lemma \ref{BTgluing}, we can glue $\mathcal{B}'[p^\infty]$ and $\mathcal{H}$ to get a $p$-divisible group $\mathcal{G}$ over $C^{\sharp+}$, with a trivialization $\alpha: T_p\G=T_p\mathcal{H}\cong \Lambda$. Also, applying the full-faithfulness part of Lemma~\ref{BTgluing} to $p^n\rho$ and $\mathcal{B}[p^\infty]\rightarrow \mathcal{B}'[p^\infty]$, we get an isogeny compatible with the $G$-structures 
\[\mathcal{A}_0[p^\infty]\times_{C^+/\varpi}(C^+/\varpi\cdot\CO_C)\rightarrow \mathcal{G}\times_{C^{\sharp+}}(C^+/\varpi\cdot\CO_C).\]
By Theorem \ref{ST0}, this lifts to a $G$-isogeny
\[\mathcal{A}_0[p^\infty]\rightarrow \mathcal{G}\times_{C^{\sharp+}}C^+/\varpi.\]

For a general product of points $S$, by restricting to each $s_i =\Spa(C_i,C_i^+)$, the above construction gives a $p$-divisible group $\G_i/C_i^{\sharp+}$ with $G$-structure, trivialization $\alpha_i: T_p\G_i\cong \Lambda$, and a $G$-quasi-isogeny 
\[\rho_i: \AV_0[p^\infty]\times_{R^+/\varpi}C_i^+/\varpi_i \dashrightarrow \G_i\times_{C_i^{\sharp+}}{C_i^+/\varpi_i},\]
such that $p^{n_i}\rho_i$ is an actual isogeny for some integer $n_i$. By Lemma \ref{BTproduct}, we can take the product of $\G_i$'s to get a $p$-divisible group $\G/R^{\sharp+}$ with $G$-structure. There is a unique trivialization $\alpha: T_p\G\cong \underline{\Lambda}$ that restricts to the $\alpha_i$'s, by properness of the diagonal of $[\ast/\underline{K}_p]\rightarrow \ast:= \Spd \F_q$, see Proposition~\ref{qcqs}. In order to get a $G$-quasi-isogeny
\[\AV_0[p^\infty]\dashrightarrow \G_0:=\G\times_{R^{\sharp+}}R^+/\varpi,\]
which restricts to the $\rho_i$'s, we have to show that the $n_i$'s can be commonly bounded by a large enough integer $N$. 

Note that $(\G,\alpha)$ defines an $S$-point of $\Gr_{G,\mu}$ by tensoring the prismatic Dieudonn\'e module $M_{\prism}(\G)$ up to $\Bdr^+(R^\sharp)$ and using
\[T_p\G\otimes_{\Z_p}\Bdr^+(R^\sharp)\xrightarrow{\overset{\alpha\otimes id}{\sim}} \Lambda\otimes_{\Z_p}\Bdr^+(R^\sharp).\]
Since this $S$-point agrees with the original point $y$ on each $s_i$, by properness of $\Gr_{G,\mu}$ it is so on the whole $S$. In particular, the vector bundle $\mathscr{E}(\G):=\mathscr{E}(\G_0)$ attached to $\G$ is isomorphic to $\mathscr{E}(y)$ and the isomorphism $\phi$ can be viewed as an element in $\Hom(\mathscr{E}(\AV_0[p^\infty]), \mathscr{E}(\G))$.

On the other hand, there is a commutative diagram
	\[
	\begin{tikzcd}
	\Hom(\AV_0[p^\infty], \G_0)\ar[r,"\mathscr{E}(\cdot)"]\ar[d,hook]
	& \Hom(\mathscr{E}(\AV_0[p^\infty]), \mathscr{E}(\G)) \ar[d,hook]\\
	\prod_i(\Hom(\AV_{0,i}[p^\infty], \G_{0,i})[1/p]) \ar[r,"{\mathscr{E}(\cdot)}"]
	& \prod_i \Hom(\mathscr{E}(\AV_{0,i}[p^\infty]), \mathscr{E}(\G_{i})).
	\end{tikzcd}
	\]
If we set $(M,\varphi)$ and $(M',\varphi')$ to be the crystalline Dieudonn\'e module of $\AV_0$ and $\G_0$, then using the classification of $p$-divisible groups over qrsp rings, cf. Example \ref{example-crystallineDieu}, 
the top row can be identified with 
\[(M^\vee\otimes M')^{\varphi^\vee\otimes \varphi'=1} \rightarrow (M^\vee\otimes M'[1/p]\otimes B^+_\cris(R^\circ/\varpi))^{\varphi^\vee\otimes \varphi'=1}.\]
It is then clear that for $N\gg 0$, $p^N\phi$, and hence $p^N(\phi_i)_{i\in I}$, lies in the image of $\mathscr{E}(\cdot)$, so this $N$ serves as the desired upper bound. 

To conclude, by taking product we get a quasi-isogeny 
\[\rho=p^{-N}(\prod_ip^{N}\rho_i):\AV_0[p^\infty]\dashrightarrow \G_0,\] 
that is compatible with $G$-structures and maps to $\phi$ by $\mathscr{E}(\cdot)$. Using Lemma \ref{modifyAb}, we can modify $\AV_0$ with $\rho$ inside its isogeny class to get a new abelian scheme satisfying the condition of Theorem \ref{ST1}. More precisely, there exists a unique (up to isomorphisms) abelian scheme $\AV'_0$ with $G$-structure over $R^+/\varpi$, together with a morphism $\AV_0\rightarrow \AV'_0$ in $\Igs^\mathrm{pre}(S)$, that identifies $\AV'_0[p^\infty]$ $\CO_B$-linearly with $\G_0$, preserving the polarization up to a scalar in $\underline{\Z}_p^\times(S)$.

Now upon modifying the polarization on $\G$ by a section in $\underline{\Z}_p^\times(S)$, which does not change its isomorphism class, $\rho'$ preserves the polarization on the nose and therefore by Theorem \ref{ST1}, the triple $(\AV'_0, \G, \rho')$ gives rise to a formal abelian scheme $\mathfrak{A}$ over $R^{\sharp+}$ with $G$-structure. Its $p$-divisible group agrees with $\G$ and its reduction to $R^+/\varpi$ is isomorphic to $\AV_0$ in $\Igs(S)$. Lifting the $K^p$-level structure $\bar{\eta_0}$ of $\AV'_0$ to $\mathfrak{A}$ is automatic since $(R^{\sharp+},(\varpi))$ is a Henselian pair. Also, $\alpha: T_p\mathfrak{A}_\eta=T_p\G\cong \underline{\Lambda}$ gives the level structure at $p$. This defines an $S$-point of $\Shi^{\circ,\mathrm{pre}}_{K^p}$.

To conclude the proof, we are left to check the assignment 
\[(x,y,\phi)\mapsto(\mathfrak{A}, \alpha)\] defines an inverse of $F$. This is direct and is left to the reader.
\end{proof}

As a result, we get cartesian diagrams at any level. Especially, to compare with the integral model at hyperspecial level in Section 11, we have:
\begin{cor}\label{LevelK}
    Write $\Gr_{G,\mu,K_p}$ for the quotient stack $\left[\Gr_{G, \mu}/K_p\right]$. The following diagram at level $K_p=G_{\Z_p}(\Z_p)$ is 2-cartesian
    \[
    \begin{tikzcd}
    \Shi_K^\circ \ar[r,"{\pi_{HT,K_p}^\circ}"] \ar[d,"\mathrm{red}"]& \Gr_{G,\mu,K_p}\ar[d,"BL"]\\
    \Igs \ar[r,"\overline{\pi}_{HT}^\circ"] 					&   \Bun_G.
    \end{tikzcd}
    \]
    
\end{cor}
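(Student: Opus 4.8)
The plan is to deduce \ref{LevelK} from \ref{cartesian} by quotienting by the profinite group $\underline{K_p}$, where $K_p=G_{\Z_{(p)}}(\Z_p)$. Recall that the projection $\Shi_{K^p}^\circ\to\Shi_K^\circ$ is the pro-\'etale torsor $\underline{\mathrm{Isom}}_G(T_p\AV,\underline\Lambda)$ of $p$-level structures, whose structure group is $\underline{\Aut}_G(\underline\Lambda)=\underline{K_p}$; hence $\Shi_K^\circ\cong[\Shi_{K^p}^\circ/\underline{K_p}]$ as v-stacks, while $\Gr_{G,\mu,K_p}=[\Gr_{G,\mu}/\underline{K_p}]$ by definition. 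The Hecke action of $K_p\subset G(\Q_p)$ on the tower $\{\Shi_{K^p}^\circ\}$ is the deck action of this torsor, and by \ref{HT} the map $\pi_{HT}^\circ$ is $G(\Q_p)$-equivariant for the natural $\underline{K_p}$-action on $\Gr_{G,\mu}$; since the trivialization appearing on the $\Gr_G$-side is built from the trivialization of $T_p\AV$, the map $\pi_{HT}^\circ$ is moreover the pullback of $\pi_{HT,K_p}^\circ$ along $\Gr_{G,\mu}\to\Gr_{G,\mu,K_p}$.

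The first point to observe is that both $\mathrm{red}\colon\Shi_{K^p}^\circ\to\Igs$ and $\overline{\pi}_{HT}^\circ\circ\mathrm{red}$ are $\underline{K_p}$-invariant. Indeed $\mathrm{red}$ depends only on the underlying $\CO_E$-integral abelian scheme with $G$-structure and $K^p$-level, not on the chosen trivialization of $T_p\AV$, so it factors as $\Shi_{K^p}^\circ\to\Shi_K^\circ\xrightarrow{\mathrm{red}}\Igs$; equivalently, for $g\in K_p$ the Hecke operator leaves the formal abelian scheme unchanged up to the canonical $p^N$-isomorphism, which becomes an isomorphism in $\Igs^{\mathrm{pre}}$. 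Consequently the cartesian square of \ref{cartesian} is $\underline{K_p}$-equivariant, with $\underline{K_p}$ acting trivially on $\Igs$ and $\Bun_G$, naturally on $\Gr_{G,\mu}$, and via Hecke on $\Shi_{K^p}^\circ$.

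It then remains to use the elementary fact that $[-/\underline{K_p}]$ commutes with $2$-fibre products in this situation: if $\underline{K_p}$ acts trivially on small v-stacks $Y,Z$ and compatibly on a $\underline{K_p}$-equivariant map $X\to Z$, then the canonical morphism $[(Y\times_ZX)/\underline{K_p}]\to Y\times_Z[X/\underline{K_p}]$ is an equivalence — given a $\underline{K_p}$-torsor $P\to T$, a $\underline{K_p}$-equivariant map $P\to X$, a map $y\colon T\to Y$ and a $2$-isomorphism between the two induced maps to $Z$, one pulls $y$ back along $P\to T$ (legitimate since the action on $Y$ is trivial) to repackage the datum as a single $\underline{K_p}$-equivariant map $P\to Y\times_ZX$. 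Applying this with $(X,Y,Z)=(\Gr_{G,\mu},\Igs,\Bun_G)$ and combining with \ref{cartesian} and $\Shi_K^\circ\cong[\Shi_{K^p}^\circ/\underline{K_p}]$ yields
\[\Shi_K^\circ\cong\bigl[(\Igs\times_{\Bun_G}\Gr_{G,\mu})/\underline{K_p}\bigr]\cong\Igs\times_{\Bun_G}[\Gr_{G,\mu}/\underline{K_p}]=\Igs\times_{\Bun_G}\Gr_{G,\mu,K_p},\]
and a short diagram chase identifies the two projections with $\mathrm{red}$ and $\pi_{HT,K_p}^\circ$ and the right vertical map with $BL$, so the displayed square is $2$-cartesian. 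No serious obstacle is expected here; the only steps needing a little care are the factorization of $\mathrm{red}$ through $\Shi_K^\circ$ and the compatibility of the torsor $\Shi_{K^p}^\circ\to\Shi_K^\circ$ with the pullback of the tautological torsor on $\Gr_{G,\mu,K_p}$, both of which are immediate from the constructions.
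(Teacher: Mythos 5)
Your argument is correct and is exactly the intended deduction: the paper states \ref{LevelK} as an immediate consequence of \ref{cartesian}, and the implicit proof is precisely your descent along the pro-\'etale $\underline{K}_p$-torsor $\Shi_{K^p}^\circ\rightarrow\Shi_K^\circ$, using the $K_p$-equivariance of $\pi_{HT}^\circ$, the $K_p$-invariance of $\mathrm{red}$, and the fact that forming the quotient $[-/\underline{K}_p]$ commutes with the fiber product when the action on $\Igs$ and $\Bun_G$ is trivial. The details you supply (the cartesian square relating the two levels via the tautological torsor on $\Gr_{G,\mu,K_p}$, and the equivariance of the equivalence from \ref{cartesian}) are exactly the points the paper leaves implicit, and they are handled correctly.
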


Theorem \ref{cartesian} implies directly some geometric properties of $\Igs$.

\begin{cor}
The v-stack $\Igs$ is small.
\end{cor}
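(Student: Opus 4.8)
The plan is to produce a presentation of $\Igs$ of the shape required by the definition of a small v-stack. The perfectoid space I would use is $Y:=\Shi_{K^p}^\circ$: by the work of Scholze \cite{Sch15} the infinite-level Shimura variety $\Shi_{K^p}$ is representable by a perfectoid space, and $\Shi_{K^p}^\circ$ is an open subspace of it, hence also perfectoid. Together with the reduction morphism $\mathrm{red}\colon Y\to\Igs$ the problem then splits into two parts: (i) show that $\mathrm{red}$ is surjective as a morphism of v-stacks; (ii) show that the relation $\mathcal{R}:=Y\times_{\Igs}Y$ is a small v-sheaf. Granting (i) and (ii), $\mathcal{R}\rightrightarrows Y\to\Igs$ is a presentation of the demanded kind, so $\Igs$ is small.

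For (i): a $T$-point of $\Igs$, for $T=\Spa(R,R^+)\in\Perf_{\F_q}$, is v-locally on $T$ an object $\AV_0=(A_0,\iota,\lambda,\bar\eta)$ of $\Igs^{\text{pre}}(T)$, i.e.\ a point of $\mathscr S_K(R^+/\varpi)$. After a further v-cover I may assume $T$ carries an untilt $T^\sharp=\Spa(R^\sharp,R^{\sharp+})$ over $E$, since every geometric point of characteristic $p$ admits an untilt over $E$ and geometric points form a basis of the v-topology. Now $R^{\sharp+}/\varpi^\sharp\cong R^+/\varpi$, each surjection $R^{\sharp+}/(\varpi^\sharp)^{n+1}\twoheadrightarrow R^{\sharp+}/(\varpi^\sharp)^{n}$ ($n\geq 1$) has square-zero kernel, and $R^{\sharp+}=\varprojlim_n R^{\sharp+}/(\varpi^\sharp)^n$. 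Using the formal smoothness of $\mathscr S_K$ over $\Spf\CO_E$ one lifts $\AV_0$ successively through these thickenings to a point $\mathfrak A\in\mathscr S_K(\Spf R^{\sharp+})$; by Lemma \ref{PresheafGRL} this is a point of $\Shi_K^\circ(T)$, and after one more pro-\'etale cover trivializing the Tate module of the generic fibre one obtains a point of $\Shi_{K^p}^\circ(T)$ whose image under $\mathrm{red}$ is the given point. (Alternatively, (i) is formal from Theorem \ref{cartesian} once one knows that $BL\colon\Gr_{G,\mu}\to\Bun_G$ is surjective, as a map of v-stacks, onto the union of Newton strata labelled by $B(G,\mu)$, using Propositions \ref{IgsImage} and \ref{BLImage}.)

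For (ii): $\mathcal{R}=Y\times_{\Igs}Y$ is a v-sheaf, since $Y$ is a v-sheaf and hence the only automorphisms of an object of the $2$-fibre product are identities; and it is the pullback of $\Delta_\Igs$ along $Y\times Y\to\Igs\times\Igs$, where $Y\times Y$ (the fibre product over $\Spd\F_q$) is again a perfectoid space. It therefore suffices to check that $\mathcal R$ is v-locally small over $Y\times Y$. On the locus where the two tautological $Y$-points reduce to honest objects $\AV_0,\AV_0'$ of $\Igs^{\text{pre}}$, the fibre of $\mathcal R$ is $\underline{\mathrm{Isom}}_G(\AV_0,\AV_0')$, the sheaf of $G$-compatible quasi-isogenies $A_0\dashrightarrow A_0'$. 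A quasi-isogeny is v-locally an isogeny of bounded degree divided by a power of $p$, and isogenies of bounded degree between fixed abelian schemes form a scheme of finite type over $R^+/\varpi$; hence $\underline{\mathrm{Hom}}(A_0,A_0')\otimes\Q$ is a countable increasing union of representable v-sheaves, so it is small, and the subsheaf $\underline{\mathrm{Isom}}_G(\AV_0,\AV_0')$ — cut out by the closed conditions of $\CO_B$- and level-compatibility, a $\underline{\Q}^\times$-torsor condition on the polarization, and the open condition that the map on $p$-divisible groups be an isomorphism — is again small. Since v-covers base-change to surjections, a v-sheaf that is v-locally small is small, so $\mathcal R$ is small.

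The hard part is (i): one has to verify that $\mathrm{red}$ hits \emph{all} of $\Igs$, including the non-ordinary strata, which is exactly the deformation-theoretic input that every abelian scheme with $G$-structure over $R^+/\varpi$ lifts, v-locally, along some untilt over $\CO_E$ — this is where formal smoothness of $\mathscr S_K$ and the existence of untilts over $E$ are used. Everything else is bookkeeping against the definition of a small v-stack.
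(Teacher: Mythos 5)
Your skeleton coincides with the paper's: the atlas is the same $Y=\Shio$, which is perfectoid (\cite{Sch15}, \cite{CS19}), and the two tasks are surjectivity of $\mathrm{red}$ and smallness of $\mathcal R=Y\times_{\Igs}Y$. The paper, however, extracts both directly from Theorem \ref{cartesian}: surjectivity of $\mathrm{red}$ is the base change of the surjectivity of $BL$ (\ref{BLSurj}, combined with \ref{IgsImage} and \ref{BLImage} to see that the image of $\overline{\pi}^\circ_{HT}$ is hit by $\Gr_{G,\mu}$), and
\[\mathcal R\;\cong\;\Shio\times_{\Gr_{G,\mu}}\bigl(\Gr_{G,\mu}\times_{\Bun_G}\Gr_{G,\mu}\bigr),\]
a fiber product of small v-sheaves over $\Gr_{G,\mu}$, hence small. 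Your parenthetical remark for (i) is exactly this argument; your primary route for (i) is a genuinely different, deformation-theoretic one, and it does work, with one correction: geometric points do not form a basis of the v-topology — products of geometric points (strictly totally disconnected spaces) do — so you must check that such a product $\Spa(R,R^+)$, $R^+=\prod_i C_i^+$, admits an untilt over $E$. It does: choose untilts of each $C_i$ over $E$ and observe that $\prod_i C_i^{\sharp+}$ is again integral perfectoid over $\CO_E$ with tilt $R^+$ (the chosen generators of the kernels of $\theta$ assemble to a distinguished element of $W(R^+)=\prod_i W(C_i^+)$). With that in place, the successive square-zero liftings via formal smoothness of $\mathscr S_K$, Lemma \ref{PresheafGRL}, and a pro-\'etale trivialization of the Tate module give (i).

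The genuine gap is in (ii), precisely at the smallness assertion for the quasi-isogeny sheaves. The fibers of $\mathcal R$ over a suitable cover of $Y\times Y$ are the v-sheafifications of the presheaves $T'\mapsto \mathrm{Isom}_{\Igs^{\text{pre}}(T')}(\AV_0|_{T'},\AV_0'|_{T'})$, and these are \emph{not} ``representable v-sheaves'': the bounded-degree isogeny functors are schemes over $R^+/\varpi$ (even this requires a spreading-out remark, since $R^+/\varpi$ is not noetherian and the standard representability results for $\underline{\mathrm{Hom}}$ of abelian schemes are stated over noetherian bases), and what actually enters is the functor $T'\mapsto X\bigl(\CO^+(T')/\varpi\bigr)$ on $\Perf_{/T}$ — a v-sheaf attached to a scheme in the spirit of Definition \ref{BigDiamond}, not a perfectoid space. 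Its smallness is plausible but is exactly what you assert without proof; to close the argument you would have to dominate it by a perfectoid space (e.g.\ embed $X$ in affine space and surject from a perfectoid polydisc parametrizing lifts of the coordinates to $\CO^+$), or invoke and justify that sub-v-sheaves of small v-sheaves are small, and then still pass from the presheaf to its v-sheafification. None of this is in your text, and it is the crux of your (ii). Since Theorem \ref{cartesian} is already available at this point of the paper, the cleanest repair is to drop the moduli computation and use the displayed identification of $\mathcal R$ above, which is what the paper does; your hands-on argument can be completed, but only after supplying the representability-over-non-noetherian-base and smallness justifications it currently takes for granted.
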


\begin{proof}
The Beauville-Laszlo map $\Gr_G\rightarrow \Bun_G$ is a surjective map of v-stacks (even as pro-\'etale stacks), see Proposition \ref{BLSurj}. Hence $\Shi_{K^p}^\circ\rightarrow \Igs$ is surjective. As remarked in \cite{CS19} before Proposition 2.6.4, $\Shi_{K^p}^\circ$ is representable by a perfectoid space. Since $\Shi_{K^p}^\circ\times_{\Igs}\Shi_{K^p}^\circ$ is the fiber product of $\Shi_{K^p}^\circ$ and $\Gr_{G,\mu}\times_{\Bun_G}\Gr_{G,\mu}$ over $\Gr_{G,\mu}$, which is a small v-sheaf, this shows $\Igs$ is small.
\end{proof}

\begin{cor}\label{compactifiable}
The map $\bar{\pi}_{HT}^\circ$ is compactifiable in the sense of \cite[Definition 22.2]{Sch18}.
\end{cor}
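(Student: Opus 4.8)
The statement asserts that $\bar\pi^\circ_{HT}\colon \Igs\to\Bun_G$ is \emph{compactifiable} in the sense of \cite[22.2]{Sch18}. Recall that a map of small v-stacks $f\colon X\to Y$ is compactifiable if it is separated and representable in nice v-stacks (or more precisely, if it factors as an open immersion followed by a proper map, or satisfies the appropriate valuative-type criterion in Scholze's framework); in \cite[22.3]{Sch18} it is shown that every separated map that is representable in locally spatial diamonds and quasi-separated and ``$\ell$-cohomologically smooth''-adjacent hypotheses is compactifiable, but the cleanest route is to use the criteria of \cite[22.2, 22.3]{Sch18} directly. The plan is to deduce the property for $\bar\pi^\circ_{HT}$ from the corresponding property for $\pi^\circ_{HT}\colon \Shi^\circ_{K^p}\to \Gr_{G,\mu}$, using the cartesian diagram of Theorem \ref{cartesian} together with the fact that $BL\colon \Gr_{G,\mu}\to\Bun_G$ is a surjection of v-stacks (indeed even of pro-\'etale stacks, \ref{BLSurj}).

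First I would observe that $\pi^\circ_{HT}$ is compactifiable: $\Shi^\circ_{K^p}$ is a (locally) spatial diamond, in fact representable by a perfectoid space, and $\pi^\circ_{HT}$ is a quasi-compact separated map of spatial diamonds — this is standard, as the Hodge--Tate period map on the good reduction locus at infinite level is qcqs (its fibers over $\Gr_{G,\mu}$ are affinoid perfectoid Igusa-type spaces as recorded in Section 9, and $\Shi^\circ_{K^p}$ and $\Gr_{G,\mu}$ are both spatial). Hence by \cite[22.3]{Sch18} it is compactifiable. Next, compactifiability is a property that can be checked v-locally on the target, and is stable under base change: if $g\colon Y'\to Y$ is a surjection of small v-stacks and $f\colon X\to Y$ is such that the base change $f'\colon X\times_Y Y'\to Y'$ is compactifiable, then $f$ is compactifiable — this is \cite[22.4]{Sch18} or an immediate consequence of the definitions there, since the canonical compactification is constructed v-locally and descends. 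Applying this with $g=BL\colon \Gr_{G,\mu}\to\Bun_G$ (surjective as a v-stack by \ref{BLSurj}) and $f=\bar\pi^\circ_{HT}$, the base change $\bar\pi^\circ_{HT}\times_{\Bun_G}\Gr_{G,\mu}$ is, by Theorem \ref{cartesian}, exactly $\pi^\circ_{HT}\colon\Shi^\circ_{K^p}\to\Gr_{G,\mu}$, which we have just argued is compactifiable. Therefore $\bar\pi^\circ_{HT}$ is compactifiable.

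The one point requiring care — and the main obstacle — is verifying that compactifiability genuinely descends along the v-surjection $BL$ in the stack (as opposed to sheaf) setting, and that $BL$ being a surjection of \emph{v-stacks} (not just, say, surjective on geometric points) is what is needed. Here I would invoke \ref{BLSurj}, which gives surjectivity even as a map of pro-\'etale stacks, hence certainly as v-stacks; the descent of compactifiability along v-covers is part of the formalism of \cite[22]{Sch18}, relying on the fact that the canonical compactification $X^{/Y}$ is itself compatible with v-base change on $Y$ when $f$ is already known to be separated. To make the argument fully rigorous one should first check that $\bar\pi^\circ_{HT}$ is separated — but this too can be checked after the base change $BL$, where it becomes separatedness of $\pi^\circ_{HT}$, which holds since $\Shi^\circ_{K^p}$ is separated over $\Spd E$ and $\Gr_{G,\mu}$ is separated (indeed proper) over $\Spd E$. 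With separatedness in hand, the descent of compactifiability is exactly \cite[22.3(ii)]{Sch18}, and the proof is complete.
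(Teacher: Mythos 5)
Your argument is correct and is essentially the paper's proof: the paper likewise deduces compactifiability of $\bar{\pi}_{HT}^\circ$ from the cartesian diagram of Theorem \ref{cartesian}, the compactifiability of $\pi_{HT}^\circ$, and the descent statement \cite[22.3(iii)]{Sch18} along the v-surjection $BL$ (Proposition \ref{BLSurj}). Your extra verifications (separatedness checked after base change, and $\pi_{HT}^\circ$ compactifiable because it is a quasi-compact separated map of locally spatial diamonds) are exactly the implicit inputs the paper takes for granted, so no genuinely different route is involved.
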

\begin{proof}
Combine Proposition~\ref{cartesian}, \cite[Proposition 22.3(iii)]{Sch18} and the compatifiability of $\pi_{HT}^\circ$.
\end{proof}

\begin{cor}\label{nice}
The map $\bar{\pi}_{HT}^\circ$ is separated, representable in spatial diamonds and of finite dim. trg., as defined in \cite[Definition 21.7]{Sch18}.
\end{cor}
\begin{proof}
Representability can be checked after basechange to $\Gr_{G,\mu}$, \cite[Proposition 13.4, 10.11]{Sch18}, where $\pi_{HT}^\circ$ is representable in spatial diamonds. For the claim on finite transcendental dimension, it suffices to check on geometric points of $\Bun_G$ in the image of $\Igs$. Lifting to $\Gr_{G,\mu}$, each fiber of $\pi_{HT}^\circ$ has finite dim.trg., since up to a canonical compactification, it agrees with an Igusa variety, see Theorem \ref{HTfiber}, the diamond attached to some perfect scheme with finite dimension over the base field.
\end{proof}

\begin{cor}
The small v-stack $\Igs$ is an Artin v-stack in the sense of \cite[Chapter IV]{FS}.
\end{cor}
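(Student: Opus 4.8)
The plan is to deduce this from the fact that $\Bun_G$ is an Artin v-stack \cite{FS}, together with Corollary \ref{nice}, which says that the structure morphism $\bar{\pi}_{HT}^\circ\colon \Igs\to \Bun_G$ is separated and representable in spatial diamonds. The guiding principle is that the class of Artin v-stacks in the sense of \cite[IV]{FS} is stable under passing to a small v-stack that maps to an Artin v-stack by a morphism representable in locally spatial diamonds; since spatial diamonds are in particular locally spatial, Corollary \ref{nice} places $\Igs$ exactly in this situation. (That $\Igs$ is small, a prerequisite for the statement, has already been recorded above.)

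Spelled out, I would check the two defining conditions of \cite[IV]{FS} directly for $\Igs$. For the diagonal, factor $\Delta_{\Igs}$ as $\Igs\to \Igs\times_{\Bun_G}\Igs\to \Igs\times\Igs$: the first arrow is the relative diagonal of $\bar{\pi}_{HT}^\circ$, which is a closed immersion since $\bar{\pi}_{HT}^\circ$ is separated, in particular representable in locally spatial diamonds; the second arrow is the base change of $\Delta_{\Bun_G}$ along $\Igs\times\Igs\to \Bun_G\times\Bun_G$, and $\Delta_{\Bun_G}$ is representable in locally spatial diamonds because $\Bun_G$ is Artin. As this property of morphisms is stable under composition and base change \cite{Sch18}, $\Delta_{\Igs}$ is representable in locally spatial diamonds. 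For the cohomologically smooth chart, choose a surjective cohomologically smooth morphism $U\to \Bun_G$ from a locally spatial diamond $U$ (which exists since $\Bun_G$ is Artin); then $\Igs\times_{\Bun_G}U\to \Igs$ is surjective and cohomologically smooth by base change, and $\Igs\times_{\Bun_G}U$ is a locally spatial diamond, being representable in locally spatial diamonds over the locally spatial diamond $U$ by Corollary \ref{nice}. This supplies the required chart, so $\Igs$ is an Artin v-stack.

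I do not expect a genuine obstacle here: the argument is formal once Corollary \ref{nice} and the Artin-ness of $\Bun_G$ are available. The only care needed is bookkeeping --- quoting correctly from \cite{Sch18} the stability of ``representable in locally spatial diamonds'' under composition, base change and formation of relative diagonals, the stability of cohomological smoothness under base change, and the fact that a v-sheaf representable in locally spatial diamonds over a locally spatial diamond is itself a locally spatial diamond.
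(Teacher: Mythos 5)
Your argument is correct and is essentially the paper's: the paper deduces the corollary by citing the permanence statement \cite[IV.1.8(iii)]{FS} together with the Artin-ness of $\Bun_G$ and Corollary \ref{nice}, and your proposal simply unwinds the proof of that permanence statement (relative diagonal plus pulled-back smooth chart). No gap; you have just made explicit what the cited reference encapsulates.
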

\begin{proof}
Combine \cite[Proposition IV.1.8(iii)]{FS}, \cite[Example IV.1.19]{FS}, and Corollary~\ref{nice}.
\end{proof}

\subsection{Sheaf theoretic implications}
The geometric properties has the following sheaf theoretic implication. Here we use the derived category $D_\text{\rm{\'et}}(\cdot)$ of a small v-stack in the sense of \cite[Definition 14.13]{Sch18}. We fix a ring of coefficients $\Lambda$ such that $n\Lambda=0$ for some $n$ prime to $p$.

\begin{prop}
\label{PBC}
There is a natural base change equivalence
\[BL^\ast R\bar{\pi}^\circ_{HT,\ast}\cong R\pi^\circ_{HT,\ast}\mathrm{red}^\ast\]
of functors $D_\text{\rm{\'et}}(\Igs,\Lambda)\rightarrow D_\text{\rm{\'et}}(\Gr_{G,\mu},\Lambda)$. In particular, the complex $R\pi^\circ_{HT,\ast}\Lambda$ on $\Gr_{G,\mu}$ descends to $R\bar{\pi}^\circ_{HT,\ast}\Lambda$ on $\Bun_G$. The similar statement for $R\bar{\pi}^\circ_{HT,!}$ holds true.
\end{prop}
\begin{proof}
The statement for usual pushforward follows from Corollary \ref{nice} and the qcqs base change of \cite[Proposition 17.6]{Sch18} (see \cite[Section 21]{Sch18} for a discussion on bounding the cohomological dimension of a map by its dim. trg..). The statement for the pushforward with proper supports follows from Corollary \ref{nice}, Corollary \ref{compactifiable} and \cite[22.8]{Sch18}.
\end{proof}

Recall the notion of universal locally acyclic (ULA) sheaves from \cite{FS}. We denote by $D^\mathrm{ULA}(\Bun_G,\Lambda)$ the full subcategory of $D_\text{\rm {\'et}}(\Bun_G,\Lambda)$ consisting of objects that are universal locally acyclic for the structure map $\Bun_G\rightarrow \ast$. By \cite[Theorem V.7.1]{FS}, $A\in D_\text{\rm {\'et}}(\Bun_G,\Lambda)$ lies in this subcategory means that for any $b\in B(G)$, its restriction along 
\[i_b: \Bun_G^b\hookrightarrow \Bun_G\]
is ``admissible". More precisely, let $D(G_b(\Q_p),\Lambda)$ be the derived category of smooth representations of $G_b(\Q_p)$ with $\Lambda$-coefficient (see Remark \ref{basic} for the definition of $G_b$). Then by \cite[Proposition V.2.2]{FS}, there is an equivalence 
$D_\text{\rm {\'et}}(\Bun_G^b,\Lambda)\cong D(G_b(\Q_p),\Lambda)$. Under this equivalence, $i_b^\ast A$ is identified with a complex of $G_b(\Q_p)$-representations, such that for any open pro-$p$ subgroup $K$ of $G_b(\Q_p)$, the $K$-invariants of this complex is a perfect complex of $\Lambda$-modules. We denote the full subcategory of such complexes in $D(G_b(\Q_p),\Lambda)$ by $D^\mathrm{adm}(G_b(\Q_p),\Lambda)$.

We prove below that the complex $R\overline{\pi}^\circ_{HT,\ast}\Lambda$ is ULA with respect to $\Bun_G\rightarrow \ast$. As we will refer to the description of the Newton stratification on the Igusa stack in Proposition~\ref{stratum}, the reader may skim Section 9.4 for the relevant statements. We denote the restriction of $\overline{\pi}^\circ_{HT}$ to $\Igs^b:=\Igs^{\circ,b}_{K^p}$ by $\overline{\pi}^{\circ,b}_{HT}$, and base change everything to $\Spd \overline{\F}_p$.

\begin{prop}\label{ULA}
The complex $R\overline{\pi}^\circ_{HT,\ast}\Lambda$ lies in $D^\mathrm{ULA}(\Bun_G,\Lambda)$.
\end{prop}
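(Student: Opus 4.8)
The plan is to verify the ULA property stratum by stratum over $\Bun_G$, using the criterion of \cite[V.7.1]{FS} together with the Newton decomposition $|\Bun_G|\cong B(G)$ and the known structure of the Igusa stack strata. First I would reduce to checking that for each $[b]\in B(G)$ the restriction $i_b^\ast R\overline{\pi}^\circ_{HT,\ast}\Lambda$ is admissible, i.e. that for every open pro-$p$ subgroup $K\subset G_b(\Q_p)$ the $K$-invariants form a perfect complex of $\Lambda$-modules. By proper base change (note $R\overline{\pi}^\circ_{HT}$ is not proper, but $\overline{\pi}^{\circ,b}_{HT}$ restricted to a single stratum will be handled via the fiber-product description), the stalk of $i_b^\ast R\overline{\pi}^\circ_{HT,\ast}\Lambda$ is computed by the cohomology of the fiber $\overline{\pi}^{\circ,b}_{HT}$ over a geometric point of $\Bun_G^b$.

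Next I would invoke the description of the Newton strata of $\Igs$ in \ref{StratumGoodReductionLocus}: the stratum $\Igs^b$ is, up to the canonical compactification $\overline{\Igs^{\circ,b}_{K^p}}\cong [\overline{\mathrm{Ig}^{b,\diamond}}/\tilde{G}_b]$, the quotient of (the diamond of, possibly compactified) an Igusa variety $\mathrm{Ig}^b$ by $\tilde{G}_b=\underline{\Aut}(\mathscr{E}_b)$. Hence over a geometric point $x$ of $\Bun_G^b$, the fiber of $\overline{\pi}^{\circ,b}_{HT}$ is the Igusa variety $\mathrm{Ig}^b_C$ (a diamond attached to a perfect scheme of finite dimension over the residue field, as recorded in \ref{nice}), and the $K$-invariants of the stalk complex compute the compactly-or-not étale cohomology of a finite-level Igusa variety. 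Since $\mathrm{Ig}^b$ is (the perfection of) a finite-type scheme over an algebraically closed field, its étale cohomology with $\Lambda$-coefficients is a perfect complex of $\Lambda$-modules, and passing to $K$-invariants for $K$ an open pro-$p$ subgroup of $G_b(\Q_p)$ corresponds to passing to a finite-level Igusa variety $\mathrm{Ig}^b/K$, whose cohomology is again perfect over $\Lambda$. This gives admissibility on each stratum.

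Finally, I would assemble these stratumwise inputs into the global ULA statement. Concretely, after choosing a presentation of $\Bun_G$ by the semistable (or any) strata, one checks that $R\overline{\pi}^\circ_{HT,\ast}\Lambda$ restricted to each $\Bun_G^b\cong [\ast/\tilde{G}_b]$ is the complex of smooth $\tilde{G}_b$-representations given by the cohomology of the corresponding Igusa variety tower, and that this complex is admissible in the sense recalled before the statement. Invoking \cite[V.7.1]{FS}, admissibility on all strata is equivalent to membership in $D^{\mathrm{ULA}}(\Bun_G,\Lambda)$, which is exactly the claim. The main obstacle I expect is the careful handling of base change for the non-proper map $\overline{\pi}^\circ_{HT}$: one must justify that the restriction to a stratum together with the canonical compactification of the Igusa variety genuinely computes the stalk, so that the perfectness of Igusa variety cohomology can be applied; this requires combining Corollary \ref{nice}, Corollary \ref{compactifiable}, the qcqs base change \cite[17.6]{Sch18}, and the comparison \ref{HTfiber} of the fibers of $\pi^\circ_{HT}$ with Igusa varieties, and being attentive to whether one works with $R\overline{\pi}^\circ_{HT,\ast}$ or $R\overline{\pi}^\circ_{HT,!}$ on each stratum.
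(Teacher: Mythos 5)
Your proposal follows essentially the same route as the paper: restrict to each stratum $\Bun_G^b\cong[\ast/\tilde{G}_b]$, identify (via \ref{StratumGoodReductionLocus}, qcqs base change and the invariance of \'etale cohomology under canonical compactification) the restricted complex with the smooth $G_b(\Q_p)$-complex $R\Gamma(\mathrm{Ig}^{b,\diamond},\Lambda)^{\mathrm{sm}}$, and deduce admissibility because the $K$-invariants are computed by the finite-level Igusa variety $\mathrm{Ig}^b/K$, a finite \'etale cover of the perfection of a finite-type central leaf. One small correction: it is only these finite-level quotients, not the infinite-level $\mathrm{Ig}^b$ itself, whose cohomology is a perfect complex of $\Lambda$-modules, which is exactly what the admissibility criterion requires.
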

\begin{proof}
It suffices to check $i_b^\ast R\overline{\pi}^\circ_{HT,\ast}\Lambda \cong R\overline{\pi}^{\circ,b}_{HT,\ast}\Lambda$ lies in $D^\mathrm{adm}(G_b(\Q_p),\Lambda)$ under the equivalence 
    \[D_\text{\rm {\'et}}(\Bun_G^b,\Lambda)= D_\text{\rm {\'et}}([\ast/\widetilde{G}_b],\Lambda) \cong D_\text{\rm {\'et}}([\ast/G_b(\Q_p)],\Lambda) \cong D(G_b(\Q_p),\Lambda),\]
where the second equivalence is induced by pullback along the section 
\[\pi_\mathrm{unip}: [\ast/G_b(\Q_p)]\rightarrow [\ast/\widetilde{G}_b].\]
By Proposition~\ref{stratum}, up to canonical compactification, $\Igs^b\cong [\mathrm{Ig}^{b,\diamond}/\widetilde{G}_b]$. Then by qcqs base change (along $\pi_\mathrm{unip}$) and the fact that canonical compactification does not change the usual \'etale cohomology, see \cite[Lemma 4.4.2]{CS17}, $\pi_\mathrm{unip}^\ast R\overline{\pi}^{\circ,b}_{HT,\ast}\Lambda$ is identified with
\[R\Gamma(\mathrm{Ig}^{b,\diamond},\Lambda)^\mathrm{sm}:=\varinjlim_{K} R\Gamma(\mathrm{Ig}^{b,\diamond},\Lambda)^K\in D(G_b(\Q_p),\Lambda),\]
where $K$ runs over all open pro-$p$ subgroups of $G_b(\Q_p)$, and $G_b(\Q_p)$ acts on $\mathrm{Ig}^{b,\diamond}$ via the embedding $G_b(\Q_p)\hookrightarrow \widetilde{G}_b$. Now for any open pro-$p$ subgroup $K$ of $G_b(\Q_p)$, we have
\[(R\Gamma(\mathrm{Ig}^{b,\diamond},\Lambda)^\mathrm{sm})^K\cong R\Gamma(\mathrm{Ig}_K^{b,\diamond},\Lambda),\]
where $\mathrm{Ig}_K^{b,\diamond}$ is the diamond attached to the quotient $[\mathrm{Ig}^b/K]$, which is representable by a finite \'etale cover of the perfect central leaf $\mathscr{C}^{\mathbb{X}_b}_\mathrm{perf}$. Now use \cite[Proposition 27.2]{Sch18} and the fact that $\mathscr{C}^{\mathbb{X}_b}_\mathrm{perf}$ is the perfection of a separated scheme of finite type over an algebraically closed field ($\Lambda$ is torsion coprime to $p$), we see that $R\Gamma(\mathrm{Ig}_K^{b,\diamond},\Lambda)$ is a perfect complex of $\Lambda$-modules as desired.
\end{proof}

Proposition \ref{ULA} combined with \cite[Proposition 4.17]{HL23} implies a finer structure of $R\overline{\pi}^\circ_{HT,\ast}\Lambda$. Here we take $\Lambda$ to be $\overline{\F}_\ell$ with $\ell\neq p$. We let 
\[\phi: W_{\Q_p}\rightarrow {}^LG(\overline{\F}_\ell)\]
be a semi-simple $L$-parameter and write 
\[(-)_\phi: D_\text{\rm {\'et}}(\Bun_G,\Lambda) \rightarrow
D_\text{\rm {\'et}}(\Bun_G,\Lambda)\]
for the idempotent localization functor as in Definition A.1 of \textit{loc. cit.}. Note that for any $A\in D_\text{\rm {\'et}}(\Bun_G,\Lambda)$, any Schur irreducible subquotient of $A_\phi$ has Fargues-Scholze $L$-parameter equal to $\phi$. We let $B(G)_\mathrm{un}:=\mathrm{Im}(B(T)\rightarrow B(G))$ be the unramified elements of $B(G)$, where $T\subset G$ is a maximal torus contained in a rational Borel. This is also the set of $b\in B(G)$ whose $\sigma$-centralizer $G_b$ is quasi-split and is a Levi subgroup of $G$, see \cite[Lemma 2.12]{Linus22}, \cite[Section 4.2.1, Corollary 4.2.12]{XZ17}, and hence independent of the choice of $T$.

\begin{prop}
Assuming \cite[Assumption 4.4]{HL23}, if $\phi$ is induced from a generic toral parameter, as defined in \cite[Condition/Definition 1.4]{Linus22}, we have 
\[(R\overline{\pi}^\circ_{HT,\ast}\Lambda)_\phi\cong \bigoplus_{b\in B(G)_\mathrm{un}}(R\overline{\pi}^{\circ,b}_{HT,\ast}\Lambda)_\phi.\]
\end{prop}
\begin{proof}
    Combine Proposition \ref{ULA} and \cite[Proposition 4.17]{HL23}.
\end{proof}

\begin{remark}
\cite[Assumption 4.4]{HL23} is an assumption on the properties of the Fargues-Scholze local Langlands correspondence and is verified for the cases listed in Table (22) of \textit{loc. cit.}.
\end{remark}
\hfill
\hfill

\section{Minimal compactification}
This section aims to construct a minimal compactification of the Igusa stack $\Igs:=\Igs_{K^p}^\circ$ under the assumption that the boundary of the minimal compactification of the Shimura variety has codimension at least two. We first list some notation that is used only in this section.

\begin{notation}\hfill
\begin{itemize}
\item For the pair of structure sheaves $(\CO,\CO^+)$ on the v-site of a small v-stack, we abbreviate $(\CO(S),\CO^+(S))$ as $(\CO,\CO^+)(S)$.  
\item A superscript $()^a$ stands for ``almost", see Section 9.1.3 below.
\item For an adic space (resp. diamond, scheme, formal scheme) $X$, we use $X_\tau$, $\tau=\text{Zar, an, \'et, or } v$, to denote the Zariski, analytic, \'etale or v-site of $X$.
\end{itemize}
\end{notation}

\subsection{Basic constructions}
We define the affinization of small v-stacks, and then review the canonical compactification of maps between v-stacks, as well as some almost mathematics as needed.
\subsubsection{Structure sheaf on small v-stacks}
\begin{defn}[{\cite[Theorem 8.7]{Sch18}}]
Let $X$ be a small v-stack on $\Perf$. Then the sheaves
$\CO_{X}$, $\CO_{X}^+$
are defined to be the unique sheaves that restrict to $\CO_{Y}, \CO_{Y}^+$, for all $Y$ in $\Perf$ with a map $Y\rightarrow X$. 
\end{defn}

\begin{defn}
For any small v-stack $X$, we define a presheaf 
\[X_0:= (S\mapsto \Hom((\CO_{X}, \CO_{X}^+)(X), (\CO_{S},\CO^+_{S})(S))).\]
Here for $\Hom$ we take continuous ring homomorphisms from 
$\CO_{X}(X)$ to $\CO_{S}(S)$ that maps $\CO^+_{X}(X)$ into $\CO_{S}^+(S)$, where the ring $\CO_{X}(X)$ is computed through covers by perfectoid spaces and it is equipped with a limit topology from its expression as an equalizer. 
\end{defn}
\begin{remark}\label{untilted}
     We caution the reader that for a general small v-stack, the global sections $(\CO_{X}, \CO_{X}^+)(X)$ will not have the preferable properties like being affinoid perfectoid, or Tate etc. 
\end{remark}

\begin{lemma}\label{affinization}
    There is a map of presheaves (of groupoids) $X\rightarrow X_0$ such that for any affinoid perfectoid $Y$ in $\Perf$, any map $X\rightarrow Y$ factors uniquely through $X\rightarrow X_0$. We call $X\rightarrow X_0$ the \textbf{affinization} of $X$. 
\end{lemma}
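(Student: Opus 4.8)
The plan is to exhibit the natural map $X\to X_0$ and verify the universal property by unwinding the two definitions. First I would construct $X\to X_0$: given a test object $S\in\Perf$ with a map $f\colon S\to X$, pullback of sections along $f$ gives a continuous ring homomorphism $f^\ast\colon (\CO_{X^\sharp},\CO^+_{X^\sharp})(X)\to (\CO_{S^\sharp},\CO^+_{S^\sharp})(S^\sharp)$ carrying $\CO^+_{X^\sharp}(X)$ into $\CO^+_{S^\sharp}(S^\sharp)$ — here $S^\sharp$ is the untilt determined by $S\to X\to\Spd\Z_p$. This assignment is functorial in $S$ and hence defines a morphism of v-sheaves $X\to X_0$. (One should note $X_0$ is indeed a v-sheaf since $\Hom$ into a sheaf, computed section-wise, is a sheaf; this is the content of the parenthetical "This defines a functor\dots" in the definition of $X_0$.)

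Next I would check the universal property. Let $Y=\Spa(R,R^+)$ be an affinoid perfectoid space over $\Spd\Z_p$ and let $g\colon X\to Y$ be any map. Since $Y$ is affinoid perfectoid, the v-sheaf represented by $Y$ is, via the tilting/adic-space dictionary, the functor $S\mapsto \Hom((R,R^+),(\CO_{S^\sharp},\CO^+_{S^\sharp})(S^\sharp))$ of continuous ring maps; this uses that $Y$ is subcanonical (\cite[8.7]{Sch18}) together with the identification of $\Hom(S,Y)$ with maps of affinoid Tate rings. Applying $g$ to the tautological sections of $\CO_Y,\CO^+_Y$ — i.e. pulling back along $g$ the universal pair $(R,R^+)$ sitting inside $(\CO_{Y^\sharp},\CO^+_{Y^\sharp})(Y)$ — produces a continuous ring homomorphism $(R,R^+)\to (\CO_{X^\sharp},\CO^+_{X^\sharp})(X)$. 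Composing with the evaluation map that defines $X\to X_0$ gives, for each $S\to X$, a continuous ring map $(R,R^+)\to (\CO_{S^\sharp},\CO^+_{S^\sharp})(S^\sharp)$, i.e. a point of $Y$; this is precisely the data of a morphism $X_0\to Y$, and by construction it restricts to $g$ along $X\to X_0$. So $g$ factors through $X\to X_0$.

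For uniqueness of the factorization, suppose $h,h'\colon X_0\to Y$ both restrict to $g$ along $X\to X_0$. A map into the affinoid perfectoid $Y$ is determined by the induced continuous ring homomorphism $(R,R^+)\to (\CO_{(X_0)^\sharp},\CO^+_{(X_0)^\sharp})(X_0)$ on global sections; but by the very definition of $X_0$ as a $\Hom$-sheaf, an element of $X_0(S)$ is a continuous ring map out of $(\CO_{X^\sharp},\CO^+_{X^\sharp})(X)$, and pullback along $X\to X_0$ identifies $(\CO_{(X_0)^\sharp},\CO^+_{(X_0)^\sharp})(X_0)$ with a subring of (a limit computing) $(\CO_{X^\sharp},\CO^+_{X^\sharp})(X)$ in a way compatible with the tautological elements — so $h$ and $h'$ induce the same map $(R,R^+)\to(\CO_{X^\sharp},\CO^+_{X^\sharp})(X)$, namely the one coming from $g$, hence $h=h'$.

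The main obstacle I anticipate is purely formal rather than deep: keeping the topologies straight. The ring $\CO_{X^\sharp}(X)$ carries the limit topology coming from its expression as an equalizer over a perfectoid cover (cf.\ Remark \ref{untilted}), and one must check that "continuous ring homomorphism" is the correct notion so that $X_0$ genuinely corepresents maps to affinoid perfectoid spaces — in particular that a map $X\to\Spa(R,R^+)$ of v-stacks is the \emph{same} as a continuous map $(R,R^+)\to(\CO_{X^\sharp},\CO^+_{X^\sharp})(X)$ of topological pairs, with no discrepancy arising from the passage to the untilt. Since $R$ is Tate and the tilting equivalence is compatible with $\CO,\CO^+$ on affinoid perfectoids, this causes no trouble when we test on perfectoid $S$; the content of the lemma is just that these compatibilities glue over an arbitrary presentation of $X$. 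I would spell this out by reducing everything, via a perfectoid presentation $Y_\bullet\to X$, to the already-known representability statement for affinoid perfectoid spaces.
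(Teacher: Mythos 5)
Your construction of $X\to X_0$ and your existence argument for the factorization are exactly the paper's (two-line) proof: a map $g\colon X\to Y$ induces a continuous homomorphism from the Huber pair of the untilt of $Y$ into $(\CO_{X^\sharp},\CO^+_{X^\sharp})(X)$, and composing with the evaluation maps defining $X_0$ gives $X_0\to Y$ restricting to $g$. One notational slip: the pair to pull back is $(R^\sharp,R^{\sharp+})=(\CO_{Y^\sharp},\CO^+_{Y^\sharp})(Y)$, not $(R,R^+)$, since a map $S\to Y$ compatible with the structure maps to $\Spd\Z_p$ is the same as a continuous homomorphism $(R^\sharp,R^{\sharp+})\to(\CO_{S^\sharp},\CO^+_{S^\sharp})(S^\sharp)$; you flag the tilting issue yourself, so this is harmless, but several of your displays mix the characteristic-$p$ pair with characteristic-$0$ rings.

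The genuine soft spot is your uniqueness argument. You assert that pullback along $\iota\colon X\to X_0$ identifies $(\CO_{(X_0)^\sharp},\CO^+_{(X_0)^\sharp})(X_0)$ with a subring of $A:=(\CO_{X^\sharp},\CO^+_{X^\sharp})(X)$. What comes for free is the splitting in the other direction: evaluation $a\mapsto (x\mapsto x(a))$ defines $\tau\colon A\to \CO_{(X_0)^\sharp}(X_0)$ with $\iota^\ast\circ\tau=\mathrm{id}_A$, so $\iota^\ast$ is a split surjection; its injectivity is not clear, because $X_0(S)$ consists of arbitrary continuous homomorphisms $A\to\CO_{S^\sharp}(S^\sharp)$, many of which need not come, even v-locally, from points of $X$, and a section of $\CO_{(X_0)^\sharp}$ vanishing on the image of $\iota$ need not vanish at such points. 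Consequently, knowing that $h$ and $h'$ induce the same homomorphism $(R^\sharp,R^{\sharp+})\to A$ after composing with $\iota$ does not by itself pin down their values on points of $X_0$ outside the image of $\iota$. To be fair, the paper's own proof only constructs the factorization and is silent on uniqueness; but since you chose to argue it, you should either justify the injectivity claim or argue uniqueness by a different mechanism (for instance, by showing that every map $X_0\to Y$ over $\Spd\Z_p$ is of the evaluation form associated to a continuous homomorphism $(R^\sharp,R^{\sharp+})\to A$, which is then determined by $g$ because $\Hom(X,Y)$ itself identifies with such homomorphisms), rather than asserting $\CO_{(X_0)^\sharp}(X_0)\hookrightarrow A$.
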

\begin{proof}
    For any $S\in \Perf$ mapping to $X$, by taking the induced map on the global sections of the structure sheaves, we obtain a map $S\rightarrow X_0$. This defines the desired map $X\rightarrow X_0$. On the other hand, any map $X\rightarrow Y$, for $Y=\Spa(R,R^+)$ in $\Perf$, induces a homomorphism 
    \[(R,R^+)
    \rightarrow (\CO_{X},\CO^+_{X})(X),\]
    thus giving a map $X_0\rightarrow Y$ and the composition $X\rightarrow X_0 \rightarrow Y$ is the given map.
\end{proof}

\begin{lemma}
    The presheaf $X_0$ is a v-sheaf.
\end{lemma}
\begin{proof}
    Let $T\to S$ be a v-cover of affinoid perfectoid spaces and $T^\bullet:=T^{\times_S^\bullet}$ be the Cech nerve. We need to show 
    \[X_0(S)=\lim_{\Delta} X_0(T^\bullet).\]
    But each term in the Cech nerve is affinoid perfectoid, so by Lemma~\ref{affinization}, this is equivalent to showing
    \[X(S)=\lim_{\Delta} X(T^\bullet),\]
    which holds since $X$ itself is a sheaf (of groupoids).
\end{proof}

Taking affinization defines a functor from the category of small v-stacks to that of v-sheaves.

\subsubsection{Canonical compactification of maps of v-stacks}

\begin{defn}[{\cite[Defintion 18.6]{Sch18}}]
    Let $f:X\rightarrow Y$ be a separated map of v-stacks. The functor on totally disconnected perfectoid spaces sending $\Spa(R,R^+)$ to 
    \[X(R,R^\circ)\times_{Y(R,R^\circ)}Y(R,R^+)\]
    extends to a v-stack $\overline{X}^{/Y}$. There is a factorization of $f$ as 
    \[X\rightarrow \overline{X}^{/Y}\xrightarrow{\overline{f}^{/Y}} Y\]
     with $\overline{f}^{/Y}$ being partially proper. For any partially proper map $Z\rightarrow Y$ of v-stacks, any map $X\rightarrow Z$ factors uniquely through $X\rightarrow \overline{X}^{/Y}$.
\end{defn}
\begin{prop}[{\cite[Proposition 18.7, Corollary 18.8]{Sch18}}]
    The construction $f\mapsto \overline{f}^{/Y}$ is functorial in $f$.
\end{prop}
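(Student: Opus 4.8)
The plan is to deduce functoriality formally from the universal property of the canonical compactification recorded in the preceding definition. Recall that a morphism of arrows is a commutative square
\[
\begin{tikzcd}
X' \ar[r,"u"] \ar[d,"f'"'] & X \ar[d,"f"] \\
Y' \ar[r,"v"'] & Y
\end{tikzcd}
\]
with $f$ and $f'$ separated; the goal is to produce a map $\overline{u}\colon \overline{X'}^{/Y'}\to \overline{X}^{/Y}$ lying over $v$, extending $u$, and respecting identities and composition.

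First I would form the fibre product $\overline{X}^{/Y}\times_Y Y'$. Since $\overline{f}^{/Y}\colon \overline{X}^{/Y}\to Y$ is partially proper and both separatedness and the valuative lifting criterion are stable under base change, the map $\overline{X}^{/Y}\times_Y Y'\to Y'$ is again partially proper. The composite $X'\xrightarrow{u}X\to\overline{X}^{/Y}$ and the map $f'\colon X'\to Y'$ have the same composite to $Y$, because the square commutes, so together they give a map $X'\to \overline{X}^{/Y}\times_Y Y'$ over $Y'$. Applying the universal property of $\overline{X'}^{/Y'}$ to the partially proper target $\overline{X}^{/Y}\times_Y Y'\to Y'$, this map factors uniquely through $X'\to\overline{X'}^{/Y'}$; post-composing the resulting map with the projection $\overline{X}^{/Y}\times_Y Y'\to\overline{X}^{/Y}$ produces $\overline{u}$, which by construction lies over $v$ and extends $u$.

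Then I would verify the two functoriality axioms. For the identity square the recipe returns $\mathrm{id}_{\overline{X}^{/Y}}$ by the uniqueness clause. For a composable pair of morphisms of arrows, the two candidate maps $\overline{X''}^{/Y''}\to\overline{X}^{/Y}$ — the composite of the two constructed maps, versus the map obtained by applying the recipe to the composite square — are both maps over $Y$ that lie over $Y''\to Y$ and extend $X''\to X$, hence they coincide by uniqueness in the universal property. Alternatively, and more quickly, one can bypass the universal property: on totally disconnected perfectoid spaces the explicit functor $\Spa(R,R^+)\mapsto X(R,R^\circ)\times_{Y(R,R^\circ)}Y(R,R^+)$ is visibly functorial in the square (the square induces the evident map of iterated fibre products), and functoriality descends along v-stackification. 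The only genuinely delicate point — the place I expect the real care to be needed — is bookkeeping: one must invoke the universal property only under the hypotheses it actually requires (in particular for the target $\overline{X}^{/Y}\times_Y Y'\to Y'$, which is partially proper but need not be separated), and one must choose the factorizations coherently so that the composition law holds on the nose. Since the universal property as stated holds for all partially proper maps and partial properness is stable under base change, no further input beyond what is recorded above is required.
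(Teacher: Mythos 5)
The paper does not prove this statement at all: it is imported verbatim from \cite[18.7, 18.8]{Sch18}, so there is no internal argument to compare yours against. Judged on its own, your proof is correct and is essentially the standard one. The universal-property route works: partial properness is indeed stable under base change (both the separatedness of the diagonal and the unique-lifting criterion pull back), so $\overline{X}^{/Y}\times_Y Y'\to Y'$ is a legitimate target for the universal property of $\overline{X'}^{/Y'}$, and the identity and composition laws follow from the uniqueness clause once you compare the two candidates as maps into the partially proper base change $\overline{X}^{/Y}\times_Y Y''\to Y''$, which is exactly your fibre-product trick. Your ``quicker'' alternative — functoriality of the explicit formula $\Spa(R,R^+)\mapsto X(R,R^\circ)\times_{Y(R,R^\circ)}Y(R,R^+)$ on the basis of totally disconnected spaces, then extension to all of $\Perf$ since the compactification is determined by its values there — is in fact closer to how the cited reference sees it, modulo the usual $2$-categorical bookkeeping (the squares commute up to specified $2$-isomorphisms, so one gets a pseudofunctor). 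Two small inaccuracies, neither of which damages the argument: your parenthetical that the base-changed target ``need not be separated'' conflicts with the paper's definition, in which partially proper already includes separated (and separatedness does base change), though the universal property as stated only needs partial properness anyway; and ``functoriality descends along v-stackification'' is better phrased as unique extension from the totally disconnected basis rather than descent.
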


\subsubsection{The almost setup}
Let $R$ be a perfectoid Tate ring with subring of power-bounded elements $R^\circ$. We denote the category of $R^\circ$-modules by $R^\circ$-mod.
\begin{defn}[{\cite[Definition 3.21, 3.23]{Sch18}}]
    An $R^\circ$-module $M$ is almost zero if $\varpi M=0$ for all pseudo-uniformizers $\varpi$. Such modules form a thick Serre subcategory of $R^\circ$-mod. The category $R^{\circ a}$-mod of almost $R^\circ$-modules is the quotient of $R^\circ$-mod by the subcategory of almost zero modules. 
\end{defn}
\begin{remark}
    Similarly one can define almost $R^+$-modules for any ring of integral elements $R^+\subset R^\circ$ and the forgetful functor from $R^{\circ a}$-mod to $R^{+ a}$-mod is an equivalence.
\end{remark}
\begin{prop}[{\cite[Theorem 3.24]{Sch18}}]
    Let $(R,R^+)$ be a perfectoid Tate Huber pair and let $X$ be $\Spa(R,R^+)$. Then the $R^+$-module $H^i(X,\CO^+_X)$ is almost zero for $i>0$ and $H^0(X,\CO_X^+)=R^+$.
\end{prop}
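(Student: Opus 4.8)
The plan is to separate the assertion into the computation of $H^{0}$, which is essentially formal, and the almost vanishing of $H^{i}$ for $i>0$, which I would reduce to characteristic $p$ by tilting. For $H^{0}$: a perfectoid Tate ring is sheafy with acyclic structure sheaf (a standard fact, provable via stable uniformity), so $H^{0}(X,\CO_{X})=R$ and $H^{i}(X,\CO_{X})=0$ for $i>0$; consequently $H^{0}(X,\CO_{X}^{+})$ is the subring of those $f\in R$ with $|f(x)|\le 1$ for all $x\in X$, and since $R^{+}$ is integrally closed in $R$ this subring is exactly $R^{+}$ --- the same bound-checking that characterizes $R^{+}$ inside $R$ by means of the continuous valuations of $\Spa(R,R^{+})$.

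For $H^{i}$ with $i>0$, fix a pseudo-uniformizer $\varpi\in R^{+}$ with compatible $p$-power roots and a matching $\varpi^{\flat}\in R^{\flat+}$. The sheaf $\CO_{X}^{+}$ is $\varpi$-torsion free, so there are short exact sequences $0\to\CO_{X}^{+}\xrightarrow{\varpi^{n}}\CO_{X}^{+}\to\CO_{X}^{+}/\varpi^{n}\to 0$; as $R^{+}$ is $\varpi$-adically complete, $R\Gamma(X,\CO_{X}^{+})\simeq R\lim_{n}R\Gamma(X,\CO_{X}^{+}/\varpi^{n})$, and both $\lim$ and $\lim^{1}$ of a tower of almost zero modules are almost zero. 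Each $\CO_{X}^{+}/\varpi^{n}$ is a successive extension of copies of $\CO_{X}^{+}/\varpi$, and the homeomorphism $|X|\cong|X^{\flat}|$ --- compatible with rational subsets and inducing $R^{+}/\varpi\cong R^{\flat+}/\varpi^{\flat}$ on every rational localization --- identifies $\CO_{X}^{+}/\varpi$ with $\CO_{X^{\flat}}^{+}/\varpi^{\flat}$. Chasing the exact sequences then shows that $H^{i}(X,\CO_{X}^{+})^{a}=0$ for $i>0$ follows from the corresponding statement for $X^{\flat}$, so I may assume $R$ has characteristic $p$; then $R^{+}$ and all of its rational localizations, being integral closures of perfect rings inside perfectoid rings, are perfect.

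In characteristic $p$, choose a finite rational cover $\mathfrak{U}$ of $X$ computing \v{C}ech cohomology and set $C^{\bullet}=C^{\bullet}(\mathfrak{U},\CO_{X}^{+})$. By Tate acyclicity of the perfectoid affinoid $X$, the complex $C^{\bullet}[1/\varpi]=C^{\bullet}(\mathfrak{U},\CO_{X})$ is exact in positive degrees, so $H^{i}(C^{\bullet})$ is $\varpi$-power torsion for $i>0$; moreover, since each term of $C^{\bullet}$ is a perfect ring, the $p$-th power Frobenius $\Phi$ is a bijective, Frobenius-semilinear chain endomorphism of $C^{\bullet}$, hence acts bijectively and semilinearly on each $H^{i}(C^{\bullet})$.

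\emph{The main obstacle} is to upgrade ``$\varpi$-power torsion'' to ``almost zero''. I would do this by Scholze's approximation for perfectoid algebras, arranging that the cover is already defined over a reduced topologically finitely presented $K^{\circ}$-subalgebra $A_{0}$, with $K=\widehat{\mathbb{F}_{p}((\varpi^{1/p^{\infty}}))}\hookrightarrow R$, so that $C^{\bullet}$ is the $\varpi$-adic completion of $\varinjlim_{\Phi}C_{0}^{\bullet}$, where $C_{0}^{\bullet}$ is the integral \v{C}ech complex of the noetherian affinoid $\Spa(A_{0},A_{0}^{\circ})$. Classical Tate acyclicity makes each $H^{i}(C_{0}^{\bullet})$ ($i>0$) a finitely generated $\varpi$-power-torsion module over a noetherian ring, hence killed by a fixed power $\varpi^{N}$; passing to $\varinjlim_{\Phi}$ (which commutes with cohomology) and using the semilinear transition maps, any class can be re-represented at an arbitrarily high Frobenius stage $m$, at which multiplication by $\varpi^{1/p^{\ell}}$ is multiplication by $\varpi^{p^{m-\ell}}$ and therefore annihilates the class as soon as $p^{m-\ell}\ge N$; hence $\varinjlim_{\Phi}H^{i}(C_{0}^{\bullet})$, and after $\varpi$-completion also $H^{i}(X,\CO_{X}^{+})$, is almost zero. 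Everything below the approximation step is this semilinear-Frobenius colimit bookkeeping together with the tilting dictionary of the second paragraph.
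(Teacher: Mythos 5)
Note first that the paper does not prove this statement at all: it is quoted as \cite[3.24]{Sch18} (ultimately Scholze's Tate-acyclicity theorem for perfectoid affinoids, going back to \emph{Perfectoid spaces}, Theorem 6.3, and Kedlaya--Liu). Your proposal is therefore not compared against an argument in the paper but against the cited proof, and in outline you are reconstructing exactly that proof: compute $H^0$ from sheafiness of $\CO_X$ plus Huber's description of $R^+$ as the functions of absolute value $\le 1$; reduce the almost vanishing in positive degrees to characteristic $p$ by d\'evissage modulo $\varpi$ and the tilting identification on rational subsets; and in characteristic $p$ use Noetherian approximation together with the Frobenius rescaling of torsion bounds. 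That architecture is correct.

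There is, however, a genuine gap at the crux, i.e.\ precisely at the step you flag as the main obstacle. For a general perfectoid $R$ of characteristic $p$ it is \emph{not} true that the integral \v{C}ech complex $C^\bullet$ of your cover is the $\varpi$-adic completion of $\varinjlim_\Phi C_0^\bullet$ for a single topologically finitely presented subalgebra $A_0$ containing the parameters of the cover: that identity holds only when $R$ itself is $p$-finite (the completed perfection of $A_0[1/\varpi]$), and a general $R$ is vastly larger than the completed perfection of any tft subalgebra. The standard repair is Scholze's two-step reduction: first treat $p$-finite perfectoid algebras exactly as you do (classical Tate acyclicity gives a uniform bound $\varpi^N$, Frobenius pullback rescales it to $\varpi^{N/p^m}$, and one needs the small but nontrivial lemma that $\varpi$-adic completion of a complex of torsion-free complete modules preserves almost exactness); then write a general $R$ as the completed filtered colimit of $p$-finite perfectoid subalgebras, observe that any rational cover is pulled back from a finite stage, and check that the integral \v{C}ech complexes are (almost) compatible with this colimit and completion. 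Without that layer (or an almost base-change statement for rational localizations, which is not available for free) your argument only proves the $p$-finite case. Two smaller points you should also tighten: the identification $R\Gamma(X,\CO^+_X)\simeq R\varprojlim_n R\Gamma(X,\CO^+_X/\varpi^n)$ does not follow merely from completeness of $R^+$ --- work with the \v{C}ech complexes of the \emph{presheaf} $\CO^+$ on rational subsets (whose terms are complete and torsion-free, hence derived complete), which also avoids a circularity in identifying the sheaf quotient $\CO^+_X/\varpi$ with its tilt; and in degree $1$ the relevant $\varprojlim^1$ is of the $H^0$-tower, which is not almost zero, so you must argue that this tower is almost isomorphic to $\{R^+/\varpi^n\}$ with surjective transition maps.
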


\subsection{Igusa varieties}
    To construct the minimal compactification of the Igusa stack, we need to know some properties of the fibers of the Hodge-Tate period map. Since they are identified with Igusa varieties up to canonical compactifications (cf. Theorem \ref{HTfiber} below), we first recall relevant facts about PEL Igusa varieties, based on  \cite{CS17}, \cite{CS19}.

\subsubsection{Igusa varieties}
Let $E/\Q_p$, $\F_q$, $S_K/\CO_E$ with $K_p$ hyperspecial and $\mu$ be as in section 5. Fix an algebraically closed field $k$ containing $\F_q$. Write $S_{K,k}$ for the base change of $S_K$ to $k$. Consider the Kottwitz set $B(G)$ for $G_{\Q_p}$. Using Proposition \ref{IgsImage} and \cite[Theorem 5.1.4]{SW}, the isogeny classes of $p$-divisible groups over $k$ with the corresponding $G$-structure are in bijection to the set $B(G,\mu)$ of $\mu$-admissible elements. For any $[b]\in B(G,\mu)$, we fix a $p$-divisible group $\mathbb{X}_b$ representing the corresponding isogeny class. Write $\tilde{\mathbb{X}}_b$ for its universal cover.

\begin{defn}
    We let $\underline{\mathrm{Aut}}_G(\mathbb{X}_b)$ be the fpqc sheaf of groups on $k$-$\mathrm{alg^{op}}$, the opposite category of $k$-algebras
    \[R\mapsto \mathrm{Aut}_G(\mathbb{X}_b\times_kR)\]
    i.e. $\CO_B$-linear automorphisms of $\mathbb{X}_b\times_kR$ that preserve the polarization up to an element in $\underline{\mathrm{Aut}}(\mu_{p^\infty})(R)=\underline{\Z_p^\times}(R)$.
    Here $\underline{\Z_p^\times}$ is considered as a sheaf on $k$-$\mathrm{alg^{op}}$ by taking continuous maps from $\Spec(R)$ to the profinite group $\Z_p^\times$.
\end{defn}

\begin{def/prop} [{\cite[Definition 2.3.1]{CS19}, \cite{Oort}}]
    The central leaf $\mathscr{C}^{\mathbb{X}_b}\subset S_{K,k}$ is the smooth locally closed subscheme, where the fibers of the $p$-divisible group of the universal abelian scheme at all geometric points are isomorphic to $\mathbb{X}_b$.
\end{def/prop}

\begin{def/prop}\label{IgTorsor} [{\cite[Proposition 2.2.6]{CS19}, cf. \cite[Corollary 2.3.2]{CS19}}]
    The perfect Igusa variety $\mathrm{Ig}^{b}$ is the $\underline{\mathrm{Aut}}_G(\mathbb{X}_b)$-torsor over $\mathscr{C}^{\mathbb{X}_b}$ parametrizing isomorphisms $A[p^\infty]\xrightarrow{\sim} \mathbb{X}_b$, compatible with the $G$-structures. It is a perfect $k$-scheme and is (up to isomorphism) independent of the choice of $\mathbb{X}_b$ in its isogeny class. Moreover, $\mathrm{Ig}^{b}$ is a torsor under the profinite group $\Gamma_{\mathbb{X}_b}:= \mathrm{Aut}_G(\mathbb{X}_b)$ over the perfection $\mathscr{C}^{\mathbb{X}_b}_{\text{perf}}$ of $\mathscr{C}^{\mathbb{X}_b}$.
\end{def/prop}
Note that our notation is different from \cite{CS19}, where they use Fraktur letters for the perfect Igusa varieties to distinguish them with the (pro-)Igusa varieties.

It is manifest from the definition that $\underline{\mathrm{Aut}}_G(\mathbb{X}_b)$ acts on the Igusa variety $\mathrm{Ig}^{b}$. In fact $\mathrm{Ig}^{b}$ has an alternative moduli interpretation, which allows the action of a larger formal group scheme, namely the group of self-quasi-isogenies of $\mathbb{X}_b$, or equivalently, the automorphism group of the universal cover $\tilde{\mathbb{X}}_b$.

\begin{defn}[{Cf. \cite[Definition 4.2.9]{CS17}}]\label{def:Aut}
    Let $\underline{\mathrm{Aut}}_G(\tilde{\mathbb{X}}_b)$ be the fpqc sheaf on the opposite category of $k$-algebras
    \[R\rightarrow \mathrm{Aut}_G(\tilde{\mathbb{X}}_b\times_kR)\]
    where the latter is the group of $B$-linear automorphisms of $\tilde{\mathbb{X}}_b\times_kR$ that preserve the polarization up to an element in $\underline{\mathrm{Aut}}(\tilde{\mu}_{p^\infty})(R)=\underline{\Q_p^\times}(R)$.
    Here $\underline{\Q_p^\times}$ is considered as a sheaf by taking continuous maps from $\Spec(R)$ to the locally profinite group $\Q_p^\times$. 
\end{defn}
This is the special fiber of the formal group scheme considered in \cite[Definition 4.2.9]{CS17}. It follows from \cite[Lemma 4.2.10]{CS17} that it is representable by a formal group scheme over $k$.

\begin{prop}[{\cite[Lemma 4.3.4, Corollary 4.3.5]{CS17}, \cite[Lemma 4.2.2]{CT}}]\label{prop:ActionofAut}
For a $k$-algebra $R$, $\mathrm{Ig}^{b}(R)$ can be identified with the set of isomorphism classes of pairs $(A,\rho)$, where $A\in S_K(R)$ is an abelian scheme with $G$-structures, considered up to isogeny and $\rho$ is a quasi-isogeny 
\[\rho: A[p^\infty]\rightarrow \mathbb{X}_b\times_k R\]
respecting the $G$-structures. In particular,  $\underline{\mathrm{Aut}}_G(\tilde{\mathbb{X}}_b)$ acts on $\mathrm{Ig}^{b}$. 
\end{prop}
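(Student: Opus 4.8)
The plan is to realize $\mathrm{Ig}^{b}$ as representing the functor $\mathcal{F}$ on perfect $k$-algebras $R$ whose value is the set of isomorphism classes of pairs $(A,\rho)$ as in the statement, where an isomorphism $(A_1,\rho_1)\to(A_2,\rho_2)$ is a $G$-quasi-isogeny $f\colon A_1\to A_2$ (compatible with $\iota$, $\bar\eta$, and the polarizations up to a scalar in $\underline{\Q}^\times(R)$) with $\rho_2\circ f[p^\infty]=\rho_1$. Since $\mathrm{Ig}^{b}$ is a perfect scheme one may replace $R$ by its colimit perfection throughout; the reduction of the general case to the perfect one uses only that the relative Frobenius of an abelian scheme in characteristic $p$ is an isogeny, as in \cite[2.2--2.3]{CS19}. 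First I would record that $\mathcal{F}$ is genuinely set-valued: an automorphism of $(A,\rho)$ is a $G$-quasi-isogeny $f\colon A\to A$ with $\rho\circ f[p^\infty]=\rho$, so $f[p^\infty]=\mathrm{id}$ because $\rho$ is invertible in the isogeny category; hence $f$ is a prime-to-$p$ quasi-isogeny, and is then the identity by representability of $S_K$ — the same rigidity used for $0$-truncatedness in the proof of \ref{cartesian}.

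Next I would build the comparison morphism $\mathrm{Ig}^{b}\to\mathcal{F}$. By \ref{IgTorsor}, an $R$-point of $\mathrm{Ig}^{b}$ is a point $A$ of the central leaf $\mathscr{C}^{\mathbb{X}_b}$ — an abelian scheme with $G$-structure at level $K$ over $R$, taken up to prime-to-$p$ quasi-isogeny, whose $p$-divisible group is fibrewise isomorphic to $\mathbb{X}_b$ — together with a $G$-isomorphism $\rho\colon A[p^\infty]\xrightarrow{\ \sim\ }\mathbb{X}_b\times_k R$; one sends this to the pair $(A,\rho)$ with $A$ now regarded up to arbitrary $G$-quasi-isogeny and $\rho$ regarded as a quasi-isogeny. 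This is well defined since a prime-to-$p$ quasi-isogeny induces an isomorphism on $p$-divisible groups. The map is injective on $R$-points: if $(A_1,\rho_1)$ and $(A_2,\rho_2)$ have the same image, a witnessing $G$-quasi-isogeny $f$ satisfies $f[p^\infty]=\rho_2^{-1}\circ\rho_1$, an isomorphism, and a quasi-isogeny of abelian schemes inducing an isomorphism on $p$-divisible groups is prime-to-$p$; hence $(A_1,\rho_1)$ and $(A_2,\rho_2)$ already coincide in $\mathrm{Ig}^{b}(R)$.

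For surjectivity I would run the standard modification. Given $(A,\rho)\in\mathcal{F}(R)$, quasi-compactness of $\Spec(R)$ lets one choose a single integer $N$ with $\sigma:=p^{N}\rho\colon A[p^\infty]\to\mathbb{X}_b\times_k R$ an isogeny, of kernel a finite locally free subgroup scheme $H\subset A[p^\infty]$. Then $A':=A/H$ is an abelian scheme, and by the quasi-isogeny variant of Lemma \ref{modifyAb} (the remark following it, which in particular pins down the polarization scalar) it carries a canonical $G$-structure at level $K$ for which $\bar\sigma\colon A'[p^\infty]\xrightarrow{\ \sim\ }\mathbb{X}_b\times_k R$ is a $G$-isomorphism; so $(A',\bar\sigma)$ is an $R$-point of $\mathrm{Ig}^{b}$, and the quasi-isogeny $f=p^{-N}\pi\colon A\to A'$ ($\pi$ the quotient map) witnesses $(A',\bar\sigma)\cong(A,\rho)$ in $\mathcal{F}(R)$ because $\bar\sigma\circ f[p^\infty]=p^{-N}\sigma=\rho$. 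Independence of $N$ is a one-line check: enlarging $N$ replaces $(A',\bar\sigma)$ by a further quotient, and the comparison is a quasi-isogeny whose effect on $p$-divisible groups is an isomorphism, hence prime-to-$p$, so the two $R$-points of $\mathrm{Ig}^{b}$ agree. This gives the identification $\mathrm{Ig}^{b}\cong\mathcal{F}$. The action of $\underline{\mathrm{Aut}}_G(\tilde{\mathbb{X}}_b)$ is then immediate: restricting this sheaf to $k$-algebras and using that $\mathrm{Aut}_G(\tilde{\mathbb{X}}_b\times_k R)$ is the group of $G$-self-quasi-isogenies of $\mathbb{X}_b\times_k R$, an element $g$ acts on $\mathcal{F}(R)=\mathrm{Ig}^{b}(R)$ by $(A,\rho)\mapsto(A,g\circ\rho)$, functorially in $R$.

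The main obstacle is not a single hard step but the two rigidity inputs over a general base that make these comparisons honest equalities rather than equalities up to powers of $p$: that a quasi-isogeny of abelian schemes inducing an isomorphism on $p$-divisible groups is prime-to-$p$ (over connected bases by a minimal-denominator argument, in general componentwise), and the identification of $\underline{\mathrm{Aut}}_G(\tilde{\mathbb{X}}_b)$ with $G$-self-quasi-isogenies of $\mathbb{X}_b$ over nilpotent thickenings; both are borrowed from \cite[4.2--4.3]{CS17}, with the level-$K$ and $G$-structure bookkeeping handled exactly by Lemma \ref{modifyAb}. The perfection reduction is the only place where the hypothesis that $\mathrm{Ig}^{b}$ is a perfect scheme is used, and it is harmless.
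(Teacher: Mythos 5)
The paper offers no argument of its own here beyond the citations to \cite[4.3.4--4.3.5]{CS17} and \cite[4.2.2]{CT}, and your proof is in substance exactly that cited argument, correctly transplanted into this paper's setup: the comparison map from the Isom-description of Definition \ref{IgTorsor}, injectivity via ``a quasi-isogeny inducing an isomorphism on $p$-divisible groups is prime-to-$p$'', surjectivity by passing to $p^N\rho$, quotienting by its finite locally free kernel and invoking Lemma \ref{modifyAb} together with its remark to transport the $G$- and level structures (the same rigidity and bookkeeping the paper itself uses in the proof of \ref{cartesian} and \ref{Bijective}), and the action $(A,\rho)\mapsto (A,g\circ\rho)$ of $\underline{\mathrm{Aut}}_G(\tilde{\mathbb{X}}_b)$ restricted to $k$-algebras. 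All of these steps are sound, and the uniform choice of $N$ over $\Spec(R)$ by quasi-compactness is the right way to handle a general base.

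The one sentence I would not let stand is the opening reduction ``since $\mathrm{Ig}^{b}$ is a perfect scheme one may replace $R$ by its colimit perfection throughout''. For a perfect scheme $X$ and a non-perfect $k$-algebra $R$ the natural map $X(R)\to X(R_{\mathrm{perf}})$ is in general neither surjective (take $X=\Spec k[t^{1/p^\infty}]$ and $R=k[t]$: the identity on the perfection does not come from an $R$-point) nor injective (take $R$ with non-nilpotent nilradical), so perfectness of $\mathrm{Ig}^{b}$ alone does not permit this substitution. Fortunately you do not need it: taking Definition \ref{IgTorsor} at face value, $\mathrm{Ig}^{b}(R)$ is by definition the set of pairs consisting of an $R$-point of $\mathscr{C}^{\mathbb{X}_b}$ and a $G$-compatible isomorphism onto $\mathbb{X}_b\times_k R$, and your comparison, injectivity and surjectivity arguments go through verbatim for an arbitrary $k$-algebra $R$, with Lemma \ref{modifyAb} applying over any $\CO_E$-scheme. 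If instead you want to regard the moduli description of $\mathrm{Ig}^{b}$ as established only on perfect test rings (as in \cite{CS17}), then the reduction must be argued via the Frobenius-isogeny mechanism of \cite[2.2--2.3]{CS19} applied to the moduli functors themselves (relative Frobenius is an isogeny, and abelian schemes and quasi-isogenies are of finite presentation, so the isogeny moduli problem is insensitive to the filtered colimit defining $R_{\mathrm{perf}}$), not deduced from the bare fact that the representing scheme is perfect. Either drop the aside or replace its justification; the rest of the proof stands.
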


\subsubsection{Partial minimal compactifications}
Using the theory of well-positioned subsets \cite{Boxer} and \cite{LS}, one can construct well-behaved partial minimal compactification of the central leaf $\mathscr{C}^{\mathbb{X}_b}$ inside the minimal compactification $S_{K,k}^\ast$ of $S_{K,k}$.

\begin{defn}
    Let $Y$ be the complement of $\mathscr{C}^{\mathbb{X}_b}$ in its closure in $S_{K,k}$. Then the \textit{partial minimal compactification} $\mathscr{C}^{\mathbb{X}_b,\ast}$ of $\mathscr{C}^{\mathbb{X}_b}$ is $\overline{\mathscr{C}^{\mathbb{X}_b}}\backslash \overline{Y}$, i.e. the closure of $\mathscr{C}^{\mathbb{X}_b}$ in $S_{K,k}^\ast$, deleting the closure of $Y$. 
\end{defn}

\begin{defn}
    The partial minimal compactification $\mathrm{Ig}^{b,*}$ is the normalization of $\mathscr{C}^{\mathbb{X}_b,\ast}$ in $\mathrm{Ig}^b$.
\end{defn}

\begin{prop}[{\cite[Proposition 3.2.4]{Mafalda}}]\label{DegPDiv}
Consider the set-theoretic partition of $S^\ast_K=\coprod_Z S_Z$ according to cusp labels at level $K$ as in Section 5. For simplicity, we assume $K$ is principal. Then for a cusp label $Z=(Z,(X,Y,\phi,\varphi_{-2},\varphi_0))$ representing a cusp label at level $K$, the intersection
\[\mathscr{C}_Z^\natural:=\mathscr{C}^{\mathbb{X}_b,\ast}\times_{S^\ast_{K,k}} S_{Z,k}\]
is a central leaf for the Shimura variety $S_{Z,k}$. It is non-empty if and only if the $p$-divisible group with $G$-structure $\mathbb{X}_b$ admits a decomposition
\[\mathbb{X}_b\cong \Hom(X,\mu_{p^\infty})\oplus \mathbb{X}_Z\oplus Y\otimes (\Q_p/\Z_p).\]
In this case, $\mathscr{C}_Z^\natural$ is attached to the $p$-divisible group $\mathbb{X}_Z$.
\end{prop}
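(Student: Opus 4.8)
The plan is to reduce the statement about partial minimal compactifications of central leaves to a corresponding statement about boundary strata of the minimal compactification $S^\ast_{K,k}$, using the theory of well-positioned subsets and the known structure of the strata $S_{Z,k}$ in terms of smaller PEL Shimura data (as recorded in the remark following the description of cusp labels). First I would recall that, for a cusp label $Z$ with $\mathrm{Gr}^Z_{-1}\cong M\otimes_{\Z_{(p)}}\hat\Z$, the stratum $S_{Z,k}$ is the special fiber of the Shimura variety attached to $(\CO_B,\ast,M,(\cdot,\cdot)_{11},h_{-1})$, and that an abelian scheme corresponding to a point of $S^\ast_{K,k}$ near this stratum has a semiabelian (Raynaud) degeneration $0\to T\to E\to B\to 0$ with $\CO_B$-action, where $X$ is (tensored to $\Z_{(p)}$) the character group of $T$ and $Y$ that of the dual. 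On $p$-divisible groups this degeneration yields a three-step filtration on $\mathfrak{A}[p^\infty]$ with graded pieces $\Hom(X,\mu_{p^\infty})$, $\mathbb{X}_Z:=B[p^\infty]$, and $Y\otimes(\Q_p/\Z_p)$, compatibly with the $G$-structure; this is the filtration induced by $Z$ on $\Lambda_0\otimes\hat\Z$ at $p$.

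Next I would argue the ``only if'' direction: if $\mathscr{C}^\natural_Z$ is nonempty, pick a geometric point of it. The corresponding abelian variety lies in $S_{Z,k}$, so its $p$-divisible group is $\mathbb{X}_Z\oplus$ (the toric and étale parts dictated by $Z$) — more precisely, since we are on a central leaf, $\mathfrak{A}[p^\infty]$ at that point is isomorphic to $\mathbb{X}_b$ as a $p$-divisible group with $G$-structure; but by the degeneration picture it also carries the filtration above, and over a perfect (indeed algebraically closed) field $k$ every $p$-divisible group with a filtration whose graded pieces are étale, bi-infinitesimal (i.e. of slopes in $(0,1)$), and multiplicative respectively splits canonically (the slope filtration), so $\mathbb{X}_b\cong\Hom(X,\mu_{p^\infty})\oplus\mathbb{X}_Z\oplus Y\otimes(\Q_p/\Z_p)$ as claimed. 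For the ``if'' direction, conversely, if $\mathbb{X}_b$ admits such a decomposition, I would construct a point of $\mathscr{C}^\natural_Z$ by exhibiting a point of the central leaf $\mathscr{C}^{\mathbb{X}_Z}$ on $S_{Z,k}$ (nonempty by Oort's theorem, since $\mathbb{X}_Z$ is the $p$-divisible group attached to a point of the smaller Shimura variety of PEL type), and then using Lan--Stroh's well-positionedness of the central leaf $\mathscr{C}^{\mathbb{X}_b}$ — the key input that $\mathscr{C}^{\mathbb{X}_b}$ is well-positioned with respect to the stratification $S^\ast_K=\coprod_Z S_Z$, which gives that $\mathscr{C}^{\mathbb{X}_b,\ast}\cap S_{Z,k}$ is exactly the locus in $S_{Z,k}$ where the universal $p$-divisible group lies in a prescribed isogeny (indeed isomorphism) class, namely that of $\mathbb{X}_Z$. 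Hence $\mathscr{C}^\natural_Z=\mathscr{C}^{\mathbb{X}_Z}$.

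To see that $\mathscr{C}^\natural_Z$ \emph{is} a central leaf (and not merely a locally closed subset cut out by a $p$-divisible group condition), I would appeal to the fact, due to Boxer and to Lan--Stroh, that the intersection of a well-positioned subset with a boundary stratum is governed by the ``$p$-divisible group part'' of the degeneration data: the fiber of the universal semiabelian scheme over $S_{Z,k}$ has $p$-divisible group whose graded pieces for the weight filtration are the fixed toric part $\Hom(X,\mu_{p^\infty})$, the variable part $B[p^\infty]$ varying over $S_{Z,k}$, and the fixed étale part $Y\otimes(\Q_p/\Z_p)$; requiring $\mathfrak{A}[p^\infty]$ (with its $G$-structure) to be geometric-fiberwise isomorphic to $\mathbb{X}_b$ on the closure translates, via the splitting of $\mathbb{X}_b$ above and the rigidity of the fixed toric and étale parts, to requiring $B[p^\infty]$ to be geometric-fiberwise isomorphic to $\mathbb{X}_Z$, which is by definition the central leaf $\mathscr{C}^{\mathbb{X}_Z}\subset S_{Z,k}$. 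The main obstacle I anticipate is bookkeeping the $G$-structure (endomorphisms by $\CO_B$, polarization up to scalar, and the prime-to-$p$ level) through the degeneration: one must check that the splitting of $\mathbb{X}_b$ respects the $\CO_B$-action and is isotropic/self-dual for the polarization pairing in the way dictated by the cusp label datum $(X,Y,\phi,\varphi_{-2},\varphi_0)$ — in particular that the pairing $(\cdot,\cdot)_{20}$ matches the duality between $\Hom(X,\mu_{p^\infty})$ and $Y\otimes(\Q_p/\Z_p)$, and that $(\cdot,\cdot)_{11}$ on $M$ is the one inducing $h_{-1}$. This is exactly the content of Lan's results on PEL degeneration data (and their compatibility with the minimal compactification), so the proof will mostly be a matter of citing \cite{Lan13} and \cite{LS} carefully and recording the resulting $p$-divisible group identity, rather than proving anything new.
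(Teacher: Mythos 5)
Your proposal takes essentially the same route as the paper: the paper's proof is simply ``combine Theorem 2.3.2 and Proposition 3.4.2 of Lan--Stroh, cf.\ Caraiani--Scholze 3.1.4 in the principally polarized case'', i.e.\ everything is delegated to the theory of well-positioned subsets and PEL degeneration data, which is exactly the skeleton of your argument. One inaccuracy worth fixing: in your ``only if'' step you assert that the graded pieces of the weight filtration are \'etale, bi-infinitesimal and multiplicative and invoke the slope filtration to split it; this is wrong as stated, since $\mathbb{X}_Z=B[p^\infty]$ is the $p$-divisible group of the abelian part and may itself have \'etale and multiplicative constituents (e.g.\ when the degeneration happens along the ordinary locus), so the three-step filtration is not a slope filtration. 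The desired splitting over the algebraically closed field $k$ still holds, but for the standard reason that the multiplicative sub and the \'etale quotient of a $p$-divisible group split off canonically over a perfect field (hence compatibly with the $\CO_B$-action, with the polarization bookkeeping as you describe); in any case this is subsumed in the cited results of Lan--Stroh, so your proof goes through once that justification is corrected.
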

\begin{proof}
    Combine Theorem 2.3.2 and Proposition 3.4.2 of \cite{LS}. cf. \cite[Proposition 3.1.4]{CS19} in the principally polarized case.
\end{proof}

The following result is crucial to our construction of the minimal compactification of $\Igs$.
\begin{prop}[{\cite[Theorem 3.3.3, Proposition 3.3.5]{Mafalda}}]\label{IgAffine}
    The partial minimal compactifications $\mathrm{Ig}^{b,\ast}$ and $\mathscr{C}^{\mathbb{X}_b,\ast}$ are affine.
\end{prop}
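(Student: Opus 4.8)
The statement to prove is that the partial minimal compactifications $\mathrm{Ig}^{b,\ast}$ and $\mathscr{C}^{\mathbb{X}_b,\ast}$ are affine. Since $\mathrm{Ig}^{b,\ast}$ is by definition the normalization of $\mathscr{C}^{\mathbb{X}_b,\ast}$ in the finite (pro-\'etale, but after truncating, finite \'etale) cover $\mathrm{Ig}^b \to \mathscr{C}^{\mathbb{X}_b}_{\mathrm{perf}}$, and normalization in a finite extension is an affine morphism, it suffices to prove that $\mathscr{C}^{\mathbb{X}_b,\ast}$ is affine; then $\mathrm{Ig}^{b,\ast} \to \mathscr{C}^{\mathbb{X}_b,\ast}$ being affine (or integral) gives the claim for $\mathrm{Ig}^{b,\ast}$ as well. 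So I would first reduce to the statement for $\mathscr{C}^{\mathbb{X}_b,\ast}$.

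For $\mathscr{C}^{\mathbb{X}_b,\ast}$, the natural strategy is to exhibit it as a closed subscheme of an affine scheme, or to use the ampleness of the automorphic line bundle $\omega$ on $S_{K,k}^\ast$. Recall that $S_{K,k}^\ast = \mathrm{Proj}\bigl(\bigoplus_{n\geq 0} H^0(S_K^\ast, \omega^{\otimes n})\bigr)$ with $\omega$ ample, and that $S_K$ (the open Shimura variety) is the non-vanishing locus of the Hasse invariant only on the ordinary part — that is not quite enough. The key input I would use instead is the structure of $\mathscr{C}^{\mathbb{X}_b,\ast}$ as analyzed by Lan--Stroh: by \cite[Theorem 2.3.2, Proposition 3.4.2]{LS} (already cited for Proposition \ref{DegPDiv}), $\mathscr{C}^{\mathbb{X}_b,\ast}$ is a normal scheme with a stratification by central leaves $\mathscr{C}^\natural_Z$ of boundary Shimura varieties, each again a central leaf. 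The crucial point is that central leaves in the \emph{interior}, for a \emph{fixed} isogeny type $\mathbb{X}_b$, are themselves affine: this is a theorem of Oort (in the Siegel case) and its generalization, which follows from the fact that a central leaf is closed in the corresponding Newton stratum and the Newton stratum is affine after removing the smaller strata (affineness of Newton strata / leaves is \cite[Theorem in \S6]{Oort} type results, and in the PEL setting follows from the work cited in \cite{CS19}). I would then argue: $\mathscr{C}^{\mathbb{X}_b,\ast}$ is the closure of $\mathscr{C}^{\mathbb{X}_b}$ in $S_{K,k}^\ast$ minus the closure of the non-leaf locus, hence it is obtained from the affine scheme $\mathscr{C}^{\mathbb{X}_b}$ by adjoining boundary leaves which are again affine; the gluing is controlled by normality plus the fact that the boundary has the expected (positive) codimension, so that functions extend. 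Concretely, I would invoke the Lan--Stroh description that $\mathscr{C}^{\mathbb{X}_b,\ast}$ carries an ample line bundle (the restriction of $\omega$) together with enough sections coming from Hasse-type invariants cutting out each leaf, so that $\mathscr{C}^{\mathbb{X}_b,\ast}$ is a finite union of basic affine opens; alternatively, it is the closure of $\mathscr{C}^{\mathbb{X}_b}$ in the affine cone minus its vertex, hence affine. The cleanest route is probably: restrict $\omega$ to $\overline{\mathscr{C}^{\mathbb{X}_b}} \subset S_{K,k}^\ast$ (still ample, being a restriction of an ample bundle to a closed subscheme), and observe that $\mathscr{C}^{\mathbb{X}_b,\ast}$ is the locus in this projective scheme where a section $s$ of some power of $\omega$ (a product of partial Hasse invariants whose non-vanishing locus is exactly the open leaf, extended to the partial minimal compactification by well-positionedness) is non-zero; the non-vanishing locus of a section of an ample line bundle on a projective scheme is affine.

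The \textbf{main obstacle}, and where I would spend the most care, is justifying that the partial Hasse invariant (or the appropriate section of a power of $\omega$) whose non-vanishing locus is $\mathscr{C}^{\mathbb{X}_b}$ in the interior actually extends to a section over $\overline{\mathscr{C}^{\mathbb{X}_b}} \subset S_{K,k}^\ast$ with non-vanishing locus exactly $\mathscr{C}^{\mathbb{X}_b,\ast}$ — i.e. that no extra vanishing is introduced at the boundary and that the closure of the complementary locus $Y$ is precisely the vanishing locus of $s$ on the boundary strata. This is exactly the content of the well-positionedness of central leaves proved by Boxer \cite{Boxer} and Lan--Stroh \cite{LS}: the leaf $\mathscr{C}^{\mathbb{X}_b}$ is a well-positioned subset of $S_{K,k}$, which by \cite[\S2, \S3]{LS} guarantees that its partial minimal compactification is cut out inside $\overline{\mathscr{C}^{\mathbb{X}_b}}$ by a well-behaved (``partial Hasse'') section, and the non-vanishing locus of that section is affine. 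So rather than reproving affineness from scratch, I would cite \cite[Proposition 3.4.2]{LS} (and \cite{Boxer}) for the existence of this section and its compatibility with the stratification, then conclude affineness in one line from ampleness of $\omega$. The remaining genuinely routine step is the normalization argument passing from $\mathscr{C}^{\mathbb{X}_b,\ast}$ to $\mathrm{Ig}^{b,\ast}$, using that a finite (or integral) morphism to an affine scheme has affine source.
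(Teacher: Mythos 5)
Your reduction of $\mathrm{Ig}^{b,\ast}$ to $\mathscr{C}^{\mathbb{X}_b,\ast}$ is fine (the relative normalization of an affine scheme in $\mathrm{Ig}^b$ is affine over it, hence affine), and the general spirit — ampleness of $\omega$ on $S^\ast_{K,k}$ plus the Boxer/Lan--Stroh theory of well-positioned subschemes — is the right one; the paper itself does not reprove anything here but simply invokes \cite[3.3.2, 3.3.4]{CS19}, checking that the inputs used there are available for PEL type AC with good reduction at $p$. However, your argument for the crucial point, affineness of $\mathscr{C}^{\mathbb{X}_b,\ast}$, has a genuine gap exactly where you locate the ``main obstacle''. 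The inference you rely on — that well-positionedness of the central leaf ``guarantees'' a partial-Hasse section of a power of $\omega$ on $\overline{\mathscr{C}^{\mathbb{X}_b}}\subset S^\ast_{K,k}$ whose non-vanishing locus is precisely $\mathscr{C}^{\mathbb{X}_b,\ast}$ — is not what \cite{Boxer} or \cite{LS} prove, and it cannot follow from well-positionedness alone: $S_K$ itself is well-positioned, and its partial minimal compactification is the projective scheme $S^\ast_K$, which is not affine and is not the non-vanishing locus of any section of an ample bundle on itself. For a general central leaf, which is strictly finer than an Ekedahl--Oort stratum, no such ``leaf Hasse invariant'' exists in the literature you cite (the Hasse invariants of Boxer and Goldring--Koskivirta live on EO strata, and the $\mu$-ordinary Hasse invariant only covers the $\mu$-ordinary leaf); moreover such a section would force the complement of $\mathscr{C}^{\mathbb{X}_b}$ in its closure to be a divisor, which is not known and is not how affineness is actually obtained.

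Your auxiliary claims are also off, and they cannot serve as a fallback: Oort's theorem gives quasi-affineness of central leaves, not affineness — for instance the $\mu$-ordinary leaf of a noncompact Shimura variety of dimension $\geq 2$ is the complement, inside the affine non-vanishing locus of the Hasse invariant in $S^\ast_{K,k}$, of a nonempty closed subset of codimension $\geq 2$, hence not affine — and ``the Newton stratum is affine after removing the smaller strata'' is not a meaningful statement (removing smaller strata from the closure is what produces the open Newton stratum, which is usually not affine). Consequently the ``glue affine leaves using normality and codimension'' step does not stand either. The paper's route is to quote the proof of \cite[3.3.2, 3.3.4]{CS19}, which rests on more specific structural inputs about central leaves than well-positionedness (together with the Lan--Stroh formalism for extending the relevant morphisms and strata to the partial minimal compactifications), and then passes to $\mathrm{Ig}^{b,\ast}$ by normalization exactly as you do. To repair your write-up you would need to supply those inputs, or do as the paper does and verify that the argument of \emph{loc.\ cit.} applies verbatim in the type AC setting.
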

\begin{proof}
The proof in \cite[Theorem 3.3.2, Proposition 3.3.4]{CS19} works verbatim, as the results they cited hold in the generality of Section 5. 
\end{proof}

We record one more property of the partially minimally compactified central leaves.
\begin{lemma}
    The partial minimal compactification $\mathscr{C}^{\mathbb{X}_b,\ast}$ is a normal scheme.
\end{lemma}
\begin{proof}
    The central leaf $\mathscr{C}^{\mathbb{X}_b}$ possesses a partial toroidal compactification $\mathscr{C}^{\mathbb{X}_b,\mathrm{tor}}$, see \cite[Definition 2.2.2]{Mafalda}. One can show that the latter is smooth and in particular normal, using \cite[Proposition 2.3.13]{LS}, see \cite[Lemma 3.1.5]{CS19}. Since $\mathscr{C}^{\mathbb{X}_b,\ast}$ is affine and $\mathscr{C}^{\mathbb{X}_b,\mathrm{tor}}\to \mathscr{C}^{\mathbb{X}_b,\ast}$ has connected fibers, we have that 
    \[\mathscr{C}^{\mathbb{X}_b,\ast}=\Spec (\CO(\mathscr{C}^{\mathbb{X}_b,\mathrm{tor}})).\]
    Now apply \cite[Tag 0358]{stacks-project} connected component-wise to $\mathscr{C}^{\mathbb{X}_b,\mathrm{tor}}$. Since it has finitely many connected components, we deduce that $\mathscr{C}^{\mathbb{X}_b,\ast}$ is normal as desired.
\end{proof}

\subsubsection{Dimension}
Let $G/\Q$ be the algebraic group defined by the global PEL-datum as before. The dimension of a central leaf labeled by $[b]\in B(G_{\Q_p})$ is computed by Hamacher and it agrees with the formal dimension of  $\underline{\mathrm{Aut}}_G(\tilde{\mathbb{X}}_b)$ (by which we mean the number of formal variables in the formula in \cite[Proposition 4.2.11]{CS17}), i.e. we have:

\begin{lemma}\label{dimformula}
    The dimension of the central leaf $\mathscr{C}^{\mathbb{X}_b}$ and the formal dimension of $\underline{\mathrm{Aut}}_G(\widetilde{\mathbb{X}}_b)$ are the same. Both equal $\langle 2\rho, \nu_G(b)\rangle$, where $\rho$ is the half sum of the (absolute) positive roots of $G_{\Q_p}$ and $\nu_G(b)$ is the Newton point of $b$.
\end{lemma}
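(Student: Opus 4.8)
The statement to prove is Lemma~\ref{dimformula}: the dimension of the central leaf $\mathscr{C}^{\mathbb{X}_b}$ equals the formal dimension of $\underline{\mathrm{Aut}}_G(\tilde{\mathbb{X}}_b)$, and both equal $\langle 2\rho, \nu_G(b)\rangle$. The plan is to split this into two independent identifications, each of which is already available in the literature in a suitable form, and then match them. First I would invoke the formula of Hamacher \cite[7.8(2)]{Hamacher} for the dimension of Newton strata and central leaves in PEL-type Shimura varieties: in the good reduction setting this gives $\dim \mathscr{C}^{\mathbb{X}_b} = \langle 2\rho, \nu_G(b)\rangle$ directly. One has to be slightly careful that Hamacher's conventions on the sign of the Newton point and on the half-sum of roots $\rho$ (absolute versus relative, and whether one uses $\mu$ or $\mu^{-1}$) agree with ours; I would include a sentence reconciling this, noting that since $b$ is $\mu$-admissible and $\nu_G(b)$ is dominant, $\langle 2\rho, \nu_G(b)\rangle \geq 0$, which fixes the sign unambiguously.

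The second half is the computation of the formal dimension of $\underline{\mathrm{Aut}}_G(\tilde{\mathbb{X}}_b)$. Here I would cite \cite[4.2.11]{CS17}, where Caraiani--Scholze compute exactly this dimension for the automorphism group scheme of the universal cover of a $p$-divisible group with $G$-structure; their answer is again $\langle 2\rho, \nu_G(b)\rangle$. The conceptual reason, which I would recall briefly rather than reprove, is that $\underline{\mathrm{Aut}}_G(\tilde{\mathbb{X}}_b)$ is pro-represented by a formal group whose Lie algebra is the degree-$\geq 0$ (or positive-slope, depending on convention) part of the isocrystal $\mathrm{End}_G(\tilde{\mathbb{X}}_b)$ attached to $b$, and counting the slopes in that part with multiplicity produces precisely the pairing $\langle 2\rho, \nu_G(b)\rangle$; this is the standard "dimension of the unipotent part" computation. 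Both \cite[7.8(2)]{Hamacher} and \cite[4.2.11]{CS17} are stated for PEL data, and our Assumption~\ref{assumption} places us in their setting, so no new argument is needed beyond checking hypotheses.

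The main (and only genuine) obstacle is bookkeeping of conventions: the papers cited use slightly different normalizations for the Newton cocharacter (ours follows the convention in which the Dieudonné Frobenius is divided by $p$, cf.\ the Notations section, so that $\mathbb{Q}_p/\mathbb{Z}_p$ has slope $0$ and $\mu_{p^\infty}$ has slope $1$ in our sign), and one must verify that after translating, both cited formulas land on the same expression $\langle 2\rho, \nu_G(b)\rangle$ with $\rho$ the half-sum of the \emph{absolute} positive roots of $G_{\mathbb{Q}_p}$. I would resolve this by checking the identity on the two extreme cases in the running example of $\mathrm{GL}_2$ (the basic/supersingular $b_0$ with $\nu = (-\tfrac12,-\tfrac12)$, giving central leaf dimension $0$; and the ordinary $b_1$ with $\nu=(0,-1)$, giving dimension $1$), which pins down the sign and normalization unambiguously, and then citing the general formulas for the rest. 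Once the conventions are aligned, the lemma is immediate: the common value of the two dimensions is the asserted pairing.
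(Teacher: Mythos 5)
Your proposal is correct and matches the paper's proof, which is exactly the combination of \cite[7.8(2)]{Hamacher} for the central leaf and \cite[4.2.11]{CS17} for the formal dimension of $\underline{\mathrm{Aut}}_G(\tilde{\mathbb{X}}_b)$. The extra care you take with normalization conventions and the $\GL_2$ sanity check is reasonable but not part of the paper's (one-line) argument.
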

\begin{proof}
    Combine \cite[Proposition 4.2.11]{CS17} and \cite[Corollary 7.8(2)]{Hamacher}.
\end{proof}

This leads to the following proposition.
\begin{prop}\label{codim}
    If the boundary of the minimal compactification $S^\ast_K$ of the Shimura variety has codimension at least two, then so does the boundary of the partial minimal compactification of any central leaf $\mathscr{C}^{\mathbb{X}_b,\ast}\subset S^\ast_{K,k}$.
\end{prop}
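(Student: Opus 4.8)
The plan is to reduce the statement about the boundary codimension of the partial minimal compactification $\mathscr{C}^{\mathbb{X}_b,\ast}$ to a dimension count using the stratification by cusp labels recalled in Proposition \ref{DegPDiv}. First I would recall that by construction $\mathscr{C}^{\mathbb{X}_b,\ast}$ is the closure of $\mathscr{C}^{\mathbb{X}_b}$ in $S^\ast_{K,k}$ with the closure of $Y$ (the ``bad'' boundary coming from degenerations that leave the closure of the leaf) removed, so its boundary is the union of the strata $\mathscr{C}^\natural_Z = \mathscr{C}^{\mathbb{X}_b,\ast}\times_{S^\ast_{K,k}}S_{Z,k}$ over the proper cusp labels $Z$ for which this intersection is non-empty. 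By Proposition \ref{DegPDiv}, non-emptiness forces a decomposition $\mathbb{X}_b\cong \Hom(X,\mu_{p^\infty})\oplus \mathbb{X}_Z\oplus Y\otimes(\Q_p/\Z_p)$, and in that case $\mathscr{C}^\natural_Z$ is the central leaf attached to $\mathbb{X}_Z$ inside the boundary Shimura variety $S_{Z,k}$.

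The key computation is then to compare $\dim \mathscr{C}^{\mathbb{X}_b}$ with $\dim \mathscr{C}^\natural_Z$ for each such $Z$, using Lemma \ref{dimformula}: both dimensions equal $\langle 2\rho_{?}, \nu(b)\rangle$ for the half-sum of positive roots of the respective groups. Concretely, I would write $\dim \mathscr{C}^{\mathbb{X}_b} = \langle 2\rho_G, \nu_G(b)\rangle$ and $\dim\mathscr{C}^\natural_Z = \langle 2\rho_{G_Z}, \nu_{G_Z}(b_Z)\rangle$, where $G_Z$ is the reductive group of the boundary datum and $b_Z$ the class of $\mathbb{X}_Z$. The Newton point of $\mathbb{X}_b$ is the ``sum'' of the Newton points of the three summands $\Hom(X,\mu_{p^\infty})$ (slope $1$, of rank $= \CO_B$-rank of $X$), $\mathbb{X}_Z$, and $Y\otimes\Q_p/\Z_p$ (slope $0$, of rank $=$ that of $Y$); one gets $\dim\mathscr{C}^{\mathbb{X}_b}$ as $\dim\mathscr{C}^\natural_Z$ plus a contribution from the toric/\'etale parts and from the pairing of the $\mathrm{Levi}$-versus-$G$ roots against these slopes. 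The point is that the excess $\dim\mathscr{C}^{\mathbb{X}_b} - \dim\mathscr{C}^\natural_Z$ is exactly the same as the codimension of the corresponding boundary stratum $S_{Z,k}$ inside $S^\ast_{K,k}$ — this is the standard compatibility of the Hamacher dimension formula with the cusp label combinatorics, and it follows by the same Levi-decomposition bookkeeping that makes $\langle 2\rho, \nu\rangle$ additive along the filtration $Z$. Hence the hypothesis $\mathrm{codim}_{S^\ast_{K,k}} S_{Z,k}\geq 2$ translates directly into $\dim\mathscr{C}^{\mathbb{X}_b} - \dim\mathscr{C}^\natural_Z \geq 2$, which is the assertion.

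I expect the main obstacle to be making the additivity of the dimension formula along a cusp label completely precise: one has to unwind how $\rho$ and the Newton point behave under passing from $G$ to the Levi $G_Z$ of the parabolic attached to the two-step filtration $Z$, and confirm that the ``difference'' $\langle 2\rho_G,\nu_G(b)\rangle - \langle 2\rho_{G_Z},\nu_{G_Z}(b_Z)\rangle$ coincides with the (known) codimension of $S_{Z,k}$ in $S^\ast_{K,k}$, rather than merely being bounded below by it. A clean way to handle this, which I would adopt, is to avoid explicit root computations altogether: on the open dense stratum, $\mathscr{C}^{\mathbb{X}_b}$ has codimension $\dim S_{K,k} - \dim \mathscr{C}^{\mathbb{X}_b}$ in $S_{K,k}$, and the analogous statement holds for $\mathscr{C}^\natural_Z$ in $S_{Z,k}$; since the leaf and its partial compactification are ``well-positioned'' in the sense of Lan--Stroh (this is exactly what Proposition \ref{DegPDiv} encodes, via \cite[2.3.2, 3.4.2]{LS}), the codimension of $\mathscr{C}^\natural_Z$ in $\mathscr{C}^{\mathbb{X}_b,\ast}$ is bounded below by the codimension of $S_{Z,k}$ in $S^\ast_{K,k}$. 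Combined with the hypothesis that the latter is $\geq 2$, this gives the claim, and one only needs the dimension formula of Lemma \ref{dimformula} to know $\mathscr{C}^\natural_Z$ is genuinely of the expected (smaller) dimension rather than spurious. I would then conclude by taking the union over the finitely many boundary cusp labels.
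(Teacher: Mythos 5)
Your reduction to the boundary strata $\mathscr{C}^\natural_Z$ and the appeal to Lemma \ref{dimformula} are the same starting point as the paper, but your pivotal step is not established, and this is a genuine gap. Both versions of it --- the exact additivity $\dim\mathscr{C}^{\mathbb{X}_b}-\dim\mathscr{C}^\natural_Z=\mathrm{codim}_{S^\ast_{K,k}}S_{Z,k}$ in your first paragraph, and the inequality you fall back on in the second --- are asserted rather than proved, and neither is ``exactly what Proposition \ref{DegPDiv} encodes'': that proposition only identifies $\mathscr{C}^\natural_Z$ as a central leaf of $S_{Z,k}$ and gives the nonemptiness criterion; it carries no codimension information whatsoever. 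The comparison is most likely true, but to prove it you would either have to carry out the root-theoretic bookkeeping --- where the cross terms pairing $2\rho$ against the multiplicative and \'etale blocks only cancel because the Kottwitz point of $b_Z$ equals $\mu_Z^\flat$, so it is not a formal ``additivity of $\langle 2\rho,\nu\rangle$ along $Z$'' --- or invoke the full local product structure of well-positioned subschemes in toroidal boundary charts from Lan--Stroh and then descend to the minimal compactification. Either way, the ``clean way to avoid explicit computations'' conceals precisely the work you were trying to skip, so as written the argument does not close.

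For comparison, the paper takes a different and more economical route that never compares leaf codimensions with $\mathrm{codim}_{S^\ast_{K,k}}S_{Z,k}$. By Lemma \ref{dimformula} it suffices to bound from below the difference of the (formal) dimensions of $\underline{\mathrm{Aut}}_G(\mathbb{X}_b)$ and $\underline{\mathrm{Aut}}_G(\mathbb{X}_Z)$; this difference is controlled by the two-step unipotent filtration $U_2\subset U_1\subset \underline{\mathrm{Aut}}_G(\mathbb{X}_b)$, whose graded pieces embed into internal Hom $p$-divisible groups between the multiplicative part, $\mathbb{X}_Z$, and the \'etale part. A case check over the simple factors of $B_{\Q_p}$ shows that $U_2$ and $U_1/U_2$ both have positive dimension whenever $\mathbb{X}_Z\neq 0$, so in that case the boundary codimension is $\geq 2$ unconditionally; the hypothesis on $S^\ast_K$ is used only to exclude the remaining case $\mathbb{X}_Z=0$, where the leaf is the dense $\mu$-ordinary locus and a codimension-one boundary would force the Shimura variety to be a curve with zero-dimensional cusps. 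So if you wish to keep your route you must actually prove the codimension transfer (via the Kottwitz-point computation or the Lan--Stroh boundary-chart structure); otherwise the filtration argument is the shorter path.
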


\begin{proof}
We may assume the $\Q$-algebra with positive involution $(B,\ast)$ in the global PEL-datum defining $S_K$ is simple. Assume $\mathbb{X}_b$ admits a decomposition
\[\mathbb{X}_b\cong \Hom(X,\mu_{p^\infty})\oplus \mathbb{X}_Z\oplus Y\otimes (\Q_p/\Z_p)\]
for some cusp label $Z$, cf. Proposition \ref{DegPDiv}.
We compare the dimension of $\mathscr{C}^{\mathbb{X}_b}$ with that of $\mathscr{C}_Z^\natural$. Using Lemma \ref{dimformula}, it suffices to compare the dimension of $\underline{\mathrm{Aut}}_G(\mathbb{X}_b)$ and that of $\underline{\mathrm{Aut}}_G(\mathbb{X}_Z)$ (passing to the universal cover does not change the formal dimension). It suffices to exclude the case where the difference between their dimensions is one, under our codimension assumption on the boundary of $S^\ast_K$. 

But $\underline{\mathrm{Aut}}_G(\mathbb{X}_b)$ is a closed subgroup of $\underline{\mathrm{Aut}}(\mathbb{X}_b)$, which is of the form 
\[
\begin{pmatrix}
\underline{\mathrm{Aut}(\Hom(X,\mu_{p^\infty}))} &  &  \\
\mathcal{H}_{\mathbb{X}_Z,\Hom(X,\mu_{p^\infty})} & \underline{\mathrm{Aut}(\mathbb{X}_Z)}& \\
\mathcal{H}_{Y\otimes (\Q_p/\Z_p),\Hom(X,\mu_{p^\infty})} 
& \mathcal{H}_{Y\otimes (\Q_p/\Z_p),\mathbb{X}_Z} & \underline{\mathrm{Aut}(Y\otimes \Q_p)}.\\ 
\end{pmatrix}
\]
Here we use $\mathcal{H}_{\mathbb{X}_Z,\Hom(X,\mu_{p^\infty})}$ etc. to denote the internal Hom $p$-divisible groups of \cite[Lemma 4.1.6]{CS17}. Hence $\underline{\mathrm{Aut}}_G(\mathbb{X}_b)$ admits a 2-step filtration 
\[U_2\subset U_1\subset U_0 = \underline{\mathrm{Aut}}_G(\mathbb{X}_b),\]
with closed subgroups
\[U_2\subset \mathcal{H}_{Y\otimes (\Q_p/\Z_p),\Hom(X,\mu_{p^\infty})},\] 
\[U_1/U_2\subset \mathcal{H}_{\mathbb{X}_Z,\Hom(X,\mu_{p^\infty})}\times \mathcal{H}_{Y\otimes (\Q_p/\Z_p),\mathbb{X}_Z}.\] 

Decompose $(B,\ast)_{\Q_p}$ into simple factors, which fall into three possible cases, see \cite[Corollary 4.5]{Hamacher}. A casewise study shows that both $U_2$ and $U_1/U_2$ will be of positive dimensions unless $\mathbb{X}_Z=0$. But in this case the degeneration is towards a 0-dimensional cusp, since the $p$-divisible group of the universal abelian scheme over $\mathscr{C}_Z^\natural$ is fiberwise at geometric points isomorphism to $\mathbb{X}_Z$. Also, when $\mathbb{X}_Z=0$ the central leaf $\mathscr{C}^{\mathbb{X}_b}$ agrees with the $\mu$-ordinary locus of $S_K$, which is dense by \cite{Wedhorn}. This means the Shimura variety itself is 1-dimensional and has a 0-dimensional cusp, which is excluded by our assumption.
\end{proof}

For a scheme $X$, write $X_0:=\Spec(\CO_X(X))$ for its affinization.
\begin{cor}
\label{0=*}
If the boundary of the minimal compactification $S_K^\ast$ of the Shimura variety has codimension at least two. Then $(\mathrm{Ig}^b)_0\cong \mathrm{Ig}^{b,\ast}$.
\end{cor}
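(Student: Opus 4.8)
The plan is to deduce the isomorphism $(\mathrm{Ig}^b)_0 \cong \mathrm{Ig}^{b,\ast}$ from algebraic Hartogs' extension lemma, using the affineness and normality statements already at hand together with the codimension bound just established in \Cref{codim}. First I would recall the relevant structural facts: by \Cref{IgAffine} both $\mathrm{Ig}^{b,\ast}$ and $\mathscr{C}^{\mathbb{X}_b,\ast}$ are affine schemes over $k$, and $\mathrm{Ig}^{b,\ast}$ is normal by definition, being the normalization of the normal scheme $\mathscr{C}^{\mathbb{X}_b,\ast}$ in $\mathrm{Ig}^b$. Since $\mathrm{Ig}^b$ is an open dense subscheme of $\mathrm{Ig}^{b,\ast}$ (its preimage under the normalization map), and by \Cref{codim} (combined with the compatibility of normalization with passing to partial minimal compactifications, which preserves codimension of boundary strata) the complement $\mathrm{Ig}^{b,\ast}\setminus \mathrm{Ig}^b$ has codimension at least two in $\mathrm{Ig}^{b,\ast}$.

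The core step is then purely commutative algebra. Write $A := \CO(\mathrm{Ig}^{b,\ast})$, so that $\mathrm{Ig}^{b,\ast} = \Spec A$ with $A$ a normal $k$-algebra. Algebraic Hartogs' lemma (for a normal noetherian domain — or, here, for the relevant normal ring; one passes to connected/irreducible components if needed) says that regular functions on $\Spec A$ minus a closed subset of codimension $\geq 2$ extend uniquely to $\Spec A$: concretely, the restriction map
\[
\CO(\mathrm{Ig}^{b,\ast}) \longrightarrow \CO(\mathrm{Ig}^b)
\]
is an isomorphism. Here one must be slightly careful because $\mathrm{Ig}^b$ and $\mathrm{Ig}^{b,\ast}$ are \emph{perfect} $k$-schemes; but perfection is a filtered colimit along the Frobenius, Hartogs' lemma is stable under such colimits (Frobenius is finite on a normal noetherian scheme of finite type, and codimension is insensitive to perfection), so the statement descends to the perfect setting. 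Granting this, $\CO(\mathrm{Ig}^{b,\ast}) = \CO(\mathrm{Ig}^b)$, hence
\[
(\mathrm{Ig}^b)_0 = \Spec \CO(\mathrm{Ig}^b) = \Spec \CO(\mathrm{Ig}^{b,\ast}) = \mathrm{Ig}^{b,\ast},
\]
where the last equality uses that $\mathrm{Ig}^{b,\ast}$ is already affine, i.e.\ equals its own affinization.

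The main obstacle I anticipate is not the analytic/geometric input — \Cref{codim} and \Cref{IgAffine} do the real work — but rather making the commutative-algebra step clean in the perfect, not-necessarily-connected, not-obviously-noetherian setting. One wants to reduce to a normal noetherian domain: decompose $\mathrm{Ig}^b$ (equivalently $\mathrm{Ig}^{b,\ast}$) into its connected components, which are finite in number if $\mathrm{Ig}^b$ is quasi-compact (or handle this componentwise in general), observe each component is the perfection of an integral normal finite-type $k$-scheme, apply classical Hartogs' lemma to the non-perfected model where the codimension-$\geq 2$ hypothesis is literally available from \Cref{codim}, and then take the colimit along Frobenius. A secondary point to verify carefully is that the boundary $\mathrm{Ig}^{b,\ast}\setminus\mathrm{Ig}^b$ really does inherit the codimension bound from $\mathscr{C}^{\mathbb{X}_b,\ast}\setminus\mathscr{C}^{\mathbb{X}_b}$; this is because the normalization map $\mathrm{Ig}^{b,\ast}\to\mathscr{C}^{\mathbb{X}_b,\ast}$ is finite and surjective (indeed a pro-(finite étale) torsor over the open locus), so it preserves dimensions of closed subsets and their complements. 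Once these reductions are in place, the proof is short.
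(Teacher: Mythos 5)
There is a genuine gap in your reduction to the noetherian case. Your plan hinges on the claim that each connected component of $\mathrm{Ig}^{b}$ (equivalently of $\mathrm{Ig}^{b,\ast}$) is the perfection of an integral normal finite-type $k$-scheme, so that classical Hartogs can be applied to a deperfected model and then carried along the Frobenius colimit. But $\mathrm{Ig}^b\rightarrow \mathscr{C}^{\mathbb{X}_b}_{\mathrm{perf}}$ is a pro-(finite \'etale) torsor under the profinite group $\Gamma_{\mathbb{X}_b}$, which is typically infinite (e.g.\ on the ordinary locus of the modular curve it is a $\Z_p^\times\times\Z_p^\times$-torsor with connected infinite tower); hence neither $\mathrm{Ig}^{b,\ast}$ nor its components admit a finite-type (de)perfection, and they are certainly not noetherian, so Hartogs cannot be applied ``upstairs'' as you propose — the colimit you must control is over the level tower $K_b\subset \Gamma_{\mathbb{X}_b}$, not just over Frobenius, and your write-up never addresses it. (Your secondary worry about the boundary codimension upstairs is real but minor; the genuinely missing ingredient is the tower.)

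The paper's proof applies Hartogs exactly once, where it is legitimately available: to the noetherian normal scheme $\mathscr{C}^{\mathbb{X}_b,\ast}$, whose boundary has codimension $\geq 2$ by \Cref{codim}, giving $(\mathscr{C}^b)_0\cong \mathscr{C}^{b,\ast}$, and then passes to perfections (a harmless Frobenius colimit, as you also observe). The statement is then transported up the Igusa tower rather than proved there directly: for each compact open normal $K_b\subset\Gamma_{\mathbb{X}_b}$, the finite \'etale cover $q_{K_b}\colon \mathrm{Ig}^b_{K_b}\rightarrow \mathscr{C}^b_{\mathrm{perf}}$ satisfies $f_{0,\ast}q_{K_b,\ast}\CO_{\mathrm{Ig}^b_{K_b}}$ is a finite $\CO_{\mathscr{C}^{b,\ast}_{\mathrm{perf}}}$-algebra (using $f_{0,\ast}\CO_{\mathscr{C}^b_{\mathrm{perf}}}\cong\CO_{\mathscr{C}^{b,\ast}_{\mathrm{perf}}}$ from the Hartogs step), hence coincides with the normalization defining $\mathrm{Ig}^{b,\ast}_{K_b}$; taking global sections and the colimit over $K_b$ yields $\Gamma(\mathrm{Ig}^b,\CO)=\Gamma(\mathrm{Ig}^{b,\ast},\CO)$, which together with affineness of $\mathrm{Ig}^{b,\ast}$ (\Cref{IgAffine}) gives the corollary. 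Your approach could be repaired by inserting this level-by-level argument (or by applying Hartogs to finite-level deperfected models and then taking both colimits, checking compatibility of normalization with perfection and the codimension bound at each finite level), but as written the key reduction step fails.
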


\begin{proof}
The statement is true for the corresponding central leaves, i.e. $(\mathscr{C}^b)_0\cong {\mathscr{C}}^{b,\ast}$, because the latter is normal, noetherian with boundary codimension at least two by Proposition \ref{codim}. Hence the algebraic Hartogs' extension lemma \cite[Tag 031T]{stacks-project}
applies, from which we obtain that $(\mathscr{C}^b)_0\cong {\mathscr{C}}^{b,\ast}$. It follows immediately that for their perfections, we also have $(\mathscr{C}^b_{\text{perf}})_0\cong {\mathscr{C}}^{b,\ast}_{\text{perf}}$.

Now for the Igusa varieties, we use that $\mathrm{Ig}^b\xrightarrow{q} \mathscr{C}^b_{\text{perf}}$ is a pro-finite \'etale, Galois cover, under the Galois group $\Gamma_{\mathbb{X}_b}:=\mathrm{Aut}_G(\mathbb{X}_b)$ (Proposition \ref{IgTorsor}). For any normal compact open subgroup $K_b$ of $\Gamma_{\mathbb{X}_b}$, the map 
\[\mathrm{Ig}^b_{K_b}:=\mathrm{Ig}^b/\underline{K_b}\xrightarrow{q_{K_b}} \mathscr{C}^b_{\text{perf}}\] 
is a finite Galois cover under $\Gamma_{\mathbb{X}_b}/K_b$. We define its ``partial minimal compactification" to be the normalization of $\mathscr{C}^{b,\ast}_{\text{perf}}$ in $\mathrm{Ig}^b_{K_b}$, denoted
\[\mathrm{Ig}_{K_b}^{b,\ast}\xrightarrow{\overline{q_{K_b}}} \mathscr{C}^b_{\text{perf}}.\]
We have a commutative diagram
\[
\begin{tikzcd}
\mathrm{Ig}^b 	\ar[r,]\ar[d,"f"]	& \mathrm{Ig}^b_{K_b} 	\ar[r,"q_{K_b}"]	\ar[d,"f_{K_b}"] &\mathscr{C}^b_{\text{perf}}	\ar[d,"f_0"] \\
\mathrm{Ig}^{b,\ast} \ar[r]	& \mathrm{Ig}^{b,\ast}_{K_b}	\ar[r,"\overline{q_{K_b}}"] &\mathscr{C}^{b,\ast}_{\text{perf}},
\end{tikzcd}
\]
and $\mathrm{Ig}^{b,\ast}$ equals $\varprojlim_{K_b}\mathrm{Ig}^{b,\ast}_{K_b}$. We denote its projection to $\mathscr{C}^{b,\ast}_{\text{perf}}$ by $\bar{q}$.

At each $K_b$-level, we have
\[(\overline{q_{K_b}})_\ast f_{K_b,\ast}\CO_{\mathrm{Ig}^b_{K_b}}
\cong f_{0,\ast} q_{K_b,\ast}\CO_{\mathrm{Ig}^b_{K_b}}\]
is a finite $\CO_{\mathscr{C}^{b,\ast}_{\text{perf}}}$-algebra, because $q_{K_b,\ast}\CO_{\mathrm{Ig}^b_{K_b}}$ is a finite \'etale $\CO_{\mathscr{C}^b_{\text{perf}}}$-algebra, while 
$f_{0,\ast}\CO_{\mathscr{C}^b_{\text{perf}}}\cong \CO_{\mathscr{C}^{b,\ast}_{\text{perf}}}.$
Hence this is also the normalization of $\CO_{\mathscr{C}^{b,\ast}_{\text{perf}}}$ in it, and we have 
\[(\overline{q_{K_b}})_\ast f_{K_b,\ast}\CO_{\mathrm{Ig}^b_{K_b}}\cong (\overline{q_{K_b}})_\ast \CO_{\mathrm{Ig}^{b,\ast}_{K_b}}.\]
Therefore we can compute that
\begin{align*}
\Gamma(\mathrm{Ig}^b,\CO_{\mathrm{Ig}^b}) &= \varinjlim_{K_b}\Gamma(\mathscr{C}^b_{\text{perf}},q_{K_b,\ast}\CO_{\mathrm{Ig}_{K_b}^b})\\
& = \varinjlim_{K_b}\Gamma(\mathscr{C}^{b,\ast}_{\text{perf}},f_{0,\ast}q_{K_b,\ast}\CO_{\mathrm{Ig}_{K_b}^b})\\
& = \varinjlim_{K_b}\Gamma(\mathscr{C}^{b,\ast}_{\text{perf}},\overline{q_{K_b,\ast}}f_{K_b,\ast}\CO_{\mathrm{Ig}_{K_b}^b})\\
& = \varinjlim_{K_b}\Gamma(\mathscr{C}^{b,\ast}_{\text{perf}},\overline{q_{K_b,\ast}}\CO_{\mathrm{Ig}_{K_b}^{b,\ast}}) = \Gamma(\mathrm{Ig}^{b,\ast},\CO_{\mathrm{Ig}^{b,\ast}}).
\end{align*}
This is what we want to prove.
\end{proof}

From the proof of Proposition \ref{codim} we see that the boundary of the partial minimal compactification of a central leaf having codimension at least two is a very mild condition. In fact when the PEL datum of type AC is simple, this happens only if the corresponding Shimura variety is a non-compact curve. Below we classify such Shimura varieties with a central leaf whose partial minimal compactification has boundary codimension one. A quick observation is that the condition on dimension and existence of cusps already forces the group $G$ to be quasi-split over $\Q$ with absolute root system of type $A_1$. 

\begin{prop}\label{classification}
    Let $(B,\ast)$ be a simple $\Q$-algebra with positive involution and $(B,\ast,V,(\cdot,\cdot), h)$ be a global PEL-datum satisfying Assumption \ref{assumption}. If the boundary of the partial minimal compactification of a central leaf on the attached Shimura variety (hyperspecial level) has codimension one, then the leaf must be the ordinary locus and the Shimura variety is either the modular curve, or a unitary Shimura curve attached to an imaginary quadratic extension of $\Q$ as in Example \ref{unitary}.
\end{prop}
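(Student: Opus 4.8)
The plan is to combine the dimension bookkeeping from the proof of Proposition~\ref{codim} with the structure theory of the groups $\underline{\mathrm{Aut}}_G(\mathbb{X}_b)$ over the $p$-adic completions of the simple factors of $(B,\ast)$. The hypothesis forces, as already observed at the end of the proof of Proposition~\ref{codim}, that there is a cusp label $Z$ with $\mathbb{X}_Z=0$, so that $\mathbb{X}_b$ degenerates towards a $0$-dimensional cusp and $\mathscr{C}^{\mathbb{X}_b}$ is the $\mu$-ordinary (dense) locus of $S_K$; in particular $S_K$ is a $1$-dimensional Shimura variety admitting a $0$-dimensional cusp. First I would record that $\dim S_K = \langle 2\rho,\mu\rangle = 1$ together with the existence of a rational parabolic cutting out the cusp forces $G$ to be quasi-split over $\Q$ with $G^{\mathrm{der}}$ of absolute rank one, i.e.\ the absolute root system is of type $A_1$. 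Under Assumption~\ref{assumption}(1), with $(B,\ast)$ simple, this leaves exactly the type $C$ case $n=1$ (so $B=\Q$, $G=\GSp_2=\GL_2$, the modular curve) and the type $A$ case $n=1$ with $B=F$ a CM field, $F^+\subset F$ totally real, $G_\R\cong \mathrm{GU}(1,1)^{[F^+:\Q]}$; the dimension being $1$ then forces $[F^+:\Q]=1$, i.e.\ $F$ is imaginary quadratic over $\Q$, giving the unitary Shimura curve of Example~\ref{unitary}.

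Next I would verify that in both surviving cases the central leaf with boundary codimension one is indeed the ordinary locus rather than some other leaf. For a $1$-dimensional Shimura variety the only leaves are the $0$-dimensional (supersingular/basic) ones and the $1$-dimensional $\mu$-ordinary one; a basic leaf is a finite set of points, its partial minimal compactification is itself (it meets no cusp, since meeting a $0$-dimensional cusp would via Proposition~\ref{DegPDiv} require $\mathbb{X}_b\cong \Hom(X,\mu_{p^\infty})\oplus Y\otimes(\Q_p/\Z_p)$, which is ordinary, contradicting basicness), so its boundary is empty and a fortiori not of codimension one. Hence only the ordinary leaf can have a nonempty boundary, and that boundary consists of the cusps, which are $0$-dimensional, hence of codimension one in the $1$-dimensional $\mathscr{C}^{\mathbb{X}_b,\ast}$. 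This confirms the ``must be the ordinary locus'' clause.

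The remaining point is to rule out any further type $A$ possibilities: a priori $G$ of absolute type $A_1$ could come from a division algebra $B$ with $F$ a CM field and $G_0$ an inner (non-split) form of $\mathrm{SU}(1,1)$ or from $B$ a matrix algebra over such an $F$; but a non-split inner form at a real place would make $G_\R$ anisotropic modulo centre at that place, so the corresponding factor of $X$ would be a point and would not contribute a $1$-dimensional Hermitian symmetric domain — again forcing, together with $\dim=1$ and the existence of a $\Q$-rational parabolic (which forces $G$ quasi-split at the relevant place), the split form $\mathrm{GU}(1,1)$ over an imaginary quadratic $F$. Here I would also use Remark~\ref{Hasse} to note that the moduli problem is a union of copies of the Shimura variety, so no new phenomena arise from the failure of the Hasse principle in type $A$ of odd dimension (which is anyway not the obstruction here). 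I expect the \emph{main obstacle} to be the careful case analysis of the simple factors of $(B,\ast)_{\Q_p}$ as in \cite[4.5]{Hamacher} — i.e.\ checking, in the spirit of the proof of Proposition~\ref{codim}, that the two graded pieces $U_2$ and $U_1/U_2$ of the unipotent-type filtration on $\underline{\mathrm{Aut}}_G(\mathbb{X}_b)$ both vanish only when $\mathbb{X}_Z=0$, and then translating the vanishing of $\mathbb{X}_Z$ together with $\langle 2\rho,\mu\rangle=1$ into the precise identification of $(B,\ast,V,(\cdot,\cdot),h)$ with one of the two stated data; this is a finite but somewhat delicate combinatorial unwinding of the PEL classification rather than a conceptual difficulty.
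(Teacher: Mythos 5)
Your proposal is correct and follows essentially the same route as the paper: it starts from the conclusion of the proof of Proposition~\ref{codim} (codimension one forces $\mathbb{X}_Z=0$, an ordinary $\mathbb{X}_b$, and hence a $1$-dimensional Shimura variety with a $0$-dimensional cusp), invokes the same observation that this forces $G$ quasi-split over $\Q$ of absolute type $A_1$, and then classifies the possible PEL data. The only difference is cosmetic: you execute the final classification via archimedean signatures (forcing $n=1$, $[F^+:\Q]=1$) and exclude division-algebra/inner-form variants by the absence of rational parabolics, whereas the paper reduces by Wedderburn--Morita to $B=D$, uses the type-$A_1$ count to force $n=2$, $d=1$, and runs the three-case analysis over the center $F$; both are valid ways of finishing.
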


\begin{proof}
    We see in the proof of Proposition \ref{codim} that for the codimension-one situtation to happen, the $p$-divisible group $\mathbb{X}_b$ is ordinary and admit an $\CO_B$-linear decomposition 
    \[\mathbb{X}_b\cong \Hom(X,\mu_{p^\infty})\oplus Y\otimes (\Q_p/\Z_p).\]
    Moreover, the dimension of $U_2=U_1$ must be one. 

    Since each simple factor of $B$ is stable under the involution $\ast$ \cite[Lemma 1.2.1.11]{Lan13}, $B$ itself is simple. By Wedderburn's structure theorem, such a $B$ is a matrix algebra over some division $\Q$-algebra $D$. Hence under Morita equivalence, we may assume $B=D$.
    
    Let $F$ be the center of $D$, $d$ the degree of $D$ over $F$, and $n$ the dimension of $V$ over $D$. Then $\mathrm{End}_D(V)\cong M_n(D)$ and in particular if we base change to $\C$, we have by definition\footnote{Recall that the involution on $\mathrm{End}_B(V)$ induced by $(\cdot,\cdot)$, extending that on $B$, is still denoted $\ast$. We write $\ast_B$ below for its restriction to $B$ to avoid confusion.}
    \[G_{\C}=\{g\in \prod_{F\hookrightarrow \C}M_{n\cdot d^2}\mid gg^\ast\in \Gm_{,\C}\}.\]
    Up to identifying the similitude factors, this will be a product of $\GL_{n\cdot d^2}\times \Gm$ and $\mathrm{GSp}_{n\cdot d^2}$'s, with the number of each factor depending on the shape of the involution $\ast$. Now the constraint that the root datum is of type $A_1$ requires $n\cdot d^2$ to be two and therefore we have necessarily $n=2$ and $d=1$.
    
    According to whether $\dim_{\Q}F$ is one or two, and whether $\ast_B$ is trivial, we have the following cases:
    \begin{itemize}
        \item Case I: $B\cong F\cong \Q$, $\ast_B$ is trivial, and $V=\Q^2$, equipped with the standard symplectic form. In this case $G=\GL_2$, the Shimura variety is the modular curve;
        \item Case II: $B\cong F$ is a quadratic extension of $\Q$, $\ast_B=\overline{(\cdot)}$ is the nontrivial automorphism of $F$ over $\Q$, and $V=F^2$, equipped with the symplectic form $\mathrm{tr}_{F/\Q}\langle\cdot,\cdot\rangle$, where $\langle x,y \rangle = \bar{x}^\tau\cdot y$ is a skew Hermitian form. For $\ast_B$ to be positive, $F$ must be imaginary quadratic. In this case $G$ is a quasi-split $\Q$-form of $\GL_2\times \Gm$ that splits over $F$. It is a unitary similitude group with signature $(1,1)$ at infinity. The corresponding Shimura variety is a unitary Shimura curve as in Example~\ref{unitary};
        \item Case III: $B\cong F$ is a quadratic extension of $\Q$, $\ast_B$ is trivial, and $V=F^2$ is equipped with the standard symplectic form. For $\ast_B$ to be positive, $F$ must be real quadratic. But in this case, $G$ is a form of $(\GL_2\times \GL_2)/\Gm$, which is excluded by the root system constraint.
    \end{itemize}
\end{proof}

\begin{remark}
    That the group $G/\Q$ comes from a global PEL-datum is crucial in this classification. Otherwise there are examples where the group is quasi-split over $\Q$ with type $A_1$ absolute root system, but does not fall in any of the above cases. For example, one can compute quasi-split outer forms of $G:= \GL_2\times \Gm$ over $\Q$, which amounts to representations of $\mathrm{Gal}(\overline{\Q}/\Q)$ into the outer automorphism group of $G_{\bar{\Q}}$. The latter is in bijection to the automorphism group of the root datum 
    \[\mathcal{R}:=(\Z^3, \{\pm \alpha\}, \Z^3, \{\pm \alpha^\vee\}),\]
    where $\alpha=e_1-e_2$ and $\alpha^\vee=e_1^\vee-e_2^\vee$ for a standard basis $e_i$, $i=1,2,3$. This group is of the form
    \[\biggl\{\begin{bmatrix}
        a & a-1 & b\\
        a-1 & a & b\\
        c   & c & d
    \end{bmatrix}\in \GL_3(\Z)\bigg | \mathrm{det}=\pm 1\biggr\},\]
    which is large due to the high rank of the character lattice.    
    We can also compute its finite order elements. There can be only elements of order $4$ and they are of the form $d=1-2a$, $\mathrm{det}=\pm 1$ (among those order $2$ elements has additionally $c=\frac{2a(1-a)}{b}$). For instance, one can take the automorphism of $\mathcal{R}$ that fixes $\pm \alpha$ and rotates the plane perpendicular to it by $\pi/2$ radians. Using these one can construct quasi-split $\Q$-forms of $\GL_2\times \Gm$ that splits only over a degree four extension of $\Q$, and are hence different from all cases in the above classification. To all of them, there are attached Shimura varieties, since the relevant axioms of Shimura data concern only the infinte place, while over $\R$ the groups are either $\mathrm{GU}(1,1)$ or $\GL_2\times \Gm$.
\end{remark}

\subsubsection{Fibers of the Hodge-Tate map}\label{fiber}
Up to a canonical compactification, fibers of the Hodge-Tate map (resp. its minimal compactification) can be identified with Igusa varieties (resp. their partial minimal compactifications). More precisely, let 
\[x: \Spa(C,\CO_C)\rightarrow \Gr_{G,\mu}\]
be a rank one geometric point with an untilt $\Spa(C^\sharp,\CO_{C^\sharp})$ determined by the structure map to $\Spd E$. It determines a $p$-divisible group $\G_x$ over $\CO_{C^\sharp}$ with trivialized Tate module. Write $k$ for the residue field of $C$. Assume $\G_x\times_{\CO_{C^\sharp}} k$ lies in the isogeny class labeled by $b\in B(G,\mu)$. We have the perfect Igusa variety $\mathrm{Ig}^b$, which admits a canonical lift to $W(k)$ and hence to $\CO_{C^\sharp}$. Notation-wise, we set:
    \begin{itemize}
        \item $\mathrm{Ig}^b$: the perfect scheme over $k$;
        \item $\mathrm{Ig}^b_{\CO_C}$: the lift of $\mathrm{Ig}^b$ to $\Spf \CO_{C^\sharp}$, viewed as a formal scheme;
        \item $\Ig$: the adic generic fiber of $\mathrm{Ig}^b_{\CO_C}$. 
    \end{itemize}
    Here for the latter two spaces we suppress the $\sharp$ symbols from notation. Similarly, we write $\mathrm{Ig}^{b,\ast}$, $\mathrm{Ig}^{b,\ast}_{\CO_C}$, $\mathrm{Ig}^{b,\ast}_C$ for the partial minimal compactifications. Note that since $\mathrm{Ig}^b$ and $\mathrm{Ig}^{b,\ast}$ are perfect, $\Ig$ and $\mathrm{Ig}^{b,*}_C$ are perfectoid spaces. The latter is affinoid perfectoid by Proposition \ref{IgAffine}. Later we do not distinguish $\mathrm{Ig}^{b,(\ast)}_{C}$ and its attached diamond.

\begin{theorem}\label{HTfiber}
Fixing a quasi-isogeny 
\[\G_x\times_{\CO_{C^\sharp}} \CO_{C^\sharp}/p \dashrightarrow \mathbb{X}_b\times_k \CO_{C^\sharp}/p,\]
there are natural (with respect to complete algebraically closed extensions of $C$) open immersions
\[\Ig\hookrightarrow (\pi_{HT}^\circ)^{-1}(x),\,\mathrm{Ig}^{b,*}_C\hookrightarrow (\pi_{HT}^\ast)^{-1}(x)\]
inducing isomorphisms on their canonical compactifications towards $x$.
\end{theorem}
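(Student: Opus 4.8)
The plan is to reduce the statement to a moduli-theoretic comparison on a rank one geometric point and then invoke the perfectoid Serre--Tate theory already developed. First I would unwind the fiber $(\pi_{HT}^\circ)^{-1}(x)$ using the moduli description of $\Shi_{K^p}^\circ$ from Lemma~\ref{PresheafGRL} and the cartesian diagram of Theorem~\ref{cartesian}: by base changing that diagram along $x\colon \Spa(C,\CO_C)\to\Gr_{G,\mu}$, the fiber is identified with the v-sheaf that sends a test perfectoid $T=\Spa(R,R^+)$ over $\Spa(C,\CO_C)$ to the groupoid of abelian schemes $A_0$ with $G$-structure over $R^+/\varpi$ up to quasi-isogeny, equipped with an isomorphism of the attached $G$-bundle $\mathscr{E}(A_0)$ with $\mathscr{E}(x)$ (equivalently, using Lemma~\ref{commute}, a lift compatible with $\G_x$). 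Using the fixed quasi-isogeny $\G_x\times k\dashrightarrow \mathbb{X}_b\times k$, this extra datum is exactly the datum of a quasi-isogeny $A_0[p^\infty]\dashrightarrow \mathbb{X}_b\times R^+/\varpi$ compatible with $G$-structures, which by the moduli interpretation of the Igusa variety in \cite[4.3.4-5]{CS17} (recorded before \ref{HTfiber}) is a $T$-point of the v-sheaf attached to $\mathrm{Ig}^b$ --- but only after choosing such a lift; without the choice one gets the sheafified version. So I expect the open immersion $\Ig\hookrightarrow(\pi_{HT}^\circ)^{-1}(x)$ to come from: (i) the fact that $\Ig$ is the adic generic fiber of the formal deformation $\mathrm{Ig}^b_{\CO_C}$, whose $R^{\sharp+}$-points are (by formal smoothness / the universal property of the formal Igusa variety) pairs of a lift of the central-leaf point together with a lift of the trivialization; (ii) Serre--Tate (Theorem~\ref{ST1}) converting such a lift into a formal abelian scheme over $R^{\sharp+}$, i.e. a point of $\Shi_{K^p}^\circ$ lying over $x$.

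Concretely, the key steps in order would be: \emph{Step 1.} Identify the formal Igusa variety $\mathrm{Ig}^b_{\CO_C}$ with a moduli space of lifts: an $R^{\sharp+}$-point (for $R^{\sharp+}$ a $p$-complete $\CO_{C^\sharp}$-algebra) is an abelian scheme $A$ with $G$-structure over $R^{\sharp+}$ up to quasi-isogeny, together with an isomorphism $A[p^\infty]\xrightarrow{\sim}\G$ where $\G$ is the (unique, by Serre--Tate rigidity of universal covers, cf. the footnote in the definition of $\underline{\mathrm{Aut}}_G(\tilde{\mathbb{X}}_b)$) lift of $\mathbb{X}_b$ determined via the fixed quasi-isogeny and the point $x$; this is essentially the formal-scheme incarnation of the Serre--Tate coordinates and follows by combining \ref{ST1} with \cite[4.3.4-5]{CS17}. \emph{Step 2.} Pass to adic generic fibers: $\Ig = (\mathrm{Ig}^b_{\CO_C})^{\mathrm{ad}}_\eta$ then represents exactly those $T$-points of $(\pi_{HT}^\circ)^{-1}(x)$ for which the $p$-divisible group lift is \emph{literally} $\G$ rather than merely locally isomorphic to it; the discrepancy is measured by the pro-étale $\underline{\mathrm{Aut}}_G$-torsor of such trivializations, which is trivial over a geometric point but not v-locally, whence an \emph{open} (not closed, not surjective) immersion. \emph{Step 3.} For the minimal compactification, run the same argument with $\mathrm{Ig}^{b,*}_C$ and $(\pi_{HT}^\ast)^{-1}(x)$, using the partial minimal compactification $\mathrm{Ig}^{b,*}_{\CO_C}$ (normalization of $\mathscr{C}^{\mathbb{X}_b,*}$ in $\mathrm{Ig}^b$, lifted to $\CO_{C^\sharp}$) and the moduli interpretation of $S^\ast_K$ via Raynaud extensions described in section 5; here one extends the identification over the boundary cusps using Proposition~\ref{DegPDiv} (decomposition $\mathbb{X}_b\cong\Hom(X,\mu_{p^\infty})\oplus\mathbb{X}_Z\oplus Y\otimes\Q_p/\Z_p$) to match the Igusa variety of the boundary stratum with the corresponding boundary stratum of the Hodge--Tate fiber. \emph{Step 4.} Show the open immersions induce isomorphisms on canonical compactifications towards $x=\Spa(C,\CO_C)$: since both $\Ig$ and $(\pi_{HT}^\circ)^{-1}(x)$ are spatial diamonds with affinoid (indeed perfectoid) total spaces --- the fibers of $\pi_{HT}^\circ$ are affinoid perfectoid by \cite{Sch15} and $\Ig$ is affinoid perfectoid since $\mathrm{Ig}^b$ has affine perfection-pieces --- and since the open immersion is a bijection on rank one points (both parametrize the same thing over $\Spa(C',\CO_{C'})$ by uniqueness of the lift $\G$ and full-faithfulness of $\mathscr{E}(\cdot)$ from \cite[5.1.4]{SW}), the induced map $\overline{\Ig}^{/x}\to\overline{(\pi_{HT}^\circ)^{-1}(x)}^{/x}$ is a bijective map of v-sheaves proper over $x$; one checks it is an isomorphism by testing on all geometric points $\Spa(C',C'^+)\to x$, using that canonical compactification towards $x$ only adds the ``bounded'' valuation-ring points and both sides add the same ones.

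\textbf{Main obstacle.} The hard part will be Step~1 and Step~2 --- precisely pinning down which v-sheaf $\Ig$ represents inside the Hodge--Tate fiber, i.e. verifying that passing from the formal Igusa scheme to its adic generic fiber, together with Serre--Tate lifting, lands one exactly in the image of an open subfunctor of $(\pi_{HT}^\circ)^{-1}(x)$ and that the ``defect'' is an $\underline{\mathrm{Aut}}_G$-torsor that becomes trivial (hence the complement of the image is genuinely just the boundary, so that the canonical compactifications agree). This requires carefully distinguishing the quasi-isogeny datum appearing in the fiber product of Theorem~\ref{cartesian} (which only sees $\mathscr{E}(A_0)\cong\mathscr{E}(x)$, i.e. an isomorphism of $G$-bundles on the Fargues--Fontaine curve) from the finer datum of an honest isomorphism $A_0[p^\infty]\xrightarrow{\sim}\mathbb{X}_b\times R^+/\varpi$ defining an $\mathrm{Ig}^b$-point, and tracking how the fixed quasi-isogeny $\G_x\times k\dashrightarrow\mathbb{X}_b\times k$ rigidifies this. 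The comparison between the two is essentially the content of \cite[Section 4.3]{CS17} and \cite[Section 2.6]{CS19}, so I expect to cite those; the bookkeeping for the boundary (Step~3) via Proposition~\ref{DegPDiv} and the well-positioned-subset machinery of Lan--Stroh will also need care, but is more routine.
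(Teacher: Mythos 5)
Your overall instinct is the right one, and in fact the paper itself disposes of this statement by citation: the good-reduction part is quoted from \cite[4.4.4]{CS17}, and the compactified part from \cite[4.5.1]{CS19} as adapted verbatim to the present PEL setting in Santos's thesis \cite[4.3.12, 4.3.16]{Mafalda}. So your plan of unwinding the fiber via the moduli description (your use of Theorem \ref{cartesian} is legitimate here, since \ref{HTfiber} is not an input to its proof) and then deferring the crux to Caraiani--Scholze is consistent with what the paper does. However, your reconstruction misidentifies the crux, and that misidentification is a genuine gap rather than a cosmetic one.

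The problem is your Step 2. Over the fiber of a \emph{fixed} point $x$, the Tate module is tautologically trivialized and the Hodge--Tate filtration is prescribed by $x$; by the Scholze--Weinstein classification the $p$-divisible group of any lift is then canonically identified with $\G_x$ (base-changed), up to unique isomorphism compatible with these data. There is no pro-\'etale $\underline{\mathrm{Aut}}_G$-torsor of trivializations left over, and ``trivial over a geometric point but not v-locally'' cannot in any case account for the failure of surjectivity, because the statement of \ref{HTfiber} is tested precisely on geometric points $\Spa(C',C'^+)$. The actual discrepancy --- which is the entire content of the clause ``isomorphisms on canonical compactifications towards $x$'' --- lives on the higher-rank points: a $(C',C'^+)$-point of $(\pi_{HT}^\circ)^{-1}(x)$ is a formal abelian scheme over $C'^{\sharp+}$ whose $p$-divisible group is $\G_x$, and composing with the fixed quasi-isogeny (which exists only over $\CO_{C^\sharp}/p$) one controls the resulting quasi-isogeny to $\mathbb{X}_b$, i.e.\ gets an honest $\mathrm{Ig}^b$-point, only after restriction to $(C',\CO_{C'})$; an $\Ig$-point by contrast amounts to an $\mathrm{Ig}^b$-point over all of the tilt of $C'^{\sharp+}$. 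Matching these two and showing the difference is exactly repaired by canonical compactification towards $x$ is what \cite[4.4.4]{CS17} proves, and your sketch does not supply it (your citation of \cite{CS17} is aimed instead at the quasi-isogeny-versus-isomorphism bookkeeping, which is the easy part, handled here by the analogue of Lemma \ref{modifyAb}). Two smaller inaccuracies in the same vein: in Step 1, $\mathbb{X}_b$ has no unique lift --- what is rigid is the quasi-isogeny (Drinfeld rigidity), and the relevant lift is $\G_x$ itself, supplied by $x$, not a canonical lift of $\mathbb{X}_b$; and in Step 4, affinoidness is a property of the fibers of $\pi_{HT}^\ast$ on the minimal compactification, the good-reduction fiber being only an open subspace thereof, so the compactification comparison should be run as in \cite[4.4.2]{CS19} (bijectivity on rank-one points for a qc separated source and proper target), which is also how the compactified statement is ultimately handled in \cite[4.5.1]{CS19} and \cite{Mafalda} via the boundary-stratum analysis you allude to in Step 3.
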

\begin{proof}
    The statement for the fiber on the good reduction locus follows from the arguments towards \cite[Theorem 4.4.4]{CS17}. The statement on the minimal compactification is proven in \cite[Theorem 4.5.1]{CS19} for certain unitary Shimura varieties, but their argument has been extended to our situation by Santos, in her Imperial College London PhD thesis \cite[Theorem 4.3.12]{Mafalda}.
\end{proof}

\subsubsection{Torsion in the first cohomology of the integral structure sheaf}
We record a torsion (almost) vanishing result in the first cohomology of $\CO^+$ on the Igusa variety $(\Ig)_{an}$ for later use, which follows from a general result on the first Witt vector cohomology of perfect schemes. 

\begin{prop}\label{torsionfree}
    Let $X$ be a perfect scheme in characteristic $p$. Denote by $W(X)$ the canonical lift of $X$ to characteristic zero using the $p$-typical Witt vectors. Then the cohomology $H^1(W(X),\CO)$ of the structure sheaf on the Zariski site of $W(X)$ is $p$-torsionfree.
\end{prop}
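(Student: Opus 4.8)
The plan is to deduce the statement from the short exact sequence of sheaves of abelian groups on $|X| = |W(X)|$
\[0 \to \CO_{W(X)} \xrightarrow{\;p\;} \CO_{W(X)} \xrightarrow{\;\pi\;} \CO_X \to 0\]
and the long exact cohomology sequence it induces. First I would set this sequence up. By construction $\CO_{W(X)} = \varprojlim_n W_n\CO_X$, so for an affine open $U = \Spec R \subseteq X$ one has $\CO_{W(X)}(U) = \varprojlim_n W_n(R) = W(R)$, using that $R$ is a perfect $\F_p$-algebra and hence $W(R)$ is already $p$-adically complete. Standard Witt vector theory for perfect rings gives that $p$ is a nonzerodivisor on $W(R)$ and that projection to the first Witt component induces an isomorphism $W(R)/pW(R) \cong R$, all compatibly with localization. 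Testing on the basis of affine opens, this shows that $p$ is injective on $\CO_{W(X)}$ and that its cokernel is canonically $\CO_X$; this is the only place where perfectness is used in an essential way.

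The long exact sequence then reads
\[H^0(W(X), \CO_{W(X)}) \xrightarrow{\;\pi_\ast\;} H^0(X, \CO_X) \xrightarrow{\;\delta\;} H^1(W(X), \CO_{W(X)}) \xrightarrow{\;p\;} H^1(W(X), \CO_{W(X)}),\]
so the $p$-torsion subgroup of $H^1(W(X), \CO_{W(X)})$ is $\mathrm{im}(\delta) \cong \mathrm{coker}(\pi_\ast)$. It therefore suffices to prove that $\pi_\ast$ is surjective on global sections. For this I would use the Teichm\"uller lift $[\,\cdot\,]\colon \CO_X \to \CO_{W(X)}$: it is functorial in the ring, hence defines a morphism of the Zariski sheaves $\CO_X \to \CO_{W(X)}$ (of multiplicative monoids), and it satisfies $\pi\circ[\,\cdot\,] = \mathrm{id}_{\CO_X}$. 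Applying $H^0(X,-)$ yields a set-theoretic section of $\pi_\ast$, so $\pi_\ast$ is surjective, $\mathrm{coker}(\pi_\ast) = 0$, and $H^1(W(X), \CO_{W(X)})$ has no $p$-torsion. (Alternatively one can identify $H^0(W(X),\CO_{W(X)})$ with $W(\Gamma(X,\CO_X))$ — both $\Gamma(X,-)$ and each $W_n$ commute with limits — and observe that $W(A)\to A$ is surjective; the Teichm\"uller formulation is shorter and requires no finiteness hypothesis on $X$.)

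I do not expect a genuine obstacle: the content is the short exact sequence together with the Teichm\"uller section, and the remainder is formal homological algebra valid for an arbitrary perfect scheme $X$, with no quasi-compactness or separatedness needed. The only points requiring care are the sheaf-theoretic checks in the first paragraph — that $\CO_{W(X)}/p\,\CO_{W(X)} \to \CO_X$ is an isomorphism of sheaves and that $p$ acts injectively, both of which follow by testing on affine opens — and the routine verification that $[\,\cdot\,]$ is a morphism of the limit sheaf $\varprojlim_n W_n\CO_X$, not merely of presheaves, which again follows from naturality of the Witt functors. It is worth recording, as the paper does, that the same long exact sequence gives $H^{i}(W(X),\CO_{W(X)})[p] \cong \mathrm{coker}\big(H^{i-1}(W(X),\CO_{W(X)}) \to H^{i-1}(X,\CO_X)\big)$ for every $i$, so the vanishing is special to $i = 1$, where the source $H^{0}$ carries the Teichm\"uller section.
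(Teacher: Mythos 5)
Your proposal is correct and coincides with the paper's own argument: the same short exact sequence $0\to W\CO_X\xrightarrow{\cdot p}W\CO_X\to \CO_X\to 0$ (exact by perfectness), the Teichm\"uller lift as a multiplicative section giving surjectivity on $H^0$, and the long exact sequence identifying the $p$-torsion of $H^1$ with the cokernel of that map. Your extra care in checking exactness on affine opens and the remark on higher degrees are fine but add nothing beyond the paper's proof.
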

\begin{proof}
    Consider the short exact sequence 
    \[0\rightarrow W\CO_X\xrightarrow{\cdot p} W\CO_X \rightarrow \CO_X\rightarrow 0\]
    on the perfect scheme $X$, where $W\CO_X$ is the sheaf on $X$ sending an open $U$ to the ring of Witt vectors of $\CO_X(U)$. This is exact by perfectness of $X$. Since the last surjection has a multiplicative section given by the Teichm\"uller lift, this gives surjectivity on global sections 
    \[H^0(X,W\CO_X)=W(H^0(X,\CO_X))\twoheadrightarrow H^0(X,\CO_X).\]
    Take the cohomology long exact sequence and we get
    \[H^1(W(X),\CO)[p]=H^1(X,W\CO_X)[p]=0.\]
\end{proof}
\begin{remark}[Question]
    Assume $X$ comes from taking perfection of a smooth quasi-affine scheme of finite type over a perfect field $k$, do we always have that $H^i(X,WO_X)$, $i>0$ has bounded $p$-torsion? 
\end{remark}

    The example below is provided by Offer Gabber, which shows that torsion can appear in degree $2$ and all higher even degrees. By taking products and replacing $\Gamma$ by $\Z/p^n$ with increasing $n$'s, we see that without the finite type assumption, the question has a negative answer.

\begin{example}
Let $\Gamma$ be the cyclic group of order $p$, acting trivially on $\Z$. Its group cohomologies with integer coefficients are 
\[H^i(\Gamma,\Z)=\Bigg\{\begin{array}{@{}@{}l} \Z,\,\, i=0\\ 0,\,\, i \text{ odd}\\ \Z/p, i\geq 2, \text{even}. \end{array}\]
Let $V$ be a finite dimensional faithful $k$-representation of $\Gamma$. We define $Y'$ to be the spectrum of the symmetric algebra on the dual $V^\ast$ of $V$. It has an induced $\Gamma$-action. Upon replacing $V$ with some tensor power of it, we may assume this action is free on an open $U$ whose complement has codimension $m\geq 2$. Consider the quotient $Y'/\Gamma$. Let $Z$ be the complement of $U/\Gamma$ in $Y'/\Gamma$. We may choose homogenous polynomials $h_1,\cdots,h_{\text{dim} Z}$ in $\mathrm{Sym}^\bullet V^\ast$, such that they form a regular sequence on $Z$ and that the vanishing locus $V(h_1,\cdots, h_{\text{dim}Z})$ is smooth. Now $Z\cap V(h_1,\cdots, h_{\text{dim}Z})$ agrees set-theoretically with the origin. Let $C$ be the algebra $\mathrm{Sym}^\bullet V^\ast/(h_1,\cdots,h_{\text{dim}Z})$, $Y$ be the perfection of its punctured spectrum. Then $\Gamma$ acts freely on $Y$ and the quotient $X:=Y/\Gamma$ is a smooth, quasi-affine perfect scheme of finite type over $k$. We can use a Hochschild-Serre spectral sequence to compute the cohomology of $X$. In particular, we have for any $p\leq m-1$
\[H^p(\Gamma, H^0(Y,W\CO_Y))\hookrightarrow H^p(X,W\CO_X).\]
Since $H^0(Y,\CO_Y)=C_\text{perf}$ has $k$ as a retract, $H^p(X,W\CO_X)$ has $H^p(\Gamma, W(k))$ as a direct summand and hence can be torsion if $p$ is even.
\end{example}

For $x=\Spa(C,\CO_C)\rightarrow \Gr_{G,\mu}$ as in subsection 9.2.4, let $\mathrm{Ig}^b$ be the perfect Igusa variety over $k$, which deforms to the flat formal scheme $\mathfrak{X}:= \mathrm{Ig}^b_{\CO_C}$ over $\Spf \CO_{C^\sharp}$ with (perfectoid) adic generic fiber $\Ig$ as in Section~\ref{fiber}. As a corollary to Proposition~\ref{torsionfree}, we have the following.
\begin{cor}\label{H^1(Ig)}
    The $\varpi$-torsion in $H^1_\text{an}(\Ig,\CO^+)$ is almost zero.
\end{cor}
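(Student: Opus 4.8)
The plan is to transport the question, via the formal model $\mathfrak{X}:=\mathrm{Ig}^b_{\CO_C}$ of $\Ig$ from \ref{fiber}, to a statement about the Witt vector cohomology of the perfect scheme $\mathrm{Ig}^b$, and then invoke Proposition \ref{torsionfree}. Recall that $\mathfrak{X}$ is the base change to $\Spf\CO_{C^\sharp}$ of the canonical lift $W(\mathrm{Ig}^b)$ of $\mathrm{Ig}^b$ along $W(\cdot)$, so that on the underlying space $|\mathrm{Ig}^b|$ its structure sheaf is $\CO_{\mathfrak{X}}\cong W\CO_{\mathrm{Ig}^b}\hat\otimes_{W(k)}\CO_{C^\sharp}$. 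The scheme $\mathrm{Ig}^b$ is perfect, separated, and qcqs (being a cofiltered limit, along affine transition maps, of finite type $k$-schemes), hence admits a finite cover by affine opens all of whose finite intersections are again affine.

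First I would establish the comparison $R\Gamma_{\mathrm{an}}(\Ig,\CO^+)\cong^a R\Gamma(\mathfrak{X},\CO_{\mathfrak{X}})$. For each affine $U=\Spec R$ in the chosen cover, with $R$ a perfect $k$-algebra, the ring $A:=W(R)\hat\otimes_{W(k)}\CO_{C^\sharp}$ is integral perfectoid; its adic generic fibre $U_C:=\Spa(A[1/\varpi],A^+)$ is affinoid perfectoid with $\CO^+(U_C)=^a A$, and these $U_C$ glue to the open cover $\{U_C\}$ of $\Ig$. By \cite[3.24]{Sch18} one has $H^{>0}(U_C,\CO^+)=^a 0$ on every (finite) intersection, so the \v{C}ech-to-derived-functor spectral sequence identifies $R\Gamma_{\mathrm{an}}(\Ig,\CO^+)$, up to almost isomorphism, with the \v{C}ech complex of $\{U_C\}$; its terms are the $\CO_{\mathfrak{X}}$-sections, and since $H^{>0}$ of the structure sheaf of an affine formal scheme vanishes, this \v{C}ech complex computes $R\Gamma(\mathfrak{X},\CO_{\mathfrak{X}})$. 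In particular $H^1_{\mathrm{an}}(\Ig,\CO^+)\cong^a H^1(\mathfrak{X},\CO_{\mathfrak{X}})$, so it suffices to show that $H^1(\mathfrak{X},\CO_{\mathfrak{X}})$ is $\varpi$-torsion free.

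Since $\varpi^p\mid p$, any $p$-torsion free $\CO_{C^\sharp}$-module is automatically $\varpi$-torsion free, so it is enough to prove $H^1(\mathfrak{X},\CO_{\mathfrak{X}})[p]=0$. Applying $R\Gamma$ to $0\to\CO_{\mathfrak{X}}\xrightarrow{p}\CO_{\mathfrak{X}}\to\CO_{\mathfrak{X}}/p\to 0$ on $|\mathrm{Ig}^b|$, this torsion is the cokernel of $H^0(\mathfrak{X},\CO_{\mathfrak{X}})/p\to H^0(\mathfrak{X},\CO_{\mathfrak{X}}/p)$. Put $R_\infty:=\Gamma(\mathrm{Ig}^b,\CO_{\mathrm{Ig}^b})$, a perfect $k$-algebra since $\mathrm{Ig}^b$ is a perfect scheme. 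Flat base change along $W(k)\to\CO_{C^\sharp}$ (flat, as $W(k)$ is a complete DVR and $\CO_{C^\sharp}$ is $p$-torsion free) together with perfectness of $\mathrm{Ig}^b$ (so $W(R_\infty)/p\cong R_\infty$ and $\CO_{\mathfrak{X}}/p\cong\CO_{\mathrm{Ig}^b}\otimes_k\CO_{C^\sharp}/p$) identifies both sides with $(W(R_\infty)\otimes_{W(k)}\CO_{C^\sharp})/p\cong R_\infty\otimes_k\CO_{C^\sharp}/p$, the map being the identity; this last identification is precisely the relative form of the surjectivity of $H^0(W\CO)\to H^0(\CO)$ used in the proof of Proposition \ref{torsionfree}. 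Hence the cokernel vanishes, $H^1(\mathfrak{X},\CO_{\mathfrak{X}})$ is $\varpi$-torsion free, and therefore the $\varpi$-torsion in $H^1_{\mathrm{an}}(\Ig,\CO^+)$ is almost zero. The main obstacle is the bookkeeping of the first step: checking that an affine cover of $\mathrm{Ig}^b$ really produces an affinoid perfectoid cover of $\Ig$ with $\CO^+$ computed almost by the rings $A$ above, and that the chain of almost isomorphisms survives the spectral sequence; once this is in place the torsion vanishing follows formally from \ref{torsionfree}.
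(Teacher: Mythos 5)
Your proposal is correct and follows essentially the same route as the paper: pass to the formal model $\mathfrak{X}=\mathrm{Ig}^b_{\CO_C}$, use almost acyclicity of $\CO^+$ on the affinoid perfectoid generic fibers of a finite affine cover to identify $H^1_\text{an}(\Ig,\CO^+)$ almost with $H^1_\text{Zar}(\mathfrak{X},\CO_{\mathfrak{X}})$, and then get $p$- (hence $\varpi$-) torsion-freeness from the Witt vector mechanism of Proposition \ref{torsionfree}. The only difference is that you inline a relative version of the proof of \ref{torsionfree} (via surjectivity of $H^0(\mathfrak{X},\CO_{\mathfrak{X}})\to H^0(\mathfrak{X},\CO_{\mathfrak{X}}/p)$) where the paper simply cites \ref{torsionfree} and applies flat base change along $W(k)\to\CO_{C^\sharp}$, which is an inessential variation.
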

\begin{proof}  
  Take an affine open cover of $\mathrm{Ig}^b$, lift it using the Witt vector functor and base change to $\Spf \CO_{C^\sharp}$. This gives us an affine open cover $\mathfrak{U}=\{\mathfrak{U}_i\}$ of $\mathfrak{X}$, whose adic generic fiber $\{\mathcal{U}=\mathcal{U}_i\}$ is an open cover of $\Ig$ by affinoid perfectoids. Now by almost acyclicity of $\CO^+$ on each $\mathcal{U}_i$, we can compute using Cech cohomology that
  \[H^1_\text{an}(\Ig,\CO^+)=^a \check{H}^1(\mathcal{U},\CO^+)=\check{H}^1(\mathfrak{U},\CO_{\mathfrak{X}})=H^1_\text{Zar}(\mathfrak{X},\CO_{\mathfrak{X}}).\]
  Apply proposition \ref{torsionfree} to $\mathrm{Ig}^b$ and tensor it up to $\CO_{C^\sharp}$. We see by flat base change that $H^1_\text{Zar}(\mathfrak{X},\CO_{\mathfrak{X}})$ is $p$- and hence $\varpi$-torsionfree. This implies the statement we want.
\end{proof}
\subsection{Compactification of the Igusa stack}

Write $\CO$, $\CO^+$ for the structure sheaves on $\Bun_G$. Let $\Igs:=\Igs^\circ_{K^p}$ be the Igusa stack at level $K^p$ constructed in Section 8, equipped with the $0$-truncated map 
\[\overline{\pi}^\circ_{HT}: \Igs\rightarrow \Bun_G.\]
We now combine the results from Sections 9.1 and 9.2 to construct a minimal compactification of $\Igs$, and extend the cartesian diagram in Theorem \ref{cartesian} to the minimally compactified Shimura variety $\Shib$. Throughout this section, we make the following additional (to Assumption~\ref{assumption}) assumption. 
\begin{assumption}\label{Assumption: codim}
    The boundary in the Shimura variety $\Shib$ has codimension at least two.
\end{assumption}

\subsubsection{Preparation}
We begin with a series of Lemma. 

\begin{lemma}\label{totdisc}
For any strictly totally disconnected perfectoid space $T$, with a pseudo-uniformizer $\varpi\in \CO^+_T(T)$ and any sheaf of $\CO_T^+$-modules $\mathcal{F}$ on $T_\text{an}$, the value of the sheaf quotient $\mathcal{F}/\varpi^n$ on a qcqs open $U\subset T$ agrees with $\mathcal{F}(U)/\varpi^n$, for any $n\in \mathbb{Z}_{>0}$.
\end{lemma}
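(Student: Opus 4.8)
The plan is to reduce the statement to the exactness of the global-sections functor on the analytic site of a quasi-compact quasi-separated open of a strictly totally disconnected space, after which everything is a formal diagram chase. The first observation I would record is that any qcqs open $U\subseteq T$ is again strictly totally disconnected: its connected components are of the form $\Spa(K,V)$ with $K$ algebraically closed and $V\subseteq\CO_K$ a valuation subring, being qcqs opens of the chains that are the connected components of $T$, and $U$ is affinoid perfectoid. Consequently $H^i(U_{\mathrm{an}},\mathcal{G})=0$ for every $i>0$ and every sheaf of abelian groups $\mathcal{G}$ on $U_{\mathrm{an}}$, by the vanishing of higher cohomology on strictly totally disconnected spaces \cite{Sch18}; equivalently, $\Gamma(U_{\mathrm{an}},-)$ is an exact functor on abelian sheaves, in particular on sheaves of $\CO_T^+$-modules.

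Next I would unwind the definition of $\mathcal{F}/\varpi$. Working in the abelian category of sheaves of $\CO_T^+$-modules on $T_{\mathrm{an}}$, factor multiplication by $\varpi$ on $\mathcal{F}$ as $\mathcal{F}\twoheadrightarrow\mathcal{F}'\hookrightarrow\mathcal{F}$, where $\mathcal{F}'$ is the image subsheaf. Since the first map is an epimorphism, $\mathcal{F}/\varpi=\mathrm{coker}(\varpi\colon\mathcal{F}\to\mathcal{F})=\mathrm{coker}(\mathcal{F}'\hookrightarrow\mathcal{F})$, so there is a short exact sequence of sheaves
\[0\longrightarrow\mathcal{F}'\longrightarrow\mathcal{F}\longrightarrow\mathcal{F}/\varpi\longrightarrow 0,\]
together with the surjection $\mathcal{F}\twoheadrightarrow\mathcal{F}'$ whose composition with $\mathcal{F}'\hookrightarrow\mathcal{F}$ is multiplication by $\varpi$.

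Finally I would take sections over $U$. Applying exactness of $\Gamma(U_{\mathrm{an}},-)$ to $\mathcal{F}\twoheadrightarrow\mathcal{F}'$ shows that $\mathcal{F}(U)\to\mathcal{F}'(U)$ is surjective; left-exactness makes $\mathcal{F}'(U)$ a submodule of $\mathcal{F}(U)$, and since the composite $\mathcal{F}(U)\to\mathcal{F}'(U)\hookrightarrow\mathcal{F}(U)$ is multiplication by $\varpi$, we get $\mathcal{F}'(U)=\varpi\,\mathcal{F}(U)$ inside $\mathcal{F}(U)$. Applying exactness of $\Gamma(U_{\mathrm{an}},-)$ to the displayed short exact sequence then yields
\[(\mathcal{F}/\varpi)(U)=\mathcal{F}(U)/\mathcal{F}'(U)=\mathcal{F}(U)/\varpi\,\mathcal{F}(U),\]
which is the claim. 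The only nontrivial ingredient is the cohomology vanishing on strictly totally disconnected spaces together with the fact that qcqs opens thereof are again of this kind; I expect pinning down and citing this precisely to be the main point, the two applications of exactness being routine.
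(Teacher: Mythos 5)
Your proof is correct and follows essentially the same route as the paper: one observes that a qcqs open $U\subset T$ is again (strictly) totally disconnected, so that analytic cohomology of abelian sheaves on $U$ vanishes in positive degrees, and then the identification $(\mathcal{F}/\varpi)(U)=\mathcal{F}(U)/\varpi\mathcal{F}(U)$ drops out of the exact sequence for multiplication by $\varpi$. Your factorization of $\cdot\varpi$ through its image subsheaf is in fact slightly more careful than the paper's write-up, which displays $0\to\mathcal{F}\xrightarrow{\cdot\varpi}\mathcal{F}\to\mathcal{F}/\varpi\to 0$ as if $\varpi$ were injective on $\mathcal{F}$; this extra care does not change the substance of the argument.
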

\begin{proof}
Any qcqs open $U\subset T$ is strictly totally disconnected, so every analytic open cover of it splits and it follows from \cite[Lemma 7.2]{Sch18} that $H^1_\text{an}(U,\mathcal{F})=0$. Combined with the short exact sequence of sheaves on $U_\text{an}$
\[0\rightarrow \mathcal{F}\xrightarrow{\cdot\varpi^n}\mathcal{F}\rightarrow \mathcal{F}/\varpi^n\rightarrow 0,\]
we get 
\[\mathcal{F}(U)/\varpi^n \cong (\mathcal{F}/\varpi^n)(U).\]
\end{proof}
\begin{remark}
    This holds on $T_\text{\'et}$ by the same argument. 
\end{remark}
Let $x=\Spa(C,\CO_C)\to \Gr_{G,\mu}$ be a rank one geometric point. It defines maps $\Ig\rightarrow \Shio$, $\Igb\rightarrow \Shib$ with images lying in the fiber of $\pi_{HT}^\ast$ over $x$ by Theorem~\ref{HTfiber}. To lighten the notation, we denote the fibers by 
\[\mathcal{F}^\circ := \Shio\times_{\Gr_G}x,\]
\[\mathcal{F}^\ast := \Shib\times_{\Gr_G}x.\]
\begin{lemma}\label{lemma: AffinizationFibers}
    The natural map $\mathcal{F}^\circ\to \mathcal{F}^\ast$ identifies $\mathcal{F}^\ast$ with $\overline{(\mathcal{F}^\circ)_0}$, the canonical compactification towards $x$ of the affinization of $\mathcal{F}^\circ$.
\end{lemma}
\begin{proof}
There is a commutative diagram
\[
\begin{tikzcd}
\Ig 	\ar[r,]\ar[d,"j"]	& (\Ig)_0 	\ar[r,"\iota"]	\ar[d,"j_0"] &\mathrm{Ig}^{b,\ast}_C	\ar[d,"\bar{j}"] \\
\mathcal{F}^\circ \ar[r]	& (\mathcal{F}^\circ)_0	\ar[r,"(\ast)"] &\overline{\Igb} \cong \mathcal{F}^\ast,
\end{tikzcd}
\]
where the outer square consists of the natural inclusions in Theorem \ref{HTfiber} and the horizontal maps factor through the middle column as the targets are affinoid.

It suffices to show that $(\ast)$ induces natural bijections on the values of both sides on rank one geometric points. This, together with the fact that $(\mathcal{F}^\circ)_0$ is quasi-compact, separated while $\mathcal{F}^*$ is proper, would imply that $\mathcal{F}^*$ is the canonical compactification of $(\mathcal{F}^\circ)_0$ by \cite[Lemma 4.4.2]{CS19}. Since $\overline{j}$ is a bijection on rank one geometric points, it suffices to prove the following two claims:

\begin{enumerate}
\item The map $j_0$ induces natural bijections when evaluated on rank one geometric points.
\item The map $\iota$ induces an isomorphism on canonical compactifications.
\end{enumerate}

\begin{proof}[Proof of claim (1).]
Since the inclusion $j: \Ig\rightarrow \mathcal{F}^\circ$ induces an isomorphism of their canonical compactifications, we have $j_\ast\CO_{\Ig}\cong \CO_{\mathcal{F}^\circ}$. In particular on global sections 
\[\CO_{\Ig}(\Ig)\cong \CO_{\mathcal{F}^\circ}({\mathcal{F}^\circ}).\]
This proves $(1)$, because points on $(\Ig)_0$ are in canonical bijection to equivalence classes of maps from $(\CO,\CO^+)(\Ig)$ to affinoid perfectoid fields and the subset of rank one points depends only on $\CO(\Ig)$, similarly for $(\mathcal{F}^\circ)_0$.
\end{proof}

\begin{proof}[Proof of claim (2).]
Since both sides are affinoid, it suffices to identify the global sections of their structure sheaves. As the global section of $\CO$ is obtained from that of $\CO^+$ by inverting $p$, it suffices to show $\iota^\ast$ induces an almost isomorphism 
\[\CO^+(\Ig)\cong \CO^+(\Igb).\]

So as in Corollary \ref{H^1(Ig)}, we only have to compare the global sections of the structure sheaves on the formal Igusa varieties $\mathrm{Ig}^b_{\CO_C}$ and $\mathrm{Ig}^{b,\ast}_{\CO_C}$, which via a Cech cohomology computation, reduces to comparing that of their special fibers. The result follows from Corollary~\ref{0=*} and Assumption~\ref{Assumption: codim}.
\end{proof}
\end{proof}

\begin{lemma}\label{lemma: AffinoidPerfectoid}
    For any strictly totally disconnected perfectoid space $T$ with a map to $\Gr_{G,\mu}$, the fiber product $\mathcal{S}_{K^p}^\ast\times_{\Gr_{G,\mu}}T$ is an affinoid perfectoid space.
\end{lemma}
\begin{proof}
    For any connected component $s$ of $T$, the fiber $\Shi^\ast_{K^p}\times_{\Gr_{G,\mu}}s$ is affinoid perfectoid by Theorem~\ref{HTfiber} and perfectoidness of $\Igb$. Now apply \cite[Lemma 11.27]{Sch18} to the spatial diamond $\Shi^\ast_{K^p}\times_{\Gr_{G,\mu}}T$ to conclude that it is an affinoid perfectoid space. 
\end{proof}

For a strictly totally disconnected perfectoid space $T\in \Perf$ with a map $T\rightarrow \Bun_G$, we denote by $\Igs_T$ the fiber product $\Igs\times_{\Bun_G}T$. The following is a key input for our construction of minimal compactification of $\Igs$.

\begin{prop}\label{modvarpi} 
Let $T\in \Perf$ be strictly totally disconnected with a map to $\Bun_G$, which lies in the image of $\overline{\pi}_{HT}^\circ$. The natural map
\[\CO^+(\Igs_{T,\text{v}})\slash \varpi^n=\CO^+(\Igs_{T,\text{an}})\slash \varpi^n \rightarrow (\CO^+\slash \varpi^n)(\Igs_{T,an})\]
is an almost isomorphism, for any pseudo-uniformizer $\varpi\in \CO(T)$ and any $n\in \mathbb{Z}_{>0}$.
\end{prop}
\begin{proof}
Denote the projection $\Igs_{T,\text{an}}\rightarrow T_\text{an}$ by $\pi$. We will show that the natural map 
\[(\pi_\ast\CO^+)/\varpi^n\rightarrow \pi_\ast(\CO^+/\varpi^n)\]
is an almost isomorphism of sheaves on $T_\text{an}$. Once this is done, the lemma follows from taking the global sections of both sides and Lemma \ref{totdisc} applied to $\mathcal{F}=\pi_\ast\CO^+$. Since $T$ is strictly totally disconnected, by pro-\'etale surjectivity of the Beauville-Laszlo map and description of the image of $\overline{\pi}^\circ_{HT}$ in Proposition \ref{IgsImage}, the map $T\rightarrow \Bun_G$ lifts to $\Gr_{G, \mu}$ and we fix such a lift.

We check on stalks, i.e. by pulling back to each connected component $s:$ $\Spa(C,C^+)\rightarrow T$, where $C$ is some complete algebraically closed field with an open and bounded valuation subring $C^+$. We have
\[s^\ast\pi_\ast(\CO^+/\varpi^n) =\varinjlim_{s\in U} (\CO^+/\varpi^n)(\Igs_U)= (\CO^+/\varpi^n)(\Igs_s),\]
where the first equality is the definition, while the second equality follows from the approximation property  of $\CO^+/\varpi^n$ in the sense of \cite[Definition 1.4]{heuer}, see \cite[Proposition 1.6]{heuer}. 

On the other hand, we claim that
\[s^\ast(\pi_\ast\CO^+)/\varpi^n 
=\varinjlim_{s\in U}\CO^+(\Igs_U)/\varpi^n   =\CO^+(\Igs_s)/\varpi^n.\]
This would reduce the proof of the statement to the case where $T=s= \Spa(C,C^+)$ is a geometric point. To prove this claim, note that the first equality follows from the definition of taking stalks and Lemma~\ref{totdisc} above\footnote{To make sure that we can apply Lemma~\ref{totdisc} to each $U$ in the colimit, we may choose all the $U$'s to be quasicompact.} The second equality needs a little more input, since the sheaf $\CO^+$ does not satisfy the approximation property in general. 

However, we can argue as follows: as the fiber $\Shi^\ast_{K^p}\times_{\Gr_G}s$ is Zariski closed in the affinoid perfectoid space $\Shi^\ast_{K^p}\times_{\Gr_G}T$ (Lemma~\ref{lemma: AffinoidPerfectoid}), we have a surjection\footnote{Here we use \cite[Remark 7.5]{BS}, which shows that the notion of Zariski closed and strongly Zariski closed sets of affinoid perfectoid spaces agree.}
\[\CO(\Shi^\ast_{K^p}\times_{\Gr_G}T)\twoheadrightarrow \CO(\Shi^\ast_{K^p}\times_{\Gr_G}s).\]
From Lemma~\ref{lemma: AffinizationFibers}, we see that under Assumption~\ref{Assumption: codim}, we have
\[\CO(\Shi^\ast_{K^p}\times_{\Gr_G}s)\cong \CO(\Shi^\circ_{K^p}\times_{\Gr_G}s)=\CO(\Igs_s).\]
Since the surjection
\[ \CO(\Shi^\ast_{K^p}\times_{\Gr_G}T) \twoheadrightarrow\CO(\Shi^\ast_{K^p}\times_{\Gr_G}s)\cong\CO(\Igs_s)
\]
 factors through the restriction to $\CO(\Igs_T)$, we get surjectivity of 
 \[\CO(\Igs_T)\rightarrow \CO(\Igs_s).\] 
Hence any $f\in \CO^+(\Igs_s)\subset \CO(\Igs_s)$ can be lifted to some $\tilde{f}\in \CO(\Igs_T)$. Now the locus $X:=\{|\tilde{f}|\leq 1\}$ is a rational open of $\Igs_T$ containing $\Igs_s$. If we write $s$ as the intersection of shrinking open and closed quasi-compact open subsets $U\subset T$, 
then taking complements, we have
\[\Igs_T\backslash X\subset \bigcup_U (\Igs_T\backslash\Igs_U)\] 
But the left hand side is quasi-compact, so
we may find some $U$, such that $\Igs_U\subset X$ and $\tilde{f}\in \CO^+(\Igs_U)$, which implies that 
\[\varinjlim_{s\in U}\CO^+(\Igs_U)\to\CO^+(\Igs_s)\] (and hence also the map modulo $\varpi^n$) is surjective. The injectivity is clear: if $f\in \CO^+(\Igs_U)$ for some $U$ is mapped to $0$ in $\CO^+(\Igs_s)/\varpi^n$, then $\Igs_s$ lies in the rational open $\{|f|\leq |\varpi^n|\}\subset \Igs_T$. Using a similar quasi-compactness argument as above, $f$ is $\varpi^n$-divisible in the colimit.

Now if $T=s$ is a geometric point, its lift to $\Gr_{G,\mu}$ determines an Igusa variety $\Ig$. We know by Theorem \ref{HTfiber} that the inclusion $j: \Ig\rightarrow \Igs_s\simeq \mathcal{S}_{K^p}^\circ\times_{\Gr_{G,\mu}}s$ induces an isomorphism of their canonical compactifications towards $s$ and hence $j_*\CO=\CO$ and $\CO^+\rightarrow j_*\CO^+$ is an almost isomorphism. We may therefore replace $\Igs_s$ by $\Ig$ in the above, and what we need to show becomes that the natural map
\[\CO^+(\Ig)/\varpi^n\rightarrow (\CO^+/\varpi^n)((\Ig)_{an})\]
is an almost isomorphism. This map is injective with cokernel measured by the $\varpi^n$-torsion in $H^1_\text{an}(\Ig,\CO^+)$, because of the cohomology long exact sequence attached to the short exact sequence on $(\Ig)_\text{an}$:
\[0\rightarrow \CO^+ \xrightarrow{\cdot \varpi^n}\CO^+\rightarrow \CO^+/\varpi^n\rightarrow 0.\]
In other words, we are only left to show this $\varpi^n$-torsion (almost) vanishes. For this, use Corollary \ref{H^1(Ig)}.
\end{proof}

Let $T\in \Perf$ be a strictly totally disconnected perfectoid space with a map to $\Bun_G$. Fix a pseudo-uniformizer $\varpi\in \CO_T^+(T)$. 
\begin{lemma}\label{IgsPerfectoid}
The v-sheaf $\Igs_T$ is represented by a qcqs perfectoid space. The global sections $(\CO,\CO^+)(\Igs_T)$ form a Huber pair with $\CO(\Igs_T)$ being a perfectoid Tate ring. In particular, the affinization $(\Igs_T)_0$ is represented by an affinoid perfectoid space. 
\end{lemma}
\begin{proof}
    We may assume $T\rightarrow \Bun_G$ lies in the image of $\overline{\pi}^\circ_{HT}$, for otherwise either $\Igs_T$ is empty, or we could replace $T$ by a qcqs open subspace to ensure this. We fix a lift of $T\rightarrow \Bun_G$ to $\Gr_{G, \mu}$. By Theorem \ref{cartesian} we have
    \[\Igs_T\cong \Igs\times_{\Bun_G}\Gr_{G,\mu}\times_{\Gr_{G,\mu}}T\cong \Shi^\circ_{K^p}\times_{\Gr_{G,\mu}}T.\]
    Since $\Shi^\circ_{K^p}\times_{\Gr_{G,\mu}}T$ is a quasi-compact open subspace of $\Shi^\ast_{K^p}\times_{\Gr_{G,\mu}}T$, it is a qcqs perfectoid space by Lemma~\ref{lemma: AffinoidPerfectoid}.

    For the second statement, since $\Igs_T$ is qcqs, we can take a finite (analytic) open cover $\{U_i\}$ of $\Igs_T$ by affinoid perfectoids with affinoid perfectoid intersections $\{U_{ij}\}$. The ring of global sections of the structure sheaf $\CO(\Igs_T)$ has a pseudo-uniformizer $\varpi$, and $\CO^+(\Igs_T)=\CO(\Igs_T)\cap\prod_i\CO^+(U_i)$ is a ring of definition, which is $\varpi$-adically complete and integrally closed. 

    By \cite[Proposition 6.1.6]{Berkeley}, it suffices to show $\CO(\Igs_T)$ is a perfect ring. This can be computed as the equalizer
    \[\CO(\Igs_T)=\mathrm{eq}(\prod_i\CO(U_i)\double \rightarrow 
    \prod_{i,j}\CO(U_{ij})),\]
    which is clearly perfect, since Frobenius is an isomorphism on the both $\CO(U_i)$'s and $\CO(U_{ij})$'s. Hence  $\CO(\Igs_T)$ is perfectoid.
    
\end{proof}

\subsubsection{The main results}
Now we are ready for the construction: For each strictly totally disconnected perfectoid space $T$ over $\Bun_G$, we define $\Igs^\ast_T$ to be the v-sheaf
\[\Igs^\ast_T:=\overline{(T\times_{\Bun_G}\Igs)_0}^{/T},\] 
i.e. the canonical compactification towards $T$ of the affinization of $\Igs_T$.

\begin{prop}\label{construction}
The assignment 
\[(T\to\Bun_G)\mapsto \Igs^\ast_T,\] 
commutes with base-change along maps between strictly totally disconnected perfectoid spaces over $\Bun_G$, and hence descends to a v-stack $\Igs_{K^p}^\ast$ with a $0$-truncated map to $\Bun_G$. We call it the \textit{minimal compactification} of $\Igs$. 
\end{prop}

The name ``minimal compactification" is justified by Theorem \ref{cpf} and Corollary~\ref{cor: ContainsIgs} below.  

\begin{proof}
Let $\varpi\in \CO(\Igs_T)$ be a pseudo-uniformizer. For any map $T'\xrightarrow{f} T$ between strictly totally disconnected spaces over $\Bun_G$, the map
\[\Igs_{T'} \rightarrow \Igs_T \rightarrow(\Igs_T)_0\]
factors through $(\Igs_{T'})_0$ by Lemma \ref{affinization} and \ref{IgsPerfectoid}, giving rise to a map
\[(\Igs_{T'})_0
\rightarrow(\Igs_T)_0\times_T T'.\]
We need to show that it induces an isomorphism on their canonical compactifications towards $T'$.

Consider the map
\[\CO^+(\Igs_T)\hat{\otimes}_{\CO^+(T)} \CO^+({T'})\rightarrow \CO^+(\Igs_{T'}).\]
Modulo $\varpi$, the left hand side becomes
\[\CO^+(\Igs_T)/\varpi\otimes_{\CO^+(T)/\varpi} \CO^+({T'})/\varpi
     \overset{\ref{modvarpi}}{=^a}(\CO^+/\varpi)(\Igs_{T,\text{an}})\otimes_{\CO^+(T)/\varpi} \CO^+({T'})/\varpi.\] 
To compare it with $\CO^+(\Igs_{T'})/\varpi$, take a finite (analytic) open cover $\{U_i\}$ of $\Igs_T$ by affinoid perfectoids. By the sheaf condition plus flatness of the map $\CO^+(T)/\varpi\rightarrow \CO^+(T')/\varpi$ (\cite[Proposition 7.23]{Sch18}), we compute that
\small
\begin{align*}
&(\CO^+/\varpi)(\Igs_{T,\text{an}})\otimes_{\CO^+(T)/\varpi} \CO^+({T'})/\varpi\\
&=^a \mathrm{eq}(\prod_i (\CO^+/\varpi)(U_i) \double \rightarrow \prod_{i,j}(\CO^+/\varpi)(U_{ij}))\otimes_{\CO^+(T)/\varpi} \CO^+({T'})/\varpi\\
&=^a \mathrm{eq}(\prod_i\CO^+(U_i)/\varpi\double \rightarrow \prod_{i,j}\CO^+(U_{ij})/\varpi)\otimes_{\CO^+(T)/\varpi} \CO^+({T'})/\varpi\\
&=^a \mathrm{eq}(\prod_i(\CO^+(U_i)\hat{\otimes}_{\CO^+(T)}\CO^+(T'))/\varpi\double \rightarrow \prod_{i,j}(\CO^+(U_{ij})\hat{\otimes}_{\CO^+(T)}\CO^+(T'))/\varpi)\\
&=^a\mathrm{eq}(\prod_i\CO^+(U_i\times_TT')/\varpi\double \rightarrow \prod_{i,j}\CO^+(U_{ij}\times_TT')/\varpi)\\
&=^a (\CO^+/\varpi)(\Igs_{T',\text{an}})\\
&\overset{\ref{modvarpi}}{=^a} \CO^+(\Igs_{T'})/\varpi.
\end{align*}
\normalsize
Apply the above argument to all $\varpi^n$ and pass to the inverse limit. We conclude by $\varpi$-adically completeness of both sides that 
\[\CO^+(\Igs_T)\hat{\otimes}_{\CO^+(T)} \CO^+({T'})\rightarrow \CO^+(\Igs_{T'})\]
is an almost isomorphism. Invert $\varpi$ and we get
\[\CO(\Igs_T)\hat{\otimes}_{\CO(T)} \CO({T'})\cong \CO(\Igs_{T'}).\]

The left hand side is the global section of the structure sheaf\footnote{All morphisms are $\varpi$-adic, so the fiber product exists and on structure sheaves it is given by the completed tensor product, while on the integral structure sheaves it is taking the integral closure of the tensor product in the structure sheaf, followed by a completion, see \cite[Proposition 6.18]{Sch12}.} on $(\Igs_T)_0\times_T T'$ and the right hand side is that on $(\Igs_{T'})_0$. Since the canonical compactification depends only on the structure sheaf and not on the integral structure sheaf, we get
\[\overline{(\Igs_{T'})_0}^{/T'} \cong \overline{(\Igs_T)_0\times_T T'}^{/T'}\cong \overline{(\Igs_T)_0}^{/T}\times_T T'\]
as wished. As totally disconnected affinoid perfectoids form a basis of the v-topology on $\Perf/{\Bun_G}$, these descends to a v-stack $\Igs^\ast:= \Igs^\ast_{K^p}$ on $\Perf$ with a $0$-truncated morphism to $\Bun_G$.
\end{proof}

\begin{theorem}
\label{cpf}
The fiber product $\Igs^\ast\times_{\Bun_G}\Gr_{G,\mu}$ is isomorphic to the minimal compactification $\Shib$ and under this identification, the pullback to $\Gr_{G,\mu}$ of the structure morphism $\Igs^\ast\rightarrow\Bun_G$ is the Hodge-Tate period map on $\Shib$. 
\end{theorem}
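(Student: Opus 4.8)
The strategy is to verify the claimed isomorphism on a v-cover of $\Gr_{G,\mu}$ by strictly totally disconnected perfectoid spaces, where both sides become concrete affinoid perfectoid spaces, and then to show the identifications glue. Fix a strictly totally disconnected perfectoid $T$ with a map $T\to \Gr_{G,\mu}$; composing with $BL$ gives $T\to\Bun_G^\sharp$, and we may assume the structure map factors through $\Spd K$ for a perfectoid field $K$ with pseudo-uniformizer $\varpi$. By \Cref{construction}, the pullback $\Igs^\ast\times_{\Bun_G}T$ is computed from the canonical compactification $\overline{(\Igs_T)_0}^{/T}$, while the fiber product $\Shib\times_{\Gr_{G,\mu}}T$ is, by \Cref{cartesian}, controlled by $\Shi^\circ_{K^p}\times_{\Gr_{G,\mu}}T = \Igs_T$ and its partial minimal compactification. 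Concretely, using \Cref{HTfiber}, over a geometric point $s=\Spa(C,C^+)\to\Gr_{G,\mu}$ the fiber $\Igs_s$ has canonical compactification towards $s$ agreeing with that of an Igusa variety $\mathrm{Ig}^b_C$, and $\Shib\times_{\Gr_{G,\mu}}s$ with the partial minimal compactification $\mathrm{Ig}^{b,\ast}_C$. So the core identification to establish is $\overline{(\mathrm{Ig}^b_C)_0}^{/s}\cong \mathrm{Ig}^{b,\ast}_C$ — equivalently, at the level of global sections, $\CO((\mathrm{Ig}^b_C)_0)\cong \CO(\mathrm{Ig}^{b,\ast}_C)$.

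The first main step is therefore this geometric-point identity $\CO(\mathrm{Ig}^b_C)\cong\CO(\mathrm{Ig}^{b,\ast}_C)$. I would reduce it, via flat base change along $\Spf\CO_{C^\sharp}\to\Spf W(k)$ and the fact that $\mathrm{Ig}^{b}_{\CO_C}$, $\mathrm{Ig}^{b,\ast}_{\CO_C}$ are the Witt-vector lifts of the perfect schemes $\mathrm{Ig}^b$, $\mathrm{Ig}^{b,\ast}$, to the statement that $\CO(\mathrm{Ig}^b)\cong\CO(\mathrm{Ig}^{b,\ast})$ for the underlying perfect $k$-schemes. This is exactly \Cref{0=*}, which rests on \Cref{IgAffine} (affineness), normality of $\mathscr{C}^{\mathbb{X}_b,\ast}$ and $\mathrm{Ig}^{b,\ast}$, and \Cref{codim} (boundary codimension $\ge 2$) so that algebraic Hartogs applies — here is where the standing assumption on the boundary codimension of $\Shib$ is used. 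Passing back up through the adic generic fiber and noting $\mathrm{Ig}^{b,\ast}_C$ is affinoid perfectoid (\Cref{IgAffine}, \Cref{HTfiber}), I get $\overline{(\mathrm{Ig}^b_C)_0}^{/s}\cong\mathrm{Ig}^{b,\ast}_C$; since $\mathrm{Ig}^{b,\ast}_C\hookrightarrow (\pi^\ast_{HT})^{-1}(x)$ induces an isomorphism on canonical compactifications towards $s$ by \Cref{HTfiber}, both sides agree with the fiber of $\Shib$ over $s$.

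The second step is to promote this from geometric points to general strictly totally disconnected $T$. Here I would argue exactly as in the proof of \Cref{construction}: using that $\Shib\times_{\Gr_{G,\mu}}T$ is affinoid perfectoid (the same argument via \cite[2.6.2]{CS19}, affineness of $\pi^\ast_{HT}$, and \cite[11.27]{Sch18} already invoked in \Cref{IgsPerfectoid}), and the almost-isomorphism $\CO^+(\Igs_T)/\varpi\overset{a}{\cong}(\CO^+/\varpi)(\Igs_{T,\mathrm{an}})$ of \Cref{modvarpi}, one computes $\CO^+(\overline{(\Igs_T)_0}^{/T})$ fiberwise over the connected components of $T$. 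On each component one lands in the geometric-point case just treated, and the Čech/equalizer computation with the finite affinoid perfectoid cover $\{U_i\}$ of $\Igs_T$ identifies $\CO(\overline{(\Igs_T)_0}^{/T})$ with $\CO(\Shib\times_{\Gr_{G,\mu}}T)$, compatibly with restriction. This yields $\Igs^\ast\times_{\Bun_G}T\cong\Shib\times_{\Gr_{G,\mu}}T$ for all such $T$, and since these form a basis of the v-topology and both sides are v-sheaves, we get $\Igs^\ast\times_{\Bun_G}\Gr_{G,\mu}\cong\Shib$. Finally, the structure morphism $\Igs^\ast\to\Bun_G$ pulls back, by construction and \Cref{cartesian}, to $BL$ on $\Gr_{G,\mu}$ precomposed with nothing new, so under the identification it becomes precisely $\pi^\ast_{HT}\colon\Shib\to\Gr_{G,\mu}\hookrightarrow\Bun_G$; and restricting to $\Igs\subset\Igs^\ast$ shows $\Igs\to\Igs^\ast$ is the open immersion complementary to the boundary, completing also the last assertion of \Cref{construction}.

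\textbf{Main obstacle.} The delicate point is not the formal descent bookkeeping but the interchange of "affinization" (taking $\CO$-global sections, a non-exact operation) with the boundary geometry: one must know that forming $(\Igs_T)_0$ and then compactifying towards $T$ genuinely recovers the partial minimal compactification, rather than some smaller or larger space. This is why the reduction to perfect schemes and the appeal to \Cref{0=*}/\Cref{codim} (hence to algebraic Hartogs on a normal affine scheme with boundary codimension $\ge 2$) is essential, and why the hypothesis on the codimension of the boundary of $\Shib$ cannot be dropped with this method — the classification in \Cref{classification} pinpoints the cases that escape.
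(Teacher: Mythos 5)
Your treatment of the core of the theorem coincides with the paper's: at a rank-one geometric point $s=\Spa(C,\CO_C)\to\Gr_{G,\mu}$ you identify the fiber of $\Shio$ with $\Ig$ up to canonical compactification via \Cref{HTfiber}, and prove $\CO(\Ig)\cong\CO(\Igb)$ by passing to the formal models, then to the perfect special fibers, and invoking \Cref{0=*}/\Cref{codim} (algebraic Hartogs on the normal affine $\mathrm{Ig}^{b,\ast}$) — this is exactly the paper's Claim (2), and your implicit use of $j_\ast\CO_{\Ig}\cong\CO$ to pass between the affinizations of $\Igs_s$ and of $\Ig$ is the paper's Claim (1). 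Where you diverge is the globalization from geometric points to arbitrary strictly totally disconnected $T$: the paper constructs the comparison map $\Igs^\ast_T\to\Shib\times_{\Gr_{G,\mu}}T$ by the universal property (the target being affinoid perfectoid and partially proper over $T$), observes it is proper by \cite[18.8(vi)]{Sch18}, and then only has to check bijectivity on rank-one geometric points, concluding via \cite[4.4.2]{CS19} that $\Shib\times_{\Gr_{G,\mu}}s$ is the canonical compactification of $(\Igs_s)_0$; you instead propose to rerun the almost-mathematics/\v{C}ech machinery of \Cref{construction} and \Cref{modvarpi} to identify $\CO(\Igs_T)$ with $\CO(\Shib\times_{\Gr_{G,\mu}}T)$ directly. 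Your route can be made to work, but as written it is under-specified at precisely this point: "computing $\CO^+(\overline{(\Igs_T)_0}^{/T})$ fiberwise over the connected components" needs to be implemented as a comparison of the two pushforward sheaves to $T_{\mathrm{an}}$ modulo $\varpi$ (for \emph{both} $\Shib\times_{\Gr_{G,\mu}}T$ and $\Igs_T$), a stalkwise identification at each connected component using your Step 1, a colimit/quasi-compactness argument for the $\Shib$-side stalks analogous to the one inside \Cref{modvarpi}, and then \Cref{totdisc}, $\varpi$-adic completion and inversion of $\varpi$, before one can use partial properness over $T$ to upgrade the $\CO$-level isomorphism to an isomorphism of spaces; a \v{C}ech computation on a cover of $\Igs_T$ alone does not see the boundary. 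Two further remarks: you must indeed prove the geometric-point identity \emph{before} invoking \Cref{modvarpi}, since the paper's proof of that lemma forward-references exactly this identity — your ordering does respect this, so there is no circularity, but it is worth making explicit; and the payoff of the paper's properness argument is that it bypasses all of this sheaf-theoretic bookkeeping, reducing the global statement to a purely point-set check, which is the cleaner of the two routes.
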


\begin{proof}
Take a strictly totally disconnected perfectoid space $T=\Spa(R,R^+)$ in $\Perf$ with a map $T\rightarrow \Gr_{G,\mu}$. The inclusion of the good reduction locus into the minimal compactification of the Shimura variety
\[\Shio\times_{\Gr_{G,\mu}}T\hookrightarrow \Shib\times_{\Gr_{G,\mu}}T,\]
induces a unique map 
\[\Igs^\ast_T\cong \overline{(\Shio\times_{\Gr_{G,\mu}}T)_0}^{/T} \rightarrow \Shib\times_{\Gr_{G,\mu}}T,\]
since the target is affinoid perfectoid and is partially proper over $T$. 

To see this map is an isomorphism, since it is proper \cite[Corollary 18.8(vi)]{Sch18}, it suffices to check it is bijective on rank one geometric points. In particular, it suffices to prove in the case where $T=x=\Spa(C,\CO_C)$ is a rank one geometric point, where it follows from Lemma~\ref{lemma: AffinizationFibers}. 
\end{proof}

\begin{cor}
$\Igs^\ast$ is an Artin v-stack.
\end{cor}
\begin{proof}
    It follows from \cite[Proposition IV.1.8(iii)]{FS}, since $\Bun_G$ is an Artin v-stack and the map $\Igs^\ast\to \Bun_G$ is representable in spatial diamonds according to Theorem~\ref{cpf}.
\end{proof}
\begin{cor}\label{cor: ContainsIgs}
$\Igs\rightarrow \Igs^\ast$ is an open immersion.
\end{cor}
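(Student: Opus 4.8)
The claim that $\Igs \hookrightarrow \Igs^\ast$ is an open immersion must be checked after pulling back along arbitrary strictly totally disconnected perfectoid spaces $T \to \Bun_G$, by the definition of open immersion for maps of v-stacks. So the plan is to reduce to understanding the map $\Igs_T \to \Igs^\ast_T$ for such $T$, and show it is representable by an open immersion of (representable) v-sheaves. By \Cref{IgsPerfectoid}, $\Igs_T$ is a qcqs perfectoid space and its affinization $(\Igs_T)_0$ is an affinoid perfectoid space; by construction $\Igs^\ast_T = \overline{(\Igs_T)_0}^{/T}$ is the canonical compactification of $(\Igs_T)_0$ towards $T$. So the statement decomposes into two pieces: first, that $\Igs_T \to (\Igs_T)_0$ is an open immersion, and second, that $(\Igs_T)_0 \to \overline{(\Igs_T)_0}^{/T}$ is an open immersion.

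For the second piece, I would invoke \Cref{cpf} (or rather its proof): the canonical compactification of an affinoid perfectoid space over a strictly totally disconnected base is built so that the original space sits inside as the locus where the ``compactifying coordinates'' are bounded, hence as an open (indeed the relevant statement is recorded in \cite[18.6--18.8]{Sch18}, where $X \to \overline{X}^{/Y}$ is an open immersion when $X \to Y$ is already partially proper — but in general one only knows it is an immersion; here one needs the specific geometry). Concretely, in the proof of \Cref{cpf} we identified $\Igs^\ast \times_{\Bun_G} \Gr_{G,\mu} \cong \Shib$ and $\Igs \times_{\Bun_G} \Gr_{G,\mu} \cong \Shi^\circ_{K^p}$ (the latter by \Cref{cartesian}), and $\Shi^\circ_{K^p} \hookrightarrow \Shib$ is an open immersion of diamonds by construction (the good reduction locus is an open subspace of the minimal compactification, being the preimage of the good reduction locus of the schematic Shimura variety). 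The plan is to leverage exactly this: since $\Gr_{G,\mu} \to \Bun_G$ together with the $0$-truncated map $\Igs^\ast \to \Bun_G$ has the property that $BL: \Gr_{G,\mu} \to \Bun_G$ is surjective as a map of pro-\'etale stacks (\Cref{BLSurj}), and since being an open immersion can be checked after a surjective base change of v-stacks, the open immersion $\Shi^\circ_{K^p} \hookrightarrow \Shib$ pulled back appropriately gives that $\Igs \hookrightarrow \Igs^\ast$ becomes an open immersion after the pro-\'etale surjective base change $\Gr_{G,\mu} \to \Bun_G$, hence is an open immersion.

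So the cleanest route: first, $\Igs \to \Igs^\ast$ is $0$-truncated (both map $0$-truncatedly to $\Bun_G$, and the diagonal of $\Igs$ over $\Bun_G$ factors through that of $\Igs^\ast$), so it is in particular representable in the sense needed. Second, to check it is an open immersion, by \cite[10.11(ii)]{Sch18} it suffices to check after a surjective map of v-stacks to the target; take the base change along $BL: \Gr_{G,\mu} \to \Bun_G$, which is surjective as a map of v-stacks by \Cref{BLSurj}, so $\Gr_{G,\mu} \times_{\Bun_G} \Igs^\ast \to \Igs^\ast$ is surjective. Then $\Gr_{G,\mu}\times_{\Bun_G}\Igs = \Shi^\circ_{K^p}$ and $\Gr_{G,\mu}\times_{\Bun_G}\Igs^\ast = \Shib$ by \Cref{cartesian} and \Cref{cpf}, and the induced map is precisely the open immersion $\Shi^\circ_{K^p} \hookrightarrow \Shib$ — which is an open immersion because $\Shib$ is (the diamond attached to) the minimal compactification with infinite level at $p$ and $\Shi^\circ_{K^p}$ is its good-reduction open locus, a statement that holds at each finite level $K_p$ and passes to the limit.

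The main obstacle will be verifying carefully that $\Shi^\circ_{K^p} \hookrightarrow \Shib$ really is an open immersion of diamonds in this precise formulation — i.e. that $\Shi^\circ_{K^p}$ as we have defined it (the pullback of the good reduction locus $\Shi^\circ_K$ to infinite level) coincides with the open subspace of $\Shib = \varprojlim_{K_p} S^{\ast,\diamond}_{K_pK^p,E}$ cut out fiberwise by good reduction, and that openness is stable under the inverse limit over $K_p$ and is compatible with the identifications of \Cref{cpf}. One must also confirm that the map $\Gr_{G,\mu}\times_{\Bun_G}\Igs^\ast \to \Shib$ coming from the universal property in \Cref{construction} is literally the structural identification, not just an abstract isomorphism, so that the open immersion is the expected one; this is implicit in the proof of \Cref{cpf} but should be spelled out. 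Given these, openness descends along the pro-\'etale (hence v-) surjection, and we conclude.

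\begin{proof}
By \cite[10.11(ii)]{Sch18}, to check that $\Igs \to \Igs^\ast$ is an open immersion it suffices to do so after a surjective base change of v-stacks on the target. The Beauville-Laszlo map $BL\colon \Gr_{G,\mu}\to \Bun_G$ is surjective as a map of pro-\'etale stacks, hence of v-stacks, by \Cref{BLSurj}; composing with the $0$-truncated structure map $\Igs^\ast\to \Bun_G$ we get a surjection $\Gr_{G,\mu}\times_{\Bun_G}\Igs^\ast \to \Igs^\ast$. By \Cref{cartesian} we have $\Gr_{G,\mu}\times_{\Bun_G}\Igs \cong \Shio$, and by \Cref{cpf} we have $\Gr_{G,\mu}\times_{\Bun_G}\Igs^\ast \cong \Shib$, compatibly with the maps to $\Gr_{G,\mu}$ and with the inclusion $\Igs\hookrightarrow \Igs^\ast$; under these identifications the base change of $\Igs\to \Igs^\ast$ along $BL$ is the natural map $\Shio \to \Shib$. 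But $\Shib = \varprojlim_{K_p} S^{\ast,\diamond}_{K_pK^p,E}$ and, at each finite level $K_p$, the good reduction locus $S^{\circ,\diamond}_{K_pK^p,E}$ is an open subspace of $S^{\ast,\diamond}_{K_pK^p,E}$ (it is the preimage, under the analytification of $S^\ast_{K_pK^p,E}$, of the open analytic locus of good reduction); passing to the inverse limit over $K_p$, the open immersions $S^{\circ,\diamond}_{K_pK^p,E}\hookrightarrow S^{\ast,\diamond}_{K_pK^p,E}$ yield an open immersion $\Shio \hookrightarrow \Shib$. Hence $\Igs\to \Igs^\ast$ becomes an open immersion after a surjective base change, and is therefore an open immersion.
\end{proof}
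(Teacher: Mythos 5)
Your proof is correct and follows essentially the same route as the paper: check openness v-locally via \cite[10.11]{Sch18} by pulling back along the surjective map $\Gr_{G,\mu}\to\Bun_G$, where by \Cref{cartesian} and \Cref{cpf} the map becomes the open immersion of the good reduction locus $\Shio$ into the minimal compactification $\Shib$. The extra care you take about the compatibility of the identifications and the finite-level openness passing to the limit is fine but not a departure from the paper's argument.
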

\begin{proof}
Being an open immersion can be checked v-locally by \cite[Proposition 10.11]{Sch18}. In particular, we can check the statement by pulling back to $\Gr_{G,\mu}$, where this map becomes the open immersion of the good reduction locus into the minimal compactification.
\end{proof}

\subsection{Newton stratification}

Fix an algebraically closed field $k$ containing the residue field $\F_q$ of $E$. Let $B(G)$ be the Kottwitz set for $G=G_{\Q_p}$ and $B(G,\mu)$ be the subset of $\mu$-admissible elements. We have discussed in Section 7 the Newton stratification on $\Bun_{G, k}$ labeled by $B(G)$. We can pull it back to define the Newton stratification on the Igusa stack. As before, we fix the level subgroup $K^p$ and write $\Igs$ for $\Igs^\circ_{K^p}$, $\Igs^\ast$ for $\Igs^\ast_{K^p}$. For each $[b]\in B(G,\mu)$, we choose a representative $\mathbb{X}_b$ of the corresponding isogeny class of $p$-divisible groups over $k$ and use them to define the perfect Igusa varieties $\mathrm{Ig}^b$ over $k$. We have an action of the formal group scheme $\underline{\mathrm{Aut}}_G(\tilde{\mathbb{X}}_b)$ on $\mathrm{Ig}^b$ as described in Proposition~\ref{prop:ActionofAut}. We also recall the v-sheaf of groups $\widetilde{G}_b$ from Definition/Proposition~\ref{BunGStrata}, which we think of as the sheaf of automorphisms of the $\Spd k$-point of $\Bun_G$ corresponding to $\tilde{\mathbb{X}}_b$.

\begin{defn}
    For any $[b]\in B(G,\mu)$, we define the following locally closed substacks of $\Igs_{k}^\ast$
    \[\Igs^{\ast,b}:= \Igs^{\ast}_k \times_{|\Bun_G|}\{[b]\}.\] 
    \[\Igs^{b}:= \Igs_k \times_{|\Bun_G|}\{[b]\}.\]
\end{defn}

Before we give a more explicit description of these strata, we need two lemma.

\begin{lemma}\label{lemma: UniqueExistenceJbAction}
Under Assumption~\ref{Assumption: codim}, there is a unique extension of the action of the formal group scheme $\underline{\mathrm{Aut}}_G(\tilde{\mathbb{X}}_b)$ on $\mathrm{Ig}^b$ to the partial minimal compactification $\mathrm{Ig}^{b,*}$.     
\end{lemma}
\begin{proof}
We denote $\underline{\mathrm{Aut}}_G(\tilde{\mathbb{X}}_b)$ by $\widetilde{J}_b$ in this proof to simplify notation. The action of $\widetilde{J}_b$ on $\mathrm{Ig}^b$ is given by a morphism 
\[\widetilde{J}_b\times_{k} \mathrm{Ig}^b\to \mathrm{Ig}^b.\]
Taking global sections gives a ring homomorphism
\[\mathcal{O}(\mathrm{Ig}^{b,\ast})\simeq \mathcal{O}(\mathrm{Ig}^b)\to \mathcal{O}(\widetilde{J}_b\times_{k} \mathrm{Ig}^b)\simeq \mathcal{O}(\widetilde{J}_b\times_{k} \mathrm{Ig}^{b,\ast}),\]
which gives the desired action map $\widetilde{J}_b\times_{k} \mathrm{Ig}^{b,\ast}\to \mathrm{Ig}^{b,\ast}$ by affineness of $\mathrm{Ig}^{b,\ast}$. The uniqueness is clear.
\end{proof}

Let $S=\Spa(R,R^+)$ in $\Perf_k$ be affinoid perfectoid, with chosen pseudo-uniformizer $\varpi$. Recall from the construction in Proposition~\ref{prop: MapIgstoBunG} that we have a functor from $p$-divisible group over $R^+/\varpi$ up to isogenies to vector bundles on the Fargues-Fontaine curve $X_S$ via their rational crystalline Dieudonn\'e modules. We denote this functor by $\mathcal{G}\mapsto \mathscr{E}(\mathcal{G})$. 
\begin{lemma}[{Cf. \cite[Corollary 6.3]{Far16}}]\label{lemma: EFullFaithful}
    The functor $\mathscr{E}$ is fully faithful on $p$-divisible groups, whose Newton polygons are fiber-wise constant over $\Spec(R^+/\varpi)$.
\end{lemma}
\begin{proof}
     Let $\mathcal{G}$ and $\mathcal{H}$ be two $p$-divisible groups over $R^+/\varpi$. Then by \cite[Theorem A]{SW}, homomorphisms between them in the isogeny category are the same as the homomorphisms between their rational crystalline Dieudonn\'e modules, which we denote by $M(\mathcal{G})[1/p]$ and $M(\mathcal{H})[1/p]$ accordingly. We have 
     \[\Hom(\mathcal{G}, \mathcal{H})[1/p] = (M(\mathcal{G})^\vee\otimes M(\mathcal{H})[1/p])^{\varphi_\mathcal{G}^\vee\otimes\varphi_\mathcal{H}=1}.\]
     But by \cite[Lemma 4.2.15, 4.2.16]{CS17}, we may assume $R^+ = R^\circ$, in which case the latter is also the global sections of $\mathscr{E}(\mathcal{G})^\vee\otimes \mathscr{E}(\mathcal{H})$.
\end{proof}

\begin{cor}\label{cor: CompareAut}
The v-sheaf $\underline{\mathrm{Aut}}_G(\tilde{\mathbb{X}}_b)^\diamond$ on $\Perf_k$ is isomorphic to $\widetilde{G}_b$.   
\end{cor}
\begin{proof}
    Let $S=\Spa(R,R^+)$ be an affinoid perfectoid in $\Perf_k$. We have
    \[\underline{\mathrm{Aut}}_G(\tilde{\mathbb{X}}_b)^\diamond (S)= \varinjlim_n \operatorname{Aut}_G(\tilde{\mathbb{X}}_b\times_k R^+/\varpi^n)\simeq \operatorname{Aut}_G(\tilde{\mathbb{X}}_b\times_k R^+/\varpi),\]
    where the first identity follows from the definition of the small diamond functor and the second isomorphism follows from Theorem~\ref{ST0}. By Lemma~\ref{lemma: EFullFaithful} above, the latter agrees with the automorphism group of the $G$-bundle $\mathscr{E}(\mathbb{X}_b\times_kR^+/\varpi)=\mathscr{E}(\mathbb{X}_b)|_{X_S}$, which is $\widetilde{G}_b(S)$ as desired.
\end{proof}

Let $\mathrm{Ig}^{b,\ast,\diamond}$ be the v-sheaf attached to $\mathrm{Ig}^b$ using the small diamond functor, and $\overline{\mathrm{Ig}^{b,\ast,\diamond}}$ be its canonical compactification towards $\Spd k$. Then $\widetilde{G}_b$ acts on it through the isomorphism in Corollary~\ref{cor: CompareAut} and the action of $\underline{\mathrm{Aut}}_G(\tilde{\mathbb{X}}_b)$ on $\mathrm{Ig}^{b,\ast}$ in Lemma~\ref{lemma: UniqueExistenceJbAction}. Since $\widetilde{G}_b$ is partially proper by Remark~\ref{Y}, it extends to an action on $\overline{\mathrm{Ig}^{b,\ast,\diamond}}$.
\begin{prop}\label{stratum}
    We have the following identifications
    \begin{align*}
    \overline{\Igs^b}& \simeq [\overline{\mathrm{Ig}^{b,\diamond}}/\widetilde{G}_b]\\
    \Igs^{\ast,b}& \simeq [\overline{\mathrm{Ig}^{b,\ast,\diamond}}/\widetilde{G}_b],
    \end{align*}
    where $\overline{\Igs^b}$ is the canonical compactification of $\Igs^b$ towards $\Bun_G^b$ and the $\widetilde{G}_b$-actions are as described above.
\end{prop}
\begin{proof}
    The element $b$ defines a surjective map $\Spd k\rightarrow \Bun_G^b$, whose fiber is the v-sheaf $\widetilde{G}_b$. We first check that for $\overline{\Igs^b}$, there is a canonical identification
    \[\overline{\mathrm{Ig}^{b,\diamond}} \simeq \overline{\Igs^{b}} \times_{\Bun_G^b}\Spd k. \]
    Indeed, for any $S=\Spa(R,R^+)$ in $\Perf_k$, using Lemma~\ref{lemma: EFullFaithful}, an $S$-point of the right hand side amounts to a pair $(A_0, \alpha)$, where $A_0$ is an abelian scheme over $R^\circ/\varpi$, considered up to isogeny, for some pseudo-uniformizer $\varpi \in R^+$; and $\alpha: A_0[p^\infty]\dashrightarrow \mathbb{X}_b\times_{k}R^\circ/\varpi$ is a quasi-isogeny. This is the same as a $\Spec R^\circ/\varpi$-point of $\mathrm{Ig}^b$. Since $\mathrm{Ig}^b$ is a perfect scheme, such a point lifts automatically to the perfection $\Spec R^\circ\simeq \Spec (R^\circ/\varpi)^\mathrm{perf}$. But $\Spec R^\circ\to \mathrm{Ig}^b$ gives precisely an $S$-point of the v-sheaf $\overline{\mathrm{Ig}^{b,\diamond}}$. Clearly the $\widetilde{G}_b$-torsor structure on $\overline{\mathrm{Ig}^{b,\diamond}}\to \overline{\Igs^b}$ is given by the action of $\widetilde{G}_b$ via its identification with $\underline{\operatorname{Aut}}_G(\tilde{\mathbb{X}}_b)^\diamond$.
\end{proof}

\hfill

\section{Hecke action} 	 
    This short section is devoted to the part of Conjecture \ref{conjecture} regarding the away-from-$p$ Hecke action on the Igusa stack. We adopt the notation from Section 5 and fix the level at $p$ to be $K_p=G_{\Z_p}(\Z_p)$. The adelic group $G(\Af^p)$-acts on the inverse system $\{S_{K_pK^p}\}_{K^p}$ as follows: for any $g\in G(\Af^p)$, there is a map between $\CO_E$-schemes
\[\gamma_g: S_{K_pK^p}\rightarrow S_{K_pg^{-1}K^pg}\]
sending a tuple $(A,\iota,\lambda,\bar{\eta}=K^p\cdot \eta)$ to $(A,\iota, \lambda, g^{-1}\bar{\eta}=g^{-1}K^pg\cdot g^{-1}\eta)$. By Lan \cite[Section 7.2.5]{Lan13}, this action extends to the system of minimal compactifications $\{S_{K_pK^p}^*\}_{K^p}$. By taking attached formal schemes, adic spaces or diamonds, we have a $G(\Af^p)$-action on all these variants of Shimura varieties.

Similarly, since the action is simply on the away-from-$p$ level structures, we can define an action of $G(\Af^p)$ on the system $\{\Igs_{K^p}^\circ\}_{K^p}$ where $K^p$ runs through compact open subgroups of $G(\Af^p)$: for any $g\in G(\Af^p)$, define a map of fibered categories
\[\Igs^{\text{pre},\circ}_{K^p}\rightarrow \Igs^{\text{pre},\circ}_{g^{-1}K^pg}\]
which sends a tuple $(A_0,\iota,\lambda,\bar{\eta})$ to $(A_0,\iota, \lambda, g^{-1}\bar{\eta})$ and a quasi-isogeny between two tuples to the same quasi-isogeny. This induces a map of v-stacks
\[\bar{\gamma}_g^\circ:\Igs^\circ_{K^p}\rightarrow \Igs^\circ_{g^{-1}K^pg}.\]

Under the assumption that the minimal compactification boundary of the Shimura variety has codimension at least two, see our classification in Proposition \ref{classification}, the Hecke action extends to the minimal compactifications 
$\{\Igs_{K^p}^\ast\}_{K^p}$, which is clear from the formula of $\Igs^*_{K^p}$ given in Definition \ref{construction}. By checking on the moduli problems, we have

\begin{prop}\label{Hecke}
Let $K=K_pK^p$ with $K_p=G_{\Z_p}(\Z_p)$ as before. For any $g\in G(\Af^p)$, the following diagrams of v-stacks over $\Bun_G$ commute (on the nose, as all maps to $\Bun_G$ are $0$-truncated)
\[
\begin{tikzcd}
\Shi^\ast_{K}\ar[r,"\gamma^\ast_g"]\ar[d] & \Shi^\ast_{g^{-1}Kg}\ar[d] & & \Igs^\ast_{K^p} \ar[d,"\bar{\gamma}^\ast_g"] \ar[dr,"\bar{\pi}^\ast_{HT}"]&\\
\Igs^\ast_{K^p}\ar[r,"\bar{\gamma}^\ast_g"] & \Igs^\ast_{g^{-1}K^pg}& & \Igs^\ast_{g^{-1}K^pg} \ar[r,"\bar{\pi}^\ast_{HT}"]& \Bun_G
\end{tikzcd}
\]
\end{prop}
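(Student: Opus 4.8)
The plan is to prove both diagrams first on the open Igusa stack $\Igs^\circ_{K^p}$ and the good reduction locus, where everything is governed by explicit moduli/presheaf descriptions, and then to propagate the statements to the minimal compactifications using the functoriality of the construction in \ref{construction} and the fiber product formula of \ref{cpf}. Throughout, the key structural observation is that the away-from-$p$ Hecke action $\bar{\gamma}_g^\circ$ modifies only the level datum $\bar{\eta}$ and leaves the underlying (formal) abelian scheme, its $\CO_B$-action and its polarization untouched.

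First I would check the triangle on the open level. Recall that $\bar{\pi}_{HT}^\circ$ sends a quadruple $(A_0,\iota,\lambda,\bar{\eta})\in\Igs^{\text{pre}}(S)$ to the $G$-bundle $\mathscr{E}(\AV_0)$ on $X_S$ built from the rational crystalline Dieudonn\'e module of $A_0[p^\infty]$ with its induced $\CO_B\otimes\CO_{X_S}$-structure and symplectic pairing --- data depending only on $(A_0[p^\infty],\iota,\lambda)$, not on $\bar{\eta}$. Hence there is a canonical identification $\mathscr{E}(\bar{\gamma}_g^\circ(\AV_0))=\mathscr{E}(\AV_0)$, so $\bar{\pi}_{HT}^\circ\circ\bar{\gamma}_g^\circ=\bar{\pi}_{HT}^\circ$; since both are $0$-truncated maps to $\Bun_G$ this is an equality on the nose, and in particular $\bar{\gamma}_g^\circ$ is a morphism over $\Bun_G$. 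Similarly, comparing presheaves, the composite $\Shi^{\circ,\text{pre}}_{K^p}\xrightarrow{\mathrm{red}}\Igs^{\text{pre}}_{K^p}\xrightarrow{\bar{\gamma}_g^\circ}\Igs^{\text{pre}}_{g^{-1}K^pg}$ sends $(S^\sharp,\mathfrak{A},\iota,\lambda,\bar{\eta})$ to $(A_0,\iota,\lambda,g^{-1}\bar{\eta})$ with $A_0=\mathfrak{A}\times_{R^{\sharp+}}R^+/\varpi$, which is exactly what one gets by first applying $\gamma_g$ on Shimura varieties and then reducing; stackification preserves this, giving the commutative square with $\circ$ in place of $\ast$.

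Next I would upgrade to the minimal compactifications. By \ref{construction}, for a strictly totally disconnected $T\to\Bun_G^\sharp$ one has $\Igs^\ast_{K^p,T}=\overline{(T\times_{\Bun_G}\Igs^\circ_{K^p})_0}^{/T}$, and likewise for $g^{-1}K^pg$. Since $\bar{\gamma}_g^\circ$ is a $\Bun_G$-map it induces $T\times_{\Bun_G}\Igs^\circ_{K^p}\to T\times_{\Bun_G}\Igs^\circ_{g^{-1}K^pg}$, hence a map on affinizations by \ref{affinization} and a map on canonical compactifications towards $T$ by functoriality of $\overline{(\cdot)}^{/T}$ (\cite[18.7, 18.8]{Sch18}); these glue over the v-topology to $\bar{\gamma}_g^\ast$, which is by construction a morphism over $\Bun_G$, so the triangle commutes on the nose. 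For the square, I would invoke \ref{cpf} (and, for the finite level at $p$ appearing in the statement, the observation that $\Igs^\ast_{K^p}$ carries the trivial $G(\Q_p)$-action, so $\Shi^\ast_{K^p}\to\Igs^\ast_{K^p}$ descends to $\Shi^\ast_K\to\Igs^\ast_{K^p}$): under $\Shi^\ast_{K^p}\times_{\Gr_{G,\mu}}T\cong\Igs^\ast_{K^p,T}\times_{\Bun_G}\Gr_{G,\mu}$, the Hecke map $\gamma^\ast_g$ of Lan \cite[7.2.5]{Lan13} restricts on the good reduction locus to $\bar{\gamma}_g^\circ\circ\mathrm{red}$ by the presheaf computation, and this identity extends over $\overline{(\cdot)}^{/T}$ by uniqueness of extensions into partially proper targets; descending to v-stacks yields the square.

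The main obstacle is not conceptual but bookkeeping: one must verify at each stage that the Hecke maps are genuinely morphisms over $\Bun_G$ (so that the relative affinization defining $\Igs^\ast$ applies), and that the various identifications already in play --- stackification versus the presheaf on a basis, $\Igs^\ast_T\times_{\Bun_G}\Gr_{G,\mu}$ versus $\Shi^\ast_{K^p}\times_{\Gr_{G,\mu}}T$ of \ref{cpf}, and the canonical compactifications of \ref{construction} --- are all mutually compatible. The triangle does the real work here, since establishing $\bar{\pi}_{HT}^\circ\circ\bar{\gamma}_g^\circ=\bar{\pi}_{HT}^\circ$ is what promotes $\bar{\gamma}_g^\circ$, and then $\bar{\gamma}_g^\ast$, to the category of v-stacks over $\Bun_G$; once that is in hand the square is a formal consequence of the presheaf-level computation and the functoriality statements already proved.
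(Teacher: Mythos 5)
Your proposal is correct and follows essentially the same route as the paper: the triangle holds ``by construction'' because the Hecke action only modifies the prime-to-$p$ level datum and hence leaves the associated $G$-bundle (and the relative construction of $\Igs^\ast$ over $\Bun_G$) unchanged, and the square is reduced, via the identification of \ref{cpf} on strictly totally disconnected test objects, to the evident presheaf-level compatibility on the good reduction locus together with the fact that maps out of $\overline{(\Shi^\circ_{K^p}\times_{\Gr_{G,\mu}}T)_0}^{/T}$ are determined by their restriction to the open locus. You merely spell out details the paper leaves implicit (the open-level computations, functoriality of affinization and canonical compactification, and the descent from infinite to finite level at $p$), so no substantive difference in method.
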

\begin{proof}
The right diagram is commutative by construction. For the commutativity of the left diagram, it suffices to pullback the right diagram along $BL:\Gr_{G,\mu}\rightarrow \Bun_G$. When restricted to the good reduction locus of the Shimura variety and the open substack $\Igs^\circ_{K^p}$, it is clear that $\bar{\gamma}_g^\circ$ pulls back to $\gamma_g$. But as we have shown in \Cref{cpf}, on a strictly totally disconnected test object $T$, we have 
\[\Shi_{K^p}^\ast\times_{\Gr_{G,\mu}}T \cong \overline{(\Shi_{K^p}^\circ\times_{Gr_G}T)_0}^{/T},\]
so the map $\gamma_g^\ast$ is uniquely determined by its restriction to $\Shi_{K^p}^\circ$. Hence it must agree with the pullback of $\bar{\gamma}^\ast_g$.
\end{proof}


\hfill

\section{Integral model}
     Consider the formal integral model $\mathscr{S}_K$ over $\Spf \CO_E$ of the Shimura variety at hyperspecial level and its attached $v$-sheaf $\SK$ over $\Spd \CO_E$ as in Section 5. We have the reduction map
\[\mathrm{red}: \SK\rightarrow \Igs_{K^p}^\circ\]
from Remark \ref{IntRed}. The goal of this section is to extend the fiber product structure on $\Shi_{K}^\circ$ to this v-sheaf integral model, substituting the left vertical map in the cartesian diagram in Corollary \ref{LevelK} by the above map. In the integral model diagram the map $\overline{\pi}^\circ_{HT}: \Igs^\circ_{K^p}\rightarrow \Bun_G$ remains unchanged, but the minuscule Schubert cell $\Gr_{G,\mu,K_p}$ will be replaced by a moduli stack of shtukas with extra structures. The first two subsections are purely local and work for general reductive groups. Starting from Section 11.3 we switch back to the global PEL-setup. We begin by some recollection, see Remark \ref{Y} for the notation $\mathcal{Y}$, $\mathcal{Y}_I$. 

\subsection{\texorpdfstring{$\G$}{}-torsors, shtukas and BKF-modules}

Let $G_{\Z_p}$ over $\Z_p$ be a reductive group scheme. We denote by $\G$ the adic space representing hte functor that takes an adic space $S$ over $\Z_p$ to $G_{\Z_p}(\CO_S(S))$. If $G_{\Z_p}=\Spec(A)$, then $\G=\Spa(A,A^+)$, where $A^+$ is the integral closure of $\Z_p$ in $A$, equipped with discrete topology. We also write $\G(\Z_p)$ for $G_{\Z_p}(\Z_p)$ and the notation is understood as the $\Spa \Z_p$ points of $\G$. We have the notion of $\G$-torsors on sousperfectoid analytic adic spaces as in Definition \ref{G-bundle}.

\begin{defn}[{\cite[Definition 19.5.2]{Berkeley}}]
Let $X$ be a sousperfectoid analytic adic space over $\Z_p$. A $\G$-torsor $\CP$ is an \'etale sheaf on X with a $\G$-action which is \'etale locally $\G$-equivariantly isomorphic to $\G$. Equivalently, this is an exact tensor functor 
\[\mathrm{Rep}G_{\Z_p}\rightarrow \Bun(X),\]
from the exact tensor category of algebraic representations of $G_{\Z_p}$ on finite free $\Z_p$-modules to that of vector bundles on $X$. 
\end{defn}

Let $S=\Spa(R,R^+)\in \Perf/\Spd \Z_p$ be an affinoid perfectoid space of characteristic $p$ with an untilt $S^\sharp$ over $\Z_p$. We view it as a closed Cartier divisor on the analytic adic space $\mathcal{Y}_{[0,\infty)}(S)$. Write $\varphi_S$ for the Frobenius on $\mathcal{Y}_{[0,\infty)}(S)$.

\begin{defn}[{\cite[Definition 11.4.1]{Berkeley}}]
A shtuka over $S$ with one leg at $S^\sharp$ is a pair $(\CP,\varphi_\CP)$, where $\CP$ is a vector bundle on $\mathcal{Y}_{[0,\infty)}(S)$; and $\varphi_\CP$ is an isomorphism
\[\varphi_S^\ast\CP|_{\mathcal{Y}_{[0,\infty)}(S)\backslash S^\sharp}\cong \CP|_{\mathcal{Y}_{[0,\infty)}(S)\backslash S^\sharp},\]
which is meromorphic along $S^\sharp$ in the sense of \cite[Definition 5.3.5]{Berkeley}. A map between two shtukas $(\CP,\varphi_\CP)$ and $(\CP',\varphi_{\CP'})$ is a map of vector bundles $f:\CP\rightarrow \CP'$ such that $\varphi_{\CP'}\circ \varphi_S^*f = f\circ \varphi_\CP$.
\end{defn}

\begin{defn}[{\cite[Section 23.1]{Berkeley}}]\label{shtuka}
A $\G$-shtuka over $S$ with one leg at $S^\sharp$ is a pair $(\CP,\varphi_\CP)$,
where $\CP$ is a $\G$-torsor on $\mathcal{Y}_{[0,\infty)}(S)$; and $\varphi_\CP$ is an isomorphism

\[\varphi_S^\ast \CP|_{\mathcal{Y}_{[0,\infty)}(S) \backslash S^\sharp} \cong \CP|_{\mathcal{Y}_{[0,\infty)}(S) \backslash S^\sharp},\]
which is meromorphic along $S^\sharp$. A map between two $\G$-shtukas $(\CP,\varphi_\CP)$ and $(\CP',\varphi_{\CP'})$ is a map of $\G$-torsors $f:\CP\rightarrow \CP'$ such that $\varphi_{\CP'}\circ \varphi_S^\ast f = f\circ \varphi_\CP$.
\end{defn}

We have a notion of boundedness: recall from \cite[Definition 20.3.1]{Berkeley} that the mixed characteristic Beilinson-Drinfeld affine Grassmannian attached to $\G$ is the small v-sheaf $\Gr_{\G}$ over $\Spd\Z_p$ sending $S\in \Perf$ to the set of triples consisting of an untilt $S^\sharp$, a $\G$-torsor $\mathcal{F}$ on $\mathcal{Y}_{[0,\infty)}(S)$ and a trivialization $\alpha$ of $\mathcal{F}|_{\mathcal{Y}_{[0,\infty)}(S)\backslash S^\sharp}$, meromorphic along $S^\sharp$. Or equivalently this sends $S$ to the set of triples $\{(S^\sharp, \mathcal{F},\alpha)\}$, where $S^\sharp$ is an untilt, $\mathcal{F}$ is a $G_{\Z_p}$-torsor on $\Spec(\Bdr^+(R^\sharp))$ and $\alpha$ is a trivialization over $\Spec(\Bdr(R^\sharp))$, where if $R^\sharp$ has characteristic $p$, $\Bdr^+(R^\sharp)$ is defined to be $W(R^\sharp)$ and $\Bdr(R^\sharp)$ is $W(R^\sharp)[1/p]$.

Fix a maximal torus and a Borel $T\subset B\subset G_{\overline{\Q}_p}$. Let $\mu: \Gm_{\overline{\Q}_p}\rightarrow G_{\overline{\Q}_p}$ be a dominant cocharacter, whose $G(\overline{\Q}_p)$-conjugacy class is defined over some finite extension $E$ over $\Q_p$. (Later we will take $\mu$ to be the dominant cocharacter and $E$ the local field determined by the global PEL-datum, so we do not introduce new notation.) We have as in \cite[Definition 20.3.5]{Berkeley}, a Schubert variety $\Gr_{\G, \CO_E, \leq \mu}\subset \Gr_{\G, \CO_E}$ over $\Spd \CO_E$, which is the subfunctor where geometric-pointwise on a test object $S$, the relative position of $\mathcal{F}$ and the trivial $\G$-torsor is bounded by $\mu$, i.e. if the point is of characteristic $p$, the pair $(\mathcal{F},\alpha)$ lies in the closure of the Schubert cell labeled by $\mu$ in the Witt vector affine Grassmannian; otherwise in that of the $\Bdr^+$-affine Grassmannian. 

\begin{defn}
    Given $S\in \Perf$ with an untilt $S^\sharp$ over $\Spa \CO_E$, we say a $\G$-shtuka $(\CP,\varphi_\CP)$ over $S$ with one leg at $S^\sharp$ is \textit{bounded by $\mu$}, if geometric-pointwise on $S$, the relative position of $\varphi_S^\ast\CP$ and $\CP$, completed along $S^\sharp$ is bounded by $\mu$. Namely, for any geometric point $\bar{s}\in S$, choose a trivialization $\G\cong (\varphi_S^\ast\CP)_{\bar{s}^\sharp}$ of the stalk of the $\G$-torsor $\varphi_S^\ast\CP$ at $\bar{s}^\sharp$, \footnote{This is possible by smoothness of the group scheme $G_{\Z_p}$ and henselianness of the completion of $\CO_{\mathcal{Y}_{[0,\infty)}(S)}$ at $\bar{s}^\sharp$.} so that the pair $(\CP_{\bar{s}^\sharp}, \varphi_{\CP,\bar{s}^\sharp})$ defines an $\bar{s}$-point of $\Gr_{\G,\CO_E}$. Then this point lies in $\Gr_{\G,\CO_E,\leq \mu}$.
\end{defn}

\begin{remark}\label{rem: SignShtuka}
    If $G_{\Z_p}$ is a smooth parahoric group scheme, one can also define $\G$-shtukas with a notion of boundedness by $\mu$ using the local model $\mathbb{M}_{\mathcal{G},\mu}$, see \cite[Section 2.4.4]{PR}. Note also that our sign convention agrees with \textit{loc. cit.}. Namely, we  trivialize the stalk of $\varphi_S^\ast \mathcal{P}$ instead of that of $\mathcal{P}$. This leads to the slightly confusing fact that the generic fiber of $\mathrm{Sht}_{\mathcal{G},\mu}$ we consider in Definition~\ref{def: ShtukaGmu} is the Schubert cell $[\Gr_{G,\mu^{-1}}/\underline{K_p}]$. We choose this sign convention, so that the local model has the same cocharacter as the Hodge cocharacter of the Shimura variety, as in \cite[Section 4.9]{PR}. This sign convention is necessary, if one wants the local model diagram for the Shimura variety to be compatible with the classical Borel embedding on the generic fiber, as stated in \cite[Section 4.9.2]{PR}, cf. Remark~\ref{rem: SignHT}. 
\end{remark}

As objects with more favorable algebraic properties, we introduce Breuil-Kisin-Fargues modules with $\G$-structure and discuss their relation to $\G$-shtukas. 

\begin{defn}
\label{BKF}
    Let $S$, $S^\sharp$ be as above. Write $\varphi_S$ for the Frobenius on $W(R^+)$ and $\xi$ is a chosen generator of the kernel of Fontaine's map $\theta: W(R^+)\rightarrow R^{\sharp+}$.\footnote{Recall from \cite[Definition 3.5, Lemma 3.20]{BS} that $\operatorname{ker}\theta$ is principal.} A \textit{Breuil-Kisin-Fargues module (BKF-module)}, respectively a $\G$-BKF-module, over $S$ with a leg at $S^\sharp$ is a pair $(M,\varphi_M)$ consisting of a finite projective $W(R^+)$-module $M$, respectively a $G_{\Z_p}$-torsor $M$ over $\Spec(W(R^+))$, together with an isomorphism 
    \[\varphi_M: (\varphi_S^\ast M)[1/\xi]\xrightarrow{\sim}M[1/\xi].\]
\end{defn}

A $\G$-BKF module $(M,\varphi_M)$ defines a $\G$-shtuka by the following construction: restrict $(M,\varphi_M)$ to the punctured spectrum $\Spec(W(R^+))\backslash V(I)$, where $I=(p,[\varpi])$. Using Tannakian formalism and the equivalence between the exact tensor category of vector bundles on $\Spec(W(R^+))\backslash V(I)$ and that on $\mathcal{Y}(S)$ due to Kedlaya (cf. \cite[Theorem 2.1.6 a)]{PR}), this gives a $\G$-torsor with meromorphic Frobenius on the latter. Further restricting to $\mathcal{Y}_{[0,\infty)}(S)$, one obtains a $\G$-shtuka. In the rest of this subsection, we will show that when $S$ is a product of rank one geometric points, all $\G$-shtukas come from $\G$-BKF-modules in such a way. For this we need the following key input from the recent work of Gleason--Ivanov--Zillinger \cite{GIZ}.\footnote{During the time this paper is being reviewed, Gleason--Ivanov--Zillinger's paper has been updated. We refer here to results in the first Arxiv version of their paper. This is to avoid circulation, since in Remark 1.10 in their second version, they have referred to this paper.}

\begin{theorem}[{\cite[Version 1, Theorem 8.6]{GIZ}, cf. \cite[Version 1, Corollary 1.9]{GIZ}}]\label{VBextend}
    Let $S$ be a product of rank one geometric points with untilt $S^\sharp$, then the tensor category of shtukas over $S$ with a leg at $S^\sharp$ is equivalent to that of vector bundles with meromorphic Frobenius over $\mathcal{Y}(S)=\mathcal{Y}_{[0,\infty]}(S)$.
    
\end{theorem}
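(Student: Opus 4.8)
The statement is the result of Gleason--Ivanov quoted above, and the plan would be to reconstruct their argument along the following lines. The functor in question is restriction along the open immersion $\mathcal{Y}_{[0,\infty)}(S)\hookrightarrow \mathcal{Y}(S)=\mathcal{Y}_{[0,\infty]}(S)$; it is visibly compatible with tensor products and duals and sends a meromorphic Frobenius to a meromorphic Frobenius, so the whole content is that it is fully faithful and essentially surjective. First I would record the decomposition $\mathcal{Y}(S)=\mathcal{Y}_{[0,\infty)}(S)\cup\mathcal{Y}_{(0,\infty]}(S)$, glued along $Y_S=\mathcal{Y}_{(0,\infty)}(S)$, and observe that the complement $\mathcal{Y}(S)\setminus\mathcal{Y}_{[0,\infty)}(S)$ is the Cartier divisor $V([\varpi])$ sitting inside the locus of $\Spa W(R^+)$ where $p$ is invertible. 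Via a Beauville--Laszlo descent along this two-term cover, the theorem reduces to two tasks: (i) extend a vector bundle $\mathcal{P}$ on $\mathcal{Y}_{[0,\infty)}(S)$ equipped with its meromorphic Frobenius across $V([\varpi])$, functorially in the shtuka; and (ii) do the same for morphisms.

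For the uniqueness in both tasks, I would note that restriction to the dense open $\mathcal{Y}_{[0,\infty)}(S)$ is faithful, so any extension is automatically unique and automatically compatible with the tensor and Frobenius structures; this turns (ii) into the special case of (i) in which one extends a single global section of an already extended bundle, i.e.\ a lower-rank instance of the same problem. So the crux is the \emph{existence} of the extension in (i) -- a Hartogs-type statement across a divisor, which is false without the Frobenius and without the hypothesis on $S$.

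Here the Frobenius $\varphi_S$ is an automorphism of $\mathcal{Y}(S)$ (because $R^+$ is perfect), and it moves every point toward the end $\kappa=\infty$, i.e.\ toward $V([\varpi])$; iterating $\varphi_{\mathcal{P}}$ therefore rigidifies $\mathcal{P}$, and its hypothetical extension, near $V([\varpi])$ in terms of the restriction of $\mathcal{P}$ to a fixed annulus $\mathcal{Y}_{[a,b]}(S)$, with $a$ chosen above the radii of $S^\sharp$ and of all its Frobenius translates so that $\varphi_{\mathcal{P}}$ and its positive iterates are honest isomorphisms there. The remaining, and most technical, point is that for $S$ a product of rank one geometric points -- so $R^+=\prod_i\CO_{C_i}$ a product of rank one valuation rings, whence $W(R^+)=\prod_iW(\CO_{C_i})$ is coherent and the annulus algebras of the $\mathcal{Y}_I(S)$ are under uniform control -- the bundle together with its Frobenius genuinely descends to the relative Fargues--Fontaine curve away from the leg and extends across $V([\varpi])$. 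I would carry this out using Kedlaya's equivalence between vector bundles on $\Spec W(R^+)\setminus V(p,[\varpi])$ and on $\mathcal{Y}(S)$, reflexivity of vector bundles, and bookkeeping of the codimensions involved, exactly as one does (with the single-point case being essentially classical, from Scholze--Weinstein) but now uniformly in families. The obstacle I expect to be hardest is precisely this families statement: one cannot reduce to the single-point case, because localization at $p$ and the relevant adic constructions do not commute with the infinite product over the $\CO_{C_i}$, so it has to be proved by hand from the structure of products of valuation rings -- and this is where the restriction to products of rank one points is genuinely needed.

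Finally, granting the extension functor, I would check formally (again a density argument) that it is inverse to restriction and symmetric monoidal, and then upgrade to the $\G$-equivariant statement required for $\G$-shtukas versus $\G$-BKF-modules by the Tannakian description of Definition~\ref{G-bundle}, reducing all structure-group questions to $\GL_n$ by means of Anschütz's triviality of $G_{\Z_p}$-torsors on $\Spec W(R^+)[1/p]$.
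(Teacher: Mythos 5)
The paper does not actually prove this statement: Theorem \ref{VBextend} is imported as a black box from Gleason--Ivanov (\cite[Theorem 8.6]{G-I}), and the only arguments the paper builds on top of it are the Tannakian upgrade in Corollary \ref{Gextend} and the extension-to-BKF-modules statement in Proposition \ref{extension}. So there is no in-paper proof to compare your proposal against; what you have written is a plan for reconstructing an external theorem, and it has to be judged on its own.

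As a plan it captures the expected shape: the functor is restriction along $\mathcal{Y}_{[0,\infty)}(S)\hookrightarrow\mathcal{Y}(S)$, the complement is $V([\varpi])$ in the locus where $p$ is invertible, uniqueness and full faithfulness reduce to Frobenius-equivariant extension of sections, and the Frobenius contraction toward the missing end is the mechanism that makes a Hartogs-type extension plausible (this is how the single-point case in \cite[2.4.6]{PR} and Scholze--Weinstein goes). But the step that carries all of the content --- existence of the extension \emph{uniformly} over a product $R^+=\prod_i\CO_{C_i}$ of infinitely many rank one valuation rings --- is only gestured at (``bookkeeping of the codimensions'', ``proved by hand from the structure of products of valuation rings''). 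That is precisely the theorem of Gleason--Ivanov: $W(R^+)$ is non-noetherian, the pointwise extensions do not automatically assemble into a finite projective object or glue over $\mathcal{Y}(S)$ (compare the care taken in Proposition \ref{extension} and Remark \ref{ShtProd}, where one must restrict to collections commonly bounded by a cocharacter), and reflexivity/codimension arguments of noetherian type are not available as stated. So, as a proof, the proposal has a genuine gap exactly at its acknowledged crux. Two smaller points: faithfulness of restriction is fine, but extension of a morphism across $V([\varpi])$ is itself false without invoking Frobenius-equivariance, so your reduction of (ii) to (i) should be stated as a Frobenius-equivariant extension problem; and your final paragraph on $\G$-shtukas is not part of this statement --- the paper handles it separately in Corollary \ref{Gextend} (via semisimplicity of $\mathrm{Rep}_{\Q_p}G_{\Q_p}$ after restricting to $\mathcal{Y}_{(0,\infty)}(S)$), while Ansch\"utz's triviality of torsors over $W(R^\circ)[1/p]$ enters only in Proposition \ref{extension}, not here.
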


\begin{cor} \label{Gextend}
    Let $S$ be a product of rank one geometric points with untilt $S^\sharp$, then the category of $\G$-shtukas over $S$ with a leg at $S^\sharp$ is equivalent to that of $\G$-torsors with meromorphic Frobenius over $\mathcal{Y}(S)$.
\end{cor}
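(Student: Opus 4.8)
The strategy is to reduce the statement about $\G$-shtukas to the corresponding statement about vector bundles, Theorem \ref{VBextend}, by means of the Tannakian description of $\G$-torsors. Recall that a $\G$-shtuka over $S$ with a leg at $S^\sharp$ is, by Definition \ref{shtuka}, a $\G$-torsor on $\mathcal{Y}_{[0,\infty)}(S)$ together with a meromorphic Frobenius structure; since $\mathcal{Y}_{[0,\infty)}(S)$ is sousperfectoid (being covered by the affinoid sousperfectoid spaces $\Spa(R_n,R_n^+)$ of Remark \ref{Y}), Definition/Proposition \ref{G-bundle} lets us view such a $\G$-torsor as an exact tensor functor $\mathrm{Rep}\,G_{\Z_p}\to \Bun(\mathcal{Y}_{[0,\infty)}(S))$. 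A Frobenius structure on the $\G$-torsor is then the same datum as a compatible family of meromorphic Frobenius structures on the vector bundles $\CP(\rho)$ for $\rho\in \mathrm{Rep}\,G_{\Z_p}$, functorial in $\rho$ and compatible with the tensor structure; in other words, a $\G$-shtuka is precisely an exact tensor functor from $\mathrm{Rep}\,G_{\Z_p}$ to the category of shtukas over $S$ with a leg at $S^\sharp$. The same Tannakian reformulation applies verbatim on $\mathcal{Y}(S)=\mathcal{Y}_{[0,\infty]}(S)$, which is likewise sousperfectoid, so a $\G$-torsor with meromorphic Frobenius over $\mathcal{Y}(S)$ is an exact tensor functor from $\mathrm{Rep}\,G_{\Z_p}$ to the category of vector bundles with meromorphic Frobenius over $\mathcal{Y}(S)$.

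\textbf{Main step.} With both sides expressed as categories of exact tensor functors out of $\mathrm{Rep}\,G_{\Z_p}$, the corollary follows by post-composing with the equivalence of Theorem \ref{VBextend}. Concretely, Theorem \ref{VBextend} gives an exact tensor equivalence
\[
\Psi\colon \{\text{shtukas over }S\text{ with a leg at }S^\sharp\} \xrightarrow{\ \sim\ } \{\text{vector bundles with meromorphic Frobenius over }\mathcal{Y}(S)\},
\]
and composition with $\Psi$ induces a functor from exact tensor functors into the source to exact tensor functors into the target. Because $\Psi$ is an equivalence of exact tensor categories, composition with a quasi-inverse $\Psi^{-1}$ provides a functor in the opposite direction, and the two compositions are naturally isomorphic to the identity; this is a purely formal consequence of the fact that an exact tensor equivalence induces an equivalence on categories of exact tensor functors out of any fixed exact tensor category. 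One should check that $\Psi$ and $\Psi^{-1}$ genuinely preserve exactness (so that the composite of an exact tensor functor with $\Psi$ is again exact) and the tensor structure with its unit and associativity constraints; these are exactly the properties asserted in Theorem \ref{VBextend} together with the standard compatibility of the Kedlaya-type equivalence used there with tensor products.

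\textbf{The delicate point.} The only subtlety I anticipate is the bookkeeping around the Frobenius: one must verify that under the dictionary "$\G$-torsor with meromorphic Frobenius $\leftrightarrow$ exact tensor functor to (vector bundles / shtukas)", the \emph{meromorphy} of the Frobenius on the $\G$-torsor corresponds to meromorphy of the induced Frobenius on each associated vector bundle, uniformly and compatibly with morphisms of representations. This is where one uses that $G_{\Z_p}$ is reductive — a faithful representation exists, so the Frobenius on $\CP$ is meromorphic if and only if it is so after applying one (equivalently, every) faithful $\rho$, and the collection of such data for all $\rho$ assembles back to a Frobenius structure on the $\G$-torsor by Tannakian reconstruction. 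Granting this, the rest is formal, and no further input beyond Theorem \ref{VBextend} and Definition/Proposition \ref{G-bundle} is needed. Thus the proof is short: unwind both sides Tannakianly, then transport along the equivalence of \cite{G-I}.
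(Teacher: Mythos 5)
Your overall strategy --- unwind both sides Tannakianly and transport along the Gleason--Ivanov equivalence --- is the same as the paper's, but the step you dismiss as ``purely formal'' is precisely where the paper has to do work, and as written your argument has a gap. Writing a $\G$-shtuka as an exact tensor functor from $\mathrm{Rep}\,G_{\Z_p}$ to shtukas and composing with the extension direction $\Psi^{-1}$ of the equivalence in Theorem \ref{VBextend} only produces a \emph{tensor} functor to vector bundles on $\mathcal{Y}(S)$; to conclude that it defines a $\G$-torsor you need the composite to be \emph{exact}, i.e.\ you need the extension functor to carry sequences of shtukas that are exact on $\mathcal{Y}_{[0,\infty)}(S)$ to sequences of bundles that are exact on all of $\mathcal{Y}(S)$, including along the added locus $V([\varpi])\cap\{p\neq 0\}$. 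Theorem \ref{VBextend} as stated asserts a tensor equivalence, not an exact one, so the exactness you need is not ``exactly the property asserted'' there; and since $\mathrm{Rep}\,G_{\Z_p}$ (representations on finite free $\Z_p$-modules) is not semisimple, exactness of the composite is a genuine condition rather than an automatic one. The ``delicate point'' you flag (meromorphy of the Frobenius, checked on a faithful representation) is comparatively harmless bookkeeping; the real crux is this exactness transfer across the boundary.

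The paper's proof is arranged to avoid exactly this issue: it observes that the extension from $\mathcal{Y}_{[0,\infty)}(S)$ to $\mathcal{Y}(S)$ only concerns the locus where $p$ is invertible, so one may restrict the given shtuka to $\mathcal{Y}_{(0,\infty)}(S)$ and study the extension to $\mathcal{Y}_{(0,\infty]}(S)$, gluing back with the original torsor over $\mathcal{Y}_{(0,\infty)}(S)$. On $\mathcal{Y}_{(0,\infty]}(S)$ a $\G$-torsor is the same as a $G_{\Q_p}$-torsor, i.e.\ a tensor functor out of the \emph{semisimple} category of finite-dimensional $\Q_p$-representations of $G_{\Q_p}$, for which the exactness condition is vacuous; the tensor equivalence for vector bundles then immediately upgrades to the equivalence for $\G$-torsors. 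To salvage your integral formulation you would have to actually prove that the equivalence of Theorem \ref{VBextend} is bi-exact (or otherwise show the composed tensor functor is exact); absent that, you should follow the paper's reduction to the locus where $p$ is invertible.
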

\begin{proof}
Combine Lemma \ref{VBextend} and Tannakian formalism: since the locus where $p=0$ has nothing to do with the extension we are interested in, we can restrict a given shtuka to $\mathcal{Y}_{(0,\infty)}(S)$ and consider the statement there. But $\G$-torsors on $\mathcal{Y}_{(0,\infty]}(S)$ are the same as exact tensor functors from the category of finite dimensional algebraic $\Q_p$-representations of $G_{\Q_p}$ to vector bundles on $\mathcal{Y}_{(0,\infty]}(S)$. Since the representation category is semi-simple, the condition on exactness is vacuous and the tensor equivalence for vector bundles implies the equivalence for $\G$-torsors.
\end{proof}

\begin{lemma}\label{trivial}
    Let $S=\Spa(R,R^\circ)$ in $\Perf$ be a product of rank one geometric points. Then $G_{\Z_p}$-torsors on $\Spec(R)$ are trivial.
\end{lemma}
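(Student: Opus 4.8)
The plan is to reduce the statement to a triviality result for $G_{\Z_p}$-torsors over a product of fields, since $R = R^\circ = \prod_i \CO_{C_i}$ where the $C_i$ are complete algebraically closed nonarchimedean fields. First I would note that $\Spec(R)$ for such a product ring has a simple structure: although $\Spec(\prod_i \CO_{C_i})$ is not literally the disjoint union of the $\Spec(\CO_{C_i})$, a $G_{\Z_p}$-torsor over it is still étale-locally trivial, and I want to leverage that $G_{\Z_p}$ is smooth with connected fibers over $\Z_p$. The key algebraic fact to invoke is that a reductive (hence in particular smooth affine with connected fibers) group scheme over a local ring, or more generally over a ring with trivial Picard group and no nontrivial finite étale covers in the relevant sense, has only trivial torsors. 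For a valuation ring $V$ with algebraically closed fraction field, $G_{\Z_p}$-torsors over $\Spec(V)$ are trivial: this follows because such torsors are classified by $H^1_{\mathrm{\acute et}}(\Spec V, G_{\Z_p})$, and one can use that $V$ is a henselian (even strictly henselian after completion, but here we only need the fraction field algebraically closed) local ring together with the smoothness of $G_{\Z_p}$ to spread out a trivialization from the closed point, combined with the vanishing of the relevant obstruction over the algebraically closed generic fiber.

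Concretely, the steps I would carry out are: (1) Recall $R^+ = R^\circ = \prod_{i} V_i$ with each $V_i = \CO_{C_i}$ a valuation ring with algebraically closed fraction field $C_i$. (2) Cite or prove that over each $\Spec(V_i)$, every $G_{\Z_p}$-torsor is trivial --- this is a standard consequence of smoothness of $G_{\Z_p}$ and the structure of $V_i$ (the residue field is algebraically closed, so the special fiber torsor is trivial by Lang's theorem / Steinberg's theorem for connected groups over algebraically closed fields, and smoothness lets one lift the trivialization to all of $\Spec(V_i)$ by henselianness, after passing to the completion if necessary; the generic-fiber compatibility is automatic since $C_i$ is algebraically closed). (3) Pass from the individual factors to the product: a $G_{\Z_p}$-torsor over $\Spec(\prod_i V_i)$, being given by finitely presented data ($G_{\Z_p}$ is of finite presentation over $\Z_p$), is determined by its pullbacks to the $\Spec(V_i)$, so triviality over each factor gives triviality over the product. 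This last point uses that $\Spec(\prod_i V_i) \to \coprod_i \Spec(V_i)$ induces an equivalence on categories of finitely presented quasi-coherent sheaves (and torsors under a finitely presented group scheme), an argument of the same flavor as Lemma \ref{milnor} and the product-of-valuation-rings lemma already used in the excerpt.

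The main obstacle I anticipate is step (3): making precise that a $G_{\Z_p}$-torsor over the product ring $\prod_i V_i$ is pinned down by its restrictions to the factors. Torsors are not quasi-coherent sheaves, so one cannot directly quote the descent lemma for finite projective modules; instead I would argue that a $G_{\Z_p}$-torsor over an affine base, being affine and of finite presentation over that base, corresponds to descent data that are themselves finitely presented, and then invoke that $\Spec(\prod_i V_i)$ behaves like the disjoint union for finitely presented algebras --- equivalently, that $\varinjlim$ over finite subproducts recovers all finitely presented objects. Alternatively, and perhaps more cleanly, I would embed $G_{\Z_p}$ into some $\GL_n$ via a faithful representation with quasi-affine (indeed affine, since $G_{\Z_p}$ is reductive) quotient $\GL_n/G_{\Z_p}$, reduce the triviality of the $G_{\Z_p}$-torsor to the vanishing of $H^1(\Spec R, \GL_n) = \mathrm{Pic}$-type data plus a lifting across $\GL_n \to \GL_n/G_{\Z_p}$, and handle each piece using that $R$ is a product of valuation rings (so all finite projective modules are free, by the product-of-valuation-rings lemma) and that $(\GL_n/G_{\Z_p})(R) = \prod_i (\GL_n/G_{\Z_p})(V_i)$ surjects appropriately onto $\GL_n(V_i)$-orbits. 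I expect the representation-theoretic route to be the most robust, so that is the one I would write up.
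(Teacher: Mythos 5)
There is a genuine gap, and it occurs at the very first step: you identify the ring in the statement incorrectly. For a product of rank one geometric points one has $R^\circ=\prod_i\CO_{C_i}$ but $R=R^\circ[1/\varpi]$, which is a perfectoid Tate ring strictly between $\prod_i\CO_{C_i}$ and $\prod_i C_i$; it is \emph{not} equal to $R^\circ$ and is not a product of valuation rings or of fields. The lemma (and its use in the proof of the extension result for $\G$-shtukas, where one lifts trivializations along the henselian pair $(W(R),\ker(W(R)\twoheadrightarrow R))$) is about $G_{\Z_p}$-torsors on $\Spec(R)$ for this Tate ring. Your plan proves (modulo the gluing step) triviality over $\Spec(\prod_i\CO_{C_i})=\Spec(R^\circ)$, which is a different scheme: $\Spec(R)$ is the open subscheme of $\Spec(R^\circ)$ where $\varpi$ is invertible, so triviality over $\Spec(R^\circ)$ says nothing about a torsor that is only given on $\Spec(R)$ (it need not extend). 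Moreover, $\Spec(R)$ is not the disjoint union of the $\Spec(C_i)$ and has many points beyond the principal components (coming from ultrafilters on the index set), so a factor-by-factor argument via Lang/Steinberg on residue fields plus henselian lifting does not engage the actual object. The step you flag as the main obstacle — that a torsor over an infinite product is pinned down by its restrictions to the factors — is also where the real content lies even in your setting: finitely presented data over $\prod_i V_i$ do \emph{not} descend to finite subproducts, and controlling a torsor on $\Spec(R)$ by its restrictions to the $C_i$ is essentially equivalent to the lemma itself; it cannot be dispatched by an argument ``of the same flavor as Lemma \ref{milnor}''.

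For comparison, the paper's proof runs entirely through the adic space: by a GAGA-type theorem of Kedlaya, pullback along the morphism of locally ringed spaces $S\rightarrow\Spec(R)$ is an exact tensor equivalence on vector bundles, so by the Tannakian description of torsors it identifies $G_{\Z_p}$-torsors on $\Spec(R)$ with \'etale $\G$-torsors on $S$; since a product of rank one geometric points is strictly totally disconnected, every \'etale cover of $S$ splits and all such torsors are trivial. The strict total disconnectedness of $S$ is exactly the ``uniformity across all components'' that your approach would need and does not supply. If you want to salvage your algebraic route, you would have to prove directly that $H^1_{\text{\rm\'et}}(\Spec(R),G_{\Z_p})$ vanishes for the Tate ring $R$ — e.g. via a faithful representation $G_{\Z_p}\hookrightarrow\GL_n$, freeness of finite projective $R$-modules, and a lifting argument across $\GL_n\rightarrow\GL_n/G_{\Z_p}$ — but each of these inputs for $R$ (as opposed to $R^\circ$) again requires knowing the structure of $\Spec(R)$ or of $S$, which is what the Kedlaya-plus-strictly-totally-disconnected argument provides.
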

\begin{proof}
    Pullback along the map of locally ringed spaces
    \[S\rightarrow \Spec(R)\]
    defines an exact equivalence between the category of vector bundles on both sides by \cite[Theorem 1.4.2]{Kedlaya}. One checks directly that this equivalence is symmetric monoidal. Hence using Tannakian formalism, we obtain an equivalence between the category of $\G$-torsors on $S$ and that of $G_{\Z_p}$-torsors on $\Spec(R)$. Since $S$ is strictly totally disconnected, \'etale $\G$-torsors on $S$ are trivial, so are $G_{\Z_p}$-torsors on $\Spec(R)$. 
\end{proof}

\begin{prop}
\label{extension}
    Let $S=\Spa(R,R^\circ)$ in $\Perf$ be a product of rank one geometric points with an untilt $S^\sharp$ over $\Z_p$. Then pullback along 
    \[\mathcal{Y}_{[0,\infty)}(S)\rightarrow \Spa(W(R^\circ), W(R^\circ)) \rightarrow \Spec(W(R^\circ)),\]
    defines an equivalence of categories between $\G$-BKF-modules over $S$ with a leg at $S^\sharp$ and $\G$-shtukas over $S$ with a leg at $S^\sharp$.
    
    If we write $R^\circ=\prod_{i\in I} \CO_{C_i}$, where $\CO_{C_i}$ is the ring of integers in some complete algebraically closed non-archimedean field $C_i$, and let $s_i:=\Spa(C_i,\CO_{C_i})$. Then the inverse equivalence is given by the following construction: given a $\G$-shtuka $(\CP,\varphi_{\CP})$, restrict it to $\mathcal{Y}_{[0,\infty)}(s_i)$ for each $i$. The restriction $(\CP_i,\varphi_{\CP_i})$ extends uniquely to a $\G$-BKF-module $(M_i,\varphi_{M_i})$ by \cite[Proposition 2.4.6]{PR}. Their product $(M,\varphi_M)$ 
    is the desired $\G$-BKF module.
\end{prop}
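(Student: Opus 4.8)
The plan is to reduce everything to the known per-point statement \cite[2.4.6]{PR} via a product/gluing argument, using Corollary \ref{Gextend} to deal with the meromorphicity and Lemma \ref{trivial} to handle triviality of $G_{\Z_p}$-torsors over $\Spec(W(R^\circ))$. First I would set up the functor in the forward direction: given a $\G$-BKF-module $(M,\varphi_M)$ over $S$ with a leg at $S^\sharp$, I restrict along the composite $\mathcal{Y}_{[0,\infty)}(S)\to \Spa(W(R^\circ),W(R^\circ))\to \Spec(W(R^\circ))$ (using Kedlaya's equivalence between vector bundles on $\Spec(W(R^\circ))\setminus V(p,[\varpi])$ and on $\mathcal{Y}(S)$, as in \cite[2.1.5 a)]{PR}, together with Tannakian formalism) to obtain a $\G$-torsor with meromorphic Frobenius on $\mathcal{Y}_{[0,\infty)}(S)$, i.e.\ a $\G$-shtuka. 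This is the same construction already recalled before \ref{VBextend}, and functoriality and exactness are immediate from the exactness of Kedlaya's equivalence.

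Next I would construct the candidate inverse. Given a $\G$-shtuka $(\CP,\varphi_\CP)$ over $S=\Spa(R,R^\circ)$ with $R^\circ=\prod_{i\in I}\CO_{C_i}$: restrict to each $s_i=\Spa(C_i,\CO_{C_i})$; by \cite[2.4.6]{PR} the restriction $(\CP_i,\varphi_{\CP_i})$ extends uniquely to a $\G$-BKF-module $(M_i,\varphi_{M_i})$ over $s_i$ with a leg at $s_i^\sharp$, i.e.\ a $G_{\Z_p}$-torsor on $\Spec(W(\CO_{C_i}))$ with an isomorphism after inverting $\xi_i$. Since $W(R^\circ)=\prod_i W(\CO_{C_i})$ (the Witt vector functor commutes with products), a compatible family $(M_i)_i$ of $G_{\Z_p}$-torsors glues to a $G_{\Z_p}$-torsor $M$ on $\Spec(W(R^\circ))$: by Lemma \ref{trivial} each $M_i$ is trivial, so $M:=\prod_i M_i$ is again a (trivial, hence in particular) $G_{\Z_p}$-torsor, and more intrinsically, by Tannakian formalism the product over $i$ of the fiber functors $\mathrm{Rep}\,G_{\Z_p}\to \mathrm{Vect}(W(\CO_{C_i}))$ is an exact tensor functor to $\mathrm{Vect}(W(R^\circ))$ since a module over $\prod_i W(\CO_{C_i})$ is finite projective iff it is so factorwise with uniformly bounded rank, and the rank is locally constant hence bounded on each connected component. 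The isomorphisms $\varphi_{M_i}$ over $W(\CO_{C_i})[1/\xi_i]$ likewise assemble to $\varphi_M\colon (\varphi_S^\ast M)[1/\xi]\xrightarrow{\sim} M[1/\xi]$ over $W(R^\circ)[1/\xi]$. This gives a $\G$-BKF-module $(M,\varphi_M)$ over $S$.

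Finally I would check the two functors are mutually inverse. In one direction: starting from $(M,\varphi_M)$, passing to the associated $\G$-shtuka and restricting to $s_i$ gives the $\G$-shtuka associated to $M_i:=M\otimes_{W(R^\circ)}W(\CO_{C_i})$; by uniqueness of the extension in \cite[2.4.6]{PR} the re-extension recovers $M_i$, and since these determine $M$ (again because $W(R^\circ)=\prod_i W(\CO_{C_i})$ and $G_{\Z_p}$-torsors are detected factorwise) we recover $(M,\varphi_M)$. In the other direction: starting from $(\CP,\varphi_\CP)$, the $\G$-shtuka attached to the glued $\G$-BKF-module $(M,\varphi_M)$ restricts on each $\mathcal{Y}_{[0,\infty)}(s_i)$ to the $\G$-shtuka attached to $(M_i,\varphi_{M_i})$, which is $(\CP_i,\varphi_{\CP_i})$ by \cite[2.4.6]{PR}; and by Corollary \ref{Gextend} a $\G$-shtuka over a product of rank one geometric points is determined by (indeed, equivalent to) a $\G$-torsor with meromorphic Frobenius on $\mathcal{Y}(S)$, while such data over $\mathcal{Y}(S)$ is in turn determined by its restrictions to the $\mathcal{Y}(s_i)$ together with the product structure on $R^\circ$, so $(\CP,\varphi_\CP)$ is recovered. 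The compatibility of all identifications with the tensor structure is formal. The main obstacle I expect is the bookkeeping of the product structure: verifying carefully that $G_{\Z_p}$-torsors (and their modifications) over $\Spec(W(\prod_i\CO_{C_i}))$ are equivalent to compatible families over the $\Spec(W(\CO_{C_i}))$ — this uses that finite projectivity over an infinite product of local rings is a factorwise condition with bounded rank, plus Lemma \ref{trivial} to reduce torsors to their trivializations — and matching this with the ``$\mathcal{Y}$-side'' statement of Corollary \ref{Gextend}, where the gluing is recorded through Kedlaya's equivalence rather than directly; once these two descriptions of ``product of points'' data are aligned, the mutual-inverse check is routine.
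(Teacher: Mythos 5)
Your forward functor and the pointwise use of \cite[2.4.6]{PR} match the paper, and the identity $W(R^\circ)=\prod_i W(\CO_{C_i})$ is fine; but the heart of your argument has a circularity. To show the glued BKF-module recovers the original shtuka (equivalently, to get essential surjectivity), you invoke that ``a $\G$-shtuka over $S$ --- or a $\G$-torsor with meromorphic Frobenius on $\mathcal{Y}(S)$ --- is determined by its restrictions to the $\mathcal{Y}(s_i)$.'' Corollary \ref{Gextend} does not say this: it only identifies shtukas over $S$ with data on $\mathcal{Y}(S)$, and sections of bundles on $\mathcal{Y}(S)$ for a product of points are \emph{not} obviously computed factorwise (the rings $\CO(\mathcal{Y}_I(S))$ are completions that do not decompose as products of the $\CO(\mathcal{Y}_I(s_i))$). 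In the paper this factorwise determination is exactly Remark \ref{ShtProd}, which is \emph{deduced from} Proposition \ref{extension}; assuming it here begs the question. The paper's proof supplies the missing content differently: after extending over $\mathcal{Y}(S)$ via Gleason--Ivanov, it proves the extended $\G$-torsor is \emph{trivial}, by writing it (via GAGA/Beauville--Laszlo) as glued from torsors over $\Spf(W(R))$ and over $\Spec(W(R^\circ)[1/p])$ --- both trivial by Lemma \ref{trivial} plus henselianness, resp. \cite[11.5]{Ans22} --- so that it is classified by a double coset, and then shows this coset is a singleton using ind-properness of the Witt vector affine Grassmannian: the pointwise $\CO_{C_i}$-extensions of an $R$-point are glued into an $R^\circ$-point through a finite affine cover of a proper subscheme. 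That is precisely the step where ``pointwise data glues over the product'' is legitimately established, and your proposal has no substitute for it.

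There are also two smaller issues. First, the family $(\varphi_{M_i})_i$ only gives an isomorphism over $\prod_i W(\CO_{C_i})[1/\xi_i]$, which strictly contains $W(R^\circ)[1/\xi]$; to get a genuine BKF-Frobenius you need a uniform bound $N$ with $\xi^N\varphi_{M_i}$ and $\xi^N\varphi_{M_i}^{-1}$ integral for all $i$. Such a bound does follow from meromorphicity of $\varphi_\CP$ along $S^\sharp$ over the quasi-compact $S$ (and the fact that twisted maps extend uniquely over the puncture), but this must be argued, and it is the same uniform-boundedness phenomenon the paper isolates in Remark \ref{ShtProd} via the subcategory $\prod_i'$. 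Second, Lemma \ref{trivial} concerns $G_{\Z_p}$-torsors on $\Spec(R)$, not on $\Spec(W(\CO_{C_i}))$; triviality of each $M_i$ instead comes from $W(\CO_{C_i})$ being henselian local with algebraically closed residue field (or from the trivializations built into \cite[2.4.6]{PR}), so the citation should be corrected. With the circularity unrepaired, the proposal does not prove the proposition; repairing it essentially forces you back to the paper's triviality-of-$\tilde\CP$ argument or an equivalent input.
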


\begin{proof}
      The restriction functor from $\G$-BKF-modules to $\G$-torsors on $\mathcal{Y}_{[0,\infty]}(S)$ is fully faithful, by combining Tannakian formalism with the following results: the restriction of $G_{\Z_p}$-torsors from $\Spec(W(R^\circ))$ to the punctured spectrum \[\Spec(W(R^\circ))\backslash V(I)\] is fully faithful, see \cite[Lemma 8.4]{Ans22}, and the pullback functor on the category of vector bundles along the map of locally ringed spaces
      \[\mathcal{Y}(S)\rightarrow \Spec(W(R^\circ))\backslash V(I)\]
      is an exact tensor equivalence due to Kedlaya, see \cite[Theorem 2.1.6]{PR}.
      
      To show essential surjectivity, given a $\G$-shtuka $(\CP,\varphi_\CP)$ over $S$ with one leg at $S^\sharp$, we apply Lemma \ref{Gextend} to extend it to a $\G$-torsor with meromorphic Frobenius $(\tilde{\CP},\varphi_{\Tilde{\CP}})$ over $\mathcal{Y}(S)$. It suffices to show that the underlying $\G$-torsor of $\Tilde{\CP}$ is trivial, so that we can take the trivial extension to get a $\G$-BKF-module. For this, we adapt an argument from \cite[Proposition 9.2]{Ans22}, which deals with the case of $S$ being a point.
     
     By GAGA and Beauville-Laszlo gluing, we may view $\tilde{P}$ as a $G_{\Z_p}$-torsor on the scheme 
     \[\Spec(W(R^\circ))\backslash V(I),\] 
     glued from $G_{\Z_p}$-torsors on $\Spec(W(R^\circ)[1/p])$ and on $\Spf(W(R))$ over $W(R)[1/p]$. But $G_{\Z_p}$-torsors on both are trivial: on $W(R)$ it follows from Lemma \ref{trivial}, smoothness of $G_{\Z_p}$ and henselianness of the pair 
     \[(W(R),\text{ker}(W(R)\twoheadrightarrow R)),\] 
     while triviality on $W(R^\circ)[1/p]$ is proven in \cite[Proposition 11.5]{Ans22}.

     Hence $\tilde{P}$ can be described by an element of the double coset 
     \[G_{\Z_p}(W(R))\backslash G_{\Q_p}(W(R)[1/p])/G_{\Q_p}(W(R^\circ)[1/p]).\] By comparing with the presheaf of the Witt vector affine Grassmannian $\Gr_{G_{\Z_p}}^W$ (\cite[Definition 20.3.3]{Berkeley}), the above set (in fact its \'etale sheafification on $\Spec(R)$, which has the same value) measures $R$-points of $\Gr_{G_{\Z_p}}^W$ that do not come from $R^\circ$-points. But given any $R$-point of $\Gr_{G_{\Z_p}}^W$, its restriction to $C_i$ extends uniquely to an $\CO_{C_i}$-point, since $\Gr_{G_{\Z_p}}^W$ is ind-proper (see \cite[Theorem 1.4]{Zhu}). These $\CO_{C_i}$-points in turn define a unique $\Spec(R^\circ)$-point of $\Gr_{G_{\Z_p}}^W$ as below: assume the original $R$-point lies in some proper subscheme $X$. Take a finite affine open cover $\{X_j\}$, $j=1,\dots,n$ of $X$. We obtain a finite partition 
     \[I=\coprod_{j=1}^n I_j\]
     of the index set $I$, where $i\in I_j$ if the image of $\Spec(\CO_{C_i})$ lies in $X_j$. Define $R^\circ_j:=\prod_{i\in I_j}\CO_{C_i}$. Since $X_j$ is affine, the collection of maps $\Spec(\CO_{C_i})\rightarrow X_j$, $i\in I_j$ determines a unique map 
     \[\Spec(R^\circ_j)\rightarrow X_j.\]
     As $\{\Spec(R^\circ_j)\}$, $j=1,\dots, n$ is a cover of $\Spec(R^\circ)$ by open-and-closed subspaces, we get a unique map
     \[\Spec(R^\circ)\rightarrow \Gr_{G_{\Z_p}}^W.\]
     Moreover, by checking for each $j$ we see that when restricted along $\Spec(R)\hookrightarrow \Spec(R^\circ)$, we get back the given $R$-point. This shows that the double coset we considered is a singleton and $\tilde{\CP}$ is trivial as desired.

     To see that the inverse equivalence takes the stated form, assume $(\CP,\varphi_{\CP})$ extends to $(M',\varphi_{M'})$. Then the restriction $(M'_i,\varphi_{M'_i})$ to each $s_i$ extends the restriction $(\CP_i,\varphi_{\CP_i})$ of the given $\G$-shtuka. By uniqueness, $(M'_i,\varphi_{M'_i})=(M_i,\varphi_{M_i})$ and hence $(M',\varphi_{M'}) = (M,\varphi_{M})$.
\end{proof}

\begin{remark} \label{ShtProd}
    Over a product of rank one geometric points $S$ as in the proposition, by going through the equivalences 
    \begin{align*}
        \{\G\text{-shtukas over }S\}
    &\Leftrightarrow \{\G\text{-BKF-modules over }S\}\\
    &\Leftrightarrow {\prod_i}'\{\G\text{-BKF-modules over }s_i\}\\
    &\Leftrightarrow {\prod_i}'\{\G\text{-shtukas over }s_i\},
    \end{align*}
    we see that a $\G$-shtuka over $S$ is uniquely determined by its restriction to each $s_i$. Here we use ${\prod_i}'$ to mean the full subcategory of the product category in which an object is a collection of $\G$-BKF-modules, respectively $\G$-shtukas, for all $i\in I$, that are commonly bounded by some cocharacter $\lambda$.\footnote{We define boundedness of $\G$-BKF-modules by that of their attached $\G$-shtukas.} If a $\G$-shtuka $(\CP,\varphi_{\CP})$ is sent to the collection $(\CP_i,\varphi_{\CP_i})$ under the above composition of equivalences, we will call $(\CP,\varphi_{\CP})$ the product of $(\CP_i,\varphi_{\CP_i})$'s. 
\end{remark}

\subsection{Moduli of \texorpdfstring{$\G$}{}-shtukas}
We now define the moduli stack of $\G$-shtukas and record some of its geometric properties.

\begin{defn}
Let $\Sht$ be the presheaf of groupoids on the v-site of $\Perf/\Spd \Z_p$ sending $S$ to the groupoid of $\G$-shtukas over $S$ with a leg at $S^\sharp$, where $S^\sharp$ is the untilt of $S$ over $\Spa \Z_p$ determined by the structure map $S\rightarrow \Spd \Z_p$. This is a v-stack by \cite[Proposition 19.5.3]{Berkeley}.
\end{defn}

\begin{prop}\label{qsness}
    The structure map $\Sht\rightarrow\Spd \Z_p$ is quasi-separated. For any perfectoid Tate algebra $R$ with an open bounded integrally closed subring $R^+$ and any commutative diagram with solid arrows
    \[
    \begin{tikzcd}
    \Spa(R,R^\circ)\ar[r,"f"]\ar[d]
    & \Sht \ar[d]\\
    \Spa(R,R^+)\ar[r]\ar[ru,dashed]
    & \Spd \Z_p,
    \end{tikzcd}
    \]
    there is a unique (up to isomorphism) dotted arrow making the whole diagram commute up to a natural automorphism of $f$.
\end{prop}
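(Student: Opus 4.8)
The statement has two parts: quasi-separatedness of $\Sht \to \Spd\Z_p$, and the valuative criterion for the ``spreading out'' of a shtuka defined over $\Spa(R,R^\circ)$ to one over $\Spa(R,R^+)$. I would treat them in this order, since the valuative criterion is the more substantial input and the quasi-separatedness argument is largely formal once the right finiteness statements are in place.

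For quasi-separatedness, the plan is to reduce to the statement that for two $\G$-shtukas $(\CP,\varphi_\CP)$, $(\CP',\varphi_{\CP'})$ over an affinoid perfectoid $S$, the v-sheaf $\underline{\mathrm{Isom}}((\CP,\varphi_\CP),(\CP',\varphi_{\CP'}))$ on $\Perf_{/S}$ is qcqs, since the diagonal of $\Sht$ is the disjoint union (over isomorphism classes of leg data, which is rigid) of such Isom sheaves and $0$-truncatedness is automatic. To see the Isom sheaf is qcqs I would use the criterion of Proposition \ref{qcqs}: it suffices to check that for a product of geometric points $S$ with principal components $s_i$, giving an isomorphism of $\G$-shtukas over $S$ is equivalent to giving a compatible family of isomorphisms over the $s_i$. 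This is exactly Remark \ref{ShtProd} (a $\G$-shtuka over a product of rank one geometric points is determined by its restrictions to the components, via Theorem \ref{VBextend}, Corollary \ref{Gextend} and Proposition \ref{extension}), combined with the fact that over a single rank one geometric point an isomorphism of $\G$-shtukas extends uniquely to an isomorphism of the associated $\G$-BKF-modules over $W(\CO_C)$, whose moduli of isomorphisms is plainly representable and hence spatial. One also needs the quasi-separated diagonal hypothesis of \ref{qcqs}, which follows because an isomorphism of isomorphisms is trivial (everything here is $0$- or $(-1)$-truncated). I expect no real difficulty here beyond bookkeeping.

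For the valuative criterion, the key point is that $\mathcal{Y}_{[0,\infty)}$ of $\Spa(R,R^\circ)$ and of $\Spa(R,R^+)$ have the same rational subsets of the form $\{|p| \le |[\varpi^{1/p^n}]|\}$ up to the choice of integral subring — by the discussion in Remark \ref{Y}, the category of vector bundles (hence of $\G$-torsors) on $\mathcal{Y}_{[0,\infty)}(S)$ depends only on the Tate ring $R$, not on $R^+$. More precisely, I would argue as follows. First reduce to $\Spa(R,R^+)$ being strictly totally disconnected, equivalently a product of rank one geometric points possibly with smaller valuation rings, by v-descent: the assertion (existence and uniqueness up to automorphism of $f$ of the dotted arrow) is local for the v-topology on $\Spa(R,R^+)$, and the quasi-separatedness established in the first part gives the uniqueness-up-to-isomorphism needed to glue. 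Then, over such a base, by Remark \ref{ShtProd} it suffices to treat a single geometric point $\Spa(C,C^+)$ with $C^+ \subsetneq \CO_C$; here $\mathcal{Y}_{[0,\infty)}(\Spa(C,C^+))$ is open in $\mathcal{Y}_{[0,\infty)}(\Spa(C,\CO_C))$ and the two contain the same $\mathcal{Y}_{[r,\infty)}$-type loci away from the boundary, so a $\G$-shtuka over $\Spa(C,\CO_C)$ restricts to one over $\Spa(C,C^+)$ and conversely the given shtuka over $\Spa(C,C^+)$ extends: invoke Proposition \ref{extension} (or rather \cite[2.4.6]{PR}) to extend over $s = \Spa(C,\CO_C)$ uniquely to a $\G$-BKF-module over $W(\CO_C)$, whose restriction back to $\mathcal{Y}_{[0,\infty)}(\Spa(C,C^+))$ recovers the original shtuka. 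Uniqueness of the extension up to isomorphism is again \ref{extension}/\cite[2.4.6]{PR}, and the ``up to an automorphism of $f$'' clause absorbs the ambiguity in the trivialization chosen when passing through BKF-modules.

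The main obstacle I anticipate is the descent step reducing the valuative criterion from a general $\Spa(R,R^+)$ to a product of geometric points: one must check that the shtuka over $\Spa(R,R^\circ)$, after pulling back along a v-cover of $\Spa(R,R^+)$ by a strictly totally disconnected space $T \to \Spa(R,R^+)$ and extending to $T$, descends back down $\mathcal{Y}_{[0,\infty)}(T) \to \mathcal{Y}_{[0,\infty)}(\Spa(R,R^+))$ — and this uses precisely the v-descent of $\G$-torsors on sousperfectoid adic spaces together with the fact that the extension is unique, so the two pullbacks to $\mathcal{Y}_{[0,\infty)}(T\times_{\Spa(R,R^+)}T)$ agree canonically, yielding a descent datum. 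Making the cocycle condition match up (again only up to automorphisms of $f$, as the statement allows) is where one has to be careful, but conceptually it is forced by the uniqueness in Proposition \ref{extension}. Everything else is an assembly of results already in hand.
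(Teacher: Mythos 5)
Your first part is essentially the paper's argument: there, too, qcqs-ness of the diagonal is obtained from criterion \ref{qcqs} combined with remark \ref{ShtProd}, applied to the Isom-sheaf of two shtukas over an affinoid test object (the paper proves quasi-separatedness of the diagonal by a separate explicit argument -- the locus where an automorphism of a shtuka equals the identity is closed -- but your observation that the quasi-separated-diagonal hypothesis of \ref{qcqs} is automatic for the $0$-truncated Isom-sheaf serves the same purpose). One point you should make explicit: the products of geometric points in \ref{qcqs} have components $\Spa(K_i,K_i^+)$ with $K_i^+$ of arbitrary rank, whereas \ref{ShtProd} and Proposition \ref{extension} concern products of rank one points; bridging this uses exactly the independence of shtukas of the choice of integral subring, i.e.\ the third assertion of the proposition, so that assertion should come first in the logical order.

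It is in that third assertion that your write-up has a genuine problem -- ironically, because your opening sentence already \emph{is} the proof, and it is the paper's proof: by remark \ref{Y} (that is, \cite[2.1.1]{PR}) the rational subsets $\{|p|\le|[\varpi^{1/p^n}]|\}$ covering $\mathcal{Y}_{[0,\infty)}(\Spa(R,R^+))$ have rings of functions depending only on $R$, so restriction along $\mathcal{Y}_{[0,\infty)}(R,R^\circ)\to\mathcal{Y}_{[0,\infty)}(R,R^+)$ is an exact tensor equivalence on vector bundles, hence on $\G$-torsors by Tannakian formalism, compatibly with the Frobenius and with the leg (the divisor is cut out by a generator of $\ker\theta$, determined by the untilt of $R$ alone). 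Existence and uniqueness of the dotted arrow follow at once for the given $(R,R^+)$, with no reduction to points. The ``more precise'' argument you substitute instead does not work as written. First, the containments are reversed: since $C^+\subseteq\CO_C$ one has $\Spa(C,\CO_C)\subseteq\Spa(C,C^+)$, and correspondingly $\mathcal{Y}_{[0,\infty)}(\Spa(C,\CO_C))$ sits inside $\mathcal{Y}_{[0,\infty)}(\Spa(C,C^+))$, not the other way around; the task over a component is to extend from the former to the latter. Second, Proposition \ref{extension} (or \cite[2.4.6]{PR}) produces a $\G$-BKF-module over $W(\CO_C)$, and since $W(C^+)\subset W(\CO_C)$ there is no map from $\mathcal{Y}_{[0,\infty)}(\Spa(C,C^+))$ to $\Spec(W(\CO_C))$ along which to ``restrict back''; the boundary points you need to reach are simply not seen by $W(\CO_C)$. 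Third, the v-descent reduction is both mismatched and circular: the pullback of $\Spa(R,R^\circ)$ along a strictly totally disconnected cover $T=\Spa(A,A^+)\to\Spa(R,R^+)$ is not of the form $\Spa(A,A^\circ)$, so the statement you would prove componentwise is not the one needed over $T$; and gluing the extensions requires uniqueness (full faithfulness of restriction) over the non-point base $T\times_{\Spa(R,R^+)}T$, which is precisely the general statement the detour was meant to avoid. Delete the detour and argue directly as in your first sentence.
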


\begin{proof}
    We first check that the diagonal map is quasi-separated. 
    
    For this, we need to show that the diagonal map from $\Sht$ to the inertia stack is quasi-compact. This is equivalent to the fact that over an affinoid perfectoid test object $T$ with untilt $T^\sharp$, if we are given a $\G$-shtuka $(\CP,\varphi_{\CP})$ over $T$ with one leg at $T^\sharp$ together with an automorphism $g\in \underline{\mathrm{Aut}}(\CP)(T)$, then the locus on $T$ where $g$ equals the neutral element $e$ is quasi-compact. Here we use $\underline{\mathrm{Aut}}(\CP)$ to denote v-sheaf over $S$ of automorphisms of the $\G$-shtuka $(\CP,\varphi_\CP)$:
    \[S\mapsto \mathrm{Aut}((\CP,\varphi_\CP))(\mathcal{Y}_{[0,\infty)}(S)).\]
    It suffices to show that the structure map $\underline{\mathrm{Aut}}(\CP)\to S$ is separated, because then the sections $g$ and $e$ defines closed immersions $S\to\underline{\mathrm{Aut}}(\CP)$ and the intersection
    \[\lim(S\xrightarrow{g} \underline{\mathrm{Aut}}(\CP) \xleftarrow{e} S)\]
    is a closed (in particular quasicompact) sub-v-sheaf in $S$. For this, pick a faithful representation $V$ of $\G$ and consider the vector bundle shtuka $\mathcal{V}:=(\CP,\varphi_\CP)\times^\G V$. Since $\underline{\mathrm{Aut}}(\CP)$ is a sub-v-sheaf of $\underline{\mathrm{End}}(\mathcal{V})$, it suffices to prove separatedness of the latter over $S$. This again is a sub-v-sheaf of the Banach-Colmez space for the vector bundle on $X_S$ obtained by restricting $\mathcal{V}$ to $\mathcal{Y}_{[r,\infty)}(S)$ for large enough $r$ (see Proposition \ref{generic fiber} below). But this Banach-Colmez space is separated over $S$ according to \cite[Proposition II.2.16]{FS}. This finishes the proof of quasiseparatedness.

    For quasi-compactness, take any map from an affinoid perfectoid space $X$ (without loss of generality of the form $\Spa(R,R^\circ)$ with a pseudo-uniformizer $\varpi\in R$) to $\Sht\times_{\Spd \Z_p}\Sht$ and consider the cartesian diagram
    \[
    \begin{tikzcd}
        Y\ar[r]\ar[d] & X\ar[d]\\
        \Sht\ar[r,"\Delta"] & \Sht\times_{\Spd \Z_p}\Sht.
    \end{tikzcd}
    \]
    We need to show $Y$ is quasi-compact. For this, combine Remark \ref{ShtProd} and Criterion \ref{qcqs}: Fix a representative $y=\Spa(C_y, C_y^+)$ for each geometric point of $Y$, the composition $y\rightarrow Y \rightarrow X$ determines a map of affinoid rings $(R,R^\circ)\rightarrow (C_y,C_y^+)$ such that the image of $\varpi$ is some pseudo-uniformizer $\varpi_y$. Form a product of points $S$ by letting 
    \[A^+:= \prod_{y\in|Y|} C_y^+,\, \varpi_A:=(\varpi_y), \, A:=A^+[\frac{1}{\varpi_A}],\]
    and taking $S$ to be $\Spa(A,A^+)$. This comes equipped with a map $S\rightarrow X$ induced by $R^+\rightarrow \prod_{y\in|Y|} C_y^+$. It suffices to show this map factors through a surjection of v-sheaves $S\rightarrow Y$. The map factoring through $Y$ amounts to the fact that the two $\G$-shtukas $(\CP,\varphi_{\CP})$, $(\CP',\varphi_{\CP'})$ on $S$ obtained by pulling back along $S\rightarrow X\rightarrow \Sht\times_{\Spd \Z_p}\Sht$ are isomorphic. But by construction, $(\CP_y,\varphi_y)\cong(\CP'_{y},\varphi_{\CP'_y})$ for each $y\in |Y|$ and hence Remark \ref{ShtProd} shows the product of these isomorphisms gives an isomorphism $(\CP,\varphi_\CP)\cong(\CP',\varphi_{\CP'})$. For surjectivity, since $S\rightarrow Y \rightarrow X$ is qcqs and the second map is quasi-separated by quasi-separatedness of $\Delta$, $S\rightarrow Y$ is qcqs by cancellation. It is surjective on topological spaces by construction and hence a surjection of v-sheaves \cite[Lemma 17.4.9]{Berkeley}. This finishes the proof of quasi-compactness.

    For the last claim, use Tannakian formalism and the tensor exact equivalence for restriction of vector bundles along
    \[\mathcal{Y}_{[0,\infty)}(R,R^\circ)\rightarrow\mathcal{Y}_{[0,\infty)}(R,R^+).\]
\end{proof}

\begin{cor}\label{ProperDiag}
    The diagonal $\Delta: \Sht\rightarrow \Sht\times_{\Spd\Z_p}\Sht$ is proper.
\end{cor}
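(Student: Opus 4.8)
\textbf{Proof strategy for Corollary \ref{ProperDiag}.} The plan is to deduce properness of the diagonal from the two statements already packaged in Proposition \ref{qsness}, using the standard valuative characterization of proper maps of v-stacks from \cite[Section 18]{Sch18}. Recall that for a map $f$ of small v-stacks, being proper means being separated, quasi-compact, and satisfying the lifting criterion along $\Spa(R,R^\circ)\hookrightarrow \Spa(R,R^+)$ for all perfectoid Tate rings $R$ with open integrally closed $R^+$. Proposition \ref{qsness} already gives us: (i) $\Sht\to\Spd\Z_p$ is quasi-separated, i.e. $\Delta$ is quasi-compact and quasi-separated (qcqs); and (ii) the valuative lifting criterion holds for $\Sht\to\Spd\Z_p$, with the lift unique up to a natural transformation by an automorphism of $f$. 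So the first step is simply to record that $\Delta$ is qcqs, hence separated (a qcqs $0$-truncated map is automatically separated in this setting, since the diagonal of $\Delta$ is a monomorphism which is qcqs, hence a closed immersion) and quasi-compact, leaving only the valuative criterion for $\Delta$ itself to verify.

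For the valuative criterion on $\Delta$, I would take a diagram
\[
\begin{tikzcd}
\Spa(R,R^\circ)\ar[r]\ar[d] & \Sht\ar[d,"\Delta"]\\
\Spa(R,R^+)\ar[r] & \Sht\times_{\Spd\Z_p}\Sht
\end{tikzcd}
\]
and unwind it: the bottom arrow is a pair of $\G$-shtukas $(\CP_1,\varphi_1)$, $(\CP_2,\varphi_2)$ over $\Spa(R,R^+)$ (both with leg at the untilt determined by the map to $\Spd\Z_p$), together with an isomorphism $g\colon (\CP_1,\varphi_1)|_{\Spa(R,R^\circ)}\xrightarrow{\sim}(\CP_2,\varphi_2)|_{\Spa(R,R^\circ)}$ coming from the top arrow. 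What must be produced is a unique extension of $g$ to an isomorphism over all of $\Spa(R,R^+)$. Here the key point is the tensor-exact equivalence of \cite[2.1.1]{PR} (invoked at the end of the proof of Proposition \ref{qsness}, c.f.\ remark \ref{Y}): restriction of vector bundles, and hence via Tannakian formalism of $\G$-torsors, along $\mathcal{Y}_{[0,\infty)}(R,R^\circ)\to\mathcal{Y}_{[0,\infty)}(R,R^+)$ is an equivalence of categories, because $\mathcal{Y}_{[0,\infty)}$ depends on the Tate ring $R$ and not on the choice of integral subring. Since an isomorphism of $\G$-shtukas is by definition a Frobenius-equivariant isomorphism of the underlying $\G$-torsors on $\mathcal{Y}_{[0,\infty)}$, the isomorphism $g$ over $\mathcal{Y}_{[0,\infty)}(R,R^\circ)$ therefore descends uniquely to an isomorphism over $\mathcal{Y}_{[0,\infty)}(R,R^+)$, automatically compatible with $\varphi_1,\varphi_2$ since compatibility is a closed condition that holds on the dense locus and the equivalence is exact. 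This yields the desired dotted arrow, and uniqueness is immediate from the uniqueness half of the equivalence; note that in the diagonal case there is no room for the ambiguity-by-automorphism that appeared in Proposition \ref{qsness}, so the criterion is satisfied on the nose.

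Assembling these, $\Delta$ is separated, quasi-compact, and satisfies the valuative criterion, hence is proper by \cite[Proposition 18.3]{Sch18} (or the discussion of properness there). The main — indeed essentially the only — obstacle is making sure the valuative extension argument is run at the level of $\G$-torsors with meromorphic Frobenius rather than naively, so that one genuinely uses \cite[2.1.1]{PR} and the fact that $\mathcal{Y}_{[0,\infty)}$ is insensitive to the integral subring; everything else is bookkeeping with the definitions of \cite[Section 18]{Sch18}. In fact, since the proof of Proposition \ref{qsness} has already carried out exactly this extension argument for the last claim there, the corollary is little more than the observation that the same input gives the valuative criterion for $\Delta$, combined with the qcqs-ness of $\Delta$ established in the first part of that proposition.
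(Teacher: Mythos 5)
Your proposal is correct and is essentially the paper's argument: both use the qcqs-ness of $\Delta$ coming from Proposition \ref{qsness} to reduce, via \cite[18.3]{Sch18}, to the valuative criterion along $\Spa(R,R^\circ)\hookrightarrow\Spa(R,R^+)$, and both settle that criterion by the insensitivity of vector bundles on $\mathcal{Y}_{[0,\infty)}$ to the choice of integral subring (\cite[2.1.1]{PR}, remark \ref{Y}) — the paper packages this by lifting the point of $\Sht$ via the already-proven criterion for $\Sht\rightarrow\Spd\Z_p$ and then rigidifying, while you extend the isomorphism of $\G$-shtukas directly, which is the full-faithfulness facet of the same equivalence. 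One small caveat: your parenthetical claim that a qcqs monomorphism is automatically a closed immersion (hence separatedness for free) is not correct as stated, but it is also not needed, since the uniqueness half of the valuative criterion in \cite[18.3]{Sch18} already supplies separatedness for a $0$-truncated qcqs map.
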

\begin{proof}
    Use the characterization of properness by \cite[Proposition 18.3]{Sch18}. Since the diagonal is 0-truncated, qcqs, it suffices to check the valuative criterion, i.e. for any perfectoid field $K$, with ring of integers $\CO_K$ and open bounded valuation subring $K^+\subset K$ and any commutative diagram 
    \[
    \begin{tikzcd}
        \Spa(K,\CO_K)\ar[r,"x"]\ar[d]& \Sht \ar[d,"\Delta"]\\
        \Spa(K,K^+)\ar[r]\ar[ru,dashed,"\tilde{x}"]& \Sht\times_{\Spd \Z_p}\Sht,
    \end{tikzcd}
    \]
    there should be a unique dotted arrow making the diagram commute. Post-composing with the structure maps to $\Spd \Z_p$, we obtain a unique up to isomorphism $\tilde{x}$ from the valuative criterion for $\Sht\rightarrow \Spd \Z_p$, making the upper left triangle commute up to an automorphism of $x$. But the commutativity of the lower right triangle rigidifies the situation, namely we can modify $\tilde{x}$ with a unique isomorphism making the diagram of sheaves over $\Sht\times_{\Spd \Z_p}\Sht$ commute. 
\end{proof}

Let $K_p$ be $G_{\Z_p}(\Z_p)$. To relate $\Sht$ to $\Bun_G$, we observe that the generic fiber of $\Sht$ can be identified with the quotient $\Gr_{G,K_p}:=[\Gr_G/\underline{K_p}]$, by rephrasing the moduli interpretation of $\Gr_{G,K_p}$ as below.

\begin{lemma}
The value of $\Gr_{G,K_p}/\Spd \Q_p$ on an affinoid perfectoid $S\in \Perf$, is the groupoid of isomorphism classes of tuples 
\[(S^\sharp, \mathscr{E}_0, \mathscr{E}, \mathbb{T}, \alpha: \mathscr{E}_0\dashrightarrow \mathscr{E}),\]
where $S^\sharp$ is an untilt of $S$ over $\Spa \Q_p$; $\mathscr{E}_0$, $\mathscr{E}$ are $G$-torsors on the relative Fargues-Fontaine curve $X_S$, with $\mathscr{E}_0$ being geometric pointwise on $S$ trivial; $\mathbb{T}$ is a pro-\'etale $\underline{K}_p$-torsor over $S$ such that 
\[\mathscr{E}_0 = \mathbb{T}\times^{\underline{K}_p}(\G\times X_S);\]
and $\alpha$ is a meromorphic isomorphism over $X_S\backslash S^\sharp$.
\end{lemma}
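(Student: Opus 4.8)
The plan is to unwind both sides of the claimed identification into concrete Tannakian data and match them directly, using the description of $\Gr_G$ as meromorphic modifications of $G$-bundles (see Remark~\ref{rem}) together with the formula $X_S^\diamond\cong(S^\diamond\times\Spd\Q_p)/\varphi^\Z\times\mathrm{id}$ and the dictionary between $\underline{K}_p$-torsors on $S$ and reductions of the trivial bundle. First I would recall that $[\Gr_G/K_p](S)$ is, by definition of the quotient v-stack, the groupoid of pairs consisting of a pro-\'etale $\underline{K}_p$-torsor $\mathbb{T}\to S$ together with a $\underline{K}_p$-equivariant map $\mathbb{T}\to\Gr_G$ over $\Spd\Q_p$; equivalently, since $\Gr_G$ is a sheaf (not a stack), this is the groupoid whose objects are $\underline{K}_p$-torsors $\mathbb{T}$ equipped with a section of the contracted product $\mathbb{T}\times^{\underline{K}_p}\Gr_G\to S$, and whose morphisms are isomorphisms of $\underline{K}_p$-torsors compatible with these sections. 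So the content is to identify, naturally in $S$, this datum with the tuple $(S^\sharp,\mathscr{E}_0,\mathscr{E},\mathbb{T},\alpha)$ in the statement.

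The key step is the following translation. Given $\mathbb{T}$, form the $G$-bundle $\mathscr{E}_0:=\mathbb{T}\times^{\underline{K}_p}(\G\times X_S)$ on $X_S$ by first extending the structure group of $\mathbb{T}$ along $\underline{K}_p=\G(\Z_p)\hookrightarrow G(\Q_p)$ to a pro-\'etale $G(\Q_p)$-local system on $S$, and then pushing it to $X_S$ via the tensor functor $G(\Q_p)\text{-loc.sys.}/S\to\Bun_G(S)$ of \cite[III]{FS}; this $\mathscr{E}_0$ is geometric-pointwise trivial precisely because a $G(\Q_p)$-local system on a geometric point is trivial and the functor to $\Bun_G$ sends the trivial local system to the trivial bundle. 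Conversely, any geometric-pointwise-trivial $\mathscr{E}_0$ arising with a $\underline{K}_p$-structure $\mathbb{T}$ recovers $\mathbb{T}$ as its sheaf of $\G$-trivializations compatible with the reduction. Now an $S$-point of $\Gr_G$ relative to the base $G$-bundle $\mathscr{E}_0$ (using Remark~\ref{rem}, that the bundle being modified need not be trivial — here we use the pro-\'etale-local triviality of $\mathbb{T}$ to reduce to the standard case) is exactly a meromorphic modification $\alpha:\mathscr{E}_0\dashrightarrow\mathscr{E}$ along the Cartier divisor $S^\sharp\hookrightarrow X_S$ cut out by the untilt determined by $S\to\Spd\Q_p$, with $\alpha$ an isomorphism away from $S^\sharp$. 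The $\underline{K}_p$-equivariance of the map $\mathbb{T}\to\Gr_G$ is what allows this modification to be formed $\underline{K}_p$-equivariantly and hence descend to the contracted product; untwisting, one gets precisely the tuple $(S^\sharp,\mathscr{E}_0,\mathscr{E},\mathbb{T},\alpha)$, and conversely. Matching morphisms on both sides is then routine: an isomorphism of $\underline{K}_p$-torsors induces an isomorphism of the associated $\mathscr{E}_0$'s compatible with the modifications, and every such isomorphism arises this way because $\mathscr{E}_0$ with its $\underline{K}_p$-reduction determines $\mathbb{T}$.

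I expect the main obstacle to be the bookkeeping around \emph{base change of the affine Grassmannian}: to make sense of "an $S$-point of $\Gr_G$ relative to $\mathscr{E}_0$" when $\mathscr{E}_0$ is only pro-\'etale-locally trivial, one must either work pro-\'etale-locally on $S$ (where $\mathbb{T}$ trivializes, $\mathscr{E}_0$ becomes the trivial bundle, and the usual Beauville-Laszlo/modification description of $\Gr_G$ applies verbatim) and then descend, checking that the descent datum on the modification is exactly the one encoding $\underline{K}_p$-equivariance; or invoke that $\Gr_G$ represents modifications of an arbitrary $G$-bundle and that this is compatible with the contracted-product construction. Either route requires care that the meromorphy condition along $S^\sharp$ is preserved under these manipulations and that the untilt $S^\sharp$ is the one canonically attached to the structure map, not an extra piece of data. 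A secondary (but genuinely routine) point is to confirm that the quotient $[\Gr_G/K_p]$ formed as a v-stack agrees, on affinoid perfectoid $S$, with the naive groupoid of $\underline{K}_p$-torsors-with-section, which holds because $\Gr_G$ is a v-sheaf and $\underline{K}_p$ is a (pro-)profinite group acting so that torsors are pro-\'etale-locally trivial, so no further stackification is needed on the level of the groupoid-valued presheaf beyond what the contracted product already provides. Once these are in place, the lemma follows by assembling the natural bijection on objects and morphisms, functorially in $S$.
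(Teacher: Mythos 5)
Your proposal is correct and is essentially the paper's argument read in the opposite direction: the paper proves the lemma in one line by identifying $\Gr_G$ $\underline{K}_p$-equivariantly with the v-sheaf $\underline{\mathrm{Isom}}_S(\mathbb{T},\underline{K}_p)$ of trivializations of the universal torsor over the tuple moduli problem, so that the latter is exactly $[\Gr_G/K_p]$. Your unwinding of $[\Gr_G/K_p](S)$ as $\underline{K}_p$-torsors with equivariant maps to $\Gr_G$, followed by untwisting via the contracted product and pro-\'etale descent of the modification, makes the same identification explicit from the quotient side.
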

\begin{proof}
Note that one can $\underline{K}_p$-equivariantly identify the moduli interpretation of $\Gr_G$ with the v-sheaf of trivializations $\underline{\mathrm{Isom}}_S(\mathbb{T},\underline{K}_p)$ over the above moduli problem.
\end{proof}

\begin{prop}
\label{generic fiber}
The generic fiber $\mathrm{Sht}_{\G,\Q_p}$ can be identified with $\Gr_{G,K_p}/\Spd \Q_p$. In particular, the Beauville-Laszlo map on $\Gr_G$ factors through a map 
\[\mathrm{Sht}_{\G,\Q_p}\rightarrow \Bun_G.\]
\end{prop}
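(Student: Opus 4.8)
The plan is to compare the two moduli descriptions. On the generic fiber, a $\G$-shtuka over $S$ with one leg at $S^\sharp$ (an untilt over $\Q_p$) is, by definition, a $\G$-torsor on $\mathcal{Y}_{[0,\infty)}(S)$ with a meromorphic Frobenius $\varphi_\CP$; since $p$ is invertible on the untilt and we are working over $\Spd\Q_p$, restricting along $\mathcal{Y}_{(0,\infty)}(S)\to\mathcal{Y}_{[0,\infty)}(S)$ loses no information about the leg. So first I would recall that the quotient $\CP/\varphi_\CP$ descends $\CP$ to a $G$-torsor $\mathscr{E}$ on the relative Fargues-Fontaine curve $X_S=\mathcal{Y}_{(0,\infty)}(S)/\varphi^\Z$; meanwhile the $\G$-torsor $\CP$ itself, without the descent datum, being a $\G$-torsor on $\mathcal{Y}_{[0,\infty)}(S)$, produces by pushout to $\mathcal{Y}_{(0,\infty)}(S)$ and then $\varphi$-descent a second $G$-torsor which is geometric-pointwise trivial (because, pointwise, a $\G$-torsor on $\mathcal{Y}_{[0,\infty)}(\Spa(C,C^+))$ is trivial by smoothness of $G_{\Z_p}$ and the same argument used in \ref{extension} and \ref{trivial}), together with a reduction of structure group to a pro-\'etale $\underline{K}_p$-torsor $\mathbb{T}$ on $S$ recording the lattice. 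The meromorphy of $\varphi_\CP$ along $S^\sharp$ becomes exactly the meromorphic modification $\alpha\colon\mathscr{E}_0\dashrightarrow\mathscr{E}$ away from $S^\sharp$.

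The key steps, in order, are: (i) write down the functor $\mathrm{Sht}_{\G,\Q_p}\to \Gr_{G,K_p}/\Spd\Q_p$ by the recipe just sketched, $(S^\sharp,\CP,\varphi_\CP)\mapsto (S^\sharp,\mathscr{E}_0,\mathscr{E},\mathbb{T},\alpha)$, using the equivalence (Kedlaya, c.f. \cite[2.1.5]{PR}) between vector bundles / $\G$-torsors on $\mathcal{Y}$ and on the punctured spectrum, plus $\varphi$-descent to $X_S$ and GAGA; (ii) construct a quasi-inverse: given $(S^\sharp,\mathscr{E}_0,\mathscr{E},\mathbb{T},\alpha)$, the datum $\mathbb{T}$ gives a $\G$-torsor $\widetilde{\CP}$ on $X_S$ whose pushout to $G$ is $\mathscr{E}_0$, pull back to $\mathcal{Y}_{(0,\infty)}(S)$, glue along $\alpha$ using Beauville-Laszlo to incorporate the modification near $S^\sharp$, and extend across $p=0$ trivially (harmless on the generic fiber) to land on $\mathcal{Y}_{[0,\infty)}(S)$; the meromorphic Frobenius comes from the $\varphi$-equivariance built into working on the quotient curve $X_S$; (iii) check these are mutually inverse and compatible with isomorphisms, i.e. that the two constructions induce an equivalence of groupoids naturally in $S$ — here the $\underline{K}_p$-equivariant identification of the moduli of $\Gr_G$ with $\underline{\mathrm{Isom}}_S(\mathbb{T},\underline{K}_p)$ from the preceding lemma is what pins down the quotient stack structure; (iv) conclude: since $BL\colon\Gr_G\to\Bun_G$ is $\underline{K}_p$-invariant (it only depends on the modified torsor $\mathscr{E}$, not on the lattice $\mathbb{T}$), it factors through $[\Gr_G/\underline{K}_p]=\mathrm{Sht}_{\G,\Q_p}$, giving the asserted map $\mathrm{Sht}_{\G,\Q_p}\to\Bun_G$, which on the level of the above data is simply $(S^\sharp,\mathscr{E}_0,\mathscr{E},\mathbb{T},\alpha)\mapsto \mathscr{E}$.

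I expect the main obstacle to be step (ii), specifically making the two passages between ``$\G$-torsor on $\mathcal{Y}_{[0,\infty)}(S)$ with $\varphi$'' and ``$G$-torsor $\mathscr{E}_0$ on $X_S$ that is geometric-pointwise trivial together with a $\underline{K}_p$-structure $\mathbb{T}$'' genuinely inverse to each other at the level of the full v-site and not merely pointwise: one must verify that the $\underline{K}_p$-torsor $\mathbb{T}$ extracted from $\CP$ really does recover $\CP$ after the pushout-and-glue, which amounts to the statement that a $\G$-torsor on $\mathcal{Y}_{[0,\infty)}(S)$ equipped with its $\varphi$ is the same as its associated $G_{\Q_p}$-torsor on $X_S$ plus a $\underline{K}_p$-lattice structure — i.e. the integral analogue, on the nose, of the Beauville-Laszlo / $BL$ picture. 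This is where one invokes that $BL\colon\Gr_G\to\Bun_G$ together with the evident map to $B\underline{K}_p$ is an isomorphism onto the appropriate substack, and one should be careful that no boundedness hypothesis is needed since we are describing all of $\Gr_G$, not a Schubert cell. Everything else — exactness being automatic because $\mathrm{Rep}_{\Q_p}(G_{\Q_p})$ is semisimple, naturality in $S$, and the factorization of $BL$ — is formal given the cited results.
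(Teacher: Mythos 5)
Your overall strategy is the same as the paper's: extract from a one‑legged shtuka the tuple $(S^\sharp,\mathscr{E}_0,\mathscr{E},\mathbb{T},\alpha)$ by restricting to small and large radii of $\mathcal{Y}_{[0,\infty)}(S)$ and descending to $X_S$, and reconstruct the shtuka by gluing. But two steps, as written, have genuine gaps. First, the existence of the pro-\'etale $\underline{K}_p$-torsor $\mathbb{T}$ over a general $S$ (and the fact that it, together with $\mathscr{E}_0$, recovers $\CP$ near $p=0$) is not a consequence of pointwise triviality of $\G$-torsors as in \ref{trivial}/\ref{extension}; it is exactly the Scholze--Weinstein theorem that shtukas with no legs are equivalent to $\underline{\G(\Z_p)}$-local systems (\cite[22.6.1, 23.3.1]{Berkeley}), applied to the restriction of $(\CP,\varphi_\CP)$ to $\mathcal{Y}_{[0,\epsilon]}(S)$ where $\varphi_\CP$ is an isomorphism. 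Your proposed substitute --- that $BL\colon\Gr_G\to\Bun_G$ together with the map to $B\underline{K}_p$ is ``an isomorphism onto the appropriate substack'' --- is essentially a restatement of \ref{generic fiber} itself, so invoking it here is circular. (Relatedly, $\CP$ does not descend to $X_S$ ``via $\CP/\varphi_\CP$'': $\varphi_\CP$ is only meromorphic at the leg, so one must first restrict to $\mathcal{Y}_{(r,\infty)}(S)$, $r\gg 0$, to get $\mathscr{E}$, and use a power $\varphi_\CP^n$ to produce $\alpha$.)

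Second, the inverse construction cannot be carried out $\varphi$-equivariantly on $X_S$: if you glue $\mathscr{E}_0$ and the completion of $\mathscr{E}$ at $S^\sharp\subset X_S$ via Beauville--Laszlo and then pull back to $\mathcal{Y}_{(0,\infty)}(S)$, the result is a legless shtuka (it is just the pullback of $\mathscr{E}$), and the leg is lost. Nor does modifying the legless shtuka $\CP'$ only at $S^\sharp\subset\mathcal{Y}_{[0,\infty)}(S)$ work: then $\varphi_S^\ast\CP$ carries an extra modification at $\varphi_S^{-1}(S^\sharp)$, so the Frobenius fails to be an isomorphism there as well, and one does not obtain a shtuka with a single leg. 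The correct construction, as in the paper, modifies $\CP'$ by the $\Bdr^+$-lattice of $\mathscr{E}$ at \emph{all} translates $\varphi_S^n(S^\sharp)$, $n\geq 1$, so that the new Frobenius is $\varphi_{\CP'}$ away from the orbit of the leg, the identity at those translates, and $\alpha^{-1}$ (meromorphic) only at $S^\sharp$. Finally, the extension across $p=0$ is not ``trivial/harmless'': it is exactly where the $\underline{K}_p$-structure $\mathbb{T}$ and \cite[22.6]{Berkeley} are used. With these repairs your argument becomes the paper's proof.
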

\begin{proof}
Take $S=\Spa(R,R^+)$ in $ \Perf$. An $S$-point of $\mathrm{Sht}_{\G,\Q_p}$ gives an untilt $S^\sharp$ of $S$ over $\Spa \Q_p$ and a $\G$-shtuka $(\CP, \varphi_\CP)$ over $S$ with one leg at $S^\sharp$. Restrict $(\CP, \varphi_\CP)$ to $\mathcal{Y}_{(r,\infty)}(S)$ for $r$ large enough such that $\mathcal{Y}_{(r,\infty)}(S)$ does not meeting $S^\sharp$. This descends to a $\G$-torsor $\mathscr{E}$ on $X_S$. Similarly the restriction of $(\CP, \varphi_\CP)$ to $\mathcal{Y}_{(0,\epsilon]}(S)$ for some $\epsilon$ with $\mathcal{Y}_{(0,\epsilon]}(S)$ not meeting $S^\sharp$, descends to a $\G$-torsor $\mathscr{E}_0$ on $X_S$. By \cite[Proposition 22.6.1, 23.3.1]{Berkeley}, as its pullback to $\mathcal{Y}_{(0,\epsilon]}(S)$ extends $\varphi_S^{-1}$-equivariantly over the locus $p=0$, $\mathscr{E}_0$ is geometric pointwise on $S$ trivial and there is a pro-\'etale $\underline{K}_p$-torsor $\mathbb{T}$ on $S$ such that 
\[\mathscr{E}_0 = \mathbb{T}\times^{\underline{K}_p}(\G\times X_S).\]
Furthermore, for $n\in \N$ large enough, $\varphi_\CP^n$ induces an isomorphism
\[\alpha: \mathscr{E}_0|_{X_S\backslash S^\sharp}\cong(\varphi_S^{n})^*\CP|_{\mathcal{Y}_{(r,pr)}}\xrightarrow{\varphi_\CP^n} \CP|_{\mathcal{Y}_{(r,pr)}} \cong \mathscr{E}|_{X_S\backslash S^\sharp}.\] 
The tuple $(S^\sharp, \mathscr{E}_0, \mathscr{E}, \mathbb{T}, \alpha)$ is an object of $\Gr_{G,K_p}(S)$ and the assignment $(S^\sharp, \CP,\varphi_{\CP})$ to $(S^\sharp, \mathscr{E}_0, \mathscr{E}, \mathbb{T}, \alpha)$ preserves isomorphisms. 

Conversely, pull back $\mathscr{E}_0$ to $\mathcal{Y}_{(0,\infty)}(S)$. By \cite[Section 22.6]{Berkeley} and the existence of the $\underline{K}_p$-torsor $\mathbb{T}$, this extends to a $\G$-shtuka $\CP'$ with no legs. On the other hand, the completion of $\mathscr{E}$ along $S^\sharp \subset X_S$ gives a $\Bdr^+(R^\sharp)$-lattice. We may use $\alpha$ to modify $\CP'$ by this lattice at $\varphi_S^n(S^\sharp)$ for all $n\geq 1$. By doing so we obtain a new $\G$-torsor $\CP$, together with a meromorphic map
\[\varphi_\CP: \varphi_S^\ast\CP\dashrightarrow \CP,\]
which is $\varphi_{\CP'}$ at $\mathcal{Y}_{[0,\infty)}(S)\backslash \varphi_S^n(S^\sharp)$, $n\geq 0$, identity at $\varphi_S^n(S^\sharp)$, $n\geq 1$ and $\alpha^{-1}$ at the leg. The pair $(\CP,\varphi_\CP)$ is a $\G$-shtuka over $S$ with one leg at $S^\sharp$. Clearly these two constructions are inverse to each other. 
\end{proof}

\begin{prop}
\label{VB}
There is a map 
\[BL^\mathrm{int}_{K_p}:\Sht\rightarrow \Bun_G\]
extending the map $\mathrm{Sht}_{\G,\Q_p}\rightarrow \Bun_G$ from Proposition \ref{generic fiber}.
\end{prop}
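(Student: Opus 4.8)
The plan is to build the map $BL^{\mathrm{int}}_{K_p}$ directly on the v-site by attaching to a $\G$-shtuka the $G$-bundle obtained as the ``$\varphi$-invariant part at infinity'' of its value sheaf on the Fargues--Fontaine curve, exactly mirroring the construction in the proof of Proposition \ref{generic fiber} but without requiring an untilt over $\Spa\Q_p$. First I would take an affinoid perfectoid $S=\Spa(R,R^+)\in\Perf$ with its structure untilt $S^\sharp\to\Spa\Z_p$ and a $\G$-shtuka $(\CP,\varphi_\CP)$ over $S$ with one leg at $S^\sharp$. Restricting $\CP$ to $\mathcal Y_{(r,\infty)}(S)$ for $r\gg 0$ large enough that this locus misses $S^\sharp$, the isomorphism $\varphi_\CP$ furnishes a descent datum along $\varphi_S^{\Z}$, so that this restriction descends to a $\G$-torsor $\mathscr E$ on the relative Fargues--Fontaine curve $X_S$; by Tannakian formalism and GAGA this is a well-defined object of $\Bun_G(S)$. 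Setting $BL^{\mathrm{int}}_{K_p}(S^\sharp,\CP,\varphi_\CP):=\mathscr E$ and sending isomorphisms of $\G$-shtukas to the induced isomorphisms of $G$-bundles gives a morphism of presheaves of groupoids, which descends to a morphism of v-stacks since $\Bun_G$ is a v-stack.

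Next I would check the two compatibility assertions. The first is that the construction is independent of the auxiliary radius $r$: for $r'>r$ the two descended bundles agree canonically because $\varphi_\CP$ glues the restrictions over the overlap $\mathcal Y_{(r',pr)}(S)$, a standard Frobenius-descent bookkeeping. The second, which is the content linking the statement to Proposition \ref{generic fiber}, is that over $\Spd\Q_p$ this $\mathscr E$ is exactly the $G$-bundle $\mathscr E$ appearing in the tuple $(S^\sharp,\mathscr E_0,\mathscr E,\mathbb T,\alpha)$ of the proof of \ref{generic fiber}; there the identification with $\Gr_{G,K_p}/\Spd\Q_p$ followed by the Beauville--Laszlo map sends $(\mathscr E_0,\mathscr E,\mathbb T,\alpha)$ to $\mathscr E$, so the two maps literally agree on the generic fiber. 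This reduces to observing that the ``restriction to $\mathcal Y_{(r,\infty)}$ and descend'' recipe is the same recipe used in that proof, so no further work is needed beyond unwinding definitions.

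The main subtlety — and the step I expect to require the most care — is the behaviour at $p=0$, i.e.\ making sense of the construction v-universally rather than just after inverting $p$. When $R^\sharp$ has residue characteristic $p$ (equivalently when the untilt meets the special fibre), the divisor $S^\sharp$ may not be disjoint from the Frobenius-translates one wants to use, and more importantly $\mathcal Y_{[0,\infty)}(S)$ includes the locus $p=0$, where $\varphi_S$ is no longer an isomorphism onto an open but rather the relevant identifications come from the results of \cite[22.6.1, 23.3.1]{Berkeley} on extending $\varphi$-modules over $V([\varpi])$. The clean way around this is to first restrict to $\mathcal Y_{(0,\infty)}(S)$ (away from $p=0$), descend there to get a $G$-bundle on $X_S$ which a priori depends on no untilt data at all, and only then note that this is the object we want; boundedness by $\mu$ plays no role for the mere existence of the map. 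I would therefore phrase the construction uniformly over $\Perf/\Spd\Z_p$ using $\mathcal Y_{(0,\infty)}(S)$, deduce v-descent from that of $\Bun_G$, and finally invoke \ref{generic fiber} to see that restricting along $\Spd\Q_p\hookrightarrow\Spd\Z_p$ recovers the earlier map, so that $BL^{\mathrm{int}}_{K_p}$ indeed extends $\mathrm{Sht}_{\G,\Q_p}\to\Bun_G$.
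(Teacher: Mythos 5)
Your first two paragraphs are exactly the paper's proof: restrict the shtuka to $\mathcal{Y}_{[r,\infty)}(S)$ (equivalently $\mathcal{Y}_{(r,\infty)}(S)$) for $r$ large enough that the leg is avoided, use $\varphi_\CP$ as the descent datum down to $X_S$, and observe via the proof of Proposition \ref{generic fiber} that for a characteristic-zero leg this bundle is the $\mathscr{E}$ produced by the Beauville--Laszlo map, so the map extends $\mathrm{Sht}_{\G,\Q_p}\rightarrow \Bun_G$.

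The third paragraph, however, should be dropped rather than adopted as the ``uniform phrasing.'' The worry about $p=0$ is unfounded: if the untilt has characteristic $p$, the leg is the divisor $V(p)$, which only meets the regions $\mathcal{Y}_{[0,b]}(S)$ (a point with $|p|=0$ and $[\varpi]\neq 0$ violates $|[\varpi]|\leq |p|^a$ for any $a>0$), so restricting to $\mathcal{Y}_{[r,\infty)}(S)$ with $r>0$ already avoids the leg and all its Frobenius translates, and the construction of your first paragraph goes through verbatim — this is precisely why the paper's one-line argument works for legs in either characteristic. Conversely, your proposed fix of descending from all of $\mathcal{Y}_{(0,\infty)}(S)$ fails when the leg has characteristic zero: the leg then lies in $\mathcal{Y}_{(0,\infty)}(S)$ and $\varphi_\CP$ is only an isomorphism away from it, so there is no $\varphi$-equivariant structure on that whole region to descend; one must first pass to large radius, exactly as in your first paragraph. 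With that paragraph removed (and the routine independence-of-$r$ and v-functoriality checks kept), your argument coincides with the paper's.
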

\begin{proof}[Construction.]
For $S=\Spa(R,R^+)$ in $\Perf$ with an untilt $S^\sharp$ over $\Spa\Z_p$, given a $\G$-shtuka $(\CP,\varphi_\CP)$ over $S$ with a leg at $S^\sharp$, restrict $\CP$ to $\mathcal{Y}_{[r,\infty)}(S)$ for large enough $r$ such that $S^\sharp$ does not lie in $\mathcal{Y}_{[r,\infty)}(S)$. Then the restriction $\CP|_{\mathcal{Y}_{[r,\infty)}(S)}$ with the descent datum provided by $\varphi_\CP$ defines an $S$-point of $\Bun_G$. Clearly if the leg is not in characteristic $p$, then this is the map induced by the Beauville-Laszlo map.
\end{proof}

We can define bounded substacks of $\Sht$. Fix $T\subset B\subset G_{\overline{\Q}_p}$ and let $\mu \in X_\ast(T)$ be a dominant cocharacter defined over a finite extension $E/\Q_p$ with ring of integers $\CO_E$.
\begin{defn}\label{def: ShtukaGmu}
    $\mathrm{Sht}_{\G,\leq \mu}$ is the closed substack of $\mathrm{Sht}_{\G,\CO_E}$ where the $\G$-shtukas are bounded by $\mu$. We write $\mathrm{Sht}_{\G,\mu}$ for $\mathrm{Sht}_{\G,\leq \mu}$ if $\mu$ is minuscule.
\end{defn}

\color{black}
\begin{theorem}\label{qcqsSchubert}
    Let $\mu$ and $E$ be as above. The structure map 
    \[\mathrm{Sht}_{\G,\leq \mu}\rightarrow \Spd\CO_E\]
    is qcqs, with proper diagonal, and for any perfectoid Tate algebra $R$ with an open bounded integrally closed subring $R^+$ and any commutative diagram with solid arrows
    \[
    \begin{tikzcd}
    \Spa(R,R^\circ)\ar[r,"f"]\ar[d]
    & \mathrm{Sht}_{\G,\leq \mu} \ar[d]\\
    \Spa(R,R^+)\ar[r]\ar[ru,dashed]
    & \Spd \CO_E,
    \end{tikzcd}
    \]
    there is a unique (up to isomorphism) dotted arrow making the whole diagram commute up to a natural automorphism of $f$.
\end{theorem}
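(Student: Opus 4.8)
The plan is to bootstrap everything from the already-established properties of the full moduli stack $\Sht$ in \ref{qsness}, \ref{ProperDiag}, together with quasi-compactness of the bounded locus. First I would note that $\mathrm{Sht}_{\G,\leq\mu}\hookrightarrow \mathrm{Sht}_{\G,\CO_E}$ is a closed immersion by construction (boundedness is a closed condition, checked geometric-pointwise via the Schubert variety $\Gr_{\G,\CO_E,\leq\mu}\subset\Gr_{\G,\CO_E}$), and $\mathrm{Sht}_{\G,\CO_E}$ is the base change of $\Sht$ along $\Spd\CO_E\to\Spd\Z_p$. Hence quasi-separatedness and properness of the diagonal are inherited from \ref{qsness} and \ref{ProperDiag}: base change preserves quasi-separatedness and proper diagonals, and a closed substack of a quasi-separated v-stack with proper diagonal again has these properties (a closed immersion is proper and $0$-truncated, and properness is stable under composition by \cite[18.3]{Sch18}). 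Likewise the valuative-type lifting statement for $\mathrm{Sht}_{\G,\leq\mu}\to\Spd\CO_E$ follows from the corresponding statement in \ref{qsness}: given the solid diagram, compose with $\Spd\CO_E\to\Spd\Z_p$ and apply \ref{qsness} to get a unique (up to automorphism of $f$) dotted arrow $\Spa(R,R^+)\to\Sht$; then I must check this arrow lands in $\mathrm{Sht}_{\G,\leq\mu}$ and is compatible with the map to $\Spd\CO_E$. The first is because boundedness of the $\G$-shtuka over $\Spa(R,R^\circ)$ extends to $\Spa(R,R^+)$ — the extension is the one coming from the tensor-exact equivalence of vector bundles along $\mathcal{Y}_{[0,\infty)}(R,R^\circ)\to\mathcal{Y}_{[0,\infty)}(R,R^+)$ (\cite[2.1.1]{PR}, c.f.\ remark \ref{Y}), so its relative position at the leg is unchanged; the second is automatic since the untilt is determined by the structure map.

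The genuinely new content is quasi-compactness of $\mathrm{Sht}_{\G,\leq\mu}\to\Spd\CO_E$. Here the plan is to invoke the qcqs criterion \ref{qcqs} exactly as in the quasi-compactness argument of \ref{qsness}. Take an affinoid perfectoid $X$ over $\Spd\CO_E$, and I want $\mathrm{Sht}_{\G,\leq\mu}\times_{\Spd\CO_E}X$ quasi-compact. Using \ref{qcqs} (applied to $Y=\mathrm{Sht}_{\G,\leq\mu}$ over the base $X=\Spd\CO_E$-object, whose structure map has quasi-separated diagonal by the previous paragraph), it suffices to show that for any product of rank one geometric points $S=\Spa(R,R^+)$ over $X$ with principal components $s_i=\Spa(C_i,\CO_{C_i})$, the restriction $\mathrm{Sht}_{\G,\leq\mu}(S)\to\prod_i\mathrm{Sht}_{\G,\leq\mu}(s_i)$ is an equivalence of groupoids. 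This is precisely remark \ref{ShtProd}: over a product of rank one geometric points a $\G$-shtuka with one leg is equivalent, via passage to $\G$-BKF-modules (\ref{extension}) and componentwise extension (\cite[2.4.6]{PR}), to a compatible family of $\G$-shtukas over the $s_i$ that are \emph{commonly bounded}; and the condition ``bounded by $\mu$'' on $S$ is by definition the pointwise condition, i.e.\ that each component is bounded by $\mu$, which is automatically a common bound. So the primed product $\prod'_i$ appearing in remark \ref{ShtProd} becomes the genuine product once we fix the bound $\mu$, and the restriction functor is an equivalence on the nose. Since $\mathrm{Sht}_{\G,\leq\mu}\to\Spd\CO_E$ is thus qcqs and, as observed, has proper diagonal and satisfies the lifting property, the theorem follows.

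The main obstacle I anticipate is the bookkeeping around boundedness under the various equivalences of categories: one must check that the notion of ``bounded by $\mu$'' for a $\G$-shtuka over $S$ is genuinely the componentwise condition over the $s_i$ and is preserved under the $\G$-BKF-module $\leftrightarrow$ $\G$-shtuka correspondence, and that it descends along $\mathcal{Y}_{[0,\infty)}(R,R^\circ)\to\mathcal{Y}_{[0,\infty)}(R,R^+)$. Once boundedness is handled, the argument is a formal combination of \ref{qsness}, \ref{ProperDiag}, \ref{qcqs} and remark \ref{ShtProd} with no further input. A secondary minor point is to confirm that \ref{qcqs} applies in the form needed — that $\mathrm{Sht}_{\G,\leq\mu}$, as a small v-stack on $\Perf_{/\Spd\CO_E}$ with quasi-separated diagonal over the base, meets the hypotheses — but this is immediate from what has already been proven.
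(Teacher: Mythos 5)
Your proposal is correct and follows essentially the same route as the paper, whose proof simply notes that quasi-separatedness, the proper diagonal and the lifting property are already contained in \ref{qsness} and \ref{ProperDiag}, while quasi-compactness follows from the criterion \ref{qcqs} combined with remark \ref{ShtProd}. Your added observation that fixing the bound $\mu$ turns the restricted product $\prod'_i$ of \ref{ShtProd} into a genuine product is exactly the point that makes the bounded substack quasi-compact, so no new ingredient is needed beyond what you cite.
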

\begin{proof}
    We only have to prove quasi-compactness; the rest is Proposition \ref{qsness} and Corollary \ref{ProperDiag}. But this follows from the qcqsness criterion Proposition~\ref{qcqs} and Remark \ref{ShtProd}.
\end{proof}

\subsection{The crystalline period map}

Back to the PEL-setup: $G_{\Z_p}$ is the reductive group determined by the quadruple $(\CO_{B_{\Q_p}}, \ast, \Lambda, (\cdot,\cdot))$; $E$ is the completion of the global reflex field at a fixed prime above $p$, and $\mu$ is a dominant representative in the conjugacy class of the inverse of the Hodge cocharacter. We show that the universal formal abelian scheme over the integral model of the Shimura variety gives rise to a $\G$-shtuka with one leg bounded by $\mu^{-1}$.\footnote{Note that the generic fiber of $\mathrm{Sht}_{\G,\mu^{-1}}$ is $\Gr_\mu$. This switch from $\mu^{-1}$ to $\mu$ reflects that the type of the Hodge filtration on the de Rham cohomology and the Hodge-Tate filtration on the \'etale cohomology are inverse to each other.} This defines a map $\SK\rightarrow \mathrm{Sht}_{\G,\mu^{-1}}$. We call it the crystalline period map. \footnote{The name is not optimal, since shtukas do not see the crystalline point on the spectrum of $\mathbb{A}_\mathrm{inf}$. Our justification is that the generic fiber of $\SK$ is the potentially crystalline locus.}

\begin{prop}
Assume $p\neq 2$, then there is a map of small v-stacks over $\Spd \CO_E$
\[\pi_\text{crys}: \SK\rightarrow \mathrm{Sht}_{\G,\mu^{-1}}.\]
whose base change to $\Spd E$ is the Hodge-Tate period map $\overline{\pi}^\circ_{HT}$ at level $K_p$. For $p=2$, the same statement is true if \cite[Theorem 3.16]{RZ} holds. 
\end{prop}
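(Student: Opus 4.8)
The plan is to build the map $\pi_\text{crys}$ by the same mechanism as the Hodge-Tate period map in Theorem \ref{HT}, but now working integrally with prismatic Dieudonné modules instead of de Rham comparison. First I would use the fact that $\SK$ is the v-sheaf attached to the formal scheme $\mathscr{S}_K$, so by Lemma \ref{PresheafGRL}-type reasoning (now over $\Spf \CO_E$ rather than over $E$) it suffices to work on the presheaf sending $S = \Spa(R,R^+) \in \Perf$ with an untilt $S^\sharp = \Spa(R^\sharp, R^{\sharp+})$ over $\Spa \CO_E$ to the set of maps $\Spf R^{\sharp+} \to \mathscr{S}_K$, i.e.\ to a formal abelian scheme $\mathfrak{A}$ over $R^{\sharp+}$ with $G$-structure at level $K_p$. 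Given such data, take the $p$-divisible group $\mathfrak{A}[p^\infty]$ and its covariant prismatic Dieudonné module over the quasi-regular semiperfectoid ring $R^{\sharp+}$; restricting the resulting Breuil-Kisin-Fargues module to $\mathcal{Y}_{[0,\infty)}(S)$ and using the equivalence of Kedlaya between vector bundles on the punctured spectrum of $W(R^+)$ and on $\mathcal{Y}(S)$, this produces a shtuka over $S$ with one leg at $S^\sharp$, carrying a $\G$-structure coming from the $\CO_B \otimes \Z_p$-action and the polarization (via full-faithfulness of the prismatic Dieudonné functor, cf.\ the construction in Theorem \ref{HT}). This assigns functorially a $\G$-shtuka to each point of $\SK$, hence a map $\SK \to \Sht$.

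Second I would check the shtuka so produced is bounded by $\mu$. Boundedness is checked geometric-pointwise, so one reduces to a rank one geometric point $\Spa(C,\CO_C)$ over $\Spa\CO_E$; if the untilt has characteristic zero, the relative position of $\varphi_S^\ast\CP$ and $\CP$ completed along $S^\sharp$ is exactly the one computed in the proof of Theorem \ref{HT} (the Lie algebra condition gives a minuscule cocharacter with the correct weights, and the comparison-of-cohomology argument identifies its conjugacy class with $[\mu]$). If the untilt has characteristic $p$, then $R^{\sharp+}$ has $pR^{\sharp+}=0$ and the prismatic Dieudonné module is the crystalline Dieudonné module (Example \ref{example-crystallineDieu}); boundedness by $\mu$ in the Witt-vector affine Grassmannian at such a point follows from the Kottwitz determinant condition on $\mathrm{Lie}(\mathfrak{A}[p^\infty])$, exactly as in the local-model theory—this is where the hypothesis from \cite[3.23c]{RZ} (resp.\ \cite[3.16]{RZ} for $p=2$) enters, as it is what guarantees the point of $\Gr_{\G,\CO_E}$ lands in $\Gr_{\G,\CO_E,\leq\mu}$. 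Since $\mathrm{Sht}_{\G,\mu}$ is a closed substack of $\mathrm{Sht}_{\G,\CO_E}$ and boundedness is a closed, v-local condition, the map factors through $\mathrm{Sht}_{\G,\mu}$.

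Third, for the compatibility statement: base changing $\pi_\text{crys}$ along $\Spd E \hookrightarrow \Spd\CO_E$ restricts attention to untilts in characteristic zero, where the BKF-module comes from de Rham comparison and the construction literally reproduces that of $\bar\pi^\circ_{HT}$ at level $K_p$ via the identification $\mathrm{Sht}_{\G,\Q_p} \cong \Gr_{G,K_p}/\Spd\Q_p$ of Proposition \ref{generic fiber} composed with $BL$; thus the diagram with the good reduction locus of section 8 commutes. The main obstacle I anticipate is the characteristic-$p$ boundedness check: unlike the generic-fiber case, here one cannot invoke comparison with Betti cohomology, and one must instead argue directly that the crystalline Dieudonné module of a $p$-divisible group with $G$-structure satisfying the Kottwitz condition defines a point of the integral local model $\mathbb{M}_{\G,\mu}$—this is precisely the content encapsulated by \cite[3.23c]{RZ} and explains the $p\ne 2$ caveat (the cited statement is only available under a restriction at $p=2$). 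A secondary, more bookkeeping-style point is ensuring the $\G$-structure on the shtuka is well-defined as an honest $G_{\Z_p}$-torsor and not merely a torsor after inverting $p$; this follows from the smoothness of $G_{\Z_p}$ together with the self-duality of $\Lambda$, much as in the construction of the $G$-torsor $\mathcal{F}$ in the proof of Theorem \ref{HT}.
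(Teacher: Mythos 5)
Your construction skeleton (prismatic Dieudonn\'e module of $\mathfrak{A}[p^\infty]$ $\rightarrow$ BKF-module $\rightarrow$ restriction to $\mathcal{Y}_{[0,\infty)}(S)$, then boundedness and generic-fibre comparison) is the same as the paper's, but you have misidentified where the real difficulty and the $p\neq 2$ hypothesis live, and the step you dismiss as ``bookkeeping'' is in fact the heart of the proof. Full-faithfulness of the Dieudonn\'e functor only transports the $\CO_B$-action and the pairing onto the module $M$ over $W(R^+)$; it does not make the sheaf of symplectic-similitude $\CO_B$-linear isomorphisms $\underline{\mathrm{Isom}}(M,\Lambda\otimes W(R^+))$ into a $G_{\Z_p}$-torsor. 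For that you need \emph{\'etale-local standardness} of the polarized $\CO_B$-module $M$, and smoothness of $G_{\Z_p}$ plus self-duality of $\Lambda$ do not give this: local triviality is exactly the content of \cite[3.16]{RZ} (smoothness of the scheme of such isomorphisms and local existence of them), which is only available for $p\neq 2$ --- this is precisely why the caveat appears in the statement. The paper's argument runs: since $M$ and $\Lambda\otimes W(R^+)$ are $\xi$-adically complete and separated, the smoothness part of \cite[3.16]{RZ} lets one lift isomorphisms along the $\xi$-adic filtration, reducing to the quotient mod $\xi$; there $M\otimes_{W(R^+),\theta}R^{\sharp+}$ is identified with $H_{1,\mathrm{dR}}(\mathfrak{A}/R^{\sharp+})=\varprojlim_n H_{1,\mathrm{dR}}(A_n/R_n)$, and the Kottwitz condition on the graded pieces of the Hodge filtration (via \cite[3.23c]{RZ}) gives the local isomorphism with $\Lambda\otimes R_n$. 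None of this appears in your proposal, and ``smoothness of $G_{\Z_p}$ together with self-duality of $\Lambda$'' is not a substitute.

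Relatedly, your placement of the $p\neq2$/\cite[3.16]{RZ} input in the characteristic-$p$ boundedness check is wrong: once the integral $\G$-torsor is in hand, boundedness by $\mu$ follows from the shape of the Hodge filtration exactly as in the proof of Theorem \ref{HT} (the minuscule containment $\xi M\subset \varphi M\subset M$ together with the determinant condition), with no new case analysis needed at characteristic-$p$ untilts; boundedness is not where the hypothesis enters. Two smaller points: the prismatic Dieudonn\'e module naturally has its leg at $\varphi(S^\sharp)$ with the paper's conventions, so one must pass to $\varphi^\ast\CP$ with Frobenius $\varphi_M\circ\varphi$ to place the leg at $S^\sharp$ (your write-up glosses over this), and the generic-fibre comparison via Proposition \ref{generic fiber} is fine as you state it.
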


\begin{proof}[Construction]
View $\SK/\Spd \CO_E$ as the sheafification of the presheaf on $\Perf$
\[S=\Spa(R,R^+)\mapsto  \mathscr{S}_K(\Spf(R^{\sharp+})),\]
where $S^\sharp=\Spa(R^\sharp,R^{\sharp+})$ is the untilt of $S$ over $\CO_E$ determined by the structure morphism to $\Spd \CO_E$. 

For $S=\Spa(R,R^+)$ as above, we denote by $\varphi$ the Frobenius on $W(R^+)$, $\xi$ a generator of the kernel of Fontaine's theta map and write $X$ for $\Spec(W(R^+))$. Assume we have a map $\Spf(R^{\sharp+})\rightarrow \mathscr{S}_K$, denote the pullback of the universal formal abelian scheme by $\mathfrak{A}$. Its prismatic Dieudonn\'e module is a BKF-module $(M,\varphi_M)$ over $S$ with a leg at $\varphi(S^\sharp)$, equipped with an alternating form $(\cdot,\cdot)$ and an $\CO_B$-action. Let $\tilde{M}$ be the coherent sheaf on $X_\text{\'et}$ attached to the $W(R^+)$-module $M$. Consider the sheaf on $X_\text{\'et}$
\[\CP: T \mapsto \{g\in \mathrm{Isom}_{\CO_B}(\tilde{M}_T, \Lambda\otimes_{\Z_p} \CO_T)| g^\ast(\cdot,\cdot)=c(g)(\cdot,\cdot), c(g)\in \CO_T(T)^\times\}.\]
We want to show that this is a $G_{\Z_p}$-torsor, i.e. $\tilde{M}$ is \'etale locally on $X$ isomorphic to $\Lambda\otimes_{\Z_p} \CO_X$ as polarized $\CO_B\otimes_{\Z_{(p)}} \CO_X$-modules. 

Since both $M$ and $\Lambda\otimes_{\Z_p}W(R^+)$ are $\xi$-adically complete and $\xi$-adically separated, we can check this after reducing modulo $\xi$. (Here we use the smoothness part of \cite[Theorem 3.16]{RZ} and hence have to exclude $p=2$.) But $M\otimes_{W(R^+),\theta}R^{\sharp+}$
 agrees with the de Rham homology $H_{1,\text{dR}}(\mathfrak{A}/R^{\sharp+})$ of the formal scheme $\mathfrak{A}$ over $R^{\sharp+}$.
 For any chosen pseudo-uniformizer $\varpi\in R^{\sharp+}$ of $R^\sharp$ and each integer $n$, denote the reduction of $\mathfrak{A}$ modulo $\varpi^n$ by $A_n$ and $R_n:=R^{\sharp+}/\varpi^n$. We have 
 \[H_{1,\text{dR}}(\mathfrak{A}/R^{\sharp+})\cong \varprojlim_n H_{1,\text{dR}}(A_n/R_n).\]
 Since the Hodge filtration on $H_{1,\text{dR}}(A_n/R_n)$ is $\CO_B$-linear with graded pieces given by the Lie algebra of the abelian scheme $A_n$ and the dual of the Lie algebra of $A_n^\vee$, it is implied by the Kottwitz condition that each $H_{1,\text{dR}}(A_n/R_n)$ is isomorphic to $\Lambda\otimes_{\Z_p} R_n$ as polarized $\CO_B\otimes_{\Z_{(p)}}R_n$-modules. Passing to the limit we have the desired statement.

Now restrict $\varphi^\ast\CP$ to $\mathcal{Y}_{[0,\infty)}(S)$ and equip it with the Frobenius semi-linear endomorphism $\varphi_M\circ \varphi$, we obtain a $\G$-shtuka over $S$ with a leg at $S^\sharp$. It is bounded by $\mu^{-1}$ because of the shape of the Hodge-filtration as explained in Theorem \ref{HT}. This 
induces a map of v-stacks 
\[\SK\rightarrow \mathrm{Sht}_{\G,\mu^{-1}}.\]
Compare with the construction in Theorem \ref{HT} and use Proposition \ref{generic fiber}, we see that when restricted to the generic fiber of $\SK$ this is the Hodge-Tate period map. 
\end{proof}

\begin{remark}
    In \cite[Section 4.5-4.6]{PR}, Pappas-Rapoport showed in the more general case of Hodge type Shimura varieties at parahoric levels, integral models satisfying \cite[Conjecture 4.2.2 a)-c)]{PR} exist, and the universal $\G$-shtuka over the generic fiber extends uniquely over the integral model.
\end{remark}

\begin{prop}
    The map $\pi_\text{crys}$ is qcqs.
\end{prop}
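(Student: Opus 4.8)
\textbf{Proof plan for ``$\pi_\text{crys}$ is qcqs''.}

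The plan is to reduce the statement to checking the criterion of Proposition \ref{qcqs} on the map $\pi_\text{crys}$, i.e.\ to verify that for any product of geometric points $S$ over $\mathrm{Sht}_{\G,\mu}$ with principal components $s_i$, the restriction map on the fibers of $\pi_\text{crys}$ behaves well, and then to bootstrap to a genuine qcqs statement. More efficiently, however, I would exploit the diagram at level $K_p$ already established. The key observation is that by Corollary \ref{LevelK}, the generic fiber $\SK\times_{\Spd\CO_E}\Spd E \cong \Shi_K^\circ$ sits in a cartesian square over $\Bun_G$ with $\Gr_{G,\mu,K_p}=\mathrm{Sht}_{\G,\mu,\Q_p}$ (using Proposition \ref{generic fiber}), and the map $\pi_\text{crys}$ is constructed so that it is compatible with $BL^\mathrm{int}_{K_p}$ of Proposition \ref{VB}; that is, $BL^\mathrm{int}_{K_p}\circ\pi_\text{crys} = \overline{\pi}^\circ_{HT}\circ\mathrm{red}$ as maps $\SK\to\Bun_G$. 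This lets me factor $\pi_\text{crys}$ through the fiber product $\mathrm{Sht}_{\G,\mu}\times_{\Bun_G}\Igs^\circ_{K^p}$.

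First I would set $P := \mathrm{Sht}_{\G,\mu}\times_{\Bun_G}\Igs^\circ_{K^p}$ and produce the natural map $\SK\to P$ induced by $\pi_\text{crys}$ and $\mathrm{red}$. Second, I would show this map $\SK\to P$ is qcqs. Here the point is that the structure map $P\to\Igs^\circ_{K^p}$ is a base change of $\mathrm{Sht}_{\G,\mu}\to\Bun_G$; since $\mathrm{Sht}_{\G,\mu}\to\Spd\CO_E$ is qcqs with proper (hence quasi-separated) diagonal by Theorem \ref{qcqsSchubert}, and $\overline{\pi}^\circ_{HT}:\Igs^\circ_{K^p}\to\Bun_G$ is qcqs (this is Corollary \ref{nice}, giving it is separated, representable in spatial diamonds, finite dim.\ trg., hence qcqs), the composite $P\to\Bun_G$ and also $P\to\Igs^\circ_{K^p}$ are qcqs with quasi-separated diagonal; by cancellation it then suffices to check $\SK\to P$ is qcqs. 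For this I would invoke the criterion of Proposition \ref{qcqs}: the map $\SK\to P$ has quasi-separated diagonal (both $\SK$ and $P$ are quasi-separated over $\Spd\CO_E$ — $\SK$ because it comes from a finite-type formal scheme, $P$ from the above), so it is enough to show that on a product of geometric points $S$ over $P$, the groupoid of lifts to $\SK$ is equivalent to the product of the groupoids of lifts on the principal components $s_i$. This last equivalence is exactly the integral analogue of Proposition \ref{Bijective}, proved by the same Serre--Tate-theory argument used in Section 8 but now over $R^{\sharp+}$ rather than modulo a pseudo-uniformizer: one uses Proposition \ref{extension} and Remark \ref{ShtProd} to see that a $\G$-shtuka over a product of rank one points is its product of restrictions (so it comes from a $\G$-BKF-module, hence a $p$-divisible group over $W(R^+)$ and, via the untilt, over $R^{\sharp+}$), and then Serre--Tate lifting (Theorem \ref{ST1}) together with Lemma \ref{modifyAb} to reconstruct the formal abelian scheme.

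Having $\SK\to P$ qcqs and $P\to\Spd\CO_E$ qcqs, we conclude $\pi_\text{crys}$ is qcqs by composing with the (qcqs, in fact proper-diagonal) structure map of $\mathrm{Sht}_{\G,\mu}$ and cancellation. The main obstacle I anticipate is the integral refinement of Proposition \ref{Bijective}: one must check that the $\G$-BKF-module extracted from a $\mu$-bounded $\G$-shtuka really produces a $p$-divisible group with $G$-structure over $R^{\sharp+}$ whose associated abelian scheme lies on $\mathscr{S}_K$ — this requires the Kottwitz-condition bookkeeping on the Hodge filtration of the BKF-module (the argument already invoked in the construction of $\pi_\text{crys}$, using \cite[3.16]{RZ}), and one must be careful that the modification of the abelian scheme within its isogeny class (Lemma \ref{modifyAb}) can be performed integrally rather than only after inverting $p$ or reducing mod $\varpi$. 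Once the moduli dictionary is in place, the qcqs conclusion is formal from Theorem \ref{qcqsSchubert}, Corollary \ref{nice}, and the cancellation properties of qcqs morphisms in \cite[Section 10, 12]{Sch18}.
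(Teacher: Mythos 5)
There is a genuine gap, and the route is also strategically backwards. The load-bearing step in your plan is that $\SK\to P:=\mathrm{Sht}_{\G,\mu}\times_{\Bun_G}\Igs^\circ_{K^p}$ is qcqs, which you propose to verify via the criterion of Proposition \ref{qcqs}, i.e.\ by proving an integral, product-of-geometric-points analogue of Proposition \ref{Bijective} (extending shtukas over products of points to BKF-modules, gluing $p$-divisible groups over $R^{\sharp+}$, uniformly bounding the quasi-isogenies across components, Serre--Tate lifting, Kottwitz-condition bookkeeping). You explicitly leave this as an ``anticipated obstacle'' rather than carrying it out, so the proof is not complete. Worse, that verification is essentially the entire content of Theorem \ref{integral}: in the paper the logical order is the reverse --- the present proposition is established first by a purely formal argument, precisely so that the cartesian property of the integral diagram can then be checked on \emph{single} geometric points (where one only needs \cite[2.4.6]{PR} and the argument of \ref{cartesian}), rather than on arbitrary products of points. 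Packing that hard moduli-theoretic work into the proof of a preliminary qcqs statement defeats the purpose of the statement and leaves the actual difficulty unaddressed.

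The intended argument needs none of this. The composite $\SK\xrightarrow{\pi_\mathrm{crys}}\mathrm{Sht}_{\G,\mu}\to\Spd\CO_E$ is just the structure map of $\SK$, which is qcqs because $\mathscr{S}_K$ is a quasi-compact, separated (quasi-projective) formal scheme over $\Spf\CO_E$; and $\mathrm{Sht}_{\G,\mu}\to\Spd\CO_E$ is quasi-separated by Theorem \ref{qcqsSchubert}. Cancellation (a qcqs composite followed through a quasi-separated second map forces the first map to be qcqs) then gives the claim in two lines. You in fact record the relevant input yourself when you note that $\SK$ is quasi-separated over $\Spd\CO_E$ ``because it comes from a finite-type formal scheme'' --- the same observation gives quasi-compactness, at which point the cancellation argument closes the proof without any appeal to $\Igs^\circ_{K^p}$, to Corollary \ref{nice}, or to any integral refinement of Proposition \ref{Bijective}.
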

\begin{proof}
    Since the composition 
    \[\SK \xrightarrow{\pi_\text{crys}} \mathrm{Sht}_{\G,\mu^{-1}} \rightarrow \Spd \CO_E\]
    is qcqs and the second map is quasi-separated by Theorem \ref{qcqsSchubert}, the first map is qcqs by cancellation.
\end{proof}

\subsection{Integral model of the cartesian diagram}
\begin{theorem}\label{integral}
The following diagram of small v-stacks on $\Perf/\Spd\CO_E$ is 2-cartesian. 
    \[
    \begin{tikzcd}
        \SK \ar[r,"\pi_\text{crys}"] \ar[d,"\mathrm{red}"] & \mathrm{Sht}_{\G,\mu^{-1}}\ar[d, "BL^\mathrm{int}_{K_p}"]\\
        \Igs^\circ_{K^p}\ar[r, "\overline{\pi}^\circ_\text{HT}"] & \Bun_G
    \end{tikzcd}
    \]
 Its base change to $\Spd E$, identifies with the diagram in Corollary \ref{LevelK}.
\end{theorem}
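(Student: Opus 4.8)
The strategy is to reduce the cartesian property of the integral diagram to the already-established cartesian property on the generic fiber (Corollary~\ref{LevelK}) together with a check on geometric points using Serre-Tate theory. First I would observe, exactly as in the proof of Theorem~\ref{cartesian}, that stackification commutes with $2$-fiber products, so it suffices to identify $\SK$ with the v-stackification of $\Igs^\mathrm{pre}\times_{\Bun_G}\mathrm{Sht}_{\G,\mu}$ on a basis of the v-topology. The key structural input is that the map $\overline\pi^\circ_{HT}\colon \Igs^\circ_{K^p}\to\Bun_G$ is qcqs (a corollary of Proposition~\ref{rational}, via Corollary~\ref{nice}) and that $\mathrm{Sht}_{\G,\mu}\to\Spd\CO_E$ is quasi-separated (Theorem~\ref{qcqsSchubert}). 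Hence the natural map from $\SK$ to the fiber product $\Igs^\circ_{K^p}\times_{\Bun_G}\mathrm{Sht}_{\G,\mu}$ is qcqs: indeed the composition $\SK\to \mathrm{Sht}_{\G,\mu}\to\Spd\CO_E$ is qcqs (the map $\pi_\text{crys}$ is qcqs by the preceding proposition, and $\mathrm{Sht}_{\G,\mu}\to\Spd\CO_E$ is qcqs by Theorem~\ref{qcqsSchubert}), and $\overline\pi^\circ_{HT}$ is quasi-separated, so one concludes by cancellation that $\SK\to \Igs^\circ_{K^p}\times_{\Bun_G}\mathrm{Sht}_{\G,\mu}$ is qcqs; being qcqs and a map whose source and target are reasonable, to check it is an isomorphism it suffices to check it induces an equivalence of groupoids on geometric points $\Spa(C,C^+)$, indeed on rank one geometric points $\Spa(C,\CO_C)$.

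On a rank one geometric point $\Spa(C,\CO_C)$ with untilt over $\CO_E$, the proof becomes a direct translation of Serre-Tate theory. An $\Spa(C,\CO_C)$-point of the fiber product consists of: an abelian scheme $A_0$ with $G$-structure over $\CO_C/\varpi$ up to quasi-isogeny (a point of $\Igs^\circ_{K^p}$), a $\G$-shtuka over $\Spa(C,\CO_C)$ with one leg bounded by $\mu$ which by Proposition~\ref{extension} is the same as a $\G$-BKF-module $(M,\varphi_M)$ over $\CO_C$, and an identification of their images in $\Bun_G$. Via the classification of $p$-divisible groups over $\CO_C$ by Breuil-Kisin-Fargues modules (the example following Definition~\ref{BKF} and \cite[17.5.2]{Berkeley}, together with the fact that a $\G$-structure translates into $\CO_B$-action and polarization), the $\G$-BKF-module $(M,\varphi_M)$ corresponds to a $p$-divisible group $\G$ with $G$-structure over $\CO_C$. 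The compatibility over $\Bun_G$ furnishes, after passing to the Fargues-Fontaine curve, a quasi-isogeny between $A_0[p^\infty]$ (base-changed to $\CO_C/p$) and $\G$ (base-changed to $\CO_C/p$) respecting $G$-structures --- precisely as in the rank one case of Proposition~\ref{Bijective}. I would then run the same argument as there: use Lemma~\ref{1} and Lemma~\ref{modifyAb} to modify $A_0$ within its isogeny class, use Lemma~\ref{BTgluing} to glue and Theorem~\ref{ST0} to lift the quasi-isogeny, and finally apply the Serre-Tate theorem~\ref{ST1} to produce a formal abelian scheme with $G$-structure over $\CO_C$ lifting $A_0$ and having $p$-divisible group $\G$; this is the desired point of $\SK(\Spa(C,\CO_C))$, i.e.\ of $\mathscr{S}_K(\Spf\CO_C)$. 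The inverse construction sends a formal abelian scheme to its reduction and its prismatic Dieudonn\'e module. One checks these are mutually inverse exactly as in the proof of Proposition~\ref{Bijective}, with the simplification that there are no product-of-points bookkeeping issues (the $n_i$-boundedness argument) since we are already on a single geometric point.

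The main obstacle, and where I would spend the most care, is the first paragraph: establishing that the comparison map $\SK\to\Igs^\circ_{K^p}\times_{\Bun_G}\mathrm{Sht}_{\G,\mu}$ is qcqs cleanly enough that checking on rank one geometric points suffices to conclude it is an isomorphism. One must be careful that $\mathrm{Sht}_{\G,\mu}$ is only quasi-separated, not separated, so the relevant cancellation and ``check on points'' statements need to be invoked in a form that tolerates automorphisms --- this is exactly why Theorem~\ref{qcqsSchubert} is phrased with the valuative criterion ``commuting up to a natural transform given by an automorphism of $f$''. The representability statements (Corollary~\ref{nice}: $\overline\pi^\circ_{HT}$ is representable in spatial diamonds) combined with \cite[Proposition~12.10, and the discussion in Section~17]{Sch18} should give the needed ``isomorphism can be checked on geometric points'' principle for a qcqs map between small v-stacks with the relevant finiteness; alternatively, one can observe directly that after base change along $BL\colon\mathrm{Sht}_{\G,\mu}\to\Bun_G$ the source becomes $\SK\times_{\Igs^\circ_{K^p}}(\Igs^\circ_{K^p}\times_{\Bun_G}\mathrm{Sht}_{\G,\mu})$ and argue there. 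For the last sentence of the theorem, that the base change to $\Spd E$ recovers the diagram of Corollary~\ref{LevelK}, I would simply note that $\mathrm{Sht}_{\G,\mu}\times_{\Spd\CO_E}\Spd E\cong \mathrm{Sht}_{\G,\Q_p,\mu}\cong[\Gr_{G,\mu}/\underline K_p]=\Gr_{G,\mu,K_p}$ by Proposition~\ref{generic fiber} (restricted to the $\mu$-bounded locus, where the local model is the Schubert cell $\Gr_{G,\mu}$ itself since $\mu$ is minuscule), that $\SK\times_{\Spd\CO_E}\Spd E\cong\Shi_K^\circ$, that $\pi_\text{crys}$ restricts to $\pi^\circ_{HT,K_p}$ by the construction of $\pi_\text{crys}$, and that $BL^\mathrm{int}_{K_p}$ restricts to $BL$ by Proposition~\ref{VB}; the reduction map $\mathrm{red}$ and $\overline\pi^\circ_{HT}$ are unchanged, so the two diagrams agree.
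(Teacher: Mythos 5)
Your overall route — construct the comparison map $\SK\to F:=\Igs^\circ_{K^p}\times_{\Bun_G}\mathrm{Sht}_{\G,\mu}$, show it is qcqs by cancellation from the qcqs-ness of $\overline{\pi}^\circ_{HT}$ (Corollary \ref{nice}), the quasi-separatedness statements of Theorem \ref{qcqsSchubert} and the qcqs-ness of $\pi_\text{crys}$, then check the equivalence pointwise via the shtuka/BKF-module/Serre--Tate dictionary — is the paper's argument, and your treatment of the last sentence of the theorem is fine. The genuine gap is the further reduction ``indeed on rank one geometric points $\Spa(C,\CO_C)$''. This is not a valid principle in this situation: a qcqs map of small v-sheaves that is bijective on rank one geometric points need not be an isomorphism (the open immersion $\Spa(C,\CO_C)\hookrightarrow\Spa(C,C^+)$ is qcqs and bijective on $(K,\CO_K)$-valued points for all algebraically closed perfectoid fields $K$), and the usual mechanism for reducing to rank one, namely partial properness and the valuative criterion, is unavailable here: neither $\SK$ nor $F$ is partially proper over $\Spd\CO_E$. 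Indeed a $(C,C^+)$-point of $\SK$ is a map $\Spf C^{\sharp+}\to\mathscr{S}_K$, i.e.\ a formal abelian scheme with $G$-structure over $C^{\sharp+}$, which is strictly more data than its restriction to the rank one point; likewise the $\Igs^\circ_{K^p}$-component of a $(C,C^+)$-point of $F$ lives over $C^+/\varpi$. Establishing bijectivity at higher-rank geometric points is exactly where the real content lies (it amounts to partial properness of $\SK\to F$, which you cannot assume): one has to glue the $p$-divisible group over $\CO_{C^\sharp}$ coming from the shtuka (after the BKF extension) with the abelian scheme over $\overline{C^+}$, using Lemma \ref{1} over the valuation ring $\overline{C^+}$ and the Milnor-square gluing of Lemma \ref{BTgluing}, before Serre--Tate lifting can be performed over $C^{\sharp+}$. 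This is what the paper does: it checks at arbitrary geometric points $\Spa(C,C^+)$ and then argues as in the single-geometric-point case of Proposition \ref{Bijective}.

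The repair is easy, and you in fact already name the needed tools (Lemmas \ref{1}, \ref{BTgluing}, \ref{modifyAb}), which are precisely the ones that are vacuous at rank one and essential at higher rank — so your remark that the only simplification over Proposition \ref{Bijective} is the absence of ``product-of-points bookkeeping'' conflates two different issues: the common bound $p^N$ for the quasi-isogenies is indeed only a product-of-points issue, but the Milnor-square gluing is needed at every geometric point of rank greater than one, i.e.\ exactly in the cases your reduction discards. Concretely: delete the rank-one reduction, work at a general geometric point $s=\Spa(C,C^+)$, use that shtukas over $s$ agree with shtukas over $\Spa(C,\CO_C)$ (Remark \ref{Y}) and extend to a $\G$-BKF-module, hence a $p$-divisible group with $G$-structure over $\CO_{C^\sharp}$, and then run the geometric-point case of Proposition \ref{Bijective} verbatim; with that change your argument coincides with the paper's proof.
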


\begin{proof}
The last statement is clear. We only need to show that the diagram is 2-cartesian. For convenience we denote the fiber product $\Igs^\circ_{K^p}\times_{\Bun_G}\mathrm{Sht}_{\G,\mu^{-1}}$ by $F$. The diagram commutes up to a natural isomorphism (comparison between prismatic and crystalline Dieudonn\'e modules). Hence there is a unique map $\SK\rightarrow F$ by the universal property. We need to show that this is an isomorphism.

We know by Corollary \ref{nice} that the map $\overline{\pi}^\circ_{HT}$ is qcqs, and hence its base change $F \rightarrow \mathrm{Sht}_{\G,\mu^{-1}}$ is also qcqs. Since the composition 
\[\SK\rightarrow F \rightarrow \mathrm{Sht}_{\G,\mu^{-1}}\]
is naturally isomorphic to $\pi_\text{crys}$ and is quasi-separated, the map $\SK\rightarrow F$ is qcqs by cancellation. In particular, that this is an isomorphism can be checked on geometric points. On a geometric point $s:=\Spa(C,C^+)$, a $\G$-shtuka over $s$ with a leg at $s^\sharp$ can be uniquely extended to a $\G$-BKF-module over $\CO_C$ with one leg at $\CO_{C^\sharp}$. Using Dieudonn\'e theory, it is the same as a $p$-divisible group with $G$-structure over $\CO_{C^\sharp}$. One can now argue as in Theorem \ref{cartesian}. Namely, an $s$-point of $F$ amounts to a triple $(\mathcal{A}_0, (s^\sharp,\mathcal{H}),\rho)$, where $\mathcal{A}_0$ is an abelian scheme over $C^+/\varpi$ with $G$- and $K$-level structures; $s^\sharp=\Spa(C^\sharp,C^{\sharp+})$ is an untilt of $s$ over $\Spd \CO_E$; $\mathcal{H}$ is a $p$-divisible group with $G$-structure over $\CO_{C^\sharp}$ and $\rho$ is a quasi-isogeny between $\mathcal{A}_0[p^\infty]\times_{C^+/\varpi} \CO_C/\varpi$ and $\mathcal{H}\times_{\CO_{C^\sharp}}\CO_C/\varpi$, compatibly with the $G$-structures. The argument in Theorem~\ref{cartesian} shows that this datum is equivalent to a formal abelian scheme with $G$- and $K$-level structures $\mathfrak{A}$ over $C^{\sharp+}$, i.e. an $s$-point of $\mathscr{S}_K^\diamond$.
\end{proof}

\subsection{Newton stratification}
Parallel to the discussion in Sections 7 and 9, we discuss the Newton stratification on the cartesian diagram in Theorem \ref{integral}. This recovers (at least on the level of v-sheaves) the almost product formula on Newton strata of PEL-type Shimura varieties due to Mantovan \cite[Proposition 11]{Mantovan}, cf. \cite[Section 4.3]{CS17}, as well as the $p$-adic uniformization of Rapoport and Zink \cite[Theorem 6.30]{RZ}.

Fix an algebraically closed field $k$ containing $\F_q$. Let $B(G)$ be the Kottwitz set for $G_{\Q_p}$. We base-change the cartesian diagram in Theorem \ref{integral} to $\Spd k$ and consider the absolute version of it. Namely, we forget the structure map $\mathscr{S}_K^\diamond\rightarrow \Spd W(k)$, and view it as the v-sheaf
\[S (\in \Perf_k)\mapsto \{(S^\sharp, S^\sharp \rightarrow \mathscr{S}_K^\text{ad})\mid S^\sharp \text{ is  an untilt of }S\},\]
similarly for $\mathrm{Sht}_{\G,\mu}$.

The Newton stratification on $\Bun_{G,k}$ define stratifications on all of $\Igs^\circ_{K_p}$, $\mathrm{Sht}_{\G,\mu^{-1}}$ and $\SK$. For $[b]\in B(G)$, we label the corresponding strata by a superscript $b$ (on $\Igs^\circ_{K_p}$ and $\SK$ this is empty unless $[b]\in B(G,\mu)$ by Proposition \ref{IgsImage}). Then it follows from Theorem \ref{integral} that

\begin{cor}
    The following diagram on $\Perf_k$ is 2-cartesian.
    \[
    \begin{tikzcd}
    \mathscr{S}_K^{\diamond,b} \ar[r,"\pi^b_\mathrm{crys}"] \ar[d] & \mathrm{Sht}^b_{\G,\mu^{-1}}\ar[d, "BL^\mathrm{int,b}_{K_p}"]\\
    \Igs^{\circ,b}_{K^p}\ar[r, "\overline{\pi}^{\circ,b}_\text{HT}"] & \Bun^b_G
    \end{tikzcd}
    \]
\end{cor}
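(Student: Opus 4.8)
The plan is to deduce the cartesian property of the Newton-stratified diagram directly from Theorem \ref{integral}, using that all four strata are obtained by pulling back the single locally closed substack $\Bun_G^b \hookrightarrow \Bun_{G,\Spd k}$ along the various structure maps to $\Bun_G$. First I would set up the absolute versions of all the objects over $\Perf_k$ as indicated in the text: replace $\SK$, $\mathrm{Sht}_{\G,\mu}$, $\Igs^\circ_{K^p}$ by their v-sheaves/v-stacks over $\Spd k$ obtained by forgetting the structure map to $\Spd\CO_E$ (for $\SK$ and $\mathrm{Sht}_{\G,\mu}$) and by base-changing everything along $\Spd k \to \Spd\F_q$. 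The diagram of Theorem \ref{integral}, base-changed to $\Perf_k$, remains 2-cartesian, since 2-fiber products commute with base change; call this diagram $(\star)$. All maps in $(\star)$ sit over $\Bun_G$ (via $BL^{\mathrm{int}}_{K_p}$ on the right and $\overline{\pi}^\circ_{HT}$ on the bottom), and the Newton stratum of each object is by definition its fiber product with $\{[b]\}\subset|\Bun_G|$, i.e. with $\Bun_G^b = \Bun_G\times_{|\Bun_G|}\{[b]\}$.

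The key step is then a formal diagram chase: given $(\star)$ cartesian and a map $\Bun_G^b \to \Bun_G$, pulling back $(\star)$ along $\Bun_G^b\to\Bun_G$ yields
\[
\begin{tikzcd}
\mathscr{S}_K^{\diamond}\times_{\Bun_G}\Bun_G^b \ar[r]\ar[d] & \mathrm{Sht}_{\G,\mu}\times_{\Bun_G}\Bun_G^b \ar[d]\\
\Igs^\circ_{K^p}\times_{\Bun_G}\Bun_G^b \ar[r] & \Bun_G^b,
\end{tikzcd}
\]
which is again 2-cartesian (base change of a cartesian square along a map to the terminal-in-the-relevant-slice object $\Bun_G$ is cartesian; concretely this is \cite[04Y1]{stacks-project} or the associativity of 2-fiber products). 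It remains to identify each corner of this pulled-back square with the superscript-$b$ object appearing in the corollary: by construction of the Newton stratification on $\SK$, $\mathrm{Sht}_{\G,\mu}$, $\Igs^\circ_{K^p}$ as pullbacks of that on $\Bun_{G,\Spd k}$, we have $\mathscr{S}_K^{\diamond}\times_{\Bun_G}\Bun_G^b \cong \mathscr{S}_K^{\diamond,b}$, $\mathrm{Sht}_{\G,\mu}\times_{\Bun_G}\Bun_G^b\cong\mathrm{Sht}^b_{\G,\mu}$, and $\Igs^\circ_{K^p}\times_{\Bun_G}\Bun_G^b\cong\Igs^{\circ,b}_{K^p}$, while the bottom-right corner is $\Bun_G^b$ itself. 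The induced maps are then precisely $\pi^b_\mathrm{crys}$, $BL^{\mathrm{int},b}_{K_p}$ and $\overline{\pi}^{\circ,b}_{HT}$, being the restrictions of $\pi_\mathrm{crys}$, $BL^{\mathrm{int}}_{K_p}$ and $\overline{\pi}^\circ_{HT}$ along the stratum inclusions. This gives the claimed 2-cartesian diagram.

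I do not expect a serious obstacle here; the only points requiring a line of care are (i) checking that the pullback of a 2-cartesian square of v-stacks along an arbitrary morphism to one of its objects is again 2-cartesian — but this is formal and cited, and (ii) confirming that the Newton stratum superscripts were \emph{defined} as these pullbacks (which the text does, referencing \ref{IgsImage} for the fact that the strata on $\SK$ and $\Igs^\circ_{K^p}$ are empty unless $[b]\in B(G,\mu)$, so no vacuous-case issue arises). If one wished to be more hands-on, one could alternatively repeat the argument of Proposition \ref{Bijective}/Theorem \ref{cartesian} fiberwise over $\Bun_G^b$, using that over a geometric point a $\G$-shtuka extends uniquely to a $\G$-BKF-module (Proposition \ref{extension}) and hence corresponds by Dieudonné theory to a $p$-divisible group with $G$-structure whose isocrystal lies in the class $[b]$; but invoking Theorem \ref{integral} as a black box and pulling back is strictly shorter, so that is the route I would take.
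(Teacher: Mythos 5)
Your proposal is correct and matches the paper, which states this as an immediate consequence of Theorem \ref{integral}: since the Newton strata on $\mathscr{S}_K^{\diamond}$, $\mathrm{Sht}_{\G,\mu}$ and $\Igs^\circ_{K^p}$ are by definition the pullbacks of $\Bun_G^b\subset\Bun_{G,\Spd k}$, the stratified square is just the base change of the integral cartesian square along $\Bun_G^b\to\Bun_G$. The formal pullback argument you give is exactly the intended (and only needed) justification.
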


In the rest of this section, we describe the strata and explain the relation to Mantovan's formula and Rapoport-Zink uniformization.

Denote by $\mathcal{D}$ the integral local PEL datum $(\CO_B\otimes \Z_p,\ast, \Lambda, (\cdot,\cdot), \mu^{-1}, [b])$, where the first five entries are determined by the integral global PEL-datum and $[b]\in B(G,\mu)$. Fix a representative $\mathbb{X}_b$ of the isogeny class of $p$-divisible groups with $G$-structure over $k$ labeled by $[b]$. This defines a formal scheme $\mathfrak{M}_\mathcal{D}$ (the Rapoport-Zink space attached to $\mathcal{D}$, see. \cite[Definition 3.21]{RZ}) (pro)-representing the following deformation functor of $p$-divisible groups:
\[\mathrm{Nilp}^\text{op}_{W(k)} \rightarrow \mathrm{Sets}\]
\[R\mapsto \{(\mathcal{H},f)\}/\sim,\]
where $\mathcal{H}$ is a $p$-divisible group over $R$ with principal polarization and $\CO_B\otimes \Z_p$-action (satisfying the Kottwitz condition and compatibility with $\ast$, see. Remark \ref{G-p-div}), and $f: \mathcal{H}\times_R R/p\dashrightarrow \mathbb{X}_b\times_k R/p$ is a $B_{\Q_p}$-linear quasi-isogeny, preserving the polarization up to a scalar in $\underline{\Q}^\times_p(\Spec(R/p))$. The equivalence relation is given by isomorphisms of such pairs. We consider its attached v-sheaf and view it as defined absolutely over $\Spd k$.

Recall from Definition~\ref{def:Aut} the formal group scheme $\underline{\mathrm{Aut}}_G(\tilde{\mathbb{X}}_b)$ of self-quasi-isogenies compatible with the extra structures on $\mathbb{X}_b$. Note that its attached v-sheaf $\underline{\mathrm{Aut}}_G(\tilde{\mathbb{X}}_b)^\diamond$ acts on $\mathfrak{M}_{\mathcal{D}}^\diamond$ as follows: given $S=\Spa(R,R^+)$ in $\Perf_k$. An $S$-point of $\underline{\mathrm{Aut}}_G(\tilde{\mathbb{X}}_b)^\diamond$ is the same as an element $g\in \underline{\mathrm{Aut}}_G(\tilde{\mathbb{X}}_b\times_kR^+/\varpi)$ for any chosen pseudo-uniformizer $\varpi$, cf. the proof of Corollary~\ref{cor: CompareAut}. While an $S$-point of $\mathfrak{M}_{\mathcal{D}}^\diamond$ amounts to a triple $(S^\sharp, \mathcal{H}, f)$, where $S^\sharp=(R^\sharp,R^{\sharp+})$ is an untilt of $S$ over $W(k)$, and $(\mathcal{H},f)$ is a $\Spf R^{\sharp +}$-point of $\mathfrak{M}_{\mathcal{D}}$. We let $g.(S^\sharp, \mathcal{H}, f)$ be the triple $(S^\sharp, \mathcal{H},f')$, where $S^\sharp=(S^\sharp,\iota:S^{\sharp\flat}\simeq S)$ and $\mathcal{H}$ are as before, and $f'$ is the composition 
\[\mathcal{H}\times_{R^{\sharp+}} R^{\sharp+}/p\dashrightarrow \mathbb{X}_b\times_k R^{\sharp+}/p\dashrightarrow \mathbb{X}_b\times_k R^{\sharp+}/p,\]
where the first dashed arrow is the quasi-isogeny $f$ and the second is $g$, using the isomorphism 
\[{R^{\sharp+}}/p\simeq R^+/(\varpi:=p^\flat)\]
specified by the map $\iota$.

We let $\mathscr{E}_b$ denote the $\Spd k$-point of $\Bun_G$ determined by $\mathbb{X}_b$ and $\widetilde{G}_b$ be the v-sheaf of automorphism groups of it. We identify $\widetilde{G}_b$ with $\underline{\mathrm{Aut}}_G (\tilde{\mathbb{X}}_b)^\diamond$ by Corollary~\ref{cor: CompareAut}.
\begin{lemma}\label{ShtStrata}
    The locally closed substack $\mathrm{Sht}^b_{\G,\mu^{-1}}$ of $\mathrm{Sht}_{\G,\mu^{-1}}$ is isomorphic to the quotient stack
    \[\left[\mathfrak{M}_{\mathcal{D}}^\diamond /\underline{\mathrm{Aut}}_G (\tilde{\mathbb{X}}_b)^\diamond\right]\]
    and the quotient map $\mathfrak{M}_{\mathcal{D}}^\diamond\rightarrow \mathrm{Sht}^b_{\G,\mu^{-1}}$ identifies with the tautological $\widetilde{G}_b$-torsor coming from the (integral) Beauville-Laszlo map $\mathrm{Sht}^b_{\G,\mu^{-1}}\rightarrow \Bun_G^b\cong [\ast/\widetilde{G}_b]$.
\end{lemma}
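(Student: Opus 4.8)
The plan is to establish the isomorphism $\mathrm{Sht}^b_{\G,\mu}\cong [\mathfrak{M}_{\mathcal{D}}^\diamond/\underline{\mathrm{Aut}}_G(\tilde{\mathbb{X}}_b)^\diamond]$ by identifying both sides with the same moduli problem over $\Spd k$, and to exhibit $\mathfrak{M}_{\mathcal{D}}^\diamond$ as the pullback of the $\tilde{G_b}$-torsor $\Spd k\rightarrow \Bun_G^b$ along $BL^{\mathrm{int},b}_{K_p}$. First I would unwind what an $S$-point of $\mathrm{Sht}^b_{\G,\mu}$ is for $S\in\Perf_k$: it is an untilt $S^\sharp$ over $\Spd k$ (so $S^\sharp$ is automatically a characteristic-$p$ point, i.e. the leg is at $p=0$) together with a $\G$-shtuka $(\CP,\varphi_\CP)$ bounded by $\mu$, whose attached $G$-bundle $\mathscr{E}$ on $X_S$ under $BL^{\mathrm{int},b}_{K_p}$ (the construction in \ref{VB}) lies in the stratum $\Bun_G^b$, i.e. is fppf-locally on $S$ isomorphic to $\mathscr{E}_b$. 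Giving the data $(\mathscr{E},\text{iso with }\mathscr{E}_b\text{ locally})$ is exactly giving a $\tilde{G_b}$-torsor on $S$; so the claim will follow once I show that the fiber of $BL^{\mathrm{int},b}_{K_p}$ over the canonical point $\Spd k\rightarrow\Bun_G^b$ (which trivializes $\mathscr{E}_b$) is $\mathfrak{M}_{\mathcal{D}}^\diamond$.

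For this key step I would argue pointwise on a strictly totally disconnected (indeed product-of-rank-one-geometric-points) cover, using the machinery of section 11.1. Over such an $S$ with untilt $S^\sharp$ over $k$, Proposition \ref{extension} lets me replace the $\G$-shtuka by a $\G$-BKF-module $(M,\varphi_M)$ over $W(R^+)$ with a leg at $S^\sharp$; since the leg is in characteristic $p$ the relevant prism is $(A_{\mathrm{crys}}(R^+/\text{(nothing)}),\dots)$ — more precisely, by Example \ref{example-crystallineDieu} and the classification of $p$-divisible groups over quasi-regular semiperfectoid rings (here the rings $R^+$, which are integral perfectoid), the $\G$-BKF-module with its $\CO_B$-action, polarization, and $\mu$-boundedness is the same datum as a $p$-divisible group $\mathcal{H}$ with $G$-structure over $\Spec W(R^+)$, i.e. a lift to the ``$\CO_{C^\sharp}$-level" in the geometric-point case. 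The trivialization of $\mathscr{E}$ against $\mathscr{E}_b$, restricted to the part of $\mathcal{Y}(S)$ where $p=0$, is precisely a quasi-isogeny $\mathcal{H}\times R^+/p \dashrightarrow \mathbb{X}_b\times R^+/p$ compatible with all structures, via Ansch\"utz–Le Bras Dieudonn\'e theory over $R^+/p$ and the identification $\tilde{G_b}(S)=\mathrm{Aut}_{X_S}(\mathscr{E}_b)$. That is exactly an $S$-point of the Rapoport–Zink functor $\mathfrak{M}_{\mathcal{D}}$ (via $\mathfrak{M}_{\mathcal{D}}^\diamond$, using \ref{vsheafFormal}), and the $\underline{\mathrm{Aut}}_G(\tilde{\mathbb{X}}_b)^\diamond$-action by post-composing with self-quasi-isogenies of $\mathbb{X}_b$ matches the change-of-trivialization action of $\tilde{G_b}$. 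One then descends from the cover: both $\mathrm{Sht}^b_{\G,\mu}$ and $[\mathfrak{M}_{\mathcal{D}}^\diamond/\underline{\mathrm{Aut}}_G(\tilde{\mathbb{X}}_b)^\diamond]$ are v-stacks, so the cover-wise equivalence of groupoids, being compatible with pullback, glues to the asserted isomorphism, and it sits in a $2$-cartesian square over $\Bun_G^b\cong[\ast/\tilde{G_b}]$ by construction.

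The main obstacle I anticipate is the careful bookkeeping of the leg and the $\varphi$-twist: the $\G$-BKF-module of a $p$-divisible group has its leg at $\varphi(S^\sharp)$ rather than $S^\sharp$ (as in the construction of $\pi_{\mathrm{crys}}$), and the passage between ``$\G$-shtuka with leg at $S^\sharp$ bounded by $\mu$" and ``$p$-divisible group with $G$-structure satisfying the Kottwitz condition" requires matching $\mu$-boundedness with the Kottwitz/Hodge-filtration condition exactly as in the proof of Theorem \ref{HT} and Theorem \ref{integral}; I would invoke \cite[22.6.1, 23.3.1]{Berkeley} to handle the extension of the bundle $\mathscr{E}_0$ over the locus $p=0$ and to see that the residual degrees of freedom are captured by $\underline{K}_p$-torsors, which here collapse since we are looking at the $b$-stratum fiber. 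A secondary subtlety is checking that the $\underline{\mathrm{Aut}}_G(\tilde{\mathbb{X}}_b)^\diamond$-action is free enough for the quotient stack to behave well and that the tautological map $\mathfrak{M}_{\mathcal{D}}^\diamond\to\mathrm{Sht}^b_{\G,\mu}$ really is the $\tilde{G_b}$-torsor; this is forced by the cartesian square over $[\ast/\tilde{G_b}]$ once the pointwise identification of fibers is in hand, so I expect no genuine difficulty there beyond the standard descent argument. Most of the input — the RZ representability \cite[3.21]{RZ}, Ansch\"utz's Theorem \ref{AbsFF}, Gleason–Ivanov's Proposition \ref{extension}, and the Dieudonn\'e classification — is already available, so the work is in assembling it correctly rather than proving anything new.
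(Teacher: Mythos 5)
Your overall architecture (pass to the $\tilde{G}_b$-torsor of trivializations of the shtuka against $\mathscr{E}_b$ ``at $r$ large'' over the $b$-stratum, identify that torsor with the Rapoport--Zink space, and read off the quotient description together with the compatibility with $BL^{\mathrm{int}}_{K_p}$) is the same as the paper's. But there is a genuine error at the very first step: you assert that because we work over $\Spd k$, the untilt $S^\sharp$ ``is automatically a characteristic-$p$ point, i.e.\ the leg is at $p=0$.'' This is false in the setup of the lemma. The stratification is taken on the \emph{absolute} version of $\mathrm{Sht}_{\G,\mu}$ over $\Perf_k$: the test object $S$ lives over $k$ (hence in characteristic $p$), but the datum of a point still includes an arbitrary untilt $S^\sharp$ over $\Spa\CO_E$, so the leg can (and typically does) lie at a characteristic-zero untilt. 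This matters for the conclusion: $\mathfrak{M}_{\mathcal{D}}^\diamond$ is the diamond of the \emph{formal} RZ scheme over $\Spf W(k)$, so it contains the rigid-analytic generic fiber, and it is exactly the characteristic-zero legs that produce those points. If you only allow legs at $p=0$, your dictionary (BKF-module with leg at a char-$p$ divisor $\leftrightarrow$ $p$-divisible group over the perfect ring $R^+$, plus quasi-isogeny to $\mathbb{X}_b$) at best identifies the special-fiber locus, and the claimed isomorphism with all of $\mathfrak{M}_{\mathcal{D}}^\diamond$ does not follow. (Relatedly, your phrase ``$p$-divisible group with $G$-structure over $\Spec W(R^+)$'' is not what the Dieudonn\'e-theoretic classification gives; the $p$-divisible group lives over the untilt $R^{\sharp+}$, which is where the characteristic-zero deformation data enters.)

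Even after correcting the location of the leg, be aware that what you are sketching is essentially a re-proof of Scholze--Weinstein's theorem that the integral local Shimura variety $\mathcal{M}^{\mathrm{int}}_{(\G,b,\mu)}$ --- precisely the sheaf of trivializations $\CP|_{\mathcal{Y}_{[r,\infty)}(S)}\cong\tilde{\mathscr{E}}_b$ over $\mathrm{Sht}^b_{\G,\mu}$ --- is representable by $\mathfrak{M}_{\mathcal{D}}^\diamond$. The paper's proof consists of recognizing the torsor as this integral local Shimura variety and citing [SW, 25.1.3], plus the Dieudonn\'e-theoretic identification $\tilde{G}_b\cong\underline{\mathrm{Aut}}_G(\tilde{\mathbb{X}}_b)^\diamond$; the second assertion of the lemma is then immediate from the definition of $BL^{\mathrm{int}}_{K_p}$. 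Reconstructing that theorem by hand over products of rank-one geometric points and then ``gluing by v-descent'' is substantially more work than your sketch suggests (one must produce a morphism functorially on a basis of the v-topology, match $\mu$-boundedness with the RZ moduli conditions, and treat families, not just points), so either carry that out in detail or, as the paper does, quote the Scholze--Weinstein identification directly.
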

\begin{proof}
    Consider the $\widetilde{G}_b$-torsor $\mathcal{M}^\text{int}_{(\G,b,
    \mu^{-1})}$ over $\mathrm{Sht}_{\G,\mu^{-1}}^b$, which parametrizes for a $\G$-shtuka $(\CP,\varphi_{\CP})$ Frobenius equivariant trivializations 
    \[\iota_r:\CP|_{\mathcal{Y}_{[r,\infty)}(S)}\cong \tilde{\mathscr{E}}_b,\]
    where $r\in (0,\infty)$ is large enough such that $S^\sharp$ does not intersect $\mathcal{Y}_{[r,\infty)}(S)$, and $\tilde{\mathscr{E}}_b$ is the pullback of $\mathscr{E}_b$ to $\mathcal{Y}_{[r,\infty)}(S)$.
    This is the integral local Shimura variety of \cite[Section 25.1]{Berkeley} and is isomorphic to $\mathfrak{M}_{\mathcal{D}}^\diamond$ as a v-sheaf by \cite[Corollary 25.1.3]{Berkeley}. The last statement follows directly from the definition of the (integral) Beauville-Laszlo map in Proposition \ref{VB}.
\end{proof}

Let $\mathrm{Ig}^b$ be the perfect Igusa variety over $k$ defined by $\mathbb{X}_b$ and $\overline{\mathrm{Ig}^{b,\diamond}}$ the canonical compactification of its attached v-sheaf towards $\Spd k$. Let $\overline{\mathscr{S}_K^{\diamond,b}}$ be the canonical compactification of $\mathscr{S}_K^{\diamond,b}$ towards $\mathrm{Sht}_{\G,\mu^{-1}}^b$. Combine the above with the description of the Newton strata of the Igusa stack in Proposition \ref{stratum}, we get the following formula of Newton strata on the Shimura variety.
\begin{cor}[{Mantovan's product formula, cf. \cite[Proposition 11]{Mantovan}, \cite[Section 4.3]{CS17}}]
   \begin{align*}
   \overline{\mathscr{S}_K^{\diamond,b}} & \cong [\overline{\mathrm{Ig}^{b,\diamond}}/\widetilde{G}_b]\times_{[\ast/\widetilde{G}_b]} [\mathfrak{M}_{\mathcal{D}}^\diamond/\widetilde{G}_b]\\
    &\cong [(\overline{\mathrm{Ig}^{b,\diamond}}\times \mathfrak{M}_{\mathcal{D}}^\diamond)/\underline{\mathrm{Aut}}_G (\tilde{\mathbb{X}}_b)^\diamond],
   \end{align*}
   where in the second line, the quotient is taken for the diagonal action of $\underline{\mathrm{Aut}}_G (\tilde{\mathbb{X}}_b)^\diamond$. 
\end{cor}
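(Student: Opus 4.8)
The plan is to deduce the corollary directly from the cartesian diagram in the preceding corollary (the Newton-stratified version of \ref{integral}), together with the two local identifications already in hand: the description of the strata of $\Sht$ in \ref{ShtStrata} and the description of the strata of the Igusa stack in \ref{stratum}. First I would take the $2$-cartesian diagram
\[
\begin{tikzcd}
\mathscr{S}_K^{\diamond,b} \ar[r,"\pi^b_\mathrm{crys}"] \ar[d] & \mathrm{Sht}^b_{\G,\mu}\ar[d]\\
\Igs^{\circ,b}_{K^p}\ar[r] & \Bun^b_G
\end{tikzcd}
\]
and substitute the known forms of the three non-trivial corners: $\Bun_G^b\cong [\ast/\tilde{G_b}]$ by \ref{AbsFF}, $\mathrm{Sht}^b_{\G,\mu}\cong [\mathfrak{M}_{\mathcal{D}}^\diamond/\underline{\mathrm{Aut}}_G(\tilde{\mathbb{X}}_b)^\diamond]$ by \ref{ShtStrata} (where $\tilde{G_b}\cong \underline{\mathrm{Aut}}_G(\tilde{\mathbb{X}}_b)^\diamond$ via Dieudonn\'e theory, and the map to $\Bun_G^b$ is the tautological $\tilde{G_b}$-torsor), and $\Igs^{\circ,b}_{K^p}$ whose canonical compactification towards $\Bun_G$ is $[\overline{\mathrm{Ig}^{b,\diamond}}/\tilde{G_b}]$ by \ref{StratumGoodReductionLocus}. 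The only subtlety is that the corollary is stated for the canonical compactification $\overline{\mathscr{S}_K^{\diamond,b}}$ towards $\mathrm{Sht}_{\G,\mu}^b$, so I would first argue that canonical compactification towards $\mathrm{Sht}^b_{\G,\mu}$ commutes with the fiber product over $\Bun_G^b$ — i.e. $\overline{\mathscr{S}_K^{\diamond,b}}\cong \overline{\Igs^{\circ,b}_{K^p}}\times_{\Bun_G^b}\mathrm{Sht}^b_{\G,\mu}$; this follows from functoriality of canonical compactification (\cite[18.7, 18.8]{Sch18}) applied to the base change of the separated map $\mathscr{S}_K^{\diamond,b}\to \mathrm{Sht}^b_{\G,\mu}$ along the $\tilde{G_b}$-torsor $\mathfrak{M}_{\mathcal{D}}^\diamond\to \mathrm{Sht}^b_{\G,\mu}$, using that the compactification of the Shimura variety fiber agrees with that of the compactified Igusa variety (\ref{HTfiber}).

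Concretely, the first displayed isomorphism in the corollary is then immediate: plug $\overline{\Igs^{\circ,b}_{K^p}}\cong [\overline{\mathrm{Ig}^{b,\diamond}}/\tilde{G_b}]$ and $\mathrm{Sht}^b_{\G,\mu}\cong [\mathfrak{M}_{\mathcal{D}}^\diamond/\tilde{G_b}]$ into the cartesian square, obtaining
\[
\overline{\mathscr{S}_K^{\diamond,b}}\cong [\overline{\mathrm{Ig}^{b,\diamond}}/\tilde{G_b}]\times_{[\ast/\tilde{G_b}]} [\mathfrak{M}_{\mathcal{D}}^\diamond/\tilde{G_b}],
\]
where both maps to $[\ast/\tilde{G_b}]$ are the structural ones (the torsor-classifying maps). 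The second isomorphism is the general groupoid identity that for a v-sheaf of groups $H$ acting on v-stacks $Y$ and $Z$, one has $[Y/H]\times_{[\ast/H]}[Z/H]\cong [(Y\times Z)/H]$ with $H$ acting diagonally; I would simply cite this (it is a formal descent statement, cf. the 2-fiber product of quotient stacks) and instantiate it with $H=\underline{\mathrm{Aut}}_G(\tilde{\mathbb{X}}_b)^\diamond\cong\tilde{G_b}$, $Y=\overline{\mathrm{Ig}^{b,\diamond}}$, $Z=\mathfrak{M}_{\mathcal{D}}^\diamond$, noting that the diagonal action is exactly the one appearing in Mantovan's formula. One should remark that the action of $\tilde{G_b}$ on $\overline{\mathrm{Ig}^{b,\diamond}}$ is the extension to the compactification established in \ref{action}, and that on $\mathfrak{M}_{\mathcal{D}}^\diamond$ is the one by composition with the framing quasi-isogeny $f$ — both compatible with the respective maps to $\Bun_G^b$ by construction, which is what makes the fiber product computation legitimate.

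The main obstacle, modest as it is, is bookkeeping around the canonical compactification: one must be careful that the compactification direction in the corollary (towards $\mathrm{Sht}^b_{\G,\mu}$) is the one compatible with base change along the $\tilde{G_b}$-torsor, and that passing to the quotient by $\tilde{G_b}$ does not interact badly with it. This is handled by working $\tilde{G_b}$-equivariantly downstairs on $\mathfrak{M}_{\mathcal{D}}^\diamond$: there $\mathscr{S}_K^{\diamond,b}\times_{\mathrm{Sht}^b_{\G,\mu}}\mathfrak{M}_{\mathcal{D}}^\diamond\cong \overline{\mathrm{Ig}^{b,\diamond}}\times_{[\ast/\tilde{G_b}]}\ast$-type computations reduce to the fiberwise statement \ref{HTfiber}, after which descent along $\mathfrak{M}_{\mathcal{D}}^\diamond\to\mathrm{Sht}^b_{\G,\mu}$ gives the claim. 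Everything else is a direct substitution into the cartesian diagram of \ref{integral}, so no further computation is needed.
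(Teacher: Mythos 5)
Your proposal is correct and follows essentially the same route the paper intends: read the identifications $\mathrm{Sht}^b_{\G,\mu}\cong[\mathfrak{M}_{\mathcal{D}}^\diamond/\tilde{G_b}]$, $\Bun_G^b\cong[\ast/\tilde{G_b}]$ and $\overline{\Igs^{\circ,b}_{K^p}}\cong[\overline{\mathrm{Ig}^{b,\diamond}}/\tilde{G_b}]$ into the Newton-stratified cartesian square, use compatibility of canonical compactification with base change to pass from $\Igs^{\circ,b}_{K^p}$ to its compactification towards $\Bun_G^b$ (equivalently from $\mathscr{S}_K^{\diamond,b}$ to $\overline{\mathscr{S}_K^{\diamond,b}}$), and finish with the formal identity $[Y/H]\times_{[\ast/H]}[Z/H]\cong[(Y\times Z)/H]$ for the diagonal action. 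Your extra appeal to \ref{HTfiber} is redundant, since that input is already packaged in \ref{StratumGoodReductionLocus}, but this does not affect correctness.
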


Choose any $k$-point $x$ on the fiber $\mathscr{S}_{K,k}$ of the Shimura variety over $k$, which is an abelian variety $A$ over $k$ with $\CO_B$-endomorphism $\iota$, polarization $\lambda$, and $K^p$-level structure $\bar{\eta}$. Consider the algebraic group $I_x$ over $\Q$ whose value on a $\Q$-algebra $R$ is 
\[\{g\in \mathrm{End}_{\CO_B}(A)\otimes_{\Z} R\mid gg^\ast \in R^\times\cdot \mathrm{id}_A\},\]
where $\ast$ denotes the Rosati involution induced by $\lambda$. Namely, we take $\CO_B$-linear self-quasi-isogenies of $A$ that preserve the polarization up to a scalar. Note that for a prime $\ell\neq p$,
\[\mathrm{End}_{\CO_B}(A)\otimes \Q_\ell \hookrightarrow \mathrm{End}_B(V_\ell(A)) \cong 
\mathrm{End}_B(V\otimes_{\Q}\Q_\ell),\]
so we have $I_x(\Q_\ell)\hookrightarrow G(\Q_\ell)$. We underline the topological groups $I_x(\Q)$, $G(\Af^p)$, $K^p$ to denote their attached v-sheaves. Consider the v-sheaf theoretic double quotient 
\[[\underline{I_x(\Q)}\backslash \underline{G(\Af^p)}/\underline{K^p}],\]
where $\underline{I_x(\Q)}$ acts on $\underline{G(\Af^p)}$ from the left via diagonal embedding
\[I_x(\Q)\hookrightarrow I_x(\Af^p)\hookrightarrow G(\Af^p)\]
and $\underline{K^p}$ acts from the right by the regular action.

We define a ``uniformization map" of small v-stacks
\[\Theta_x: [\underline{I_x(\Q)}\backslash \underline{G(\Af^p)}/\underline{K^p}]\rightarrow \Igs^\circ_{K^p},\]
as below: Given a totally disconnected test object $S=\Spa(R,R^+)$ with chosen pseudo-uniformizer $\varpi\in R^+$, write $R_0$ for $R^+/\varpi$. On the level of presheaves of groupoids, $\Theta_x$ sends a point of the left hand side, represented by the section 
\[g\in \underline{G(\Af^p)}(S)=\mathrm{Maps}(\pi_0(S), G(\Af^p)) = \mathrm{Maps}(\Spec(R_0), G(\Af^p))\]
to the tuple
\[(A_{R_0},\iota_{R_0}, \lambda_{R_0}, \overline{g^{-1}\eta_{R_0}}),\]
where $A_{R_0}:=A\times_k R_0$ and $\iota_{R_0}$, $\lambda_{R_0}$, $\overline{\eta_{R_0}}$ are the corresponding base changes. This preserves automorphisms and induces a map of v-stacks. 

Assume the $p$-divisible group of $A$ is in the isogeny class labeled by $[b]\in B(G,\mu)$. Then the image of the above map lies in the substack $\Igs^{\circ,b}_{K^p}$. If furthermore the element $[b]$ is basic as in Remark \ref{basic}, then the group $I_x$ is an inner form of $G$, agreeing with $G$ at all places but $p$ and infinity, see \cite[Theorem 6.30]{RZ}. In this case, by the proof of \textit{loc. cit.}, the set of isogeny classes of abelian varieties over $k$ with $G$-structure, whose $p$-divisible group has isogeny type $[b]$, is finite and is bijective to the Hasse kernel $\mathrm{ker}^1(\Q,I_x)$ (and hence to $\mathrm{ker}^1(\Q,G)$ by \cite[Equation (4.2.2)]{Kottwitz84}) of the map
\[\mathrm{res}: H^1(\Q, I_x)\rightarrow \prod_p H^1(\Q_p, I_x).\]
For each such isogeny class, we fix a point $x_i\in \mathscr{S}_{K}(k)$ lying in it, and define 
\[\Theta=\coprod_{i\in \mathrm{ker}^1(\Q,G)} \Theta_{x_i}: \coprod_{i\in \mathrm{ker}^1(\Q,G)} [\underline{I_{x_i}(\Q)} \backslash \underline{G(\Af^p)} /\underline{K^p}]\rightarrow \Igs^{\circ,b}_{K^p}.\]
We have the following reformulation of Rapoport-Zink uniformization:

\begin{prop}[{cf. \cite[Theorem 6.30]{RZ}}] \label{BasicStratum}
For $[b]$ basic, the uniformization map $\Theta$ is an isomorphism of small v-stacks over $\Spd k$.
\end{prop}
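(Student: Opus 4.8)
The plan is to reduce the statement to the classical Rapoport--Zink $p$-adic uniformization theorem as recorded in \cite[6.30]{RZ}, via the identification of $\Igs^{\circ,b}_{K^p}$ with $[\overline{\mathrm{Ig}^{b,\diamond}}/\tilde G_b]$ away from the canonical compactification (here we only need the good reduction stratum, so $\Igs^{\circ,b}_{K^p}\cong[\mathrm{Ig}^{b,\diamond}/\tilde G_b]$ by \ref{StratumGoodReductionLocus}, using that for $[b]$ basic the diamond of $\mathrm{Ig}^b$ is already partially proper over $\Spd k$, or else just work with the canonical compactification throughout and note the uniformization map is compatible with it). Since $\Theta$ is a map of small v-stacks over $\Spd k$ and both sides are ``discrete'' in the relevant sense (the fibered categories involved are equivalent to sets after the quasi-isogeny rigidification, by representability of $S_K$ as in the proof of \ref{cartesian}), it suffices to check that $\Theta$ is an isomorphism on $(R,R^+)$-points for $S=\Spa(R,R^+)$ ranging over a basis of the v-topology, e.g. strictly totally disconnected perfectoid spaces, and then invoke v-descent. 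By writing such an $S$ as a disjoint union over $\pi_0(S)$ and using that $\underline{G(\Af^p)}(S)=\mathrm{Maps}(\pi_0(S),G(\Af^p))$ together with the product description of $p$-divisible groups and Igusa varieties over products of points (\ref{BTproduct}, \ref{ShtProd}), everything reduces to the case $S=\Spa(C,C^+)$ a geometric point, and ultimately to $\Spec k'$-points for $k'$ an algebraically closed field, i.e. to a statement about abelian varieties over $k'$ with $G$-structure up to prime-to-$p$ quasi-isogeny.

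First I would make precise the source: an $\Spec R_0$-point of $[\underline{I_{x_i}(\Q)}\backslash\underline{G(\Af^p)}/\underline{K^p}]$, for $R_0$ a product of algebraically closed fields, is v-locally a section $g\in G(\Af^p)$ modulo $K^p$ on the right and $I_{x_i}(\Q)$ on the left. Under $\Theta_{x_i}$ this goes to the abelian scheme $(A_{x_i}\times_k R_0,\iota,\lambda,\overline{g^{-1}\eta})$. The content of surjectivity of $\Theta$ is precisely the statement, extracted from the proof of \cite[6.30]{RZ}, that every abelian variety over an algebraically closed field $k'\supseteq k$ with $G$-structure at level $K$ whose $p$-divisible group lies in the isocrystal class $[b]$ is prime-to-$p$ quasi-isogenous (compatibly with $G$-structure) to $A_{x_i}\times_k k'$ for a \emph{unique} $i\in\mathrm{ker}^1(\Q,G)$; the finiteness of the set of such isogeny classes and its bijection with $\mathrm{ker}^1(\Q,I_{x_i})\cong\mathrm{ker}^1(\Q,G)$ is exactly what \cite{RZ} establishes for basic $[b]$ (using that $I_{x_i}$ is then an inner form of $G$ agreeing with $G$ at all finite places $\ne p$ and at $\infty$). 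Injectivity/faithfulness comes down to computing the automorphisms: an automorphism of the image tuple is a prime-to-$p$ self-quasi-isogeny of $A_{x_i}$ preserving $\iota$, $\lambda$ up to scalar and the level, i.e. an element of $I_{x_i}(\Af^p)$ fixing $g K^p$, and the ones coming from the source are exactly $g I_{x_i}(\Q) g^{-1}\cap K^p$-classes; matching these is the groupoid-quotient bookkeeping, identical to the complex-uniformization argument of Deligne.

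The key steps, in order: (1) reduce to geometric points and then to $\Spec k'$-points by v-descent plus the product-of-points formalism, checking $\Theta$ is $0$-truncated using representability of $S_K$ as in \ref{cartesian}; (2) on $\Spec k'$-points, translate the target into the set of abelian varieties with $G$-structure up to prime-to-$p$ quasi-isogeny whose $p$-divisible group has isocrystal $[b]$ --- this is where the moduli interpretation of $\Igs$ (Definition \ref{DefIgusa}) and \ref{IgsImage}, \cite[5.1.4]{SW} are used; (3) invoke \cite[6.30]{RZ}: for $[b]$ basic, this set is finite, in bijection with $\mathrm{ker}^1(\Q,G)$, and each class is that of some $A_{x_i}$, with the prime-to-$p$ Tate module giving a torsor structure identifying the fiber over $A_{x_i}$ with $I_{x_i}(\Q)\backslash G(\Af^p)/K^p$; (4) assemble these identifications into an equivalence of groupoid-valued functors and conclude by descent. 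The main obstacle will be step (3), or rather its careful invocation: \cite[6.30]{RZ} is stated in the language of formal schemes and rigid-analytic uniformization of the basic locus, and one must extract from its proof the purely algebraic input about isogeny classes over $k'$ and the role of $\mathrm{ker}^1(\Q,G)$, then check that our conventions (covariant Dieudonn\'e theory, the sign in the Cartan decomposition, the normalization of $[b]$ versus $[\mu]$) match, and that no type-$D$ subtleties with the Hasse principle intervene --- which is guaranteed by Assumption \ref{assumption}(1) but should be remarked. A secondary technical point is ensuring the v-sheaf double quotient $[\underline{I_{x_i}(\Q)}\backslash\underline{G(\Af^p)}/\underline{K^p}]$ is computed correctly on products of points, i.e. that $\pi_0$ commutes with the relevant limits, which is routine given that $I_{x_i}(\Q)$, $G(\Af^p)$, $K^p$ are locally profinite (for $K^p$) resp. discrete, but worth one sentence.
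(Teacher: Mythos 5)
Your route is genuinely different from the one taken in the paper, and as written it has two real gaps. First, your key input in step (3) is misstated: it is \emph{not} true that every abelian variety over $k'$ with $G$-structure whose $p$-divisible group has isocrystal class $[b]$ is prime-to-$p$ quasi-isogenous to some $A_{x_i}\times_k k'$ --- that only holds on the central leaf, and for basic $[b]$ the Newton stratum is in general strictly larger (already for Siegel varieties of genus $\geq 2$). What \cite[6.30]{RZ} classifies by $\mathrm{ker}^1(\Q,G)$ are \emph{full} quasi-isogeny classes, and the reason the Rapoport--Zink space $\mathfrak{M}_{\mathcal D}$ disappears from the Igusa-stack statement is precisely that the morphisms in $\Igs$ (Definition \ref{DefIgusa}) are arbitrary $G$-quasi-isogenies, with no integrality condition at $p$; your phrasing conflates the uniformization of the basic stratum of the Shimura variety (where $\mathfrak{M}_{\mathcal D}$ and prime-to-$p$ rigidity at $p$-level matter) with that of the Igusa stack, and the groupoid bookkeeping you defer to ``Deligne's argument'' is exactly where this distinction has to be made. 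Second, your reduction chain ``strictly totally disconnected $\Rightarrow$ products of points $\Rightarrow$ geometric points $\Rightarrow$ $\Spec k'$-points'' is not supported by the lemmas you cite: \ref{BTproduct} and \ref{ShtProd} concern $p$-divisible groups and shtukas, not abelian schemes with $G$-structure and prime-to-$p$ level over rings like $\prod_i C_i^+/\varpi_i$ or $C^+/\varpi$ (which are not fields), and the value of the \emph{stackified} $\Igs^{\circ,b}$ on such test objects is not the naive groupoid $\Igs^{\mathrm{pre},b}$; in the analogous step of \ref{cartesian} the paper needed Milnor squares, Lemma \ref{1} and Serre--Tate to bridge exactly this, and if you instead invoke \ref{StratumGoodReductionLocus} to replace the target by $[\mathrm{Ig}^{b,\diamond}/\tilde G_b]$, you still owe a verification that $\Theta$, defined via the moduli description, is compatible with that identification (which was itself constructed via a choice of lift to $\Gr_{G,\mu}$ and \ref{HTfiber}) and with the $\tilde G_b$-action.

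For comparison, the paper avoids all pointwise analysis: it pulls $\Theta$ back along the v-cover $\mathscr{S}_K^{\diamond,b}\to\Igs^{\circ,b}_{K^p}$, computes the pullback explicitly as $\coprod_i[\underline{I_{x_i}(\Q)}\backslash(\mathfrak{M}_{\mathcal D}^\diamond\times\underline{G(\Af^p)}/\underline{K^p})]$ using the identification $\mathrm{Sht}^b_{\G,\mu}\cong[\mathfrak{M}_{\mathcal D}^\diamond/\tilde G_b]$ of \ref{ShtStrata} and an explicit description of the tautological $\tilde G_b$-torsor over the source, identifies the resulting map with the actual Rapoport--Zink uniformization morphism, and then quotes \cite[6.30]{RZ} as a geometric isomorphism statement, the only extra input being the identification of $\mathscr{S}_K^{\diamond,b}$ with the v-sheaf attached to the formal completion of $S_K$ along the basic locus. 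If you want to salvage your pointwise strategy, you would essentially have to reprove the point-level content of that formal-scheme statement together with the descent bookkeeping above; the two misidentified steps would need to be fixed first.
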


\begin{proof}
    This can be checked v-locally. Hence we may pull back along the v-cover $\mathscr{S}_K^{\diamond, b}\rightarrow \Igs^{\circ,b}_{K^p}$. To simplify notation, let us denote the source of $\Theta$ by $X$ and the pullback by $Y$. Consider the following diagram where all squares are cartesian
    \[
    \begin{tikzcd}
        Y\ar[r,"\tilde{\Theta}"]\ar[d] & \mathscr{S}_K^{\diamond, b} \ar[r]\ar[d] & \mathrm{Sht}_{\G,\mu^{-1}}^b\ar[d]\\
        X \ar[r,"\Theta"] & \Igs^{\circ,b}_{K^p} \ar[r] & \Bun_G^b.
    \end{tikzcd}
    \]

    By a direct computation, the tautological $\widetilde{G}_b$-torsor above $X$ corresponding to the map 
    \[X\xrightarrow{\Theta} \Igs^b_{K^p}\rightarrow \Bun_G^b\cong [\ast/\widetilde{G}_b]\]
    can be identified with
    \[X':= \coprod_{i\in \mathrm{ker}^1(\Q,G)} [\underline{I_{x_i}(\Q)} \backslash \widetilde{G}_b\times \underline{G(\Af^p)} /\underline{K^p}],\]
    where for each $i$, $\underline{I_{x_i}(\Q)}$ acts diagonally on the two middle terms: on $\widetilde{G}_b\cong \underline{\mathrm{Aut}}(\widetilde{\mathbb{X}}_b)^\diamond$ via the map 
    \[I_{x_i}(\Q)\rightarrow \mathrm{Aut}(\widetilde{A_{x_i}[p^\infty]})\cong \mathrm{Aut}(\widetilde{\mathbb{X}}_b),\] 
    and on $\underline{G(\Af^p)}$ as explained earlier; while $\underline{K^p}$ acts via the right regular action on $\underline{G(\Af^p)}$; $\widetilde{G}_b$ acts on $X'$ via the right regular action on itself.
    
    Identify $\mathrm{Sht}_{\G,\mu^{-1}}^b$ with  $[\mathfrak{M}_{\mathcal{D}}^\diamond
    /\widetilde{G}_b]$ using Lemma~\ref{ShtStrata}. Then using the outer cartesian square we can compute $Y$ to be the product of $X'$ with $\mathfrak{M}_{\mathcal{D}}^\diamond$ quotienting by the diagonal action of $\widetilde{G}_b$. This simplifies to the following formula
    \[\coprod_{i\in \mathrm{ker}^1(\Q,G)} [\underline{I_{x_i}(\Q)} \backslash (\mathfrak{M}_{\mathcal{D}}^\diamond\times \underline{G(\Af^p)}/\underline{K^p})],\]
    where $I_{x_i}(\Q)$ acts diagonally on the middle terms and the action on $\mathfrak{M}_{\mathcal{D}}^\diamond$ is via its map to $\mathrm{Aut}(\widetilde{\mathbb{X}}_b)$. The map $\tilde{\Theta}$ agrees with the $p$-adic uniformization map of \cite[Theorem 6.30]{RZ} by comparing the construction in \cite[p. 283, Equation (6.3)]{RZ} with ours, cf. the proof of Theorem \ref{integral}. Now $\tilde{\Theta}$ is an isomorphism by \cite[Theorem 6.30]{RZ}, upon identifying $\mathscr{S}_K^{\diamond,b}$ with the v-sheaf attached to the completion of $S_K$ (as a $W(k)$-scheme) along $T\subset S_{K,k}$, the closed subscheme where the universal $p$-divisible group is geometric fiberwise of isogeny class $b$. But this is fine, since both are the open sub-v-sheaf of $\SK$, obtained from sheafifying the presheaf
    \[S=\Spa(R,R^+)\mapsto \left\lbrace (S^\sharp, f \in \mathscr{S}_K(R^{\sharp+})) \;\middle|\;
  \begin{tabular}{@{}l@{}}
	$f^\ast \mathfrak{A} \times_{R^{\sharp+}} R^+/\varpi$ is geometric-\\
	pointwise of isogeny class $[b]$\\
   \end{tabular}
  \right\rbrace,
	\]
    where $S^\sharp$ is an untilt of $S$ over $\CO_E$ and $\mathfrak{A}$ is the universal formal abelian scheme on $\mathscr{S}_K$. 
\end{proof}

\begin{cor}
    If $[b]$ is basic, then the Igusa variety $\mathrm{Ig}^{b,\diamond}$ as a v-sheaf on $\Perf_k$ is isomorphic to
    \[\coprod_{i\in \mathrm{ker}^1(\Q,G)} [\underline{I_{x_i}(\Q)} \backslash \underline{G_b(\Q_p)}\times \underline{G(\Af^p)} /\underline{K^p}].\]
\end{cor}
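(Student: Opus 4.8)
The plan is to extract this corollary directly from Proposition \ref{BasicStratum} together with the stratum description of the Igusa stack in \ref{stratum} (or rather \ref{StratumGoodReductionLocus} for the good reduction locus). Recall from \ref{StratumGoodReductionLocus} that, after canonical compactification, $\overline{\Igs^{\circ,b}_{K^p}}\cong[\overline{\mathrm{Ig}^{b,\diamond}}/\tilde{G_b}]$; undoing the compactification this says $\Igs^{\circ,b}_{K^p}\cong[\mathrm{Ig}^{b,\diamond}/\tilde{G_b}]$, so that the tautological $\tilde{G_b}$-torsor over $\Igs^{\circ,b}_{K^p}$ coming from the Beauville--Laszlo map $\overline{\pi}^{\circ,b}_{HT}\colon\Igs^{\circ,b}_{K^p}\to\Bun_G^b\cong[\ast/\tilde{G_b}]$ is precisely $\mathrm{Ig}^{b,\diamond}$. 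On the other hand, Proposition \ref{BasicStratum} identifies $\Igs^{\circ,b}_{K^p}$ (for $[b]$ basic) with the double quotient $\coprod_{i\in\ker^1(\Q,G)}[\underline{I_{x_i}(\Q)}\backslash\underline{G(\Af^p)}/\underline{K^p}]$. The plan is to transport the tautological $\tilde{G_b}$-torsor across this isomorphism and read off the answer.

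First I would compute, exactly as in the proof of \ref{BasicStratum}, the pullback of the $\tilde{G_b}$-torsor $\Bun_G^b\to\ast$ along the composite $\coprod_i[\underline{I_{x_i}(\Q)}\backslash\underline{G(\Af^p)}/\underline{K^p}]\xrightarrow{\Theta^{-1}\text{-side}}\Igs^{\circ,b}_{K^p}\xrightarrow{\overline{\pi}^{\circ,b}_{HT}}\Bun_G^b$. As recorded there, this pullback is
\[
X':=\coprod_{i\in\ker^1(\Q,G)}[\underline{I_{x_i}(\Q)}\backslash\tilde{G_b}\times\underline{G(\Af^p)}/\underline{K^p}],
\]
where $\underline{I_{x_i}(\Q)}$ acts on $\tilde{G_b}\cong\underline{\Aut}(\widetilde{\mathbb{X}}_b)^\diamond$ via $I_{x_i}(\Q)\to\Aut(\widetilde{A_{x_i}[p^\infty]})\cong\Aut(\widetilde{\mathbb{X}}_b)$ and diagonally on $\underline{G(\Af^p)}$, and $\tilde{G_b}$ acts by right translation on itself. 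Since by the first paragraph this same pullback is (isomorphic to) $\mathrm{Ig}^{b,\diamond}$, we obtain $\mathrm{Ig}^{b,\diamond}\cong X'$. It then remains to simplify $X'$: for $[b]$ basic, $\tilde{G_b}$ is the v-sheaf attached to the locally profinite group $G_b(\Q_p)$ by Remark \ref{basic}, so $\tilde{G_b}=\underline{G_b(\Q_p)}$, and the map $I_{x_i}(\Q)\to G_b(\Q_p)$ is the one coming from $I_{x_i}(\Q_p)\cong G_b(\Q_p)$ (both being $\Q_p$-points of the inner form of $G$ that $I_{x_i}$ represents, c.f. \cite[6.30]{RZ}). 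Substituting gives
\[
\mathrm{Ig}^{b,\diamond}\cong\coprod_{i\in\ker^1(\Q,G)}[\underline{I_{x_i}(\Q)}\backslash\underline{G_b(\Q_p)}\times\underline{G(\Af^p)}/\underline{K^p}],
\]
as claimed.

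The only point needing a little care — and the main (minor) obstacle — is the compatibility of the two descriptions of $\mathrm{Ig}^{b,\diamond}$: that the tautological $\tilde{G_b}$-torsor over $\Igs^{\circ,b}_{K^p}$ appearing implicitly in \ref{stratum}/\ref{StratumGoodReductionLocus} really is the same torsor as the one whose pullback to $X'$ was computed inside the proof of \ref{BasicStratum}. This is already used there (the diagram at the end of that proof has $X'$ sitting over $X$ as ``the tautological $\tilde{G_b}$-torsor above $X$ corresponding to $X\xrightarrow{\Theta}\Igs^b_{K^p}\to\Bun_G^b$''), so it is essentially a matter of bookkeeping: both torsors are classified by the same map to $[\ast/\tilde{G_b}]$, namely $\overline{\pi}^{\circ,b}_{HT}$, hence agree. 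One should also note the identification $\tilde{G_b}\cong\underline{\Aut}_G(\widetilde{\mathbb{X}}_b)^\diamond$ via Dieudonn\'e theory (as in \ref{action} and \ref{ShtStrata}) so that the $I_{x_i}(\Q)$-action matches on the nose. With these identifications in place the corollary is immediate; no new geometric input beyond \ref{BasicStratum} and \ref{StratumGoodReductionLocus} is required.
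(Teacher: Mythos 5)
This is essentially the paper's own proof, which likewise just combines Remark \ref{basic}, the stratum description in \ref{stratum}/\ref{StratumGoodReductionLocus}, Proposition \ref{BasicStratum}, and the identification of the tautological $\tilde{G}_b$-torsor $X'$ made in the proof of \ref{BasicStratum}. The only point to phrase more carefully is your ``undoing the compactification'' step: dropping the bars in \ref{StratumGoodReductionLocus} is not a formal consequence in general, but it is valid here because for basic $[b]$ the perfect Igusa variety is a profinite $k$-scheme, so $\mathrm{Ig}^{b,\diamond}$ is already partially proper over $\Spd k$ and the canonical compactifications change nothing --- which is exactly the parenthetical remark in the paper's proof.
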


\begin{proof}
    Combine the description of $\widetilde{G}_b$ in Remark \ref{basic}, description of the strata of the Igusa stack in Proposition \ref{stratum}, Proposition \ref{BasicStratum} and the identification of the tautological $\widetilde{G}_b$-torsor in its proof. (In this case partial minimal and canonical compactifications are unnecessary, since $\mathrm{Ig}^b$ is zero-dimensional.)
\end{proof}
\par

\newpage
\bibliographystyle{amsalpha}
\bibliography{bib}

@BOOK{AB,
AUTHOR="Y. Andr\'e and F. Baldassarri and M. Cailotto",
TITLE="De {R}ham cohomology of differential modules on algebraic varieties (2nd ed.)",
YEAR="2020",
PUBLISHER="Birkh{\"a}user",
SERIES="Progress in Mathematics",
VOLUME="189",
}

@misc{Ans16,
      title={$G$-bundles on the absolute {Fargues--Fontaine} curve}, 
      author={Johannes Anschütz},
      year={2022},
      eprint={1606.01029},
      archivePrefix={arXiv},
      primaryClass={math.NT},
      howpublished={\href{https://arxiv.org/abs/1606.01029}{arxiv 1606.01029}}, 
}

@ARTICLE{Ans22,
AUTHOR="J. Ansch{\"u}tz",
TITLE="Extending torsors on the punctured {$\Spec(A_\mathrm{inf})$}",
YEAR="2022",
JOURNAL="Journal für die reine und angewandte Mathematik (Crelles Journal)",
VOLUME="Vol.2022, No. 783",
PAGES="227-268",
}

@misc{AnsFF,
AUTHOR="J. Ansch{\"u}tz",
TITLE="Lectures on the {F}argues-{F}ontaine curve",
YEAR="2019",
JOURNAL="online",
howpublished={\href{https://janschuetz.perso.math.cnrs.fr/skripte/vorlesung_the_curve.pdf}{online}}
}

@article{AL, 
title={Prismatic {Dieudonné} Theory},
volume={11}, 
DOI={10.1017/fmp.2022.22}, 
journal={Forum of Mathematics, Pi}, 
author={Anschütz, Johannes and Le Bras, Arthur-César}, 
year={2023}, 
pages={e2}}

@misc{AGLR,
       author = {{Ansch{\"u}tz}, Johannes and {Gleason}, Ian and {Louren{\c{c}}o}, Jo{\~a}o and {Richarz}, Timo},
        title = "{On the $p$-adic theory of local models}",
      journal = {arXiv e-prints},
     keywords = {Mathematics - Algebraic Geometry, Mathematics - Number Theory},
         year = 2022,
        month = jan,
archivePrefix = {arXiv},
       eprint = {2201.01234},
 primaryClass = {math.AG},
 howpublished = {\href{https://arxiv.org/abs/2201.01234}{arxiv 2201.01234}},
}

@BOOK{Berkeley,
AUTHOR="P. Scholze and J. Weinstein",
TITLE="Berkeley lectures on $p$-adic geometry",
YEAR="2020",
PUBLISHER="Princeton University Press",
SERIES="Annals of Mathematics Studies",
VOLUME="207"
}

@misc{Boxer,
AUTHOR="G. Boxer",
TITLE="{Torsion in the coherent cohomlogy of Shimura varieties and Galois representations}",
YEAR="2015",
howpublished={PhD. thesis, Harvard University, Cambridge, Massachusetts},
}

@ARTICLE{BS15,
AUTHOR="B. Bhatt and P. Scholze",
TITLE="The {pro-\'etale} topology for schemes",
YEAR="2015",
JOURNAL="{Ast\'erisque}",
VOLUME="369",
PAGES="99-201",
}

@ARTICLE{BS,
AUTHOR="B. Bhatt and P. Scholze",
TITLE="Prisms and prismatic cohomology",
YEAR="2022",
JOURNAL="Annals of Mathematics",
VOLUME={196(3)},
PAGES={1135-1275},
}

@ARTICLE{Conrad,
AUTHOR="B. Conrad",
TITLE="Relative ampleness in rigid geometry",
YEAR="2006",
JOURNAL="Annales de l'Institute Fourier",
VOLUME="56, 4",
PAGES="1049-1126",
}

@ARTICLE{Couchot,
AUTHOR="F. Couchot",
TITLE="Flat modules over valuation rings",
YEAR="2007",
JOURNAL="Journal of Pure and Applied Algebra",
VOLUME="211",
PAGES="235--247",
}

@ARTICLE{CS17,
AUTHOR="A. Caraiani and P. Scholze",
TITLE="On the generic part of the cohomology of compact unitary {S}himura varieties",
YEAR="2017",
JOURNAL="Annals of {M}athemetics.(2)",
VOLUME="186 no.3",
PAGES="649-766",
}

@ARTICLE{CS19,
AUTHOR="A. Caraiani and P. Scholze",
TITLE="On the generic part of the cohomology of non-compact unitary {S}himura varieties",
YEAR="2024",
JOURNAL="Annals of {M}athemetics.(2)",
VOLUME="199 no.2",
PAGES="483-590",
}

@article{CT, 
title={On the étale cohomology of {H}ilbert modular varieties with torsion coefficients}, 
volume={159}, 
DOI={10.1112/S0010437X23007431}, 
number={11}, 
journal={Compositio Mathematica}, author={Caraiani, Ana and Tamiozzo, Matteo}, 
year={2023}, 
pages={2279–2325}}

@ARTICLE{Deligne79,
AUTHOR="P. Deligne",
TITLE="Variétés de {S}himura: interprétation modulaire, et techniques de construction de modèles canoniques",
YEAR="1979",
JOURNAL="in Automorphic forms, representations and L-functions, Proc. Sympos. Pure Math",
PAGES="247-289",
}

@misc{DvHKZ,
      title={Igusa Stacks and the Cohomology of {S}himura Varieties}, 
      author={Patrick Daniels and Pol van Hoften and Dongryul Kim and Mingjia Zhang},
      year={2024},
      eprint={2408.01348},
      archivePrefix={arXiv},
      primaryClass={math.NT},
      url={https://arxiv.org/abs/2408.01348}, 
}

@misc{Far16,
      title={Geometrization of the local Langlands correspondence: an overview}, 
      author={Laurent Fargues},
      year={2016},
      eprint={1602.00999},
      archivePrefix={arXiv},
      primaryClass={math.NT},
      howpublished={\href{https://arxiv.org/abs/1602.00999}{arxiv 1602.00999}}, 
}

@ARTICLE{FF,
AUTHOR="L. Fargues and J-M. Fontaine",
TITLE="Courbes et fibr\'es vectoriels en th\'eorie de {H}odge $p$-adique",
YEAR="2018",
JOURNAL="Ast\'erisque",
volume={406},
pages={1-344},
}

@misc{FS,
      title={{G}eometrization of the local {L}anglands correspondence}, 
      author={Laurent Fargues and Peter Scholze},
      year={2024},
      eprint={2102.13459},
      archivePrefix={arXiv},
      primaryClass={math.RT},
      url={https://arxiv.org/abs/2102.13459}, 
}

@misc{GIZ,
      title={Meromorphic vector bundles on the Fargues--Fontaine curve}, 
      author={Ian Gleason and Alexander B. Ivanov and Felix Zillinger},
      year={2025},
      eprint={2307.00887v1},
      archivePrefix={arXiv},
      primaryClass={math.AG},
      url={https://arxiv.org/abs/2307.00887}, 
}

@ARTICLE{GR,
AUTHOR="H. Guo and E. Reinecke",
TITLE="A prismatic approach to crystalline local systems",
YEAR="2024",
JOURNAL="Invent. math.",
VOLUME = "236",
PAGES = "17-164",
}

@ARTICLE{Hamacher,
AUTHOR="P. Hamacher",
TITLE="The geometry of {N}ewton strata in the reduction modulo $p$ of {S}himura varieties of {PEL} type",
YEAR="2015",
JOURNAL="Duke Mathematical Journal",
VOLUME = "164, No.15",
pages={2809 - 2895},
}

@misc{Hansen16,
AUTHOR="D. Hansen",
TITLE="Period morphisms and variation of $p$-adic Hodge structure (prelimary draft)",
YEAR="2016",
howpublished="\href{http://www.davidrenshawhansen.com/periodmapmod.pdf}{online}",
}

@misc{Heuer,
AUTHOR="B. Heuer",
TITLE="$G$-torsors on perfectoid spaces",
Year={2024},
howpublished={Epijournal Geom. Algebr. (to appear)}, 
}

@ARTICLE{Hiremath,
AUTHOR="V. Hiremath",
TITLE="{Finitely projective modules over a Dedekind domain}",
YEAR="1978",
JOURNAL="Journal of the Australian Mathematical Society",
VOLUME = "26(3)",
PAGES="330-336",
}

@misc{HL23,
      title={Torsion Vanishing for Some {S}himura Varieties, with an appendix by {D. Hansen}}, 
      author={Linus Hamann and Si Ying Lee},
      year={2025},
      eprint={2309.08705},
      archivePrefix={arXiv},
      primaryClass={math.NT},
      url={https://arxiv.org/abs/2309.08705}, 
}

@article{Ian,
    AUTHOR = {Gleason, Ian},
     TITLE = {Specialization maps for {S}cholze's category of diamonds},
   JOURNAL = {Math. Ann.},
  FJOURNAL = {Mathematische Annalen},
    VOLUME = {391},
      YEAR = {2025},
    NUMBER = {2},
     PAGES = {1611--1679},
      ISSN = {0025-5831,1432-1807},
   MRCLASS = {14G45},
  MRNUMBER = {4853001},
       DOI = {10.1007/s00208-024-02952-3},
       URL = {https://doi.org/10.1007/s00208-024-02952-3},
}

@inproceedings{Kedlaya,
AUTHOR="K. Kedlaya",
TITLE="Sheaves, stacks, and shtukas",
YEAR="2019",
booktitle="Perfectoid Spaces: Lectures from the 2017 Arizona Winter School",
}

@misc{Kestutis,
      title={The analytic topology suffices for the $B_{\mathrm{dR}}^+$-{G}rassmannian}, 
      author={Kestutis Cesnavicius and Alex Youcis},
      year={2024},
      eprint={2303.11710},
      archivePrefix={arXiv},
      primaryClass={math.AG},
      url={https://arxiv.org/abs/2303.11710}, 
}

@ARTICLE{KL1,
AUTHOR="K. Kedlaya and R. Liu",
TITLE="Relative $p$-adic {Hodge} theory: {F}oundations",
YEAR="2015",
JOURNAL="Ast\'erisque",
VOLUME="371",
PAGES="1-239",
}

@ARTICLE{Kottwitz,
AUTHOR="R. E. Kottwitz",
TITLE="Points on some {S}himura varieties over finite fields",
YEAR="1992",
JOURNAL="Journal of the AMS",
VOLUME="5, No.2",
PAGES="373-444",
}

@ARTICLE{Kottwitz84,
AUTHOR="R. E. Kottwitz",
TITLE="Stable trace formula: Cuspidal tempered terms",
YEAR="1984",
JOURNAL="Duke Math. J.",
VOLUME="51(3)",
PAGES="611-650",
}

@ARTICLE{Kottwitz85,
AUTHOR="R. E. Kottwitz",
TITLE="Isocrystals with additional structure",
YEAR="1985",
JOURNAL="Comp. Math.",
VOLUME="56",
PAGES="201-220",
}

@misc{Kim,
      title={Uniqueness and functoriality of Igusa stacks}, 
      author={Dongryul Kim},
      year={2025},
      eprint={2504.15542},
      archivePrefix={arXiv},
      primaryClass={math.NT},
      url={https://arxiv.org/abs/2504.15542}, 
}

@BOOK{Lan13,
AUTHOR="K-W. Lan",
TITLE="Arithmetic compactifications of {PEL}-type {S}himura varieties",
YEAR="2013",
PUBLISHER="Princeton University Press",
SERIES="London Mathematical Society Monographs",
VOLUME="36"
}

@ARTICLE{Lan15,
AUTHOR="K-W. Lan",
TITLE="{Boundary strata of connected components in positive characteristics (appendix to {``Families of nearly ordinary Eisenstein series on unitary groups"} by Xin Wan)}",
YEAR="2015",
JOURNAL="Algebraic Number Theory",
VOLUME="9",
PAGES="1955-2054",
}

@misc{Lan16,
AUTHOR="K-W. Lan",
TITLE="An example-based introduction to {S}himura varieties",
YEAR="2016",
howpublished="to appear in the proceedings of ETHZ Summer School on Motives and Complex Multiplication",
}

@ARTICLE{LS,
AUTHOR="K-W. Lan and B. Stroh",
TITLE="Compactifications of subschemes of integral models of {S}himura varieties",
YEAR="2018",
JOURNAL="Forum Math. Sigma",
VOLUME="6, 105",
pages={e18},
}

@ARTICLE{Lau,
AUTHOR="E. Lau",
TITLE="Dieudonn\'e theory over semiperfect rings and perfectoid rings",
YEAR="2018",
JOURNAL="Compositio Math. (published online)",
VOLUME="154",
pages={1974-2004},
}

@misc{Linus22,
      title={Geometric {E}isenstein Series, Intertwining Operators, and {S}hin's Averaging Formula, with an Appendix by {A}lexander {Bertoloni-Meli}}, 
      author={Linus Hamann},
      year={2025},
      eprint={2209.08175},
      archivePrefix={arXiv},
      primaryClass={math.NT},
      url={https://arxiv.org/abs/2209.08175}, 
}

@ARTICLE{LZ,
AUTHOR="R. Liu and X. Zhu",
TITLE="Rigidity and a {Riemann-Hilbert} correspondence for $p$-adic local systems",
YEAR="2017",
JOURNAL="Invent. math.",
VOLUME="207",
PAGES="291–343",
}

@misc{Mafalda,
AUTHOR="M. Santos",
TITLE="On the generic part of the cohomology of non-compact {PEL}-type {$A$} {S}himura varieties",
YEAR="2023",
howpublished="PhD thesis, Imperial College London",
}

@ARTICLE{Mantovan,
AUTHOR="E. Mantovan",
TITLE="On the cohomology of certain {PEL}-type {S}himura varieties",
YEAR="2005",
JOURNAL="Duke Math J.",
VOLUME="129, no.3",
PAGES="573-610",
}

@BOOK{Messing,
AUTHOR="W. Messing",
TITLE="The Crystals associated to {B}arsotti-{T}ate groups: with applications to abelian schemes",
YEAR="1972",
PUBLISHER="Springer",
SERIES="Lecture Notes in Mathematics",
VOLUME="264"
}

@ARTICLE{MiedaImai,
AUTHOR="Naoki Imai and Yoichi Mieda",
TITLE="Potentially good reduction loci of Shimura varieties",
YEAR="2019",
JOURNAL="Tunisian Journal of Mathematics",
VOLUME="2(2)",
pages= {399-454},
}

@BOOK{Milnor,
AUTHOR="J. Milnor",
TITLE="Introduction to algebraic {K}-theory",
YEAR="1972",
PUBLISHER="Princeton University Press",
SERIES="Annals of Mathematics Studies",
VOLUME="72"
}

@ARTICLE{Oda,
AUTHOR="T. Oda",
TITLE="{The first de Rham cohomology group and Dieudonné modules}",
YEAR="1969",
JOURNAL="Annales scientifiques de l'École Normale Supérieure",
VOLUME="Serie 4, Volume 2, no. 1",
PAGES="63-135",
}

@BOOK{Olsson,
AUTHOR="M. Olsson",
TITLE="Algebraic spaces and stacks",
YEAR="2016",
PUBLISHER="American Mathematical Society",
SERIES="Colloquium Publications",
VOLUME="62"
}

@inproceedings{Oort,
author="Oort, Frans",
title="Foliations in Moduli Spaces of Abelian Varieties and Dimension of Leaves",
bookTitle="Algebra, Arithmetic, and Geometry: Volume II: In Honor of Yu. I. Manin",
year="2009",
publisher="Birkh{\"a}user Boston",
address="Boston",
pages="465--501",
doi="10.1007/978-0-8176-4747-6_15",
url="https://doi.org/10.1007/978-0-8176-4747-6_15"
}

@ARTICLE{Rap,
AUTHOR="M. Rapoport",
TITLE="Accessible and weakly accessible period domains (appendix to {On} the $p$-adic cohomology of {Lubin-Tate} tower by {Peter Scholze})",
YEAR="2018",
JOURNAL="Ann. Sci. Éc. Norm. Supér. (4) 51",
PAGES="811--863",
}

@ARTICLE{PR,
AUTHOR="G. Pappas and M. Rapoport",
TITLE="$p$-adic shtukas and the theory of global and local {S}himura varieties",
YEAR="2024",
JOURNAL="Cambridge Journal of Mathematics",
PAGES="1-164",
VOLUME="12(1)",
}

@ARTICLE{RR,
AUTHOR="M. Rapoport and M. Richartz",
TITLE="On the classification and specialization of {F}-isocrystals with additional structure",
YEAR="1996",
JOURNAL="Compositio Mathematica",
VOLUME="103",
PAGES="153-181",
}

@BOOK{RZ,
AUTHOR="M. Rapoport and Th. Zink",
TITLE="Period spaces for $p$-divisible groups",
YEAR="1996",
PUBLISHER="Princeton University Press",
SERIES="Annals of Mathematics Studies",
VOLUME="141"
}

@ARTICLE{Sch12,
AUTHOR="P. Scholze",
TITLE="Perfectoid spaces",
YEAR="2012",
JOURNAL="Publ. {M}ath. de l'{IHES}",
VOLUME="116(1)",
PAGES="245-313",
}

@ARTICLE{Sch13,
AUTHOR="P. Scholze",
TITLE="{$p$-adic Hodge theory for rigid-analytic varieties}",
YEAR="2013",
JOURNAL="Forum of {M}athemetics.",
VOLUME="{P}i(2013), Vol.1",
pages={e1},
}

@ARTICLE{Sch15,
AUTHOR="P. Scholze",
TITLE="On torsion in the cohomology of locally symmetric varieties",
YEAR="2015",
JOURNAL="Annals of {M}athemetics.",
VOLUME="82 no.3",
PAGES="945-1066",
}

@misc{Sch18,
      title={Etale cohomology of diamonds}, 
      author={Peter Scholze},
      year={2022},
      eprint={1709.07343},
      archivePrefix={arXiv},
      primaryClass={math.AG},
      howpublished={\href{https://arxiv.org/abs/1709.07343}{arxiv 1709.07343} (to appear in Ast\'erisque)}, 
}

@misc{stacks-project,
    shorthand    = {Stacks},
    author       = {The {Stacks Project Authors}},
    title        = {\textit{Stacks Project}},
    howpublished = {\url{https://stacks.math.columbia.edu}},
    year         = {2023},
  }

@ARTICLE{SW,
AUTHOR="P. Scholze and J. Weinstein",
TITLE="Moduli of $p$-divisible groups",
JOURNAL="Camb.J. Math.",
VOLUME="1 no.2",
YEAR="2013",
PAGES="145-237",
}

@ARTICLE{Viehmann,
AUTHOR="E. Viehmann",
TITLE="On the {N}ewton strata in the {$\Bdr^+$}-{G}rassmannian",
YEAR="2024",
JOURNAL="Duke Math. J.",
VOLUME="173 (1)",
PAGES="177-225",
}

@ARTICLE{Wedhorn,
AUTHOR="T. Wedhorn",
TITLE="{Ordinariness in good reductions of Shimura varieties of PEL-type}",
YEAR="1999",
JOURNAL="Ann. Sci. Ecole Norm. Sup. (4), 32",
PAGES="575-618",
}

@misc{XZ17,
      title={Cycles on {S}himura varieties via geometric {S}atake}, 
      author={Liang Xiao and Xinwen Zhu},
      year={2017},
      eprint={1707.05700},
      archivePrefix={arXiv},
      primaryClass={math.AG},
      howpublished={\href{https://arxiv.org/abs/1707.05700}{arxiv 1707.05700}}, 
}

@ARTICLE{Zhu,
AUTHOR="X. Zhu",
TITLE="Affine {G}rassmannians and the geometric {S}atake in mixed characteristic",
JOURNAL="Annals of Mathematics",
VOLUME="185, Issue 2",
YEAR="2017",
PAGES="403-492",
}

\end{document}